\newtheorem{thm}{Theorem}[section]
\newtheorem{lemma}[thm]{Lemma}
\newtheorem{cor}[thm]{Corollary}
\newtheorem{prop}[thm]{Proposition}
\theoremstyle{definition}
\newtheorem{defi}[thm]{Definition}
\newtheorem{rem}[thm]{Remark}
\newtheorem{example}[thm]{Example}
\numberwithin{equation}{section}
\begin{document}

\author{Joseph~Chuang}
\address{Department of Mathematics, City University London, Northampton Square, London EC1V 0HB, United Kingdom.}
\email{Joseph.Chuang.1@city.ac.uk}

\author{Hyohe Miyachi}
\address{Department of Mathematics, Osaka City University, 3-3-138 Sugimoto
Sumiyoshi-ku, Osaka 558-8585, Japan.}
\email{miyachi@sci.osaka-cu.ac.jp}

\author{Kai~Meng~Tan}
\address{Department of Mathematics, National University of Singapore, Block S17, 10 Lower Kent Ridge Road, Singapore 119076.} \email{tankm@nus.edu.sg}

\subjclass[2010]{17B37, 20G43}
\thanks{Supported by EPSRC grants GR/T00917 and GR/T00924, JSPS Grant-in-Aid for Young Scientists (B) No. 24740011 and Singapore Ministry of Education Academic Research Fund R-146-000-172-112.}

\title[Parallelotope tilings and $q$-decomposition numbers]{Parallelotope tilings and \\ $q$-decomposition numbers}
\begin{abstract}
We provide closed formulas for a large subset of the canonical basis vectors of the Fock space representation of $U_q(\widehat{\mathfrak{sl}}_e)$.  These formulas arise from parallelotopes which assemble to form polytopal complexes.  The subgraphs of the $\operatorname{Ext}^1$-quivers of $v$-Schur algebras at complex $e$-th roots of unity generated by simple modules corresponding to these canonical basis vectors are given by the $1$-skeletons of the polytopal complexes.
\end{abstract}
\maketitle
\tableofcontents

\def\Abar{\bar{A}}
\def\Bbar{\bar{B}}
\def\Cbar{\bar{C}}
\def\Dbar{\bar{D}}
\def\Ebar{\bar{E}}
\def\Fbar{\bar{F}}
\def\Gbar{\bar{G}}
\def\Hbar{\bar{H}}
\def\Ibar{\bar{I}}
\def\Jbar{\bar{J}}
\def\Kbar{\bar{K}}
\def\Lbar{\bar{L}}
\def\Mbar{\bar{M}}
\def\Nbar{\bar{N}}
\def\Obar{\bar{O}}
\def\Pbar{\bar{P}}
\def\Qbar{\bar{Q}}
\def\Rbar{\bar{R}}
\def\Sbar{\bar{S}}
\def\Tbar{\bar{T}}
\def\Ubar{\bar{U}}
\def\Vbar{\bar{V}}
\def\Wbar{\bar{W}}
\def\Xbar{\bar{X}}
\def\Ybar{\bar{Y}}
\def\Zbar{\bar{Z}}

\def\abar{\bar{a}}
\def\bbar{\bar{b}}
\def\cbar{\bar{c}}
\def\dbar{\bar{d}}
\def\ebar{\bar{e}}
\def\fbar{\bar{f}}
\def\gbar{\bar{g}}
\def\hbar{\bar{h}}
\def\ibar{\bar{i}}
\def\jbar{\bar{j}}
\def\kbar{\bar{k}}
\def\lbar{\bar{l}}
\def\mbar{\bar{m}}
\def\nbar{\bar{n}}
\def\obar{\bar{o}}
\def\pbar{\bar{p}}
\def\qbar{\bar{q}}
\def\rbar{\bar{r}}
\def\sbar{\bar{s}}
\def\tbar{\bar{t}}
\def\ubar{\bar{u}}
\def\vbar{\bar{v}}
\def\wbar{\bar{w}}
\def\xbar{\bar{x}}
\def\ybar{\bar{y}}
\def\zbar{\bar{z}}

\def\rhobar{\bar{\rho}}
\def\sigmabar{\bar{\sigma}}

\def\lhd{\vartriangleleft}
\def\mapsfrom{\leftarrow\!\shortmid}
\def\Rarr#1{\buildrel #1\over \longrightarrow}
\def\Rarrsquig#1{\buildrel #1\over \rightsquigarrow}
\def\iso{\buildrel \sim\over\to}
\def\osi{\buildrel \sim\over\leftarrow}

\def\GA{{\mathfrak{A}}}
\def\GI{{\mathfrak{I}}}
\def\GM{{\mathfrak{M}}}
\def\GS{{\mathfrak{S}}}
\def\GU{{\mathfrak{U}}}
\def\Sn{{\mathfrak{S}}}

\def\Ga{{\mathfrak{a}}}
\def\Gb{{\mathfrak{b}}}
\def\Gg{{\mathfrak{g}}}
\def\Gh{{\mathfrak{h}}}
\def\Gm{{\mathfrak{m}}}
\def\Gn{{\mathfrak{n}}}
\def\Gp{{\mathfrak{p}}}
\def\Gu{{\mathfrak{u}}}

\def\Ggl{{\mathfrak{gl}}}
\def\Gsl{{\mathfrak{sl}}}

\def\Db{{\mathcal D}^b}
\def\CA{{\mathcal{A}}}
\def\CB{{\mathcal{B}}}
\def\CC{{\mathcal{C}}}
\def\CD{{\mathcal{D}}}
\def\CE{{\mathcal{E}}}
\def\CF{{\mathcal{F}}}
\def\CG{{\mathcal{G}}}
\def\CH{{\mathcal{H}}}
\def\CI{{\mathcal{I}}}
\def\CJ{{\mathcal{J}}}
\def\CK{{\mathcal{K}}}
\def\CL{{\mathcal{L}}}
\def\CM{{\mathcal{M}}}
\def\CN{{\mathcal{N}}}
\def\CO{{\mathcal{O}}}
\def\CP{{\mathcal{P}}}
\def\CQ{{\mathcal{Q}}}
\def\CR{{\mathcal{R}}}
\def\CS{{\mathcal{S}}}
\def\CT{{\mathcal{T}}}
\def\CU{{\mathcal{U}}}
\def\CV{{\mathcal{V}}}
\def\CW{{\mathcal{W}}}
\def\CX{{\mathcal{X}}}
\def\CY{{\mathcal{Y}}}
\def\CZ{{\mathcal{Z}}}
\def\Cb{{\mathcal{b}}}

\def\BA{{\mathbf{A}}}
\def\BB{{\mathbf{B}}}
\def\BC{{\mathbf{C}}}
\def\BD{{\mathbf{D}}}
\def\BE{{\mathbf{E}}}
\def\BF{{\mathbf{F}}}
\def\BG{{\mathbf{G}}}
\def\BH{{\mathbf{H}}}
\def\BI{{\mathbf{I}}}
\def\BJ{{\mathbf{J}}}
\def\BK{{\mathbf{K}}}
\def\BL{{\mathbf{L}}}
\def\BM{{\mathbf{M}}}
\def\BN{{\mathbf{N}}}
\def\BO{{\mathbf{O}}}
\def\BP{{\mathbf{P}}}
\def\BQ{{\mathbf{Q}}}
\def\BR{{\mathbf{R}}}
\def\BS{{\mathbf{S}}}
\def\BT{{\mathbf{T}}}
\def\BU{{\mathbf{U}}}
\def\BV{{\mathbf{V}}}
\def\BW{{\mathbf{W}}}
\def\BX{{\mathbf{X}}}
\def\BY{{\mathbf{Y}}}
\def\BZ{{\mathbf{Z}}}
\def\Bh{{\mathbf{h}}}
\def\Bk{{\mathbf{k}}}
\def\Bs{{\mathbf{s}}}
\def\Bu{{\mathbf{u}}}
\def\Bv{{\mathbf{v}}}
\def\Bw{{\mathbf{w}}}
\def\Bpi{{\mathbf{\pi}}}
\def\Zp{\BZ_p}
\def\eps{\varepsilon}

\renewcommand{\BZ}{\mathbb{Z}}
\renewcommand{\BC}{\mathbb{C}}
\def\Ab{\operatorname{Ab}\nolimits}
\def\ad{\operatorname{ad}\nolimits}
\def\add{\operatorname{add}\nolimits}
\def\Alg{\operatorname{Alg}\nolimits}
\def\Ann{\operatorname{Ann}\nolimits}
\def\smd{\operatorname{smd}\nolimits}
\def\Ass{\operatorname{Ass}\nolimits}
\def\Aut{\operatorname{Aut}\nolimits}
\def\br{\operatorname{br}\nolimits}
\def\Br{\operatorname{Br}\nolimits}
\def\Branch{\operatorname{Branch}\nolimits}
\def\Bun{\operatorname{Bun}\nolimits}
\def\can{{\mathrm{can}}}
\def\Cat{\operatorname{Cat}\nolimits}
\def\Ch{\operatorname{Ch}\nolimits}
\def\ch{\operatorname{ch}\nolimits}
\def\Cl{\operatorname{Cl}\nolimits}
\def\coh{\operatorname{coh}\nolimits}
\def\mcoh{\operatorname{\!-coh}\nolimits}
\def\cohgr{\operatorname{cohgr}\nolimits}
\def\mcohgr{\operatorname{\!-cohgr}\nolimits}
\def\coker{\operatorname{coker}\nolimits}
\def\colim{\operatorname{colim}\nolimits}
\def\Comp{\operatorname{Comp}\nolimits}
\def\cone{\operatorname{cone}\nolimits}
\def\Der{\operatorname{Der}\nolimits}
\def\DG{\operatorname{DG}\nolimits}
\def\DGProj{\operatorname{DGProj}\nolimits}
\def\diag{\operatorname{diag}\nolimits}
\def\diff{\operatorname{diff}\nolimits}
\def\mdiff{\operatorname{\!-diff}\nolimits}
\def\Diff{\operatorname{Diff}\nolimits}
\def\dimgr{\operatorname{dimgr}\nolimits}
\def\DPic{\operatorname{DPic}\nolimits}
\def\exact{\operatorname{exact}\nolimits}
\def\en{{\mathrm{en}}}
\def\End{\operatorname{End}\nolimits}
\def\Endgr{\operatorname{Endgr}\nolimits}
\def\Ens{\operatorname{Ens}\nolimits}
\def\mens{\operatorname{\!-ens}\nolimits}
\def\Ep{\operatorname{Ep}\nolimits}
\def\Ext{\operatorname{Ext}\nolimits}
\def\Fix{\operatorname{Fix}\nolimits}
\def\Fl{\operatorname{Fl}\nolimits}
\def\Fun{\operatorname{Fun}\nolimits}
\def\Gal{\operatorname{Gal}\nolimits}
\def\GL{\operatorname{GL}\nolimits}
\def\gldim{\operatorname{gldim}\nolimits}
\def\good{\operatorname{good}\nolimits}
\def\gr{{\operatorname{gr}\nolimits}}
\def\mgr{{\operatorname{\!-gr}\nolimits}}
\def\grmod{{\operatorname{grmod}\nolimits}}
\def\Gr{{\operatorname{Gr}\nolimits}}
\def\Grp{\operatorname{Grp}\nolimits}
\def\hd{\operatorname{hd}\nolimits}
\def\hdim{\operatorname{hdim}\nolimits}
\def\Hilb{\operatorname{Hilb}\nolimits}
\def\Ho{\operatorname{Ho}\nolimits}
\def\hocolim{\operatorname{hocolim}\nolimits}
\def\holim{\operatorname{holim}\nolimits}
\def\Holim{\operatorname{Holim}\nolimits}
\def\idim{\operatorname{idim}\nolimits}
\def\invlim{\operatorname{invlim}\nolimits}
\def\Hom{\operatorname{Hom}\nolimits}
\def\Homb{\operatorname{\overline{Hom}}\nolimits}
\def\Homgr{\operatorname{Homgr}\nolimits}
\def\IC{\operatorname{IC}\nolimits}
\def\Id{\operatorname{Id}\nolimits}
\def\id{\operatorname{id}\nolimits}
\def\im{\operatorname{im}\nolimits}
\def\ind{\operatorname{ind}\nolimits}
\def\Ind{\operatorname{Ind}\nolimits}
\def\inj{\operatorname{inj}\nolimits}
\def\Inj{\operatorname{Inj}\nolimits}
\def\injgr{\operatorname{injgr}\nolimits}
\def\minjgr{\operatorname{\!-injgr}\nolimits}
\def\mInjgr{\operatorname{\!-Injgr}\nolimits}
\def\Inn{\operatorname{Inn}\nolimits}
\def\Int{\operatorname{Int}\nolimits}
\def\Irr{\operatorname{Irr}\nolimits}
\def\krull{\operatorname{Krulldim}\nolimits}
\def\mlatt{\operatorname{\!-latt}\nolimits}
\def\mLatt{\operatorname{\!-Latt}\nolimits}
\def\Lie{\operatorname{Lie}\nolimits}
\def\mMod{\operatorname{\!-mod}\nolimits}
\def\MOD{\operatorname{Mod}\nolimits}
\def\mMOD{\operatorname{\!-Mod}\nolimits}
\def\MODgr{\operatorname{Modgr}\nolimits}
\def\mMODgr{\operatorname{\!-Modgr}\nolimits}
\def\Modgr{\operatorname{modgr}\nolimits}
\def\mModgr{\operatorname{\!-modgr}\nolimits}
\def\modgr{\operatorname{modgr}\nolimits}
\def\mmodgr{\operatorname{\!-modgr}\nolimits}
\def\Ob{{\operatorname{Ob}\nolimits}}
\def\odd{{\operatorname{odd}\nolimits}}
\def\opp{{\operatorname{opp}\nolimits}}
\def\parf{\operatorname{parf}\nolimits}
\def\mparf{\operatorname{\!-parf}\nolimits}
\def\mperf{\operatorname{\!-perf}\nolimits}
\def\parfgr{\operatorname{parfgr}\nolimits}
\def\mparfgr{\operatorname{\!-parfgr}\nolimits}
\def\pdim{\operatorname{pdim}\nolimits}
\def\perf{\operatorname{perf}\nolimits}
\def\perfgr{\operatorname{perfgr}\nolimits}
\def\PGL{\operatorname{PGL}\nolimits}
\def\plat{\operatorname{plat}\nolimits}
\def\perm{\operatorname{perm}\nolimits}
\def\Perv{\operatorname{Perv}\nolimits}
\def\Pic{\operatorname{Pic}\nolimits}
\def\Picent{\operatorname{Picent}\nolimits}
\def\mproj{\operatorname{\!-proj}\nolimits}
\def\mProj{\operatorname{\!-Proj}\nolimits}
\def\pr{\operatorname{pr}\nolimits}
\def\proj{\operatorname{proj}\nolimits}
\def\Proj{\operatorname{Proj}\nolimits}
\def\Projgr{\operatorname{Projgr}\nolimits}
\def\mProjgr{\operatorname{\!-Projgr}\nolimits}
\def\Projspec{\operatorname{Proj}\nolimits}
\def\projgr{\operatorname{projgr}\nolimits}
\def\mprojgr{\operatorname{\!-projgr}\nolimits}
\def\PSL{\operatorname{PSL}\nolimits}
\def\Out{\operatorname{Out}\nolimits}
\def\qcoh{\operatorname{qcoh}\nolimits}
\def\mqcoh{\operatorname{\!-qcoh}\nolimits}
\def\rad{\operatorname{rad}\nolimits}
\def\ram{\operatorname{ram}\nolimits}
\def\rank{\operatorname{rank}\nolimits}
\def\rang{\operatorname{rang}\nolimits}
\def\rg{\operatorname{rg}\nolimits}
\def\relproj{\operatorname{relproj}\nolimits}
\def\mrelProj{\operatorname{\!-Relproj}\nolimits}
\def\Rep{\operatorname{Rep}\nolimits}
\def\res{\operatorname{res}\nolimits}
\def\Res{\operatorname{Res}\nolimits}
\def\Sc{\operatorname{Sc}\nolimits}
\def\Sch{\operatorname{Sch}\nolimits}
\def\sets{\operatorname{sets}\nolimits}
\def\sgn{\operatorname{sgn}\nolimits}
\def\Sh{\operatorname{Sh}\nolimits}
\def\Shift{\operatorname{Shift}\nolimits}
\def\SL{\operatorname{SL}\nolimits}
\def\SO{\operatorname{SO}\nolimits}
\def\SU{\operatorname{SU}\nolimits}
\def\soc{\operatorname{soc}\nolimits}
\def\Sp{\operatorname{Sp}\nolimits}
\def\Spec{\operatorname{Spec}\nolimits}
\def\Specm{\operatorname{Specm}\nolimits}
\def\St{\operatorname{St}\nolimits}
\def\stab{\operatorname{stab}\nolimits}
\def\mstab{\operatorname{\!-stab}\nolimits}
\def\Stab{\operatorname{Stab}\nolimits}
\def\mStab{\operatorname{\!-Stab}\nolimits}
\def\stabgr{\operatorname{stabgr}\nolimits}
\def\mstabgr{\operatorname{\!-stabgr}\nolimits}
\def\Stmod{\operatorname{Stmod}\nolimits}
\def\StPic{\operatorname{StPic}\nolimits}
\def\Supp{\operatorname{Supp}\nolimits}
\def\Sz{\operatorname{Sz}\nolimits}
\def\Tate{\operatorname{Tate}\nolimits}
\def\mtilt{\operatorname{\!-tilt}\nolimits}
\def\tilt{\operatorname{tilt}\nolimits}
\def\Tor{\operatorname{Tor}\nolimits}
\def\Tr{\operatorname{Tr}\nolimits}
\def\trace{\operatorname{trace}\nolimits}
\def\Trd{\operatorname{Trd}\nolimits}
\def\Tria{\operatorname{Tria}\nolimits}
\def\TrI{\operatorname{TrI}\nolimits}
\def\TrPic{\operatorname{TrPic}\nolimits}
\def\TrPicent{\operatorname{TrPicent}\nolimits}
\def\Uch{\operatorname{Uch}\nolimits}
\def\Vect{\operatorname{Vect}\nolimits}

\def\ie{{\em i.e.}}
\def\eg{{\em e.g.}}
\def\Qlbar{{\bar{\mathbf{Q}}_l}}
\def\Fqbar{{\bar{\mathbf{F}}_q}}

\def\ta{{\tilde{a}}}
\def\tb{{\tilde{b}}}
\def\te{{\tilde{e}}}
\def\tf{{\tilde{f}}}
\def\ti{{\tilde{i}}}
\def\tp{{\tilde{p}}}
\def\ts{{\tilde{s}}}
\def\tA{{\tilde{A}}}
\def\tB{{\tilde{B}}}
\def\tC{{\tilde{C}}}
\def\tD{{\tilde{D}}}
\def\tE{{\tilde{E}}}
\def\tF{{\tilde{F}}}
\def\tG{{\tilde{G}}}
\def\tH{{\tilde{H}}}
\def\tL{{\tilde{L}}}
\def\tM{{\tilde{M}}}
\def\tN{{\tilde{N}}}
\def\tP{{\tilde{P}}}
\def\tQ{{\tilde{Q}}}
\def\tR{{\tilde{R}}}
\def\tS{{\tilde{S}}}
\def\tT{{\tilde{T}}}
\def\tV{{\tilde{V}}}
\def\tW{{\tilde{W}}}
\def\tX{{\tilde{X}}}
\def\tY{{\tilde{Y}}}
\def\tphi{{\tilde{\phi}}}
\def\tHom{{\widetilde{\Hom}}}
\def\tDelta{{\tilde{\Delta}}}
\def\tgamma{{\tilde{\gamma}}}
\def\tGamma{{\tilde{\Gamma}}}
\def\tOmega{{\tilde{\Omega}}}
\def\tzeta{{\tilde{\zeta}}}
\def\tCE{{\tilde{\CE}}}
\def\tCO{{\tilde{\CO}}}
\def\tCP{{\tilde{\CP}}}
\def\tGg{{\tilde{\Gg}}}
\def\CExt{{{\mathcal E}xt}}
\def\CHom{{{\mathcal H}om}}
\def\CEnd{{{\mathcal E}nd}}

\def\uEnd{\operatorname{\underline{End}}\nolimits}
\def\uHom{\operatorname{\underline{Hom}}\nolimits}
\def\uA{\operatorname{\underline{A}}\nolimits}
\def\uH{\operatorname{\underline{H}}\nolimits}
\def\uS{\operatorname{\underline{S}}\nolimits}
\def\uW{\operatorname{\underline{W}}\nolimits}
\def\uc{\operatorname{\underline{c}}\nolimits}
\def\ui{\operatorname{\underline{i}}\nolimits}
\def\uj{\operatorname{\underline{j}}\nolimits}
\def\us{\operatorname{\underline{s}}\nolimits}
\def\ut{\operatorname{\underline{t}}\nolimits}
\def\uw{\operatorname{\underline{w}}\nolimits}

\def\dv{{\dot{v}}}
\def\dw{{\dot{w}}}

\def\la{\langle}
\def\ra{\rangle}




%
\def\mmid{\:\middle|\:}

\def\uleq{\mathcal{U}_\leq}


\def\inc#1#2{{#1}_{\geq #2}}

\def\hull#1{{#1}^{\mathbb{R}}}
\def\hhull#1{{#1}^{h\mathbb{R}}}

\def\hullinc#1#2{\hull{#1}_{\geq #2}}
\def\hhullinc#1#2{\hhull{#1}_{\geq #2}}
\def\plusinc#1#2{{#1}^{+}_{\geq #2}}
\def\reginc#1#2{{#1}^{\operatorname{reg}}_{\geq #2}}
\def\crossing{\kappa}

\def\proj{\operatorname{pr}}

\def\dist#1#2{\mathfrak{d}_{#1}(#2)}

\def\sb{\epsilon}
\def\ssb{\mathbf{e}}

\def\para{\Pi}
\def\rpara#1{\hull{\para(#1)}}
\def\hrpara#1{h\rpara{#1}}
\def\incrpara#1#2{\hullinc{\para(#1)}{#2}}
\def\inchrpara#1#2{\hullinc{h\para(#1)}{#2}}
\def\cube{C}
\def\rcube{\cube^{\mathbb{R}}}

\def\umv{\circ}
\def\mv{\bullet}

\def\smv{\rule[.2ex]{1ex}{1ex}}

\def\tz{\hat{z}}

\def\bm{\mathcal{A}}

\def\good{\mathcal{U}}
\def\goodplus{\mathcal{U}^+}
\def\goodreg{\mathcal{U}^{\operatorname{reg}}}
\def\hrgood#1{\mathcal{U}_{#1}}
\def\rgood#1{\mathcal{V}_{#1}}
\def\wt{\operatorname{wt}}
\def\tilingregion{\mathcal{V}}

\def\bd{\phi}

\def\b{B}
\def\tb{\tilde{B}}
\def\cb{\check{B}}
\def\hb{\hat{B}}

\def\y{\mathfrak{y}} 
\def\w{\bar{w}} 
\def\I{\mathbbm{1}} 
\def\z{\mathfrak{z}} 
\def\M{\mathfrak{m}} 
\def\W{\mathcal{W}} 

\def\Zq{\mathbb{Z}[q,q^{-1}]}

\def\tlambda{{\tilde{\lambda}}}
\def\tmu{{\tilde{\mu}}}
\def\tnu{{\tilde{\nu}}}
\def\talpha{{\tilde{\alpha}}}
\def\tsigma{{\tilde{\sigma}}}
\def\ttau{{\tilde{\tau}}}
\def\teta{{\tilde{\eta}}}
\def\tkappa{{\tilde{\kappa}}}
\def\clambda{{\check{\lambda}}}
\def\cmu{{\check{\mu}}}
\def\cnu{{\check{\nu}}}
\def\calpha{{\check{\alpha}}}
\def\csigma{{\check{\sigma}}}
\def\ctau{{\check{\tau}}}
\def\ceta{{\check{\eta}}}
\def\ckappa{{\check{\kappa}}}
\def\flambda{{\boldsymbol{\lambda}}}
\def\fmu{{\boldsymbol{\mu}}}
\def\fnu{{\boldsymbol{\nu}}}
\def\falpha{{\boldsymbol{\alpha}}}
\def\fsigma{{\boldsymbol{\sigma}}}
\def\ftau{{\boldsymbol{\tau}}}
\def\feta{{\boldsymbol{\eta}}}

\def\ep#1#2{#1^{#2}}
\def\epl#1{\ep{\lambda}{#1}}
\def\epm#1{\ep{\mu}{#1}}
\def\epn#1{\ep{\nu}{#1}}
\def\eps#1{\ep{\sigma}{#1}}
\def\eptl#1{\ep{\tlambda}{#1}}
\def\eptm#1{\ep{\tmu}{#1}}
\def\eptn#1{\ep{\tnu}{#1}}
\def\epts#1{\ep{\tsigma}{#1}}

\def\cheve{E}
\def\chevf{F}

\def\kashe{\tilde{E}}
\def\kashf{\tilde{F}}

\newcommand*{\pb}[1]{\eta^{#1}}
\newcommand*{\hpb}[1]{\hat\eta^{#1}}

\def\proj{p}

\newcommand*{\internal}[1]{\operatorname{Int}(#1)}
\newcommand*{\external}[1]{\operatorname{Ext}(#1)}

\def\rel{\operatorname{rel}}
\def\irr{\operatorname{irr}}

\def\su{\operatorname{su}}

\newcommand*{\beadoperation}[1]{\mathfrak{B}_{#1}}

%
\def\Hook{\operatorname{Hook}}



\def\myscaleA{1}
\def\mynodescaleA{0.6}

\def\mylinewidthA{0.8}

\def\myvertexscaleA{.4}

\def\myarrowlinewidthA{1.5}
\def\myarrowlengthA{.35}
\def\myabsolutearrowlengthA{12}
\def\myarrowcolorA{red}


\def\myscale{1.3}
\def\mysmallscale{1}
\def\mynodescale{0.6}
\def\xshift{2}
\def\yshift{12}
\def\zshift{22}

\def\anglea{-14}
\def\angleb{0}
\def\rotationangle{95}

\def\mylinewidth{0.8}
\def\mysmallerlinewidth{0.3}
\def\hiddenlineopacity{0.2}

\def\myvertexscale{.7}

\def\myarrowlinewidth{1}
\def\myarrowlength{.29}
\def\myabsolutearrowlength{12}
\def\myarrowcolor{red}

\def\xorigin{2}
\def\yorigin{-6.0}
\def\zorigin{2}	

\newcommand*{\mathcolor}{}
\def\mathcolor#1#{\mathcoloraux{#1}}
\newcommand*{\mathcoloraux}[3]{%
	\protect\leavevmode
	\begingroup
	\color#1{#2}#3%
	\endgroup
}

\newcommand{\mysquare}[3]
{
	\coordinate (P00) at #1;
	\coordinate (P10) at ($#1+#2$);
	\coordinate (P01) at ($#1+#3$);
	\coordinate (P11) at ($#1+#2+#3$);
	\draw [line width=\myscaleA*\mylinewidthA, color=black] (P00) -- (P01) -- (P11) -- (P10) -- cycle;
}

\newcommand{\mysquarearrows}[4]
{
	\coordinate (P00) at #1;
	\coordinate (A10) at ($#1+\myarrowlengthA*#2$);
	\coordinate (A01) at ($#1+\myarrowlengthA*#3$);
	\draw [<->, >=latex, line width=\myscaleA*\myarrowlinewidthA, color=\myarrowcolorA] (A01) -- (P00) -- (A10);
}

\newcommand{\mycoloredshiftedcube}[7]
{
	\coordinate (C000) at ($#1+#2$);
	\coordinate (C100) at ($#1+#2+#3$);
	\coordinate (C010) at ($#1+#2+#4$);
	\coordinate (C001) at ($#1+#2+#5$);
	\coordinate (C110) at ($#1+#2+#3+#4$);
	\coordinate (C011) at ($#1+#2+#4+#5$);
	\coordinate (C101) at ($#1+#2+#3+#5$);
	\coordinate (C111) at ($#1+#2+#3+#4+#5$);
	\draw [line width=\myscale*\mysmallerlinewidth, color=#6] (C000) -- (C100);
	\draw [line width=\myscale*\mysmallerlinewidth, color=#6] (C000) -- (C010);
	\draw [line width=\myscale*\mysmallerlinewidth, color=#6] (C000) -- (C001);
	\draw [line width=\myscale*\mysmallerlinewidth, color=#6] (C100) -- (C110);
	\draw [line width=\myscale*\mysmallerlinewidth, color=#6] (C100) -- (C101);
	\draw [line width=\myscale*\mysmallerlinewidth, color=#6] (C010) -- (C110);
	\draw [line width=\myscale*\mysmallerlinewidth, color=#6] (C010) -- (C011);
	\draw [line width=\myscale*\mysmallerlinewidth, color=#6] (C001) -- (C101);
	\draw [line width=\myscale*\mysmallerlinewidth, color=#6] (C001) -- (C011);
	\draw [line width=\myscale*\mysmallerlinewidth, color=#6] (C110) -- (C111);
	\draw [line width=\myscale*\mysmallerlinewidth, color=#6] (C101) -- (C111);
	\draw [line width=\myscale*\mysmallerlinewidth, color=#6] (C011) -- (C111);
}

\newcommand{\myopaqueshiftedcube}[7]
{	\coordinate (C000) at ($#1+#2$);
	\coordinate (C100) at ($#1+#2+#3$);
	\coordinate (C010) at ($#1+#2+#4$);
	\coordinate (C001) at ($#1+#2+#5$);
	\coordinate (C110) at ($#1+#2+#3+#4$);
	\coordinate (C011) at ($#1+#2+#4+#5$);
	\coordinate (C101) at ($#1+#2+#3+#5$);
	\coordinate (C111) at ($#1+#2+#3+#4+#5$);
	\draw [line width = \myscale*\mylinewidth, draw=#6, fill=white, draw opacity=1, fill opacity=#7] (C010) -- (C110) -- (C100) -- (C101) -- (C001) -- (C011) -- cycle;	
\draw [line width = \myscale*\mylinewidth, draw=#6, fill=white, draw opacity=1, fill opacity=#7] (C000) -- (C100);
\draw [line width = \myscale*\mylinewidth, draw=#6, fill=white, draw opacity=1, fill opacity=#7] (C000) -- (C010);
\draw [line width = \myscale*\mylinewidth, draw=#6, fill=white, draw opacity=1, fill opacity=#7] (C000) -- (C001);
}

\newcommand{\mycubearrows}[7]
{
	\coordinate (C000) at ($#1+#2$);
	\coordinate (A100) at ($#1+#2+\myarrowlength*#3$);
	\coordinate (A010) at ($#1+#2+\myarrowlength*#4$);
	\coordinate (A001) at ($#1+#2+\myarrowlength*#5$);
	\draw [line width=\myscale*\myarrowlinewidth, color=\myarrowcolor, ->, >=latex] (C000) -- (A100);
	\draw [line width=\myscale*\myarrowlinewidth, color=\myarrowcolor, ->, >=latex] (C000) -- (A010);
	\draw [line width=\myscale*\myarrowlinewidth, color=\myarrowcolor, ->, >=latex] (C000) -- (A001);
	}

\newcommand{\savedx}{0}
\newcommand{\savedy}{0}
\newcommand{\savedz}{0}

\newcommand{\rotateRPY}[4][0/0/0]
{   \pgfmathsetmacro{\rollangle}{#2}
	\pgfmathsetmacro{\pitchangle}{#3}
	\pgfmathsetmacro{\yawangle}{#4}
	
	\pgfmathsetmacro{\newxx}{cos(\yawangle)*cos(\pitchangle)}
	\pgfmathsetmacro{\newxy}{sin(\yawangle)*cos(\pitchangle)}
	\pgfmathsetmacro{\newxz}{-sin(\pitchangle)}
	\path (\newxx,\newxy,\newxz);
	\pgfgetlastxy{\nxx}{\nxy};
	
	\pgfmathsetmacro{\newyx}{cos(\yawangle)*sin(\pitchangle)*sin(\rollangle)-sin(\yawangle)*cos(\rollangle)}
	\pgfmathsetmacro{\newyy}{sin(\yawangle)*sin(\pitchangle)*sin(\rollangle)+ cos(\yawangle)*cos(\rollangle)}
	\pgfmathsetmacro{\newyz}{cos(\pitchangle)*sin(\rollangle)}
	\path (\newyx,\newyy,\newyz);
	\pgfgetlastxy{\nyx}{\nyy};
	
	\pgfmathsetmacro{\newzx}{cos(\yawangle)*sin(\pitchangle)*cos(\rollangle)+ sin(\yawangle)*sin(\rollangle)}
	\pgfmathsetmacro{\newzy}{sin(\yawangle)*sin(\pitchangle)*cos(\rollangle)-cos(\yawangle)*sin(\rollangle)}
	\pgfmathsetmacro{\newzz}{cos(\pitchangle)*cos(\rollangle)}
	\path (\newzx,\newzy,\newzz);
	\pgfgetlastxy{\nzx}{\nzy};
	
	\foreach \x/\y/\z in {#1}
	{   \pgfmathsetmacro{\transformedx}{\x*\newxx+\y*\newyx+\z*\newzx}
		\pgfmathsetmacro{\transformedy}{\x*\newxy+\y*\newyy+\z*\newzy}
		\pgfmathsetmacro{\transformedz}{\x*\newxz+\y*\newyz+\z*\newzz}
		\xdef\savedx{\transformedx}
		\xdef\savedy{\transformedy}
		\xdef\savedz{\transformedz}
	}
}

\tikzset{RPY/.style={x={(\nxx,\nxy)},y={(\nyx,\nyy)},z={(\nzx,\nzy)}}}

\section{Introduction}\label{S:intro}

Fix an integer $e\geq 2$. Leclerc and Thibon \cite{LT} associate to any pair of partitions $\lambda, \mu$ of the same size a polynomial $d_{\lambda\mu}(q)$, defined as a change of basis coefficient in a Fock space representation of the quantised enveloping algebra $U_q(\widehat{\Gsl}_e)$. The $d_{\lambda\mu}(q)$ were originally called {\em $q$-decomposition numbers} because upon evaluation at $q=1$ they are decomposition numbers of $v$-Schur algebras with the parameter $v$ taken to be a primitive $e$-th root of unity in a field of characteristic zero \cite{A1,VV}.
In fact the $d_{\lambda\mu}(q)$ may be interpreted, before specialisation at $q=1$,
as certain parabolic affine Kazhdan-Lusztig polynomials \cite{VV}, or alternatively as graded decomposition numbers of graded versions
of these $v$-Schur algebras \cite{BK,A2,SW}.

In this paper we show that if the partitions $\lambda$ and $\mu$ are generic, in a sense we make precise below, then either $d_{\lambda\mu}(q)=0$ or
$d_{\lambda\mu}(q)=q^n$ for some $n\geq 0$, and that these generic $q$-decomposition numbers are governed by configurations of parallelotopes.
Such configurations first appeared in the representation theory of symmetric groups and Schur algebras as (two-dimensional) parallelogram tilings in the work of Erdmann and Martin \cite{EM} and of Peach \cite{Peach}. Inspired by \cite{Peach}, Turner considered certain higher dimensional parallelotope tilings as data to define algebras with favourable homological properties, and conjectured a structural relationship between these `Cubist algebras' and blocks of symmetric group algebras over fields of positive characteristic \cite[\S13]{Turner}. His prediction is known to be true for weight $2$ blocks (and $2$-dimensional tilings) \cite[\S11]{CT}; the general case remains open. In this paper we define parallelotope tilings relevant to Turner's conjecture
for blocks of arbitrary weight, in terms of the combinatorics of partitions, and prove a numerical analogue of the conjecture.

James \cite{J} conjectured that the decomposition numbers of a $v$-Schur algebra of degree $n$ with quantum characteristic $e$ over a field of positive characteristic $p$ coincide with those of the corresponding $v$-Schur algebra over a field of characteristic zero, as long as $e>n/p$; a more general version predicts that the same is true for all weight $w$ blocks of $v$-Schur algebras when $e>w$. Williamson \cite{Williamson} found counterexamples to the (original) conjecture of James, but they do not involve the generic partitions we study. That leaves open the question of whether James's conjecture, in its block form, is true for generic decomposition numbers. If one could establish a version of the conjecture of Turner mentioned above, for $v$-Schur algebras in characteristic zero, along the lines of \cite{CT}, making use of the derived equivalences established in \cite{CR} and in \cite{Turner}, a similar argument would then apply to $v$-Schur algebras in positive characteristic as well, and James's conjecture for generic partitions would follow. The proofs of our results involve the analysis of mutations of tilings (see Figure~\ref{fig:weight3mutation}) underlying these derived equivalences.

In order to state our results, we recall some standard combinatorics of partitions. To a partition $\lambda=(\lambda_1,\lambda_2,\dotsc)$, we associate its Young diagram $$[\lambda]=\left\{(i,j)\in\BZ^2\mid 1\leq j \leq \lambda_i\right\}.$$
We will draw Young diagrams with $(i-1,j+1)$ placed northeast of $(i,j)$.
Given two finite subsets $X,Y\subseteq \BZ^2$, we say that $X$ is northeast of $Y$ if
$$\max\{j-i\mid (i,j)\in X\} > \max\{j-i\mid (i,j)\in Y\}.$$
A {\em{rimhook}} of $\lambda$ is a nonempty subset $H\subseteq [\lambda]$
of the form
$$ H=H_{(i,j)}=\left\{(i',j')\in[\lambda]\mid i'\geq i, \,\, j'\geq j\text{ and } (i'+1,j'+1)\notin[\lambda]\right\}.$$
We have $[\mu]=[\lambda]\setminus H$ for a partition $\mu$; we say that $\mu$ is obtained by {\em{unwrapping}} $H$ from $\lambda$, and that
$\lambda$ is obtained by {\em{wrapping}} $H$ onto $\mu$.

Let $\Hook_e(\lambda)$ be the set of rimhooks of $\lambda$ of size divisible by $e$.
Given $H\in\Hook_e(\lambda)$, define a partition $\lambda_H$ as follows. The partition $\nu$ obtained by unwrapping $H$ from $\lambda$ can be produced by
successively unwrapping rimhooks $H^1,\ldots, H^{l}$ of size $e$. Then $\lambda_H$ is obtained from $\nu$ by successively wrapping on rimhooks
$\tilde{H}^l,\ldots, \tilde{H}^1$ of size $e$, where $\tilde{H}^i$ is taken to be minimally northeast of $H^i$.
The definition of $\lambda_H$ is problematic for two reasons: the decomposition of $H$ into a sequence $H^1,\ldots, H^l$ may not be unique, and the
$\tilde{H}^i$ may not exist. Nevertheless we have the following special case of our main result,
Theorem~\ref{thm:main}.
\begin{thm}\label{intro-prop}
		Let $\lambda$ be a generic partition (defined below). Then for all $H\in\operatorname{Hook}_e(\lambda)$,
		$\lambda_H$ is well-defined, and
		$d_{\lambda,\lambda_H}(q)=q$.
		\end{thm}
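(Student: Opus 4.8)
The plan is to pass to James's $e$-abacus and then to determine the single canonical basis coefficient $d_{\lambda,\lambda_H}(q)$.

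\emph{Abacus model and well-definedness.}
Fix an $e$-runner abacus display of $\lambda$. Unwrapping a rimhook of size $e\ell$ amounts to sliding one bead $\ell$ places towards $0$ along a single runner, passing over the $\ell-1$ positions in between; wrapping is the reverse, and the northeast order on rimhooks corresponds to an explicit order on abacus positions. The role of genericity is that the beads are spread out — on each runner consecutive beads are further apart than the weight of $\lambda$ — so those intermediate positions are empty. This forces the decomposition $H=H^1\sqcup\dots\sqcup H^l$: any chain of $e$-rimhook removals turning $\lambda$ into $\nu$ must slide one distinguished bead one place at a time, $l$ times, in the order dictated by the runner; and at each stage it guarantees an empty position immediately to the northeast in which to wrap $\tilde H^i$. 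Carrying the construction through, $\lambda_H$ is obtained from $\lambda$ by a single explicit minimal change of the bead configuration — informally, $H$ is pushed to the neighbouring addable rimhook — so $\lambda_H$ is well defined and $\lambda\lhd\lambda_H$.

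\emph{The value $q$.}
Write $G(\mu)=\sum_\sigma d_{\sigma\mu}(q)\,|\sigma\rangle$ for the canonical basis vector, characterised by bar-invariance, $d_{\mu\mu}=1$ and $d_{\sigma\mu}\in q\BZ[q]$ for $\sigma\neq\mu$; recall $d_{\sigma\mu}(q)\neq 0$ forces $\sigma\trianglelefteq\mu$. I would argue by induction on $m=|H|/e$. When $m=1$, genericity confines $\lambda$ and $\lambda_H$ to a ``weight-one corner'' of the Fock space in which only a single $e$-rimhook is in play, occupying one of two neighbouring addable positions; the computation then reduces to that for a weight-one block, where $G(\lambda_H)=|\lambda_H\rangle+q\,|\lambda\rangle$, whence $d_{\lambda,\lambda_H}(q)=q$. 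For $m>1$, peel off the outermost layer: let $H^1$ be the first $e$-rimhook removed from $\lambda$ and $\tilde H^1$ its northeast-minimal shift (the most northeasterly of the $\tilde H^i$), and set $\mu=\lambda\setminus H^1$, $\mu'=\lambda_H\setminus\tilde H^1$. One checks $\mu'=\mu_{H'}$ where $H'=H^2\sqcup\dots\sqcup H^l$ has size $(m-1)e$, so $d_{\mu,\mu'}(q)=q$ by induction. Genericity separates the abacus positions involved in the outer $e$-rimhook from those moved in passing from $\mu$ to $\mu'$; combining this with the compatibility of the canonical basis with the $U_q(\widehat{\Gsl}_e)$-action and with the operators that add $e$-rimhooks, one transports the computation one weight at a time, concluding that the coefficient of $|\lambda\rangle$ in $G(\lambda_H)$ equals that of $|\mu\rangle$ in $G(\mu')$, namely $q$. (Alternatively, a Scopes equivalence reduces the whole statement to the case of a Rouquier block, where the relevant decomposition number is read off from the explicit description of such decomposition matrices.)

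\emph{Main obstacle.}
The crux is the decoupling invoked twice above: in the generic regime one must show that adding the outer $e$-rimhook — or, for $m=1$, restricting to the weight-one corner — neither kills the coefficient nor lets its degree exceed $1$, and that no $\sigma\neq\lambda$ contributes in degree one. Equivalently, $\lambda$ and $\lambda_H$ are adjacent vertices of a single parallelotope of the tiling and the edge joining them has ``length one''; this is exactly what Theorem~\ref{thm:main}, proved through the analysis of mutations of parallelotope tilings, records, and Theorem~\ref{intro-prop} is the special case obtained by reading off that edge.
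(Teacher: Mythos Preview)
Your closing sentence is the actual proof: Theorem~\ref{intro-prop} is the $|\Gamma|=1$ case of Theorem~\ref{thm:main}, once $\lambda_H$ is identified as the partition with $\z(\lambda_H)=\z(\lambda)+\sb_r^\lambda$ via Proposition~\ref{P:movealongdescription} (see the Remark following it). The independent argument you sketch before that does not stand on its own. Your reading of ``generic'' is wrong: it means $\z(\lambda)$ is $10$-increasing with $\z_w\le e-2$, a condition on the armlengths $\z_i=\z(b_i;q_i)$ of the bead movements (the number of unoccupied positions in the strip $(q_i-e,q_i]$), not on vertical gaps between beads on a single runner. Beads on a runner may well be adjacent---for instance in a Rouquier block a $1$-increasing partition has each runner carrying at most one bead movement, with the bead at most one step from its core position (Remark~\ref{remark:Rouquier})---so ``those intermediate positions are empty'' is false, and your well-definedness sketch collapses. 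The paper obtains well-definedness through the bead operations of Lemma~\ref{L:omnibus} and Proposition~\ref{P:movealongdescription}.

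For the value $q$, neither branch of your induction on $m=|H|/e$ works. In the base case $m=1$, both $\lambda$ and $\lambda_H$ live in the same block of $e$-weight $w$, and genericity does not carve out a weight-one sub-Fock-space in which $G(\lambda_H)$ can be computed locally; there is no such local-to-global principle available. In the inductive step you invoke ``operators that add $e$-rimhooks'' to transport $d_{\mu,\mu'}(q)$ to $d_{\lambda,\lambda_H}(q)$, but the Heisenberg-type operators that add $e$-rimhooks do not send canonical basis vectors to canonical basis vectors, so no such transport exists. Your parenthetical Scopes alternative is closer to the paper's route, but the chain of $[w:k]$-pairs connecting an arbitrary block to a Rouquier block (Lemma~3.1 of \cite{F}) has only $k\ge 1$, not $k\ge w$; when $k<w$ these pairs are not Morita equivalences and do not preserve $q$-decomposition numbers on the nose. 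Handling them is precisely the content of Proposition~\ref{prop:main} and the analysis of exceptional families in Section~\ref{section:mainproof}, which is the substance you are deferring to in your last line.
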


\begin{example}\label{intro-example1}
	Let $e=4$, and let $\lambda=(5,5,4,2,2,2,1,1)$. There is a unique $H\in\Hook_e(\lambda)$ such that $\left| H\right|=12$.
	Then $\lambda_H=(6,5,5,2,2,2)$, obtained by successively unwrapping
	$H^1=\{(1,5),(2,5),(2,4),(3,4)\}$,
	$H^2=\{(6,2),(6,1),(7,1),(8,1)\}$ and
	$H^3=\{(3,3),(3,2),(4,2),(5,2)\}$, and then wrapping
		$\tilde{H}^3=\{(2,4),(3,4),(3,3),(3,2)\}$,
		$\tilde{H}^2=\{(4,2),(5,2),(6,2),(6,1)\}$ and
		$\tilde{H}^1=\{(1,6),(1,5),(2,5),(3,5)\}$. See Figure~\ref{lambdaHexample}.
				The roles of $H^1$ and $H^2$ can be swapped, but that does not affect the result $\lambda_H$.
		For this example, it is known (via a computer calculation using the Lascoux-Leclerc-Thibon algorithm \cite[\S6.2]{LLT}) that indeed $d_{\lambda,\lambda_H}(q)=q$, even though
		Theorem~\ref{intro-prop} does not apply, as $\lambda$ is not generic.
\begin{figure}
	\caption{Example construction of $\lambda_H$.}
\def\hm{.3}	
\def\shiftx{10}
$$
\begin{tikzpicture}[y={(0,-1)},scale=0.6]
\node at (-0.5,3) {$[\lambda]:$};
\foreach \x/\y in {
		1/1,2/1,3/1,4/1,5/1,
		1/2,2/2,3/2,4/2,5/2,
		1/3,2/3,3/3,4/3,
		1/4,2/4,
		1/5,2/5,
		1/6,2/6,
		1/7,
		1/8
	}{
	\node at (\x,\y) [circle, fill=black, scale=0.3]{};}
\draw [rounded corners, thick, dashed] (5-\hm, 1-\hm)
-- ++(2*\hm,0) -- ++(0,1+2*\hm) -- ++(-1,0) -- ++(0,1) -- ++(-2*\hm, 0) -- ++(0,-1-2*\hm) -- ++(1,0) -- cycle;
\draw [rounded corners, thick, dashed] (1-\hm, 6-\hm)
-- ++(1+2*\hm,0) -- ++(0,2*\hm) -- ++(-1,0) -- ++(0,2) -- ++(-2*\hm, 0) -- cycle;
\draw [rounded corners, thick, dashed] (2-\hm, 3-\hm)
-- ++(1+2*\hm,0) -- ++(0,2*\hm) -- ++(-1,0) -- ++(0,2) -- ++(-2*\hm, 0) -- cycle;
\node at (5,3) {\footnotesize $H^1$};
\node at (2,7) {\footnotesize $H^2$};
\node at (3,4) {\footnotesize $H^3$};

\node at (\shiftx-0.5,3) {$[\lambda_H]:$};
\foreach \x/\y in {
	1/1,2/1,3/1,4/1,5/1,6/1,
	1/2,2/2,3/2,4/2,5/2,
	1/3,2/3,3/3,4/3,5/3,
	1/4,2/4,
	1/5,2/5,
	1/6,2/6
}{
\node at (\x+\shiftx,\y) [circle, fill=black, scale=0.3]{};}
\draw [rounded corners, thick, dashed] (\shiftx+5-\hm, 1-\hm)
-- ++(1+2*\hm,0) -- ++(0,2*\hm) -- ++(-1,0) -- ++(0,2) -- ++(-2*\hm, 0) -- cycle;
\draw [rounded corners, thick, dashed] (\shiftx+4-\hm, 2-\hm)
-- ++(2*\hm,0) -- ++(0,1+2*\hm) -- ++(-2-2*\hm,0) -- ++(0,-2*\hm) -- ++(2,0) -- cycle;
\draw [rounded corners, thick, dashed] (\shiftx+2-\hm, 4-\hm)
-- ++(2*\hm,0) -- ++(0,2+2*\hm) -- ++(-1-2*\hm,0) -- ++(0,-2*\hm) -- ++(1, 0) -- cycle;
\node at (\shiftx+6,2) {\footnotesize $\tilde{H}^1$};
\node at (\shiftx+2,7) {\footnotesize $\tilde{H}^2$};
\node at (\shiftx+4,4) {\footnotesize $\tilde{H}^3$};
\end{tikzpicture}
$$	\label{lambdaHexample}
	\end{figure}
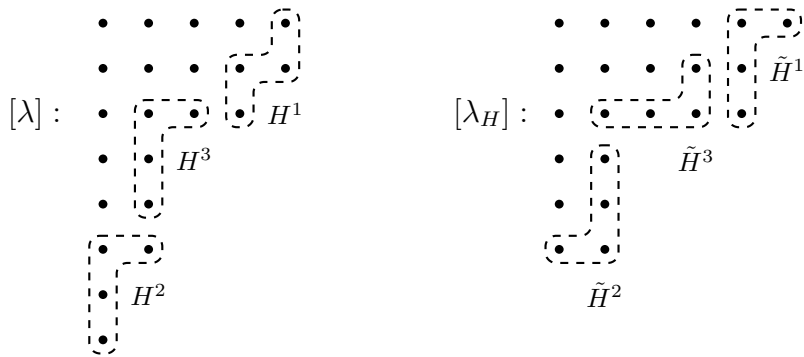
	\end{example}

	The statement of our main result requires the notion of blocks of partitions. The $e$-weight of a partition $\lambda$ is defined as  $w=\left|\Hook_e(\lambda)\right|$. A partition obtained
	by unwrapping $H\in\Hook_e(\lambda)$ from $\lambda$ has $e$-weight $w-(\left| H\right|/e)$. So by succesively unwrapping rimhooks (of size divisible by $e$) we obtain a partition $\kappa$ of $e$-weight $0$, called the $e$-core of $\lambda$, such that $\left|\lambda\right| = \left|\kappa\right|+we$. Moreover $\kappa$ does not depend on which rimhooks are chosen to be unwrapped. A block is defined to be a set of partitions sharing the same $e$-weight and $e$-core. It is known that $d_{\lambda\mu}(q)\neq 0$ only if $\lambda$ and $\mu$ are in the same block \cite[Theorem 6.8]{LLT}.
		Denote by $B_{\operatorname{gen}}$ the subset of generic partitions in a block $B$. The cardinality of the sets $B$ and $B_{\operatorname{gen}}$ depends only on $w$ and $e$, and the proportion $\left| B_{\operatorname{gen}} \right| / \left| B\right|$ of generic partitions in a block, keeping $w$ fixed, tends to $1$ as $e$ tends to infinity (see Remark~\ref{rem:proportion}), so the following theorem describes `almost all' $q$-decomposition numbers when $e\gg w$. 	
	
	\begin{thm} \label{intro-thm} Let $B$ be a block of partitions of $e$-weight $w$.
		There exists a map $\z\colon B_{\operatorname{gen}}\to\mathbb{Z}^w$ such that
		for all $\lambda,\mu \in B_{\operatorname{gen}}$,
		$$d_{\lambda\mu}(q)=
		\begin{cases}
		q^{|\Omega|} & \text{if } \z(\mu)-\z(\lambda) = \sum_{H\in\Omega} (\z(\lambda_H)-\z(\lambda)) \text{ for some }
		\Omega\subseteq \Hook_e(\lambda); \\
		  0 &  \text{otherwise.}
		\end{cases}$$
		
		The $w$-dimensional parallelotopes
		$$\Pi(\lambda)^{\mathbb{R}}=\left\{\z(\lambda)+\sum_{H\in\operatorname{Hook}_e(\lambda)} a_H(\z(\lambda_H)-\z(\lambda))
		\,\bigg|\,
		 0\leq a_H\leq 1 \right\}\subset \mathbb{R}^w$$
		are the $w$-cells of a polytopal complex in $\mathbb{R}^w$, i.e.\ the intersection of any two parallelotopes is either empty or a common face.
		
		Moreover,
		given $\lambda,\mu\in B_{\operatorname{gen}}$,
		there is a non-split extension of $L(\lambda)$ by $L(\mu)$ (simple modules of the $v$-Schur algebra in characteristic zero) if and only if $\z(\lambda)$ and $\z(\mu)$ are adjacent vertices of the polytopal complex, in which case $\Ext^1(L(\lambda),L(\mu))$ is one-dimensional, and
		either $\mu=\lambda_H$ for some $H\in\Hook_e(\lambda)$ or $\lambda=\mu_H$ for some $H\in\Hook_e(\mu)$.
	\end{thm}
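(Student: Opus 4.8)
The plan is to reduce the theorem to a concrete, finite combinatorial model by first establishing the map $\z$ and the fundamental relation between $\lambda_H$ and the decomposition numbers in the weight-$1$ situation. First I would analyse a single application of the unwrapping/rewrapping operation $\lambda\mapsto\lambda_H$: using abacus combinatorics (beta-numbers on $e$ runners) one sees that for generic $\lambda$ the rimhook $H\in\Hook_e(\lambda)$ corresponds to sliding a bead up on some runner and that $\lambda_H$ is obtained by sliding that same bead together with a minimally-northeast replacement, so $\z(\lambda_H)-\z(\lambda)$ can be read off as one of $w$ "elementary vectors" depending only on which of the $w$ beads in the weight-$1$ piece is moved. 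The genericity hypothesis is exactly what guarantees these $w$ elementary vectors are linearly independent (so the parallelotopes are non-degenerate) and that the $w$ one-hook moves commute in the sense needed, i.e.\ that $\lambda_H$ is well-defined and that iterating gives a lattice-like structure inside the block; this is essentially Theorem~\ref{intro-prop}, which I would assume for the setup and from which I would extract $\z$ as the composite of the beta-number encoding with a linear coordinate change.

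Next I would prove the formula for $d_{\lambda\mu}(q)$. The key input is the known behaviour of the canonical basis under the action of $U_q(\widehat{\Gsl}_e)$: generic partitions in a weight-$w$ block with fixed $e$-core are, via the abacus, a "product" of $w$ independent weight-$1$ situations spread far apart, and the canonical basis element $G(\lambda)$ factors accordingly. In the weight-$1$ case the $q$-decomposition numbers are completely known — $d_{\lambda\mu}(q)\in\{0,1,q\}$, with $q$ occurring precisely for the single partition one step up, i.e.\ $\mu=\lambda_H$. So for generic $\lambda$ I would argue that $G(\lambda)$ is, up to the $\z$-coordinate translation, a tensor/convolution product of $w$ such weight-$1$ canonical basis vectors, giving $d_{\lambda\mu}(q)=\prod q^{\epsilon_H}$ over the coordinates in which $\mu$ is "one step up" from $\lambda$, and $0$ otherwise. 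Rewriting the condition "$\mu$ is one step up in the coordinates indexed by $\Omega$" as $\z(\mu)-\z(\lambda)=\sum_{H\in\Omega}(\z(\lambda_H)-\z(\lambda))$ and the exponent as $|\Omega|$ gives exactly the displayed formula. The main subtlety here is that the partitions appearing need not themselves be generic along the way, so I would need to control that the "far apart" hypothesis is preserved under the relevant divided-power operators $\chevf_i^{(k)}$, or work directly with Leclerc–Thibon's straightening/LLT algorithm restricted to the generic region; that control is precisely the content of the main technical results (leading up to Theorem~\ref{thm:main}) which I would cite.

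For the polytopal complex statement I would observe that once $\z$ is realized as an injection $B_{\operatorname{gen}}\to\BZ^w$ whose image, together with the $w$ elementary vectors $\z(\lambda_H)-\z(\lambda)$, locally looks like the standard lattice $\BZ^w$ with unit cube, the parallelotopes $\Pi(\lambda)^{\mathbb{R}}$ are (affine images of) the unit cubes of a cubical subdivision of $\mathbb{R}^w$; that any two unit cubes of such a tiling meet in a common face (or not at all) is standard, so the content is again that genericity makes the local picture globally consistent — i.e.\ that the elementary vectors do not depend on $\lambda$ (within the generic region) and that distinct generic $\lambda$ have distinct $\z$-values. This injectivity and translation-invariance I would get from the abacus description. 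Finally, for the $\Ext^1$ statement I would invoke the identification of $q$-decomposition numbers with graded decomposition numbers of the graded $v$-Schur algebra (cited in the introduction: \cite{BK,A2,SW}): the degree-$1$ part of $d_{\lambda\mu}(q)$ computes $\dim\Ext^1(L(\lambda),L(\mu))$ (using that the Loewy/grading filtration is rigid here, or that $\Ext^1$ between simples lives in degree $1$). By the formula just proved, the coefficient of $q^1$ in $d_{\lambda\mu}(q)$ or in $d_{\mu\lambda}(q)$ is $1$ precisely when $\z(\mu)-\z(\lambda)$ is $\pm$ one elementary vector, i.e.\ when $\z(\lambda),\z(\mu)$ are adjacent vertices of the complex and $\mu=\lambda_H$ or $\lambda=\mu_H$; and it is never larger than $1$, giving one-dimensionality. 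The hardest part, I expect, is not any of these individual steps but the genericity bookkeeping that glues the $w$ weight-$1$ pictures together — showing the abacus configuration stays "spread out" under all the operations used in the LLT recursion, so that the clean weight-$1$ answers really do multiply; this is where the bulk of the paper's combinatorial work on mutations of tilings will be needed.
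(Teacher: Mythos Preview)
Your proposal contains a concrete error that undermines the polytopal-complex argument, and a methodological gap in the decomposition-number argument.

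\textbf{The elementary vectors are not translation-invariant.} You assert that ``the elementary vectors do not depend on $\lambda$ (within the generic region)'', so that the parallelotopes are all affine images of a single unit cube and the polytopal-complex property becomes trivial. This is false. In the paper's notation the vectors $\z(\lambda_H)-\z(\lambda)$ are the modified basis vectors $\sb^\lambda_i$ of Definition~\ref{def:modified}, and each $\sb^\lambda_i$ is one of $\ssb_i$, $\ssb_i-\ssb_{i+1}$, or $\ssb_i-\ssb_{i-1}$, the choice depending on the shape of the $e$-quotient of $\lambda$ on the relevant runner. Already in weight $2$ (Figure~\ref{weight2figure}) the parallelograms come in three distinct shapes; in weight $3$ (Figure~\ref{weight3figure}) there are seven translation classes of parallelepipeds. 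So the polytopal-complex statement is genuinely nontrivial: one must show that parallelotopes of \emph{different} shapes nonetheless meet in common faces. The paper does this by lifting each $\para(\lambda)$ to a hypercube $\cube(\lambda)$ in a larger lattice $\widehat{\BZ^w}$ of rank $w(w+1)/2$ (Section~\ref{S:cubes}), where the intersection property is automatic, and then proving that the projection $p:\widehat{\BZ^w}\to\BZ^w$ is injective on the relevant union of (truncated) hypercubes (Lemmas~\ref{C:discretecubesbijectparas} and~\ref{C:cubesbijectparas}). This hypercube lifting is the essential idea you are missing.

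\textbf{The canonical basis does not factor as a tensor product.} Your argument for the decomposition-number formula rests on the claim that for generic $\lambda$ the canonical basis vector $G(\lambda)$ is ``a tensor/convolution product of $w$ weight-$1$ canonical basis vectors''. There is no such factorisation of the Fock space or of the bar involution available here; the block does not decompose as a product of weight-$1$ blocks in any sense compatible with the canonical basis. The paper instead proves the formula (Theorem~\ref{thm:main}) by first verifying it in Rouquier blocks via the Leclerc--Miyachi closed formulas (Example~\ref{example:mainRouquier}), and then transporting it along Scopes pairs $B\to s_a(B)$ via an explicit inductive construction of $G(\tmu)$ from $G(\mu)$ and lower-weight canonical basis vectors (Proposition~\ref{prop:main}, proved through the analysis of hook-quotient exceptional families in Section~\ref{section:mainproof}). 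You correctly sense that ``genericity bookkeeping'' under divided-power operators is the crux, but the mechanism is Weyl-group induction from Rouquier blocks, not a product decomposition.

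Your approach to the $\Ext^1$ statement via Koszulity of the graded $v$-Schur algebra is essentially the paper's (Theorem~\ref{thm:extquiver}), though identifying ``adjacent vertices'' with ``$\mu=\lambda_H$ or $\lambda=\mu_H$'' again requires the hypercube lift (Lemma~\ref{lem:goodedges}).
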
	

We next explain what it means for a partition to be generic and define the map $\z$ appearing in Theorem~\ref{intro-thm}. In the body of this paper, $\z(\lambda)$ is defined for an arbitrary partition $\lambda$. The alternative description we provide here is valid for any $\lambda$ satisfying the following condition:
 for all $H\in\operatorname{Hook}_e(\lambda)$, any inductive decomposition of $H$ into a sequence $H^1,\ldots, H^l$ of
 rimhooks of size $e$ ends with the same rimhook $H^{\operatorname{last}}$. In fact this condition is equivalent
 to  requiring  that $\lambda$ is a hook-quotient partition, i.e.\ that  every component of the $e$-quotient of $\lambda$ is a hook partition (see Section~\ref{subsection:partitions} and Definition~\ref{def:hookquotient} for the relevant definitions).
	For such a partition $\lambda$, each $H \in \operatorname{Hook}_e(\lambda)$ gives a distinct $H^{\operatorname{last}}$; write
	$\operatorname{Hook}_e(\lambda)=\{H_1,\dotsc,H_w\}$ so that  $H_{i+1}^{\operatorname{last}}$ is northeast of
	$H_i^{\operatorname{last}}$.
		Define $\z_i$ to be the width of $H_i^{\operatorname{last}}$, or one less than the width if
	the northeast-most node of $H_i^{\operatorname{last}}$ is the last node in its row of $[\lambda]$.
	We say that $\lambda$ is $m$-increasing if $\z_{i+1}-\z_i\geq m$ for all $i\in\{0,\ldots,w-1\}$, and that
	$\lambda$ is generic if it is $10$-increasing and $\z_w\leq e-2$. The map $\z:B_{\operatorname{gen}}\to\mathbb{Z}^w$ appearing in Theorem~\ref{intro-thm} is given by
	$\z(\lambda)=(\z_1,\dotsc,\z_w)$. If $\lambda$ is generic and $H\in\Hook_e(\lambda)$, then $\lambda_H$ and $\z(\lambda_H)$ are well-defined, even if $\lambda_H$ is not necessarily generic.

Finally we mention that the vectors $\sb_H=\z(\lambda_H)-\z(\lambda)$ generating $\Pi(\lambda)^{\mathbb{R}}$ in the statement of Theorem~\ref{intro-thm} can be defined alternatively in a simple fashion directly from $\lambda$, rather than through $\lambda_H$ and $\z$, c.f.\ Definition~\ref{def:modified}. In fact, for any hook-quotient partition $\lambda$ and any $H_i\in\Hook_e(\lambda)$,
the set $\CE_i$ of $e$-rimhooks appearing in all inductive decompositions of $H_i$ into rimhooks of size $e$ is fixed, and we can define
$\sb_{H_i}=\ssb_i-\ssb_j$, where $\CE_i\subsetneq \CE_j$ and $j$ is chosen (necessarily uniquely) to minimize $|\CE_j|$, and $\sb_{H_i}=\ssb_i$ if no such $j$ exists. Here $\ssb_i$ means the $i$-th standard basis vector in $\mathbb{Z}^w$.

\begin{example}
Let $e=4$ and $\lambda=(5,5,4,2,2,2,1,1)$, a partition with $4$-core $(2)$ and $4$-weight $5$. Even though $\lambda$ is not generic, $H^{\operatorname{last}}$ is well-defined for all $H\in\Hook_e(\lambda)$,
so we can determine $\z(\lambda)$ following the recipe described above. In Figure~\ref{zlambdaexample}, we depict the five rimhooks $H_i$ of $\lambda$ of size divisible by $e$ in order, and indicate the corresponding rimhooks $H^{\operatorname{last}}_i$, of widths $2,1,2,3,2$, respectively. The northeast-most node of
$H^{\operatorname{last}}_i$ is the last node in its row when $i=1,4,5$. Thus $\z(\lambda)=(1,1,2,2,1)$.
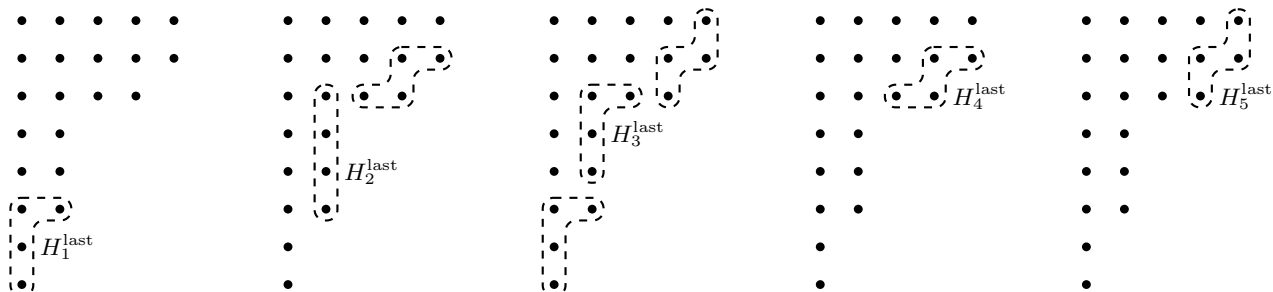
\begin{figure}
	\caption{Example calculation of $\z(\lambda)$.}
	\def\hm{.3}	
	\def\shiftx{7}
	$$
	\begin{tikzpicture}[y={(0,-1)},scale=0.5]
	\foreach \w in {0,1,2,3,4}
	{
	\foreach \x/\y in {
		1/1,2/1,3/1,4/1,5/1,
		1/2,2/2,3/2,4/2,5/2,
		1/3,2/3,3/3,4/3,
		1/4,2/4,
		1/5,2/5,
		1/6,2/6,
		1/7,
		1/8
	}{
	\node at (\x+\w*\shiftx,\y) [circle, fill=black, scale=0.3]{};}
}
\draw [rounded corners, thick, dashed] (1-\hm, 6-\hm)
-- ++(1+2*\hm,0) -- ++(0,2*\hm) -- ++(-1,0) -- ++(0,2) -- ++(-2*\hm, 0) -- cycle;
\node at (2.2,7) {\scriptsize $H_1^{\operatorname{last}}$};

\draw [rounded corners, thick, dashed] (\shiftx + 4-\hm, 2-\hm)
-- ++(1+2*\hm,0) -- ++(0,2*\hm) -- ++(-1,0) -- ++(0,1) -- ++(-1-2*\hm, 0) -- ++(0,-2*\hm) -- ++(1,0) -- cycle;
\draw [rounded corners, thick, dashed] (\shiftx + 2-\hm, 3-\hm)
-- ++(2*\hm,0) -- ++(0,3+2*\hm) -- ++(-2*\hm, 0) -- cycle;
\node at (\shiftx+3.2,5) {\scriptsize $H_2^{\operatorname{last}}$};

\draw [rounded corners, thick, dashed] (2*\shiftx + 5-\hm, 1-\hm)
-- ++(2*\hm,0) -- ++(0,1+2*\hm) -- ++(-1,0) -- ++(0,1) -- ++(-2*\hm, 0) -- ++(0,-1-2*\hm) -- ++(1,0) -- cycle;
\draw [rounded corners, thick, dashed] (2*\shiftx + 1-\hm, 6-\hm)
-- ++(1+2*\hm,0) -- ++(0,2*\hm) -- ++(-1,0) -- ++(0,2) -- ++(-2*\hm, 0) -- cycle;
\draw [rounded corners, thick, dashed] (2*\shiftx + 2-\hm, 3-\hm)
-- ++(1+2*\hm,0) -- ++(0,2*\hm) -- ++(-1,0) -- ++(0,2) -- ++(-2*\hm, 0) -- cycle;
\node at (2*\shiftx+3.2,4) {\scriptsize $H_3^{\operatorname{last}}$};

\draw [rounded corners, thick, dashed] (3*\shiftx + 4-\hm, 2-\hm)
-- ++(1+2*\hm,0) -- ++(0,2*\hm) -- ++(-1,0) -- ++(0,1) -- ++(-1-2*\hm, 0) -- ++(0,-2*\hm) -- ++(1,0) -- cycle;
\node at (3*\shiftx+5.2,3) {\scriptsize $H_4^{\operatorname{last}}$};

\draw [rounded corners, thick, dashed] (4*\shiftx + 5-\hm, 1-\hm)
-- ++(2*\hm,0) -- ++(0,1+2*\hm) -- ++(-1,0) -- ++(0,1) -- ++(-2*\hm, 0) -- ++(0,-1-2*\hm) -- ++(1,0) -- cycle;
\node at (4*\shiftx+5.2,3) {\scriptsize $H_5^{\operatorname{last}}$};

\end{tikzpicture}$$\label{zlambdaexample}
\end{figure}

We have seen in Example~\ref{intro-example1} that $\lambda_{H_3}=(6,5,5,2,2,2)$. It is easy to work out $\lambda_{H_5}=(6,5,3,2,2,2,1,1)$ as well.
 We also leave it to the reader to check that $\z(\lambda_{H_3})=(1,1,3,2,1)$ and $\z(\lambda_{H_5})=(1,1,1,2,2)$. Let $\mu=(6,5,4,2,2,2,1)$, a partition
 in the same block as $\lambda$. Then
 $\z(\mu)=(1,1,2,2,2)$, so that $\z(\mu)-\z(\lambda)= \left(\z(\lambda_{H_3})-\z(\lambda)\right)+ \left(\z(\lambda_{H_5})-\z(\lambda)\right)$. Using
 the Lascoux-Leclerc-Thibon algorithm, one finds that $d_{\lambda\mu}(q)=q^2$, in accord with Theorem~\ref{intro-thm}, even though, strictly speaking, that theorem does not apply, as $\lambda$ is not generic.
\end{example}

\begin{example} Any block of $e$-weight $2$ contains $e(e+3)/2$ partitions, of which $(e-10)(e-11)/2$ are generic.
	In Figure~\ref{weight2figure} the tiling associated to the block $B$ of $17$-weight $2$ with $17$-core $(5,3,1)$ is depicted.
The label of each vertex in the figure is $\z(\lambda)$ of a partition $\lambda\in B_{\operatorname{gen}}$, and the parallelogram $\Pi(\lambda)^{\mathbb{R}}$ is `generated' by the arrows emanating from that vertex.
Note that each edge between labelled vertices carries an arrow, in agreement with the last statement in
Theorem~\ref{intro-thm}.
\begin{figure}\caption{Tiling associated to the block of $17$-weight $2$ with $17$-core $(5,3,1)$.}

\def\h{5} 
\def\g{10} 
\def\core{{3,1,0,0,0,0,0,0,0,0,0,0,0}} 

$$
\begin{tikzpicture}[y={(1,-1)},x={(2,-1)},scale=\myscaleA, every node/.style={font=\scalefont{\mynodescaleA}}]
\foreach \j in {0,...,\h}{
	 \pgfmathsetmacro{\a}{\core[\h-\j]}
	 \ifthenelse{\h>\j}{\pgfmathsetmacro{\b}{\core[\h-\j-1]+1}}{\pgfmathsetmacro{\b}{5000}}
	\foreach \i in {0,...,\j}{
			\ifthenelse{\i=\a}
		{\mysquare{(\i,\j)}{(1,-1)}{(0,1)}}
				{\ifthenelse{\i>\a \AND \i<\b}
		{\mysquare{(\i,\j)}{(1,0)}{(-1,1)}}
		{\mysquare{(\i,\j)}{(1,0)}{(0,1)}}
	}
}
};
\foreach \j in {0,...,\h}{
	\pgfmathsetmacro{\a}{\core[\h-\j]}
	\ifthenelse{\h>\j}{\pgfmathsetmacro{\b}{\core[\h-\j-1]+1}}{\pgfmathsetmacro{\b}{5000}}
	\foreach \i in {0,...,\j}{
		\ifthenelse{\i=\a}
		{\mysquarearrows{(\i,\j)}{(1,-1)}{(0,1)}}
		{\ifthenelse{\i>\a \AND \i<\b}
			{\mysquarearrows{(\i,\j)}{(1,0)}{(-1,1)}}
			{\mysquarearrows{(\i,\j)}{(1,0)}{(0,1)}}
		}
	}
};
\foreach \j  [evaluate=\j as \k using int(\j+\g)]  in {0,...,\h}{
	\pgfmathsetmacro{\a}{\core[\h-\j]}
	\ifthenelse{\h>\j}{\pgfmathsetmacro{\b}{\core[\h-\j-1]+1}}{\pgfmathsetmacro{\b}{5000}}
	\foreach \i in {0,...,\j}{
		\ifthenelse{\i=\a}
		{\node at (\i+.275,\j+.175) {$(\i,\k)$};}
		{\ifthenelse{\i>\a \AND \i<\b}
			{\node at (\i+.175,\j+.275) {$(\i,\k)$};}
			{\node at (\i+.3,\j+.3) {$(\i,\k)$};}
		}
	}
};
\foreach \j [evaluate=\j as \k using int(\j+\g)] in {0,...,\h}{
	\foreach \i in {0,...,\j}{
		\node at (\i,\j)  [circle, fill=black, scale=\myvertexscaleA]{};
	}
};

\end{tikzpicture}
$$

\label{weight2figure}
\end{figure}
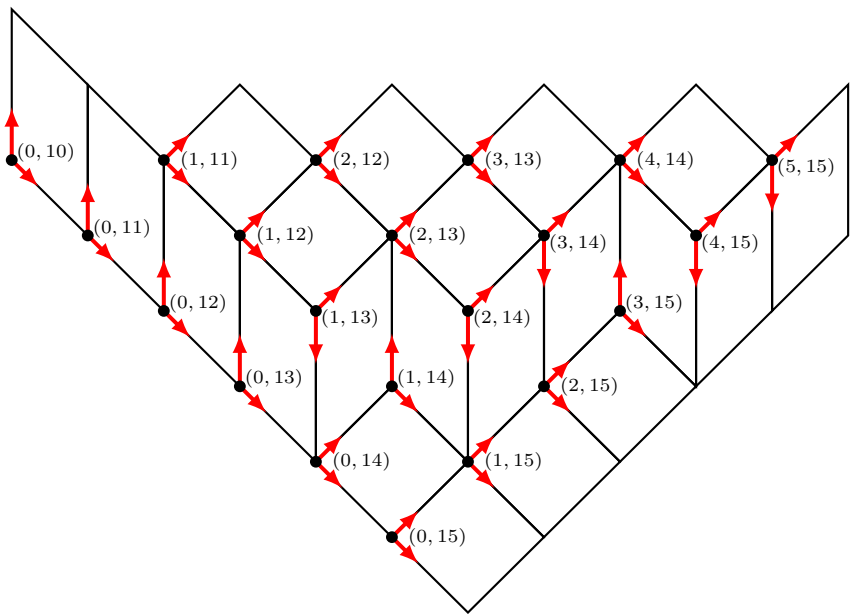
\end{example}

\begin{example} Any block of $e$-weight $3$ contains in total $e(e+1)(e+8)/6$ partitions, of which $(e-21)(e-20)(e-19)/6$ are generic.
	In Figure~\ref{weight3figure} the tiling associated to the block $B$ of $25$-weight $3$ with $25$-core $(15,1^{14})$ is depicted.
	To locate the tiling in $\mathbb{R}^3$, the coordinates of some vertices (not all of the form $\z(\lambda)$ with $\lambda \in B_{\operatorname{gen}}$) are indicated. The $20$ parallelepipeds in the tiling fall into $7$ distinct translation classes.
	
	In Figure~\ref{fig:weight3mutation} the tiling associated to $B$ is shown to the left of that associated to the block $\tilde{B}$ of
	$25$-weight $3$ with $25$-core $(15,1^{13})$, with which $B$ forms a so-called Scopes $[3:1]$-pair, c.f.\ Section \ref{subsection:Fock}.
	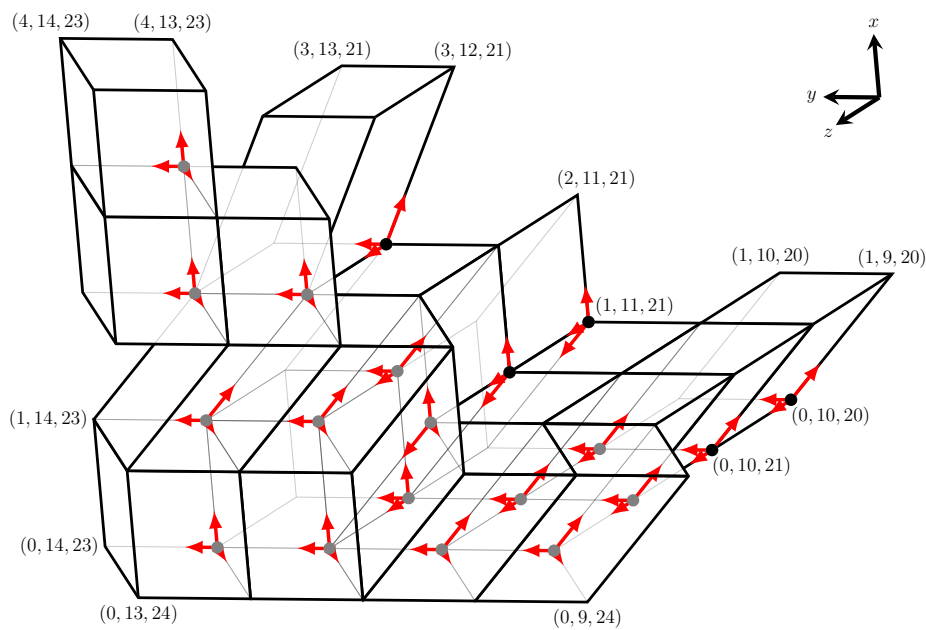
\begin{figure}\caption{Tiling associated to the block of $25$-weight $3$ with $25$-core $(15,1^{14})$.}

$$
\begin{tikzpicture}[
scale=\myscale, every node/.style={scale=\mynodescale}]
\rotateRPY{\anglea}{\angleb}{\rotationangle}
\begin{scope}[RPY]

\foreach  \xx/\yy/\zz in
{0/12/22}
{\myopaqueshiftedcube{(-\xshift,-\yshift,-\zshift)}{(\xx+1,\yy,\zz+1)}{(-1,0,0)}{(0,1,0)}{(0,0,-1)}{black}{1};}
\foreach  \xx/\yy/\zz in
{1/12/22}
{\myopaqueshiftedcube{(-\xshift,-\yshift,-\zshift)}{(\xx+1,\yy-1,\zz+1)}{(-1,1,0)}{(0,1,0)}{(0,0,-1)}{black}{1};}
\foreach  \xx/\yy/\zz in
{1/11/21, 1/11/22}
{\myopaqueshiftedcube{(-\xshift,-\yshift,-\zshift)}{(\xx,\yy,\zz+1)}{(1,0,0)}{(-1,1,0)}{(0,0,-1)}{black}{1};}
\foreach  \xx/\yy/\zz in
{ 0/11/21, 0/11/22, 0/10/20, 0/10/21, 0/10/22}
{\myopaqueshiftedcube{(-\xshift,-\yshift,-\zshift)}{(\xx+1,\yy-1,\zz+1)}{(-1,1,0)}{(0,1,0)}{(0,0,-1)}{black}{1};}
\foreach  \xx/\yy/\zz in
{0/10/23, 0/11/23}
{\myopaqueshiftedcube{(-\xshift,-\yshift,-\zshift)}{(\xx+1,\yy-1,\zz+1)}{(-1,1,0)}{(0,-1,0)}{(0,1,-1)}{black}{1};}	
\foreach  \xx/\yy/\zz in
{1/11/23}
{\myopaqueshiftedcube{(-\xshift,-\yshift,-\zshift)}{(\xx,\yy,\zz+1)}{(-1,0,0)}{(1,-1,0)}{(0,1,-1)}{black}{1};}
\foreach  \xx/\yy/\zz in
{0/12/23, 0/13/23}
{\myopaqueshiftedcube{(-\xshift,-\yshift,-\zshift)}{(\xx+1,\yy,\zz+1)}{(-1,0,0)}{(0,-1,0)}{(0,1,-1)}{black}{1};}
\foreach  \xx/\yy/\zz in
{1/12/23,1/13/23}
{\myopaqueshiftedcube{(-\xshift,-\yshift,-\zshift)}{(\xx+1,\yy-1,\zz+1)}{(-1,1,0)}{(0,-1,0)}{(0,1,-1)}{black}{1};}
\foreach  \xx/\yy/\zz in
{2/12/22}
{\myopaqueshiftedcube{(-\xshift,-\yshift,-\zshift)}{(\xx+1,\yy,\zz)}{(-1,0,1)}{(0,1,0)}{(0,0,-1)}{black}{1};}
\foreach  \xx/\yy/\zz in
{2/12/23, 2/13/23, 3/13/23}
{\myopaqueshiftedcube{(-\xshift,-\yshift,-\zshift)}{(\xx+1,\yy,\zz+1)}{(-1,0,0)}{(0,-1,0)}{(0,1,-1)}{black}{1};}

\begin{scope}[opacity=\hiddenlineopacity, transparency group]
\foreach  \xx/\yy/\zz in
{0/10/20, 0/10/21, 0/10/22, 0/11/21, 0/11/22, 1/12/22}
{\mycoloredshiftedcube{(-\xshift,-\yshift,-\zshift)}{(\xx,\yy,\zz)}{(1,-1,0)}{(0,1,0)}{(0,0,1)}{black};}	
\foreach  \xx/\yy/\zz in
{0/10/23, 0/11/23, 1/12/23, 1/13/23}
{\mycoloredshiftedcube{(-\xshift,-\yshift,-\zshift)}{(\xx,\yy,\zz)}{(1,-1,0)}{(0,1,0)}{(0,-1,1)}{black};}
\foreach  \xx/\yy/\zz in
{0/12/22}
{\mycoloredshiftedcube{(-\xshift,-\yshift,-\zshift)}{(\xx,\yy,\zz)}{(1,0,0)}{(0,1,0)}{(0,0,1)}{black};}
\foreach  \xx/\yy/\zz in
{1/11/21,1/11/22}
{\mycoloredshiftedcube{(-\xshift,-\yshift,-\zshift)}{(\xx,\yy,\zz)}{(1,0,0)}{(-1,1,0)}{(0,0,1)}{black};}
\foreach  \xx/\yy/\zz in
{0/12/23, 0/13/23, 2/12/23, 2/13/23, 3/13/23}
{\mycoloredshiftedcube{(-\xshift,-\yshift,-\zshift)}{(\xx,\yy,\zz)}{(1,0,0)}{(0,1,0)}{(0,-1,1)}{black};}
\foreach  \xx/\yy/\zz in
{2/12/22}
{\mycoloredshiftedcube{(-\xshift,-\yshift,-\zshift)}{(\xx,\yy,\zz)}{(1,0,-1)}{(0,1,0)}{(0,0,1)}{black};}
\foreach  \xx/\yy/\zz in
{1/11/23}
{\mycoloredshiftedcube{(-\xshift,-\yshift,-\zshift)}{(\xx,\yy,\zz)}{(1,0,0)}{(-1,1,0)}{(0,-1,1)}{black};}
\end{scope}

\foreach  \xx/\yy/\zz in
{0/10/20, 0/10/21, 0/10/22, 0/11/21, 0/11/22, 1/12/22}
{\mycubearrows{(-\xshift,-\yshift,-\zshift)}{(\xx,\yy,\zz)}{(1,-1,0)}{(0,1,0)}{(0,0,1)}{black};}	
\foreach  \xx/\yy/\zz in
{0/10/23, 0/11/23, 1/12/23, 1/13/23}
{\mycubearrows{(-\xshift,-\yshift,-\zshift)}{(\xx,\yy,\zz)}{(1,-1,0)}{(0,1,0)}{(0,-1,1)}{black};}
\foreach  \xx/\yy/\zz in
{0/12/22}
{\mycubearrows{(-\xshift,-\yshift,-\zshift)}{(\xx,\yy,\zz)}{(1,0,0)}{(0,1,0)}{(0,0,1)}{black};}
\foreach  \xx/\yy/\zz in
{1/11/21,1/11/22}
{\mycubearrows{(-\xshift,-\yshift,-\zshift)}{(\xx,\yy,\zz)}{(1,0,0)}{(-1,1,0)}{(0,0,1)}{black};}
\foreach  \xx/\yy/\zz in
{0/12/23, 0/13/23, 2/12/23, 2/13/23, 3/13/23}
{\mycubearrows{(-\xshift,-\yshift,-\zshift)}{(\xx,\yy,\zz)}{(1,0,0)}{(0,1,0)}{(0,-1,1)}{black};}
\foreach  \xx/\yy/\zz in
{2/12/22}
{\mycubearrows{(-\xshift,-\yshift,-\zshift)}{(\xx,\yy,\zz)}{(1,0,-1)}{(0,1,0)}{(0,0,1)}{black};}
\foreach  \xx/\yy/\zz in
{1/11/23}
{\mycubearrows{(-\xshift,-\yshift,-\zshift)}{(\xx,\yy,\zz)}{(1,0,0)}{(-1,1,0)}{(0,-1,1)}{black};}

\foreach \xx/\yy/\zz in
{0/10/20, 0/10/21, 0/10/22, 0/11/21, 0/11/22, 0/12/22, 0/10/23, 0/11/23, 0/12/23, 0/13/23,
1/11/21, 1/11/22, 1/12/22, 1/11/23, 1/12/23, 1/13/23,
2/12/23, 2/13/23, 3/13/23, 2/12/22}
{\node at (\xx-\xshift,\yy-\yshift,\zz-\zshift) [circle, fill=gray, scale=\myvertexscale]{};}
\foreach \xx/\yy/\zz in
{0/10/20, 0/10/21, 1/11/21, 1/11/22, 2/12/22}
{\node at (\xx-\xshift,\yy-\yshift,\zz-\zshift) [circle, fill=black, scale=\myvertexscale]{};}

\draw [line width = \myscale*\mylinewidth, draw=black] (1-\xshift,9-\yshift,23-\zshift) -- ++ (0,1,0);
\draw [line width = \myscale*\mylinewidth, draw=black] (1-\xshift,8-\yshift,24-\zshift) -- ++ (0,2,0);
\draw [line width = \myscale*\mylinewidth, draw=black] (2-\xshift,10-\yshift,24-\zshift) -- ++ (0,1,0);	
		
\draw [->, >=stealth, ultra thick] (\xorigin,\yorigin,\zorigin) -- (\xorigin+0.5,\yorigin,\zorigin);
\draw [->, >=stealth, ultra thick] (\xorigin,\yorigin,\zorigin) -- (\xorigin,\yorigin+0.5,\zorigin);
\draw [->,  >=stealth, ultra thick] (\xorigin,\yorigin,\zorigin) -- (\xorigin,\yorigin,\zorigin+0.55);	
	\node [above] at (0.5+\xorigin,\yorigin,\zorigin) {$x$};
	\node [left] at (\xorigin,\yorigin+0.5,\zorigin) {$y$};
	\node [below left] at (\xorigin,\yorigin, \zorigin+0.5) {$z$};

\node [above] at (3-\xshift,12-\yshift,21-\zshift) {$\qquad (3,12,21)$};
\node [above] at (3-\xshift,13-\yshift,21-\zshift) {$(3,13,21) \quad$};
\node [above] at (2-\xshift,11-\yshift,21-\zshift) {$\qquad (2,11,21)$};
\node [above right] at (1-\xshift,11-\yshift,21-\zshift) {$(1,11,21)$};
\node [above] at (1-\xshift,10-\yshift,20-\zshift) {$(1,10,20) \quad$};
\node [above] at (1-\xshift,9-\yshift,20-\zshift) {$(1,9,20)$};
\node [below right] at (0-\xshift,10-\yshift,20-\zshift) {$\!\!(0,10,20)$};
\node [below right] at (0-\xshift,10-\yshift,21-\zshift) {$\!\!(0,10,21)$};
\node [above] at (4-\xshift,14-\yshift,23-\zshift) {$(4,14,23) \quad$};
\node [above] at (4-\xshift,13-\yshift,23-\zshift) {$(4,13,23)$};
\node [left] at (1-\xshift,14-\yshift,23-\zshift) {$(1,14,23)$};
\node [left] at (0-\xshift,14-\yshift,23-\zshift) {$(0,14,23)$};
\node [below] at (0-\xshift,13-\yshift,24-\zshift) {$(0,13,24)$};
\node [below] at (0-\xshift,9-\yshift,24-\zshift) {$(0,9,24)$};
			
\end{scope}

\end{tikzpicture}
$$
\label{weight3figure}
	\end{figure}
	
\begin{figure}\caption{Comparison of tilings associated to two blocks forming a Scopes pair.}
$$
\begin{tikzpicture}[
scale=\mysmallscale, every node/.style={scale=\mynodescale}]
\rotateRPY{\anglea}{\angleb}{\rotationangle}
\begin{scope}[RPY]

\foreach  \xx/\yy/\zz in
{0/12/22}
{\myopaqueshiftedcube{(-\xshift,-\yshift,-\zshift)}{(\xx+1,\yy,\zz+1)}{(-1,0,0)}{(0,1,0)}{(0,0,-1)}{black}{1};}
\foreach  \xx/\yy/\zz in
{1/12/22}
{\myopaqueshiftedcube{(-\xshift,-\yshift,-\zshift)}{(\xx+1,\yy-1,\zz+1)}{(-1,1,0)}{(0,1,0)}{(0,0,-1)}{black}{1};}
\foreach  \xx/\yy/\zz in
{1/11/21, 1/11/22}
{\myopaqueshiftedcube{(-\xshift,-\yshift,-\zshift)}{(\xx,\yy,\zz+1)}{(1,0,0)}{(-1,1,0)}{(0,0,-1)}{black}{1};}
\foreach  \xx/\yy/\zz in
{ 0/11/21, 0/11/22, 0/10/20, 0/10/21, 0/10/22}
{\myopaqueshiftedcube{(-\xshift,-\yshift,-\zshift)}{(\xx+1,\yy-1,\zz+1)}{(-1,1,0)}{(0,1,0)}{(0,0,-1)}{black}{1};}
\foreach  \xx/\yy/\zz in
{0/10/23, 0/11/23}
{\myopaqueshiftedcube{(-\xshift,-\yshift,-\zshift)}{(\xx+1,\yy-1,\zz+1)}{(-1,1,0)}{(0,-1,0)}{(0,1,-1)}{black}{1};}	
\foreach  \xx/\yy/\zz in
{1/11/23}
{\myopaqueshiftedcube{(-\xshift,-\yshift,-\zshift)}{(\xx,\yy,\zz+1)}{(-1,0,0)}{(1,-1,0)}{(0,1,-1)}{black}{1};}
\foreach  \xx/\yy/\zz in
{0/12/23, 0/13/23}
{\myopaqueshiftedcube{(-\xshift,-\yshift,-\zshift)}{(\xx+1,\yy,\zz+1)}{(-1,0,0)}{(0,-1,0)}{(0,1,-1)}{black}{1};}
\foreach  \xx/\yy/\zz in
{1/12/23,1/13/23}
{\myopaqueshiftedcube{(-\xshift,-\yshift,-\zshift)}{(\xx+1,\yy-1,\zz+1)}{(-1,1,0)}{(0,-1,0)}{(0,1,-1)}{black}{1};}
\foreach  \xx/\yy/\zz in
{2/12/22}
{\myopaqueshiftedcube{(-\xshift,-\yshift,-\zshift)}{(\xx+1,\yy,\zz)}{(-1,0,1)}{(0,1,0)}{(0,0,-1)}{black}{1};}
\foreach  \xx/\yy/\zz in
{2/12/23, 2/13/23, 3/13/23}
{\myopaqueshiftedcube{(-\xshift,-\yshift,-\zshift)}{(\xx+1,\yy,\zz+1)}{(-1,0,0)}{(0,-1,0)}{(0,1,-1)}{black}{1};}

\begin{scope}[opacity=\hiddenlineopacity, transparency group]
\foreach  \xx/\yy/\zz in
{0/10/20, 0/10/21, 0/10/22, 0/11/21, 0/11/22, 1/12/22}
{\mycoloredshiftedcube{(-\xshift,-\yshift,-\zshift)}{(\xx,\yy,\zz)}{(1,-1,0)}{(0,1,0)}{(0,0,1)}{black};}	
\foreach  \xx/\yy/\zz in
{0/10/23, 0/11/23, 1/12/23, 1/13/23}
{\mycoloredshiftedcube{(-\xshift,-\yshift,-\zshift)}{(\xx,\yy,\zz)}{(1,-1,0)}{(0,1,0)}{(0,-1,1)}{black};}
\foreach  \xx/\yy/\zz in
{0/12/22}
{\mycoloredshiftedcube{(-\xshift,-\yshift,-\zshift)}{(\xx,\yy,\zz)}{(1,0,0)}{(0,1,0)}{(0,0,1)}{black};}
\foreach  \xx/\yy/\zz in
{1/11/21,1/11/22}
{\mycoloredshiftedcube{(-\xshift,-\yshift,-\zshift)}{(\xx,\yy,\zz)}{(1,0,0)}{(-1,1,0)}{(0,0,1)}{black};}
\foreach  \xx/\yy/\zz in
{0/12/23, 0/13/23, 2/12/23, 2/13/23, 3/13/23}
{\mycoloredshiftedcube{(-\xshift,-\yshift,-\zshift)}{(\xx,\yy,\zz)}{(1,0,0)}{(0,1,0)}{(0,-1,1)}{black};}
\foreach  \xx/\yy/\zz in
{2/12/22}
{\mycoloredshiftedcube{(-\xshift,-\yshift,-\zshift)}{(\xx,\yy,\zz)}{(1,0,-1)}{(0,1,0)}{(0,0,1)}{black};}
\foreach  \xx/\yy/\zz in
{1/11/23}
{\mycoloredshiftedcube{(-\xshift,-\yshift,-\zshift)}{(\xx,\yy,\zz)}{(1,0,0)}{(-1,1,0)}{(0,-1,1)}{black};}
\end{scope}
\end{scope}	
\end{tikzpicture}
\qquad
\begin{tikzpicture}[
scale=\mysmallscale, every node/.style={scale=\mynodescale}]
\rotateRPY{\anglea}{\angleb}{\rotationangle}
\begin{scope}[RPY]
\foreach  \xx/\yy/\zz in
{1/12/22}
{\myopaqueshiftedcube{(-\xshift,-\yshift,-\zshift)}{(\xx,\yy,\zz+1)}{(1,0,0)}{(-1,1,0)}{(0,0,-1)}{black}{1};}
\foreach  \xx/\yy/\zz in
{0/10/20, 0/11/21, 0/12/22, 0/10/21, 0/11/22, 0/10/22}
{\myopaqueshiftedcube{(-\xshift,-\yshift,-\zshift)}{(\xx+1,\yy-1,\zz+1)}{(-1,1,0)}{(0,1,0)}{(0,0,-1)}{black}{1};}
\foreach  \xx/\yy/\zz in
{0/10/23, 0/11/23, 0/12/23}
{\myopaqueshiftedcube{(-\xshift,-\yshift,-\zshift)}{(\xx+1,\yy-1,\zz+1)}{(-1,1,0)}{(0,-1,0)}{(0,1,-1)}{black}{1};}	
\foreach  \xx/\yy/\zz in
{1/11/21,
	1/11/22}
{\myopaqueshiftedcube{(-\xshift,-\yshift,-\zshift)}{(\xx+1,\yy,\zz+1)}{(-1,0,0)}{(0,1,0)}{(0,0,-1)}{black}{1};}
\foreach  \xx/\yy/\zz in
{2/12/22}
{\myopaqueshiftedcube{(-\xshift,-\yshift,-\zshift)}{(\xx+1,\yy,\zz)}{(-1,0,1)}{(0,1,0)}{(0,0,-1)}{black}{1};}
\foreach  \xx/\yy/\zz in
{1/11/23}
{\myopaqueshiftedcube{(-\xshift,-\yshift,-\zshift)}{(\xx+1,\yy,\zz+1)}{(-1,0,0)}{(0,-1,0)}{(0,1,-1)}{black}{1};}
\foreach  \xx/\yy/\zz in
{1/12/23}
{\myopaqueshiftedcube{(-\xshift,-\yshift,-\zshift)}{(\xx,\yy,\zz+1)}{(-1,0,0)}{(1,-1,0)}{(0,1,-1)}{black}{1};}
\foreach  \xx/\yy/\zz in
{0/13/23}
{\myopaqueshiftedcube{(-\xshift,-\yshift,-\zshift)}{(\xx+1,\yy,\zz+1)}{(-1,0,0)}{(0,-1,0)}{(0,1,-1)}{black}{1};}
\foreach  \xx/\yy/\zz in
{1/13/23}
{\myopaqueshiftedcube{(-\xshift,-\yshift,-\zshift)}{(\xx+1,\yy-1,\zz+1)}{(-1,1,0)}{(0,-1,0)}{(0,1,-1)}{black}{1};}
\foreach  \xx/\yy/\zz in
{2/12/23, 2/13/23, 3/13/23}
{\myopaqueshiftedcube{(-\xshift,-\yshift,-\zshift)}{(\xx+1,\yy,\zz+1)}{(-1,0,0)}{(0,-1,0)}{(0,1,-1)}{black}{1};}	
\begin{scope}[opacity=\hiddenlineopacity, transparency group]
\foreach  \xx/\yy/\zz in
{0/10/20, 0/10/21, 0/10/22, 0/11/21, 0/11/22, 0/12/22}
{\mycoloredshiftedcube{(-\xshift,-\yshift,-\zshift)}{(\xx,\yy,\zz)}{(1,-1,0)}{(0,1,0)}{(0,0,1)}{black};}	
\foreach  \xx/\yy/\zz in
{0/10/23, 0/11/23, 0/12/23, 1/13/23}
{\mycoloredshiftedcube{(-\xshift,-\yshift,-\zshift)}{(\xx,\yy,\zz)}{(1,-1,0)}{(0,1,0)}{(0,-1,1)}{black};}
\foreach  \xx/\yy/\zz in
{1/11/21, 1/11/22}
{\mycoloredshiftedcube{(-\xshift,-\yshift,-\zshift)}{(\xx,\yy,\zz)}{(1,0,0)}{(0,1,0)}{(0,0,1)}{black};}
\foreach  \xx/\yy/\zz in
{1/12/22}
{\mycoloredshiftedcube{(-\xshift,-\yshift,-\zshift)}{(\xx,\yy,\zz)}{(1,0,0)}{(-1,1,0)}{(0,0,1)}{black};}
\foreach  \xx/\yy/\zz in
{1/11/23, 0/13/23,
	2/12/23, 2/13/23, 3/13/23}
{\mycoloredshiftedcube{(-\xshift,-\yshift,-\zshift)}{(\xx,\yy,\zz)}{(1,0,0)}{(0,1,0)}{(0,-1,1)}{black};}
\foreach  \xx/\yy/\zz in
{2/12/22}
{\mycoloredshiftedcube{(-\xshift,-\yshift,-\zshift)}{(\xx,\yy,\zz)}{(1,0,-1)}{(0,1,0)}{(0,0,1)}{black};}
\foreach  \xx/\yy/\zz in
{1/12/23}
{\mycoloredshiftedcube{(-\xshift,-\yshift,-\zshift)}{(\xx,\yy,\zz)}{(1,0,0)}{(-1,1,0)}{(0,-1,1)}{black};}
\end{scope}
\end{scope}	
\end{tikzpicture}
$$	
\label{fig:weight3mutation}	
\end{figure}
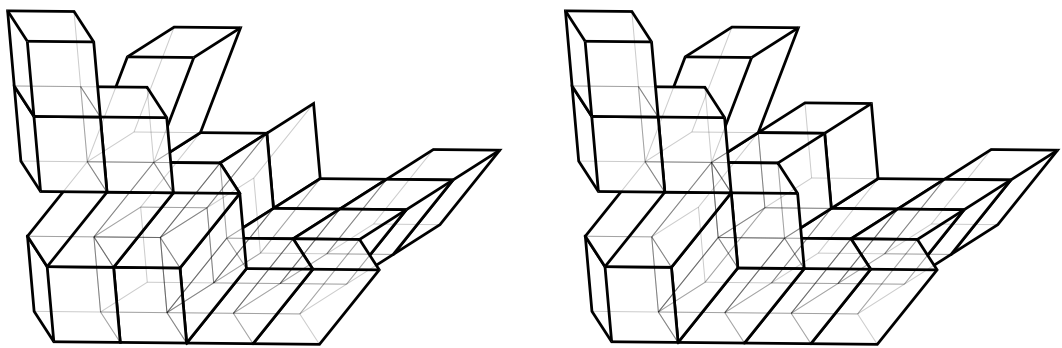		
\end{example}

In fact, the statements that we prove in the main body of this paper, Theorem \ref{thm:main}, Corollary \ref{C:polytopalcomplex} and Theorem \ref{thm:extquiver}, are stronger than those in
Theorem \ref{intro-thm}, in that they hold for larger classes of partitions.    For example Theorem \ref{thm:main} determines $d_{\lambda\mu}(q)$ for arbitrary partitions $\lambda$ and $4$-increasing partitions $\mu$; in other words, it fully determines the canonical basis vectors of the Fock space labelled by $4$-increasing partitions.

  While we have described our results using the combinatorics of Young diagrams in this introduction, it is actually easier to formulate and prove them using the combinatorics of the abacus.  This is the approach that we follow in the body, and the results can then be stated in full strength.

We now indicate the contents and layout of this paper.  In the next section, we introduce the notations that we shall use and remind the reader of some background material, concluding with a description of an action of the Weyl group $\mathcal{W}$ of $\widehat{\mathfrak{sl}}_e$ on the set of partitions, obtained by identifying the latter with the crystal basis of the Fock space representation. It seems to be a difficult problem to determine in a simple manner whether two given partitions are in the same orbit of this action. Nevertheless, in Section \ref{S:label}, we define a new labelling for partitions, extending the map $\z$ briefly introduced in this section, and prove that it indexes certain $\W$-orbits of partitions, including all orbits of generic partitions.  In Section \ref{S:main}, we define the parallelotopes of hook-quotient partitions and state our main theorem (Theorem \ref{thm:main}) on $q$-decomposition numbers.  In Section \ref{S:movealong}, we show how to obtain from $\lambda$ any partition $\mu$ lying in the parallelotope of $\lambda$ by successive applications of
the procedure $\lambda\mapsto\lambda_H$ described in this introduction.
  Armed with this key result, we then proceed to prove Theorem \ref{thm:main} in Section \ref{section:mainproof}, by an explicit inductive construction of the canonical basis vectors that we need.  In Section \ref{S:cubes}, we show that the parallelotopes  that we have described in this introduction can be obtained in a natural way as the projections of hypercubes lying in higher dimensional spaces, and we reformulate Theorem \ref{thm:main} in terms of hypercubes associated to partitions. In the following section we prove that the parallelotopes (and hypercubes) of the partitions concerned assemble to form tilings with nice properties and prove in particular Corollary \ref{C:polytopalcomplex} and Theorem \ref{thm:extquiver}. We conclude in Section \ref{section:Mullineux} with a short investigation into the effect of the Mullineux-Kleshchev involution for $e$-regular partitions on our map $\z$, and provide an alternative algorithm for this involution for the $e$-regular partitions that we have been concerned with, which is likely to be more efficient than Mullineux's or Kleshchev's algorithms when the $e$-weights of the partitions are small compared to their sizes.

\section{Preliminaries}

In this section, we set up the notations that we shall use, and remind the reader of the background that we shall need.
As before, we fix an integer $e \geq 2$.

\subsection{Notations}\label{section:notations}
Throughout, we use the following notations:

\begin{enumerate}
\item For $a,b \in \mathbb{Z}$,
\begin{align*}
[a,\, b] &= \{ x \in \mathbb{Z} \mid a \leq x \leq b \}; \\
(a,\, b] &= \{ x \in \mathbb{Z} \mid a < x \leq b \}; \\
[a,\, b) &= \{ x \in \mathbb{Z} \mid a \leq x < b \}; \\
(a,\, b) &= \{ x \in \mathbb{Z} \mid a < x < b \}.
\end{align*}
\item
 We write $a\equiv_e b$ to mean $e \mid (a-b)$, and $a \geq_e b$ to mean $a \equiv_e b$ and $a\geq b$ (with obvious extensions to $a >_e b$, $a\leq_e b$ and $a <_e b$).
 \item Given a subset $\Omega\in\mathbb{R}^n$, and $m \in \mathbb{Z}$, we let
 $$\inc{\Omega}{m}=\left\{(x_1,\dotsc,x_n) \in \Omega \mid x_{i+1}-x_i \geq m \text{ for all } i\in [1,n)\right\}$$
 be the set of {\em $m$-increasing} elements of $\Omega$.
 \item Given a subset $\Omega\in\mathbb{Z}^n$, we denote by $\hull{\Omega}$ the convex hull of $\Omega$ in $\mathbb{R}^n$.
 \item We denote by $\I_{\mathtt{p}}$ the indicator function taking the value $1$ if $\mathtt{p}$ is true and $0$ otherwise.
\end{enumerate}
\subsection{Partitions, $\beta$-numbers, abacus, blocks}\label{subsection:partitions}
A partition $\lambda = (\lambda_1,\lambda_2,\dotsc)$ is a weakly decreasing sequence of non-negative integers which are eventually zero.  We identify $\lambda$ with its finite subsequence $(\lambda_1,\lambda_2,\dotsc, \lambda_l)$ when $\lambda_{l+1} = 0$. In particular, $(0,0,\dotsc)$ may be denoted as $(0)$ or $\emptyset$.  The size of $\lambda$ is $|\lambda|=\sum_i \lambda_i$. Write $\mathcal{P}$ for the set of all partitions.

To each $\lambda$, recall its associated its Young diagram $[\lambda] = \{ (i,j) \mid j \in [1,\, \lambda_i] \}$, whose elements are called nodes.  The $e$-residue of a node $(i,j)$ is the residue class of $j-i$ modulo $e$.  If the $e$-residue of $(i,j)$ is $r$, then we also call $(i,j)$ an $r$-node.  If removing $(i,j)$ from $[\lambda]$ produces the Young diagram $[\mu]$ of another partition $\mu$, we call $(i,j)$ a removable node of $\lambda$ and an addable node of $\mu$.

Given a partition $\lambda=(\lambda_{1},\lambda_{2},\ldots)$, let%
\[
\beta(\lambda)=\{\lambda_{1}-1,\lambda_{2}-2,\ldots\}\subseteq\mathbb{Z}%
\]
be the infinite set of {\em $\beta$-numbers} associated to $\lambda$.
Following \cite[\S2.7]{JK}, consider an abacus with $e$ infinite vertical runners labelled $0, 1, \dotsc, e-1$ from left to right, with positions labelled by the integers increasing from left to right and top to bottom, so that runner $r$ contains the integers congruent to $r$ modulo $e$. We represent $\lambda$ on the abacus by placing a bead at each position corresponding to a $\beta$-number of $\lambda$, leaving the other positions unoccupied. We note that the $i$-th bead at position $\lambda_i-i$ lies on runner $r$ if and only if the rightmost node $(i,\lambda_i)$ of row $i$ of $[\lambda]$ has $e$-residue $r$.

One can easily read off the partition $\lambda$ from its abacus display:  if the occupied positions on the abacus display are $b_1,b_2,\dotsc$, arranged in descending order, then $\lambda_i = |\{ x \in \mathbb{Z} \setminus \beta(\lambda)  \mid x < b_i \}|$, the number of unoccupied positions before $b_i$.

The partition $\lambda'$ {\em conjugate} to $\lambda$ is defined by
$(i,j)\in[\lambda'] \iff (j,i)\in[\lambda]$, or, equivalently, by
$x\in\beta(\lambda')\iff -x\notin\beta(\lambda)$.

Moving a bead from a position $x$ of the abacus display of $\lambda$ to an unoccupied position $y$ ($x>y$) corresponds to removing a rimhook of size $x-y$ from (the Young diagram of) $\lambda$.  In particular moving a bead on runner $r$ to its unoccupied preceding position (on runner $r-1$) corresponds to removing a removable $r$-node from $\lambda$.  We call such beads {\em removable}.  Similarly, moving a bead on runner $r-1$ to its unoccupied succeeding position (on runner $r$) corresponds to adding an addable $r$-node to $\lambda$.  We call such beads {\em addable}.

A removable bead at position $x$ on runner $a$ is {\em normal} if and only if reading the abacus below position $x$, there are at least as many removable beads on runner $a$ than addable beads on runner $a-1$.  Formally, $x \in \beta(\lambda)$ is normal if and only if
$$
|\{ x <_e t \leq_e y \mid t \in \beta(\lambda),\ t-1 \notin \beta(\lambda)\}| \geq |\{ x <_e t \leq_e y \mid t \notin \beta(\lambda),\ t-1 \in \beta(\lambda)\}|
$$
for all $y >_e x$.

When we slide the beads in the abacus display of $\lambda$ as high up their respective runners as possible, we obtain (the abacus display of) its {\em{$e$-core}}.


For each $b\in \mathbb{Z}$ let
$$
\wt_{\lambda}(b)=|\{ x <_e b \mid  x\notin\beta(\lambda)\}|
$$
be the number of unoccupied positions above $b$ in its runner. 
This is the {\em $e$-weight} of $b$ when $b \in \beta(\lambda)$.
Let
$$
w_\lambda=\sum_{b\in\beta(\lambda)}\wt_{\lambda}(b).
$$
We call $w_\lambda$ the {\em{$e$-weight}} of $\lambda$; this is
the usual notion of the $e$-weight of a partition:
the number of $e$-hooks (i.e.\ rimhooks of size $e$) removed in succession from the Young diagram of $\lambda$ to reach its $e$-core.

We now describe how the set $\mathcal{P}$ of partitions decomposes as a disjoint union of {\em blocks}, depending on $e$. The {\em $e$-quotient} of a partition $\lambda$ is an $e$-tuple $(\lambda^{(0)},\ldots,\lambda^{(e-1)})$ of partitions, where each $\lambda^{(r)}$ is the partition read off from runner $r$ of the abacus display of $\lambda$; thus the parts of $\lambda^{(r)}$ are the various $\wt_{\lambda}(b)$ for $b \in \beta(\lambda)$ with $b \equiv_e r$.
Given an $e$-core partition $\kappa$ and $w\in\mathbb{Z}_{\geq 0}$, the block $B_{\kappa,w}$ is defined to be the set of partitions with $e$-core $\kappa$ and $e$-weight $w$. Taking $e$-quotients gives a bijection between $B_{\kappa,w}$ and the set of $e$-tuples $(\lambda^{(0)},\ldots,\lambda^{(e-1)})$ of partitions such that
$\sum_{i=0}^{e-1} |\lambda^{(i)}|=w$.

We call $B_{\kappa,w}$ a {\em Rouquier block} if
the abacus display of its $e$-core $\kappa$ has at least $w-1$ removable beads (and no addable beads) on runner $a$,
for all $a\in [1,\,e)$.  It is clear 
that there exists at least one Rouquier block (in fact, infinitely many) of any $w\in \BZ_{\geq 0}$.

\subsection{Fock space representation of $U_q(\widehat{\mathfrak{sl}}_e)$}\label{subsection:Fock}

Let $q$ be an indeterminate.  The quantum affine algebra $U_q(\widehat{\mathfrak{sl}}_e)$ is a unital $\mathbb{C}(q)$-algebra generated by $\{ E_i, F_i, K_i, K_i^{-1} \mid i \in [0,e) \}$ subject to certain relations which we do not need here (interested readers may refer to \cite{L} for more details).

Its Fock space representation $\CF :=\bigoplus_{\lambda\in\CP} \mathbb{C}(q) \lambda$ has the set $\CP$ of all partitions as its basis as a $\mathbb{C}(q)$-vector space \cite{H,MM}.  For our purposes, we only require the action of $E_i$ and $F_i$ on $\CF$, which we shall now describe.

Let $\lambda$ be a partition and let $i$ be a residue class modulo $e$.  Let $C \subseteq \beta(\lambda)$ be a set of addable beads of $\lambda$ on runner $i-1$, and write $C^+$ for $\{c+1 \mid c \in C\}$.  Let $\mu$ be the partition such that $\beta(\mu) = \beta(\lambda) \cup C^+ \setminus C$.  We call $C$ an addable $i$-$\beta$-subset of $\lambda$ and $C^+$ a removable $i$-$\beta$-subset of $\mu$.  Define
\begin{align*}
N_E(\lambda,\mu) &= \sum_{\substack{y \in \beta(\lambda): \\ y \text{ removable}}} |\{ b \in C^+ \mid b >_e y \}| - \sum_{\substack{x \in \beta(\mu): \\ x \text{ addable}}} |\{ c \in C \mid c >_e x \}|, \\
N_F(\lambda,\mu) &= \sum_{\substack{x \in \beta(\mu): \\ x \text{ addable}}} |\{ c \in C \mid c <_e x \}| - \sum_{\substack{y \in \beta(\lambda): \\ y \text{ removable}}} |\{ b \in C^+ \mid b <_e y \}|.
\end{align*}
Then, for $k \in \mathbb{Z}^+$, we have
\begin{align*}
E_i^{(k)} (\mu) &= \sum_{\lambda} q^{N_E(\lambda,\mu)} \lambda, \\
F_i^{(k)} (\lambda) &= \sum_{\mu} q^{N_F(\lambda,\mu)} \mu,
\end{align*}
where the first sum runs over all $\lambda$ with $\beta(\lambda) = \beta(\mu) \cup C \setminus C^+$ for some removable $i$-$\beta$-subset $C^+$ of $\mu$ with $|C^+|=k$, and the second sum runs over all $\mu$ with $\beta(\mu) = \beta(\lambda) \cup C^+ \setminus C$ for some addable $i$-$\beta$-subset $C$ of $\lambda$ with $|C|=k$.  Here, and hereafter, $E_i^{(k)} = \frac{1}{[k]_q!} E_i^k$ and $F_i^{(k)} = \frac{1}{[k]_q!}F_i^k$, where $[k]_q! = [k]_q[k-1]_q\dotsm[1]_q$ and $[i]_q = q^{-i+1} + q^{-i+3} + \dotsb + q^{i-3} + q^{i-1} \in \mathbb{Z}[q,q^{-1}]$ for all $i \in \mathbb{Z}^+$.
 The action of $E_i^{(k)}$ and $F_i^{(k)}$ is of course determined by that of $E_i$ and $F_i$; nonetheless we will find it convenient to use directly the formulae for their action given above.

In \cite{LT}, Leclerc and Thibon defined a bar involution $x \mapsto \overline{x}$ on $\mathcal{F}$, which satisfies $\overline{a(q)x + y} = a(q^{-1}) \overline {x} + \overline{y}$, $\overline{E_i(x)} = E_i(\overline{x})$ and $\overline{F_i(x)} = F_i(\overline{x})$ for all $a(q) \in \mathbb{C}(q)$, $x,y \in \mathcal{F}$ and $i \in [0,\,e)$.  They proved the existence of another distinguished basis $\{ G(\lambda) \mid \lambda \in \mathcal{P} \}$ of $\mathcal{F}$, called the canonical basis, that has the following characterization:
$$ G(\lambda) - \lambda \in \sum_{\mu \in \mathcal{P}} q\mathbb{Z}[q] \mu,\quad \overline{G(\lambda)} = G(\lambda).$$

Let $\left\langle - , - \right\rangle$ be the symmetric bilinear form on $\mathcal{F}$ with respect to which $\mathcal{P}$ is orthonormal.
For $\lambda,\mu \in \mathcal{P}$, define $d_{\lambda\mu}(q) \in \mathbb{C}(q)$ by
$$
d_{\lambda\mu}(q) = \langle G(\mu), \lambda \rangle.
$$
The $d_{\lambda\mu}(q)$ are now commonly known as $q$-decomposition numbers, and they enjoy many remarkable properties. We shall only need the following one: if $d_{\lambda\mu}(q)\neq 0$, then $\lambda$ and $\mu$ are in the same block.

Let $A$ be the subring of $\mathbb{C}(q)$ consisting of functions without pole at $q=0$, and let $\CF_A$ be the $A$-lattice in $\CF$ generated by all $\lambda\in\CP$. Then (the image of) $\CP$ in $\CF_A/q\CF_A$ is a crystal basis \cite{MM}, so Kashiwara's operators $\kashe_i$ and $\kashf_i$ act on $\CP\cup\{0\}$. A combinatorial description of the action of $\kashe_i$ and $\kashf_i$ is given in \cite{MM} (see also \cite{LLT}); for our purposes we need only that of $\kashe_i$.
Let $\lambda\in\CP$. If the abacus of $\lambda$ has no normal removable beads on runner $i$, then $\kashe_i(\lambda)=0$. Otherwise $\kashe_i(\lambda)$ is the partition whose abacus is obtained by moving the top normal bead on runner $i$ to its preceding position on runner $i-1$.

According to Kashiwara \cite{Ka}, an induced action of the Weyl group $\W$ of $\widehat{\mathfrak{sl}}_e$, a Coxeter group of type $A_{e-1}^{(1)}$, on the crystal basis, and therefore on $\CP$, is obtained.
 Denote the simple reflections of $\W$ by $s_0,s_1,\dotsc, s_{e-1}$.  The action of $s_i$ on $\lambda\in\CP$, whose $e$-core $\kappa$ has $k$ removable beads on runner $i$ for some $k \in \mathbb{Z}_{\geq 0}$ and has no addable beads on runner $i-1$, is given by $s_i(\lambda)=\kashe_i^k(\lambda)$; in other words $s_i(\lambda)$ is obtained by moving the top $k$ normal removable beads on runner $i$ to their preceding positions on runner $i-1$.
Similarly, $s_i(\lambda)=\kashf_i^k(\lambda)$ if $\kappa$ has $k$ addable beads on runner $i-1$ for some $k \in \mathbb{Z}_{\geq 0}$ and has no removable beads on runner $i$.
Note that the $e$-core of $s_i(\lambda)$ is $s_i(\kappa)$, and the $e$-weight of $s_i(\lambda)$ is equal to the $e$-weight of $\lambda$. Consequently for any $w\geq 0$, we have an induced transitive action of $\W$ on the set of blocks of $e$-weight $w$, given by $s_i(B_{\kappa,w})=B_{s_i(\kappa),w}$. In the terminology of Scopes~\cite[Definition 2.1]{S}, the blocks
$\b=B_{\kappa,w}$ and $\tb=s_i(\b) = B_{s_i(\kappa),w}$ are said to form a $[w:k]$-pair. Every $\W$-orbit of partitions of $e$-weight $w$ contains a single partition in each block of $e$-weight $w$.

The action of $U_q(\widehat{\mathfrak{sl}}_e)$ on $\CF$ induces another action of $\W$ on $\CP$, in which $s_i$ acts by swapping runners $i-1$ and $i$ of the abacus, i.e.\ moving all removable beads on runner $i$ to their preceding positions on runner $i-1$ and
all addable beads on runner $i-1$ to their succeeding positions on runner $i$. We do not make explicit use of this more common, combinatorially simpler action, the orbits of which are determined by the $e$-weight and $e$-quotient (up to an easily specified permutation of its components). Thus when we speak of a $\W$-action on $\CP$, we always mean the crystal action described in the preceding paragraph.

\def\tz{\hat{z}}
\def\bm{\mathcal{A}}
\def\good{\mathcal{U}}

\section{A new labelling for partitions} \label{S:label}

We introduce some combinatorics of partitions based on James's abacus, and show that they can
be used to identify certain $\W$-orbits on the set of partitions.

\subsection{Bead movements}

Let $\lambda$ be a partition of $e$-weight $w_{\lambda}$.
Define
\begin{align*}
\bm(\lambda):&=\left\{(b;q)\mid b\in\beta(\lambda),
\tfrac{b-q}{e}\in [0,\, \wt_{\lambda}(b))\right\} \\
&= \{ (b; b-ie) \mid b \in \beta(\lambda),\ i \in [0,\, \wt_{\lambda}(b)) \}.
\end{align*}
We regard this $w_{\lambda}$-element set as the set of
{\em bead movements} needed to reduce $\lambda$
to its $e$-core. Here $(b;q)$ represents the $(\frac{b-q}{e}+1)$-th movement of the bead that is originally positioned at $b$; the movement {\em starts} at $q$ and {\em ends} at $q-e$.  Each bead movement corresponds to a rimhook of $\lambda$ of size divisible by $e$, and thus $\bm(\lambda)$ corresponds naturally and bijectively with the set $\operatorname{Hook}_e(\lambda)$ introduced in Section \ref{S:intro}.  We call $(b;b)$ the {\em initial} bead movement of $b$, and $(b;b - (\wt_{\lambda}(b) -1)e)$ the {\em final} bead movement of $b$. 
If $j$ is a residue class modulo $e$, write $\bm^{(j)}(\lambda)$ for the set of bead movements in runner $j$, i.e. $\bm^{(j)}(\lambda) = \{ (b;q) \in \bm(\lambda) \mid q \equiv_e j \}$.

Totally order $\mathcal{\bm}(\lambda)$ by the reverse lexicographical order, i.e. $(b;q) < (b';q')$ if and only if $q < q'$ or both $q = q'$ and $b < b'$.  So the bead movements are ordered according to their starting positions, and those with the same starting positions are ordered according to the original positions of the associated beads.

We now investigate the induced effect of a simple reflection $s_a \in \W$ on $\bm(\lambda)$.  The image $\tilde{\lambda}=s_a(\lambda)$ of $\lambda$ has the same $e$-weight, and hence the same number of bead movements.
    It is clear that $\bm^{(j)} (\lambda) = \bm^{(j)}(\tilde{\lambda})$ for all $j \ne a,a-1$.  Interchanging the roles of $\lambda$ and $\tlambda$ if necessary, we may assume that $\lambda$ has more, say $k> 0$ more, removable $a$-nodes than addable $a$-nodes. Then $\lambda$ has $k+l$ normal beads on runner $a$ of its abacus display, for some $l\geq 0$, and $\tlambda$ is obtained by moving the top $k$ normal beads across into runner $a-1$.

Let $B_\lambda= \beta(\lambda)\setminus\beta(\tlambda)$.  Then $\{ b-1 \mid b \in B_{\lambda} \} = \beta(\tlambda) \setminus \beta(\lambda)$, and $ \wt_\lambda(b)-\wt_\tlambda(b-1) = l$ for all $b \in B_{\lambda}$.  We now describe the change in the bead movements when $\lambda$ becomes $\tlambda$.
Let $b \in B_{\lambda}$, $w = \wt_{\lambda}(b)$ and $\tilde{w} = \wt_{\tlambda}(b-1)$.  Define
$$
B_{>b}:=\{c\in\beta(\tlambda)\mid c>_e b\}, \qquad
B_{>b -1} :=\{ c' \in \beta(\lambda) \mid c' >_e b-1\}.
$$
Then $|B_{>b}| - |B_{>b-1}| = l = w - \tilde{w}$.  Let
$$
B_{>b} = \{ c_1<\dotsb < c_r \},\qquad B_{>b-1} = \{ c'_1 < \dotsb < c'_{r-l} \}.$$
Then $r\geq l = w-\tilde{w}$.  Firstly on runner $a$, since the bead at position $b$ is moved (to $b-1$), we lose the bead movements $\{(b; b-ie) \mid i \in [0,\, w) \}$, while the $e$-weight of each bead at $c_j$ ($j \in [1,\, r]$) increases by 1 because of this move, and so we gain the bead movements $\{(c_j; b+(j- w)e) \mid j \in [1,\, r]\}$ (since the bead at $b$, which could be moved to $b - we$ when obtaining the $e$-core, is now missing, the bead at $c_1$ can now move to this position, the one at $c_2$ can move to $b + (1-w)e$, etc).  On the other hand, on runner $a-1$, the new bead at position $b-1$ has $e$-weight $\tilde{w} = w - l$, and so we gain the bead movements $\{(b-1; b-1-ie) \mid i \in [0,\, \tilde{w}) \}$, while the $e$-weight of each bead at $c'_j$ ($j \in [1,\, r-l]$) decreases by 1 because of this new bead, and so we lose the bead movements $\{(c'_j; b-1+(j-\tilde{w})e) \mid j \in [1,\, r-l] \}$.

Summarising, let $\Gamma_b$ and $\Delta_b$ be the sets of bead movements gained and lost when going from $\lambda$ to $\tlambda$, that are associated to the bead $b \in B_{\lambda}$.  Then
\begin{align*}
\Gamma_b &= \{ (c_j; b+(j-w)e) \mid j \in [1,\, r] \} \cup \{ (b-1;b-1-ie) \mid i \in [0,\, \tilde{w}) \}; \\
\Delta_b &= \{ (b;b-ie) \mid i \in [0,\, w) \} \cup \{ (c'_j; b-1 + (j-\tilde{w})e) \mid j \in [1,\, r-l] \} 
\end{align*}
It is straightforward to verify that $\phi_b : \Delta_b \to \Gamma_b$, defined by
\begin{align*}
\phi_b(b; b-ie) &=
\begin{cases}
(c_{w - i}; b-ie), &\text{if } \max\{0,w -r\} \leq i < w,\\
(b-1; b-1-ie), \;\, &\text{if } 0 \leq i < w-r;
\end{cases} \\
\phi_b(c'_j; b-1 + (j-\tilde{w})e) &=
\begin{cases}
(b-1; b-1 + (j-\tilde{w})e), &\text{if } 1\leq j \leq \min\{r-l,\tilde{w}\}, \\
(c_{j+l}; b+(j-\tilde{w})e), &\text{if } \tilde{w} < j \leq r-l;
\end{cases}
\end{align*}
is a bijection.
Gluing $\phi_b$ together for each $b \in B_{\lambda}$ then produces a bijection between $\bm(\lambda) \setminus \bm(\tlambda)$ and $\bm(\tlambda) \setminus \bm(\lambda)$, which we extend to a bijection $\phi$ from $\bm(\lambda)$ to $\bm(\tlambda)$ by taking $\phi$ to be the identity on $\bm(\lambda)\cap\bm(\tlambda)$.

\begin{example}
	In this example, we let $e=2$, $\lambda=(10,9,8,8,8,6,6,5,3,3,1,0,\ldots)$ and $a=1$, and follow the notation above. We have $\beta(\lambda)=(9,7,5,4,3,0,-1,-3,-6,-7,-10,-12,-13,\ldots)$,
	so $\lambda$ is a partition of $2$-weight $32$,
	 with $4$ normal beads on runner $1$, at positions $-3$, $3$, $7$ and $9$. We have $k=2=l$, $\beta(\tlambda)=(9,7,5,4,2,0,-1,-4,-6,-7,-10,-12,-13,\ldots)$ and
	 $\tlambda=(10,9,8,8,7,6,6,4,3,3,1,0,\ldots)$.
		 \begin{figure}[h]
	 \begin{tikzpicture}[y=-1cm, scale=0.7, every node/.style={font=\scalefont{0.6}}]

	 	\def\separation{5}
	 	\def\beadsize{.35}
	 	
	 	\foreach \x in {0,1}
	 	{\draw [thin, gray] (\x,-7.5) --(\x,5.5);
	 		\draw [thin, gray] (\x+\separation,-7.5) --(\x+\separation,5.5);
	 	};

	 	\foreach \b in {-14,...,11}
	 	\pgfmathsetmacro\z{Mod(\b,2)}
	 	\pgfmathsetmacro\y{(\b-\z)/2}
	 	{\node at (\z,\y) {$\b$};
	 		\node at (\z+\separation,\y) {$\b$};};
	 	
	 	\foreach \b in {9,7,5,4,3,0,-1,-3,-6,-7,-10,-12,-13,-14}
	 	\pgfmathsetmacro\z{Mod(\b,2)}
	 	\pgfmathsetmacro\y{(\b-\z)/2}
	 	\draw [thick] (\z,\y) circle [radius=\beadsize];
	 	
	 	\foreach \b in {9,7,5,4,2,0,-1,-4,-6,-7,-10,-12,-13,-14}
	 	\pgfmathsetmacro\z{Mod(\b,2)}
	 	\pgfmathsetmacro\y{(\b-\z)/2}
	 	\draw [thick] (\z+\separation,\y) circle [radius=\beadsize];
	 	
	 	\node at (-1.2,-3) {\large $\lambda$:};
	 	\node at (-1.2+\separation,-3) {\large $\tilde\lambda$:};
	 	
	 	\end{tikzpicture}
	 \end{figure}

We have $B_{\lambda} = \{ -3,3 \}$.  We consider first $B_{>-3} = \{-1,5,7,9\}$, $B_{>-4} = \{0,4\}$.  Hence
\begin{align*}
\phi(-3;-7)=(-1;-7),\quad \phi(-3;-5)&=(5;-5),\quad \phi(-3;-3)=(7;-3), \\
\phi(0;-4) &= (-4;-4),\\
\phi(4;-2) &= (9;-1) .
\end{align*}

	Next, $B_{>3} = \{5,7,9\}$ and $B_{>2} = \{4\}$.  Thus
	\begin{gather*}
\phi(3;-3) = (5;-3), \quad \phi(3;-1) = (7;-1),\quad \phi(3;1) = (9;1), \quad \phi(3;3) = (2;2) \\
\phi(4;0) = (2;0).
\end{gather*}

Note that $(5;-1) \in \bm(\lambda) \cap \bm(\tlambda)$.  Thus
$$ \phi(5;-1) = (5;-1) < (9;-1) = \phi(4;-2),$$
showing that $\phi$ is not order-preserving, since $(4;-2) < (5;-1)$.
	\end{example}

Using the bijection $\phi$, we can deduce that the order-preserving bijection between $\bm(\lambda)$ and $\bm(\tlambda)$ has some desirable properties.

\begin{lemma}\label{beadmovementbijection}
Let $\lambda$ be a partition, and let $\tlambda = s_a(\lambda)$.  The order-preserving bijection
	$\bm(\lambda) \iso \bm(\tlambda) : (b;q)\mapsto(b';q')$
	satisfies $(b';q') = (b;q)$ whenever $(b;q) \in \bm^{(j)}(\lambda)$ with $j \ne a-1,a$ and
	$$q'=
	\begin{cases}
	q \text{ or } q+1, & \text{if } (b;q) \in \bm^{(a-1)}(\lambda); \\
	q \text{ or } q-1, & \text{if } (b;q) \in \bm^{(a)}(\lambda). \\
	\end{cases}
	$$
Furthermore, if $q' \ne q$, then there exists a $\hat{b} \in \beta(\lambda) \setminus \beta(\tlambda)$ with $\hat{b} \equiv_e a$ such that either $\frac{\hat{b}-q}{e} \in [0,\, \wt_{\lambda}(\hat{b}) -N)$ or $\frac{q'-\hat{b}}{e} \in (0,\, N - \wt_{\lambda}(\hat{b})]$, where $N = |\{ x \in \beta(\tlambda) \mid x >_e \hat{b}\}|$.
\end{lemma}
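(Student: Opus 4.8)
The plan is to turn the statement into a comparison of two step functions that count starting positions of bead movements, and to read that comparison off from the explicit sets $\Gamma_b$, $\Delta_b$ computed above. Write $\psi\colon\bm(\lambda)\iso\bm(\tlambda)$, $(b;q)\mapsto(b';q')$, for the order-preserving bijection, and for $t\in\mathbb Z$ set $g_\lambda(t)=|\{(c;q)\in\bm(\lambda)\mid q\le t\}|$, and likewise $g_\tlambda(t)$ for $\tlambda$. Under the reverse lexicographic order, $(b;q)$ is preceded in $\bm(\lambda)$ by exactly $g_\lambda(q-1)+\rho$ movements, where $\rho$ is the number of movements with starting position $q$ and smaller bead, so $0\le\rho<g_\lambda(q)-g_\lambda(q-1)$; consequently $q'=\min\{t\in\mathbb Z\mid g_\tlambda(t)\ge g_\lambda(q-1)+\rho+1\}$. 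Everything thus reduces to locating the points where $g_\lambda$ and $g_\tlambda$ disagree.

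The main step is to extract from the formulas for $\Gamma_b$ and $\Delta_b$ the exact change in the multiset of starting positions. Since $\bm(\lambda)\setminus\bm(\tlambda)=\bigsqcup_{b\in B_\lambda}\Delta_b$ and $\bm(\tlambda)\setminus\bm(\lambda)=\bigsqcup_{b\in B_\lambda}\Gamma_b$, one $b\in B_\lambda$ can be handled at a time. With $w=\wt_\lambda(b)$ and $r=|B_{>b}|$, cancelling the common part of $\Gamma_b$ and $\Delta_b$ shows that passing from $\lambda$ to $\tlambda$ has the following net effect on starting positions: if $r\le w$, delete the ``staircase'' $\{b-ie\mid 0\le i<w-r\}$ (on runner $a$) and insert $\{(b-1)-ie\mid 0\le i<w-r\}$ (on runner $a-1$); if $r>w$, delete $\{(b-1)+ie\mid 1\le i\le r-w\}$ (on runner $a-1$) and insert $\{b+ie\mid 1\le i\le r-w\}$ (on runner $a$). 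In both regimes the inserted and deleted positions strictly interleave (because $e\ge2$), so the contribution of $b$ to $g_\tlambda(t)-g_\lambda(t)$ at integer $t$ is supported on the residue class $a-1$: a bead $b$ with $r\le w$ contributes $+1$ exactly at $t=(b-1)-ie$ for $0\le i<w-r$, a bead $b$ with $r>w$ contributes $-1$ exactly at $t=(b-1)+ie$ for $1\le i\le r-w$, and nothing elsewhere. Summing over $b\in B_\lambda$: $g_\tlambda(t)=g_\lambda(t)$ whenever $t\not\equiv_e a-1$, while a positive (resp.\ negative) value of $g_\tlambda(t)-g_\lambda(t)$ at $t\equiv_e a-1$ forces the existence of a bead of the first (resp.\ second) kind above.

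Feeding this into the formula for $q'$ settles the three cases simultaneously. If $(b;q)\in\bm^{(j)}(\lambda)$ with $j\ne a-1,a$, then the movements starting at $q$ are the same for $\lambda$ and $\tlambda$ and $g_\lambda(q-1)=g_\tlambda(q-1)$ (as $q-1\not\equiv_e a-1$), so $(b;q)$ has the same rank in both and $(b';q')=(b;q)$. If $(b;q)\in\bm^{(a-1)}(\lambda)$, then $g_\lambda(q-1)=g_\tlambda(q-1)$ and $g_\lambda(q+1)=g_\tlambda(q+1)$ (as $q-1,q+1\not\equiv_e a-1$), and the formula for $q'$ then forces $q'\in\{q,q+1\}$; if $q'=q+1$ then necessarily $g_\tlambda(q)<g_\lambda(q)$, so there is a $\hat b\in B_\lambda$ with $\wt_\lambda(\hat b)<|B_{>\hat b}|$ and $q=(\hat b-1)+ie$ for some $1\le i\le|B_{>\hat b}|-\wt_\lambda(\hat b)$, and since $|B_{>\hat b}|=|\{x\in\beta(\tlambda)\mid x>_e\hat b\}|=N$ and $q'=q+1$, this is the inequality $\frac{q'-\hat b}{e}\in(0,\,N-\wt_\lambda(\hat b)]$. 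Symmetrically, if $(b;q)\in\bm^{(a)}(\lambda)$ then $g_\lambda(q-2)=g_\tlambda(q-2)$ and $g_\lambda(q)=g_\tlambda(q)$ force $q'\in\{q-1,q\}$, and $q'=q-1$ implies $g_\tlambda(q-1)>g_\lambda(q-1)$, hence a $\hat b\in B_\lambda$ with $\wt_\lambda(\hat b)>|B_{>\hat b}|$ and $q=\hat b-ie$ for some $0\le i<\wt_\lambda(\hat b)-|B_{>\hat b}|$, i.e.\ $\frac{\hat b-q}{e}\in[0,\,\wt_\lambda(\hat b)-N)$.

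The main obstacle is the bookkeeping in the second paragraph: extracting the clean ``staircase swap'' from the somewhat unwieldy definitions of $\Gamma_b$ and $\Delta_b$, keeping the two regimes $r\le w$, $r>w$ and the two runners $a-1$, $a$ apart, and verifying the interleaving that confines $g_\tlambda-g_\lambda$ to residue $a-1$. Once that is done the rest is routine: one never needs to bound the size of $g_\tlambda-g_\lambda$, only to know its support and the signs of the individual bead contributions.
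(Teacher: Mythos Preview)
Your argument is correct. The key computation — that the net change in the multiset of starting positions, for each $b\in B_\lambda$, is a ``staircase swap'' shifting $w-r$ (or $r-w$) positions by $\pm 1$ between runners $a$ and $a-1$ — is right, and once this is established the step-function bookkeeping with $g_\lambda,g_{\tlambda}$ goes through cleanly. The residue checks ($q-1\not\equiv_e a-1$ when $q\equiv_e a-1$, etc.) all rely only on $e\ge 2$, which is given.

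The paper's proof uses the same underlying computation but packages it differently: rather than tracking cumulative counts, it observes directly from the bijection $\phi$ (whose definition encodes exactly your staircase swap) that for each $q\equiv_e a$ the total number of bead movements starting at $q$ or $q-1$ is the same for $\lambda$ and $\tlambda$. From this pairing the first assertion is immediate, and for the second the paper simply notes that $q'\ne q$ forces $\phi$ to move some movement between $q$ and $q'$, then reads off $\hat b$ from the explicit formula for $\phi$. Your approach is a bit more computational but also more self-contained: you never need $\phi$ as a bijection, only the net change in starting positions, and the sign argument at the end is slightly cleaner than chasing cases of $\phi$. The two are equivalent in substance.
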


\begin{proof}
Clearly, $\bm^{(j)}(\lambda) = \bm^{(j)}(\tlambda)$ for $j \ne a, a-1$.  Furthermore, from the bijection $\phi$ defined above, we see that, for each $q\in \mathbb{Z}$ with $q \equiv_e a$, the total number of bead movements of $\tlambda$ starting at $q$ and $q-1$ equals that of $\lambda$.
Since the order on the bead movements takes into account the starting positions first, the first assertion thus follows.

For the second assertion, if $q' \ne q$, then the number of bead movements of $\lambda$ starting at $q$ is not equal to that of $\tlambda$, so that necessarily $\phi(\bar{b};q) = (\tilde{b};q')$  or $\phi(\tilde{b};q') = (\bar{b};q)$ for some $\bar{b}, \tilde{b}$.  Examining the definition of $\phi$ then proves the assertion.
\end{proof}

\subsection{Armlengths of bead movements}

Let $S$ be a subset of $\mathbb{Z}$.  Define the function $\z_S : \mathbb{Z} \to [0,\, e]$ by
$$
\z_S(x) = \left| (x-e,\, x]\setminus S\right|
$$
for all $x \in\BZ$.

Let $\lambda$ be a partition.  We write $\z_\lambda$ for $\z_{\beta(\lambda)}$, and for each $ (b;q)\in\bm(\lambda)$, we define
$\z(b;q) := \z_{\lambda}(q)$.
Thus $\z(b;q)$, in terms of the abacus display of $\lambda$, is the number of unoccupied positions `passed' by the bead movement from $q$ to $q-e$, where the starting position, if unoccupied, is included in the count
(but the ending position, occupied or not, is never included in the count).  If $(b;q)$ corresponds to the rimhook $H$ of size divisible by $e$, then $\z(b;q)$ is the width of $H^{\operatorname{last}}$, or one less the width of $H^{\operatorname{last}}$ if the northeast-most node of $H^{\operatorname{last}}$ is the last node in its row in $[\lambda]$ (see Section \ref{S:intro}).  This explains why $\z(b;q)$ may be considered as the `armlength' of $(b;q)$.  Note that
$\z(b;q)$ depends only on $q$ and not on $b$.

Let $\lambda$ be a partition of $e$-weight $w$.  List the elements of $\bm(\lambda)$ in  ascending order with respect to the total order on $\bm(\lambda)$: $(b_1;q_1)< (b_2;q_2)<\dotsb< (b_w;q_w)$.
Define
$$
\z(\lambda) := (\z(b_1;q_1),\dotsc, \z(b_w;q_w))  \in [0,e]^{w}.
$$
This definition extends that used in Section \ref{S:intro}.  Note that, necessarily, $b_w = q_w$, so that $\z(b_w;q_w) \neq e$.

\begin{example}[Rouquier blocks]\label{example:Rouquier}
Let us consider a partition $\lambda$ lying in a Rouquier block of $e$-weight $w$ (see the end of Section~\ref{subsection:partitions} for the definition).
Using \cite[Lemma 4]{CK}, we deduce that for each $(b;q)\in\bm(\lambda)$ with $q\equiv_e a$, we have $\z(b;q)=a$ if $q\in\beta(\lambda)$ and
$\z(b;q)=a+1$ if $q\notin\beta$. Furthermore the bead movements in runner $a$ always precede those in runner $a+1$ in the total order on $\bm(\lambda)$.
It follows that the components of $\z(\lambda)$ are weakly increasing if and only if the $p$-quotient of
 $\lambda$ has the form $((1^{w_0}),\ldots,(1^{w_{e-1}}))$, in which case $\z(\lambda)=(0^{w_0},\dotsc,(e-1)^{w_{e-1}})$.
\end{example}

Let $\lambda$ be a partition and let $s_a(\lambda) = \tilde{\lambda}$, where $s_a$ is a simple reflection of the affine Weyl group $\W$.  We shall prove a sufficient condition for $\z(\lambda) = \z(\tlambda)$.  We begin with the following lemma:

\begin{lemma} \label{beadconfiginherit}
Let $\lambda$ be a partition whose abacus display has more removable beads than addable beads on runner $a$, and let $s_a(\lambda) = \tlambda$.  If there exists $q \in \beta(\lambda)$ with $q \equiv_e a$ such that $q-e \notin \beta(\lambda)$ and $(b;q-1) \in \bm(\lambda)$ for some $b$, then the same holds for $\tlambda$ (possibly for a different $q$).
\end{lemma}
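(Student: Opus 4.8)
The plan is to carry a witnessing pair of bead movements through the explicit bijection $\phi\colon\bm(\lambda)\to\bm(\tlambda)$ constructed in the previous subsection. First I would reformulate the hypothesis: since $q\in\beta(\lambda)$, $q\equiv_e a$ and $q-e\notin\beta(\lambda)$, we have $\wt_\lambda(q)\geq 1$, so the initial bead movement $(q;q)$ lies in $\bm^{(a)}(\lambda)$; thus the hypothesis says that $\bm(\lambda)$ contains the movement $(q;q)$ together with a movement $(b;q-1)\in\bm^{(a-1)}(\lambda)$, where $q$ is a ``runner-$a$ descent'' of $\lambda$, i.e.\ $q\in\beta(\lambda)$, $q\equiv_e a$, $q-e\notin\beta(\lambda)$. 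The observation to record at the outset is that descents are ``stable above'': since $\beta(\tlambda)=(\beta(\lambda)\setminus B_\lambda)\cup(B_\lambda-1)$ and $B_\lambda-1$ consists of integers $\equiv_e a-1$, no integer $\equiv_e a$ can enter $\beta(\tlambda)$ that was not already in $\beta(\lambda)$, so the empty slot above any runner-$a$ descent we choose to carry along survives in $\tlambda$.

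I would then split according to whether the bead at $q$ is one of the $k$ moved beads, i.e.\ whether $q\in B_\lambda$. If $q\notin B_\lambda$, then $q\in\beta(\tlambda)$ and by the opening observation $q$ is again a runner-$a$ descent of $\tlambda$, so it suffices to exhibit a movement of $\bm(\tlambda)$ starting at $q-1$; if $(b;q-1)$ survives we are done, and otherwise $(b;q-1)\in\Delta_{b_1}$ for a unique $b_1\in B_\lambda$, whence (inspecting $\Delta_{b_1}$) $(b;q-1)$ is one of the movements $(c'_j;b_1-1+(j-\tilde{w})e)$, and its image $\phi_{b_1}(b;q-1)$ is either $(b_1-1;q-1)$, again a movement starting at $q-1$, or $(c_{j+l};q)$, a movement that has migrated onto runner $a$. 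If $q\in B_\lambda$, then $q\notin\beta(\tlambda)$; here I would take $q'=q+e$ as the candidate descent (its slot above, namely $q$, persists), with $q'=\min B_{>q}$ as a fallback when $q+e\notin\beta(\tlambda)$, and the companion movement at $q'-1$ should come from $\phi_q$ applied to the initial movement $(q;q)\in\Delta_q$, which the formulas evaluate to $(q-1;q-1)$ when $\wt_\lambda(q)>|B_{>q}|$ and to $(c_{\wt_\lambda(q)};q)$ otherwise; combined with the original $(b;q-1)$ and its $\phi$-image, one reads off a runner-$a$ descent of $\tlambda$ with a movement one step to its left.

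The main obstacle is the migration sub-cases, where the moved beads simultaneously fill the slot just below the chosen descent on runner $a$ and push the companion movement from position $q-1$ up to position $q$: there the original witness is genuinely destroyed and a replacement must be built. The tools for this are the shapes of $\Gamma_b$ and $\Delta_b$: $\phi_b$ always deposits the block $\{(b-1;b-1-ie)\mid i\in[0,\tilde{w})\}$ of initial movements of the new bead at $b-1$, and normality of $b$ forces, via the defining inequality for normal beads, the set $B_{>b}=\{c_1<\dotsb<c_r\}$ to be large enough relative to $B_{>b-1}$ that the movements $(c_j;b+(j-w)e)$ predicted by $\Gamma_b$ genuinely occur; together with the opening observation one then locates a runner-$a$ descent $q'$ of $\tlambda$ among (or just above) the $c_j$ and matches it with a surviving movement of $\bm(\tlambda)$ starting at $q'-1$. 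Pinning down exactly which elements of $\Delta_b$ are sent by $\phi_b$ down to runner $a-1$ versus back up to runner $a$, and how the descent slides from $q$ to the relevant $c_j$, is the crux; everything else is the routine translation above.
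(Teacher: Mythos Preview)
Your approach via the bijection $\phi$ is genuinely different from the paper's, and more complicated. The paper does not use $\phi$ at all. Instead it argues directly on $\beta(\tlambda)$: given the witness $q\in\beta(\lambda)$, $q-e\notin\beta(\lambda)$, $(b;q-1)\in\bm(\lambda)$, it sets
\[
B=\{\,q\leq_e\bar b\leq_e b+1\mid \bar b\in\beta(\tlambda)\,\},\qquad
C=\{\,c<_e q-1\mid c\notin\beta(\tlambda)\,\},
\]
shows both are nonempty, and takes $\tilde q=\min B$ together with $\tilde b=\min\{\hat b\in\beta(\tlambda)\mid \hat b\geq_e \tilde q-1\}$; then $C\neq\emptyset$ guarantees $(\tilde b;\tilde q-1)\in\bm(\tlambda)$. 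Nonemptiness of $B$ is exactly the normality argument: if $q\notin\beta(\tlambda)$ then $q$ was one of the moved normal beads, and the existence of the bead at $b\geq_e q-1$ forces some non-normal (hence unmoved) bead on runner $a$ between $q$ and $b+1$. Nonemptiness of $C$ is a short case split on whether $q-1-e\in\beta(\lambda)$.

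Your plan is viable in the easy cases but has a real gap in the ``migration'' sub-cases you flag. Concretely, when $q\notin B_\lambda$ and $\phi_{b_1}(b;q-1)=(c_{j+l};q)$, you have recovered only a runner-$a$ movement at $q$; the descent $q$ persists, but you still owe a movement of $\tlambda$ at $q-1$ (or at $\tilde q-1$ for some other descent $\tilde q$), and nothing in your outline produces one. Likewise, when $q\in B_\lambda$ and $\phi_q(q;q)=(q-1;q-1)$, you obtain a movement at $q-1$ but have lost the descent at $q$, and your fallback ``$q'=\min B_{>q}$'' tacitly requires exactly the argument that $B$ above is nonempty (and then that a movement at $q'-1$ exists, i.e.\ $C\neq\emptyset$). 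In other words, completing your migration cases forces you back onto the same two nonemptiness claims the paper proves directly; routing them through $\phi$ buys nothing and adds bookkeeping. I would recommend abandoning $\phi$ here and proving the two claims $B\neq\emptyset$, $C\neq\emptyset$ head-on.
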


In pictorial terms, the above lemma asserts that if the abacus display of $\lambda$ has the following bead configuration on runners $a$ and $a-1$, then so does that of $\tlambda$.
\begin{figure}[ht]\caption{Bead configuration}
	 \begin{tikzpicture}[y=-1cm, scale=1.5, every node/.style={font=\scalefont{1}}
	 ]

	 	\def\beadsize{.25}
	 	
	 	\foreach \x in {0,1}
	 	{\draw [thin, gray] (\x,-0.5) --(\x,1.5);
	 	};

	 	\draw (0,0) [dashed] circle [radius=\beadsize];
	 	\draw (0,1) [dashed] circle [radius=\beadsize];
	 	
	 	\node at (1,0) {$q-e$};
	 	\draw (1,1) [thick] circle [radius=\beadsize];
	 	\node at (1,1) {$q$};
	 	
	 	\draw [->, >=latex, line width = 2] (0,0.7) -- (0,0.3);

	 	\end{tikzpicture}

	\label{figure:disallowed-configuration}
\end{figure}
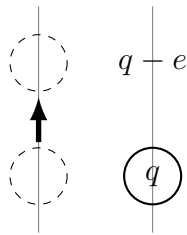
(Here, and hereafter, a dash-outlined circle indicates a position that may or may not be occupied by a bead.)

\begin{proof}
Suppose that there exists $q \in \beta(\lambda)$ with $q \equiv_e a$ such that $q-e \notin \beta(\lambda)$ and $(b;q-1) \in \bm(\lambda)$ for some $b \in \beta(\lambda)$ with $b \geq_e q-1$.  We claim that the sets
\begin{align*}
B &= \{ q \leq_e \bar{b} \leq_e b+1 \mid \bar{b} \in  \beta(\tlambda) \}, \\
C & = \{ c <_e q-1 \mid c \notin \beta(\tlambda) \}
\end{align*}
are non-empty.
If $q \in \beta(\tlambda)$, then $q \in B$.  On the other hand, if $q \notin \beta(\tlambda)$, then the bead of $\beta(\lambda)$ at $q$ is normal and $b >_e q-1$.  The existence of a bead of $\beta(\lambda)$ at $b$ with $b >_e q-1$ shows that there is some bead of $\beta(\lambda)$ at $q'$ with $q<_e q' \leq_e b+1$ which is not normal, so that $q' \in B$.
For $C$, clearly $q-1-e \in C$ if $q-1-e \notin \beta(\lambda)$.   If $q-1-e \in \beta(\lambda)$, then the bead of $\beta(\lambda)$ at $q-e-1$ is addable, so that either $\lambda$ has no removable bead at $\tilde{q}$ for all $\tilde{q} <_e q$, or $\lambda$ has a removable but not normal bead at $\tilde{q}$ for some $\tilde{q} <_e q$. In the former case, since $(b;q-1) \in \bm(\lambda)$, there exists $c <_e q-1$ such that $c \notin \beta(\lambda)$, and hence $c \notin \beta(\tlambda)$ so that $c \in C$.  In the latter case, $\tilde{q}-1 \in C$.

Let $\tilde{q} = \min(B)$.  Then $\tilde{q} \in \beta(\tlambda)$ and $\tilde{q} - e \notin \beta(\tlambda)$. Let $\tilde{b} = \min\{ \hat{b} \in \beta(\tlambda) \mid \hat{b} \geq _e\tilde{q} - 1\}$ (this set is nonempty since it contains $b$).  As $\tilde{q} -1 \geq_e q-1 >_e c \notin \beta(\tlambda)$ for any $c \in C$, we have $(\tilde{b};\tilde{q}-1) \in \bm(\tlambda)$.  \end{proof}

\begin{prop}\label{prop:zunchanged}
Let $\lambda$ be a partition, and let $s_a(\lambda) = \tlambda$.  Suppose that the abacus display of $\lambda$ does not have the bead configuration of Figure \ref{figure:disallowed-configuration} on its runners $a$ and $a-1$.
Then $\z(\lambda) = \z(\tlambda)$.
\end{prop}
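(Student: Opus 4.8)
The plan is to compare the two sequences $\z(\lambda)$ and $\z(\tlambda)$ entry by entry via the order-preserving bijection $\bm(\lambda)\iso\bm(\tlambda)$, $(b;q)\mapsto(b';q')$, supplied by Lemma~\ref{beadmovementbijection}. Interchanging $\lambda$ and $\tlambda$ if necessary, one may assume the abacus display of $\lambda$ has $k>0$ more removable than addable beads on runner $a$; set $B_\lambda=\beta(\lambda)\setminus\beta(\tlambda)$, a set of $k$ normal removable beads of $\lambda$ lying on runner $a$, so that $\beta(\tlambda)=\bigl(\beta(\lambda)\setminus B_\lambda\bigr)\cup\{\,b-1\mid b\in B_\lambda\,\}$. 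Since $\z(b;q)=\z_\lambda(q)$ depends only on $q$, it suffices to show $\z_\lambda(q)=\z_{\tlambda}(q')$ for each matched pair, and by Lemma~\ref{beadmovementbijection} we have $q'=q$ unless $(b;q)\in\bm^{(a-1)}(\lambda)$, in which case $q'\in\{q,q+1\}$, or $(b;q)\in\bm^{(a)}(\lambda)$, in which case $q'\in\{q,q-1\}$.

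The first step is a direct comparison of $\z_\lambda$ and $\z_{\tlambda}$ as functions on $\BZ$. The window $(x-e,x]$ meets each runner exactly once, and its runner-$a$ and runner-$(a-1)$ positions are consecutive integers except when the runner-$a$ position is the left endpoint $x-e+1$, which happens precisely when $x\equiv_e a-1$. Passing from $\lambda$ to $\tlambda$ moves each bead of $B_\lambda$ to its preceding position on runner $a-1$; such a move alters the number of unoccupied positions in $(x-e,x]$ only when it carries a bead out of, or into, that window, and both possibilities force $x\equiv_e a-1$. Hence $\z_\lambda(x)=\z_{\tlambda}(x)$ for all $x\not\equiv_e a-1$, while for $x\equiv_e a-1$ a short count gives $\z_{\tlambda}(x)-\z_\lambda(x)=\I_{x-e+1\in B_\lambda}-\I_{x+1\in B_\lambda}$.

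Granting this, the matched pairs with $q\not\equiv_e a-1$ and $q'=q$ are settled at once, which leaves exactly three cases. In each of them, using the elementary telescoping identity $\z_\lambda(x+1)-\z_\lambda(x)=\I_{x+1\notin\beta(\lambda)}-\I_{x-e+1\notin\beta(\lambda)}$ together with the first step, the required equality $\z_\lambda(q)=\z_{\tlambda}(q')$ reduces to the agreement of two runner-$a$ positions differing by $e$: for $(b;q)\in\bm^{(a-1)}(\lambda)$ with $q'=q+1$ it becomes ``$q+1\in\beta(\lambda)\iff q+1-e\in\beta(\lambda)$''; for $(b;q)\in\bm^{(a)}(\lambda)$ with $q'=q-1$ it becomes ``$q\in\beta(\lambda)\setminus B_\lambda\iff q-e\in\beta(\lambda)\setminus B_\lambda$''; and for $(b;q)\in\bm^{(a-1)}(\lambda)$ with $q'=q$ it becomes ``$q-e+1\in B_\lambda\iff q+1\in B_\lambda$''. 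In the first of these, if the equivalence fails with $q+1\in\beta(\lambda)$ and $q+1-e\notin\beta(\lambda)$, then $q_0:=q+1$ is an occupied position with $q_0\equiv_e a$ and $q_0-e$ unoccupied, while $(b;q)=(b;q_0-1)\in\bm(\lambda)$, so $\lambda$ displays the configuration of Figure~\ref{figure:disallowed-configuration} on its runners $a$ and $a-1$, contrary to hypothesis. For each of the remaining subcases I would argue along the same lines: when $q'\ne q$, the ``Furthermore'' clause of Lemma~\ref{beadmovementbijection} pins down a specific $\hat b\in B_\lambda$ governing the discrepancy, and combining the location of $\hat b$ with the fact that the beads of $B_\lambda$ are normal removable beads (so the relevant runner-$a$ position is occupied with the position directly below it empty, exactly as in Lemma~\ref{beadconfiginherit}) and with the bead movement $(b;q)$ one started from, one again produces the forbidden configuration; the case $q'=q$ is reduced to the same contradiction by tracing the discrepancy $\I_{q-e+1\in B_\lambda}\ne\I_{q+1\in B_\lambda}$ back to an endpoint of $B_\lambda$. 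As every case terminates in a contradiction with the hypothesis, $\z_\lambda(q)=\z_{\tlambda}(q')$ for all matched pairs, and therefore $\z(\lambda)=\z(\tlambda)$.

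I expect the main obstacle to be this last step, which amounts to proving the contrapositive: that any single armlength which changes under $s_a$ forces $\lambda$ to exhibit the forbidden bead configuration. This needs careful bookkeeping of which runner-$a$ positions near $q$ lie in $B_\lambda$, matched against the three possible values of $q'$ from Lemma~\ref{beadmovementbijection} and against the normality condition defining $B_\lambda$. By comparison, the window computation, the telescoping identity, and the reduction to the bijection are routine.
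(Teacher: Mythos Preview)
Your setup and reductions are correct: the identity $\z_{\tlambda}(x)-\z_\lambda(x)=\I_{x-e+1\in B_\lambda}-\I_{x+1\in B_\lambda}$ for $x\equiv_e a-1$ (and $=0$ otherwise) is right, the three equivalences you extract are exactly the right targets, and your argument for the subcase $q+1\in\beta(\lambda)$, $q+1-e\notin\beta(\lambda)$ of Case~(i) is valid. Two gaps remain.

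First, a minor point: the ``interchanging $\lambda$ and $\tlambda$'' step is not free, since the hypothesis concerns only $\lambda$. Passing the absence of the forbidden configuration across to $\tlambda$ requires Lemma~\ref{beadconfiginherit}, which the paper invokes explicitly.

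Second, and this is where you correctly sense the obstacle: the remaining subcases do not resolve ``along the same lines'' as the one you completed. In Case~(i), the other direction of failure has $q+1\notin\beta(\lambda)$ and $q+1-e\in\beta(\lambda)$; there is no obvious position at which the forbidden configuration appears (the candidate $q_0=q+1-e$ works only if $q+1-2e\notin\beta(\lambda)$ and there is a bead movement at $q-e$, neither of which is given). In Case~(iii) with, say, $q+1\in B_\lambda$, the bead at $q+1$ is removable, so $q\notin\beta(\lambda)$, and again no $q_0$ presents itself. The paper does not attempt to locate the forbidden configuration in these cases. Instead it first extracts two global consequences of its absence. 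With $X$ the top unoccupied position on runner $a-1$, $Y$ the bottom occupied position there, and $Z=\min\{b\geq_e X+1: b\notin\beta(\lambda)\}$, it shows: (a)~every runner-$a$ position between $Z$ and $Y+1$ is unoccupied; (b)~any supernormal bead at position $W\le Y+1$ satisfies $\wt_{\tlambda}(W-1)=0$. Once these are in hand the cases fall quickly. For your Case~(iii) (the paper's Case~3), fact~(b) rules out a supernormal bead at either $q+1$ or $q+1-e$, since its presence would force $\wt_{\tlambda}$ to vanish at a position incompatible with $(b';q)\in\bm(\tlambda)$. For your Cases~(i) and~(ii) (the paper's Cases~4 and~2), the ``Furthermore'' clause of Lemma~\ref{beadmovementbijection} pins down a supernormal $\hat b$, and then (a), (b), and a count of non-normal beads on runner~$a$ determine the occupation status of the relevant pair of runner-$a$ positions directly. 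Your local strategy of producing the forbidden configuration at each failure would, if completed, essentially rediscover (a) and (b); it is cleaner to establish them once at the outset.
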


\begin{proof}
If the abacus display of $\lambda$ has the same number of removable beads as addable beads on runner $a$, then $s_a(\lambda) = \lambda$, and so the statement is trivially true.
If $\tlambda$ has more removable beads than addable beads on runner $a$, then its abacus display also does not have the bead configuration of Figure \ref{figure:disallowed-configuration} on its runners $a$ and $a-1$, by Lemma \ref{beadconfiginherit}.  As such, we may assume, by interchanging $\lambda$ and $\tlambda$ if necessary, that $\lambda$ has more removable beads, say $k$ more, than addable beads on runner $a$. Let $X$ and $Y$ be the top unoccupied and bottom occupied positions on runner $a-1$ of the abacus of $\lambda$ respectively. Then all the bead movements of $\lambda$ on this
runner have starting positions amongst $X+e, X+2e, \ldots, Y$, and each of these positions is a starting position of some bead movement of $\lambda$.
We note that it is possible for $X >_e Y$, in which case there is no bead movement on this runner at all.
Let $Z = \min \{ b \geq_e X+1 \mid b \notin \beta(\lambda) \}$.  Then to avoid the undesired bead configuration, any position $V$ with $Z\leq_eV \leq_e Y+1$ is unoccupied; see Figure~\ref{figure:leqconfiguration}. Note that it is possible for $Z>_eY+1$.
\begin{figure}[h]
	\caption{An example of an abacus configuration avoiding Figure~\ref{figure:disallowed-configuration}}
	 \begin{tikzpicture}[y=-1cm, scale=0.7, every node/.style={font=\scalefont{0.6}}
	 ]

	 	\def\beadsize{.35}
	 		\foreach \x in {0,1}
	 	{\draw [thin, gray] (\x,-0.5) --(\x,11.5);
	 	};

	 	\draw (0,0) [thick] circle [radius=\beadsize];
	 	\draw (0,1) [thick] circle [radius=\beadsize];
	 	\draw (0,2) [thick] circle [radius=\beadsize];
	 	\node at (0,3) {$X$};
	 	\draw (1,1) [dashed] circle [radius=\beadsize];
	 	\draw (1,2) [dashed] circle [radius=\beadsize];
	 	\draw (0,4) [dashed] circle [radius=\beadsize];
	 	\draw (0,5) [dashed] circle [radius=\beadsize];
	 	\draw (0,6) [dashed] circle [radius=\beadsize];
	 	\draw (0,7) [dashed] circle [radius=\beadsize];
	 	\draw (0,8) [dashed] circle [radius=\beadsize];
	 	 \draw (1,3) [thick] circle [radius=\beadsize];
	 	\draw (1,4) [thick] circle [radius=\beadsize];
	 	 \draw (1,5) [thick] circle [radius=\beadsize];
	 	 	 \node at (1,6) {$Z$};
	 	 	  \node at (1,7) {$Z+e$};
	 		 \draw (0,9) [thick] circle [radius=\beadsize];
	 		 \node at (0,9) {$Y$};
	 		 \node at (1,9) {$Y+1$};
	 		 \draw (1,10) [dashed] circle [radius=\beadsize];
	 	 \draw (1,11) [dashed] circle [radius=\beadsize];

	 	\draw (1,0) [dashed] circle [radius=\beadsize];
	 	\draw (1,1) [dashed] circle [radius=\beadsize];


	 	\end{tikzpicture}
	\label{figure:leqconfiguration}
\end{figure}
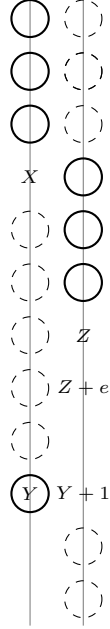

Recall that
 $\tlambda$ is obtained from $\lambda$ by moving the top $k$ of the $k+l$
 normal removable beads across from runner $a$ into runner $a-1$. For convenience's sake call these $k$ beads supernormal.

We claim first that if $\lambda$ has a supernormal bead at position $W\leq Y+1$, then $\wt_{\tlambda}(W-1)=0$.  Indeed, we must have $X< W < Z$.  Since there are no addable beads on runner $a-1$ between $X$ and $Z-e$, all removable beads on runner $a$ between $X$ and $W$ must be normal, and hence supernormal, since the bead at $W$ is.  Thus
  $\wt_{\tlambda}(W-1)=0$.

Note also that the number of unoccupied positions $<Z$ on runner $a$ is at most $l$, for otherwise there would be more than $k+l$ normal removable beads on runner $a$.

Each bead movement
$(b;q)\in\bm(\lambda)$ corresponds to $(b';q')\in\bm(\tlambda)$ via the order-preserving bijection.  We need to show that $z:=\z(b;q)$ and $z':=\z(b';q')$ are equal.  This is clear if $q \not\equiv_e a,a-1$ (in which case, $q = q'$ by Lemma \ref{beadmovementbijection}).  We have four other cases to consider:

\begin{description}
\item[Case 1. $q\equiv_e q'\equiv_e a$]  Then $q=q'$, and  since the total number of beads
in positions $q-1$ and $q$ of $\lambda$ and of $\tlambda$ is always the same, we have $z=z'$.

\item[Case 2. $q\equiv_e a$, $q'\equiv_e a-1$] By Lemma~\ref{beadmovementbijection},  there exists a supernormal bead at $\hat{b}$ such that $\frac{\hat{b}-q}{e} \in [0,\, \wt_{\lambda}(\hat{b})- N)$, where $N = |\{ x \in \beta(\tlambda) \mid x >_e \hat{b} \}|$. Since
 \begin{align*}
 \wt_{\lambda}(\hat{b}) &= |\{ x <_e \hat{b} \mid x \notin \beta(\lambda) \}| = |\{ q \leq_e x <_e \hat{b} \mid x \notin \beta(\lambda) \}|  + \wt_{\lambda}(q) \leq  \frac{\hat{b} - q}{e} + \wt_{\lambda}(q),
 \end{align*}
we see that $\wt_\lambda(q) \geq \wt_{\lambda}(\hat{b}) - \frac{\hat{b}-q}{e} > N$. 
But $\wt_{\lambda}(Z) \leq l$, and $l \leq N$ (as the $l$ normal but not supernormal beads on runner $a$ of $\beta(\lambda)$ lie below the supernormal ones (which include the bead at $\hat{b}$)).  Thus $q >_e Z$.
  Now if $Z <_e q \leq_e Y+1$, then both $q$ and $q-e$ are
 unoccupied, and so $z=z'$. Otherwise $q>_e \max(Y+1,Z)$, in which case $q-1$ is unoccupied, and any bead of $\lambda$ at position $q$ or $q-e$ must be supernormal (since the bead at position $\hat{b}$ is supernormal and $\hat{b} \geq_e q$). Here again we check easily that $z=z'$.

\item[Case 3. $q,q'\equiv_e a-1$]  Then $q=q'$.  Note that $\lambda$ cannot have any supernormal bead at any position $W$ with $q+1-e \leq_e W \leq_e b+1$, for then $\wt_\tlambda(W-1)=0$, contradicting $(b';q')=(b';q)\in\bm(\tlambda)$.  It follows that $\lambda$ and $\tlambda$ have the same bead configuration at positions $q$ and $q+1-e$, and therefore that $z=z'$.

\item[Case 4. $q\equiv_e a-1$, $q'\equiv_e a$]  By Lemma~\ref{beadmovementbijection}, $q'=q+1$,  and $\lambda$ has a supernormal bead at $\hat{b}$ with $\frac{q'-\hat{b}}{e} \in (0,\, N-\wt_{\lambda}(\hat{b})]$, where $N = |\{x \in \beta(\tlambda) \mid x >_e \hat{b} \}|$.  Thus $Y \geq_e q = q'-1 >_e \hat{b} -1 \geq_e X$, so that $\wt_{\tlambda}(\hat{b} -1) = 0$ by our claim above, and hence $\wt_{\lambda}(\hat{b}) = l$.  This implies that $N - l = N - \wt_{\lambda}(\hat{b}) > 0$. We deduce that there are $N-l$ beads of $\lambda$ on runner $a$ below $\hat{b}$ which are not normal.  Since these are not normal, they are at positions less than $Y$, and thus strictly less than $Z$. This shows that
    $$
    \frac{Z-\hat{b}}{e} > N-l = N-\wt_{\lambda}(\hat{b}) \geq \frac{q'-\hat{b}}{e}.
    $$
    Thus $Z >_e q'$.  Since $q'-e \geq \hat{b} \geq X$,
     it follows that the positions at $q'-e$ and $q'$ are occupied by beads of $\lambda$, and thus, regardless of whether these beads are supernormal, we have $z=z'$.
\end{description}
\end{proof}

The following example illustrates the necessity of the hypothesis in Proposition \ref{prop:zunchanged} on avoiding the configuration in Figure \ref{figure:disallowed-configuration}.

\begin{example}
Let $e=3$, $\lambda = (5,3,3)$ and $\tlambda=s_1(\lambda)=(4,3,3)$. Then
  $\z(\lambda) = (2,1,2) \ne (2,1,1) = \z(\tlambda)$.
\medskip

\begin{center}
	 \begin{tikzpicture}[y=-1cm, scale=0.7, every node/.style={font=\scalefont{0.6}}
	 ]

	 	\def\beadsize{.35}
	 		\foreach \x in {0,1,2,6,7,8}
	 	{\draw [thin, gray] (\x,-0.5) --(\x,4);
	 	};

	 	\draw (0,0) [thick] circle [radius=\beadsize];
	 	\draw (1,0) [thick] circle [radius=\beadsize];
	 	\draw (2,0) [thick] circle [radius=\beadsize];
	 	\draw (0,2) [thick] circle [radius=\beadsize];
	 	\draw (1,2) [thick] circle [radius=\beadsize];
	 	\draw (1,3) [thick] circle [radius=\beadsize];
	 	\node at (0,0) {$-6$};
        \node at (1,0) {$-5$};
        \node at (2,0) {$-4$};
	 	\node at (0,1) {$-3$};
        \node at (1,1) {$-2$};
        \node at (2,1) {$-1$};
	 	\node at (0,2) {$0$};
        \node at (1,2) {$1$};
        \node at (2,2) {$2$};
	 	\node at (0,3) {$3$};
        \node at (1,3) {$4$};
        \node at (2,3) {$5$};

	 	\node at (1,4.5) {Abacus display of $\lambda$};

	 	\draw (6,0) [thick] circle [radius=\beadsize];
	 	\draw (7,0) [thick] circle [radius=\beadsize];
	 	\draw (8,0) [thick] circle [radius=\beadsize];
	 	\draw (6,2) [thick] circle [radius=\beadsize];
	 	\draw (7,2) [thick] circle [radius=\beadsize];
	 	\draw (6,3) [thick] circle [radius=\beadsize];
	 	\node at (6,0) {$-6$};
        \node at (7,0) {$-5$};
        \node at (8,0) {$-4$};
	 	\node at (6,1) {$-3$};
        \node at (7,1) {$-2$};
        \node at (8,1) {$-1$};
	 	\node at (6,2) {$0$};
        \node at (7,2) {$1$};
        \node at (8,2) {$2$};
	 	\node at (6,3) {$3$};
        \node at (7,3) {$4$};
        \node at (8,3) {$5$};
 	    \node at (7,4.5) {Abacus display of $\tilde{\lambda}$};


	 	\end{tikzpicture}
\end{center}
\end{example}

\subsection{The $0$-increasing $\W$-orbits}

We now present the main result of this section, showing that certain orbits of the action of the affine Weyl group $\W$ on the set of partitions of $e$-weight $w$ are parametrised by $w$-tuples of integers, through the function $\z$.
In preparation
 we make the following definitions, using some of the notational conventions introduced in \S~\ref{section:notations}.
\begin{defi} Let $m$ be a non-negative integer.
\begin{enumerate}
	\item A partition $\lambda$ (of $e$-weight $w$) is said to be {\em $m$-increasing} if $\z(\lambda) = (z_1,\dotsc,z_w)$ is $m$-increasing, i.e.\ if $z_{i+1}-z_{i}\geq m$ for all $i\in[1,w)$.
	\item		Set
$$\good:=[0,e-1]^w \subset \mathbb{Z}^w,$$
so that
$$\inc{\good}{m}=\left\{(z_1,\dotsc,z_w) \in \mathbb{Z}^w\mid 0\leq z_i \leq e-1 \text{ and } z_{i+1}-z_{i}\geq m \text{ for all } i\in[1,w)\right\},$$
and in particular
 $$\inc{\good}{0}=\left\{(z_1,\dotsc,z_w) \in \mathbb{Z}^w\mid 0\leq z_1 \leq \dotsb \leq
 z_w \leq e-1\right\}.$$

\end{enumerate}
\end{defi}

\begin{lemma}\label{L:0-increasing}
Let $\lambda$ be a $0$-increasing partition of $e$-weight $w$.
	Then the bead configuration of Figure~\ref{figure:disallowed-configuration} cannot occur anywhere on the abacus display of $\lambda$.
\end{lemma}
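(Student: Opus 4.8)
The plan is to argue by contradiction, extracting from the forbidden configuration two bead movements in $\bm(\lambda)$ whose armlengths are forced to increase in the wrong direction along the total order. Suppose that for some $a$ the abacus display of $\lambda$ has the configuration of Figure~\ref{figure:disallowed-configuration} on runners $a$ and $a-1$, i.e.\ there is $q\in\beta(\lambda)$ with $q\equiv_e a$, $q-e\notin\beta(\lambda)$, and $(b;q-1)\in\bm(\lambda)$ for some $b$. First I would observe that $q-e$ is an unoccupied position strictly above $q$ on runner $a$, so $\wt_\lambda(q)\geq 1$, and hence the initial bead movement $(q;q)$ of the bead at $q$ also lies in $\bm(\lambda)$. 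Thus the configuration supplies two distinct elements $(b;q-1),(q;q)\in\bm(\lambda)$.

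Next I would compare these two movements in two ways. In the reverse lexicographic order on $\bm(\lambda)$ the starting positions are compared first, and $q-1<q$, so $(b;q-1)<(q;q)$; writing $\bm(\lambda)=\{(b_1;q_1)<\dotsb<(b_w;q_w)\}$, this means $(b;q-1)=(b_i;q_i)$ and $(q;q)=(b_j;q_j)$ with $i<j$. On the other hand, $\z(b;q-1)=\z_\lambda(q-1)=|(q-1-e,\,q-1]\setminus\beta(\lambda)|$ and $\z(q;q)=\z_\lambda(q)=|(q-e,\,q]\setminus\beta(\lambda)|$; the two half-open intervals agree on $\{q-e+1,\dotsc,q-1\}$ and differ only in that the first contains $q-e\notin\beta(\lambda)$ and omits $q\in\beta(\lambda)$, whence $\z(b;q-1)=\z(q;q)+1$.

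Finally I would derive the contradiction: $\lambda$ being $0$-increasing means $\z(\lambda)=(z_1,\dotsc,z_w)$ satisfies $z_1\leq\dotsb\leq z_w$, so $i<j$ forces $z_i\leq z_j$, that is $\z(b;q-1)\leq\z(q;q)$, contradicting $\z(b;q-1)=\z(q;q)+1$. Hence no such configuration occurs on any pair of adjacent runners. The proof is short and I do not expect a genuine obstacle; the one point needing a little care is the interval bookkeeping in the armlength comparison (tracking which endpoint lies in which half-open interval, together with the memberships $q\in\beta(\lambda)$ and $q-e\notin\beta(\lambda)$), but this follows straight from the definitions of $\bm(\lambda)$, of the reverse lexicographic order on it, and of the map $\z$.
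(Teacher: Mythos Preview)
Your proof is correct and follows essentially the same approach as the paper's: both observe that $(q;q)\in\bm(\lambda)$, that $(b;q-1)<(q;q)$ in the order, and that $\z(b;q-1)=\z(q;q)+1$, contradicting $0$-increasingness. The paper's version is simply more terse, omitting the explicit interval bookkeeping you spell out.
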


\begin{proof}
Suppose that $q\in\beta(\lambda)$ and
	$q-e\notin\beta(\lambda)$, so that $(q;q)\in\bm(\lambda)$.
	Then $(b;q-1)\notin\bm(\lambda)$ for any $b$, for then
	$\z(b;q-1)=\z(q;q)+1$, contradicting $\z(\lambda)\in\inc{\good}{0}$.
\end{proof}

\begin{thm}\label{thm:goodlabels} \hfill
\begin{enumerate}
		\item
		Let $B$ be a block of $e$-weight $w$. We have a bijection
		\begin{align*}
			\{0 \text{-increasing partitions in } B\} & \quad \iso \quad \inc{\good}{0} \\
			\lambda & \quad \mapsto \quad \z(\lambda) \\
\end{align*}	
	\item Let $\lambda$ and $\mu$ be partitions of $e$-weight $w$, and suppose that $\lambda$ is $0$-increasing. Then $\lambda$ and $\mu$ are in the same $\W$-orbit if and only if $\z(\lambda)=\z(\mu)$.
		\end{enumerate}
	\end{thm}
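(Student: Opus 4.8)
The plan is to bootstrap from Proposition~\ref{prop:zunchanged} and Lemma~\ref{L:0-increasing}, reduce an arbitrary block to a Rouquier block using the transitive $\W$-action on blocks, and then read everything off from the explicit computation in Example~\ref{example:Rouquier}. First I would prove an invariance statement: if $\lambda$ is $0$-increasing, then every partition $\nu$ in the $\W$-orbit of $\lambda$ is again $0$-increasing and $\z(\nu)=\z(\lambda)$. Writing $\nu=s_{a_k}\dotsm s_{a_1}(\lambda)$ and inducting on $k$, the inductive step is immediate: the partition $\nu_{j-1}:=s_{a_{j-1}}\dotsm s_{a_1}(\lambda)$ is $0$-increasing, so by Lemma~\ref{L:0-increasing} its abacus display avoids the configuration of Figure~\ref{figure:disallowed-configuration} on every pair of adjacent runners, and hence Proposition~\ref{prop:zunchanged} gives $\z(s_{a_j}(\nu_{j-1}))=\z(\nu_{j-1})=\z(\lambda)$, so that $\nu_j$ is again $0$-increasing. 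This already yields the ``only if'' direction of (2); note also that if $\z(\mu)=\z(\lambda)$ then $\mu$ is $0$-increasing by definition.

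Next I would treat a Rouquier block $B_R$ of $e$-weight $w$, one of which exists by the remark in \S\ref{subsection:partitions}. By Example~\ref{example:Rouquier}, a partition $\lambda\in B_R$ is $0$-increasing exactly when its $e$-quotient has the form $((1^{w_0}),\dotsc,(1^{w_{e-1}}))$, in which case $\z(\lambda)=(0^{w_0},\dotsc,(e-1)^{w_{e-1}})$. Since taking $e$-quotients identifies $B_R$ with the set of $e$-tuples of partitions of total size $w$, the map $\lambda\mapsto\z(\lambda)$ restricts to a bijection from the $0$-increasing partitions in $B_R$ onto $\inc{\good}{0}$; the inverse sends $(z_1\le\dotsb\le z_w)$ to the partition of $B_R$ with $w_r=|\{i\mid z_i=r\}|$.

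Finally I would assemble (1) and (2) for a general block $B$ using the two facts recalled in \S\ref{subsection:Fock}: $\W$ acts transitively on blocks of $e$-weight $w$, and every $\W$-orbit meets each such block in exactly one partition. Well-definedness of $\lambda\mapsto\z(\lambda)$ on $0$-increasing partitions in $B$ is clear: the entries of $\z(\lambda)$ lie in $[0,e]$ by construction and are weakly increasing by hypothesis, while the largest bead movement $(b_w;q_w)$ satisfies $b_w=q_w\in\beta(\lambda)$, which forces its armlength, hence every entry, to be $\le e-1$. For injectivity of (1) and the ``if'' direction of (2): suppose $\lambda$ is $0$-increasing and $\z(\lambda)=\z(\mu)$, so $\mu$ is also $0$-increasing; the $\W$-orbits of $\lambda$ and of $\mu$ each contain a unique partition in $B_R$, say $\lambda_R$ and $\mu_R$, and by the invariance step these are $0$-increasing with $\z(\lambda_R)=\z(\lambda)=\z(\mu)=\z(\mu_R)$, so $\lambda_R=\mu_R$ by the Rouquier case; hence $\lambda$ and $\mu$ lie in the same orbit, and if both lie in $B$ then in fact $\lambda=\mu$. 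For surjectivity of (1): given $z\in\inc{\good}{0}$, take the $0$-increasing partition of $B_R$ with $\z$-value $z$ from the Rouquier case, and let $\lambda_B$ be the unique member of its $\W$-orbit lying in $B$; by the invariance step $\lambda_B$ is $0$-increasing with $\z(\lambda_B)=z$.

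I expect the only substantial obstacle to have been surmounted already, in Proposition~\ref{prop:zunchanged}: once $\z$ is known to be constant on $0$-increasing $\W$-orbits, the remainder is bookkeeping. The one point in the plan that is not purely formal is that neither injectivity nor surjectivity of $\z$ is visible directly inside an arbitrary block, so the argument must be routed through a Rouquier block, where Example~\ref{example:Rouquier} makes $\z$ completely explicit.
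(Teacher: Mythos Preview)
Your proposal is correct and follows essentially the same approach as the paper: establish $\W$-invariance of $\z$ on $0$-increasing partitions via Lemma~\ref{L:0-increasing} and Proposition~\ref{prop:zunchanged}, verify the bijection for Rouquier blocks using Example~\ref{example:Rouquier}, and transport to an arbitrary block via the transitive $\W$-action. The paper's version is terser but the logical skeleton is the same; your extra care in checking that $\z(\lambda)\in\inc{\good}{0}$ (via $b_w=q_w$) and in spelling out injectivity, surjectivity, and both directions of~(2) separately is all sound.
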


\begin{proof}
	Let $B$ be a block of $e$-weight $w$, and let $\mathfrak{w} \in \W$.  Let  $\tB=\mathfrak{w}(B)$.
	 Let the subsets of $0$-increasing partitions in $B$ and $\tB$ be denoted by $B_{\geq 0}$ and $\tB_{\geq 0}$ respectively.  By Proposition \ref{prop:zunchanged} and Lemma \ref{L:0-increasing}, we have $\z(\lambda) = \z(\mathfrak{w}(\lambda))$ for all $0$-increasing $\lambda$ in $B$.
	 In particular $\mathfrak{w}(B_{\geq 0 }) \subseteq \tB_{\geq 0}$, and therefore $\mathfrak{w}(B_{\geq 0 }) =\tB_{\geq 0}$, as the same argument applies to $B=\mathfrak{w^{-1}}(\tB)$.

The first assertion of the theorem is true for a Rouquier block $B$ of $e$-weight $w$; see Exam\-ple~\ref{example:Rouquier}. The above considerations then imply that it holds also for any block of $e$-weight $w$, by the transitivity of the $\W$-action. The second assertion also follows immediately.

\end{proof}

\begin{example}\label{example:e2}
Let $e=2$. The set of all $0$-increasing partitions consists of the following	
`nearly triangular partitions with tail': let
$$\lambda(m,s,t,j)=(m,m-1,\dotsc,\hat{s},\dotsc,\hat{t},\dotsc,2,1,1^{2j}),$$
a partition of $\frac{m(m+1)}{2} - s - t + 2j$. Here
 $m,s,t,j$ range over all integers such that $m \geq s > t \geq 1$ and $j\geq 0$, and
  $\hat{s}$ and $\hat{t}$  indicate the omission of $s$ and $t$ as parts.

The partition $\lambda(m,s,t,j)$ has $2$-core
$(r,r-1,\dotsc,1)$,
where
$$r=\left|m-2s+2t+\tfrac{1}{2}\right|-\tfrac{1}{2},$$
$2$-weight
$$w  = (m-s)(s-t)+t(s-t-1)+j$$
and $2$-quotient given by
$$\lambda(m,s,t,j)^{(i)}=
\begin{cases}
\left((s-t)^{m-s}\right) & \text{if } m+i \text{ is odd}, \\
\left(t^{s-t-1}, 1^j\right) & \text{if } m+i \text{ is even},
\end{cases}
$$
for $i\in\{0,1\}$. It is easily verified that $\z(\lambda(m,s,t,j))=(0^j,1^{w-j})\in\inc{\good}{0}$.

Moreover, all $0$-increasing partitions are obtained this way: given integers $r,w,j$ with
$r\geq 0$ and $w\geq j \geq 0$, there is a unique representation
$$w-j=n(n+r)+cn+d,\qquad (n\geq0, \quad c\in\{0,1\}, \quad d\in [0,n-1 + c(r+1)]),$$
and we take
  $$m=2n+r+1+c, \quad s=2n+r+1+c-d \quad\text{and}\quad t=n+c(r+1)-d.$$
  \end{example}

\begin{rem} (Rouquier blocks)\label{remark:Rouquier}
We do not expect a complete description of the $0$-increasing partitions for $e>2$ analogous to that given in Example~\ref{example:e2}. However the situation is completely manageable if we restrict to partitions in Rouquier blocks. Indeed, in Example~\ref{example:Rouquier}, we saw that given $\mathbf{z}=(0^{w_0},\dotsc, (e-1)^{w_{e-1}})\in\inc{\good}{0}$, it is easy to describe the partition $\lambda$ in a Rouquier block (of $e$-weight $w=w_0+\dotsb+w_{e-1}$) such that $\z(\lambda)=\mathbf{z}$, as the partition with $e$-quotient $((1^{w_0}),\dotsc,(1^{w_{e-1}}))$. So the $0$-increasing partitions in Rouquier blocks are precisely those whose $e$-quotients consist entirely of $1$-column partitions. Moreover, for $m\geq 1$, a partition $\lambda$ in a Rouquier block is $m$-increasing if and only if each component $\lambda^{(i)}$ of the $e$-quotient of $\lambda$ is either $\emptyset$ or $(1)$, with $\lambda^{(i)}=\lambda^{(j)}=(1)$ only if $i=j$ or $|i-j|\geq m$.
\end{rem}

\begin{example}\label{E:principalblocks} (Principal blocks)
Let us consider the blocks at the opposite extreme to Rouquier blocks, namely the blocks with empty $e$-core.
Given $w\geq 1$ and $0\leq a_1<\dotsb<a_w\leq e-1$, denote by $\langle a_1,\dotsc,a_w \rangle$ the partition (of $e$-weight $w$) and empty $e$-core whose $e$-quotient is $(1)$ in its $a_i$-th component, for each $i\in[1,\,w]$, and $\emptyset$ in all other components; these partitions include the `general vertices' and `$p$-general vertices' considered in \cite{MR} and \cite{FM}.
Let $\mathbf{z}=(z_1,\dotsc,z_{w})\in\inc{\good}{0}$. If
$w-1\leq z_1$ and $z_w\leq e-w$, then we have
$$\z(\langle z_1-w+1, z_2 -w +3, \dotsc, z_w+w-1\rangle) =\mathbf{z}.$$

It seems difficult to describe explicitly the ($0$-increasing) partition $\lambda$ with empty $e$-core such that
$\z(\lambda)=\mathbf{z}$ in general. In preparation for one further example in which it is possible,
given $w\geq 2$ and $0\leq a_1<\dotsb<a_{w-1}\leq e-1$ and $1\leq j \leq w-1$, let $\langle a_1,\dotsc,a_{j-1}, a_j^{(2)}, a_{j+1},\dotsc, a_w \rangle$ be the partition (of $e$-weight $w$ and empty $e$-core) whose $e$-quotient is $(2)$ in the $a_j$-th coordinate, $(1)$ in the $a_i$-th component, for each $i\in[1,\,w]\setminus\{j\}$, and $\emptyset$ in all other components; amongst these are the `semi-general vertices' considered in \cite{FM}.
Then if $w-1\leq z_1$ and $z_{w-1}\leq e-w+1 \leq z_w$, we have
 $$\z(\langle z_1-w+1, z_2 -w +3,\dotsc, z_{j-1}- w+2j-1,
   (z_j - w+ 2j)^{(2)}, z_{j+1} -w+2j+2,
 \dotsc, z_{w-1} + w-2\rangle) =  \mathbf{z},$$
 where $j=e-z_w\in[1,w-1]$.

We conclude with one final example in principal blocks.
 Given $w\in \mathbb{Z}^+$, there exist unique $n \in \mathbb{Z}^+$ and $r \in [0,\, n)$ such that $w=\frac{n(n+1)}{2}-r$. Let $e \geq n$, and
$$\nu(w) = (ne-r, (n-1)e-r, \dotsc,(r+1)e-r, (r-1)e+n-r, (r-2)e+n-r, \dotsc, n-r),$$
the partition of $we$ with empty $e$-core and $e$-quotient
$$ 
((0),\dotsc, (0), (r+1), (r+2),\dotsc, (n), (0), (1), \dotsc, (r-1)).$$ Then it is easy to check that $\z(\nu(w))=(e-1,\ldots, e-1)$. The appearance of the triangular numbers $\frac{n(n+1)}{2}$
suggests the difficulty of inverting the map $\lambda\mapsto \z(\lambda)$ in general.
	\end{example}

\subsection{The $1$-increasing partitions}
We end this section with an analysis of $1$-increasing partitions in Lemma~\ref{lemma:1unram}, preceded by an easy and useful lemma on which it depends.
\begin{lemma} \label{L:easy}
	Let $S \subseteq \mathbb{Z}$, and let $x,y \in \mathbb{Z}$ with $x<y$.
	\begin{enumerate}
		\item If $\z_S(x) < \z_S(y)$, then there exists $t\in(x,\,y]$ such that $t\notin S$ and $t-e\in S$.
		\item If $\z_S(x) > \z_S(y)$, then there exists $t\in(x,\,y]$ such that $t\in S$ and $t-e\notin S$.
		\item If $\z_S(x) = \z_S(y)$, then either $t\in S \Leftrightarrow t-e\in S$ for all $t\in (x,\,y]$, or there exist $t,t'\in(x,\,y]$ such that $t\notin S$ and $t-e\in S$, and $t'\in S$ and $t'-e\notin S$.
	\end{enumerate}
\end{lemma}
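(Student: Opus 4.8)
\emph{Proof proposal.} The plan is to analyse the ``discrete derivative'' of $\z_S$. First I would record the elementary identity describing how $\z_S$ changes when its argument is incremented by $1$. Since $(t-e,\,t] = \{t-e+1,\dots,t\}$ and $(t-1-e,\,t-1] = \{t-e,\dots,t-1\}$, passing from $\z_S(t-1)$ to $\z_S(t)$ amounts to dropping the integer $t-e$ from the counting window and adjoining the integer $t$; hence
$$\z_S(t) - \z_S(t-1) = \I_{t\notin S} - \I_{t-e\notin S} \in \{-1,0,1\}.$$
Moreover this difference equals $+1$ precisely when $t\notin S$ and $t-e\in S$; it equals $-1$ precisely when $t\in S$ and $t-e\notin S$; and it equals $0$ precisely when $t\in S \Leftrightarrow t-e\in S$. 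This trichotomy is exactly the one appearing in the three cases of the lemma, so everything reduces to a statement about a sum of $\pm 1$ and $0$ terms.

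Next I would telescope: summing the identity above over the (nonempty, since $x<y$) set $(x,\,y]$ gives
$$\z_S(y) - \z_S(x) = \sum_{t\in(x,\,y]} \bigl(\I_{t\notin S} - \I_{t-e\notin S}\bigr),$$
a sum whose summands lie in $\{-1,0,1\}$. The three parts then follow by a one-line sign argument. For (1), if $\z_S(x) < \z_S(y)$ the sum is strictly positive, so at least one summand equals $+1$, i.e.\ there is $t\in(x,\,y]$ with $t\notin S$ and $t-e\in S$. For (2), if $\z_S(x) > \z_S(y)$ the sum is strictly negative, so at least one summand equals $-1$, giving $t\in(x,\,y]$ with $t\in S$ and $t-e\notin S$. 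For (3), if $\z_S(x)=\z_S(y)$ the sum vanishes; then either every summand is $0$ --- which is precisely the assertion that $t\in S\Leftrightarrow t-e\in S$ for all $t\in(x,\,y]$ --- or some summand is nonzero, in which case, since the summands add up to zero, there must be both a $+1$ summand and a $-1$ summand, yielding the required $t$ and $t'$.

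There is no substantial obstacle here: the only point requiring any care is getting the window-shift bookkeeping right (which integer enters and which leaves when the argument increases by one) and the resulting sign of $\I_{t\notin S}-\I_{t-e\notin S}$. Once that identity is in place the lemma is immediate, and I would present the argument in exactly this order --- window-shift identity, telescoping sum, sign argument --- as a single short self-contained paragraph.
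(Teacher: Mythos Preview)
Your proposal is correct and follows essentially the same approach as the paper: compute the one-step difference $\z_S(t)-\z_S(t-1)\in\{-1,0,1\}$, telescope over $(x,y]$, and read off each case from the sign of the sum. The paper's proof is just a terser version of exactly this argument.
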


\begin{proof}
	We have $\z_S(y)-\z_S(x)=\sum_{t=x+1}^{y}(\z_S(t)-\z_S(t-1))$ and
	$$\z_S(t)-\z_S(t-1)=\begin{cases}
	1, & \text{if } t\notin S \text{ and } t-e\in S; \\
	-1, & \text{if } t\in S \text{ and } t-e\notin S; \\
	0, & \text{otherwise.}
	\end{cases}
	$$
The lemma follows immediately.
	\end{proof}

  Recall that a partition $\lambda$ is called a {\em hook} if and only if $\lambda_2\leq 1$. Thus, the empty partition is a hook, and any nonempty hook is uniquely expressible as $(x,1^y)$ with $x \in \BZ^+$ and $y \in \BZ_{\geq 0}$.

\begin{lemma} \label{lemma:1unram}
 Let $\lambda$ be a $1$-increasing partition, with $e$-quotient $(\lambda^{(0)},\dotsc,\lambda^{(e-1)})$.
\begin{enumerate}
\item Each $\lambda^{(j)}$ is a hook. 
\item Suppose that the abacus display of $\lambda$ has more removable beads than addable beads on runner $j$.
Then there is at most one addable bead on runner $j-1$.  Furthermore, if $\lambda^{(j-1)}, \lambda^{(j)} \ne \emptyset$, then there is such an addable bead only when the bottom bead on runner $j-1$ is at the preceding position of the least unoccupied position on runner $j$.
\item Suppose that the abacus display of $\lambda$ has more addable beads than removable beads on runner $j$.
Then there is at most one removable bead on runner $j$.  Furthermore, if $\lambda^{(j-1)}, \lambda^{(j)} \ne \emptyset$, then there is such a removable bead only when the bottom bead on runner $j$ is at the succeeding position of the least unoccupied position on runner $j-1$.
\end{enumerate}
\end{lemma}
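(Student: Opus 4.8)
The plan is to obtain all three assertions from a single observation about the bead movements of a $1$-increasing partition, and then to read parts (2) and (3) off the shape of the abacus display on two adjacent runners.

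For part (1), the starting point is that when $\lambda$ is $1$-increasing the entries of $\z(\lambda)$ are \emph{strictly} increasing along the total order on $\bm(\lambda)$, whereas $\z(b;q)=\z_\lambda(q)$ depends only on the starting position $q$; hence distinct bead movements of $\lambda$ must have distinct starting positions. Assume, for a contradiction, that some component $\lambda^{(j)}$ of the $e$-quotient is not a hook, so $\lambda^{(j)}_2\ge 2$. Let $p_1>p_2$ be the beads on runner $j$ corresponding to the first two rows of $\lambda^{(j)}$; then $\wt_\lambda(p_1)=\lambda^{(j)}_1$, $\wt_\lambda(p_2)=\lambda^{(j)}_2$, and $(p_1-p_2)/e=\lambda^{(j)}_1-\lambda^{(j)}_2+1$. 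It follows that $(p_2;p_2)\in\bm(\lambda)$ (because $\wt_\lambda(p_2)\ge 1$) and that $(p_1;p_2)\in\bm(\lambda)$ (because $0<\lambda^{(j)}_1-\lambda^{(j)}_2+1<\lambda^{(j)}_1=\wt_\lambda(p_1)$, where the second inequality is precisely the assumption $\lambda^{(j)}_2\ge 2$). These are two distinct bead movements with the common starting position $p_2$, which is impossible; hence every $\lambda^{(j)}$ is a hook.

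For parts (2) and (3) I would first invoke part (1): each $\lambda^{(r)}$ is a hook, and this forces the abacus display of $\lambda$ on every runner to be an infinite downward run of consecutive beads followed, reading upwards, by a single unoccupied position, a run of beads, a run of unoccupied positions, and a single isolated bead (with some of these blocks empty, in particular when $\lambda^{(r)}=\emptyset$). In this situation the addable beads on runner $j-1$ and the removable beads on runner $j$ are finite in number, and the difference between these two counts depends only on the $e$-core; so after replacing $\lambda$ by $s_j(\lambda)$ if necessary --- which, $\lambda$ being $1$-increasing, hence $0$-increasing, is again $1$-increasing by Proposition~\ref{prop:zunchanged} and Lemma~\ref{L:0-increasing} --- the hypothesis of part (2) (resp.\ (3)) simply fixes the sign of that difference. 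A glance at runners $j-1$ and $j$ now shows that an addable bead on runner $j-1$ can only be the isolated bead of runner $j-1$, or else the bead sitting just above the unique gap on runner $j$ (if such a bead is present); so there are at most two addable beads, and these two candidate positions coincide exactly when the isolated bead of runner $j-1$ lies at the position preceding the least unoccupied position on runner $j$ --- the configuration isolated in the refinement. Part (3) is the mirror statement, with the roles of the two runners interchanged.

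It then remains to show, using the $1$-increasing hypothesis in earnest, that away from that coinciding configuration neither candidate is actually addable. The input is the formula
\[
\z_\lambda(q)=\#\{\,r\in[0,a]\mid \text{level }\nu\text{ on runner }r\text{ is unoccupied}\,\}+\#\{\,r\in(a,e)\mid \text{level }\nu-1\text{ on runner }r\text{ is unoccupied}\,\},
\]
valid for any position $q$ lying on runner $a$ at level $\nu$, which expresses each $\z$-value as a count of unoccupied positions along two neighbouring ``columns'' of the abacus. A bead movement at level $\nu$ on runner $j-1$ and one at level $\nu$ on runner $j$ have starting positions that differ by exactly $1$, so --- since movements have distinct starting positions --- they are adjacent in the total order on $\bm(\lambda)$; the formula gives $\z_\lambda$ of the runner-$j$ movement minus $\z_\lambda$ of the runner-$(j-1)$ movement as $\I_{\text{level }\nu\text{ unoccupied on }j}-\I_{\text{level }\nu-1\text{ unoccupied on }j}$, and the $1$-increasing hypothesis forces this to equal $1$, hence forces level $\nu$ on runner $j$ to be unoccupied and level $\nu-1$ on runner $j$ to be occupied, whenever both movements occur. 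Running this constraint over the bead movements produced by the isolated bead of runner $j-1$ (which has positive $\wt_\lambda$ as soon as $\lambda^{(j-1)}\ne\emptyset$) and over those produced by the run of beads on runner $j$, together with Lemma~\ref{L:easy} to track $\z_\lambda$ along the remaining movements, should eliminate the bad configurations and deliver both the bound and the refinement. I expect this final bookkeeping to be the main obstacle: the reasoning is elementary, but requires a careful case analysis according to which of $\lambda^{(j-1)},\lambda^{(j)}$ is empty, and to the relative sizes of the hook parameters of $\lambda^{(j-1)}$ and $\lambda^{(j)}$ and of the $e$-core invariant; the affine node $j=0$ also needs the obvious shift-by-one adjustment in the above comparison of levels.
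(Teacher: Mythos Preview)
Your argument for part (1) is correct and essentially identical to the paper's: both exhibit two distinct bead movements with the same starting position when $\lambda^{(j)}_2\ge 2$, contradicting strict monotonicity of $\z$.

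For parts (2) and (3), your key constraint --- that if $\lambda$ has bead movements starting at $q$ and $q-1$ with $q\equiv_e j$ then $q\notin\beta(\lambda)$ and $q-e\in\beta(\lambda)$ --- is exactly the paper's main tool, so the strategy is right. But two intermediate steps in your sketch do not hold up. First, the $s_j$ remark is at best a distraction: the crystal action is not the runner swap, so it does not transport the conclusion of (2) for $\lambda$ to that of (3) for $s_j(\lambda)$; the paper, like your final sentence, simply notes that (3) is analogous and proves (2) directly. Second, and more seriously, your claim that an addable bead on runner $j-1$ ``can only be the isolated bead of runner $j-1$, or else the bead sitting just above the unique gap on runner $j$'' does not follow from the hook shapes alone. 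Nothing yet prevents several of the $y'$ beads in the middle run of runner $j-1$ from sitting opposite the $x-1$ gaps of runner $j$, producing many addable beads. That bound genuinely requires the $1$-increasing hypothesis, which you only invoke afterwards.

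The paper bypasses this detour. Writing $M_{j-1}=\max\{b\in\beta(\lambda):b\equiv_e j{-}1\}$, $m_j=\min\{y\equiv_e j:y\notin\beta(\lambda)\}$, and $q_r=M_r-\wt_\lambda(M_r)e$ for $r\in\{j-1,j\}$, the hypothesis of (2) gives $q_j>_e q_{j-1}+1$, hence $\frac{M_{j-1}-(q_j-1)}{e}<\wt_\lambda(M_{j-1})$. One then shows directly that $M_{j-1}\le_e m_j-1$, by applying your key constraint to the bead movements $(q;q)$ on runner $j$ for $m_j<_e q\le_e q_j$ (each forcing $(M_{j-1};q-1)\notin\bm(\lambda)$) and using the displayed inequality to rule out $M_{j-1}\ge_e q_j-1$. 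That single inequality $M_{j-1}\le_e m_j-1$ finishes everything: every bead on runner $j-1$ then lies at or above the preceding position of $m_j$, so the only possible addable bead is the one at $m_j-1$, present precisely when $M_{j-1}=m_j-1$. Both the bound and the refinement drop out at once, with no preliminary ``at most two'' estimate and no case analysis on the hook parameters.
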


\begin{proof} \hfill
\begin{enumerate}
\item Let $x = \max\{b \in \beta(\lambda) \mid b \equiv_e j\}$, and $y = \max\{b \in \beta(\lambda) \mid b <_e x\}$. Then
    $ \wt_{\lambda}(x) = \frac{x-y}{e} - 1 + \wt_{\lambda}(y)$.  If $\lambda^{(j)}_2 \geq 2$, then $\wt_{\lambda}(y) \geq 2$, so that $\frac{x-y}{e} \leq \wt_{\lambda}(x) - 1$.  Consequently, $(x;y), (y;y) \in \bm(\lambda)$, and clearly $\z(x;y) = \z(y;y)$.  But $\lambda$ is $1$-increasing and hence the components of $\z(\lambda)$ should be distinct, a contradiction.
\item By part (1), $\lambda^{(j)}$ and $\lambda^{(j-1)}$ are hooks (or empty).  It is then easy to see that $\lambda$ has at most one addable bead on runner $j-1$ if $\lambda^{(j-1)}$ or $\lambda^{(j)} = \emptyset$.  To deal with the remaining case, we first observe the following fact about $1$-increasing partitions:
    If $(b;q), (b';q-1) \in \bm(\lambda)$ for some $q \equiv_e j$ and $b,b' \in \beta(\lambda)$, then $\z(b';q-1) < \z(b;q)$ since $\lambda$ is $1$-increasing, so that $q \notin \beta(\lambda)$ and $q-e \in \beta(\lambda)$ by Lemma \ref{L:easy}(1).  The following summarises this in pictorial terms:
    \medskip

    \begin{center}
   	 \begin{tikzpicture}[y=-1cm, scale=1.2, every node/.style={font=\scalefont{1}}
	 ]
	
	 \def\beadsize{.25}
	
	 \foreach \x in {0,1,5,6}
	 {\draw [thin, gray] (\x,-0.5) --(\x,1.5);
	 };

	 \draw (0,0) [dashed] circle [radius=\beadsize];
	 \draw (0,1) [dashed] circle [radius=\beadsize];
	 	 \draw (1,0) [dashed] circle [radius=\beadsize];
	 	 \draw (1,1) [dashed] circle [radius=\beadsize];

	 	 	 \draw (5,0) [dashed] circle [radius=\beadsize];
	 	 	 \draw (5,1) [dashed] circle [radius=\beadsize];
	 	 	 \draw (6,0) [thick] circle [radius=\beadsize];
	 	
	 \node at (3,0.5) {$\implies$};	 	
	 \node at (0,1) {$q-1$};
	 \node at (1,1) {$q$};
	  \node at (5,1) {$q-1$};
	  \node at (6,1) {$q$};
	 \draw [->, >=latex, line width = 2] (0,0.7) -- (0,0.3);
	  \draw [->, >=latex, line width = 2] (1,0.7) -- (1,0.3);
	   \draw [->, >=latex, line width = 2] (5,0.7) -- (5,0.3);
	    \draw [->, >=latex, line width = 2] (6,0.7) -- (6,0.3);
	
	 \end{tikzpicture}

    \end{center}
    \medskip

    Now suppose that $\lambda^{(j-1)}$ and $\lambda^{(j)}$ are both non-empty.  Let
    \begin{align*}
    M_j &:= \max\{ x\equiv_e j \mid x \in \beta(\lambda)\}, \\
    m_j &:= \min\{ y\equiv_e j \mid y \notin \beta(\lambda) \}, \\
    q_j &:= M_j - \wt_{\lambda}(M_j)e.
    \end{align*}
    and similarly define $M_{j-1}, m_{j-1}, q_{j-1}$.  Then $ q_j >_e q_{j-1}  + 1$
    as $\lambda$ has more removable beads than addable beads on runner $j$.  Thus
    \begin{equation} \label{E:M}
    \frac{M_{j-1} - (q_j-1)}{e} < \frac{M_{j-1} - q_{j-1}}{e} = \wt_{\lambda}(M_{j-1}).
    \end{equation}
    Since $\lambda^{(j)}$ is non-empty, we have $q_j \geq_e m_j$.  It suffices to show that $M_{j-1} \leq_e m_j-1$.
    This can be easily seen if $q_j = m_j$: otherwise $M_{j-1} >_e m_j-1 = q_j-1$, and thus $M_{j-1} \geq_e q_j+e -1$, so that, from \eqref{E:M} above, $(M_{j-1};q_j+e-1) \in \bm(\lambda)$; but we also have $(M_j;q_j+e) \in \bm(\lambda)$, contradicting the bead configuration above (as $q_j = m_j \notin \beta(\lambda)$).  If $q_j >_e m_j$, then, for any $m_j <_e q \leq_e q_j$, we have $q \in \beta(\lambda)$ since $\lambda^{(j)}$ is a hook, and hence $(q;q) \in \bm(\lambda)$, so that $(M_{j-1};q-1) \notin \bm(\lambda)$ by the bead configuration above; in particular $M_{j-1} \ne q-1$.  In addition, we must have $M_{j-1} <_e q_j-1$, as otherwise $(M_{j-1}; q_j-1) \in \bm(\lambda)$ by \eqref{E:M}.  Thus $M_{j-1} \leq_e m_j-1$, and our proof is complete.

\item This is entirely analogous to part (2).
\end{enumerate}
\end{proof}

\section{Parallelotopes and $q$-decomposition numbers} \label{S:main}

In this section we state our main result, which gives a complete description of $q$-decomposition numbers $d_{\lambda\mu}(q)$ when
$\mu$ is $4$-increasing.

\begin{defi}\label{def:hookquotient}
Let $\lambda$ be a partition with $e$-quotient $(\lambda^{(0)},\dotsc, \lambda^{(e-1)})$.  We say that $\lambda$ is a {\em hook-quotient partition} if $\lambda^{(j)}$ is a hook for all $j \in [0,\,e)$.
\end{defi}
Note that if $\lambda^{(j)}$ is a nonempty hook $(x,1^y)$, then
$$\bm^{(j)}(\lambda) = \{ (b;b-ie) \mid i \in [0,\,x) \} \cup \{ (b-ie;b-ie) \mid i \in [x,x+y) \},$$ where $b = \max\{ c \in \beta(\lambda) \mid c \equiv_e j\}$.  In particular, every hook-quotient partition has the desirable property that its bead movements have distinct starting positions.

By Lemma \ref{lemma:1unram}(1), all $1$-increasing partitions are hook-quotient partitions.

Let $\{ \ssb_i \mid 1 \leq i \leq w \}$ be the standard basis of $\mathbb{Z}^w$, so that any $\mathbf{z} = (z_1,\dotsc,z_w) \in \mathbb{Z}^w$ may be written as $\mathbf{z} = \sum_{i=1}^w z_i \ssb_i$.

\begin{defi}\label{def:modified}
Let $\lambda$ be a hook-quotient partition of $e$-weight $w$.  We define the {\em{$\lambda$-modified basis vectors}}
$\sb_i^{\lambda} \in \mathbb{Z}^w$ as follows.  Let $\bm(\lambda) = \{(b_1;q_1) < \dotsb < (b_w;q_w)\}$.  Let $j$ be a residue class modulo $e$, and suppose that $\{ i \in [1,\, w] \mid q_i \equiv_e j \} = \{ i_1 < \dotsb < i_r \}$.  Let $l=\min\{s\mid b_{i_s}=b_{i_r}\}$, that is, $(b_{i_l}; q_{i_l})$ is the final bead movement
associated to the bottom bead on runner $j$ of the abacus display of $\lambda$.
Define
$$
\sb_{i_\gamma}^{\lambda}
:=
\begin{cases}
\ssb_{i_\gamma}-\ssb_{i_{\gamma+1}}, & \text{ if $\gamma<l$}; \\
\ssb_{i_\gamma}, & \text{ if $\gamma=l$}; \\
\ssb_{i_\gamma}-\ssb_{i_{\gamma-1}}, & \text{ if $\gamma>l$}. \\
\end{cases}
$$
We do this for all residue classes $j$ modulo $e$.
\end{defi}
It is clear that $\sb_1^\lambda,\dotsc,\sb_w^{\lambda}$ form another basis of $\BZ^w$.

\begin{example}
	Let $\lambda$ be a hook-quotient partition and suppose that
	$\lambda^{(j)}=(4,1,1)$. The bead movements on runner $j$ and the corresponding modified basis vectors are depicted as arrows in Figure~\ref{figure:modifiedbasisvectors}.
\begin{figure}[ht]\caption{} 

\def\globalscaling{.7}
\def\arrowshortening{0.25}
\def\shadebuffer{0.3}
\def\beadmovementwidth{2}
\def\abacusmovementwidth{2}
\def\beadoperationscaling{.85}
\def\labelsize{1}
\def\beadsize{.13}
\def\circledbeadmovementsize{.3}
\def\runnershiftforRHSdiagram{13}

$$
\begin{tikzpicture}
[y=-1cm, scale=\globalscaling], every node/.style={scale=.8*\globalscaling}]

\draw (0,-0.5) -- (0,10.5);

\foreach \x in
{(0,0), (0,1), (0,3), (0,4), (0,8)}
{\draw
[fill] \x circle (\beadsize)
;};

\foreach \x in
{(0,2), (0,5), (0,6),(0,7),(0,9),(0,10)}
{\node [scale=\labelsize*\globalscaling] at \x {\----};};

\foreach \x in
{(0,-1),(0,11) }
{\node [scale=\labelsize*\globalscaling] at \x {$\vdots$};};

\draw [red, line width=\beadmovementwidth*\globalscaling,->, >=latex] (0,5-\arrowshortening) -- (0,4+\arrowshortening) node [midway, right, black, scale=\labelsize*\globalscaling]{$\; \sb_{i_3}^\lambda =\ssb_{i_3}$};
\draw [green, line width=\beadmovementwidth*\globalscaling,->, >=latex] (0,6-\arrowshortening) -- (0,5+\arrowshortening) node [midway,right,black, scale=\labelsize*\globalscaling]{$\;\sb_{i_4}^\lambda = \ssb_{i_4}-\ssb_{i_3}$};
\draw [green, line width=\beadmovementwidth*\globalscaling,->, >=latex] (0,7-\arrowshortening) -- (0,6+\arrowshortening) node [midway,right,black, scale=\labelsize*\globalscaling]{$\;\sb_{i_5}^\lambda = \ssb_{i_5}-\ssb_{i_4}$};
\draw [green, line width=\beadmovementwidth*\globalscaling,->, >=latex] (0,8-\arrowshortening) -- (0,7+\arrowshortening) node [midway,right,black, scale=\labelsize*\globalscaling]{$\;\sb_{i_6}^\lambda = \ssb_{i_6}-\ssb_{i_5}$};

\draw [blue, line width=\beadmovementwidth*\globalscaling,->, >=latex] (0,4-\arrowshortening) -- (0,3+\arrowshortening) node [midway,right,black, scale=\labelsize*\globalscaling]{$\;\sb_{i_2}^\lambda = \ssb_{i_2}-\ssb_{i_3}$};
\draw [blue, line width=\beadmovementwidth*\globalscaling,->, >=latex] (0,3-\arrowshortening) -- (0,2+\arrowshortening) node [midway,right,black, scale=\labelsize*\globalscaling]{$\;\sb_{i_1}^\lambda = \ssb_{i_1}-\ssb_{i_2}$};
\end{tikzpicture}
$$

		\label{figure:modifiedbasisvectors}
\end{figure}

For each runner with nonzero $e$-weight, only one basis vector is unmodified; it corresponds to the final bead movement of the bottom bead of the runner. In the figure this bead movement is represented by a red arrow.
\end{example}

\begin{defi}
Let $\lambda$ be a hook-quotient partition of $e$-weight $w$.
\begin{enumerate}
\item For a subset $\Gamma$ of $[1,\, w]$, define
$$
\sb_{\Gamma}^{\lambda} = \sum_{a \in \Gamma} \sb_a^{\lambda} \in \BZ^w
$$
(where $\sb_{\emptyset}^{\lambda}$ is of course to be read as $0$.)
\item Define the {\em{parallelotopes}} $\para_0(\lambda)$ and $\para(\lambda)$ of $\lambda$, anchored at $0$ and $\z(\lambda)$ respectively, by
\begin{align*}
\para_0(\lambda) & =\left\{ \sb_\Gamma^\lambda \mid \Gamma \subseteq [1,\,w] \right\}, \\
\para(\lambda) &=\z(\lambda) +  \para_0(\lambda) = \left\{ \z(\lambda) + \sb_\Gamma^{\lambda} \mid \Gamma \subseteq [1,\,w] \right\};
\end{align*}
they are subsets of $\BZ^w$, each of cardinality $2^w$.
\item If $\z(\mu) = \z(\lambda) + \sb_{\Gamma}^{\lambda} \in \para(\lambda)$, define $\dist{\lambda}{\mu} = |\Gamma|$.
\end{enumerate}
\end{defi}

The following important observation is an easy consequence of the definition of the modified basis vectors $\sb_i^\lambda$.

\begin{lemma} \label{preshiftby3}
Let $\lambda$ be a hook-quotient partition of $e$-weight $w$, and let $(z_1,\dotsc, z_w) \in \para_0(\lambda)$.  Then $-2 \leq z_i \leq 1$ for all $i \in[1,\,w]$.
\end{lemma}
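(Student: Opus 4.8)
The plan is to reduce the inequality to an entirely local computation, runner by runner on the abacus. Recall from Definition~\ref{def:modified} that $[1,w]$ is partitioned, according to the residue classes $j$ modulo $e$ for which $\bm^{(j)}(\lambda)\neq\emptyset$, into the index sets $\{i_1<\dots<i_r\}$ (with $r$, and the distinguished index $l$, depending on $j$) that label the bead movements on runner $j$; and that the modified basis vector $\sb^\lambda_{i_s}$ attached to runner $j$ lies in the coordinate subspace spanned by $\ssb_{i_1},\dots,\ssb_{i_r}$. Hence, fixing $i\in[1,w]$ lying on runner $j$, say $i=i_\gamma$, the $i$-th coordinate $z_i$ of $\sb^\lambda_\Gamma=\sum_{a\in\Gamma}\sb^\lambda_a$ depends only on $\Gamma\cap\{i_1,\dots,i_r\}$ and equals the coefficient of $\ssb_{i_\gamma}$ in $\sum_{i_s\in\Gamma}\sb^\lambda_{i_s}$. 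So it is enough to bound that coefficient over all subsets of $\{i_1,\dots,i_r\}$, for each runner separately.

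Next I would read off, directly from the three cases of Definition~\ref{def:modified}, which $\sb^\lambda_{i_s}$ contribute to the $\ssb_{i_\gamma}$-coordinate. In all three cases $\sb^\lambda_{i_\gamma}$ has leading term $+\ssb_{i_\gamma}$, giving a contribution $+1$ when $i_\gamma\in\Gamma$. A vector $\sb^\lambda_{i_s}$ with $s\neq\gamma$ involves $\ssb_{i_\gamma}$ only with coefficient $-1$, and only in two situations: $s=\gamma-1$ with $s<l$ (then $\sb^\lambda_{i_{\gamma-1}}=\ssb_{i_{\gamma-1}}-\ssb_{i_\gamma}$), or $s=\gamma+1$ with $s>l$ (then $\sb^\lambda_{i_{\gamma+1}}=\ssb_{i_{\gamma+1}}-\ssb_{i_\gamma}$). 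Adopting the convention that indices outside $[1,r]$ never lie in $\Gamma$, this yields the formula
$$z_i=\I_{i_\gamma\in\Gamma}-\I_{\gamma\le l}\,\I_{i_{\gamma-1}\in\Gamma}-\I_{\gamma\ge l}\,\I_{i_{\gamma+1}\in\Gamma}.$$

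The conclusion is then immediate: the conditions $\gamma\le l$ and $\gamma\ge l$ in the two subtracted terms cannot both hold unless $\gamma=l$, so $z_i\in\{-1,0,1\}$ when $\gamma\neq l$ and $z_i\in\{-2,-1,0,1\}$ when $\gamma=l$; in every case $-2\le z_i\le1$, as claimed. I do not expect a genuine obstacle here — the one thing to keep track of is the boundary behaviour ($\gamma=1$ or $\gamma=r$, $l=1$ or $l=r$, $r=1$), which is handled uniformly by the convention above, together with the routine check that every bead movement of $\lambda$ lies on exactly one runner so that the index sets really do partition $[1,w]$. It is perhaps worth recording, as it will clarify the geometry later, that the lower bound $-2$ is attained only in the coordinate $i_l$ corresponding to the unique unmodified basis vector of a runner, and only for those $\Gamma$ containing $i_{l-1}$ and $i_{l+1}$ but not $i_l$.
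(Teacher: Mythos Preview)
Your proof is correct and is precisely the kind of direct unwinding of Definition~\ref{def:modified} that the paper has in mind; the paper itself gives no proof, merely recording the lemma as ``an easy consequence of the definition of the modified basis vectors $\sb_i^\lambda$''. Your runner-by-runner localisation and the explicit formula for $z_i$ are exactly the right way to make this precise, and your observation that the extreme value $-2$ occurs only at the pivot index $i_l$ is a nice bonus.
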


\begin{lemma}\label{lemma:shiftby3}
Let $\lambda$ be a hook-quotient partition of $e$-weight $w$, and let $\mathbf{z}_1,\mathbf{z}_2 \in \mathbb{Z}^w$.  Suppose that $\mathbf{z}_1 - \mathbf{z}_2 \in \Pi_0(\lambda)$.
Then for any $m\in\mathbb{Z}$,
\begin{enumerate}
\item if $\mathbf{z}_1$ is $m$-increasing, then $\mathbf{z}_2$ is $(m-3)$-increasing.
\item if $\mathbf{z}_2$ is $m$-increasing, then $\mathbf{z}_1$ is $(m-3)$-increasing.
\end{enumerate}
In particular, given a partition $\mu$ such that $\z(\mu)\in\para(\lambda)$, if one of $\lambda$ and $\mu$ is $m$-increasing, then the other is $(m-3)$-increasing.
\end{lemma}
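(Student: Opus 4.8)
The plan is to reduce everything to Lemma~\ref{preshiftby3}, which already carries all of the combinatorial content; what remains is a two-line estimate. First I would write $\mathbf{z}_1 - \mathbf{z}_2 = (d_1,\dotsc,d_w) \in \para_0(\lambda)$. By Lemma~\ref{preshiftby3} we then have $-2 \leq d_i \leq 1$ for every $i \in [1,w]$, and consequently $-3 \leq d_{i+1} - d_i \leq 3$ for every $i \in [1,w)$. This two-sided bound is all that is needed; I would note here that one cannot collapse the two parts of the lemma into one by a symmetry argument, since $\para_0(\lambda)$ is not in general stable under $\mathbf{z} \mapsto -\mathbf{z}$, but since both inequalities $d_{i+1}-d_i \leq 3$ and $d_{i+1}-d_i \geq -3$ hold simultaneously, each direction is available.

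Next I would set $\mathbf{z}_1 = (x_1,\dotsc,x_w)$ and $\mathbf{z}_2 = (y_1,\dotsc,y_w)$, so that $x_i - y_i = d_i$. For part (1), assuming $x_{i+1}-x_i \geq m$ for all $i$, I compute $y_{i+1} - y_i = (x_{i+1}-x_i) - (d_{i+1}-d_i) \geq m - 3$, so $\mathbf{z}_2$ is $(m-3)$-increasing. For part (2), assuming $y_{i+1}-y_i \geq m$ for all $i$, I compute $x_{i+1}-x_i = (y_{i+1}-y_i) + (d_{i+1}-d_i) \geq m - 3$, using $d_{i+1}-d_i \geq -3$, so $\mathbf{z}_1$ is $(m-3)$-increasing.

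For the final ``in particular'' assertion I would use that, by definition, a partition is $m$-increasing exactly when its $\z$-vector is, and that $\z(\mu) \in \para(\lambda)$ says precisely $\z(\mu) - \z(\lambda) \in \para_0(\lambda)$. Applying parts (1) and (2) with $\mathbf{z}_1 = \z(\mu)$ and $\mathbf{z}_2 = \z(\lambda)$ then gives: if $\mu$ is $m$-increasing then $\lambda$ is $(m-3)$-increasing, and if $\lambda$ is $m$-increasing then $\mu$ is $(m-3)$-increasing.

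There is no genuine obstacle: once Lemma~\ref{preshiftby3} is in hand, the argument is a single arithmetic inequality. The only point I would be careful to articulate is why the symmetry between parts (1) and (2) is \emph{not} automatic (because $\para_0(\lambda)$ need not be centrally symmetric), which is resolved by keeping the full estimate $|d_{i+1}-d_i| \leq 3$ rather than a one-sided one.
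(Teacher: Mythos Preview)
Your proof is correct and follows essentially the same approach as the paper's: both apply Lemma~\ref{preshiftby3} to bound the coordinates of $\mathbf{z}_1-\mathbf{z}_2$ by $-2$ and $1$, then use the resulting two-sided bound $|d_{i+1}-d_i|\leq 3$ on consecutive differences to transfer the $m$-increasing condition in either direction. Your added remark on why parts (1) and (2) are not formally symmetric is a nice clarification, though not strictly needed for the argument.
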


\begin{proof}
Let $\mathbf{z}_1 = (a_1,\dotsc, a_w)$ and $\mathbf{z}_2 = (b_1,\dotsc, b_w)$.  Then $-2 \leq a_i - b_i \leq 1$ for all $i \in [1,\,w]$, by Lemma~\ref{preshiftby3}.  Thus $b_i -2 \leq a_i \leq b_i + 1$ for any $i \in[1,\,w]$, so that
$(b_i - b_{i-1}) -3 \leq a_i - a_{i-1} \leq (b_i - b_{i-1}) + 3$.  The lemma thus follows.
\end{proof}

The following is the main theorem of this paper.
\begin{thm}\label{thm:main}
Let $\lambda$ and $\mu$ be partitions of $e$-weight $w$ with the
same $e$-core. Suppose that $\mu$ is
$4$-increasing.  Then
$$
d_{\lambda\mu}(q)=
\begin{cases}
q^{\dist{\lambda}{\mu}},  
& \text{if $\lambda$ is a hook-quotient partition and $\z(\mu) \in \para(\lambda)$}; \\
0,  & \text{otherwise.}
\end{cases}
$$
\end{thm}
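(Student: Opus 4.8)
The plan is to prove this by induction, exploiting the combinatorics of the Fock space action and the bar-invariance characterisation of the canonical basis. First I would reduce to the case where $\mu$ is the maximal element in its parallelotope, i.e.\ $\mu$ is (one of) the `source' partition(s): if $\lambda$ is hook-quotient with $\z(\mu)\in\para(\lambda)$, then by Section~\ref{S:movealong} (which I am allowed to assume only after it is proved --- so in the real paper the induction is organised around it) one can pass from $\lambda$ to $\mu$ by a sequence of moves $\nu\mapsto\nu_H$, each of which corresponds combinatorially to a single bead movement; at the level of the parallelotope this is translation by one modified basis vector $\sb_H$. The strategy is then to build $G(\mu)$ explicitly: start from a partition $\mu_0$ large in the dominance-type order for which $G(\mu_0)=\mu_0$ (for instance a suitable partition whose abacus has all relevant beads pushed up, so that no $q\mathbb{Z}[q]$-corrections are possible), and apply a carefully chosen sequence of divided powers $F_i^{(k)}$ of the Chevalley generators. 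Each application $F_i^{(k)}$ commutes with the bar involution, so the resulting vector is automatically bar-invariant; the only thing to check is that, after subtracting off lower canonical basis vectors $G(\nu)$ already constructed (with bar-invariant coefficients in $\mathbb{Z}[q+q^{-1}]$), what remains is $\mu + \sum q\mathbb{Z}[q]\,\nu$. Because the target partitions $\mu$ are $4$-increasing, Lemma~\ref{lemma:shiftby3} guarantees that all the hook-quotient partitions $\lambda$ with $\z(\mu)\in\para(\lambda)$, and everything appearing along the induction, are at least $1$-increasing, hence (Lemma~\ref{lemma:1unram}) hook-quotient, so the combinatorics of Section~\ref{S:label} --- in particular the stability $\z(\lambda)=\z(s_a\lambda)$ of Proposition~\ref{prop:zunchanged}, valid because the forbidden configuration of Figure~\ref{figure:disallowed-configuration} never occurs (Lemma~\ref{L:0-increasing}) --- applies throughout.

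The key computational steps, in order, would be: (i) identify for a hook-quotient $\lambda$ a single residue $i$ and a single bead movement whose `undoing' takes $\lambda$ to a hook-quotient $\lambda^-$ of $e$-weight $w-1$, with $\z(\lambda^-)$ obtained from $\z(\lambda)$ by deleting the appropriate coordinate, and with the parallelotope of $\lambda$ projecting onto that of $\lambda^-$; (ii) compute the action of the relevant $F_i^{(k)}$ on $G(\lambda^-)$ using the explicit formulae for $N_F(\lambda,\mu)$ from Section~\ref{subsection:Fock}, showing that the coefficient of each $\mu$ with $\z(\mu)\in\para(\lambda)$ is exactly $q^{\dist{\lambda}{\mu}}$ up to the contributions that will be cancelled; (iii) show that all the `unwanted' partitions produced --- those not in the parallelotope --- either vanish or are themselves sources of lower parallelotopes $G(\nu)$ already constructed, with the cancellation coefficients lying in $\mathbb{Z}[q+q^{-1}]$, so that bar-invariance is preserved; and (iv) verify the vanishing clause, namely that if $\lambda$ is \emph{not} hook-quotient, or $\z(\mu)\notin\para(\lambda)$, then $d_{\lambda\mu}(q)=0$ --- this is essentially a support statement, proved by the same inductive construction (if $\mu$ never appears in $G(\mu)$'s build with nonzero coefficient then $d_{\lambda\mu}(q)=0$) together with the block restriction $d_{\lambda\mu}(q)\neq0\Rightarrow\lambda,\mu$ in the same block.

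The main obstacle, which I expect to be the technical heart of the argument, is step (iii): controlling precisely which partitions appear when $F_i^{(k)}$ is applied, and checking that the recursive subtraction of lower canonical basis vectors really does clean up all the off-parallelotope terms with the right ($q+q^{-1}$-symmetric) coefficients. This is where the hypothesis that $\mu$ is $4$-increasing (rather than merely $1$- or $0$-increasing) does real work: the extra `room' of $4$ between successive coordinates of $\z(\mu)$ ensures that distinct bead movements on distinct runners do not interact when we apply $F_i^{(k)}$, so that the $q$-power exponents add up as $\dist{\lambda}{\mu}=|\Gamma|$ with no cross terms, and that the partitions arising in intermediate steps remain in the range where Proposition~\ref{prop:zunchanged} and the machinery of Section~\ref{S:movealong} apply. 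A secondary difficulty is the bookkeeping needed to show that the order in which one undoes bead movements does not matter --- i.e.\ that the construction is well-defined independently of the chosen sequence of $F_i^{(k)}$'s --- which I would handle by a commutation argument for the relevant divided powers acting on the relevant weight spaces, or alternatively by appealing to uniqueness of the canonical basis once existence and bar-invariance are established.
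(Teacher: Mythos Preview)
Your high-level instinct --- build $G(\mu)$ by applying bar-invariant operators and then subtract lower canonical basis vectors with bar-symmetric coefficients --- is exactly right, and the paper does just this. But the inductive scheme you propose has a structural gap and a conceptual confusion that would prevent it from going through as written.

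\textbf{The induction is on blocks, not on weight.} Your step (i) proposes to reduce a hook-quotient $\lambda$ of $e$-weight $w$ to some $\lambda^-$ of $e$-weight $w-1$ via ``a single residue $i$ and a single bead movement''. This conflates two unrelated operations. The Fock space operators $E_i^{(k)}, F_i^{(k)}$ move beads \emph{between adjacent runners} and do not change $e$-weight at all (they change the block); the parallelotope moves $\lambda\mapsto\lambda_H$ of Section~\ref{S:movealong} move beads \emph{along runners} and have nothing directly to do with $E_i, F_i$. There is no single $F_i^{(k)}$ that implements $\lambda^-\mapsto\lambda$ with $\lambda^-$ of weight $w-1$. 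The paper's induction is instead organised via Proposition~\ref{prop:main}: one fixes a Scopes $[w:k]$-pair $(B,\tilde B)$ with $\tilde B=s_a(B)$, assumes the theorem for $B$ and for all blocks of weight $<w$, and deduces it for $\tilde B$. The base case is a Rouquier block, where the theorem is checked directly against the Leclerc--Miyachi closed formula (Example~\ref{example:mainRouquier}); you never mention this, and without it there is no induction anchor.

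\textbf{The correction terms are the whole difficulty, and they require the exceptional-family machinery.} Your step (iii) anticipates that ``unwanted'' partitions must be cancelled by lower $G(\nu)$'s, but is vague about which ones and why the coefficients are bar-symmetric. In the paper the transfer $B\to\tilde B$ is effected by $E^{(k)}$: for non-exceptional $\lambda$ one has $E^{(k)}(\lambda)=\tilde\lambda$ cleanly, but for \emph{exceptional} $\lambda$ (those with an addable bead on runner $a-1$) one gets a $q$-weighted sum over an entire ``exceptional family'' $\{\tilde\lambda^j\}_{j=0}^{k+1}$ (equation~\eqref{Eonexceptionals}). The correction is then not by arbitrary lower $G(\nu)$'s but specifically by $F(G(\check\sigma))$ for certain $\check\sigma$ in the auxiliary block $\check B$ of weight $w-k-1$, with the quantum-integer coefficient $[n_{\boldsymbol\sigma,\mu}-1]_q$ (Corollary~\ref{C:others}). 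Identifying these $\check\sigma$ and proving the coefficient is bar-symmetric is the content of Propositions~\ref{P:moveinparallelogramtilde}--\ref{P:} and requires the detailed combinatorics of hook-quotient exceptional families (Lemmas~\ref{L:region}--\ref{L:internalparallelogram}), none of which your proposal anticipates. The $4$-increasing hypothesis is used there not to keep runners from interacting --- they must interact, that is the whole point of the Scopes move --- but to ensure via Lemma~\ref{lemma:shiftby3} that every $\lambda$ with $d_{\lambda\mu}(q)\ne 0$ is $1$-increasing and hence (Lemma~\ref{L:exceptional-1-increasing}) sits in a hook-quotient exceptional family when exceptional.
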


\begin{rem}\label{rem:proportion}
	Theorem \ref{thm:main} determines `most' of the columns of the $q$-decomposition matrix of blocks of $e$-weight $w$ as $e$ tends to infinity, in the sense that the proportion of partitions in blocks of $e$-weight $w$ that are $4$-increasing tends to $1$. In fact, the latter is true of
	$m$-increasing partitions, for any $m$,
	 as we now demonstrate.

	Let $B$ be a block of $e$-weight $w$.
	Given a sequence $c_1,\dotsc,c_l$ of positive integers summing to $w$, a sequence $j_1,\dotsc,j_l$ of integers such that $0\leq j_1<\dotsb<j_l\leq e-1$ and a sequence
	of partitions $\nu^1,\dotsc,\nu^l$ such that $|\nu^\gamma|=|c_\gamma|$, there is a unique partition $\lambda\in B$ whose $e$-quotient $(\lambda^{(0)},\dotsc, \lambda^{(e-1)})$ satisfies
	$$\lambda^{(j)}=
	\begin{cases}
	\nu^{\gamma}, & \text{if } j=j_\gamma \text{ for some } \gamma\in[1,l]; \\
	\emptyset, & \text{otherwise.}
	\end{cases}$$
	Moreover all partitions in $B$ arise uniquely in this way. Hence
		$$\left|B\right|=\sum_{(c_1,\dotsc,c_l)\vDash w} {e\choose l}\prod_{i=1}^{l} p(c_i),$$
		where the sum is taken over the set of sequences $(c_1,\dots,c_l)$ described above and $p(c_i)$ denotes the number of partitions of $c_i$.
    Each term in the sum satisfies $l\leq w$, with equality if and only if $(c_1,\dotsc,c_w)=(1,\dotsc,1)$. It follows that $|B|$ is a polynomial in $e$ of degree $w$ with leading coefficient $1/w!$. On the other hand, by Theorem~\ref{thm:goodlabels}(1), for any nonnegative integer $m$, the number of $m$-increasing partitions in $B$ is 
    $$\left|\inc{\good}{m}\right|={{e-(m-1)(w-1)}\choose w},$$
    a polynomial in $e$ with the same properties. Our claim follows. We remark that the claim is easily seen to be true also for
    the set of generic partitions in $B$ as defined in Section \ref{S:intro}.     	
\end{rem}

\begin{rem}
Suppose that $\mu$ is a $4$-increasing partition, $\lambda$ is a hook-quotient partition and $\z(\mu)\in\para(\lambda)$. Then $\lambda$ is $1$-increasing, by Lemma~\ref{lemma:shiftby3}. On the other hand, any $1$-increasing partition is a hook-quotient partition by Lemma~\ref{lemma:1unram}(1) as mentioned earlier.  Thus the phrase `$\lambda$ is a hook-quotient partition'
in Theorem~\ref{thm:main} may be replaced by `$\lambda$ is a $1$-increasing partition'.
\end{rem}

\begin{example}
Let $e=10$, and let
$\lambda=(16,8,1^{13})$ and $\mu=(17,7,2^4,1^5)$, two partitions in the same block of $e$-weight $w=3$.  Then $\z(\lambda)=(0,7,8)$ and $\z(\mu)=(1,5,9)$, so that $\lambda$ is $1$-increasing and $\mu$ is $4$-increasing.
Then the modified basis vectors associated to (the hook-quotient partition) $\lambda$ are $\sb_1^\lambda=\ssb_1-\ssb_2$,
$\sb_2^\lambda=\ssb_2$ and
$\sb_3^\lambda=\ssb_3-\ssb_2$. So $\z(\mu)=\z(\lambda)+
\sb_1^\lambda + \sb_3^\lambda \in \Pi(\lambda)$, and so
$d_{\lambda\mu}(q)=q^2$,  by Theorem~\ref{thm:main}.
Note that $\mu$ is $4$-increasing but $\lambda$ is not $2$-increasing.
\end{example}

\begin{example}[Rouquier blocks]\label{example:mainRouquier}
	Fix a Rouquier block $B$ of $e$-weight $w$. We now prove Theorem~\ref{thm:main} for partitions $\lambda$ and $\mu$ in $B$; this serves as the base case of our inductive argument to prove the result in general.  In fact we show that the formula for $d_{\lambda\mu}(q)$ holds in $B$ under the assumption, weaker than that in the statement of the theorem, that $\mu$ is a $0$-increasing partition.
	
	Let us recall the closed formulas for $q$-decomposition numbers in $B$ obtained by Leclerc and the second author in \cite{LM}. They prove that, given $\lambda,\mu\in B$,
	\begin{equation}\label{LM}
		d_{\lambda \mu}(q) =q^{\delta(\lambda,\mu)}
		\sum_{\substack{\alpha^0,\dotsc,\alpha^e \\
				\beta^0,\dotsc, \beta^{e-1}}}
		\prod_{0\leq j \leq e-1}c^{\mu^{(j)}}_{\alpha^j \beta^j}
		c^{\lambda^{(j)}}_{\beta^j (\alpha^{j+1})'},
	\end{equation}
	where $\lambda$ and $\mu$ have $e$-quotients $(\lambda^{(0)},\lambda^{(1)},\dotsc, \lambda^{(e-1)})$ and $(\mu^{(0)},\mu^{(1)},\dotsc,\mu^{(e-1)})$ respectively, the sum runs over
	all partitions $\alpha^0,\dotsc,\alpha^e, \beta^0,\dotsc, \beta^{e-1}$ satisfying
	\begin{equation}\label{alphabetasizes}
		|\alpha^i|=\sum_{j=0}^{i-1} (|\lambda^{(j)}|-|\mu^{(j)}|), \qquad
		|\beta^i|=|\mu^{(i)}|+\sum_{j=0}^{i-1} (|\mu^{(j)}|-|\lambda^{(j)}|)
	\end{equation}	
	and
	\begin{equation*}
		\delta(\lambda,\mu)=\sum_{j =0}^{e-2} (e-1-j)(|\lambda^{(j)}|-|\mu^{(j)}|).
	\end{equation*}
	Here $c^{\rho}_{\sigma \tau}$ is the Littlewood-Richardson coefficient associated to partitions $\rho$, $\sigma$ and $\tau$ satisfying $|\rho|=|\sigma|+|\tau|$.		
	
	Let $\mu$ be a $0$-increasing partition in $B$.
	We have seen in Example~\ref{example:Rouquier} that
	the $e$-quotient of
	$\mu$ is $
    ((1^{w_0}),\dotsc,(1^{w_{e-1}}))$, where $w_i$ is the number of times $i$ appears as a component of $\z(\mu)$.
	Let $\lambda$ be an arbitrary partition in $B$, with $e$-quotient $(\lambda^{(0)},\dotsc, \lambda^{(e-1)})$.  Let $\alpha^0,\dotsc,\alpha^e, \beta^0,\dotsc, \beta^{e-1}$
	be partitions satisfying (\ref{alphabetasizes}).  Thus $|\alpha^i| = a_i$ and $|\beta^i| = b_i$, where
$a_i = \sum_{j=0}^{i-1} (|\lambda^{(j)}| - w_j)$ and $b_i = w_i + \sum_{j=0}^{i-1} (w_j - |\lambda^{(j)}|)$.
	By the Littlewood-Richardson rule, we have
	$$c^{(1^{w_i})}_{\alpha^i \beta^i}=\begin{cases}
	1, & \text{if } \alpha^i = (1^{a_i})
	\text{ and } \beta^i = (1^{b_i})	\\
	0, & \text{otherwise},
	\end{cases}$$
	and
	$$c^{\lambda^{(i)}}_{(1^{b_i}),({a_{i+1}})}=\begin{cases}
	1, & \text{if } \lambda^{(i)} \in \{ (a_{i+1},1^{b_i}),
    (a_{i+1}+1,1^{b_i-1})	\}, \\
	0, & \text{otherwise.}
	\end{cases}$$
	So by \eqref{LM}, we see that $d_{\lambda\mu}(q) = 0$ unless $\lambda$ is a hook-quotient partition.
	Now assume that the $e$-quotient $(\lambda^{(0)},\dotsc, \lambda^{(e-1)})$ of $\lambda$ has the form $\lambda^{(i)}=(x_i,1^{y_i})$, where $x_i =0$ only if $y_i = 0$, for all $i\in [0,\,e)$.
	For each $i \in [0,\,e)$, put $c_i = x_i - a_{i+1}$.  We note for later use that
$c_i = x_i - \sum_{j=1}^i (|\lambda^{(j)}| - w_j)$, and hence
\begin{equation}\label{equation:rouquierparameters}
w_i = c_i - c_{i-1} + y_i +x_{i-1}
\end{equation}
for all $i\in [0,\,e)$, where $c_{-1} = x_{-1} = 0$.
	By \eqref{LM} we have
	\begin{equation} \label{E:rouquierdecomp}
		d_{\lambda\mu}(q)=\begin{cases}
			q^{\sum_{j=0}^{e-1} (x_j-c_j)} & \text{if } 0\leq c_j \leq \min(1,x_j) \text{ for all }j\in[0,\,e); \\
			0 & \text{otherwise.}
		\end{cases}
	\end{equation}

		We now turn to the description of the parallelotopes associated to hook-quotient partitions in $B$. %
	Let $x \in \mathbb{Z}^+$ and $y\in\mathbb{Z}_{\geq 0}$.
Define $z^{x,y}:=(0^y,1^{x-1},0)\in\mathbb{Z}^{x+y}$, and
	$$
	\sb_i^{x,y}
	:=
	\begin{cases}
	\ssb_{i}-\ssb_{i+1}, & \text{ if $i \in [1,\, y]$}, \\
	\ssb_{i}, & \text{ if $i=y+1$}, \\
	\ssb_{i}-\ssb_{i-1}, & \text{ if $i \in [y+2,\, x+y]$},
	\end{cases}
	$$
for each $i \in [1,\,x+y]$.
	Write $\sb_\Gamma^{x,y}=\sum_{i\in\Gamma}\sb_i^{x,y}$ for any subset $\Gamma\subseteq[1,\,x+y]$,
	and define
	$$\Pi_0^{x,y}:=\{\sb_\Gamma^{x,y}\mid \Gamma\subseteq[1,x+y]\} \subseteq \mathbb{Z}^{x+y}.$$
	It is easy to see that $\para^{x,y} := z^{x,y} + \Pi_0^{x,y}$ contains precisely two $0$-increasing elements,
	 namely
	$z^{x,y}+\sb_{[y+1,\,x+y]}^{x,y}=(0^y, 1^x)$ and $z^{x,y} +\sb_{[y+2,\,x+y]}^{x,y}=(0^{y+1}, 1^{x-1})$,
	corresponding to subsets of size $x$ and $x-1$ respectively. 		

	Consider again the hook-quotient partition $\lambda$ in $B$, with $e$-quotient given by $\lambda^{(i)}=(x_i, 1^{y_i})$ (where $x_i =0$ only if $y_i = 0$) for all $i \in [0,\,e)$. Then
	we have
\begin{align*}
\z(\lambda)&=(0^{x_0+y_0},\dotsc,(e-1)^{x_{e-1}+y_{e-1}}) + (z^{x_0,y_0},\dotsc,z^{x_{e-1},y_{e-1}}), \\
\Pi_0(\lambda)&= \Pi_0^{x_0,y_0}\times\dotsb\times\Pi_0^{x_{e-1},y_{e-1}},
\end{align*}
where $z^{x_i,y_i}$ and $\Pi_0^{x_i,y_i}$ are to be left out in the respective expressions of $\z(\lambda)$ and $\para_0(\lambda)$ if $x_i = 0$. Thus
	the $0$-increasing elements of $\Pi(\lambda)=\z(\lambda)+\Pi_0(\lambda)$ have the following form:
\begin{align*}
&(0^{x_0+y_0},\dotsc,(e-1)^{x_{e-1}+y_{e-1}}) + (\sb^{x_i,y_i}_{[y_0+1+c'_0,x_0+y_0]}, \dotsc, \sb^{x_{e-1},y_{e-1}}_{[y_{e-1}+1+c'_{e-1},x_{e+1}+y_{e-1}]}) \\
&= (0^{x_0+y_0},\dotsc,(e-1)^{x_{e-1}+y_{e-1}}) + ((0^{y_0+c'_0},1^{x_0-c'_0}), \dotsc, (0^{y_{e-1}+c'_{e-1}},1^{x_{e-1}-c'_{e-1}}))\\
&=(0^{y_0+c'_0}, 1^{x_0-c'_0+y_1+c'_1}, \ldots, (e-1)^{x_{e-2}-c'_{e-2}+y_{e-1}+c'_{e-1}}, e^{x_{e-1} - c'_{e-1}})
\end{align*}
where $c'_i \in [0, \min(x_i,1)]$ for all $i \in [0,\,e)$.
Hence, given a $0$-increasing partition $\mu$ in $B$ with $\z(\mu)=(0^{w_0},\dotsc,(e-1)^{w_{e-1}})$, we have
	$\z(\mu)\in\Pi(\lambda)$ if and only if there exist $c'_i \in [0, \min(x_i,1)]$ for all $i \in [0,\,e)$ such that $w_j = x_{j-1} - c'_{j-1} + y_j+c'_j$ for all $j \in [0,\,e)$ (where $x_{-1} = c'_{-1} = 0$, which will force $x_{e-1} = c'_{e-1}$), in which case $\dist{\lambda}{\mu}=\sum_{0\leq j\leq i}x_j-c'_j$.  We conclude from this, and \eqref{equation:rouquierparameters} and \eqref{E:rouquierdecomp}, that
$$
d_{\lambda\mu}(q) =
\begin{cases}
q^{\dist{\lambda}{\mu}}, &\text{if } \z(\mu) \in \para(\lambda); \\
0, &\text{otherwise.}
\end{cases}
$$
Hence Theorem \ref{thm:main} holds for Rouquier blocks, even under the less stringent hypothesis that $\mu$ is $0$-increasing.
\end{example}

\begin{rem} \label{R:genvertex} (Principal blocks)
	Martin and Russell \cite{MR} and Fayers and Martin \cite{FM} described part of the $\Ext^1$-quiver for principal blocks of symmetric groups.   Their {\em general vertices} \cite{MR}, {\em $p$-general vertices} \cite{FM} and {\em semi-general vertices} \cite{FM} are examples of $1$-increasing partitions. To be more precise, let $\lambda$ be a partition with empty $p$-core and $p$-weight $w$, and let $\z(\lambda)=(\z_1,\dotsc,\z_w)$. Then $\lambda$ is a general vertex if and only if $w\leq \z_1<\dotsb<\z_w\leq p-w-1$. It is
	a $p$-general vertex if and only if $w\leq \z_1<\dotsb<\z_w= p-w$. It is a semi-general vertex if and only if
	$w-1\leq \z_1 < \dotsb <\z_{w-1} \leq p-w$ and $\z_w=p-w+h$ for some $h\in[1,w-1]$, such that $\z_h\neq \z_{h-1}+1$ when $h>1$ while $\z_1\neq w-1$ when $h=1$.
	
	The main theorems of \cite{MR} and \cite{FM} take the following form. Let $\lambda$ be a general vertex, a $p$-general vertex or a semi-general vertex; in particular $\lambda$ is a $p$-regular partition with empty $p$-core. Then the set of ($p$-regular) partitions $\mu$ for which $\Ext^1(D^\lambda,D^\mu)\neq 0$ is described explicitly, and it is shown that for any such $\mu$, $\Ext^1(D^\lambda,D^\mu)$ is $1$-dimensional, and either $[S^\lambda:D^\mu]\neq 0$ or $[S^\mu:D^\lambda] \ne 0$; here $S^\lambda$ is the Specht module associated to $\lambda$ and $D^\lambda$ is its unique simple quotient. Any such $\mu$ is one of the partitions appearing in Example~\ref{E:principalblocks}, hence both a hook-quotient partition and $0$-increasing, and it is easy to check that either $\z(\mu) = \z(\lambda) + \sb^{\lambda}_i$ for some $i \in [1,\,w]$ or $\z(\lambda) = \z(\mu) + \sb^{\mu}_i$ for some $i \in [1,\,w]$ (cf.\ Theorem \ref{thm:extquiver}).
\end{rem}

\begin{rem}
Example \ref{example:mainRouquier} and Remark \ref{R:genvertex} (as well as Remark \ref{lastremark:Mullineux}, as we shall see) describe situations in which the formula for $d_{\lambda\mu}(q)$ in Theorem \ref{thm:main} holds even though $\mu$ is not $4$-increasing. In fact, we do not know of any example of a hook-quotient partition $\lambda$ and a $0$-increasing partition $\mu$ in the same block for which the formula is incorrect.
Indeed, for $e=2$, it is not difficult to compute the canonical basis vector $G(\mu)$ for $0$-increasing partitions $\mu$ from the description of such partitions given in Example \ref{example:e2} and verify directly that Theorem \ref{thm:main} holds for any hook-quotient partition $\lambda$ and $0$-increasing partition $\mu$. 
On the other hand, despite what Example \ref{example:mainRouquier} may suggest, it is possible to have $d_{\lambda\mu}(q)\neq 0$ for a non-hook-quotient partition $\lambda$ and a $0$-increasing partition $\mu$. For example, for $e=3$, the partition $\lambda=(5,3,2,1,1)$ has $e$-quotient $(\emptyset,\emptyset, (2,2),\emptyset)$ and the partition $\mu=(6,3,2,1)$ is $0$-increasing, with $\z(\mu)=(1,1,2,2)$, but $d_{\lambda\mu}(q)=q+q^3$. The formula can also fail for a hook-quotient partition $\lambda$ and a non-$0$-increasing partition $\mu$.  For example, in a Rouquier block of $2$-weight $2$ (i.e.\ $e = 2$), when $\mu = (5)$ (so that $\z(\mu) = (2,1)$),
we have $d_{(3,2),\mu}(q) = 0$ even though $\z(\mu) \in \Pi((3,2)) = \{ \z((3,2)) = (1,1), (1,2),(2,0),(2,1) \}$, while $d_{(3,1,1),\mu}(q) = q$ even though $\z(\mu) \notin \para((3,1,1)) = \{ \z((3,1,1)) = (1,2), (2,2), (1,3), (2,3)\}$.
\end{rem}

\begin{rem} \label{remark:addrunner} Let $\lambda$ be a hook-quotient partition in a block $B$ of $e$-weight $w$.
It is possible for $\para(\lambda)$ to contain a $4$-increasing element that is not of the form $\z(\mu)$ for any partition $\mu\in B$. It is easy to see that this is true
of $\mathbf{z}\in\para(\lambda)$ if and only if the last coordinate of $\mathbf{z}$ is $e$, if and only if the last coordinate of $\z(\lambda)$ is $e-1$ and
$\mathbf{z}=\z(\lambda)+\sb_\Gamma^\lambda$ for some $\Gamma\subseteq [1,w]$ containing $w$. It is nevertheless still possible to understand such $\mathbf{z}\in\para(\lambda)$ using the idea of runner addition/removal \cite{JM, F2}. To an arbitrary partition $\lambda$ in $B$, let $\lambda^+$ be the partition obtained by adding a `full' runner; the partitions thus obtained lie in a common block $B^+$ of $(e+1)$-weight $w$. More precisely, $\lambda^+$ can be defined as follows: for all $r\in\mathbb{Z}$ and $s\in[0,e]$, we have $r(e+1)+s \in \beta(\lambda^+)$ if and only if $s\neq e$ and $(r+\left|\lambda\right|)e+s\in\beta(\lambda)$, or $s=e$ and $r< \left|\lambda\right|e$.
 Then $\z(\lambda) = \z(\lambda^+)$, $\lambda$ is a hook-quotient partition if and only if $\lambda^+$ is,
 and $\sb^{\lambda}_i = \sb^{\lambda^+}_i$ for all $i \in [1,w]$, so that $\para(\lambda) = \para(\lambda^+)$.
Thus Theorem~\ref{thm:main} is consistent with Theorem 3.1 of \cite{F2}, which states that $d^e_{\lambda\mu}(q)=d^{e+1}_{\lambda^+ \mu^+}(q)$ for all $\lambda,\mu\in B$, and additionally, {\em every} $4$-increasing element of $\para(\lambda)$ may be interpreted as $\z(\nu)$ for a partition $\nu$ in $B^+$, giving a $q$-decomposition number $d_{\lambda^+\nu}(q)$ for $B^+$.
 \end{rem}

\section{Moving along a parallelotope} \label{S:movealong}

Before we tackle the proof of Theorem \ref{thm:main}, we explain in this section how to explicitly obtain partitions appearing in a parallelotope $\para(\lambda)$ in terms of the hook-quotient partition $\lambda$, side-stepping the difficulty of inverting the map $\z$.  Our results culminate in a description, in terms of certain `bead operations', which will be
essential for the proof of Theorem \ref{thm:main}.

 The aforementioned bead operations may be of independent interest. For example, they may be used to compute the image of the Mullineux map of $e$-regular partitions in some cases, and should be more efficient than the one generalising the original algorithm proposed by Mullineux \cite{M} when $w$ is small compared to $e$; this application is detailed in Section~\ref{section:Mullineux}.

We begin by defining the bead operations that we shall use.

\begin{defi}
Let $S \subseteq \mathbb{Z}$ and $x \in \mathbb{Z}$.  Define $b_S(x)$ and $\beadoperation{x}(S)$
as follows:
$$b_S(x)  := \min \{ a \mid a > x,\ a \notin S,\ a-e \in S\},\quad
\beadoperation{x}(S) := S \cup \{b_S(x)\} \setminus \{b_S(x)-e\}.$$
\end{defi}

On the abacus, $\beadoperation{x}(S)$ is obtained from $S$ by sliding one bead one position down its runner into an unoccupied position, where the unoccupied position is chosen to be as near to $x$ as possible while being greater than $x$.  We note that $b_S(x)$ as well as $\beadoperation{x}(S)$ is defined if and only if $x-e < \max(S)$.

We have two basic lemmas arising from the above definition.

	\begin{lemma}\label{L:bothbeadgapvariant}
		Let $S\subseteq\mathbb{Z}$, and let $x\in \mathbb{Z}$ with $x-e<\max(S)$, so that $a:=b_S(x)$ is defined.
		Then one of the following holds:
		\begin{enumerate}
			\item $t\in S \Leftrightarrow t-e\in S$ for all $t\in(x,\, a)$;
			\item there exists $s\in (x,\, a)$ such that
			$s\in S$, $s-e\notin S$ and $\z_S(x) > \z_S(s)$.
		\end{enumerate}
	In particular, if $S=\beta(\sigma)$ for some partition $\sigma$, and $\z_{\sigma}(x) \leq \z(b;q)$ for all $(b;q) \in \bm(\sigma)$ with $q \in (x,\, a)$, then $t \in S \Leftrightarrow  t-e \in S$ for all $t\in (x,\, a)$.
	\end{lemma}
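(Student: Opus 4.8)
## Proof proposal for Lemma~\ref{L:bothbeadgapvariant}

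The plan is to exploit the minimality in the definition of $a = b_S(x)$, which says precisely that no $t \in (x,\,a)$ satisfies $t \notin S$ and $t-e \in S$. Thus for every $t \in (x,\,a)$ one of the other three possibilities holds: $t \in S$ and $t - e \in S$; $t \notin S$ and $t - e \notin S$; or $t \in S$ and $t - e \notin S$. The first two of these are exactly the cases where $\z_S(t) = \z_S(t-1)$ (cf.\ the computation of $\z_S(t) - \z_S(t-1)$ in the proof of Lemma~\ref{L:easy}), while the third gives $\z_S(t) - \z_S(t-1) = -1$. So the dichotomy is: either no $t \in (x,\,a)$ is of the third type --- which is exactly alternative (1) --- or there is at least one such $t$.

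In the latter case, let $s$ be the \emph{least} element of $(x,\,a)$ with $s \in S$ and $s - e \notin S$. Then $s \in S$, $s - e \notin S$, and for every $t \in (x,\, s)$ we have (by minimality of $s$, and since $t$ cannot be of the excluded type) $\z_S(t) = \z_S(t-1)$, so $\z_S(s-1) = \z_S(x)$. Since $s \notin S \Leftrightarrow s - e \notin S$ fails --- indeed $\z_S(s) - \z_S(s-1) = -1$ --- we get $\z_S(s) = \z_S(x) - 1 < \z_S(x)$. This establishes alternative (2). (One could alternatively invoke Lemma~\ref{L:easy}(2) directly once one knows $\z_S(x) > \z_S(a-1)$, but the explicit telescoping argument is cleaner and produces the witness $s$ with the required inequality in one stroke.)

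For the final assertion, suppose $S = \beta(\sigma)$ and $\z_\sigma(x) \leq \z(b;q)$ for all $(b;q) \in \bm(\sigma)$ with $q \in (x,\,a)$. Assume for contradiction that alternative (1) fails; then by the above, alternative (2) holds, giving $s \in (x,\,a)$ with $s \in \beta(\sigma)$, $s - e \notin \beta(\sigma)$ and $\z_\sigma(s) < \z_\sigma(x)$. The conditions $s \in \beta(\sigma)$ and $s - e \notin \beta(\sigma)$ say that $s = q$ is the starting position of a bead movement $(b;q) \in \bm(\sigma)$ for some bead $b \geq_e s$ (namely any bead of $\sigma$ on runner $s \bmod e$ at or above position $s$; such exists since $s \in \beta(\sigma)$). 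For that bead movement, $\z(b;q) = \z_\sigma(q) = \z_\sigma(s) < \z_\sigma(x)$, contradicting the hypothesis $\z_\sigma(x) \leq \z(b;q)$. Hence alternative (1) must hold, i.e.\ $t \in S \Leftrightarrow t - e \in S$ for all $t \in (x,\,a)$.

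I do not anticipate a serious obstacle here; the only point requiring care is the bookkeeping in the last paragraph --- checking that the witness $s$ produced by alternative (2) really is the start of a genuine bead movement of $\sigma$ (i.e.\ that $(b;q) \in \bm(\sigma)$ for a suitable $b$, which amounts to unwinding the definition of $\bm(\sigma)$: we need $b \in \beta(\sigma)$ with $\frac{b - s}{e} \in [0, \wt_\sigma(b))$, and taking $b = s$ works since $s \in \beta(\sigma)$, $s - e \notin \beta(\sigma)$ forces $\wt_\sigma(s) \geq 1$). Everything else is a direct unwinding of the definitions of $\z_S$ and $b_S(x)$.
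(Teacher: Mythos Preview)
Your proof is correct and follows essentially the same approach as the paper: both use the minimality of $a=b_S(x)$ to rule out the case $t\notin S$, $t-e\in S$ on $(x,a)$, pick the least $s$ with $s\in S$, $s-e\notin S$, and deduce $\z_S(x)>\z_S(s)$ (the paper cites Lemma~\ref{L:easy} where you do the telescoping explicitly); for the final assertion both take the bead movement $(s;s)\in\bm(\sigma)$ to derive the contradiction.
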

	
	\begin{proof}
		If the first statement does not hold, then by the definition of $b_S(x) = a$, there exists $s\in (x,\, a)$ such that
		$s\in S$ and $s-e\notin S$. Choose $s$ to be the least such. Then $t\in S \iff t-e\in S$ for all $t\in(x,\, s)$, and it follows from Lemma~\ref{L:easy} that $\z_S(x)>\z_S(s)$.

For the last assertion, if statement (2) holds, then $s\in (x,\, a)$ satisfies $(s;s) \in \bm(\sigma)$ and $\z_{\sigma} (x) > \z_{\sigma}(s) = \z(s;s)$, contradicting the hypothesis.  Thus statement (1) must hold.
	\end{proof}

\begin{lemma} \label{L:+-}
Let $S := \beta(\sigma)$ for some partition $\sigma$, and let $x \in \mathbb{Z}$.  Suppose that $a:= b_S(x) \leq x+e$ and let $T := \beadoperation{x}(S) = S \cup\{a\} \setminus \{ a-e\}$.
\begin{enumerate}
\item If $\z_S(x+e) \leq \z(b;q)$ for all $(b;q) \in \bm(\sigma)$ with $q \in (x+e,\, a+e]$, then $b_T(x+e) \leq a +e $.
\item If $\z_S(x-e) \leq \z(b;q)$ for all $(b;q) \in \bm(\sigma)$ with $q \in (x-e,\, a-e]$, then $b_T(x-e) \leq a - e$.
\end{enumerate}
\end{lemma}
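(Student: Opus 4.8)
The plan is to prove parts (1) and (2) by the same two‑step strategy: first a purely combinatorial bookkeeping step comparing the abacus positions of $T$ and $S$ near $a$ and $a-e$, and then a monotonicity argument for the function $\z_S$ which is where the hypothesis on bead movements enters. Throughout I would record the basic facts that $T$ agrees with $S$ except that $a\in T\setminus S$ and $a-e\in S\setminus T$, that $a=b_S(x)$ is the least position $>x$ with $a\notin S$ and $a-e\in S$, and that $x<a\le x+e$ by assumption; in particular $a\in T$ forces $\max(T)>x$, so $b_T(x\pm e)$ is defined and the assertions make sense.

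For part (1) I would split on whether $a+e\in S$. If $a+e\notin S$, then $a+e\notin T$ while $(a+e)-e=a\in T$ and $a+e>x+e$, so $a+e$ is already a witness and $b_T(x+e)\le a+e$ with nothing further to do. If $a+e\in S$, I would first note that every integer $t$ with $x+e<t<a+e$ satisfies $t>a$ (since $t\ge x+e+1>a$), so neither $t$ nor $t-e$ lies in $\{a,a-e\}$ and hence $t\notin T\iff t\notin S$ and $t-e\in T\iff t-e\in S$; it therefore suffices to show $b_S(x+e)<a+e$. Assuming the contrary, $b_S(x+e)>a+e$ (equality is impossible as $a+e\in S$), so no $t\in(x+e,\,a+e]$ has $t\notin S$, $t-e\in S$, and the telescoping identity behind Lemma~\ref{L:easy} gives $\z_S(a+e)\le\z_S(x+e)$. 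On the other hand $(a+e;\,a+e)\in\bm(\sigma)$ — the position $a$ directly above $a+e$ on its runner is unoccupied — and $\z(a+e;a+e)=\z_S(a+e)$, so the hypothesis yields $\z_S(x+e)\le\z_S(a+e)$. Equality then forces $t\in S\iff t-e\in S$ for all $t\in(x+e,\,a+e]$, and $t=a+e$ contradicts $a+e\in S$, $a\notin S$. Hence $b_S(x+e)\in(x+e,a+e)$ is itself a witness for $T$, so $b_T(x+e)\le b_S(x+e)<a+e$.

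Part (2) is the mirror image: I would split on whether $a-2e\in S$. If $a-2e\in S$, then $a-e\notin T$, $(a-e)-e=a-2e\in T$ and $a-e>x-e$, so $b_T(x-e)\le a-e$. If $a-2e\notin S$, then $a$ itself shows $b_S(x-e)\le a$, while $a-e\in S$ gives $b_S(x-e)\ne a-e$; if $b_S(x-e)<a-e$ it is directly a witness for $T$ (positions strictly below $a-e$ behave identically in $S$ and in $T$), and if $a-e<b_S(x-e)\le a$ then no $t\in(x-e,\,a-e]$ has $t\notin S$, $t-e\in S$, so $\z_S(a-e)\le\z_S(x-e)$, while $(a-e;a-e)\in\bm(\sigma)$ (the position $a-2e$ above it being unoccupied) with $\z(a-e;a-e)=\z_S(a-e)$, and the hypothesis forces equality and hence $a-e\in S\iff a-2e\in S$, a contradiction.

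I do not expect a serious obstacle. The only place that needs care is the bracketed "positions behave identically" claims — checking that translating a witness for $b_S$ into one for $b_T$ never collides with the two altered positions $a$ and $a-e$ — and this is precisely where the bound $a\le x+e$ is used. The $\z$‑monotonicity step, which is the substantive use of the hypothesis, is then a one‑line telescoping argument of the kind already packaged in Lemmas~\ref{L:easy} and~\ref{L:bothbeadgapvariant}, together with the elementary observation that the initial bead movements $(a\pm e;\,a\pm e)$ genuinely occur in $\bm(\sigma)$; the mild subtlety is just organising the case split so that the immediate cases are dispatched before the hypothesis is invoked.
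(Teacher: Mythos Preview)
Your proposal is correct and follows essentially the same approach as the paper: the same case split on $a+e\in S$ (resp.\ $a-2e\in S$), the same recognition that the interesting case produces a bead movement $(a+e;a+e)$ (resp.\ $(a-e;a-e)$) to which the hypothesis applies, and the same use of the telescoping identity behind Lemma~\ref{L:easy} to extract a witness for $b_T$. Your version packages the second step as a contradiction via $b_S(x\pm e)$ rather than invoking Lemma~\ref{L:easy}(1) directly, and in part~(2) you add an extra subcase $b_S(x-e)<a-e$ that the paper bypasses, but these are cosmetic differences in organisation, not in substance.
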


\begin{proof} \hfill
\begin{enumerate}
\item Since $a\in T$, if $a + e \notin T$, then clearly by definition $b_T(x+e) \leq a+e$.  On the other hand, if $a+ e \in T$, then $a + e \in S$, and $a  \notin S$, so that $(a+e;a+e) \in \bm(\sigma)$.  Thus $\z_S(x+e) \leq \z(a+e;a+e) = \z_S(a + e)$, so that there exists $s+e \in (x+e,\, a+e)$ such that $s \in S$ and $s+e \notin S$ by Lemma \ref{L:easy}(1).  As $s \in T$ and $s+e \notin T$, we have $b_T(x+e) \leq s+e < a+e$.

\item Since $a-e \notin T$, if $a-2e \in T$ then $b_T(x-e) \leq a-e$ by definition.  On the other hand, if $a -2e \notin T$, then $a -2e \notin S$, while $a-e \in S$, so that $(a-e;a-e) \in \bm(\sigma)$.  Thus $\z_S(x-e) \leq \z(a-e;a-e) = \z_S(a-e)$, so that there exists $s-e \in (x-e,\, a-e)$ such that $s-2e \in S$ and $s-e \notin S$ by Lemma \ref{L:easy}(1).  As $s-2e \in T$ and $s-e \notin T$, we have $b_T(x-e) \leq s-e < a-e$.
\end{enumerate}
\end{proof}

The next technical lemma is an important one, laying the foundation for us to describe $\mu$ when $\z(\mu)$ is a $\lambda$-modified basis vector away from $\z(\lambda)$.

\begin{lemma} \label{L:omnibus}
Let $S = \beta(\sigma)$ for some partition $\sigma$ and let $x\in \BZ$, $k\in \BZ^+$, $l \in \mathbb{Z}_{\geq0}$.
Suppose that:
\begin{itemize}
	\item[(I)]
	\begin{enumerate}
		\item[(A)] $\z_S(x) < \z_S(x+e)$, or
		\item[(B)] $l=0$, $x \notin S$, $x-e \in S$ and $x-e < \max(S)$; 
	\end{enumerate}
	\item[(II)] $\z_S(x) \leq \z(b;q)$ whenever $(b;q) \in \bm(\sigma)$ and $q>x$, with the inequality being strict if $q \in (x,\,x+e]$;
	\item[(III)] $\z_S(x+ie) \leq \z(b;q)$ for all $(b;q) \in \bm(\sigma)$ with $q \in (x+ie,\, x+(i+1)e]$ and $i \in (-k,\, l]$.
\end{itemize}
Define $S_i$, for $i \in [ 0,\, k+l]$, recursively as follows: $S_0 := S$ and
$S_{i} := \beadoperation{x+f(i)e}(S_{i-1})$, where
$f: [1,\, k+l] \to (-k,\, l]$ is the bijection defined by
$$
f(i) :=
\begin{cases}
1-i, &\text{if } i \in [1,\, k]; \\
i-k, &\text{if } i \in (k,\, k+l].
\end{cases}
$$
For each $i \in [1,\, k+l]$, let $d_i := b_{S_{i-1}}(x+f(i)e)$, so that $S_{i} = S_{i-1} \cup \{ d_i \} \setminus \{d_i-e\}$.
\begin{enumerate}
\item Then $d_i \in (x+f(i)e,\, x+(f(i)+1)e]$ for all $i$, except possibly when $i=1$ and condition (IB) holds but not condition (IA).  Also,
\begin{alignat*}{2}
d_{i+1} &\leq d_i-e \qquad &&\text{for all } i \in [1,\, k); \\
d_{k+1} &\leq d_1+e; &&\\
d_{i+1} &\leq d_i +e \qquad &&\text{for all } i \in (k,\, k+l).
\end{alignat*}

\item If $s \in (x+f(i)e,\, d_i)$, then $s \in S$ if and only if $s-e \in S$.

\item Let $(b;q) \in \bm(\sigma)$.  Then
$$
q \notin \bigcup_{i=1}^k (x+(f(i)-1)e,\, d_i-e) \cup \bigcup_{i = k+1}^{k+l} (x+(f(i)+1)e,\, d_i +e)
\cup (x,\, d_1) \cup (x+e,\, d_1+e)
$$

\item For $i \in \{1\} \cup (k,\,k+l]$, write $d_{i_{\max}} = \max\{d_j \mid d_j \geq_e d_i \}$.  For $y \in \mathbb{Z}$, define
$$
g(y) := \begin{cases}
d_i, &\text{if } y = d_i-e \text{ and } i \in [2,k]; \\
d_{i_{\max}}, &\text{if } y = d_i -e \text{ and } i \notin [2,k];\\
y, &\text{otherwise.}
\end{cases}
$$
Write $\nu$ for the partition such that $\beta(\nu) = S_{k+l}$.  Then the map $\tilde{g} : \bm(\sigma) \to \bm(\nu)$ defined by $(b;q) \mapsto (g(b);q)$ is well-defined, injective and order-preserving, and
$$
\bm(\nu) \setminus \tilde{g}(\bm(\sigma)) = \{ (g(d_i-e);d_i) \mid i\in [1,\, k+l] \}.$$

\item Keep the notations in (4).  Let $(b;q) \in \bm(\sigma)$.
  Then
$$
\z_{\nu}(\tilde{g}(b;q)) =
\begin{cases}
\z_{\sigma}(b;q) +1, &\text{if } q \in 
(x-ke,\, x-(k-1)e]; \\
\z_{\sigma}(b;q) -1, &\text{if } q \in 
(x+le,\, x+(l+1)e]; \\
\z_{\sigma}(b;q), &\text{otherwise.}
\end{cases}
$$
In addition,
$$ \z_{\nu}( (g(d_i-e);d_i)) =
\begin{cases}
\z_{S}(x+f(i)e) - 1, &\text{if $k < i = k+l$}, \\
\z_{S}(x+f(i)e) + 1, &\text{if $i = 1 \leq l$}, \\
\z_{S}(x +f(i)e), & \text{otherwise},
\end{cases}
$$
for all $(g(d_i-e);d_i) \in \bm(\nu) \setminus \tilde{g}(\bm(\sigma))$.
\end{enumerate}
\end{lemma}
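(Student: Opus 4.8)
The plan is to prove all five parts by a single induction on the number of bead operations performed, carrying as an invariant a precise description of how $S_i$ differs from $S=\beta(\sigma)$. The geometric picture to keep in mind is that $\beadoperation{x+f(i)e}$ slides one bead one period down its runner, from $d_i-e$ to $d_i$, and that under hypotheses (I)--(III) these positions cascade along the runners in the regular pattern asserted in part~(1). Parts (1) and (2) must be proved in tandem, since the periodicity statement in (2) is exactly what is needed to keep track of the regions undisturbed by earlier operations.

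For the base case $i=1$: in case (IA), Lemma~\ref{L:easy}(1) produces $t\in(x,\,x+e]$ with $t\notin S$ and $t-e\in S$, so $d_1=b_S(x)\le x+e$; in case (IB) the position $b_S(x)$ is still defined because $x-e<\max(S)$, and this is precisely the exceptional situation in (1) in which $d_1$ need not lie in $(x,\,x+e]$. In both cases the $e$-periodicity of $S$ on $(x,\,d_1)$ demanded by (2) follows from the last assertion of Lemma~\ref{L:bothbeadgapvariant} via hypothesis~(II), which rules out the small-armlength alternative. For the step from $i$ to $i+1$: by the interval statements of part~(1) for the earlier steps, all of $d_1-e,\dots,d_i-e$ and $d_1,\dots,d_i$ lie outside the window used to compute $b_{S_i}(x+f(i+1)e)$ and outside its $e$-translate, so the periodicity in part~(2) for those steps forces $S_i$ to agree with $S$ on those windows; hypothesis~(III) (with its index equal to $f(i+1)$) therefore transports to the intermediate partition $\nu_i$ defined by $\beta(\nu_i)=S_i$, and Lemma~\ref{L:+-}(2) (in the downward phase $i+1\le k$, or at the switch $i+1=k+1$) or Lemma~\ref{L:+-}(1) (in the upward phase) then yields $d_{i+1}\le d_i-e$, or $d_{k+1}\le d_1+e$, or $d_{i+1}\le d_i+e$, and in particular that $d_{i+1}$ lies in the asserted interval. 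The periodicity in (2) for step $i+1$ again comes from Lemma~\ref{L:bothbeadgapvariant}, its hypothesis being the same transported instance of (III). Part~(3) is then a corollary: if $(b;q)\in\bm(\sigma)$ with $q$ in one of the listed intervals $(x+f(i)e,\,d_i)$ or $(x,\,d_1)$, the $e$-periodicity of $S$ there makes the occupancy of $q$ and $q-e$ identical, which is incompatible with $(b;q)$ being a bead movement unless $\z(b;q)<\z_S(x+f(i)e)$, contradicting (II) or (III); the $e$-translated intervals $(x+(f(i)\pm1)e,\,d_i\pm e)$ and $(x+e,\,d_1+e)$ are excluded by the same argument applied to $q\mp e$.

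For part~(4), composing the $k+l$ single-bead moves and using the inequalities of part~(1) to trace which vacated positions $d_i-e$ are subsequently re-occupied --- this being exactly what $d_{i_{\max}}$ records --- gives $\beta(\nu)=S_{k+l}$ explicitly; a direct computation shows each move raises the $e$-weight by exactly $1$ (the moved bead acquires one extra empty position above it, and no other bead's weight changes), so $|\bm(\nu)|=|\bm(\sigma)|+k+l$. The map $g$ is, runner by runner, an order-preserving injection $\beta(\sigma)\hookrightarrow\beta(\nu)$ with $g(b)\ge b$ and $g(b)\equiv_e b$, and, since $\wt$ is preserved for beads away from the boundary periods and shifts by $1$ on them, $\tilde g\colon(b;q)\mapsto(g(b);q)$ is well-defined, injective and order-preserving on $\bm(\sigma)$, with complement $\{(g(d_i-e);d_i)\mid i\in[1,\,k+l]\}$ --- one new bead movement per operation. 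Finally part~(5) is a counting argument: since $\z(b;q)=\z_S(q)$ depends only on $q$, it suffices to compare $|(q-e,q]\setminus\beta(\sigma)|$ with $|(q-e,q]\setminus\beta(\nu)|$, and by part~(4) these differ only by the number of elements of $\beta(\nu)\triangle\beta(\sigma)$ in $(q-e,q]$; the cascade from part~(1) shows each internal period both loses and gains a bead (net $0$), the bottom period $(x-ke,\,x-(k-1)e]$ loses a bead with no compensation ($+1$), and the top period $(x+le,\,x+(l+1)e]$ gains one ($-1$). The armlengths of the new movements $(g(d_i-e);d_i)$ are read off $\beta(\nu)$ in the same way, using the periodicity of part~(2), and the shift is $+1$ only at the first operation when $l\ge1$, $-1$ only at the final (upward) operation, and $0$ otherwise.

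The main obstacle will be the inductive step for parts~(1)--(2): hypotheses~(II)--(III) are formulated for $\sigma$, whereas Lemma~\ref{L:+-} must be applied to the intermediate partitions $\nu_i$, so one has to know \emph{in advance} that $S_i$ coincides with $S$ on the relevant windows. This forces (1) and (2) to be proved simultaneously --- the periodicity of (2) is precisely what certifies that the windows are undisturbed --- and it requires keeping careful bookkeeping, throughout the induction, of which of the $d_j$ and $d_j-e$ fall outside each window.
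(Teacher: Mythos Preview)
Your proposal is correct and follows essentially the same route as the paper. The paper also derives (1) from Lemma~\ref{L:easy} (base case) together with repeated applications of Lemma~\ref{L:+-} under (III), obtains (2) from (1) and Lemma~\ref{L:bothbeadgapvariant} under (II)--(III), then proves (3) by its own induction using (2), and treats (4) and (5) as straightforward bookkeeping/case analysis. The only organisational difference is that you package (1)--(2) as a single simultaneous induction, whereas the paper states (1) tersely (``follows from Condition~(III) and Lemma~\ref{L:+-}'') and derives (2) afterwards; your explicit flagging of the transport issue --- that Lemma~\ref{L:+-} must be applied to the intermediate sets $S_{i-1}$ rather than to $S$ itself --- is exactly the content hidden in the paper's one-line justification, and your mechanism (the earlier $d_j,d_j-e$ lie outside the current window, so $S_{i-1}$ agrees with $S$ there) is what makes that line work.
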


In Figure~\ref{fig:omnibuslemma}, Lemma \ref{L:omnibus} is illustrated for $k=4$ and $l=5$, with bead operations pictured on the abacus of $\sigma$, on the left, and the `extra' bead movements $(g(d_i-e);d_i)$ of $\nu$ shown on its abacus, on the right.

\begin{figure}[h]\caption{Illustration of Lemma~\ref{L:omnibus}}

\def\globalscaling{0.6}
\def\arrowshortening{0.25}
\def\shadebuffer{0.3}
\def\beadmovementwidth{2}
\def\abacusmovementwidth{2}
\def\beadoperationscaling{1.1}
\def\labelsize{1}
\def\beadsize{.13}
\def\circledbeadmovementsize{.3}
\def\runnershiftforRHSdiagram{14}


\begin{tikzpicture}
[y=-1cm, scale=\globalscaling], every node/.style={scale=.8*\globalscaling}]

\fill
[lightgray, rounded corners]
(1-\shadebuffer,-4-\shadebuffer)
-- (1+\shadebuffer,-4-\shadebuffer)
-- (1+\shadebuffer,-2-\shadebuffer)
-- (4+\shadebuffer,-2-\shadebuffer)
-- (4+\shadebuffer,-1-\shadebuffer)
-- (7+\shadebuffer,-1-\shadebuffer)
-- (7+\shadebuffer,0+\shadebuffer)
-- (5+\shadebuffer,0+\shadebuffer)
-- (5+\shadebuffer,3+\shadebuffer)
-- (3+\shadebuffer,3+\shadebuffer)
-- (3+\shadebuffer,4+\shadebuffer)
-- (1-\shadebuffer,4+\shadebuffer)
-- cycle;

\foreach \x in {0,...,9}
{
\node [scale=\labelsize*\globalscaling, above] at (\x, -7) {$\x$};
\draw (\x,-6.5) -- (\x,5.5);};
\node  [scale=\labelsize*\globalscaling,left] at (0,0) {$x$};
\foreach \x in
{(0,-6), (0,-3), (0,-2),
	(1,-6), (1,-5), (1,-4), (1,-3), (1,-2), (1,-1), (1,0), (1,1), (1,2), (1,3), (1,4),
 (2,-6),         (2,-4), (2,-3),	
 (3,-6), (3,-5), (3,-4),
 (4,-6), (4,-5), (4,-4), (4,-3), (4,-2), (4,-1), (4,0), (4,1), (4,2), (4,3),
 (5,-6), (5,-5),                 (5,-2),
 (6,-6), (6,-5), (6,-4), (6,-3), (6,-2), (6,-1), (6,0),                       (6,4),
 (7,-6), (7,-5), (7,-4), (7,-3), (7,-2), (7,-1), (7,0),                (7,5),
 (8,-6), (8,-5), (8,-4), (8,-3),         (8,-1),        (8,1), (8,2),
 (9,-6), (9,-1), (9,-2), (9,3)
 }
{\draw
[fill] \x circle (\beadsize)
;};


\foreach \x/\y/\f in
{2/-3/4, 2/-2/3, 5/-1/2, 8/0/1, 6/1/5, 6/2/6, 6/3/7, 4/4/8, 1/5/9}
{
	\draw [->, line width = \abacusmovementwidth*\globalscaling, rounded corners] (\x+\arrowshortening/2, \y-1+\arrowshortening/2) -- (\x+.25,\y-1+.5)
	node
	 [scale=\beadoperationscaling*\globalscaling, right,xshift=-2] {$\beadoperation{x+f(\f)e}$}
	-- (\x+\arrowshortening/2, \y-\arrowshortening/2);
};

\fill
[lightgray, rounded corners]
(1-\shadebuffer+\runnershiftforRHSdiagram,-4-\shadebuffer)
-- (1+\shadebuffer+\runnershiftforRHSdiagram,-4-\shadebuffer)
-- (1+\shadebuffer+\runnershiftforRHSdiagram,-2-\shadebuffer)
-- (4+\shadebuffer+\runnershiftforRHSdiagram,-2-\shadebuffer)
-- (4+\shadebuffer+\runnershiftforRHSdiagram,-1-\shadebuffer)
-- (7+\shadebuffer+\runnershiftforRHSdiagram,-1-\shadebuffer)
-- (7+\shadebuffer+\runnershiftforRHSdiagram,0+\shadebuffer)
-- (5+\shadebuffer+\runnershiftforRHSdiagram,0+\shadebuffer)
-- (5+\shadebuffer+\runnershiftforRHSdiagram,3+\shadebuffer)
-- (3+\shadebuffer+\runnershiftforRHSdiagram,3+\shadebuffer)
-- (3+\shadebuffer+\runnershiftforRHSdiagram,4+\shadebuffer)
-- (1-\shadebuffer+\runnershiftforRHSdiagram,4+\shadebuffer)
-- cycle;

\foreach \x in {0,...,9}
{
	\node [scale=\labelsize*\globalscaling, above] at (\x+\runnershiftforRHSdiagram, -7) {$\x$};
	\draw (\x+\runnershiftforRHSdiagram,-6.5) -- (\x+\runnershiftforRHSdiagram,5.5);};
\node  [scale=\labelsize*\globalscaling,left] at (0+\runnershiftforRHSdiagram,0) {$x$};

\foreach \x in
{(0,-6), (0,-3), (0,-2),
	(1,-6), (1,-5), (1,-4), (1,-3), (1,-2), (1,-1), (1,0), (1,1), (1,2), (1,3),       (1,5),
	(2,-6),                 (2,-3), (2,-2),	
	(3,-6), (3,-5), (3,-4),
	(4,-6), (4,-5), (4,-4), (4,-3), (4,-2), (4,-1), (4,0), (4,1), (4,2),        (4,4),
	(5,-6), (5,-5),                         (5,-1),
	(6,-6), (6,-5), (6,-4), (6,-3), (6,-2), (6,-1),                      (6,3), (6,4),
	(7,-6), (7,-5), (7,-4), (7,-3), (7,-2), (7,-1), (7,0),                (7,5),
	(8,-6), (8,-5), (8,-4), (8,-3),                 (8,0), (8,1), (8,2),
	(9,-6), (9,-1), (9,-2), (9,3)
}
{\draw
	[fill, xshift=\runnershiftforRHSdiagram cm] \x circle (\beadsize)
	;};

\foreach \x/\y in
{2/-3, 2/-2, 5/-1, 8/0, 6/1, 6/2, 6/3, 4/4, 1/5}
{
\draw [line width=\beadmovementwidth*\globalscaling,->, >=latex] (\x+\runnershiftforRHSdiagram,\y-\arrowshortening) -- (\x+\runnershiftforRHSdiagram,\y-1+\arrowshortening);
};

\end{tikzpicture}

\label{fig:omnibuslemma}
	\end{figure}

\begin{proof} \hfill
\begin{enumerate}
\item Note that condition (IA) ensures that $d_1 \leq x+e$ by Lemma \ref{L:easy}, and the remaining assertions then follow from Condition (III) and Lemma \ref{L:+-}.  If condition (IA) does not hold, then condition (IB) holds and guarantees that $d_1$ is defined and $d_2 \leq x$ when $k \geq 2$; the remaining assertions again follow from Condition (III) and Lemma \ref{L:+-}(2).

\item This follows from part (1), conditions (II) and (III), and Lemma \ref{L:bothbeadgapvariant}.

\item Suppose first $\sigma$ has a bead movement starting at a position $q$ with $q \in (x,\,d_1)$.  Let $q' = \min\{ y \mid  x< y \leq_e q \}$.  Then $q' \in (x,x+e]$.  Furthermore, by part (2), $q \in S$ if and only if $q' \in S$, so that $(b';q') \in \bm(\sigma)$ for some $b' \in S$.  By condition (II), we have $\z_S(x) < \z(b';q')$, so that there exists $s \in (x,\, q']$ such that $s \notin S$ and $s-e \in S$ by Lemma \ref{L:easy}(1).  Thus, $x < s \leq q' \leq q_1 < d_1$, contradicting the definition of $d_1$.

    We now prove by induction that $\sigma$ has no bead movement whose starting position lies in the open interval $(x+f(i)e,d_i)$ for all $i \in [1,\,k+l]$, for which we've just seen the base case of $i=1$.  For $i \in [2,\,k]$, we have $f(i) = 1-i$.  By part (2), for any $b' \in (x+(2-i)e,d_{i-1})$, we have $b' \in S$ if and only if $b' -e \in S$. This implies that $\sigma$ has a bead movement starting at $b'-e$ only if it has a bead movement starting at $b'$.  By induction hypothesis, $\sigma$ has no bead movement whose starting position lies in $(x+(2-i)e,d_{i-1})$, and hence $\sigma$ has no bead movement whose starting position lies in $(x+(1-i)e,d_{i-1}-e)$.  Since $d_{i-1}-e \geq d_i$ by part (1), we conclude that $\sigma$ has no bead movement whose starting position lies in $(x+f(i)e,d_{i})$.  An analogous argument also applies for $i \in (k,\,k+l]$.

    Finally we show that $\sigma$ has no bead movement whose starting position lies in the open interval $(x+(f(i)\pm 1)e,d_i \pm e)$ for all $i \in [1,\,k+l]$.  Once again, this holds because by part (2), any bead movement in this interval will imply a bead movement in $(x+f(i)e,\, d_i)$, contradicting what we have shown above.

    Thus, for all $(b;q) \in \bm(\sigma)$, we have
    $$
    q \notin \bigcup_{i=1}^{k+l} \bigcup_{\varepsilon \in \{0, \pm 1\}} (x+(f(i)+\varepsilon)e,\, d_i + \varepsilon e).
    $$
    Part (3) now follows, using part (1).

\item This is straightforward, and may be proved by, say, induction on $k+l$ and using parts (1) and (3).

\item By part (1), we have $$S_{k+l} = ((S \setminus \{ d_i-e \mid i \in [1,\, k] \}) \cup \{ d_i \mid i \in [1,\, k +l]\}) \setminus \{ d_i-e \mid i \in (k,\, k+l]\}.$$
Thus $\z_S(q) = \z_{S_{k+l}}(q)$ if $q < \min_i\{d_i-e\}$ ($= d_k-e$) or $q-e \geq \max_i\{d_i\}$ $\left(= \begin{cases} d_{k+l} &\text{if } l>0\\ d_1 &\text{if } l=0 \end{cases}\right)$. 

For $q \in [d_k-e,\, x-(k-1)e]$, we have $S_{k+l} \cap (q-e,\, q] = (S \cap (q-e,\, q]) \setminus \{d_k-e\}$ by part (1).  Thus $\z_{S_{k+l}}(q) = \z_S(q) + 1$.

For $q \in (x-ie,\, x-(i-1)e]$ for some $i \in [1,\, k)$, we have, since $f(i) = 1-i$, $q \in [d_i-e,\, x-(i-1)e]$ by part (3), so that $d_i \leq  x+ (f(i)+1)e$ even when $i=1$ and Condition (IA) does not hold.  Consequently, $S_{k+l} \cap (q-e,\, q] = ((S \cap (q-e,\, q])\setminus \{ d_i -e \}) \cup \{ d_{i+1} \} $ by part (1), and hence $\z_{S_{k+l}}(q) = \z_S(q)$.

For $q \in (x,\, x+e]$, we have $q \in [d_1,\, x+e]$ by part (3), and hence $d_1 \leq x+e$.  Thus $S_{k+l} \cap (q-e,\, q] = (S \cap (q-e,\, q]) \cup \{d_1\} \setminus \{d_{k+1} -e \}$ by part (1). So, $\z_{S_{k+l}}(q) = \z_S(q) - \delta_{l0}$.

For $q \in (x+e,\, x+2e]$, we have $q \in[d_1+e,\, x+2e]$ by part (3), and hence $d_1 \leq x+e$.  Thus $S_{k+l} \cap (q-e,\, q] = (S \cap (q-e,q]) \cup \{d_{k+1}\} \setminus \{d_{k+2}-e \}$ by part (1), and so, $\z_{S_{k+l}}(q) = \z_S(q) - \delta_{l1}$.

For $q \in (x + je,\, x+(j+1)e]$ for some $j \in [2,\, l]$, we have, since $f(k+j-1) = j-1$, $q \in [d_{k+j-1}+e,\, x+(j+1)e]$ by part (3), so that $S_{k+l} \cap (q-e,\, q] = (S  \cap (q-e,\, q]) \cup \{ d_{k+j} \} \setminus \{ d_{k+j+1}-e \}$ by part (1).  Thus $\z_{S_{k+l}}(q) = \z_S(q) - \delta_{jl}$.

If $q > x+(l+1)e$, then $q > d_1 + e$ if $l=0$ and $q> d_{k+l} + e$ if $l>0$ by part (3).  Thus $q > \max_i\{d_i\} + e$.  This proves the first assertion completely.
\medskip

For the second assertion,
let $h: (d_i-e,\, d_i] \to (x+(f(i)-1)e,\, x+f(i)e]$ be the bijection satisfying $h(y) \equiv_e y$ for all $y \in (d_i-e,\, d_i]$.  From part (2), we see that $y \in S \cap (d_i-e,\, d_i)$ if and only if $h(y) \in S$.  Thus,
\begin{align*}
\z_S(d_i) = |(d_i-e,\, d_i] \setminus S| &= |\{ h(y) \mid y \in (d_i-e,\,d_i) \setminus S \}| + \I_{d_i \notin S} \\
&= \z_S(x+f(i)e) - \I_{h(d_i) \notin S} + \I_{d_i \notin S}.
\end{align*}

We consider three different cases:
\begin{description}
\item[{$i \in [2,\, k]$}]  In this case, $h(d_i) = d_i-e \in S$ and $d_i \in S_{k+l}$, so that
$$(d_i-e,\, d_i] \setminus S = ((d_i-e,\, d_i] \setminus S_{k+l}) \cup (\{ d_i\} \setminus S) $$
by part (1).  Thus
$$
\z_S(x+f(i)e) + \I_{d_i \notin S} = \z_S(d_i) = \z_{S_{k+l}}(d_i) + \I_{d_i \notin S},
$$
giving $\z_S(x+f(i)e) = \z_{S_{k+l}}(d_i).$

\item[{$i \in (k,\, k+l]$}]  In this case, $h(d_i) = d_i-e \notin S_{k+l}$ and $d_i \notin S$.  Also
$$(d_i-e,\, d_i] \setminus S_{k+l} = ((d_i-e,\, d_i] \setminus (S \cup \{ d_{i-1}, d_i \} \setminus \{ d_{i+1}-e\} )$$
by part (1), where $d_{i-1}$ is to be read as $d_1$ when $i= k+1$.
Thus,
$$
\z_S(x+f(i)e)- \I_{d_i-e \notin S} + 1 = \z_S(d_i) = \z_{S_{k+l}}(d_i) + \I_{d_{i-1} > d_i-e} + \delta_{i,k+l}.
$$
Since $d_{i-1} > d_i-e$ if and only if $d_i-e \in S$, we get
$$
\z_S(x+f(i)e)= \z_{S_{k+l}}(d_i) + \delta_{i,k+l}.
$$
\item[$i=1$] Clearly, $d_1 \notin S$.  Also since $d_1-e \in S$, we have $h(d_1) \in S$ by part (2).  If $d_1-e > x$ (only if $l=0$), then $(d_1-e,\,d_1] \setminus S_{k+l} = (d_1-e,\,d_1] \setminus (S \cup \{d_1\})$.  Thus,
$$
\z_S(x) +1 = \z_S(d_1) = \z_{S_{k+l}}(d_1) +1.$$
If $d_1 -e \leq x$, then $ (d_1-e,\,d_1] \setminus S_{k+l} = (d_1-e,\,d_1] \setminus (S \cup \{d_1\} \setminus \{d_{k+1}-e\})$. Thus,
$$
\z_S(x) +1 = \z_S(d_1) = \z_{S_{k+l}}(d_1) + \delta_{l0}.$$
This yields $\z_{S_{k+l}}(d_1) = \z_S(x) + 1 - \delta_{l0}$ always, regardless of the value of $d_1$.
\end{description}
\end{enumerate}
\end{proof}

\begin{defi}
For $x,k,l \in \mathbb{Z}$ with $k >0$ and $l\geq0$, define
$$
\beadoperation{x}^{k,l} := (\beadoperation{x+le} \circ \beadoperation{x+(l-1)e} \circ \dotsb \circ \beadoperation{x+e}) \circ (\beadoperation{x-(k-1)e} \circ \beadoperation{x-(k-2)e} \circ \dotsb \circ\beadoperation{x}).$$
\end{defi}

Using the notation just introduced, $S_{k+l}$ in Lemma \ref{L:omnibus} can be written as $\beadoperation{x}^{k,l}(S)$.

\begin{defi}
Let $\lambda$ be a hook-quotient partition with $e$-weight $w$, with $\bm(\lambda) = \{ (b_1;q_1) < (b_2;q_2) <\dotsb < (b_w;q_w)\}$.  We define a partial order $\succeq_{\lambda}$ on $[1,\, w]$ as follows:  $i \succeq_{\lambda} j$ if and only if $b_i \equiv_e b_j$, and either $i\geq j \geq m$ or $i \leq j \leq m$, where $(b_m;q_m)$ is the final bead movement of the bottom bead on the runner.

In addition, we write $i \succ_{\lambda} j$ for $i \succeq_{\lambda} j$ and $i \ne j$.
\end{defi}

\begin{rem}
Let $\lambda$ be a hook-quotient partition.  If $\bm(\lambda) = \{ (b_1;q_1) < (b_2;q_2) <\dotsb < (b_w;q_w)\}$, and $\{ i \in [1,\, w] \mid q_i \equiv_e a \} = \{i_1 < i_2< \dotsb < i_v\}$, with $(b_{i_m};q_{i_m})$ being the final bead movement of the bottom bead on runner $a$, then
\begin{gather*}
i_1 \succeq_{\lambda}i_2 \succeq_{\lambda} \dotsb \succeq_{\lambda}i_m, \\
i_v \succeq_{\lambda}i_{v-1} \succeq_{\lambda} \dotsb \succeq_{\lambda}i_m.
\end{gather*}
In addition, $i_j \not\succeq_{\lambda}r$ and $r \not\succeq_{\lambda}i_j$ for all other $r$ and any $j$.
\end{rem}

Using Lemma \ref{L:omnibus}, we can now describe how to obtain the partition $\mu$ from the hook-quotient partition $\lambda$ when $\z(\mu)$ is $0$-increasing and one $\lambda$-modified basis vector away from $\z(\lambda)$.

\begin{prop} \label{P:movealongdescription}
Let $\lambda$ be a hook-quotient partition with $e$-weight $w$.  Let $r \in [1,\,w]$ and suppose that $\z(\lambda) + \sb^{\lambda}_{r} = \z(\mu)$ for a $0$-increasing partition $\mu$ lying in the same block as $\lambda$.
For each $i \in [1,\,w]$, let $(b_i;q_i)$ (resp.\ $(b'_i;q'_i)$) be the $i$-th bead movement of $\lambda$ (resp.\ $\mu$).  Let $g_r = \max\{ x <_e q_r \mid x \notin \beta(\lambda)\}$,
and let $\sigma$ be the partition obtained from $\lambda$ by moving the bead at $b_r$ to $g_r$, i.e. $$\beta(\sigma) = \beta(\lambda) \cup \{ g_r \} \setminus \{b_r\}.$$
Then $\beta(\mu) = \beadoperation{q_r}^{k,l}(\beta(\sigma))$, where $k = \frac{q_r-g_r}{e}$ and $l = \frac{b_r-q_r}{e}$.

In particular,
\begin{enumerate}
\item $q_t = q'_t$ if and only if $t \not\succeq_{\lambda} r$, in which case $\sb^{\lambda}_t = \sb^{\mu}_t$ if $\mu$ is a hook-quotient partition;
\item if $s \succeq_{\lambda} r$, then
\begin{enumerate}
\item $q'_s \in (q_s,\, q_s + e]$, except possibly when $r=s$ and $(b_r;q_r)=(q_r;q_r)$ is the initial bead movement of the bottom bead of that runner of $\lambda$;
 \item $\alpha \in \beta(\sigma) \iff \alpha-e \in \beta(\sigma)$ for all $\alpha \in (q_s,\, q_s')$;
\end{enumerate}
\item if $t \succeq_{\lambda} s \succeq_{\lambda} r$, then $q'_t - q_t \leq q'_s - q_s$;
\item if $q'_s = q_s +e$, then $(b_r;q_r) = (q_r;q_r)$ is the only bead movement of the bottom bead of that runner of $\lambda$, and $q_r \not\equiv_e q'_r > q_r +e $;
\item writing $\mathbf{G}_\lambda^\mu = \bigcup_{s \succeq_{\lambda} r} \left( (q_s,\, q'_s) \cup (q_s-e,\, q'_s-e) \right)$, we have
    \begin{enumerate}
      \item $x \in \beta(\sigma) \cap \mathbf{G}_\lambda^\mu$ if and only if $y \in \beta(\sigma)$ for all $y \in \mathbf{G}_\lambda^\mu$ with $y \equiv_e x$;
      \item $q_t,q_t-e \notin \mathbf{G}_\lambda^\mu$ for all $t \not\succeq_{\lambda} r$.
    \end{enumerate}
\end{enumerate}
\end{prop}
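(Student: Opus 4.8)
The plan is to realize the passage from $\lambda$ to $\mu$ as a single application of Lemma~\ref{L:omnibus}, and then to obtain (1)--(5) by translating parts (1)--(5) of that lemma. Set $S = \beta(\sigma)$, $x = q_r$, $k = (q_r - g_r)/e$ and $l = (b_r - q_r)/e$. First one checks that $g_r$ is defined and $g_r \le q_r - e$, so that $k \geq 1$; this, together with the identity $\beadoperation{q_r}^{k,l}(\beta(\sigma)) = S_{k+l}$ in the notation of Lemma~\ref{L:omnibus}, and the fact that $\sigma$ has $e$-weight $w - (k+l)$, follows from the hook-quotient shape of the component $\lambda^{(j)}$ of the $e$-quotient of $\lambda$ (with $j$ the residue of $q_r$) and the explicit description of $\bm^{(j)}(\lambda)$ recorded after Definition~\ref{def:hookquotient}. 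Write $\nu$ for the partition with $\beta(\nu) = S_{k+l}$. The proposition's central claim is that $\nu = \mu$.

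The heart of the matter is verifying hypotheses (I), (II) and (III) of Lemma~\ref{L:omnibus} for this data. Hypothesis (I) divides into the case $l > 0$ (so that $q_r < b_r$ and the bead originally at $b_r$ genuinely passes through $q_r$ on its way up to the $e$-core) and the case $l = 0$ (the initial bead movement of $b_r$): in the first case the hook-quotient structure forces $q_r \notin \beta(\sigma)$, $q_r - e \in \beta(\sigma)$ and $\z_\sigma(q_r) < \z_\sigma(q_r+e)$, which is (IA), while in the second case (IB) is checked directly from the shape of $\lambda^{(j)}$. For (II) and (III) one must bound $\z_\sigma(q_r + ie)$ below by the armlengths $\z(b;q)$ of bead movements of $\sigma$ starting in the relevant windows. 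I would establish these using two facts: that $\lambda$ is hook-quotient, so by Lemma~\ref{lemma:1unram}(1) and the description of $\bm^{(j)}$ the runner-$j$ bead movements have distinct starting positions and a rigid armlength pattern, and $\sigma$ agrees with $\lambda$ outside a bounded window of runner $j$ around $q_r$; and that $\z(\mu) = \z(\lambda) + \sb^\lambda_r$ is $0$-increasing, hence weakly increasing with entries in $[0,e-1]$, a constraint which --- unwound through Definition~\ref{def:modified}, according to whether $r$ indexes a runner-$j$ movement before, at, or after the final movement of the bottom bead of runner $j$ --- pins down $\z_\lambda$, and with it $\z_\sigma$, near $q_r$ tightly enough to yield the required inequalities.

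Granting the hypotheses, Lemma~\ref{L:omnibus}(5) computes the armlengths of all bead movements of $\nu$, and a direct comparison with the effect on armlengths of having replaced the bead at $b_r$ by a bead at $g_r$ (which decreases by one exactly the armlengths indexed by $s$ with $s \succeq_\lambda r$ and leaves the others fixed, matching the shape of $\sb^\lambda_r$) gives $\z(\nu) = \z(\lambda) + \sb^\lambda_r = \z(\mu)$. Moreover each operation $\beadoperation{\cdot}$ moves a bead along its own runner, so $\nu$ has the same number of beads on each runner as $\sigma$, hence the same $e$-core, namely that of $\lambda$ and of $\mu$; and since each of the $k+l$ operations raises the $e$-weight by one from the value $w - (k+l)$ for $\sigma$, the partition $\nu$ has $e$-weight $w$. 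Thus $\nu$ and $\mu$ lie in the same block with $\z(\nu) = \z(\mu)$; as $\mu$ is $0$-increasing, Theorem~\ref{thm:goodlabels}(2) puts $\mu$ and $\nu$ in a common $\W$-orbit, and since such an orbit meets each block in exactly one partition, $\nu = \mu$. This proves $\beta(\mu) = \beadoperation{q_r}^{k,l}(\beta(\sigma))$. Assertions (1)--(5) then follow by matching notation with Lemma~\ref{L:omnibus}: through the order-preserving injection $\tilde g$ of part (4) the bead movements of $\mu$ indexed by $s \not\succeq_\lambda r$ are images of those of $\sigma$ with unchanged starting positions (their positions avoid every window, by part (3)), which gives the backward direction of (1) and, via Definition~\ref{def:modified} and the unchanged $\succeq$-order off runner $j$, the equality of modified basis vectors; the remaining bead movements $(g(d_i-e);d_i)$ of $\nu$ are exactly the $(b'_s;q'_s)$ with $s \succeq_\lambda r$, so (2)(a), (2)(b), (3) and (4) restate parts (1), (2), the inequality chain of part (1), and the failure of $d_{i+1} \leq d_i+e$ together with (IA) failing, respectively, while (5) repackages parts (2) and (3) once one observes that $\mathbf{G}_\lambda^\mu$ is precisely the union of the intervals $(q_r + f(i)e,\, d_i) \cup (q_r + (f(i)-1)e,\, d_i - e)$.

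The main obstacle is the verification of hypotheses (II) and (III): everything else is bookkeeping, but these lower bounds on $\z_\sigma$ genuinely require disentangling how the hook shape of $\lambda^{(j)}$, the position of $q_r$ within runner $j$, and the $0$-increasing constraint on $\z(\mu)$ interact, and I do not see how to avoid a case analysis according to whether $r$ indexes a movement of the ``arm'' or the ``leg'' of $\lambda^{(j)}$ and whether it precedes or follows the final movement of the bottom bead on that runner.
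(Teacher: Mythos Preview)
Your proposal is correct and follows essentially the same route as the paper: reduce everything to a single application of Lemma~\ref{L:omnibus} with $S=\beta(\sigma)$, $x=q_r$, verify its hypotheses (I)--(III) via a case analysis on the position of $(b_r;q_r)$ within runner $j$, compute $\z(\nu)$ from Lemma~\ref{L:omnibus}(5), and invoke Theorem~\ref{thm:goodlabels} to conclude $\nu=\mu$. The paper organises the case analysis by whether $(b_r;q_r)$ is a non-final movement of the bottom bead, a movement of a non-bottom bead, or the final movement of the bottom bead (and only writes out the third case), which is exactly your split ``before, at, or after the final movement of the bottom bead''; your framing by $l>0$ versus $l=0$ for hypothesis~(I) is a cosmetic rearrangement of the same analysis. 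One small correction: your citation of Lemma~\ref{lemma:1unram}(1) is misplaced, since that lemma goes from $1$-increasing to hook-quotient, whereas here $\lambda$ is hook-quotient by hypothesis and what you need is only the description of $\bm^{(j)}(\lambda)$ recorded after Definition~\ref{def:hookquotient}.
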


\begin{proof}
Suppose that $(b_r;q_r)$ lies in runner $a$.
Note first that
\begin{align*}
\{ q_s \mid s \succeq_{\lambda} r,\ q_s \leq q_r \} &= \{ q_r - i e \mid i \in [0,\,k) \}, \\
\{ q_s \mid s \succeq_{\lambda} r,\ q_s > q_r \} &= \{ q_r + i e \mid i \in [1,\, l] \},
\end{align*}
so that $\{q_r + ie \mid i \in (-k,\,l] \} = \{ q_s \mid s \succeq_\lambda r\}$.
We consider the following cases separately:
\begin{description}
\item[Case 1] $(b_r;q_r)$ is a non-final bead movement of the bottom bead of runner $a$.
\item[Case 2] $(b_r;q_r)$ is a (and the unique) bead movement of a non-bottom bead of runner $a$.
\item[Case 3] $(b_r;q_r)$ is the final bead movement of the bottom bead of runner $a$.
\end{description}
All cases are proved similarly by applying Lemma \ref{L:omnibus}.
We shall only show the details for Case 3.

In this case,
$\bm(\lambda) = \bm(\sigma) \cup \bm^{(a)}(\lambda)$ (disjoint union).  In addition, for $x \in \mathbb{Z}$, we have
$$\z_{\sigma}(x) = \z_{\lambda}(x) + \I_{x \in [b_r,\,b_r+e)} - \I_{x \in [g_r,\, g_r + e)}.$$
We show that the conditions in Lemma \ref{L:omnibus} hold with $x = q_r$ (and $k =\frac{q_r - g_r}{e}$ and $l = \frac{b_r - q_r}{e}$).

Since $\z(\mu) = \z(\lambda) + \sb^{\lambda}_{r} = \z(\lambda) + \ssb_{r}$ and $\mu$ is $0$-increasing, we have
$$
\z_{\lambda}(q_i) + \delta_{ir} \leq \z_{\lambda}(q_j) + \delta_{jr}
$$
whenever $i < j$ (equivalently, $q_i < q_j$).  Thus, when in addition $i \succeq_{\lambda} r$, we have
\begin{align*}
\z_{\sigma} (q_i) = \z_{\lambda}(q_i) + \delta_{q_i,b_r}
&\leq \z_{\lambda}(q_j) + \delta_{jr} - \delta_{ir} + \delta_{q_i,b_r}\\
&= \z_{\sigma} (q_j) + \delta_{jr} - \delta_{ir} + \delta_{q_i,b_r} - \I_{q_j \in [b_r,\, b_r+e)}.
\end{align*}
In particular, $\z_{\sigma}(q_i) > \z_{\sigma}(q_j)$ only if $j=r$ or $q_i = b_r$.  Since $\bm^{(a)}(\sigma) = \emptyset$, we see that condition (III) holds.
Putting $i = r$, we see that condition (II) holds, and condition (IA) also holds when $l >0$ (so that $q_r+e = q_s$ for some $s$).  On the other hand, if $ l = 0$, then $q_r = b_r \notin \beta(\sigma)$ while $q_r-e \in \beta(\sigma)$.  Furthermore, $\z_{\lambda}(q_r) = m_r - 1 \leq e-2$, where $\z(\mu) = (m_1,\dotsc, m_w)$, so that there exists $y \in (q_r-e,\, q_r) \cap \beta(\lambda)$. Hence $y \in \beta(\sigma)$ and so $q_r-e < \max(\beta(\sigma))$. Thus, condition (IB) holds.

Let $\beta(\nu) = \beadoperation{q_{r}}^{k, l} (\beta(\sigma))$.  Then by Lemma \ref{L:omnibus}(5) and using the notations there, we have
\begin{align*}
\z_{\nu}(\tilde{g}(b;q)) 
&= \z_{\sigma}(b;q) + \I_{q \in (q_{r} - ke,\, q_{r} - (k-1)e]} - \I_{q \in (q_{r}+le,\, q_{r} + (l+1)e]} \\
&= \z_{\sigma}(b;q) + \I_{q \in (g_r,\, g_{r} +e]} - \I_{q \in (b_{r},\, b_r+e]} \\
&= \z_{\lambda}(b;q)
\end{align*}
for all $(b;q) \in \bm(\sigma) = \bm(\lambda) \setminus \bm^{(a)}(\lambda)$, and, for each $i \in [1,\, k+l]$,
\begin{align*}
\z_{\nu}(g(d_i-e);d_i) &= \z_{\sigma}(q_{r}+ f(i)e) - \I_{i=k+l>k} + \I_{i=1\leq l} \\
&= \z_{\lambda}(q_{s_i}) + \I_{q_{s_i} \in [b_r,\,b_r+e)} - \I_{i=k+l>k} + \I_{i=1\leq l} \\
&= \z_{\lambda}(q_{s_i}) + \delta_{i1}
\end{align*}
where $s_i \succeq_{\lambda} r$ such that $q_{s_i} = q_r + f(i)e$.  By Lemma \ref{L:omnibus}(1,3), for any $(b_t;q_t) \in  \bm(\lambda) \setminus \bm^{(a)}(\lambda) = \bm(\sigma)$,
we have $q_t \notin (q_{s_i},\, d_i)$ for all $i \in [1,\, k+l]$, so that $(b_t;q_t)$ is the $t$-th bead movement of $\nu$, and that $(g(d_i-e);d_i)$ is the $s_i$-th bead movement of $\nu$ for each $i \in [1,\, k+l]$. Thus,
$\z(\nu) = \z(\lambda) + \ssb_{s_1} = \z(\lambda) + \ssb_{r} = \z(\mu)$, and hence $\nu = \mu$ by Theorem~\ref{thm:goodlabels}.

Now we briefly describe how to obtain the remaining assertions.  We have already shown the first assertion of part (1) above, and the second assertion follows from the fact that $\sb^{\mu}_s$ depends only on whether $b'_s$ is the bottom bead on that runner, and if so, whether $q'_s$ is its final bead movement, and the answers to these questions are affirmative if and only if the respective answers to the same questions about $b_s$ and $q_s$ are affirmative.

Part (2) follows from Lemma \ref{L:omnibus}(1,2,3) and the fact that condition (IA) in Lemma \ref{L:omnibus} holds whenever $l>0$.

Part (3) follows from Lemma \ref{L:omnibus}(1).

For part (4), if $q_s' = q_s +e$, then we must have $d_1 \geq q_r + e$ by Lemma \ref{L:omnibus}(1).  This forces $l=0$, as otherwise condition (IA)---$\z_\sigma(q_r) < \z_{\sigma}(q_r+e)$---holds, so that $d_1 = b_{\beta(\sigma)}(q_r) <q_r+e$ by Lemma \ref{L:easy}(1), since $q_r \notin \beta(\sigma)$.
Let $c_r = \max\{ x \geq_e q_r \mid x < d_1\}$.
By part (2b), we see that $u \in \beta(\sigma)$ if and only if $u-e \in \beta(\sigma)$ for all $u \in (q_r,\, d_1)$.  Since $q_r\notin \beta(\sigma)$, we have $q_r+e, q_r+2e,\dotsc, c_r \notin \beta(\sigma)$. Hence $d_1 -e \ne c_r$.  Consequently, $q'_r = d_1 > q_r +e$ and $r \ne s$, so that, by part (2a), $(b_r;q_r) = (q_r;q_r)$ is the initial bead movement of the bottom bead of that runner.  Since $s \succ_{\lambda} r$, this forces $(q_r;q_r)$ to be the only bead movement of the bottom bead of that runner.

Part (5) follows from Lemma \ref{L:omnibus}(1,2,3).
\end{proof}

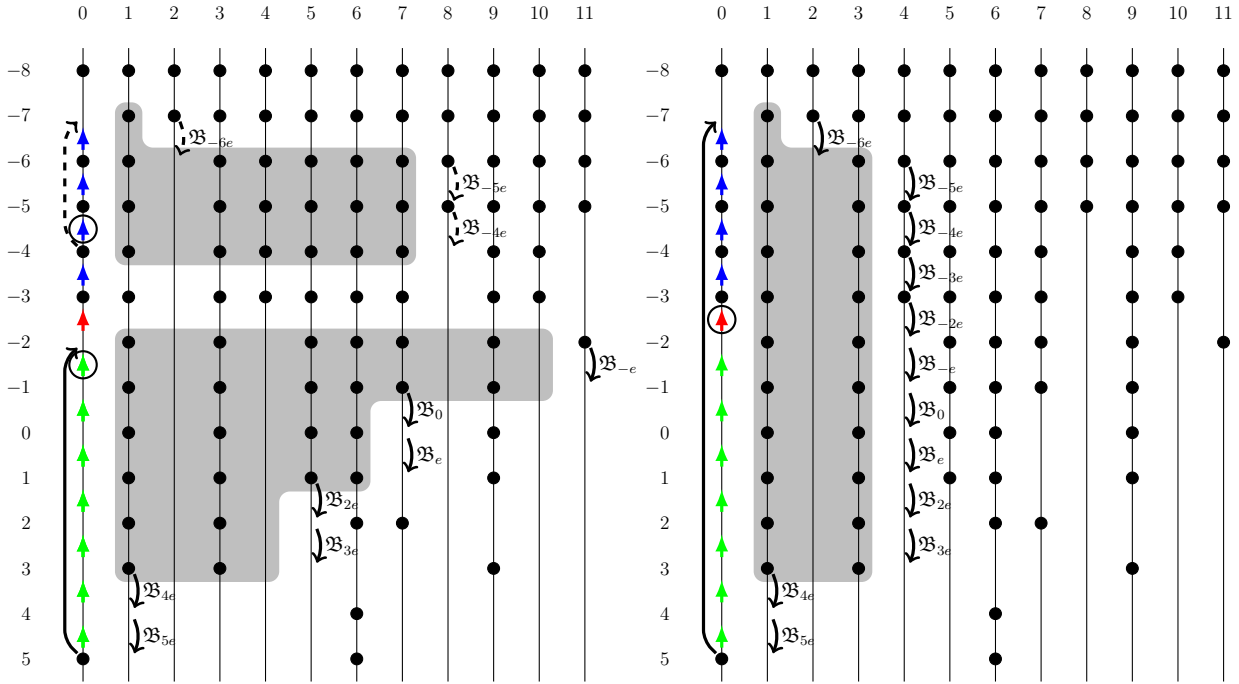
\begin{figure}[h]\caption{Illustration of Proposition~\ref{P:movealongdescription}}

\def\globalscaling{0.6}
\def\arrowshortening{0.25}
\def\shadebuffer{0.3}
\def\beadmovementwidth{2}
\def\abacusmovementwidth{2}
\def\beadoperationscaling{1.1}
\def\labelsize{1}
\def\beadsize{.13}
\def\circledbeadmovementsize{.3}
\def\runnershiftforRHSdiagram{14}


\begin{tikzpicture}
[y=-1cm, scale=\globalscaling], every node/.style={scale=.8*\globalscaling}]

\fill
[lightgray, rounded corners]
(1-\shadebuffer,3-\shadebuffer)
-- (10+\shadebuffer,3-\shadebuffer)
-- (10+\shadebuffer,4+\shadebuffer)
-- (6+\shadebuffer,4+\shadebuffer)
-- (6+\shadebuffer,6+\shadebuffer)
-- (4+\shadebuffer,6+\shadebuffer)
-- (4+\shadebuffer,8+\shadebuffer)
-- (1-\shadebuffer,8+\shadebuffer)
-- cycle;

\fill
[lightgray, rounded corners]
(1-\shadebuffer,1+\shadebuffer)
-- (7+\shadebuffer,1+\shadebuffer)
-- (7+\shadebuffer,-1-\shadebuffer)
-- (1+\shadebuffer,-1-\shadebuffer)
-- (1+\shadebuffer,-2-\shadebuffer)
-- (1-\shadebuffer,-2-\shadebuffer)
-- cycle;

\fill
[lightgray, rounded corners]
(\runnershiftforRHSdiagram+1-\shadebuffer,8+\shadebuffer)
-- (\runnershiftforRHSdiagram+3+\shadebuffer,8+\shadebuffer)
-- (\runnershiftforRHSdiagram+3+\shadebuffer,-1-\shadebuffer)
-- (\runnershiftforRHSdiagram+1+\shadebuffer,-1-\shadebuffer)
-- (\runnershiftforRHSdiagram+1+\shadebuffer,-2-\shadebuffer)
-- (\runnershiftforRHSdiagram+1-\shadebuffer,-2-\shadebuffer)
-- cycle;

\foreach \x in {0,...,11}
{
\node [scale=\labelsize*\globalscaling, above] at (\x, -4) {$\x$};
\draw (\x,-3.5) -- (\x,10.5);};
row labels
\foreach \y in {-8,...,5}
{\node [scale=\labelsize*\globalscaling, left] at (-1, \y+5) {$\y$};};

\foreach \x in
{
 (0,-3),   (1,-3), (2,-3), (3,-3), (4,-3), (5,-3), (6,-3), (7,-3), (8,-3),(9,-3), (10,-3),            (11,-3),
             (1,-2),  (2,-2),          (3,-2), (4,-2), (5,-2), (6,-2), (7,-2), (8,-2),(9,-2), (10,-2),             (11,-2),
 (0,-1), (1,-1),           (3,-1), (4,-1), (5,-1), (6,-1), (7,-1), (8,-1), (9,-1), (10,-1),                  (11,-1),
  (0,0), (1,0),              (3,0), (4,0), (5,0),    (6,0), (7,0),  (8,0),        (9,0), (10,0),           (11,0),
   (0,1), (1,1),            (3,1), (4,1), (5,1), (6,1), (7,1),              (9,1), (10,1),
    (0,2),   (1,2),          (3,2), (4,2), (5,2), (6,2), (7,2),            (9,2),                   (10,2),
             (1,3),           (3,3),           (5,3), (6,3), (7,3),          (9,3), (11,3),
             (1,4),          (3,4),          (5,4), (6,4), (7,4),           (9,4),
             (1,5),          (3,5),          (5,5), (6,5),                    (9,5),
             (1,6),          (3,6),          (5,6), (6,6),                   (9,6),
             (1,7),          (3,7),                   (6,7),     (7,7),
             (1,8),          (3,8),                                            (9,8),
                                                                     (6,9),    (6,10),
              (0,10)
}
{\draw
[fill] \x circle (\beadsize)
;};

\draw [red, line width=\beadmovementwidth*\globalscaling,->, >=latex] (0,3-\arrowshortening) -- (0,2+\arrowshortening);
\foreach \x in {4,...,10}
{
\draw [green, line width=\beadmovementwidth*\globalscaling,->, >=latex] (0,\x-\arrowshortening) -- (0,\x-1+\arrowshortening);
};
\foreach \x in {-1,...,2}
{
\draw [blue, line width=\beadmovementwidth*\globalscaling,->, >=latex] (0,\x-\arrowshortening) -- (0,\x-1+\arrowshortening);
};
\foreach \x in {4,...,10}
{
\draw [green, line width=\beadmovementwidth*\globalscaling,->, >=latex] (0,\x-\arrowshortening) -- (0,\x-1+\arrowshortening);
};

\draw [thick] (0,3.5) circle (\circledbeadmovementsize);

\draw [->, line width = \abacusmovementwidth*\globalscaling, rounded corners](-\arrowshortening/2,10-\arrowshortening/2) -- (-0.4,9.6) -- (-0.4,3.4) -- (-\arrowshortening/2, 3+\arrowshortening/2);
\draw [->, line width = \abacusmovementwidth*\globalscaling, rounded corners] (11+\arrowshortening/2, 3+\arrowshortening/2) -- (11.25,3.5) node [scale=\beadoperationscaling*\globalscaling, right,xshift=-3] {$\beadoperation{-e}$} -- (11+\arrowshortening/2, 4-\arrowshortening/2);
\draw [->, line width = \abacusmovementwidth*\globalscaling, rounded corners] (7+\arrowshortening/2, 4+\arrowshortening/2) -- (7.25,4.5) node [scale=\beadoperationscaling*\globalscaling, right,xshift=-3] {$\beadoperation{0}$} -- (7+\arrowshortening/2, 5-\arrowshortening/2);
\draw [->, line width = \abacusmovementwidth*\globalscaling, rounded corners] (7+\arrowshortening/2, 5+\arrowshortening/2) -- (7.25,5.5) node [scale=\beadoperationscaling*\globalscaling, right,xshift=-3] {$\beadoperation{e}$} -- (7+\arrowshortening/2, 6-\arrowshortening/2);
\draw [->, line width = \abacusmovementwidth*\globalscaling, rounded corners] (5+\arrowshortening/2, 6+\arrowshortening/2) -- (5.25,6.5) node [scale=\beadoperationscaling*\globalscaling, right,xshift=-3] {$\beadoperation{2e}$} -- (5+\arrowshortening/2, 7-\arrowshortening/2);
\draw [->, line width = \abacusmovementwidth*\globalscaling, rounded corners] (5+\arrowshortening/2, 7+\arrowshortening/2) -- (5.25,7.5) node [scale=\beadoperationscaling*\globalscaling, right,xshift=-3] {$\beadoperation{3e}$} -- (5+\arrowshortening/2, 8-\arrowshortening/2);
\draw [->, line width = \abacusmovementwidth*\globalscaling, rounded corners] (1+\arrowshortening/2, 8+\arrowshortening/2) -- (1.25,8.5) node [scale=\beadoperationscaling*\globalscaling, right,xshift=-3] {$\beadoperation{4e}$} -- (1+\arrowshortening/2, 9-\arrowshortening/2);
\draw [->, line width = \abacusmovementwidth*\globalscaling, rounded corners] (1+\arrowshortening/2, 9+\arrowshortening/2) -- (1.25,9.5) node [scale=\beadoperationscaling*\globalscaling, right,xshift=-3] {$\beadoperation{5e}$} -- (1+\arrowshortening/2, 10-\arrowshortening/2);

\draw [thick] (0,0.5) circle (\circledbeadmovementsize);

\draw [->, dashed, line width = \abacusmovementwidth*\globalscaling, rounded corners](-\arrowshortening/2,1-\arrowshortening/2) -- (-0.4,0.6) -- (-0.4,-1.6) -- (-\arrowshortening/2, -2+\arrowshortening/2);
\draw [->, dashed, line width = \abacusmovementwidth*\globalscaling, rounded corners] (8+\arrowshortening/2, 0+\arrowshortening/2) -- (8.25,0.5) node [scale=\beadoperationscaling*\globalscaling, right,xshift=-3] {$\beadoperation{-4e}$} -- (8+\arrowshortening/2, 1-\arrowshortening/2);
\draw [->, dashed, line width = \abacusmovementwidth*\globalscaling, rounded corners] (2+\arrowshortening/2, -2+\arrowshortening/2) -- (2.25,-1.5) node [scale=\beadoperationscaling*\globalscaling, right,xshift=-3] {$\beadoperation{-6e}$} -- (2+\arrowshortening/2, -1-\arrowshortening/2);
\draw [->, dashed, line width = \abacusmovementwidth*\globalscaling, rounded corners] (8+\arrowshortening/2, -1+\arrowshortening/2) -- (8.25,-0.5) node [scale=\beadoperationscaling*\globalscaling, right,xshift=-3] {$\beadoperation{-5e}$} -- (8+\arrowshortening/2, 0-\arrowshortening/2);


\foreach \x in {0,...,11}
{
\node [scale=\labelsize*\globalscaling, above] at (\x +\runnershiftforRHSdiagram, -4) {$\x$};
\draw (\x+\runnershiftforRHSdiagram,-3.5) -- (\x+\runnershiftforRHSdiagram,10.5);};
\foreach \y in {-8,...,5}
{\node [scale=\labelsize*\globalscaling, left] at (-1+\runnershiftforRHSdiagram, \y+5) {$\y$};};

\foreach \x in
{
 (0,-3),   (1,-3), (2,-3), (3,-3), (4,-3), (5,-3), (6,-3), (7,-3), (8,-3),(9,-3), (10,-3),             (11,-3),
             (1,-2),  (2,-2),          (3,-2), (4,-2), (5,-2), (6,-2), (7,-2), (8,-2),(9,-2), (10,-2),            (11,-2),
 (0,-1), (1,-1),           (3,-1), (4,-1), (5,-1), (6,-1), (7,-1), (8,-1), (9,-1), (10,-1),           (11,-1),
  (0,0), (1,0),              (3,0), (4,0), (5,0),    (6,0), (7,0),  (8,0),        (9,0), (10,0),            (11,0),
   (0,1), (1,1),            (3,1), (4,1), (5,1), (6,1), (7,1),              (9,1), (10,1),
    (0,2),   (1,2),          (3,2), (4,2), (5,2), (6,2), (7,2),            (9,2),                        (10,2),
             (1,3),           (3,3),           (5,3), (6,3), (7,3),          (9,3), (11,3),
             (1,4),          (3,4),          (5,4), (6,4), (7,4),           (9,4),
             (1,5),          (3,5),          (5,5), (6,5),                    (9,5),
             (1,6),          (3,6),          (5,6), (6,6),                   (9,6),
             (1,7),          (3,7),                   (6,7),  (7,7),
             (1,8),          (3,8),                                                  (9,8),
                                                                            (6,9),    (6,10),
              (0,10)
}
{
\draw
[fill,xshift=\runnershiftforRHSdiagram cm] \x circle (\beadsize)
;};

\draw [red, line width=\beadmovementwidth*\globalscaling,-> , >=latex] (\runnershiftforRHSdiagram,3-\arrowshortening) -- (\runnershiftforRHSdiagram,2+\arrowshortening);
\foreach \x in {4,...,10}
{
\draw [green, line width=\beadmovementwidth*\globalscaling,->, >=latex] (\runnershiftforRHSdiagram,\x-\arrowshortening) -- (\runnershiftforRHSdiagram,\x-1+\arrowshortening);
};
\foreach \x in {-1,...,2}
{
\draw [blue, line width=\beadmovementwidth*\globalscaling,->, >=latex ] (\runnershiftforRHSdiagram,\x-\arrowshortening) -- (\runnershiftforRHSdiagram,\x-1+\arrowshortening);
};

\draw [thick] (\runnershiftforRHSdiagram, 2.5) circle (\circledbeadmovementsize);

\draw [->, line width = \abacusmovementwidth*\globalscaling, rounded corners](\runnershiftforRHSdiagram-\arrowshortening/2,10-\arrowshortening/2) -- (\runnershiftforRHSdiagram-0.4,9.6) -- (\runnershiftforRHSdiagram-0.4,-1.6) -- (\runnershiftforRHSdiagram-\arrowshortening/2, -2+\arrowshortening/2);

\draw [->, line width = \abacusmovementwidth*\globalscaling, rounded corners] (\runnershiftforRHSdiagram+4+\arrowshortening/2, 1+\arrowshortening/2) -- (\runnershiftforRHSdiagram+4.25,1.5) node [scale=\beadoperationscaling*\globalscaling, right,xshift=-3] {$\beadoperation{-3e}$} -- (\runnershiftforRHSdiagram+4+\arrowshortening/2, 2-\arrowshortening/2);
\draw [->, line width = \abacusmovementwidth*\globalscaling, rounded corners] (\runnershiftforRHSdiagram+4+\arrowshortening/2, 0+\arrowshortening/2) -- (\runnershiftforRHSdiagram+4.25,0.5) node [scale=\beadoperationscaling*\globalscaling, right,xshift=-3] {$\beadoperation{-4e}$} -- (\runnershiftforRHSdiagram+4+\arrowshortening/2, 1-\arrowshortening/2);
\draw [->, line width = \abacusmovementwidth*\globalscaling, rounded corners] (\runnershiftforRHSdiagram+2+\arrowshortening/2, -2+\arrowshortening/2) -- (\runnershiftforRHSdiagram+2.25,-1.5) node [scale=\beadoperationscaling*\globalscaling, right,xshift=-3] {$\beadoperation{-6e}$} -- (\runnershiftforRHSdiagram+2+\arrowshortening/2, -1-\arrowshortening/2);
\draw [->, line width = \abacusmovementwidth*\globalscaling, rounded corners] (\runnershiftforRHSdiagram+4+\arrowshortening/2, -1+\arrowshortening/2) -- (\runnershiftforRHSdiagram+4.25,-0.5) node [scale=\beadoperationscaling*\globalscaling, right,xshift=-3] {$\beadoperation{-5e}$} -- (\runnershiftforRHSdiagram+4+\arrowshortening/2, 0-\arrowshortening/2);

\draw [->, line width = \abacusmovementwidth*\globalscaling, rounded corners] (\runnershiftforRHSdiagram+4+\arrowshortening/2, 2+\arrowshortening/2) -- (\runnershiftforRHSdiagram+4.25,2.5) node [scale=\beadoperationscaling*\globalscaling, right,xshift=-3] {$\beadoperation{-2e}$} -- (\runnershiftforRHSdiagram+4+\arrowshortening/2, 3-\arrowshortening/2);
\draw [->, line width = \abacusmovementwidth*\globalscaling, rounded corners] (\runnershiftforRHSdiagram+4+\arrowshortening/2, 3+\arrowshortening/2) -- (\runnershiftforRHSdiagram+4.25,3.5) node [scale=\beadoperationscaling*\globalscaling, right,xshift=-3] {$\beadoperation{-e}$} -- (\runnershiftforRHSdiagram+4+\arrowshortening/2, 4-\arrowshortening/2);
\draw [->, line width = \abacusmovementwidth*\globalscaling, rounded corners] (\runnershiftforRHSdiagram+4+\arrowshortening/2, 4+\arrowshortening/2) -- (\runnershiftforRHSdiagram+4.25,4.5) node [scale=\beadoperationscaling*\globalscaling, right,xshift=-3] {$\beadoperation{0}$} -- (\runnershiftforRHSdiagram+4+\arrowshortening/2, 5-\arrowshortening/2);
\draw [->, line width = \abacusmovementwidth*\globalscaling, rounded corners] (\runnershiftforRHSdiagram+4+\arrowshortening/2, 5+\arrowshortening/2) -- (\runnershiftforRHSdiagram+4.25,5.5) node [scale=\beadoperationscaling*\globalscaling, right,xshift=-3] {$\beadoperation{e}$} -- (\runnershiftforRHSdiagram+4+\arrowshortening/2, 6-\arrowshortening/2);
\draw [->, line width = \abacusmovementwidth*\globalscaling, rounded corners] (\runnershiftforRHSdiagram+4+\arrowshortening/2, 6+\arrowshortening/2) -- (\runnershiftforRHSdiagram+4.25,6.5) node [scale=\beadoperationscaling*\globalscaling, right,xshift=-3] {$\beadoperation{2e}$} -- (\runnershiftforRHSdiagram+4+\arrowshortening/2, 7-\arrowshortening/2);
\draw [->, line width = \abacusmovementwidth*\globalscaling, rounded corners] (\runnershiftforRHSdiagram+4+\arrowshortening/2, 7+\arrowshortening/2) -- (\runnershiftforRHSdiagram+4.25,7.5) node [scale=\beadoperationscaling*\globalscaling, right,xshift=-3] {$\beadoperation{3e}$} -- (\runnershiftforRHSdiagram+4+\arrowshortening/2, 8-\arrowshortening/2);
\draw [->, line width = \abacusmovementwidth*\globalscaling, rounded corners] (\runnershiftforRHSdiagram+1+\arrowshortening/2, 8+\arrowshortening/2) -- (\runnershiftforRHSdiagram+1.25,8.5) node [scale=\beadoperationscaling*\globalscaling, right,xshift=-3] {$\beadoperation{4e}$} -- (\runnershiftforRHSdiagram+1+\arrowshortening/2, 9-\arrowshortening/2);
\draw [->, line width = \abacusmovementwidth*\globalscaling, rounded corners] (\runnershiftforRHSdiagram+1+\arrowshortening/2, 9+\arrowshortening/2) -- (\runnershiftforRHSdiagram+1.25,9.5) node [scale=\beadoperationscaling*\globalscaling, right,xshift=-3] {$\beadoperation{5e}$} -- (\runnershiftforRHSdiagram+1+\arrowshortening/2, 10-\arrowshortening/2);

\end{tikzpicture}

	\label{figure:movealongbasisvector}
\end{figure}

\begin{example}
Let $e=12$ and $\lambda=(67,62,57,49,44,43,38^2,36,35,32,30^2,29,28,25,23^2,22,\allowbreak 21,\allowbreak 18, \allowbreak 17^3,16,15,14,13,12^3,11,10,8^2,7^5,6^2,5^2,4^5,3^{11},2^{11},1^{13})$. Then $\wt(\lambda)=19$ and
$\z(\lambda)=(0,1,1,3,3,4,5,5,6,7,7,7,7,8,9,9,10,10,10)$. The $12$ bead movements on runner $0$ of the abacus of $\lambda$ are shown on the abaci in Figure~\ref{figure:movealongbasisvector} as straight upward-facing arrows. Proposition~\ref{P:movealongdescription} is illustrated there in three cases, each involving a modified basis vector corresponding to a circled bead movement. In the notation of the proposition, for $r=8$ we have $(b_8;q_8)=(60;-12)$, $g_8=-24$, $k=1$ and $l=6$; the adjustment needed to obtain $\sigma$ from $\lambda$ is indicated by the long curved upward-pointed arrow in the bottom half of the lefthand abacus, and the ensuing bead operations by curved downward-pointing arrows.
 When $r=3$, we have $(b_3;q_3)=(-48;-48)$, $g_3=-84$, $k=3$ and $l=0$; see the dashed arrows in the top half of the lefthand abacus. Finally, for $r=6$, we have $(b_6;q_6)=(60;-24)$, $g_6=-84$, $k=5$ and $l=7$; this case is depicted on the abacus on the right.
The set $\mathbf{G}_{\lambda}^\mu$ corresponds to the shaded regions in the figure.
\end{example}

\begin{rem}
Note that our description of $\mu$ in Proposition \ref{P:movealongdescription} when $\z(\mu) = \z(\lambda) + \sb_r^{\lambda}$ shows that $\mu = \lambda_{H_r}$, where $H_r$ is the rimhook of $\lambda$ of size divisible by $e$ that is associated to the $r$-th bead movement $(b_r;q_r)$ of $\lambda$ (see Section \ref{S:intro}).  Thus, $\sb_r^{\lambda} = \z(\lambda_H) - \z(\lambda)$, and so Theorem \ref{intro-prop} and the first part of Theorem \ref{intro-thm} are special cases of Theorem \ref{thm:main}.
\end{rem}

We next address how to obtain the partition $\mu$ from a hook-quotient partition $\lambda$ when $\z(\mu)$ is more than one $\lambda$-modified basis vector away from $\z(\lambda)$.
The following lemma effectively allows us to obtain $\mu$ from $\lambda$ by adding one modified basis vector at a time when $\mu$ is $4$-increasing.

\begin{lemma} \label{L:getcloser}
Let $\lambda$ be a hook-quotient partition of $e$-weight $w$.  Let $\mu$ be a $4$-increasing partition lying in the same block as $\lambda$.  Suppose that $\z(\mu) = \z(\lambda) + \sb_{\Gamma}^{\lambda}$, for some non-empty $\Gamma\subseteq [1,\, w]$.  Let $r$ be maximal in $\Gamma$ with respect to $\succeq_{\lambda}$.
Then there exists a $1$-increasing partition $\nu$ lying in the same block as $\lambda$ such that
\begin{enumerate}
\item $\z(\lambda) + \sb^{\lambda}_r = \z(\nu)$,
\item $\sb^{\nu}_s = \sb^{\lambda}_s$ for all $s \in \Gamma \setminus \{r\}$,
\item $\z(\mu) = \z(\nu) + \sb_{\Gamma \setminus \{r\}}^{\nu}$.
\end{enumerate}
\end{lemma}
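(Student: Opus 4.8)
The plan is to build the partition $\nu$ by ``moving along'' the parallelotope by the single modified basis vector $\sb_r^\lambda$, and then to check that $\mu$ remains reachable from $\nu$. First observe that, since $\mu$ is $4$-increasing and $\z(\mu)-\z(\lambda)=\sb_\Gamma^\lambda\in\para_0(\lambda)$, Lemma~\ref{lemma:shiftby3} shows that $\lambda$ is $1$-increasing. Also, the maximality of $r$ in $\Gamma$ with respect to $\succeq_\lambda$ means precisely that there is no $s\in\Gamma$ with $s\succ_\lambda r$; in particular $s\not\succeq_\lambda r$ for every $s\in\Gamma\setminus\{r\}$, a fact I will use repeatedly.

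The heart of the argument is to show that $\mathbf{z}:=\z(\lambda)+\sb_r^\lambda$ is a $1$-increasing tuple with all entries in $[0,e-1]$, i.e.\ that $\mathbf{z}\in\inc{\good}{0}$. Granting this, Theorem~\ref{thm:goodlabels}(1) provides a unique $0$-increasing partition $\nu$ in the block of $\lambda$ with $\z(\nu)=\mathbf{z}$, which is then $1$-increasing, and hence a hook-quotient partition by Lemma~\ref{lemma:1unram}(1); this gives part~(1). Next I would apply Proposition~\ref{P:movealongdescription} with $\lambda$, $r$ and $\nu$ playing the roles of $\lambda$, $r$ and $\mu$ there (legitimate since $\z(\nu)$ is $0$-increasing and $\nu$ lies in the block of $\lambda$): part~(1) of that proposition yields $\sb_s^\nu=\sb_s^\lambda$ whenever $s\not\succeq_\lambda r$, in particular for all $s\in\Gamma\setminus\{r\}$, which is part~(2). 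Part~(3) then follows formally:
$$\z(\nu)+\sb_{\Gamma\setminus\{r\}}^\nu=\bigl(\z(\lambda)+\sb_r^\lambda\bigr)+\sum_{s\in\Gamma\setminus\{r\}}\sb_s^\lambda=\z(\lambda)+\sb_\Gamma^\lambda=\z(\mu).$$

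To prove the key claim I would write $\z(\lambda)=(a_1,\dotsc,a_w)$, $\z(\mu)=(m_1,\dotsc,m_w)$, put $r=i_\rho$ on the runner $j$ carrying its bead movement, with $\bm^{(j)}(\lambda)$ indexed increasingly by $i_1<\dotsb<i_v$ and $l$ the local index of the final bead movement of the bottom bead, as in Definition~\ref{def:modified}. From the description of $\succeq_\lambda$ and Definition~\ref{def:modified}, the maximality of $r$ forces the coordinate-$r$ entry of the full sum $\sb_\Gamma^\lambda$ to be exactly $1$, so $m_r=a_r+1$; moreover $\sb_r^\lambda$ is $\ssb_r$ (if $\rho=l$), or $\ssb_r-\ssb_{i_{\rho+1}}$ (if $\rho<l$), or $\ssb_r-\ssb_{i_{\rho-1}}$ (if $\rho>l$), and in the last two cases I write $p'$ for the index $i_{\rho\pm1}$ that is decreased. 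For each consecutive pair $p,p+1$ one then estimates the gap $\mathbf{z}_{p+1}-\mathbf{z}_p=(a_{p+1}-a_p)+(\sb_r^\lambda)_{p+1}-(\sb_r^\lambda)_p$: when $\{p,p+1\}$ avoids $\{r,p'\}$ it equals $a_{p+1}-a_p\ge1$; when $r$ or $p'$ is involved one uses $m_r=a_r+1$, the inequality $(\sb_\Gamma^\lambda)_{p'}\le0$ (immediate from Definition~\ref{def:modified}), the bound $-2\le(\sb_\Gamma^\lambda)_i\le1$ of Lemma~\ref{preshiftby3}, and $m_{p+1}-m_p\ge4$ to force $a_{p+1}-a_p$ to be large enough (at least $2$, and typically $4$ or $5$). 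Membership in $[0,e-1]^w$ is checked separately: a $1$-increasing tuple whose last entry is $\le e-1$ has all entries $\le e-1$, and if $r=w$ the last entry of $\mathbf{z}$ equals $a_w+1=m_w\le e-1$; and the entries are $\ge0$ because the only coordinate that drops is $p'$, whose $\lambda$-value is at least $1$ --- when $p'<r$ because, by the description of $\bm^{(j)}(\lambda)$ following Definition~\ref{def:hookquotient}, the bead movement indexed by $p'$ starts at an unoccupied position, and when $p'>r$ because $a_{p'}\ge a_r\ge0$ by $1$-increasingness of $\lambda$.

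The main obstacle is exactly this last step: carefully tracking how adding the single vector $\sb_r^\lambda$ perturbs the gaps of the $1$-increasing tuple $\z(\lambda)$, and handling the boundary cases (the last coordinate, and the lone decreasing coordinate). What makes it work is the combination of the identity $m_r=a_r+1$ --- so that the large gap guaranteed by $\mu$ being $4$-increasing sits correctly on either side of $r$ --- with the explicit hook shape of the bead movements on a runner; and both of these facts rely on $r$ being $\succeq_\lambda$-maximal in $\Gamma$.
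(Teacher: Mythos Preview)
Your argument is correct, and the overall skeleton---construct $\mathbf{z}=\z(\lambda)+\sb_r^\lambda$, show $\mathbf{z}\in\inc{\good}{0}$, invoke Theorem~\ref{thm:goodlabels}(1) to get $\nu$, then apply Proposition~\ref{P:movealongdescription}(1) using the $\succeq_\lambda$-maximality of $r$---matches the paper exactly. The difference lies entirely in how you establish that $\mathbf{z}$ is $1$-increasing.

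You do this by a direct coordinate analysis of the perturbation $\sb_r^\lambda$, tracking the two affected coordinates $r$ and $p'$ and bounding each consecutive gap $\mathbf{z}_{p+1}-\mathbf{z}_p$ using the identity $m_r=a_r+1$, the inequality $(\sb_\Gamma^\lambda)_{p'}\le 0$, Lemma~\ref{preshiftby3}, and the $4$-increasingness of $\mu$. This works, but it is considerably more labour than needed. The paper observes instead that
\[
\z(\mu)-\mathbf{z}=\sb_\Gamma^\lambda-\sb_r^\lambda=\sb_{\Gamma\setminus\{r\}}^\lambda\in\para_0(\lambda),
\]
so Lemma~\ref{lemma:shiftby3}(1) applied with $\mathbf{z}_1=\z(\mu)$ (which is $4$-increasing) and $\mathbf{z}_2=\mathbf{z}$ immediately gives that $\mathbf{z}$ is $(4-3)=1$-increasing. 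This one-line argument replaces your entire case analysis. Your route has the minor advantage of being self-contained and of making transparent exactly where the maximality of $r$ enters the coordinate picture (via $m_r=a_r+1$), but the paper's route shows that the $4$-increasing hypothesis on $\mu$ is used only through the $3$-unit loss in Lemma~\ref{lemma:shiftby3}, which is conceptually cleaner. Your treatment of the boundary conditions $0\le\z_i\le e-1$ is in fact slightly more careful than the paper's terse ``$\z_w\le a_w$ and $\z_1\ge b_1$''; your observation that $q_{p'}\notin\beta(\lambda)$ when $p'<r$ is the right way to handle the lower bound.
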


\begin{proof}
Let $\z(\lambda) + \sb^{\lambda}_r = z = (\z_1,\dotsc, \z_w)$.  Then $\z(\mu)-z = \sb^{\lambda}_{\Gamma\setminus \{r\}} \in \para_0(\lambda)$, so that $z$ is $1$-increasing by Lemma \ref{lemma:shiftby3}(1).  Note that if $\z(\mu) = (a_1,\dotsc, a_w)$ and $\z(\lambda) = (b_1,\dotsc,b_w)$, then $\z_w \leq a_w \leq e-1$ and $\z_1 \geq b_1 \geq 0$, so that by Theorem~\ref{thm:goodlabels}, there exists a partition $\nu$ lying in the same block as $\lambda$ such that $\z(\nu) = z$.  Thus $\nu$ is $1$-increasing and hence a hook-quotient partition by Lemma \ref{lemma:1unram}(1).  By Proposition \ref{P:movealongdescription}(1) and the maximality of $r$ in $\Gamma$, we have $\sb^\lambda_s = \sb^{\nu}_s$ for all $s \in \Gamma \setminus \{r\}$.  Thus
$$
\z(\mu) = \z(\lambda) + \sb^{\lambda}_{\Gamma} = \z(\nu) + \sb^{\lambda}_{\Gamma\setminus \{r\}} = \z(\nu) + \sb^{\nu}_{\Gamma\setminus \{r\}}.$$
\end{proof}

\begin{rem}\label{remark:movingalgorithm}
Given a hook-quotient partition $\lambda$ and a $4$-increasing partition $\mu$ with $\z(\mu) = \z(\lambda) + \sb^{\lambda}_{\Gamma}$, by iterating Lemma \ref{L:getcloser}, we can recursively find  a sequence $\nu^0, \nu^1,\dotsc, \nu^{|\Gamma|}$ of partitions with $\nu^0 = \lambda$, $\nu^{|\Gamma|} = \mu$, and $\z(\nu^i) = \z(\nu^{i-1}) + \sb^{\nu^{i-1}}_{\gamma_i}$ for all $i \in [1,\,|\Gamma|]$, where $\gamma_i \in \Gamma \setminus \{ \gamma_1,\dotsc, \gamma_{i-1} \}$ is maximal with respect to $\succeq_{\nu^{i-1}}$.  Using Proposition \ref{P:movealongdescription}, we may compute each $\nu^i$ in turn, and obtain $\mu$ in $|\Gamma|$ steps.

To extend this algorithm to a more general setting, allowing $\mu$ to be merely $0$-increasing, the main obstruction is that the inter\-mediate partitions $\nu^1,\dotsc, \nu^{|\Gamma|-1}$ may not be $0$-increasing or hook-quotient.  When the inter\-mediate partitions have these desired properties, then the algorithm will indeed produce the correct $\mu$.

As an example, take $e=4$ and $\lambda=(7,3,3,2,2,1)$, the partition with $4$-core $(2)$ and $4$-quotient $((1),\emptyset,(2,1),\emptyset)$. Then
$z(\lambda)=(1,1,2,3)$ and the modified basis vectors corresponding to $\lambda$ are
$\sb_1^\lambda=\ssb_1-\ssb_3$, $\sb_2^\lambda=\ssb_2$,  $\sb_3^\lambda=\ssb_3$ and $\sb_4^\lambda=\ssb_4-\ssb_3$.
The $0$-increasing partition $\mu$ in the same block for which $\z(\mu)=(1,2,3,3)=\z(\lambda)+\sb^\lambda_2+\sb^\lambda_3$ can be computed using the algorithm described above, via the intermediate partition $\nu$ in the same block for which $\z(\nu)=(1,1,3,3)=\z(\lambda)+\sb^\lambda_3$; two applications of
Proposition \ref{P:movealongdescription} yield
$\nu=(9,3,2,2,2)$, with $4$-quotient $((3),\emptyset, \emptyset, (1))$, and
$\mu=(10,4,2,1,1)$, with $4$-quotient $(\emptyset,(2),(1),(1))$. On the other hand one cannot arrive at $\mu$ using the algorithm via the partition $\rho$ in the same block for which
$\z(\rho)=(1,2,2,3)=\z(\lambda)+\sb^\lambda_2$, since one finds that $\rho=(7,4,4,1,1,1)$, with $4$-quotient $(\emptyset,\emptyset,(2,2),\emptyset)$, is not
a hook-quotient partition.
Incidentally, it is indeed true that $d_{\lambda\mu}(q)=q^2$, even though $\mu$ is not $4$-increasing and hence does not satisfy the hypotheses of Theorem~\ref{thm:main}.


See also Remark~\ref{remark:calculateMullineux} for a general situation in which this algorithm is useful.

\end{rem}
We end this section with the following technical result, required to prove Corollary~\ref{C:noremovable}: when the abacus of a hook-quotient partition has no removable bead on runner $a$, this property is inherited by the partitions in its sub-parallelotope generated by its modified basis vectors corresponding to bead movements starting in runner $a$ and $a-1$.

\begin{lemma} \label{L:noremovable}
Let $\lambda$ be a $1$-increasing partition and $\mu$ a $0$-increasing partition lying in the same block.  Suppose that
\begin{itemize}
\item $\z(\mu) = \z(\lambda) + \sb^\lambda_r$;
\item $\lambda$ has no removable bead on runner $a$;
\item the $r$-th bead movement of $\lambda$ lies in runner $a$ or $a-1$.
\end{itemize}
Then $\mu$ has no removable bead on runner $a$.
\end{lemma}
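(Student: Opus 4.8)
The plan is to reduce to the explicit description of $\mu$ supplied by Proposition~\ref{P:movealongdescription} and then inspect the bead operations one at a time. In the notation of that proposition, $\beta(\mu)=\beadoperation{q_r}^{k,l}(\beta(\sigma))$, where $\sigma$ is obtained from $\lambda$ by sliding the bead at $b_r$ up to $g_r$, and by hypothesis $q_r$ lies on runner $a$ or on runner $a-1$. The statement to prove — that $\mu$ has no removable bead on runner $a$ — translates into: there is no $y\equiv_e a-1$ with $y\notin\beta(\mu)$ and $y+1\in\beta(\mu)$. Since $\beta(\sigma)$ differs from $\beta(\lambda)$ only at the two positions $b_r,g_r$ (both on runner $a$ if $q_r\equiv_e a$, both on runner $a-1$ if $q_r\equiv_e a-1$), and each bead operation $\beadoperation{q_r+f(i)e}$ alters $\beta$ at exactly the two positions $d_i,d_i-e$, the positions where $\beta(\mu)$ can differ from $\beta(\lambda)$ are confined to $\{b_r,g_r\}\cup\{d_i,d_i-e:i\in[1,k+l]\}$; by Proposition~\ref{P:movealongdescription}(1) and Lemma~\ref{L:omnibus}(1) these all lie in a bounded vertical window on runners $a-1$, $a$ and possibly $a+1$. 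I would therefore only need to check the removability condition at these positions and their immediate neighbours; elsewhere Proposition~\ref{P:movealongdescription}(5b) guarantees that the relevant bead movements are undisturbed, so the condition persists from $\lambda$.

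The key tool is Lemma~\ref{L:omnibus}(2): for each $i$ one has $t\in S_{i-1}\Leftrightarrow t-e\in S_{i-1}$ for all $t\in(q_r+f(i)e,\,d_i)$, so the operation $\beadoperation{q_r+f(i)e}$ takes a bead occupying $d_i-e$ and slides it into the vacant position $d_i$ across a window whose occupancy pattern is $e$-periodic. The argument then splits according to the runner of $q_r$. In each case I would verify, using this periodicity together with the hypothesis that $\lambda$ — hence $\sigma$ away from its single displaced bead — has no removable bead on runner $a$ (equivalently: a vacant position on runner $a-1$ forces the next position, on runner $a$, to be vacant too), that no bead operation ever deposits a bead on runner $a$ whose predecessor on runner $a-1$ is vacant, and that none vacates a position on runner $a-1$ whose successor on runner $a$ is occupied. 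The $1$-increasing hypothesis on $\lambda$ enters through Lemma~\ref{lemma:1unram}: it forces $\lambda^{(a-1)}$ and $\lambda^{(a)}$ to be hooks and pins down the relative positions of the beads on runners $a-1$ and $a$, which is precisely what is needed to control the position $g_r$ (where the initial passage $\lambda\to\sigma$ could a priori leave a removable bead on runner $a$) and to exclude bad deposits during the operations.

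I expect the main obstacle to be the exceptional situation flagged in Proposition~\ref{P:movealongdescription}(2a) and~(4): when $(b_r;q_r)=(q_r;q_r)$ is the only bead movement of the bottom bead on its runner and $q'_r=d_1>q_r+e$, the first bead operation slides a bead through more than one runner-cycle. One must then show, via Lemma~\ref{L:omnibus}(1,2,3), that every position passed over on runners $a-1$ and $a$ during this long slide is unoccupied in $\sigma$ and remains unoccupied afterwards, so that no removable bead on runner $a$ is left in its wake. A secondary subtlety is confirming that the bead deposited at $g_r$ in forming $\sigma$, when it sits on runner $a$, has an occupied predecessor on runner $a-1$; this should follow again from the hook-quotient structure of $\lambda$, using Lemma~\ref{lemma:1unram}(2) to locate the topmost vacant positions on runners $a-1$ and $a$. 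Once both cases are dispatched, assembling the pieces and appealing to Proposition~\ref{P:movealongdescription}(5) to handle positions outside the window completes the proof.
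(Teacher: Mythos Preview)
Your overall strategy matches the paper's: invoke Proposition~\ref{P:movealongdescription} to write $\beta(\mu)$ explicitly and then track which positions change, splitting on the runner of $q_r$. Two things need correction, however.

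First, a factual slip: the positions $d_i$, $d_i-e$ need not lie on runners $a-1$, $a$, or $a+1$. Lemma~\ref{L:omnibus}(1) constrains only the \emph{height} of $d_i$ (namely $d_i\in(q_r+f(i)e,\,q_r+(f(i)+1)e]$), not its runner; the operation $\beadoperation{q_r+f(i)e}$ lands at the first bead-over-gap above $q_r+f(i)e$, wherever that may be. This is not fatal, since only those $d_i$ that happen to lie on runner $a-1$ or $a$ affect the removability question, but it indicates you have not yet pinned down what the operations actually do.

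Second, and more importantly, you are missing the structural observation that organizes the whole argument. The paper begins by proving: if $y$ is the position of the bottom bead on runner $a$ of $\lambda$, then \emph{every} position $s\leq_e y-1$ on runner $a-1$ is occupied in $\beta(\lambda)$. This combines the ``no removable bead on runner $a$'' hypothesis with $1$-increasingness (the latter rules out a bead movement starting at $y-1$, forcing $\wt_\lambda(y-1)=0$). Once this claim is in hand, the case $q_r\equiv_e a-1$ becomes almost immediate: $g_r>y$, so none of the changes produced by Proposition~\ref{P:movealongdescription} reach the full part of runner $a-1$ or disturb runner $a$ below $y$, and one reads off that $y'=y$ with runner $a-1$ still full below it. For $q_r\equiv_e a$, the same claim is used (together with $\z_\lambda(y)>\z_\lambda(y-e)$, which via Lemma~\ref{L:easy} locates a gap in $(y-e,y-2]$) to show that no $d_i$ can equal $y-1$; the only delicate residual case is exactly the exception you flagged, $b_r=q_r=y$ with $l=0$ and $k\ge 2$, and there the paper argues that $d_1=y-1+e$ would force all later $d_i\le y-1$, contradicting $y'=y$. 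Without this opening claim, your proposed direct inspection of each bead operation has no anchor for bounding where on runners $a-1$ and $a$ the changes occur, and would be substantially harder to close.
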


\begin{proof}
	Let $y=\max\{b\in\beta(\lambda)\mid b\equiv_e a\}$. We claim that $s\in\beta(\lambda)$ for all $s\leq_e y-1$. If $t\in\beta(\lambda)$ for all $t\leq_e y$, this is obvious, since $\lambda$ has no removable bead on runner $a$. Otherwise $(y;y)\in\bm(\lambda)$, which implies that $\lambda$, being $1$-increasing, does not have a bead movement starting at $y-1$, but $y-1 \in \beta(\lambda)$ since $\lambda$ has no removable bead on runner $a$, and hence $\wt_{\lambda}(y-1) = 0$, and so the claim holds in this other case as well.
	
	Let $(b_r;q_r)$ be the $r$-th bead movement of $\lambda$, and adopt
	the notations of Proposition~\ref{P:movealongdescription}
	and Lemma~\ref{L:omnibus}.
	In addition, let $y'=\max\{b\in\beta(\mu)\mid b\equiv_e a\}$. 			

If $q_r\equiv_e a-1$, then $g_r>y$ by the claim above.  Thus by Proposition~\ref{P:movealongdescription},   $y'=y$ and  $s\in\beta(\mu)$ for all $s\leq_e y-1$. So in this case $\mu$ has no removable bead on runner $a$.

		Suppose instead that $q_r\equiv_e a$.
	By Proposition~\ref{P:movealongdescription},
	we have $y'\leq y$, with equality only if $b_r=q_r$.
		If $(b_r;q_r)$ is the only bead movement of $\lambda$ on runner $a$, then $l=0$, $k=1$, $y'=y-e$ and $d_1>y$, so that $s\in\beta(\mu)$ for all $s\leq_e y-1-e$. Hence $\mu$ has no removable bead on runner $a$ in this case.
		So we may assume that $\lambda$ has a least two bead movements on runner $a$, so that $(y;y), (\overline{y};y-e)\in\bm(\lambda)$, where $\overline{y}$ is either $y$ or $y-e$. Since $\lambda$ is $1$-increasing, we have $\z_\lambda(y)>\z_\lambda(y-e)$, and therefore, by Lemma~\ref{L:easy}, that there exists $s\in(y-e,\,y] \setminus \beta(\lambda) = (y-e,\,y-2] \setminus \beta(\lambda)$ such that $s-e\in \beta(\lambda)$. It follows from Lemma~\ref{L:omnibus} that $d_i\neq y-1$ for any $i$, and therefore that $s\in\beta(\mu)$ for all $s\leq_e y-1-e$.	
	
	Hence we are reduced to the case where $y'=y$ and $d_j=y-1+e$ for some $j$, for otherwise $\mu$ has no removable bead on runner $a$.	If $q_r<_e y$, then $d_i<q_r+e\leq_e y$ for all $i$ by Proposition \ref{P:movealongdescription}(2a), a contradiction. So the only situation left to consider is $b_r=q_r=y$, with $l=0$ and $k\geq 2$. We must have $d_1=y-1+e$ and therefore $y-1-e\in S_1$ and $y-1\notin S_1$. It follows that $d_i\leq y-1$ for all $i>1$, and therefore that $y'<y$, a contradiction.
\end{proof}

\begin{cor} \label{C:noremovable}
Let $\Gamma \subseteq [1,\, w]$.  Let $\lambda$ be a partition of $e$-weight $w$ whose abacus display has no removable bead on runner $a$, and suppose that for all $s \in \Gamma$, the $s$-th bead movement of $\lambda$ lies in runner $a$ or $a-1$.  If $\z(\lambda) + \sb^{\lambda}_{\Gamma} = \z(\mu)$ for some $4$-increasing partition $\mu$ lying in the same block as $\lambda$, then the abacus display of $\mu$ has no removable bead on runner $a$.
\end{cor}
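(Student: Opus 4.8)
The plan is to prove Corollary~\ref{C:noremovable} by induction on $|\Gamma|$, using Lemma~\ref{L:noremovable} as the single-step engine and Lemma~\ref{L:getcloser} to strip off one modified basis vector at a time, all the while tracking which runner the relevant bead movements lie on.

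Before the induction I would record two preliminary points. Since $\sb^\lambda_\Gamma$ occurs in the hypothesis, $\lambda$ is necessarily a hook-quotient partition; and since $\z(\mu)-\z(\lambda)=\sb^\lambda_\Gamma\in\para_0(\lambda)$ with $\mu$ being $4$-increasing, Lemma~\ref{lemma:shiftby3} forces $\lambda$ to be $1$-increasing. Thus at every stage the partitions that occur are $1$-increasing, hence hook-quotient, which is exactly the hypothesis needed by Lemmas~\ref{L:noremovable} and~\ref{L:getcloser}. The base case $\Gamma=\emptyset$ is then immediate: $\z(\lambda)=\z(\mu)$ with both $\lambda$ and $\mu$ being $0$-increasing, so Theorem~\ref{thm:goodlabels}(2) places them in a common $\W$-orbit, and as they lie in the same block of $e$-weight $w$ they must coincide.

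For the inductive step ($\Gamma\neq\emptyset$) I would choose $r$ maximal in $\Gamma$ with respect to $\succeq_\lambda$ and apply Lemma~\ref{L:getcloser} to obtain a $1$-increasing partition $\nu$, in the same block as $\lambda$, with $\z(\nu)=\z(\lambda)+\sb^\lambda_r$, with $\sb^\nu_s=\sb^\lambda_s$ for all $s\in\Gamma\setminus\{r\}$, and with $\z(\mu)=\z(\nu)+\sb^\nu_{\Gamma\setminus\{r\}}$. As $r\in\Gamma$, the $r$-th bead movement of $\lambda$ lies in runner $a$ or $a-1$, so Lemma~\ref{L:noremovable} (with $\nu$ in the role of its ``$\mu$'') yields that the abacus display of $\nu$ has no removable bead on runner $a$. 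To close the induction it then suffices to verify that $\nu$ together with $\Gamma':=\Gamma\setminus\{r\}$ meets all the hypotheses of the Corollary; the only non-formal point is that for each $s\in\Gamma'$ the $s$-th bead movement of $\nu$ still lies in runner $a$ or $a-1$. Here the $\succeq_\lambda$-maximality of $r$ is essential: it guarantees $s\not\succeq_\lambda r$ for every $s\in\Gamma'$, whereupon Proposition~\ref{P:movealongdescription}(1) tells us the $s$-th bead movement of $\nu$ has precisely the same starting position as that of $\lambda$, so lies on the same runner. The inductive hypothesis applied to $(\nu,\Gamma')$ then gives the conclusion.

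I expect this runner-tracking to be the only real subtlety. Without the maximality of $r$, Proposition~\ref{P:movealongdescription}(2a) only guarantees that the starting position of the $s$-th bead movement moves into the half-open interval $(q_s,q_s+e]$, which could push it onto a runner other than $a$ or $a-1$ and break the inductive hypothesis; the choice of $r$ is exactly what prevents any such leakage to a third runner. Everything else --- checking that $\nu$ is hook-quotient, lies in the same block as $\mu$, and so on --- is immediate bookkeeping from Lemma~\ref{L:getcloser} and the preliminary observations.
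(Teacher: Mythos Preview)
Your proposal is correct and follows essentially the same approach as the paper's own proof: induct on $|\Gamma|$, peel off a $\succeq_\lambda$-maximal $r\in\Gamma$ via Lemma~\ref{L:getcloser}, apply Lemma~\ref{L:noremovable} to the intermediate $\nu$, and use Proposition~\ref{P:movealongdescription}(1) to verify that the remaining bead movements stay on runners $a$ or $a-1$. Your write-up is in fact more detailed than the paper's (you spell out the base case via Theorem~\ref{thm:goodlabels} and explain why the maximality of $r$ is essential), but the argument is the same.
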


\begin{proof}
Note first that $\lambda$ is $1$-increasing by Lemma \ref{lemma:shiftby3}.
We prove by induction on $|\Gamma|$, with $|\Gamma|=0$ being trivial.  For $|\Gamma| >0$, let $r \in \Gamma$ be maximal with respect to $\succeq_{\lambda}$.  By Lemma \ref{L:getcloser}, $\z(\lambda) + \sb_r^{\lambda} = \z(\nu)$ for some $1$-increasing partition $\nu$ lying in the same block as $\lambda$ (and $\mu$), with $\z(\mu) = \z(\nu) + \sb^{\nu}_{\Gamma\setminus \{r\}}$.  Furthermore, by Proposition \ref{P:movealongdescription}(1),  the $s$-th bead movement of $\nu$ lies in runner $a$ or $a-1$ for all $s \in \Gamma \setminus \{r\}$.  By Lemma \ref{L:noremovable} we see that the abacus display of $\nu$ has no removable bead on runner $a$, and hence by induction the abacus display of $\mu$ has no removable bead on runner $a$.
\end{proof}

\section{Proof of the Main Theorem}\label{section:mainproof}

This section is dedicated to proving the following proposition, from which the our main theorem follows easily.

\begin{prop}\label{prop:main}
	Let $B$ be a block of $e$-weight $w$ and suppose that the abacus display of the corresponding $e$-core has at least one removable bead on runner $a$. If Theorem~\ref{thm:main} holds for $B$ and for all blocks of $e$-weight strictly less than $w$, then it also holds for $s_a(B)$.
	\end{prop}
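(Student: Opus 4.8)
The plan is to prove Proposition~\ref{prop:main} by transporting the known description of the canonical basis attached to $B$ across the Scopes reflection $s_a$, using the divided power operators $E_a^{(j)}$, $F_a^{(j)}$ and the ``moving along a parallelotope'' machinery of Section~\ref{S:movealong}.

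First I would carry out the reduction. Since the $e$-core $\kappa$ of $B$ has $k\geq 1$ removable beads on runner $a$, it has no addable bead on runner $a-1$, so $s_a$ acts on all of $B$ as $\kashe_a^k$ and $\tB:=s_a(B)$ is a block of the same $e$-weight $w$. Fix a $4$-increasing partition $\tmu\in\tB$ and set $\mu:=s_a^{-1}(\tmu)\in B$. Because $\tmu$ is in particular $0$-increasing, Lemma~\ref{L:0-increasing} together with Proposition~\ref{prop:zunchanged} shows (exactly as in the proof of Theorem~\ref{thm:goodlabels}) that $s_a$ restricts to a $\z$-preserving bijection between the $0$-increasing partitions of $\tB$ and those of $B$; hence $\mu$ is $4$-increasing and $\z(\mu)=\z(\tmu)$. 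By the hypothesis that Theorem~\ref{thm:main} holds for $B$, the canonical basis vector $G(\mu)=\sum_{\lambda\in B}d_{\lambda\mu}(q)\,\lambda$ is completely known: its support is the set of hook-quotient partitions $\lambda\in B$ with $\z(\mu)\in\para(\lambda)$, with coefficient $q^{\dist{\lambda}{\mu}}$. What remains is to compute $d_{\tlambda\tmu}(q)=\langle G(\tmu),\tlambda\rangle$ for every $\tlambda\in\tB$ and to check that the result is the formula of Theorem~\ref{thm:main} for $\tB$; in particular one must also show that $d_{\tlambda\tmu}(q)=0$ whenever $\tlambda$ is not a hook-quotient partition or $\z(\tmu)\notin\para(\tlambda)$.

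The engine of the argument is the comparison of $G(\tmu)$ with $G(\mu)$ through the $\mathfrak{sl}_2$-action generated by $E_a$ and $F_a$. Since $\tmu=\kashe_a^k(\mu)$, the element $E_a^{(k)}G(\mu)$ is bar-invariant (the $[k]_q!$ are bar-invariant and $E_a$ commutes with the bar involution) and, by the explicit formula for the action of $E_a^{(k)}$ in terms of $N_E$ in \S\ref{subsection:Fock}, its expansion in the partition basis has $\tmu$ as a ``leading'' term with coefficient $q^{N_E(\tmu,\mu)}=1$. Hence $E_a^{(k)}G(\mu)=G(\tmu)+\sum_{\nu}c_\nu(q)G(\nu)$ for a bar-invariant (so symmetric-Laurent-polynomial) correction, with $\nu$ ranging among the partitions occurring in $E_a^{(k)}(\lambda)$ for $\lambda$ in the support of $G(\mu)$. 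The key point is to control these corrections. Because $\mu$, and every $\lambda$ in the support of $G(\mu)$, is $1$-increasing, Lemma~\ref{lemma:1unram} makes the bead configuration on runners $a-1,a$ very simple, so that $E_a^{(k)}$ acts on these terms in an explicit, uniform way; combining this with Proposition~\ref{P:movealongdescription}, Lemma~\ref{L:getcloser} and, crucially, Corollary~\ref{C:noremovable} (which guarantees that the relevant sub-parallelotopes consist of partitions with \emph{no} removable bead on runner $a$), one shows that each $\nu$ either lies in a block of $e$-weight strictly less than $w$ --- where $G(\nu)$ is already described by the inductive hypothesis --- or is itself one of the hook-quotient partitions of $\tB$ predicted by Theorem~\ref{thm:main}, and that after cancellation the surviving coefficient of $\tlambda$ is exactly $q^{\dist{\tlambda}{\tmu}}$ if $\tlambda$ is hook-quotient with $\z(\tmu)\in\para(\tlambda)$ and $0$ otherwise. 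By Lemma~\ref{lemma:shiftby3} such a $\tlambda$ is automatically $1$-increasing, and $\dist{\tlambda}{\tmu}=0$ forces $\tlambda=\tmu$ by Theorem~\ref{thm:goodlabels}, so the constructed element is unitriangular with respect to $\tmu$ and therefore equals $G(\tmu)$. This proves Theorem~\ref{thm:main} for $\tB$, hence the proposition. (In the course of the argument one may also need the reverse operators $F_a^{(j)}$ to identify the $G(\nu)$ that appear; this is the reason the tilings of $B$ and $\tB$ genuinely \emph{mutate}, as in Figure~\ref{fig:weight3mutation}, rather than merely being translated into one another.)

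The hard part will be precisely this last bookkeeping step: determining which partitions $\nu$ occur in $E_a^{(k)}(\lambda)$ for $\lambda\in\operatorname{supp}G(\mu)$, matching the exponents produced by the $N_E$-statistics against the combinatorial distances $\dist{\tlambda}{\tmu}$ attached to the mutated parallelotopes, and verifying that all contributions not coming from the conjectured support of $G(\tmu)$ either cancel among themselves or are absorbed by the lower-$e$-weight canonical basis vectors supplied by the induction. It is here that the detailed analysis of Section~\ref{S:movealong}, and in particular Corollary~\ref{C:noremovable} on the inheritance of the ``no removable bead on runner $a$'' property inside sub-parallelotopes, does the real work, by pinning down exactly how $\para(\lambda)$, $\dist{\lambda}{\mu}$ and the runner-$a$ bead structure behave as one reflects from $B$ to $\tB$.
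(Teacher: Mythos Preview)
Your high-level strategy --- push $G(\mu)$ across with $E_a^{(k)}$, use bar-invariance, and subtract corrections coming from lower-weight blocks --- matches the paper's, but two concrete points are off.

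First, a genuine error: you write that ``each $\nu$ either lies in a block of $e$-weight strictly less than $w$ \ldots\ or is itself one of the hook-quotient partitions of $\tB$''.  But $E_a^{(k)}$ maps $B$ into $\tB$, and both have $e$-weight $w$; none of the partitions produced by $E_a^{(k)}(\lambda)$ live in a smaller-weight block.  The lower-weight hypothesis is invoked in a different way: the correction one subtracts from $E_a^{(k)}G(\mu)$ is a sum of terms of the form $[n_{\fsigma,\mu}-1]_q\,F_a\bigl(G(\csigma)\bigr)$, where $\csigma$ lies in a block $\cb$ of $e$-weight $w-k-1$ (so $G(\csigma)$ is known by induction) and $F_a$ brings this back into $\tB$.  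Identifying exactly which $\csigma$ occur, and that the resulting element is unitriangular, is not bookkeeping with $N_E$; it requires the structure theory of \emph{hook-quotient exceptional families} developed in \S6.2 (Lemmas~\ref{L:hookquotientfamilycriteria}--\ref{L:para(flambda)}, Propositions~\ref{P:moveinparallelogramtilde} and~\ref{P:impt}, Lemma~\ref{L:internalparallelogram}).  These encode precisely which $\tlambda\in\tB$ are exceptional, how their parallelotopes decompose into ``internal'' and ``external'' directions, and why the coefficient of an exceptional $\eptl{j}$ in $E_a^{(k)}G(\mu)$ differs from $q^{\dist{\eptl{j}}{\tmu}}$ by exactly the $F_aG(\csigma)$-correction (Proposition~\ref{prop:parexceptional}).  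Corollary~\ref{C:noremovable} is one ingredient in that analysis (it feeds into Lemma~\ref{L:internalparallelogram}), but it is not the engine.

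Second, you are missing a case.  When $\tmu$ is itself the leading member $\eptm{0}$ of a hook-quotient exceptional family, the element $E_a^{(k)}G(\mu)$ is not unitriangular (the coefficient of $\tmu$ picks up extra contributions from the other $\epl{j}$), and the paper instead constructs $G(\eptm{0})$ directly as $F_a\,G(\cmu)$ (Proposition~\ref{P:leadingexceptional}).  The general formula of Corollary~\ref{C:others} is stated and proved only under the hypothesis $\tmu\neq\eptl{0}$ for any family $\flambda$.  Your sketch treats all $\tmu$ uniformly, so this case would slip through.
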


\begin{proof}[Proof of Theorem \ref{thm:main} using Proposition \ref{prop:main}]
We induct on the $e$-weight $w$ of the block $B$. If $w=0$, then the theorem clearly holds for $B$. Otherwise, assume the theorem is true for all blocks of $e$-weight strictly less than $w$. By \cite[Lemma 3.1]{F} (see also \cite[Lemma 2.16]{ST}), there exists a sequence $B_0, B_1, \dotsc, B_n = B$ of blocks of $e$-weight $w$, such that $B_0$ is a Rouquier block and $B_{i+1}=s_{a_i}(B_{i})$ for some $a_i$, for all $i\in[0,n-1]$, and that the abacus display of the $e$-core of $B_i$ has at least one removable bead on runner $a_i$. Since Theorem~\ref{thm:main} is true for all Rouquier blocks (see Example~\ref{example:mainRouquier}), it follows by Proposition~\ref{prop:main} that it holds for $B$.
\end{proof}

\subsection{Setup}
Let $\b$ be as in the statement of Proposition~\ref{prop:main}. We write $\b=B_{\kappa,w}$ for some $e$-core $\kappa$ whose abacus has $k>0$ removable beads on runner $a$. Let $\tb=s_a(\b)$, so that $\tb=B_{\tkappa,w}$, where $\tkappa=s_a(\kappa)$;
 the blocks $\b$ and $\tb$ form a Scopes $[w:k]$-pair.

Let $\cb=B_{\ckappa,\w}$ be the block of $e$-weight $\w := w-k-1$ whose $e$-core $\ckappa$ is obtained from $\tkappa$ by moving the bottom bead on runner $a$ to the top unoccupied position on runner $a-1$. Similarly let $\hb=B_{\hat\kappa,\w}$ be the block of $e$-weight $\w$ whose $e$-core is obtained from $\kappa$ by moving the bottom bead on runner $a-1$ to top occupied position on runner $a$. Then $\cb=s_a(\hb)$, and $\hb$ and $\cb$ form a Scopes $[\w:k+2]$-pair.

The following easy lemma singles out a class of well-behaved partitions in $B$ and $\tB$.  From now on, we write $E$ and $F$ for $E_a$ and $F_a$.
\begin{lemma}\label{L:nonexceptional}
The following statements about a partition $\lambda$ in $B$ and $\tilde{\lambda}= s_a(\lambda)$ in $\tilde{B}$ are equivalent:
\begin{enumerate}
\item $F(\lambda) = 0$;
\item $E^{(k)}(\lambda) = \tilde{\lambda}$;
\item $F^{(k)}(\tilde{\lambda}) = \lambda$;
\item $E(\tilde{\lambda}) = 0$.
\end{enumerate}
\end{lemma}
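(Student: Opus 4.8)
\textbf{Proof plan for Lemma~\ref{L:nonexceptional}.}

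The plan is to establish the cycle of implications $(1)\Rightarrow(2)\Rightarrow(3)\Rightarrow(4)\Rightarrow(1)$, working throughout with the abacus display of $\lambda$ and the combinatorial formulas for $E^{(k)}$ and $F^{(k)}$ recalled in Section~\ref{subsection:Fock}. The key structural fact I would use at the outset is that, since the $e$-core $\kappa$ of $B$ has exactly $k$ removable beads and no addable beads on runner $a$, any $\lambda \in B$ has $k$ more removable $a$-beads than addable $a$-beads on runner $a$ of its abacus; write $k+l$ for the number of normal removable beads on runner $a$, where $l \geq 0$. The condition $F(\lambda)=0$ means $\lambda$ has no addable bead on runner $a-1$ that contributes, i.e.\ $\lambda$ has no normal addable $a$-node; equivalently (reading the abacus) there are no addable beads on runner $a-1$ below any unmatched removable bead on runner $a$, which one checks is equivalent to $l = 0$, i.e.\ \emph{all} normal removable beads on runner $a$ are ``supernormal'' in the terminology of the proof of Proposition~\ref{prop:zunchanged}.

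First I would prove $(1)\Rightarrow(2)$. Assuming $l=0$, the partition $\tlambda = s_a(\lambda)$ is obtained by moving exactly the $k$ supernormal beads from runner $a$ into runner $a-1$ (this is the definition of the $s_a$-action). I then compute $E^{(k)}(\lambda)$ directly: the term $\tlambda$ appears with coefficient $q^{N_E(\tlambda,\lambda)}$, where $C^+$ is the removable $a$-$\beta$-subset consisting of the $k$ moved beads (at their runner-$a$ positions) and $C = \{c-1 : c \in C^+\}$. Because the moved beads are precisely the normal ones and $l=0$, the two sums in the definition of $N_E$ cancel term by term, giving $N_E(\tlambda,\lambda)=0$; and one checks that no other partition $\lambda'$ with $\beta(\lambda') = \beta(\lambda)\cup C \setminus C^+$ for a removable $a$-$\beta$-subset $C^+$ of size $k$ occurs, because any such $C^+$ must consist of normal beads, and with $l=0$ there is a unique choice. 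Hence $E^{(k)}(\lambda) = \tlambda$. The implications $(2)\Leftrightarrow(3)$ and $(2)\Rightarrow(4)$ I would handle using the standard adjointness/$\mathfrak{sl}_2$-string relations: on the weight string through $\lambda$ in the crystal, $E^{(k)}(\lambda)=\tlambda$ forces $\lambda$ to be at the bottom of its string (so $F(\lambda)=0$ is forced, closing the loop) and $F^{(k)}(\tlambda)=\lambda$, $E(\tlambda)=0$ to hold, since these are symmetric statements under the $\lambda \leftrightarrow \tlambda$, $E \leftrightarrow F$, $a$-runner-swap symmetry. Concretely, I would argue $(4)\Rightarrow(1)$ by the same abacus analysis applied to $\tlambda$: $E(\tlambda)=0$ says $\tlambda$ has no normal removable $a$-bead, which on the abacus of $\tlambda$ (where the $k$ beads now sit on runner $a-1$) translates back, via the inverse move $s_a$, into $l=0$ for $\lambda$, i.e.\ $F(\lambda)=0$.

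The main obstacle I anticipate is the bookkeeping in the direct computation of $N_E$ and $N_F$: one must verify the term-by-term cancellation of the two sums in $N_E(\tlambda,\lambda)$ under the hypothesis $l=0$, and—more delicately—verify that \emph{no other} partition contributes to $E^{(k)}(\lambda)$, which amounts to showing uniqueness of the size-$k$ removable $a$-$\beta$-subset consisting of normal beads when $l=0$. This is where the hypothesis $l=0$ is genuinely used, and it is the only place where one cannot simply invoke formal crystal/$\mathfrak{sl}_2$ combinatorics. Everything else reduces to the elementary observation, already implicit in Section~\ref{subsection:Fock}, that $E^{(k)}$ and $F^{(k)}$ are ``inverse'' divided powers along an $\mathfrak{sl}_2$-string of length exactly $k$ whenever $\lambda$ sits at the string's end, together with the symmetry exchanging the roles of $\lambda$ and $\tlambda$.
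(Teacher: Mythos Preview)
The paper does not prove this lemma, labelling it ``easy''. Your overall strategy---a cycle of implications via abacus combinatorics---is sound, but you have misidentified what $F(\lambda)=0$ means, and this creates a genuine gap. The Chevalley generator $F=F_a$ (as opposed to the Kashiwara operator $\kashf_a$) satisfies $F(\lambda)=0$ if and only if $\lambda$ has \emph{no} addable bead on runner $a-1$ whatsoever, not merely ``no normal addable $a$-node''. Your claimed equivalence with $l=0$ (where $k+l$ is the number of normal removable beads) is false: take $k=1$ with two removable beads on runner $a$ and one addable bead on runner $a-1$ sitting strictly between them; then only the bottom removable bead is normal, so $l=0$, yet $F(\lambda)\neq 0$ and $E^{(1)}(\lambda)$ has two terms, so $(2)$ fails. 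Your uniqueness argument for $C^+$ (``any such $C^+$ must consist of normal beads'') is also wrong: $C^+$ is any set of removable beads, not just normal ones.

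Once the hypothesis is read correctly, the argument is simpler than you anticipate. Since $\kappa$ has exactly $k$ removable beads on runner $a$ and none addable on runner $a-1$, any $\lambda\in B$ has exactly $k$ more removable beads on runner $a$ than addable beads on runner $a-1$. Hence $F(\lambda)=0$ is equivalent to $\lambda$ having \emph{exactly} $k$ removable beads on runner $a$; then there is a unique removable $a$-$\beta$-subset of size $k$, all $k$ beads are automatically normal, $s_a$ moves precisely these, and one checks $N_E=0$ term by term, giving $(1)\Rightarrow(2)$. The resulting $\tlambda$ then has no removable bead on runner $a$ and exactly $k$ addable beads on runner $a-1$, giving $(4)$ and, by symmetry, $(3)$. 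Conversely, if $\lambda$ has more than $k$ removable beads then $E^{(k)}(\lambda)$ is a sum over $>1$ distinct partitions, so $(2)\Rightarrow(1)$; similarly $(3)\Rightarrow(4)$. The ``main obstacle'' you flag disappears entirely.
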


\begin{defi}
	A partition $\lambda \in B$ or a partition $\tlambda \in \tB$ is called {\em non-exceptional} if it satisfies the equivalent conditions in Lemma~\ref{L:nonexceptional}. Otherwise it is called {\em exceptional}.
\end{defi}

\begin{lemma}\label{L:nonexceptional2}
	The rule $\lambda\mapsto \tlambda=s_a(\lambda)$ gives a bijection between the non-exceptional partitions in $\b$ and those in $\tb$.
	It restricts to a bijection between the non-exceptional $0$-increasing partitions of $B$ and $\tB$, and for these partitions $\z(\lambda)=\z(\tlambda)$. The map also restricts to a bijection between the non-exceptional hook-quotient partitions of $B$ and $\tB$.  Furthermore, for a $0$-increasing hook-quotient non-exceptional $\lambda$ in $\b$, we have 	$\sb_i^\lambda=\sb_i^\tlambda$ for all $i\in [1,w]$ and $\para(\lambda)=\para(\tlambda)$.
	\end{lemma}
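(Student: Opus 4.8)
\textbf{Proof proposal for Lemma~\ref{L:nonexceptional2}.}

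The plan is to chase everything through James's abacus, using the characterization of non-exceptional partitions in Lemma~\ref{L:nonexceptional} together with the combinatorial description of $s_a$ from Section~\ref{subsection:Fock}. First I would establish the bijection on the level of all non-exceptional partitions. Given a non-exceptional $\lambda\in B$, condition~(4) of Lemma~\ref{L:nonexceptional} says $E(\tlambda)=0$, i.e.\ the abacus of $\tlambda=s_a(\lambda)$ has no normal removable bead on runner $a$; hence $s_a$ acting on $\tlambda$ (via $\kashf_a^{k}$, since $\tkappa=s_a(\kappa)$ has $k$ addable beads on runner $a-1$ and no removable bead on runner $a$) recovers $\lambda$. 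So $\lambda\mapsto\tlambda$ is its own two-sided inverse on non-exceptional partitions, once one checks — which is immediate from Lemma~\ref{L:nonexceptional} applied to $\tlambda\in\tB$ — that $\tlambda$ is itself non-exceptional. Since $s_a$ preserves $e$-weight and sends $B$ to $\tB$, this is a bijection between non-exceptional partitions of $B$ and $\tB$.

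Next, the $0$-increasing claim. If $\lambda$ is additionally $0$-increasing, then by Lemma~\ref{L:0-increasing} its abacus does not carry the forbidden configuration of Figure~\ref{figure:disallowed-configuration} on runners $a,a-1$, so Proposition~\ref{prop:zunchanged} gives $\z(\lambda)=\z(\tlambda)$; in particular $\tlambda$ is $0$-increasing. The reverse direction follows by the same argument applied to $\tlambda\mapsto\lambda$ (again invoking Lemma~\ref{L:0-increasing} and Proposition~\ref{prop:zunchanged}, now for $\tlambda$), so we get a bijection between the non-exceptional $0$-increasing partitions of $B$ and $\tB$. For the hook-quotient claim, I would argue combinatorially on the $e$-quotient: being non-exceptional means $F(\lambda)=0$, i.e.\ $\lambda$ has no addable $a$-node, which on runner $a$ of the abacus means every occupied position on runner $a-1$ below the bottom bead of runner $a$ is... more precisely, $s_a$ merely slides the top $k$ normal beads from runner $a$ into runner $a-1$, and when there are no addable $a$-nodes this operation affects only runners $a-1,a$ and transforms $(\lambda^{(a-1)},\lambda^{(a)})$ into a predictable new pair; I would check directly that this new pair consists of hooks precisely when the original pair does. (Alternatively, since all the relevant $\lambda$ in the application are $1$-increasing hence automatically hook-quotient by Lemma~\ref{lemma:1unram}(1), one could observe that the bijection restricted to $1$-increasing partitions already handles the cases needed, but the clean statement is worth proving in general.)

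Finally, for a $0$-increasing hook-quotient non-exceptional $\lambda$, I would prove $\sb_i^\lambda=\sb_i^\tlambda$ for all $i$. Recall from Definition~\ref{def:modified} that $\sb_i^\lambda$ depends only on: (i) the total order position $i$ of the $i$-th bead movement among those on its runner, and (ii) which bead movement is the final one of the bottom bead on that runner. Since $\z(\lambda)=\z(\tlambda)$ and both are hook-quotient, the bead movements of $\lambda$ and of $\tlambda$ have the same armlength sequence; what must be shown is that the order-preserving bijection $\bm(\lambda)\iso\bm(\tlambda)$ of Lemma~\ref{beadmovementbijection} matches up bead movements on the same runner and preserves the ``final bead movement of the bottom bead'' data. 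Because $\lambda$ is non-exceptional, the passage $\lambda\to\tlambda$ does not create or destroy addable/removable $a$-nodes beyond the clean $k$-bead slide, and in the $0$-increasing case the forbidden configuration is absent, so I expect that $q=q'$ for \emph{every} bead movement $(b;q)\mapsto(b';q')$ here (i.e.\ the second alternative in Lemma~\ref{beadmovementbijection} does not arise), which immediately gives the matching of runners and of final bead movements, hence $\sb_i^\lambda=\sb_i^\tlambda$. From $\z(\lambda)=\z(\tlambda)$ and $\sb_i^\lambda=\sb_i^\tlambda$ for all $i$ one gets $\para_0(\lambda)=\para_0(\tlambda)$ and therefore $\para(\lambda)=\z(\lambda)+\para_0(\lambda)=\z(\tlambda)+\para_0(\tlambda)=\para(\tlambda)$.

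The main obstacle I anticipate is the very last point: verifying cleanly that for non-exceptional $0$-increasing $\lambda$ the order-preserving bijection of Lemma~\ref{beadmovementbijection} sends runner-$a$ (resp.\ runner-$(a-1)$) bead movements to runner-$a$ (resp.\ runner-$(a-1)$) bead movements without shifting starting positions, and in particular preserves which bead movement belongs to the bottom bead and is final. This requires unwinding the bijection $\phi_b$ from Section~\ref{S:label} in the special situation at hand (no addable $a$-nodes, no forbidden configuration), and being careful about the edge case where a runner's $e$-weight drops to $0$ under $s_a$; the bookkeeping is fiddly but should go through because the non-exceptional hypothesis forces the bead slide to be ``as simple as possible''.
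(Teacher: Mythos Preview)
Your handling of the first bijection is fine and matches the paper. For the $0$-increasing claim you invoke Lemma~\ref{L:0-increasing} and Proposition~\ref{prop:zunchanged} directly; the paper simply cites Theorem~\ref{thm:goodlabels}, which packages exactly those two ingredients, so this is equivalent. For the hook-quotient bijection you are vaguer than necessary: when $\lambda$ is non-exceptional one has $F(\lambda)=0$, i.e.\ no addable bead on runner $a-1$, so the pairs $(x,x+1)$ with $x\equiv_e a-1$ never look like (occupied, empty). In that situation $s_a$ moves \emph{every} removable bead on runner $a$ across, and the effect on the abacus is precisely to swap runners $a-1$ and $a$. Hence the $e$-quotient of $\tlambda$ is that of $\lambda$ with its $(a-1)$-st and $a$-th components interchanged, and the hook-quotient claim is immediate. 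This is what the paper says in one line; your ``check directly'' would have to rediscover this swap.

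The genuine gap is in your final step. Your expectation that $q=q'$ for every bead movement under the order-preserving bijection of Lemma~\ref{beadmovementbijection} is \emph{false} here. Because $s_a$ swaps runners $a-1$ and $a$, a bead movement $(b;q)$ of $\lambda$ with $q\equiv_e a$ corresponds to the bead movement $(b-1;q-1)$ of $\tlambda$ (now on runner $a-1$), and one with $q\equiv_e a-1$ corresponds to $(b+1;q+1)$; so $q'=q\mp 1$, not $q'=q$, and the runners do \emph{not} match. What actually needs to be checked is that this natural runner-swapping bijection coincides with the order-preserving one, i.e.\ that no two bead movements exchange relative position. The only way this can fail is if $\lambda$ has bead movements starting at $q$ and at $q-1$ for some $q\equiv_e a$ (these would swap places after the shift $q\mapsto q-1$, $q-1\mapsto q$). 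The paper rules this out using Lemma~\ref{L:0-increasing} together with the structure in Figure~\ref{figure:leqconfiguration}: for $0$-increasing non-exceptional $\lambda$ there is no $q\equiv_e a$ with $(b;q),(b';q-1)\in\bm(\lambda)$. Once that is established, the $i$-th bead movement of $\lambda$ on runner $a$ (resp.\ $a-1$) corresponds to the $i$-th bead movement of $\tlambda$ on runner $a-1$ (resp.\ $a$), the internal ordering and the position of the final bead movement of the bottom bead are preserved under the swap, and since Definition~\ref{def:modified} depends only on this internal structure (not on which runner), one gets $\sb_i^\lambda=\sb_i^{\tlambda}$.
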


\begin{proof} The first statement is an immediate consequence of Lemma~\ref{L:nonexceptional}. The statement about non-exceptional $0$-increasing partitions follows from
	Theorem~\ref{thm:goodlabels}. For the statement about non-exceptional hook-quotient partitions, note that the $e$-quotient of $\tlambda$ is obtained from that of
	$\lambda$ by swapping its $a$-th and $(a-1)$-st components.  When $\lambda$ in $\b$ is $0$-increasing, hook-quotient and non-exceptional, then using Lemma \ref{L:0-increasing} and Figure \ref{figure:leqconfiguration}, we deduce that there does not exist $q \equiv_e a$ such that $(b;q),(b';q-1) \in \bm(\lambda)$, and consequently, $\sb_i^\lambda=\sb_i^\tlambda$ for all $i\in [1,w]$ and $\para(\lambda)=\para(\tlambda)$.
	\end{proof}
	
We now turn attention to exceptional partitions and begin with the following key observation.
\begin{lemma}\label{lemma:nottooexceptional} \hfill
\begin{enumerate}
\item
Suppose that $\lambda\in\b$ is exceptional and $1$-increasing. Then
\begin{enumerate}
\item $\langle \chevf (\lambda), \hat{\lambda} \rangle \ne 0$ for a unique partition $\hat{\lambda}$ in $\hat{B}$, and $\chevf (\hat{\lambda}) = 0$;
\item $\langle \cheve^{(k+1)}(\lambda), \clambda \rangle \ne 0$ for a unique partition $\clambda$ in $\check{B}$, and $\cheve(\clambda)=0$.
\end{enumerate}
\item
Let $\mu$ be a $4$-increasing partition in $\b$.  If Theorem \ref{thm:main} holds for $\mu$, then $\cheve^{(k+2)}(G(\mu))=0$ and $\chevf^{(2)}(G(\mu))=0$.
\end{enumerate}
\end{lemma}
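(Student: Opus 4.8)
\textbf{Proof plan for Lemma~\ref{lemma:nottooexceptional}(2).}

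The plan is to verify the two vanishing statements $\cheve^{(k+2)}(G(\mu))=0$ and $\chevf^{(2)}(G(\mu))=0$ directly from the hypothesis that Theorem~\ref{thm:main} gives a closed formula for $G(\mu)$. Since $\mu$ is $4$-increasing, Theorem~\ref{thm:main} says that $G(\mu) = \sum_{\lambda} d_{\lambda\mu}(q)\,\lambda = \sum_{\Gamma\subseteq\Hook_e(\lambda_0)} q^{|\Gamma|}\lambda_\Gamma$, where the sum ranges over those $\lambda$ which are hook-quotient partitions with $\z(\mu)\in\para(\lambda)$; equivalently, fixing one such hook-quotient $\lambda$ (there is essentially a canonical one, or one can argue $\lambda$-by-$\lambda$), $G(\mu)$ is supported on the partitions $\nu$ with $\z(\nu)=\z(\lambda)+\sb^\lambda_\Gamma\in\para(\lambda)$. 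The key point is that $\cheve = \cheve_a$ and $\chevf = \chevf_a$ act by moving beads between runners $a-1$ and $a$, and Lemma~\ref{preshiftby3} tells us that every element of $\para_0(\lambda)$ has coordinates in $[-2,1]$, so the relevant bead configurations on runners $a-1,a$ are tightly constrained.

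First I would reduce both statements to a combinatorial claim about a single hook-quotient partition $\lambda$ with $\z(\mu)\in\para(\lambda)$: namely, that for every $\nu$ in the support of $G(\mu)$, the abacus display of $\nu$ has at most $k+1$ normal removable beads on runner $a$ (which gives $\cheve^{(k+2)}(\nu)=0$ after checking the $q$-powers assemble correctly, i.e. there is no cancellation obstruction) and at most one addable bead on runner $a-1$ relevant to $\chevf^{(2)}$. For the $\cheve$-statement: $\b$ is obtained by applying $s_a$ to a block $\b'$ whose $e$-core has $k$ removable beads on runner $a$; since $\mu$ lies in $\b$, the $e$-core of $\b$ has at least $k$ addable beads on runner $a-1$ and no removable beads on runner $a$ — wait, more carefully, $\b$ itself is the block on which we are {\em not} yet acting; here $\mu\in\b$ and we want $\cheve^{(k+2)}(G(\mu))=0$, so I need that no partition $\nu$ in the support of $G(\mu)$ has $k+2$ or more removable $a$-beads. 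The $e$-core $\kappa$ of $\b$ has exactly $k$ removable beads on runner $a$, so any $\nu\in\b$ has at most $k+ (\text{number of }e\text{-hooks that can be absorbed})$; the bound $|\z_i(\nu)-\z_i(\lambda)|\le 2$ from Lemma~\ref{preshiftby3}, together with Lemma~\ref{lemma:1unram}(2) applied to the $1$-increasing partition $\lambda$ (which governs addable/removable beads on runners $a-1,a$), should pin the count of removable $a$-beads of $\nu$ to at most $k+1$. I would make this precise by running the analysis of Proposition~\ref{P:movealongdescription}: as we pass from $\lambda$ to $\nu=\lambda_\Gamma$ by successive bead operations $\beadoperation{x}^{k,l}$, each step changes the occupancy of runners $a-1,a$ in a controlled way, and I would track $\max\{b\in\beta(\nu)\mid b\equiv_e a\}$ and the gaps below it.

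For $\chevf^{(2)}(G(\mu))=0$: since $\mu$ is $4$-increasing, $\lambda$ is $1$-increasing by Lemma~\ref{lemma:shiftby3}, and any $\nu$ in the support is $1$-increasing as well (again Lemma~\ref{lemma:shiftby3}, since $\z(\nu)-\z(\lambda)\in\para_0(\lambda)$ and $\mu$ being $4$-increasing forces $\nu$ to be at least $1$-increasing). By Lemma~\ref{lemma:1unram}(2), a $1$-increasing partition has at most one addable bead on runner $j-1$ whenever it has more removable than addable beads on runner $j$; applying this with $j=a$ gives that $\chevf_a$ can add at most one $a$-node, hence $\chevf^{(2)}(\nu)=0$ for each individual $\nu$. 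The remaining subtlety in both parts is that $\cheve^{(k+2)}$ and $\chevf^{(2)}$ are linear operators and one must rule out the pathological possibility that the images $\cheve^{(k+2)}(\nu)$ are individually nonzero but the {\em bar-invariant} combination $G(\mu)$ is mapped to something that only looks nonzero — but since $\cheve_i,\chevf_i$ commute with the bar involution and preserve the canonical-basis lattice, and since we will have shown each $\nu$ in the support satisfies the vanishing, linearity immediately gives the result; there is genuinely no cancellation issue because each summand vanishes. The main obstacle I anticipate is the bookkeeping in the $\cheve^{(k+2)}$ bound: one has to be careful that ``$k+1$ normal removable beads'' is really the sharp bound across the whole parallelotope and doesn't degrade to $k+2$ at some extreme vertex $\lambda_\Gamma$; this is where I would lean hardest on Corollary~\ref{C:noremovable} and the explicit description of the $\beadoperation{x}^{k,l}$ operations, possibly splitting into the cases of Proposition~\ref{P:movealongdescription} according to whether the relevant bead movement is initial, final, or intermediate on runner $a$.
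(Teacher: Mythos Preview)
Your argument for $\chevf^{(2)}(G(\mu))=0$ is correct and matches the paper exactly: every $\lambda$ in the support of $G(\mu)$ is $1$-increasing by Lemma~\ref{lemma:shiftby3}, and then Lemma~\ref{lemma:1unram}(2) (with $j=a$, applicable since the $e$-core of $\b$ has $k\geq 1$ removable beads on runner $a$) gives at most one addable bead on runner $a-1$, so $\chevf^{(2)}(\lambda)=0$ term-by-term.

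For $\cheve^{(k+2)}$ you are over-engineering. The bound on removable beads follows \emph{immediately} from the bound on addable beads you have already established, via the invariant
\[
(\#\text{ removable beads on runner }a)\;-\;(\#\text{ addable beads on runner }a-1)\;=\;k,
\]
which holds for every partition in $\b$ because it is determined by the $e$-core $\kappa$. Thus ``at most one addable'' gives ``at most $k+1$ removable'', hence $\cheve^{(k+2)}(\lambda)=0$ for each $\lambda$ in the support. This is exactly the content of part~(1) of the Lemma (and its proof), which the paper simply invokes. There is no need for Proposition~\ref{P:movealongdescription}, Corollary~\ref{C:noremovable}, or any tracking of bead operations; indeed Corollary~\ref{C:noremovable} is inapplicable here since partitions in $\b$ \emph{do} have removable beads on runner $a$.

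There is also a conceptual slip in your setup: the support of $G(\mu)$ consists of the hook-quotient partitions $\lambda$ with $\z(\mu)\in\para(\lambda)$, \emph{not} the vertices $\lambda_\Gamma$ of the parallelotope of some fixed $\lambda_0$. These are different sets, and the sentence ``$G(\mu)$ is supported on the partitions $\nu$ with $\z(\nu)=\z(\lambda)+\sb^\lambda_\Gamma$'' is not correct. Fortunately this confusion is harmless once you argue $\lambda$-by-$\lambda$ as above, since the only property of each $\lambda$ you need is that it is $1$-increasing.
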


\begin{proof}
When $\lambda$ is exceptional, we have $F(\lambda) \ne 0$. Thus $\lambda$ has at least one addable bead on runner $a-1$.  Hence by Lemma~\ref{lemma:1unram}(2), it has exactly one addable bead on runner $a-1$, and hence exactly $k+1$ removable beads on runner $a$, since its $e$-core $\kappa$ has $k$ removable beads on runner $a$.  Part (1) thus follows.

Let $\mu$ be as in part (2), and suppose that $\lambda$ is a partition such that $d_{\lambda\mu}(q)\neq 0$.
By Lemma~\ref{lemma:shiftby3} and the assumption that Theorem~\ref{thm:main} holds for $\mu$ we know that $\lambda$ is $1$-increasing. Whether or not $\lambda$ is exceptional we deduce by the first part that $\cheve^{(k+2)}(\lambda) = 0$ and $\chevf^{(2)}(\lambda) = 0$. So part (2) is proved.
\end{proof}

Fix $\clambda\in\cb$ such that $\cheve(\clambda)=0$. Then $\clambda$ has exactly $k+2$ addable beads on runner $a-1$ and no removable bead on runner $a$.  Let $C=\{c_0,\dotsc,c_{k+1}\}\subseteq \beta(\clambda)$ be the set of addable beads of $\clambda$ on runner $a-1$, with $c_0<\dotsb<c_{k+1}$. Then we have
$$\chevf^{(k+2)}(\clambda)=\hat{\lambda},\qquad \chevf^{(k+1)}(\clambda)=\sum_{j=0}^{k+1} q^j \epl{j}
\qquad\text{ and }\qquad
\chevf(\clambda)=\sum_{j=0}^{k+1} q^j \eptl{j},$$
where $\hat{\lambda}\in\hb$, $\epl{j}\in \b$ and $\eptl{j}\in \tb$ are the partitions
such that
\begin{align*}
\beta(\hat{\lambda}) & = \beta(\clambda)\cup \{ c+1 \mid c\in C\} \setminus C, \\
\beta(\epl{j}) &=\beta(\hat{\lambda})\cup\{c_j\}\setminus\{c_j+1\}, \\
\beta(\eptl{j})&=\beta(\clambda)\cup \{c_{k+1-j}+1\}\setminus\{c_{k+1-j}\}.
\end{align*}
It is easy to see that for any $j \in [0,k+1]$,  $\epl{j}$ is exceptional.  In fact even though it may not necessarily be $1$-increasing, it satisfies the consequence of Lemma \ref{lemma:nottooexceptional}(1a).
We say that the set $\flambda = \{\epl{j} \mid j \in [0,\,k+1] \}\cup \{\eptl{j} \mid j \in [0,\,k+1] \}$ is an {\em exceptional family}, {\em generated by} $\clambda$. Each such family has the {\em{leading}} exceptional  partitions $\epl{0}\in\b$ and $\eptl{0}\in\tb$.
We have
\begin{align*}
s_a(\epl{0})&=\kashe^k(\epl{0})=\eptl{0},\\
s_a(\epl{j})&=\kashe^k(\epl{j})=\eptl{k+2-j} \qquad (j\in [1,\,k+1]).
\end{align*}
Also, for any $j \in [0,\,k+1]$, we have
\begin{equation}\label{Eonexceptionals}
\cheve^{(k)}(\epl{j})=
\sum_{i=0}^{k-j} q^{i+j-k}\eptl{i}
+ \sum_{i= k-j+2}^{k+1} q^{i+j-k-2}\eptl{i}.
\end{equation}

\subsection{Hook-quotient exceptional families}

\begin{lemma}\label{L:hookquotientfamilycriteria}
Let $\flambda$ be an exceptional family, generated by $\clambda\in\cb$.  The following statements are equivalent:
\begin{enumerate}
\item The $(a-1)$-th and $a$-th components of the $e$-quotient of $\clambda$ is of the form $((x),(1^y))$.
\item The $(a-1)$-th and $a$-th components of the $e$-quotients of $\epl{j}$ and $\eptl{j}$ are hooks for all $j$.
\end{enumerate}
\end{lemma}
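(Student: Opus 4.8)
The plan is to analyse how the $(a-1)$-th and $a$-th components of the $e$-quotients change as one passes between the partitions $\clambda$, $\hat\lambda$, $\epl{j}$ and $\eptl{j}$. All of the relevant bead movements and additions/removals happen on runners $a-1$ and $a$, so this reduces to a purely local question about two runners; the $e$-quotient components on these two runners are precisely the partitions read off runners $a-1$ and $a$ of the abacus display. I would first record the local picture coming from the hypothesis that $\cheve(\clambda)=0$: $\clambda$ has exactly $k+2$ addable beads and no removable beads on runner $a-1$, and no removable bead on runner $a$. In abacus terms this says that on runner $a$ every occupied position lies above every unoccupied position (no removable bead means the component $\clambda^{(a)}$ read from runner $a$ is either empty or, after accounting for the addable-bead count, of a very restricted shape), while runner $a-1$ has its $k+2$ `gaps' situated below some initial occupied segment. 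The key observation to make precise is that, writing $\clambda^{(a-1)}$ and $\clambda^{(a)}$ for the two components, the condition $\cheve(\clambda)=0$ together with the $[w:k]$-pair combinatorics forces $\clambda^{(a)}$ to be a single column $(1^y)$ for some $y\ge 0$ (this is where $\cheve(\clambda)=0$ is used), while $\clambda^{(a-1)}$ can be an arbitrary partition a priori — so condition (1), that $\clambda^{(a-1)}=(x)$ is a single row, is a genuine restriction.

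Next I would carry out the passage $\clambda\mapsto\hat\lambda$, where $\beta(\hat\lambda)=\beta(\clambda)\cup\{c+1\mid c\in C\}\setminus C$: this moves each of the $k+2$ addable beads of runner $a-1$ across to runner $a$. I would compute, using the $\wt$-function description of $e$-quotient parts in Section~\ref{subsection:partitions}, exactly what $\hat\lambda^{(a-1)}$ and $\hat\lambda^{(a)}$ are in terms of $\clambda^{(a-1)}$ and $\clambda^{(a)}$ and the positions $c_0<\dots<c_{k+1}$. The point is that moving $k+2$ beads from runner $a-1$ to runner $a$ changes the two-runner configuration in a way that is a hook on both runners if and only if the original configuration was; more precisely I expect $\hat\lambda^{(a-1)}$ to be the conjugate-type transform of $\clambda^{(a)}$ shifted, and $\hat\lambda^{(a)}$ to encode $\clambda^{(a-1)}$, so that `$\clambda^{(a)}$ a column and $\clambda^{(a-1)}$ a row' is equivalent to `$\hat\lambda^{(a-1)}$ and $\hat\lambda^{(a)}$ both hooks'. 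Then the further single-bead moves producing $\epl{j}$ (move $c_j$ back one step from runner $a$ to $a-1$) and $\eptl{j}$ (move $c_{k+1-j}$ one step from $a-1$ to $a$) each change a hook by adding or removing a single removable node, hence preserve the hook property of both components. Conversely if some $\epl{j}$ or $\eptl{j}$ fails to have both components hooks, tracing the single bead move back shows $\clambda^{(a-1)}$ or $\clambda^{(a)}$ already failed the row/column condition. This gives (1)$\Rightarrow$(2) and the contrapositive of (2)$\Rightarrow$(1).

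The main obstacle, I expect, will be bookkeeping the exact correspondence between the abacus configuration on two adjacent runners and the two $e$-quotient components under the bead shifts $c\mapsto c+1$: one must be careful that a bead moving from runner $a-1$ to runner $a$ (increasing its position by $1$) does not simply translate into an obvious operation on the quotient partitions, because the $\wt_\lambda$-values of \emph{other} beads on both runners change. The cleanest route is probably to fix, once and for all, the top bead position and the reference integer on each of the two runners, express each component as a sequence of gap-counts, and verify the claim by a direct finite case-check: a configuration of $k+2$ gaps on runner $a-1$ all lying below the occupied beads, combined with a gapless-then-empty runner $a$, is moved by the $C\mapsto C^+$ operation to a configuration whose runner-$(a-1)$ gaps and runner-$a$ gaps are `nested' — and nestedness of the gap pattern on a runner is exactly the statement that the component is a hook. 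Once this translation is set up the equivalence is immediate, and I would not expect to need anything beyond Lemma~\ref{lemma:1unram} and the definitions in Section~\ref{subsection:partitions}. I would also remark that condition~(1) is exactly the condition ensuring $\clambda$ is a hook-quotient partition on these runners, consistent with the naming `hook-quotient exceptional family'.
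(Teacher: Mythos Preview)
There is a genuine gap in your plan for (2)$\Rightarrow$(1). You claim that $\cheve(\clambda)=0$ (together with the $[w:k]$-pair combinatorics) already forces $\clambda^{(a)}$ to be a single column $(1^y)$, so that condition (1) amounts only to the extra restriction $\clambda^{(a-1)}=(x)$. This is false. The condition ``$\clambda$ has no removable bead on runner $a$'' relates runners $a$ and $a-1$: it says that for every bead at position $p$ on runner $a$, position $p-1$ on runner $a-1$ is occupied. It is \emph{not} a condition on runner $a$ alone, and in particular it does not say that ``on runner $a$ every occupied position lies above every unoccupied position'' as you write. One can easily arrange $\clambda^{(a)}=(2)$, or indeed any partition, with no removable bead on runner $a$, simply by placing beads on runner $a-1$ at the preceding positions. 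So $\clambda^{(a)}=(1^y)$ is not automatic and must be \emph{deduced from} condition (2).

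The paper's argument for (2)$\Rightarrow$(1) does exactly this, by examining two specific family members. Consider $\eptl{0}$, obtained from $\clambda$ by moving the bottom addable bead $c_{k+1}$ to $c_{k+1}+1$. Since there are $k+2\ge 3$ addable beads, the positions $c_0+1,\dotsc,c_k+1$ are all unoccupied on runner $a$ and lie above $c_{k+1}+1$, so $\wt_{\eptl{0}}(c_{k+1}+1)\ge 2$. The hypothesis that $(\eptl{0})^{(a)}$ is a hook then forces $c_{k+1}+1$ to be the bottom bead on runner $a$ of $\eptl{0}$, from which one reads off $\clambda^{(a)}=(1^y)$. Symmetrically, examining $\eptl{k+1}$ (move the top addable bead $c_0$ to $c_0+1$) and using that $(\eptl{k+1})^{(a-1)}$ is a hook yields $\wt_{\clambda}(c_0)=0$ and hence $\clambda^{(a-1)}=(x)$. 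Your plan for (1)$\Rightarrow$(2), by explicit computation of the quotient components of each $\epl{j}$ and $\eptl{j}$, is essentially what the paper does, though note that moving a bead between runners does not literally ``add or remove a single node'' from a quotient component---it shifts the weights of other beads on both runners---so the verification really is the direct case-by-case listing the paper carries out.
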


\begin{proof}
	Suppose that (1) holds. Let $\xi = \{ b \in \beta(\clambda) \mid b \equiv_e a-1\}$.  Then $\xi - ie \notin \beta(\clambda)$ for all $i \in [1,\, x]$ while $\xi - \gamma e \in \beta(\clambda)$ for all $\gamma >x$.
Since $\clambda$ has no removable bead on runner $a$ and $k+2$ addable beads on runner $a-1$, the latter addable beads
	are at $\xi - (x+y+k+1)e$, $\xi - (x+i) e$ ($i \in [1,\,k]$) and $\xi$.
	 Thus
the $(a-1)$-th and $a$-th components of the $e$-quotient of $\epl{j}$ is
$$
\begin{cases}
(\emptyset, (x+1,1^{y+k})), &\text{if } j=0; \\
((x+k+1, 1^y),\emptyset),   &\text{if } j=k+1; \\
((j,1^y), (x+1, 1^{k-j})),  &\text{if } j \in [1,\,k].
\end{cases}
$$
Similarly, the $(a-1)$-th and $a$-th components of the $e$-quotient of
	$\eptl{j}$ is
$$
\begin{cases}
(\emptyset, (x+k+1,1^y)), &\text{if } j=0; \\
((x+1, 1^{y+k}),\emptyset), &\text{if } j=k+1; \\
((x+1, 1^{j-1}),(k+1-j,1^y)), &\text{if } j \in [1,\, k].
\end{cases}
$$
See Example~\ref{E:exceptionalfamily} below.
	
	Conversely suppose that (2) holds. First note that the partition $\eptl{0}\in\b$ is obtained from $\clambda$ by moving the bottom of the $k+2$ ($\geq 3$) addable beads on runner $a-1$ from its position $b$ to $b+1$. It follows that $\wt_{\eptl{0}}(b+1)\geq 2$. Since the $a$-th component of the $e$-quotient of $\eptl{0}$ is a hook, we deduce that the bottom bead on runner $a$ of $\eptl{0}$ is at $b+1$, and that the $a$-component of the $e$-quotient of $\clambda$ has the form $(1^y)$. Next, we consider $\eptl{k+1}\in\b$, obtained from $\clambda$ by moving the top of the $k+2$ ($\geq 3$) addable beads on runner $a-1$ from its position $c$ to $c+1$. Since the $(a-1)$-th component of the $e$-quotient of $\eptl{k+1}$ is a hook, we deduce that
	$\wt_{\clambda}(c)=0$ and that the $(a-1)$-th component of the $e$-quotient of $\clambda$ has the form $(x)$.
\end{proof}

\begin{defi}\label{D:hqef}
An exceptional family in which every member is a hook-quotient partition is called a \emph{hook-quotient exceptional family}.
\end{defi}

By Lemma \ref{L:hookquotientfamilycriteria}, an exceptional family generated by $\clambda \in \cb$ is a hook-quotient exceptional family if and only if $\clambda^{(i)}$ is a hook for all $i \in [0,e)$, and $\clambda^{(a-1)} = (x)$ and $\clambda^{(a)} = (1^y)$ for some $x,y \in \mathbb{Z}_{\geq 0}$, where $(\clambda^{(0)},\dotsc, \clambda^{(e-1)})$ is the $e$-quotient of $\clambda$.

\begin{example} \label{E:exceptionalfamily}
A hook-quotient exceptional family with $k=3$ is depicted in Figure~\ref{figure:exceptionalfamily}.  We show the abacus configuration of runners $a-1$ and $a$ of all its members, together with the bead movements in those runners.
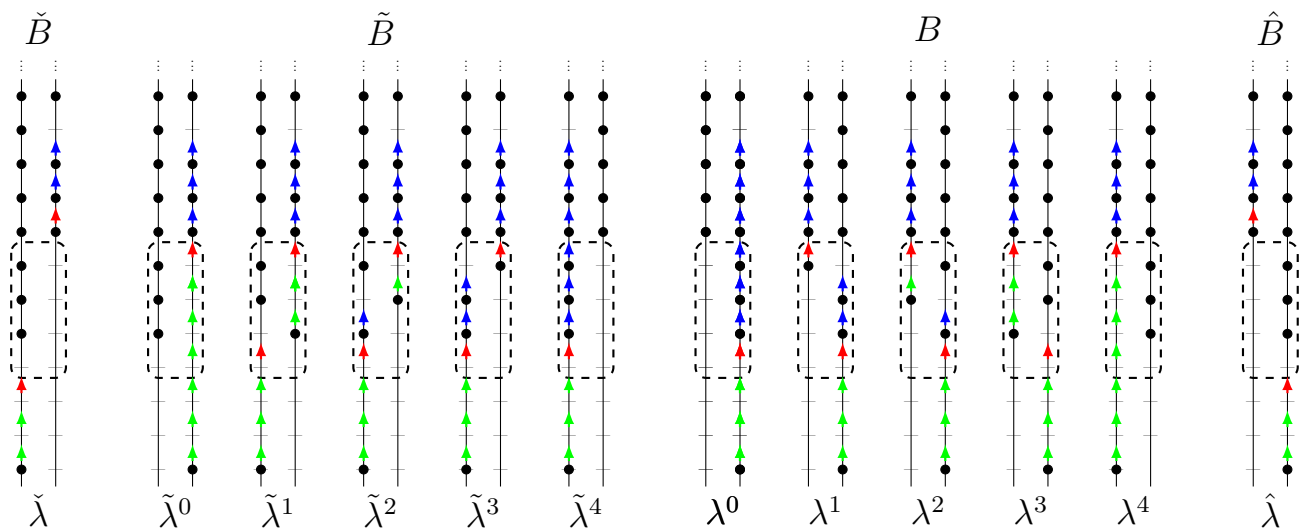
\begin{figure}[h]\caption{A hook-quotient exceptional family.}

\def\globalscaling{.45}
\def\arrowshortening{0.25}
\def\shadebuffer{0.3}
\def\beadmovementwidth{1.8}
\def\abacusmovementwidth{2}
\def\beadoperationscaling{.85}
\def\labelsize{1}
\def\partitionlabelsize{2.5    }
\def\beadsize{.13}
\def\circledbeadmovementsize{.3}
\def\runnershiftforRHSdiagram{13}
\def\runnerstart{-0.5}
\def\runnerend{11.5}
\def\gap{\----}
\def\pivot{red}
\def\belowpivot{green}
\def\abovepivot{blue}

\begin{tikzpicture}
[y=-1cm, scale=\globalscaling], every node/.style={scale=.8*\globalscaling}]

\draw (.5,\runnerstart-1.5) node[scale=\partitionlabelsize*\globalscaling] {$\check{B}$};
\draw (10.5,\runnerstart-1.5) node[scale=\partitionlabelsize*\globalscaling] {$\tilde{B}$};
\draw (26.5,\runnerstart-1.5) node[scale=\partitionlabelsize*\globalscaling] {$B$};
\draw (36.5,\runnerstart-1.5) node[scale=\partitionlabelsize*\globalscaling] {$\hat{B}$};

\foreach \x/\y in
{0/0,4/0,7/0,10/0,13/0,16/0, 20/0, 23/0, 26/0, 29/0, 32/0, 36/0
}
{
\draw [rounded corners, thick, dashed] (\x - \shadebuffer, \y+4+\shadebuffer)
rectangle ( \x + 1 +\shadebuffer, \y+8+\shadebuffer);
\draw (\x,\y+\runnerstart) -- (\x,\y+\runnerend);
\draw (\x+1,\y+\runnerstart) -- (\x+1,\y+\runnerend);
\node [scale=\labelsize*\globalscaling] at (\x, \y+\runnerstart-0.5) {$\vdots$};
\node [scale=\labelsize*\globalscaling] at (\x+1, \y+\runnerstart-0.5) {$\vdots$};
};

\def\xdiagramshift{0};
\def\ydiagramshift{0};
\draw (\xdiagramshift+.5, \ydiagramshift+\runnerend+.7)
node[scale=\partitionlabelsize*\globalscaling] {$\check{\lambda}$};
\foreach \x/\y in
{
0/0, 0/1, 0/2, 0/3, 0/4, 0/5, 0/6, 0/7, 0/11,
1/0,        1/2, 1/3, 1/4
}
{\draw
[fill] (\xdiagramshift+\x,\ydiagramshift+\y) circle (\beadsize)
;};
\foreach \x/\y in
{0/8, 0/9, 0/10, 1/1, 1/5, 1/6, 1/7,1/8,1/9,1/10,1/11}
{\node [scale=\labelsize*\globalscaling] at (\xdiagramshift+\x,\ydiagramshift+\y) {\gap};};

\foreach\x/\y/\z in
{1/2/\abovepivot,
1/3/\abovepivot,
1/4/\pivot,
0/9/\pivot,
0/10/\belowpivot,
0/11/\belowpivot
}
{\draw [\z, line width=\beadmovementwidth*\globalscaling,->, >=latex] (\xdiagramshift+\x,\ydiagramshift+\y-\arrowshortening)
-- (\xdiagramshift+\x,\ydiagramshift+\y-1+\arrowshortening);
};

\def\xdiagramshift{4};
\def\ydiagramshift{0};
\draw (\xdiagramshift+.5, \ydiagramshift+\runnerend+.7)
node[scale=\partitionlabelsize*\globalscaling] {$\tilde{\lambda}^{0}$};
\foreach \x/\y in
{
0/0, 0/1, 0/2, 0/3, 0/4, 0/5, 0/6, 0/7,
1/0,        1/2, 1/3, 1/4,                                 1/11
}
{\draw
[fill] (\xdiagramshift+\x,\ydiagramshift+\y) circle (\beadsize)
;};
\foreach \x/\y in
{0/8, 0/9, 0/10, 1/1, 1/5, 1/6, 1/7,1/8,1/9,1/10,0/11}
{\node [scale=\labelsize*\globalscaling] at (\xdiagramshift+\x,\ydiagramshift+\y) {\gap};};

\foreach\x/\y/\z in
{1/2/\abovepivot,
1/3/\abovepivot,
1/4/\abovepivot,
1/5/\pivot,
1/6/\belowpivot,
1/6/\belowpivot,
1/7/\belowpivot,
1/8/\belowpivot,
1/9/\belowpivot,
1/10/\belowpivot,
1/11/\belowpivot
}
{\draw [\z, line width=\beadmovementwidth*\globalscaling,->, >=latex] (\xdiagramshift+\x,\ydiagramshift+\y-\arrowshortening)
-- (\xdiagramshift+\x,\ydiagramshift+\y-1+\arrowshortening);
};

\def\xdiagramshift{7};
def\ydiagramshift{0};
\draw (\xdiagramshift+.5, \ydiagramshift+\runnerend+.7)
node[scale=\partitionlabelsize*\globalscaling] {$\tilde{\lambda}^{1}$};
\foreach \x/\y in
{
0/0, 0/1, 0/2, 0/3, 0/4, 0/5, 0/6,                   0/11,
1/0,        1/2, 1/3, 1/4,               1/7
}
{\draw
[fill] (\xdiagramshift+\x,\ydiagramshift+\y) circle (\beadsize)
;};
\foreach \x/\y in
{0/7, 0/8, 0/9, 0/10, 0/11, 1/1, 1/5, 1/6,1/8,1/9,1/10,1/11}
{\node [scale=\labelsize*\globalscaling] at (\xdiagramshift+\x,\ydiagramshift+\y) {\gap};};

\foreach\x/\y/\z in
{1/2/\abovepivot,
1/3/\abovepivot,
1/4/\abovepivot,
1/5/\pivot,
1/6/\belowpivot,
1/6/\belowpivot,
1/7/\belowpivot,
0/8/\pivot,
0/9/\belowpivot,
0/10/\belowpivot,
0/11/\belowpivot
}
{\draw [\z, line width=\beadmovementwidth*\globalscaling,->, >=latex] (\xdiagramshift+\x,\ydiagramshift+\y-\arrowshortening)
-- (\xdiagramshift+\x,\ydiagramshift+\y-1+\arrowshortening);
};

\def\xdiagramshift{10};
def\ydiagramshift{0};
\draw (\xdiagramshift+.5, \ydiagramshift+\runnerend+.7)
node[scale=\partitionlabelsize*\globalscaling] {$\tilde{\lambda}^{2}$};
\foreach \x/\y in
{
0/0, 0/1, 0/2, 0/3, 0/4, 0/5,       0/7,                   0/11,
1/0,        1/2, 1/3, 1/4,          1/6
}
{\draw
[fill] (\xdiagramshift+\x,\ydiagramshift+\y) circle (\beadsize)
;};
\foreach \x/\y in
{0/6, 0/8, 0/9, 0/10, 0/11, 1/1, 1/5, 1/7,1/8,1/9,1/10,1/11}
{\node [scale=\labelsize*\globalscaling] at (\xdiagramshift+\x,\ydiagramshift+\y) {\gap};};

\foreach\x/\y/\z in
{1/2/\abovepivot,
1/3/\abovepivot,
1/4/\abovepivot,
1/5/\pivot,
1/6/\belowpivot,
0/7/\abovepivot,
0/8/\pivot,
0/9/\belowpivot,
0/10/\belowpivot,
0/11/\belowpivot
}
{\draw [\z, line width=\beadmovementwidth*\globalscaling,->, >=latex] (\xdiagramshift+\x,\ydiagramshift+\y-\arrowshortening)
-- (\xdiagramshift+\x,\ydiagramshift+\y-1+\arrowshortening);
};

\def\xdiagramshift{13};
def\ydiagramshift{0};
\draw (\xdiagramshift+.5, \ydiagramshift+\runnerend+.7)
node[scale=\partitionlabelsize*\globalscaling] {$\tilde{\lambda}^{3}$};
\foreach \x/\y in
{
0/0, 0/1, 0/2, 0/3, 0/4,        0/6,  0/7,                   0/11,
1/0,        1/2, 1/3, 1/4, 1/5
}
{\draw
[fill] (\xdiagramshift+\x,\ydiagramshift+\y) circle (\beadsize)
;};
\foreach \x/\y in
{0/5, 0/8, 0/9, 0/10, 0/11, 1/1, 1/6, 1/7,1/8,1/9,1/10,1/11}
{\node [scale=\labelsize*\globalscaling] at (\xdiagramshift+\x,\ydiagramshift+\y) {\gap};};

\foreach\x/\y/\z in
{1/2/\abovepivot,
1/3/\abovepivot,
1/4/\abovepivot,
1/5/\pivot,
0/6/\abovepivot,
0/7/\abovepivot,
0/8/\pivot,
0/9/\belowpivot,
0/10/\belowpivot,
0/11/\belowpivot
}
{\draw [\z, line width=\beadmovementwidth*\globalscaling,->, >=latex] (\xdiagramshift+\x,\ydiagramshift+\y-\arrowshortening)
-- (\xdiagramshift+\x,\ydiagramshift+\y-1+\arrowshortening);
};

\def\xdiagramshift{16};
def\ydiagramshift{0};
\draw (\xdiagramshift+.5, \ydiagramshift+\runnerend+.7)
node[scale=\partitionlabelsize*\globalscaling] {$\tilde{\lambda}^{4}$};
\foreach \x/\y in
{
0/0,          0/2, 0/3, 0/4, 0/5,   0/6,  0/7,                   0/11,
1/0,  1/1, 1/2, 1/3, 1/4 }
{\draw
[fill] (\xdiagramshift+\x,\ydiagramshift+\y) circle (\beadsize)
;};
\foreach \x/\y in
{0/1, 0/8, 0/9, 0/10, 0/11, 1/5, 1/6, 1/7,1/8,1/9,1/10,1/11}
{\node [scale=\labelsize*\globalscaling] at (\xdiagramshift+\x,\ydiagramshift+\y) {\gap};};

\foreach\x/\y/\z in
{0/2/\abovepivot,
0/3/\abovepivot,
0/4/\abovepivot,
0/5/\abovepivot,
0/6/\abovepivot,
0/7/\abovepivot,
0/8/\pivot,
0/9/\belowpivot,
0/10/\belowpivot,
0/11/\belowpivot
}
{\draw [\z, line width=\beadmovementwidth*\globalscaling,->, >=latex] (\xdiagramshift+\x,\ydiagramshift+\y-\arrowshortening)
-- (\xdiagramshift+\x,\ydiagramshift+\y-1+\arrowshortening);
};

\def\xdiagramshift{36};
\def\ydiagramshift{0};
\draw (\xdiagramshift+.5, \ydiagramshift+\runnerend+.7)
node[scale=\partitionlabelsize*\globalscaling] {$\hat{\lambda}$};
\foreach \x/\y in
{
1/0, 1/1, 1/2, 1/3, 1/4, 1/5, 1/6, 1/7, 1/11,
0/0,        0/2, 0/3, 0/4
}
{\draw
[fill] (\xdiagramshift+\x,\ydiagramshift+\y) circle (\beadsize)
;};
\foreach \x/\y in
{1/8, 1/9, 1/10, 0/1, 0/5, 0/6, 0/7,0/8,0/9,0/10,0/11}
{\node [scale=\labelsize*\globalscaling] at (\xdiagramshift+\x,\ydiagramshift+\y) {\gap};};
\foreach\x/\y/\z in
{0/2/\abovepivot,
0/3/\abovepivot,
0/4/\pivot,
1/9/\pivot,
1/10/\belowpivot,
1/11/\belowpivot
}
{\draw [\z, line width=\beadmovementwidth*\globalscaling,->, >=latex] (\xdiagramshift+\x,\ydiagramshift+\y-\arrowshortening)
-- (\xdiagramshift+\x,\ydiagramshift+\y-1+\arrowshortening);
};

\def\xdiagramshift{20};
\def\ydiagramshift{0};
\draw (\xdiagramshift+.5, \ydiagramshift+\runnerend+.7)
node[scale=\partitionlabelsize*\globalscaling] {$\lambda^{0}$};
\foreach \x/\y in
{
1/0,       1/2, 1/3, 1/4, 1/5, 1/6, 1/7, 1/11,
0/0, 0/1,  0/2, 0/3, 0/4
}
{\draw
[fill] (\xdiagramshift+\x,\ydiagramshift+\y) circle (\beadsize)
;};
\foreach \x/\y in
{1/8, 1/9, 1/10, 1/1, 0/5, 0/6, 0/7,0/8,0/9,0/10,0/11}
{\node [scale=\labelsize*\globalscaling] at (\xdiagramshift+\x,\ydiagramshift+\y) {\gap};};
\foreach\x/\y/\z in
{1/2/\abovepivot,
1/3/\abovepivot,
1/4/\abovepivot,
1/5/\abovepivot,
1/6/\abovepivot,
1/7/\abovepivot,
1/8/\pivot,
1/9/\belowpivot,
1/10/\belowpivot,
1/11/\belowpivot
}
{\draw [\z, line width=\beadmovementwidth*\globalscaling,->, >=latex] (\xdiagramshift+\x,\ydiagramshift+\y-\arrowshortening)
-- (\xdiagramshift+\x,\ydiagramshift+\y-1+\arrowshortening);
};

\def\xdiagramshift{20};
\def\ydiagramshift{0};
\draw (\xdiagramshift+.5, \ydiagramshift+\runnerend+.7)
node[scale=\partitionlabelsize*\globalscaling] {$\lambda^{0}$};
\foreach \x/\y in
{
1/0,       1/2, 1/3, 1/4, 1/5, 1/6, 1/7, 1/11,
0/0, 0/1,  0/2, 0/3, 0/4
}
{\draw
[fill] (\xdiagramshift+\x,\ydiagramshift+\y) circle (\beadsize)
;};
\foreach \x/\y in
{1/8, 1/9, 1/10, 1/1, 0/5, 0/6, 0/7,0/8,0/9,0/10,0/11}
{\node [scale=\labelsize*\globalscaling] at (\xdiagramshift+\x,\ydiagramshift+\y) {\gap};};
\foreach\x/\y/\z in
{1/2/\abovepivot,
1/3/\abovepivot,
1/4/\abovepivot,
1/5/\abovepivot,
1/6/\abovepivot,
1/7/\abovepivot,
1/8/\pivot,
1/9/\belowpivot,
1/10/\belowpivot,
1/11/\belowpivot
}
{\draw [\z, line width=\beadmovementwidth*\globalscaling,->, >=latex] (\xdiagramshift+\x,\ydiagramshift+\y-\arrowshortening)
-- (\xdiagramshift+\x,\ydiagramshift+\y-1+\arrowshortening);
};

\def\xdiagramshift{23};
\def\ydiagramshift{0};
\draw (\xdiagramshift+.5, \ydiagramshift+\runnerend+.7)
node[scale=\partitionlabelsize*\globalscaling] {$\lambda^{1}$};
\foreach \x/\y in
{
1/0,  1/1,   1/2, 1/3, 1/4,         1/6, 1/7, 1/11,
0/0,              0/2, 0/3, 0/4, 0/5
}
{\draw
[fill] (\xdiagramshift+\x,\ydiagramshift+\y) circle (\beadsize)
;};
\foreach \x/\y in
{1/8, 1/9, 1/10, 0/1, 1/5, 0/6, 0/7,0/8,0/9,0/10,0/11}
{\node [scale=\labelsize*\globalscaling] at (\xdiagramshift+\x,\ydiagramshift+\y) {\gap};};
\foreach\x/\y/\z in
{0/2/\abovepivot,
0/3/\abovepivot,
0/4/\abovepivot,
0/5/\pivot,
1/6/\abovepivot,
1/7/\abovepivot,
1/8/\pivot,
1/9/\belowpivot,
1/10/\belowpivot,
1/11/\belowpivot
}
{\draw [\z, line width=\beadmovementwidth*\globalscaling,->, >=latex] (\xdiagramshift+\x,\ydiagramshift+\y-\arrowshortening)
-- (\xdiagramshift+\x,\ydiagramshift+\y-1+\arrowshortening);
};

\def\xdiagramshift{26};
\def\ydiagramshift{0};
\draw (\xdiagramshift+.5, \ydiagramshift+\runnerend+.7)
node[scale=\partitionlabelsize*\globalscaling] {$\lambda^{2}$};
\foreach \x/\y in
{
1/0,  1/1,     1/2, 1/3, 1/4, 1/5,        1/7, 1/11,
0/0,              0/2, 0/3, 0/4,        0/6
}
{\draw
[fill] (\xdiagramshift+\x,\ydiagramshift+\y) circle (\beadsize)
;};
\foreach \x/\y in
{1/8, 1/9, 1/10, 0/1, 0/5, 1/6, 0/7,0/8,0/9,0/10,0/11}
{\node [scale=\labelsize*\globalscaling] at (\xdiagramshift+\x,\ydiagramshift+\y) {\gap};};
\foreach\x/\y/\z in
{0/2/\abovepivot,
0/3/\abovepivot,
0/4/\abovepivot,
0/5/\pivot,
0/6/\belowpivot,
1/7/\abovepivot,
1/8/\pivot,
1/9/\belowpivot,
1/10/\belowpivot,
1/11/\belowpivot
}
{\draw [\z, line width=\beadmovementwidth*\globalscaling,->, >=latex] (\xdiagramshift+\x,\ydiagramshift+\y-\arrowshortening)
-- (\xdiagramshift+\x,\ydiagramshift+\y-1+\arrowshortening);
};

\def\xdiagramshift{29};
\def\ydiagramshift{0};
\draw (\xdiagramshift+.5, \ydiagramshift+\runnerend+.7)
node[scale=\partitionlabelsize*\globalscaling] {$\lambda^{3}$};
\foreach \x/\y in
{
1/0,  1/1,     1/2, 1/3, 1/4,  1/5,      1/6,  1/11,
0/0,              0/2, 0/3, 0/4, 0/7
}
{\draw
[fill] (\xdiagramshift+\x,\ydiagramshift+\y) circle (\beadsize)
;};
\foreach \x/\y in
{1/7, 1/8, 1/9, 1/10, 0/1, 0/5, 0/6,0/8,0/9,0/10,0/11}
{\node [scale=\labelsize*\globalscaling] at (\xdiagramshift+\x,\ydiagramshift+\y) {\gap};};
\foreach\x/\y/\z in
{0/2/\abovepivot,
0/3/\abovepivot,
0/4/\abovepivot,
0/5/\pivot,
0/6/\belowpivot,
0/7/\belowpivot,
1/8/\pivot,
1/9/\belowpivot,
1/10/\belowpivot,
1/11/\belowpivot
}
{\draw [\z, line width=\beadmovementwidth*\globalscaling,->, >=latex] (\xdiagramshift+\x,\ydiagramshift+\y-\arrowshortening)
-- (\xdiagramshift+\x,\ydiagramshift+\y-1+\arrowshortening);
};

\def\xdiagramshift{32};
\def\ydiagramshift{0};
\draw (\xdiagramshift+.5, \ydiagramshift+\runnerend+.7)
node[scale=\partitionlabelsize*\globalscaling] {$\lambda^{4}$};
\foreach \x/\y in
{
1/0,  1/1,     1/2, 1/3, 1/4,        1/6,  1/7, 0/11,
0/0,              0/2, 0/3, 0/4, 1/5
}
{\draw
[fill] (\xdiagramshift+\x,\ydiagramshift+\y) circle (\beadsize)
;};
\foreach \x/\y in
{1/8, 1/9, 1/10, 0/1, 0/5, 0/6, 0/7,0/8,0/9,0/10,0/11}
{\node [scale=\labelsize*\globalscaling] at (\xdiagramshift+\x,\ydiagramshift+\y) {\gap};};
\foreach\x/\y/\z in
{0/2/\abovepivot,
0/3/\abovepivot,
0/4/\abovepivot,
0/5/\pivot,
0/6/\belowpivot,
0/7/\belowpivot,
0/8/\belowpivot,
0/9/\belowpivot,
0/10/\belowpivot,
0/11/\belowpivot
}
{\draw [\z, line width=\beadmovementwidth*\globalscaling,->, >=latex] (\xdiagramshift+\x,\ydiagramshift+\y-\arrowshortening)
-- (\xdiagramshift+\x,\ydiagramshift+\y-1+\arrowshortening);
};
\end{tikzpicture}

\label{figure:exceptionalfamily}
\end{figure}
\end{example}

\begin{lemma} \label{L:exceptional-1-increasing}
Let $\tlambda \in \tb$ be an exceptional partition, and suppose that $\tlambda$ is $1$-increasing.  Then $\tlambda$ belongs to a hook-quotient exceptional family $\flambda$.
\end{lemma}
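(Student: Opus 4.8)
The plan is to show that an exceptional, $1$-increasing partition $\tlambda\in\tb$ has $e$-quotient in which every component is a hook, and moreover that the $(a-1)$-th and $a$-th components have the special "column/row" form required by Lemma~\ref{L:hookquotientfamilycriteria}(1), after passing back to the generator $\clambda\in\cb$ via $\clambda$ being the partition with $\cheve(\clambda)=0$ generating the exceptional family containing $\tlambda$.

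\begin{proof}
Since $\tlambda$ is $1$-increasing, every component of its $e$-quotient is a hook by Lemma~\ref{lemma:1unram}(1).  As $\tlambda$ is exceptional, $E(\tlambda)\neq 0$, so $\tlambda$ has at least one removable bead on runner $a$; by Lemma~\ref{lemma:1unram}(3) it has exactly one removable bead on runner $a$, and (since the $e$-core $\tkappa$ of $\tb$ has no removable bead on runner $a$, the corresponding configuration having $k$ removable beads on runner $a-1$) it follows that $\tlambda$ has exactly $k$ addable beads on runner $a-1$ coming from $\tkappa$ together with exactly one more removable bead on runner $a$.  Thus $\tlambda$ lies in some exceptional family $\flambda$, generated by the unique $\clambda\in\cb$ with $\cheve(\clambda)=0$, obtained by applying $\cheve$ repeatedly: concretely $\tlambda=\eptl{j}$ for some $j\in[0,k+1]$, and $\clambda$ is recovered by $\cheve(\clambda)=\sum_{i=0}^{k+1}q^i\eptl{i}$.

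It remains to verify condition (1) of Lemma~\ref{L:hookquotientfamilycriteria}, i.e.\ that the $(a-1)$-th and $a$-th components of the $e$-quotient of $\clambda$ have the form $((x),(1^y))$.  For this I will use the explicit abacus relationship between $\clambda$ and $\tlambda=\eptl{j}$: recall $\beta(\eptl{j})=\beta(\clambda)\cup\{c_{k+1-j}+1\}\setminus\{c_{k+1-j}\}$, where $c_0<\dotsb<c_{k+1}$ are the addable beads of $\clambda$ on runner $a-1$.  I consider two of the members that are guaranteed to be $1$-increasing when $\tlambda$ is (using Lemma~\ref{lemma:shiftby3}, since all $\eptl{j}$ and $\eptl{j'}$ have $\z$-values differing by an element of a parallelotope of a common partition, or more directly since the $\z$-values of all members of the family differ by modified basis vectors).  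The key point, already extracted in the proof of Lemma~\ref{L:hookquotientfamilycriteria}: $\eptl{0}\in\b$ is obtained from $\clambda$ by moving the \emph{bottom} of the $k+2\ (\geq 3)$ addable beads on runner $a-1$ from position $b$ to $b+1$, whence $\wt_{\eptl{0}}(b+1)\geq 2$; since the $a$-th component of the $e$-quotient of $\eptl{0}$ is a hook, the bottom bead on runner $a$ of $\eptl{0}$ must sit at $b+1$, forcing $\clambda^{(a)}=(1^y)$ for some $y$.  Symmetrically, $\eptl{k+1}\in\b$ is obtained from $\clambda$ by moving the \emph{top} such addable bead from position $c$ to $c+1$; since the $(a-1)$-th component of the $e$-quotient of $\eptl{k+1}$ is a hook, $\wt_{\clambda}(c)=0$, forcing $\clambda^{(a-1)}=(x)$ for some $x$.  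Combined with the fact that every $\clambda^{(i)}$ is a hook (which follows from every $\eptl{j}^{(i)}$, $i\neq a-1,a$, being a hook, as these components are common to $\clambda$ and all members of the family, together with the two special components just handled), Lemma~\ref{L:hookquotientfamilycriteria} gives that $\flambda$ is a hook-quotient exceptional family.

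The main obstacle is the bookkeeping needed to be sure that enough members of the family inherit the hook-quotient property from the hypothesis on the single partition $\tlambda$.  Since being $1$-increasing is not preserved under $s_a$ in general, one cannot simply say "every $\eptl{j}$ is $1$-increasing".  The resolution is that we only need the \emph{hook} property of certain components of two specific members $\eptl{0},\eptl{k+1}$ (equivalently $\epl{0},\epl{k+1}$), and these can be checked directly: the components $\eptl{j}^{(i)}$ for $i\neq a-1,a$ equal $\tlambda^{(i)}$ (common to all members and to $\clambda$), hence are hooks; and for $i\in\{a-1,a\}$ the explicit bead descriptions above control the two components we need.  Once the form $((x),(1^y))$ of $(\clambda^{(a-1)},\clambda^{(a)})$ is established, Lemma~\ref{L:hookquotientfamilycriteria} delivers the hook-quotient property of \emph{all} members of $\flambda$, completing the proof.
\end{proof}
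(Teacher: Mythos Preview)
Your argument has a genuine gap in the step establishing that $(\clambda^{(a-1)},\clambda^{(a)})=((x),(1^y))$. You correctly deduce from Lemma~\ref{lemma:1unram}(1),(3) that $\tlambda$ is hook-quotient with exactly one removable bead on runner $a$, and that moving this bead yields $\clambda\in\cb$ with $E(\clambda)=0$ generating an exceptional family $\flambda$ containing $\tlambda$. But your justification of the required form of $(\clambda^{(a-1)},\clambda^{(a)})$ recites the converse direction of the proof of Lemma~\ref{L:hookquotientfamilycriteria}, which uses that $\eptl{0}^{(a)}$ and $\eptl{k+1}^{(a-1)}$ are hooks. You only know that the components of the \emph{single} member $\tlambda=\eptl{j}$, for some unspecified $j$, are hooks. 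Your ``resolution'' paragraph acknowledges this obstacle but does not overcome it: the assertion that ``the explicit bead descriptions above control the two components we need'' is not substantiated, and the appeal to Lemma~\ref{lemma:shiftby3} fails both because it would only yield $(-2)$-increasing for the other members and because Lemma~\ref{lemma:paraexceptional} (relating the $\z$-values within a family) already presupposes that $\flambda$ is hook-quotient.

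The paper closes this gap by using the \emph{full} strength of Lemma~\ref{lemma:1unram}(3): not merely the bound ``at most one removable bead on runner $a$'', but its ``Furthermore'' clause, which (when $\tlambda^{(a-1)},\tlambda^{(a)}\neq\emptyset$) locates that bead as the bottom bead on runner $a$, sitting at the position succeeding the least unoccupied position on runner $a-1$. Moving this specific bead back visibly turns the hook $\tlambda^{(a)}=(x'',1^{y''})$ into $\clambda^{(a)}=(1^{y''})$ and fills the topmost gap of the hook $\tlambda^{(a-1)}=(x',1^{y'})$ to give $\clambda^{(a-1)}=(x'-1)$; the cases with an empty component are handled directly. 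This determines the form of $\clambda$ from $\tlambda$ alone, with no reference to any other member of the family, after which Lemma~\ref{L:hookquotientfamilycriteria} applies in the forward direction.
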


\begin{proof}
By Lemma~\ref{lemma:1unram}(1,3), $\tlambda$ is a hook-quotient partition and has exactly one removable bead on runner $a$ (and $k+1$ addable beads on runner $a-1$.  In addition, denote the partition obtained from $\tlambda$ by moving the unique removable bead on runner $a$ to its preceding unoccupied position as $\clambda$, the $(a-1)$-th and $a$-th components of the $e$-quotient of $\clambda$ has the form $((x),(1^y))$.  Clearly, $\clambda$ is hook-quotient, and $E(\clambda)=0$.  Thus it
follows from Lemma~\ref{L:hookquotientfamilycriteria} that $\clambda$ generates a hook-quotient exceptional family $\flambda$ and that $\tlambda$ belongs to $\flambda$.
\end{proof}

\begin{defi}
	Define $\y = \min\{ x \equiv_e a \mid x \notin \beta(\tilde{\kappa}) \}$ (where $\tilde{\kappa}$ is the $e$-core of $\tb$).  Define the subset $\mathcal{R}$ of $\mathbb{Z}$ as follows:
	$$
	\mathcal{R} = \{ \y + \gamma e \mid \gamma \in [0,k] \} \cup \{ \y-1 + \gamma e \mid \gamma \in [0,k] \}.
	$$
\end{defi}

The subset $\mathcal{R}$ is illustrated in Figure~\ref{figure:exceptionalfamily} as the region in the abacus enclosed by the dash-outlined rectangular boxes.

We record some straightforward
properties of hook-quotient exceptional families that
will be useful to us.

\begin{lemma} \label{L:region}
	Let $\flambda$ be a hook-quotient exceptional family, generated by $\clambda$.
	\begin{enumerate}
		\item For $\eptl{j}$ ($j \in [0,\,k+1]$): Let $x \in \mathcal{R}$ with $x \equiv_e a$, say $x = \y+\gamma e$, where $\gamma \in [0,\, k]$.
		\begin{enumerate}
            \item $\eptl{j}$ has a bead movement starting at $x$ or $x-1$, but not both.  More precisely, it has a bead movement at $x$ if $\gamma < k+1 -j$, and at $x-1$ if $\gamma \geq k+1-j$.  This is its $\tilde{i}_{\flambda,\gamma}$-th bead movement, where $\tilde{i}_{\flambda,\gamma}$ is dependent of $\flambda$ and $\gamma$ but independent of $j$.  Furthermore,
                \begin{gather*}
				\tilde{i}_{\flambda,0} \prec_{\eptl{j}} \tilde{i}_{\flambda,1} \prec_{\eptl{j}} \dotsb \prec_{\eptl{j}} \tilde{i}_{\flambda,k-j}, \\
				\tilde{i}_{\flambda, k} \prec_{\eptl{j}} \tilde{i}_{\flambda,k-1} \prec_{\eptl{j}} \dotsb \prec_{\eptl{j}} \tilde{i}_{\flambda,k+1-j}.
			\end{gather*}
            \item If $\gamma \ne k$, then $\eptl{j}$ has a bead at $x$ or $x-1$, but not both.  More precisely, $\eptl{j}$ has a bead at $x-1$ if $\gamma \ne k-j$, and at $x$ if $\gamma = k-j$.
            \item If $\gamma = k$, then $\eptl{j}$ has no bead at $x$ if $j>0$ and no bead at $x-1$ if $j=0$.
		\end{enumerate}
		\item For $\clambda$:
		\begin{enumerate}
			\item We have $x \notin \beta(\clambda)$ for all $x \geq_e \y$ and $|\{ x <_e \y \mid x \notin \beta(\clambda)\}| = 1$.
			\item We have $x \in \beta(\clambda)$ for all $x <_e \y-1 + ke$ and $|\{ x \geq_e \y -1 + ke \mid x \in \beta(\clambda) \}|= 1$.
			\item $\clambda$ has no bead movements starting at any $x \in \mathcal{R}$.
		\end{enumerate}
	\end{enumerate}
\end{lemma}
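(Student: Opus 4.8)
The plan is to prove all of the assertions by writing down explicitly the abacus configurations of runners $a-1$ and $a$ for $\clambda$ and for each $\eptl{j}$, and then reading the statements off directly. The first step is to pin down the $e$-core $\ckappa$ of $\cb$ on those two runners and to locate $\y$. Since $\b$ and $\tb$ form a Scopes $[w:k]$-pair, the $e$-core $\kappa$ of $\b$ has exactly $k$ removable beads and no addable bead on runner $a$; hence $\tkappa=s_a(\kappa)$ has runner $a$ full up to its bottom bead at position $\y-e$ (where $\y=\min\{x\equiv_e a:x\notin\beta(\tkappa)\}$) and runner $a-1$ full up to $\y-1+ke$, and moving the bottom bead of runner $a$ of $\tkappa$ to the top hole on runner $a-1$ shows that $\ckappa$ has runner $a$ full up to $\y-2e$ and runner $a-1$ full up to $\y-1+ke$, with its $k+2$ addable beads on runner $a-1$ sitting at $\y-1-e,\ \y-1,\ \y-1+e,\ \dotsc,\ \y-1+ke$.

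Next I would feed in the characterisation following Lemma~\ref{L:hookquotientfamilycriteria}, namely that $\clambda$ is a hook-quotient partition in $\cb$ with $\clambda^{(a-1)}=(x)$ and $\clambda^{(a)}=(1^y)$ for some $x,y\geq 0$. Translating each of these two one-row/one-column components into an abacus displacement gives: on runner $a$ the beads of $\clambda$ are $\{z\equiv_e a:z\leq \y-(y+2)e\}\cup\{\y-ye,\ \y-(y-1)e,\ \dotsc,\ \y-e\}$, so there is a unique hole below $\y$ (at $\y-(y+1)e$) and no bead at or above $\y$; and on runner $a-1$ the beads are $\{z\equiv_e a-1:z\leq \y-1+(k-1)e\}\cup\{\y-1+(k+x)e\}$. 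Parts (2)(a) and (2)(b) are then immediate, and (2)(c) follows at once, because a bead movement of $\clambda$ starting in $\mathcal{R}$ would either lie on runner $a$ (impossible, as $\clambda$ has no bead at or above $\y$ there) or start at some $\y-1+\gamma e$ with $\gamma\in[0,k]$ on runner $a-1$ (impossible, since every bead of $\clambda$ on runner $a-1$ below $\y-1+ke$ has weight $0$, while the unique bead above it, at $\y-1+(k+x)e$, has weight $x$ and so produces only bead movements starting at $\y-1+\gamma e$ with $\gamma\in[k+1,k+x]$). Reading off which beads of $\clambda$ on runner $a-1$ are addable then yields $C=\{c_0<\dotsb<c_{k+1}\}$ with $c_0=\y-1-(y+1)e$, $c_i=\y-1+(i-1)e$ for $i\in[1,k]$, and $c_{k+1}=\y-1+(k+x)e$, so that $\eptl{j}$ is obtained from $\clambda$ simply by sliding the bead at $c_{k+1-j}$ one place to the right onto runner $a$.

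For part (1) I would split into the three cases $j\in[1,k]$, $j=0$ and $j=k+1$, using the $(a-1)$-st and $a$-th components of the $e$-quotient of $\eptl{j}$ computed inside the proof of Lemma~\ref{L:hookquotientfamilycriteria}, and in each case write down the bead movements $\bm^{(a-1)}(\eptl{j})$ and $\bm^{(a)}(\eptl{j})$ via the standard parametrisation of the bead movements on a runner whose $e$-quotient component is a hook. Statements (1)(a) (which of $x$, $x-1$ carries a bead movement, and that the carrying position is independent of $j$), (1)(b) and (1)(c) then fall out by inspection. The point requiring genuine care is that this bead movement always occupies the \emph{same} slot $\tilde{i}_{\flambda,\gamma}$ of the total order on $\bm(\eptl{j})$ and that the two $\succeq_{\eptl{j}}$-chains hold. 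For the first I would count the bead movements of $\eptl{j}$ with starting position strictly below $\y+\gamma e$ (respectively $\y-1+\gamma e$): this count equals $\gamma+y$ plus a quantity depending only on the runners other than $a-1$ and $a$, which are common to the whole family $\flambda$, so it is independent of $j$ and strictly increasing in $\gamma$. For the chains I would locate the final bead movement of the bottom bead of runner $a$ (the one starting at $\y$, hence the smallest $\tilde{i}_{\flambda,\gamma}$ among $\gamma\in[0,k-j]$) and of runner $a-1$ (the one starting at $\y-1+ke$, hence the largest $\tilde{i}_{\flambda,\gamma}$ among $\gamma\in[k+1-j,k]$); the two chains then drop out of the definition of $\succeq_{\eptl{j}}$ together with the monotonicity just established.

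The main obstacle is precisely this last step: keeping straight the interplay between the total order on $\bm(\eptl{j})$ (which, as every partition in sight is hook-quotient, only records starting positions) and the partial order $\succeq_{\eptl{j}}$ (which additionally depends on where the final bead movement of the bottom bead of each runner sits), uniformly across the three ranges of $j$. Everything else reduces to routine bookkeeping with the explicit configurations, and can be sanity-checked against Figure~\ref{figure:exceptionalfamily}.
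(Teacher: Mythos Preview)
Your proposal is correct and is exactly the kind of direct verification the paper has in mind: the paper states this lemma without proof, introducing it only as ``straightforward properties'' to be read off from the explicit abacus descriptions already worked out in the proof of Lemma~\ref{L:hookquotientfamilycriteria} and illustrated in Figure~\ref{figure:exceptionalfamily}. Your identification of $\ckappa$, the positions $c_0,\dotsc,c_{k+1}$, and the case split on $j$ are all accurate, and your handling of the one nontrivial point --- that the count of bead movements below the $\gamma$-th internal position equals $y+\gamma$ plus a contribution from runners other than $a-1,a$, independently of $j$ --- is sound (the runner-$a$ and runner-$a-1$ contributions individually depend on $j$ but their sum does not).
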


There are of course statements analogous to Lemma \ref{L:region} about $\lambda^j \in B$ and $\hat{\lambda} \in \hb$ as well.  In fact, the $i_{\flambda,\gamma}$ for $\lambda^j$ (in the analogue of Lemma \ref{L:region}(1a)) equals $\tilde{i}_{\flambda,\gamma}$.  This leads to the following definition:

\begin{defi}
Let $\flambda$ be a hook-quotient exceptional family.  Define the subsets $\internal{\flambda}$ and $\external{\flambda}$ of $[1,\,w]$ as follows:
\begin{align*}
\internal{\flambda} &= \{i_0,i_1,\dotsc, i_{k} \} ,\\
\external{\flambda} & = [1,\,w] \setminus \internal{\flambda},
\end{align*}
where $i_\gamma = \tilde{i}_{\flambda,\gamma}$ of Lemma \ref{L:region}(1a) for all $\gamma \in [0,\,k]$.

We call $\internal{\flambda}$ and $\external{\flambda}$ the sets of {\em internal} and {\em external coordinates} of the family $\flambda$ respectively.
\end{defi}

\begin{rem}
Whenever we write $\internal{\flambda} = \{i_0,\dotsc, i_k\}$, we always assume that $i_0 < i_1 < \dotsb < i_k$ in the natural order of integers, so that the $i_{\gamma}$-th bead movement of every $\eptl{j}$ and every $\epl{j}$ starts at either $\y + \gamma e$ or $\y-1 + \gamma e$.
\end{rem}

Let $\flambda$ be a hook-quotient exceptional family with $\Int(\flambda) = \{i_0,\dotsc, i_k\}$.
Define $\pb{\flambda}_0, \pb{\flambda}_1, \dotsc, \pb{\flambda}_{k+1} \in \BZ^w$ as follows:
$$\pb{\flambda}_0=-\ssb_{i_0},\quad \pb{\flambda}_{1}=\ssb_{i_0}-\ssb_{i_1},\quad\dotsc,\quad\pb{\flambda}_k=
\ssb_{i_{k-1}}-\ssb_{i_{k}},\quad\pb{\flambda}_{k+1}=\ssb_{i_{k}}.$$
The sum of these $k+2$ vectors is zero, and any $k+1$ of them form a basis of the $\BZ$-span of $\{ \ssb_{i_{j}} \mid j \in [0,\,k]\}$.  For any subset $J$ of $[0,\, k+1]$, write $\pb{\flambda}_J$ for $\sum_{j \in J} \pb{\flambda}_j$. Then
$\pb{\flambda}_J = \pb{\flambda}_{J'}$ for two distinct subsets $J$ and $J'$ of $[0,\,k+1]$ if and only if $\{J,J'\} = \{ \emptyset, [0,\, k+1] \}$.

The following lemma is straightforward, and can be easily verified.

\begin{lemma}\label{lemma:paraexceptional}
Let $\flambda$ be a hook-quotient exceptional family, with $\Int(\flambda) = \{i_0,\dotsc, i_{k}\}$.
\begin{enumerate}
\item We have
\begin{alignat*}{3}
\z(\epl{0})&=\z(\eptl{0}), \\
\z(\epl{j})&=\z(\epl{0})-\ssb_{i_{j-1}} &&= \z(\epl{0}) + \pb{\flambda}_{[0,j)}&\qquad &(j \in [1,\,k+1]), \\
\z(\eptl{j})&= \z(\eptl{0}) - \ssb_{i_{k+1-j}} &&= \z(\eptl{0}) - \pb{\flambda}_{(k+1 -j,k+1]} &&(j \in [1,\, k+1]).
\end{alignat*}
In particular, $\z(\epl{j}) = \z(\eptl{k+2-j})$ for all $j \in [1,\, k+1]$.
\item For any $x\in \external{\flambda}$,
 $\sb_x^{\lambda}$ is constant for all $\lambda\in\flambda$; denote the common value by $\sb_x^\flambda$, and further write $\sb_{X}^{\flambda}$ for $\sum_{x\in X} \sb_x^{\flambda}$ whenever $X \subseteq \external{\flambda}$.
\item We have
$$\sb_{i_\gamma}^{\epl{j}}=
\begin{cases}
-\pb{\flambda}_{\gamma} & \text{ if } \gamma < j, \\
\pb{\flambda}_{\gamma+1} & \text{ if } \gamma \geq j,
\end{cases}
\qquad \text{and} \qquad
\sb_{i_\gamma}^{\eptl{j}}=
\begin{cases}
-\pb{\flambda}_{\gamma} & \text{ if } \gamma < k+1-j, \\
\pb{\flambda}_{\gamma+1} & \text{ if } \gamma \geq k+1-j.
\end{cases}
$$
In particular, $\para_0(\epl{j}) = \para_0(\eptl{k+1-j})$.
\item Let $z \in \BZ^w$.  Then $z \in \para_0(\epl{j})$ ($= \para_0(\eptl{k+1-j})$ by part (3))  if and only if $$z = -\pb{\flambda}_{I_{<j}} + \pb{\flambda}_{I_{>j}} + \sb_X^{\flambda}$$ for some $I_{<j} \subseteq [0,\, j)$, $I_{>j} \subseteq (j,\, k+1]$ and $X \subseteq \Ext(\flambda)$.
\item Let
$$\para(\flambda) = \left\{ \z(\epl{0}) + \pb{\flambda}_J + \sb_X^{\flambda} \mid J \subseteq [0,\, k+1],\ X \subseteq \external{\flambda} \right\}.$$
Then the cardinality of $\para(\flambda)$ is
$2^{w-k-1}\left(2^{k+2}-1\right)$ and
$$\bigcup_{j=0}^{k+1}\para(\epl{j}) \quad = \quad \para(\flambda)
\quad = \quad
\bigcup_{j=0}^{k+1}\para(\eptl{j}).
$$
\end{enumerate}
\end{lemma}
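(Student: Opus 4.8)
The plan is to deduce everything directly from parts (1)--(4) of the lemma, which already express $\z(\epl{j})$, $\z(\eptl{j})$, $\para_0(\epl{j})$ and $\para_0(\eptl{j})$ in terms of the vectors $\pb{\flambda}_0,\dotsc,\pb{\flambda}_{k+1}$ and $\sb_x^{\flambda}$ ($x\in\Ext(\flambda)$); no new combinatorics of the abacus is needed.

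First I would establish the three-way set equality. Fix $j\in[0,k+1]$. Combining $\z(\epl{j})=\z(\epl{0})+\pb{\flambda}_{[0,j)}$ from part (1) with the description of $\para_0(\epl{j})$ in part (4), a typical element of $\para(\epl{j})=\z(\epl{j})+\para_0(\epl{j})$ has the shape $\z(\epl{0})+\pb{\flambda}_{[0,j)}-\pb{\flambda}_{I_{<j}}+\pb{\flambda}_{I_{>j}}+\sb_X^{\flambda}$ with $I_{<j}\subseteq[0,j)$, $I_{>j}\subseteq(j,k+1]$ and $X\subseteq\Ext(\flambda)$. Since $I_{<j}\subseteq[0,j)$, the $\pb{\flambda}$-part collapses to $\pb{\flambda}_J$ with $J=([0,j)\setminus I_{<j})\cup I_{>j}$ ranging over exactly the subsets of $[0,k+1]$ avoiding $j$, so $\para(\epl{j})=\{\z(\epl{0})+\pb{\flambda}_J+\sb_X^{\flambda}\mid j\notin J,\ X\subseteq\Ext(\flambda)\}$. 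Taking the union over $j$ picks out all $J\subsetneq[0,k+1]$; but $\pb{\flambda}_{[0,k+1]}=\sum_{j}\pb{\flambda}_j=0=\pb{\flambda}_\emptyset$, so $J=[0,k+1]$ contributes nothing new, whence $\bigcup_j\para(\epl{j})=\para(\flambda)$. The analogous computation on the tilde side, using $\z(\eptl{j})=\z(\epl{0})-\pb{\flambda}_{(k+1-j,k+1]}$, the identification $\para_0(\eptl{j})=\para_0(\epl{k+1-j})$ of part (3), and the relation $-\pb{\flambda}_L=\pb{\flambda}_{[0,k+1]\setminus L}$, yields $\para(\eptl{j})=\{\z(\epl{0})+\pb{\flambda}_J+\sb_X^{\flambda}\mid k+1-j\in J\}$, whose union over $j$ is again $\para(\flambda)$.

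For the cardinality I would observe that $\{\sb_i^{\epl{0}}\mid i\in[1,w]\}$ is a $\mathbb{Z}$-basis of $\mathbb{Z}^w$ (true for every hook-quotient partition, as noted after Definition~\ref{def:modified}), and that by parts (2) and (3) this basis equals $\{\pb{\flambda}_1,\dotsc,\pb{\flambda}_{k+1}\}\cup\{\sb_x^{\flambda}\mid x\in\Ext(\flambda)\}$. Moreover every $\pb{\flambda}_J$ ($J\subseteq[0,k+1]$) lies in the $\mathbb{Z}$-span of $\pb{\flambda}_1,\dotsc,\pb{\flambda}_{k+1}$ (rewrite $\pb{\flambda}_0$ as $-\sum_{j\geq 1}\pb{\flambda}_j$ when $0\in J$), which is a direct complement to the span of the $\sb_x^{\flambda}$. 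Hence $\pb{\flambda}_J+\sb_X^{\flambda}=\pb{\flambda}_{J'}+\sb_{X'}^{\flambda}$ forces $X=X'$ and $\pb{\flambda}_J=\pb{\flambda}_{J'}$, and the latter holds precisely when $J=J'$ or $\{J,J'\}=\{\emptyset,[0,k+1]\}$, as recorded just before the lemma. Since there are $2^{k+2}$ choices of $J$ (collapsing to $2^{k+2}-1$ distinct values of $\pb{\flambda}_J$) and $2^{|\Ext(\flambda)|}=2^{w-k-1}$ choices of $X$, I conclude $|\para(\flambda)|=(2^{k+2}-1)\cdot 2^{w-k-1}$.

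The only mildly delicate points are the index bookkeeping on the tilde side (the substitution $j\mapsto k+1-j$ together with the sign flip coming from $\pb{\flambda}_{[0,k+1]}=0$) and the check that the $\pb{\flambda}_j$ genuinely span a $\mathbb{Z}$-complement to the $\sb_x^{\flambda}$; both become routine once parts (1)--(4) are in hand, so I do not expect any real obstacle here — this is why the lemma is stated as being easily verified.
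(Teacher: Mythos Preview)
Your argument for part (5) is correct and is exactly the natural way to verify it; the paper itself gives no proof, merely declaring the lemma ``straightforward'' and ``easily verified.'' Your identification $\para(\epl{j})=\{\z(\epl{0})+\pb{\flambda}_J+\sb_X^{\flambda}\mid j\notin J\}$ and $\para(\eptl{j})=\{\z(\epl{0})+\pb{\flambda}_J+\sb_X^{\flambda}\mid k+1-j\in J\}$ is in fact reproved later in the paper as Lemma~\ref{L:para(flambda)}(1), and your cardinality count via the direct-sum decomposition coming from the basis $\{\sb_i^{\epl{0}}\}$ and the observation $\pb{\flambda}_J=\pb{\flambda}_{J'}\iff\{J,J'\}\in\{\{J\},\{\emptyset,[0,k+1]\}\}$ (recorded just before the lemma) is the intended one.
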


\begin{defi}
Let $\flambda$ be a hook-quotient exceptional family.  Denote by $\xi_{\flambda} : [1,\, \w] \to \mathbb{Z}$ the unique order-preserving injection whose image is $\external{\flambda}$, and write $\zeta_{\flambda} : \external{\flambda} \to [1,\,\w]$ for its left inverse.

In addition, let $\pi_{\flambda} : \mathbb{Z}^w \to \mathbb{Z}^{\w}$ be the $\mathbb{Z}$-linear map defined by
 $$
 \pi_{\flambda}(\ssb_r) = \begin{cases}
 \ssb_{\zeta_{\flambda}(r)}, &\text{if } r \in \external{\flambda};\\
 0, &\text{otherwise}.
 \end{cases}
 $$
\end{defi}

Thus $\pi_{\flambda}(a_1,\dotsc, a_w) = (a_{\xi_{\flambda}(1)}, \dotsc, a_{\xi_{\flambda}(\w)})$.

Clearly, the following statements, about two hook-quotient exceptional families $\flambda$ and $\fsigma$, are equivalent:
\begin{enumerate}
\item[(i)] $\Int(\flambda) = \Int(\fsigma)$.
\item[(ii)] $\Ext(\flambda) = \Ext(\fsigma)$.
\item[(iii)] $\xi_{\flambda} = \xi_{\fsigma}$.
\item[(iv)] $\pi_{\flambda} = \pi_{\fsigma}$.
\end{enumerate}

\begin{lemma} \label{L:projection}
Let $\flambda$ be a hook-quotient exceptional family, and let $j \in [0,k+1]$.
\begin{enumerate}
\item If $i\in\internal{\flambda} $ and
  $x\in\external{\flambda}$, then
 $ i\not\succeq_{\eptl{j}} x$.
\item We have 
$\pi_{\flambda}(\z(\eptl{j})) = \z(\clambda)$;  in particular, 
$\check{\lambda}$ is at least as increasing as 
$\eptl{j}$.
\item We have $$
\pi_{\flambda}(\epsilon_r^{\eptl{j}}) =
\begin{cases}
\epsilon_{\zeta_{\flambda}(r)}^{\check{\lambda}}, &\text{if } r \in \external{\flambda},\\
 0, &\text{otherwise};
 \end{cases}
 $$
 \item For all $s,t \in \Ext(\flambda)$, we have $\zeta_{\flambda}(s) \succeq_{\check{\lambda}} \zeta_{\flambda}(t)$ if and only if $s \succeq_{\eptl{j}} t$.
\end{enumerate}
\end{lemma}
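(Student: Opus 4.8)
The plan is to reduce the statement to a careful bookkeeping of bead movements, using the explicit description of the exceptional families provided by Lemma \ref{L:region} and Lemma \ref{L:hookquotientfamilycriteria}. First recall what the partial order $\succeq_{\eptl{j}}$ encodes: $s \succeq_{\eptl{j}} t$ means that $b'_s \equiv_e b'_t$ (where $(b'_i;q'_i)$ is the $i$-th bead movement of $\eptl{j}$) and that $s,t$ lie on the same side of, and are weakly further from, the final bead movement of the bottom bead of that runner. So the assertion to prove is that for external coordinates $s,t$, this relation is detected by $\clambda$ after relabelling via $\zeta_{\flambda}$. The key structural input is that, outside the region $\mathcal{R}$, the abacus of every member $\eptl{j}$ of the family agrees with that of $\clambda$ after the core-adjustment by which $\clambda$ is obtained from $\tkappa$; more precisely, by Lemma \ref{L:region}(2c) $\clambda$ has no bead movement starting in $\mathcal{R}$, while by Lemma \ref{L:region}(1a) all the bead movements of $\eptl{j}$ starting in $\mathcal{R}$ are precisely those indexed by $\internal{\flambda}$. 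On runners $\ne a-1,a$ the two abaci literally coincide, and on runners $a-1,a$ they coincide below and above $\mathcal{R}$ by parts (1b),(1c),(2a),(2b) of Lemma \ref{L:region}.

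The first step is to set up the bijection of bead movements: I would show that the order-preserving bijection $\bm(\eptl{j}) \to \bm(\clambda)$ restricts to a bijection from the external bead movements of $\eptl{j}$ (those indexed by $\external{\flambda}$) onto all of $\bm(\clambda)$, and that under this bijection the $r$-th bead movement $(b'_r;q'_r)$ of $\eptl{j}$ with $r \in \external{\flambda}$ corresponds to the $\zeta_{\flambda}(r)$-th bead movement of $\clambda$, with the \emph{same} associated bead position $b'_r$ (modulo $e$) and the same runner. This is really just Lemma \ref{L:projection}(3) together with the definition of $\zeta_\flambda$; indeed part (3) already tells us that $\pi_\flambda$ carries the modified basis vectors across, and since the modified basis vector $\sb^{\eptl j}_r$ records exactly whether $b'_r$ is the bottom bead of its runner and whether $(b'_r;q'_r)$ is its final movement, matching modified basis vectors forces matching of these two attributes. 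The one subtlety is the runner $a$ (and $a-1$): here one must check that the bottom bead of runner $a$ of $\clambda$ and its final bead movement correspond, under $\zeta_\flambda$, to the bottom bead and final bead movement \emph{among the external ones} of $\eptl{j}$ on runner $a$; this follows from Lemma \ref{L:region}(1b),(2a), since below $\mathcal{R}$ the configurations are identical and the internal movements sit entirely inside $\mathcal{R}$.

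The second step is then a direct comparison. Fix $s,t \in \external{\flambda}$. If they do not lie on the same runner in $\eptl{j}$, then by step one they also do not in $\clambda$ (after relabelling), and both $\succeq$ relations fail; likewise if they lie on distinct runners of $\clambda$. If they lie on a common runner $c$, then by step one this runner is $c$ in $\clambda$ too, and the entire sequence of external bead movements of $\eptl j$ on runner $c$ is order-isomorphic to the full sequence of bead movements of $\clambda$ on runner $c$, with the position of the final movement of the bottom bead preserved. Since $\succeq$ on a single runner is determined purely by this linearly-ordered data — being weakly above or weakly below the distinguished final-bottom movement — we conclude $s \succeq_{\eptl{j}} t \iff \zeta_\flambda(s) \succeq_{\clambda} \zeta_\flambda(t)$, as required.

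I expect the main obstacle to be the careful handling of runner $a$: the internal coordinates $\internal{\flambda}$ are interleaved among the external ones on runner $a$ (they all lie inside $\mathcal{R}$, which is `in the middle' of that runner for some $\eptl{j}$), so one must verify that deleting the internal bead movements does not change which of the surviving bead movements is the final movement of the bottom bead, nor the relative $\succeq$-position of two external movements. This is where Lemma \ref{L:region}(1a) (the internal bead movements all start inside $\mathcal{R}$, hence strictly above the bottom positions) and parts (1b),(1c),(2a) are used; once one observes that the bottom bead of runner $a$ and its final movement always sit strictly below $\mathcal{R}$ in $\eptl{j}$ (equivalently, below position $\y-1$), the interleaving becomes harmless and the comparison goes through. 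Parts (1)–(3) of the lemma, already assumed, do most of the heavy lifting, so this last part is essentially a formal consequence of the bead-movement dictionary they establish.
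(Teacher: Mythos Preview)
Your approach is correct and aligns with the paper's: both rest on identifying the external bead movements of $\eptl{j}$ with $\bm(\clambda)$. The paper is more direct, observing that for $j \notin \{0, k+1\}$ one has literally $\bm(\clambda) = \{(b;q) \in \bm(\eptl{j}) : q \notin \mathcal{R}\}$, from which parts (2)--(4) follow at once; the boundary cases $j \in \{0, k+1\}$ are then handled by comparison with $j = 1$. Your route---deducing (4) from (1) and (3) via the fact that the collection of modified basis vectors encodes the $\succeq$-forest---also works.

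Two small corrections. First, there is no bijection $\bm(\eptl{j}) \to \bm(\clambda)$ since the domain has $k+1$ more elements; you mean the order-preserving bijection from the \emph{external} movements of $\eptl{j}$ onto $\bm(\clambda)$. Second, your claim that the bottom bead on runner $a$ and its final movement sit strictly below $\mathcal{R}$ is false: for $j \in [1,k]$ the bottom bead on runner $a$ of $\eptl{j}$ sits at $\y + (k-j)e \in \mathcal{R}$, and its final movement (starting at $\y$) is precisely the internal movement indexed by $i_0$. What you actually need is only the consequence of part (1) that you already noted: on each runner, if $i$ is internal then so is every $t$ with $i \succeq_{\eptl{j}} t$ (in particular the root of that runner is internal whenever any internal index lies on it), so the external indices on each arm form a contiguous tail whose relative $\succeq$-structure survives the excision of the internals intact. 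With that formulation your argument goes through without reference to where the bottom bead sits.
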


\begin{proof}
Part (1) is clear.  The other assertions can be easily checked when $j \ne 0,k+1$, as in this case $\bm(\clambda) = \{ (b;q) \in \bm(\eptl{j}) \mid q \notin \mathcal{R} \}$.  The only non-trivial part is the image under $\pi_{\flambda}$ of $\sb^{\eptl{j}}_r$ when $\sb^{\eptl{j}}_r = \ssb_r - \ssb_s$ with $r \in \external{\flambda}$ and $s \in \internal{\flambda}$.  In this instance, the $r$-th bead movement of $\eptl{j}$ starts at either $\y -e$ or $\y+(k+1)e-1$, and the $s$-th bead movement of $\eptl{j}$ is the final bead movement of the bottom bead on runner $a$ or $a-1$, starting at $\y$ or $\y-1+ke$.  In either case, this $r$-th bead movement becomes the final bead movement of the bottom bead on that runner when the unique removable bead of $\eptl{j}$ on runner $a$ is moved to its preceding empty position on runner $a-1$ to obtain $\clambda$.  Thus $\pi_{\flambda}(\sb_r^{\eptl{j}}) = \ssb_r = \sb_{\zeta_{\flambda}(r)}^{\clambda}$.  Subsequently, we can easily obtain the assertions for $j = 0, k+1$ by comparing them to those for, say, $j=1$.
\end{proof}

The next lemma shows that, under certain conditions, the change of bead movements when getting from a partition in $\cb$ that generates a hook-quotient exceptional family to a partition one modified basis vector away, do not `cross' the region $\CR$.

\begin{lemma} \label{L:emptyintersectionwithregionlambda}
Let $\clambda,\cmu \in \cb$, and $r \in [1,\w]$.  Suppose that:
\begin{itemize}
\item $\clambda$ is a hook-quotient partition,
\item $\z(\cmu) = \z(\clambda) + \sb_r^{\clambda}$,
\item $\clambda$ generates a hook-quotient exceptional family $\flambda$,
\item $\z(\eptl{c}) + \sb_{\xi_{\flambda}(r)}^{\flambda}$ is $1$-increasing for some $c \in [0,k+1]$.
\end{itemize}
For each $s \in [1,\w]$, let $(b_s;q_s)$ (resp.\ $(b'_s;q_s')$) be the $s$-th bead movement of $\clambda$ (resp.\ $\cmu$).  Then $(\bigcup_{s=1}^{\w} [q_s,\,q'_s]) \cap \mathcal{R} = \emptyset$.
\end{lemma}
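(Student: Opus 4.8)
The plan is to show that the bead operation producing $\cmu$ from $\clambda$, as described in Proposition~\ref{P:movealongdescription}, never needs to slide a bead across the barrier region $\mathcal{R}$. The key is to apply Proposition~\ref{P:movealongdescription} directly: writing $g_r = \max\{x <_e q_r \mid x \notin \beta(\clambda)\}$ and $\sigma$ for the partition with $\beta(\sigma) = \beta(\clambda)\cup\{g_r\}\setminus\{b_r\}$, we have $\beta(\cmu) = \beadoperation{q_r}^{k,l}(\beta(\sigma))$ with $k = (q_r-g_r)/e$ and $l = (b_r-q_r)/e$, and by part (2a) of that proposition $q'_s \in (q_s, q_s+e]$ for each $s \succeq_{\clambda} r$ (with the usual exceptional case when $(b_r;q_r)$ is the sole bead movement of the bottom bead of its runner), while $q'_s = q_s$ for $s \not\succeq_{\clambda} r$. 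So $\bigcup_s [q_s,q'_s]$ is contained in $\{q_r\} \cup \bigcup_{s \succeq_{\clambda} r}(q_s, q_s+e]$, and the whole question reduces to understanding the runners on which the bead movements $(b_s;q_s)$ with $s \succeq_{\clambda} r$ lie, together with their starting positions.

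First I would use the hypothesis that $\z(\eptl{c}) + \sb^{\flambda}_{\xi_{\flambda}(r)}$ is $1$-increasing to locate $r$. By Lemma~\ref{L:projection}(3), $\pi_{\flambda}(\sb^{\eptl{c}}_{\xi_{\flambda}(r)}) = \sb^{\clambda}_r$ (since $\xi_{\flambda}(r) \in \external{\flambda}$), and by Lemma~\ref{L:projection}(2) $\pi_{\flambda}(\z(\eptl{c})) = \z(\clambda)$; moreover $r$ (equivalently $\xi_{\flambda}(r)$) is an \emph{external} coordinate of $\flambda$, so the $r$-th bead movement $(b_r;q_r)$ of $\clambda$ corresponds, under the correspondence of bead movements between $\clambda$ and $\eptl{c}$, to a bead movement of $\eptl{c}$ lying on a runner \emph{other than} $a$ and $a-1$ if $b_r \not\equiv_e a,a-1$, and if $b_r \equiv_e a-1$ then by Lemma~\ref{L:region}(2a) together with Lemma~\ref{L:region}(1a), $(b_r;q_r)$ starts strictly above (in abacus order) the region $\mathcal{R}$ on runner $a-1$, i.e. $q_r <_e \y-1$. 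The point of invoking the $1$-increasing hypothesis is exactly to rule out the degenerate configurations: if the starting position $q_r$ were inside or just below $\mathcal{R}$ on runner $a-1$, Lemma~\ref{L:region}(2a,2c) would force a bead at $q_r$ or a forbidden bead configuration incompatible with $1$-increasingness of $\z(\eptl{c}) + \sb^{\flambda}_{\xi_{\flambda}(r)}$ via Lemma~\ref{lemma:shiftby3} and Lemma~\ref{L:0-increasing}/Lemma~\ref{lemma:1unram}.

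Next I would combine this with Lemma~\ref{L:region}(2a,2b) which pins down $\beta(\clambda)$ near $\mathcal{R}$: every position $x \geq_e \y$ is unoccupied in $\clambda$, every position $x <_e \y-1+ke$ is occupied except for a single one, and $\clambda$ has \emph{no} bead movements starting at any position in $\mathcal{R}$ (Lemma~\ref{L:region}(2c)). Consequently, for each $s \succeq_{\clambda} r$ the starting position $q_s$ lies on the same runner as $q_r$ (runner $a-1$ or a runner $\ne a,a-1$) and satisfies $q_s \leq_e q_r$ or $q_s >_e q_r$ according to the usual description, but in \emph{no} case does $q_s \in \mathcal{R}$; and since $q'_s \leq q_s + e$, the interval $[q_s, q'_s]$ stays within a single $e$-block on a runner disjoint from the two runners $a, a-1$ carrying $\mathcal{R}$, \emph{unless} $b_r \equiv_e a-1$, in which case all of $q_s, q'_s$ satisfy $q'_s \leq_e \y-1$ (using that the bead operations $\beadoperation{q_r+fe}$ only move beads down by at most one row, by Proposition~\ref{P:movealongdescription}(2a) and (4), so they cannot reach into $\mathcal{R}$ from above). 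Gluing these together gives $(\bigcup_s[q_s,q'_s]) \cap \mathcal{R} = \emptyset$.

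\textbf{Main obstacle.} The delicate point is the case $b_r \equiv_e a-1$ where $(b_r;q_r)$ genuinely lives on one of the two runners bounding $\mathcal{R}$: here I must rule out that the sequence of bead operations $\beadoperation{q_r}^{k,l}$ pushes a bead downward \emph{into} $\mathcal{R}$ from the region above it on runner $a-1$. This is where the hypothesis that $\z(\eptl{c}) + \sb^{\flambda}_{\xi_{\flambda}(r)}$ is $1$-increasing must be used essentially, rather than just $0$-increasing: it forces the relevant $\z$-values to be strictly separated, which via Lemma~\ref{L:omnibus}(1)--(3) (applied through Proposition~\ref{P:movealongdescription}) bounds each $d_i$ tightly enough — $d_i \in (q_{s_i}, q_{s_i}+e]$ — that the moved beads stay strictly above $\y-1$, hence clear of $\mathcal{R}$. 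Carefully checking that the $e$-block $(q_{s_i}, q_{s_i}+e]$ containing $d_i$ does not meet $\mathcal{R}$, using the explicit description of $\beta(\clambda)$ near $\mathcal{R}$ from Lemma~\ref{L:region}(2), is the one genuinely computational step; everything else is bookkeeping with the correspondences of Lemma~\ref{L:projection}.
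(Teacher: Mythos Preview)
Your argument has a fundamental gap: you treat the interval $[q_s,q'_s]$ as if it lived on a single runner, but it is an interval of \emph{integers}. Even when $q_s$ lies on a runner $j\notin\{a-1,a\}$, the interval $(q_s,q_s+e]$ contains exactly one integer on runner $a-1$ and one on runner $a$, and the whole point of the lemma is that neither of these falls in $\mathcal{R}$. Your sentence ``the interval $[q_s,q'_s]$ stays within a single $e$-block on a runner disjoint from the two runners $a,a-1$'' is therefore not a valid deduction; it is essentially a restatement of what must be proved. (Your case analysis is also incomplete: $q_r\equiv_e a$ is possible, since by Lemma~\ref{L:region}(2a) the bead movements of $\clambda$ on runner $a$ start at positions $\y-e,\y-2e,\dotsc$, strictly below $\mathcal{R}$; and for $q_r\equiv_e a-1$ the bead movements start at positions $\geq_e\y-1+(k+1)e$, i.e.\ \emph{below} the region in abacus order, not above.)

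The paper's proof proceeds by contradiction and makes essential quantitative use of the $1$-increasing hypothesis, in a way your sketch does not. Assuming some $x\in(q_t,q'_t]\cap\mathcal{R}$ with $x\equiv_e a-1$, one locates the internal coordinate $j\in\internal{\flambda}$ whose bead movement in $\eptl{c}$ starts at $x$ or $x+1$, and computes $\z_{\eptl{c}}(\bar x)$ in terms of $\z_{\eptl{c}}(q_t)$ using Lemma~\ref{L:region}(2) and Proposition~\ref{P:movealongdescription}(2b). Writing $(m_1,\dotsc,m_w)=\z(\eptl{c})+\sb^{\flambda}_{\xi_{\flambda}(r)}$, one obtains $m_j\leq m_{\xi_{\flambda}(t)}-\delta_{tr}+\I_{x=\y-1+ke\notin\beta(\clambda)}+\I_{\bar x-e=\y-e\in\beta(\clambda)}$; the $1$-increasing condition then forces $x$ to be one of the two extreme positions $\y-1$ or $\y-1+ke$ (with a specific occupancy constraint). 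Each of these two boundary cases is then eliminated by passing to $t^-$ (the element covered by $t$ in $\succeq_{\clambda}$) and producing two distinct indices with equal $m$-values, contradicting $1$-increasingness. None of this is captured by bounding $q'_s\in(q_s,q_s+e]$ alone.
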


\begin{proof}
Let $\z(\eptl{c}) + \sb_{\xi_{\flambda}(r)}^{\eptl{c}} = (m_1,\dotsc, m_w)$.  Then
$$
\z(\cmu) = \z(\clambda) + \sb_r^{\clambda} = \pi_{\flambda} (\z(\eptl{c}) + \sb_{\xi_{\flambda}(r)}^{\eptl{c}} ) = \pi_{\flambda}(m_1,\dotsc, m_w) = (m_{\xi_{\flambda}(1)},\dotsc, m_{\xi_{\flambda}(\w)})$$
by Lemma \ref{L:projection}(2,3), so that $\z_{\cmu}(q_i') = m_{\xi_{\flambda}(i)}$.

Let $X := \bigcup_{s=1}^{\w} [q_s,\,q'_s]$.  Assume for the sake of contradiction that there exists $x \in X \cap \mathcal{R}$, say $x \in [q_t,\,q_t'] \cap \mathcal{R}$.  Since $q_t \notin \mathcal{R}$ by Lemma \ref{L:region}(2c), we have $q_t < x \leq q_t'$, so that $t \succeq_{\clambda} r$ by Proposition \ref{P:movealongdescription}(1), and hence $\xi_{\flambda}(t) \succeq_{\eptl{c}} \xi_{\flambda}(r)$ by Lemma \ref{L:projection}(4). Also, $q_t \not\equiv_e a-1$ by Lemma \ref{L:region}(2b).  Thus, by replacing $x$ with $x-1$ if necessary, we may assume that $x \equiv_e a-1$.

By Lemmas \ref{L:region}(2(a,b)) and \ref{L:easy} and Proposition \ref{P:movealongdescription}(2b), we have
\begin{align*}
\z_{\clambda}(x) &= \z_{\clambda}(q_t) + \I_{x = \y-1+ke \notin \beta(\clambda)}, \\
\z_{\clambda}(x+1) &= \z_{\clambda}(x) + \I_{x - e+1 = \y-e \in \beta(\clambda)}.
\end{align*}
Let $\eptl{c}$ be obtained by moving the bead of $\clambda$ at $c_{\flambda}$ to $c_{\flambda}+1$, so that $\beta(\eptl{c}) = \beta(\clambda) \cup\{c_{\flambda}+1\} \setminus \{c_{\flambda}\}$.  Thus,
\begin{align*}
\z_{\eptl{c}}(x) &= \z_{\clambda}(x) + \delta_{x,c_{\flambda}} - \delta_{x,c_{\flambda}+e} \\
&= \z_{\clambda}(q_t)+ \delta_{x,c_{\flambda}} - \delta_{x,c_{\flambda}+e} + \I_{x = \y-1+ke \notin \beta(\clambda)} \\
&= \z_{\eptl{c}}(q_t) + \delta_{x,c_{\flambda}} - \delta_{x,c_{\flambda}+e} + \I_{x = \y-1+ke \notin \beta(\clambda)}. 
\end{align*}
Similarly, $\z_{\eptl{c}}(x+1) = \z_{\eptl{c}}(q_t) + \I_{x -e+1= \y-e \in \beta(\clambda)} + \I_{x = \y-1+ke \notin \beta(\clambda)}$.
By Lemma \ref{L:region}(1a), $\eptl{c}$ has a bead movement starting at $x$ or $x+1$, but not both; let this bead movement be the $j$-th one, starting at $\bar{x}$.  Then $j \in \internal{\flambda}$, and $j > \xi_{\flambda}(t)$ since the $\xi_{\flambda}(t)$-th bead movement of $\eptl{c}$ starts at $q_t$ which is less than $x$.  Thus, from the definition $m_j$, we get
\begin{align*}
m_{j} &\leq  \z_{\eptl{c}}(\bar{x}) + \delta_{{j},\xi_{\flambda}(r)} = \z_{\eptl{c}}(\bar{x}) \\
&= \z_{\eptl{c}}(q_t) + \delta_{\bar{x},c_{\flambda}} - \delta_{\bar{x},c_{\flambda}+e} + \I_{x = \y-1+ke \notin \beta(\clambda)} + \I_{\bar{x}-e=\y-e \in \beta(\clambda)}  \\
&\leq \z_{\eptl{c}}(q_t) + \I_{x = \y-1+ke \notin \beta(\clambda)} + \I_{\bar{x}-e=\y-e \in \beta(\clambda)} \\
&= m_{\xi_{\flambda}(t)} - \delta_{\xi_{\flambda}(t),\xi_{\flambda}(r)} + \I_{x = \y-1+ke \notin \beta(\clambda)} + \I_{\bar{x}-e= \y-e  \in \beta(\clambda)} \\
&= m_{\xi_{\flambda}(t)} - \delta_{tr} + \I_{x = \y-1+ke \notin \beta(\clambda)} + \I_{\bar{x}-e= \y-e  \in \beta(\clambda)},
\end{align*}
where the third line follows since $\bar{x} \ne c_{\flambda}$ by Lemma \ref{L:region}(1a), while the fourth line follows since $\xi_{\flambda}(t) \succeq_{\eptl{c}} \xi_{\flambda}(r)$ as noted earlier.  Since $(m_1,\dotsc, m_w)$ is $1$-increasing (and $j < \xi_{\flambda}(t)$), this implies that $t \ne r$, and one of the following two mutually exclusive events must occur: $x = \y-1+ke \notin \beta(\clambda)$ or $\bar{x} -e = \y-e \in \beta(\clambda)$.  From this, we also conclude that $\y-1+\gamma e \notin X \cap \mathcal{R}$ for all $\gamma \in [1,\,k)$.

Let $t^-$ be maximal with respect to $\succeq_{\clambda}$ subject to $t \succ_{\clambda} t^- \succeq_{\clambda} r$.  Then $q_t - q_{t^-} \in \{ \pm e\}$.  By Proposition \ref{P:movealongdescription}(3), $q'_{t^-} - q_{t^-} \geq q'_t - q_t$, and so $q_t < x \leq q'_{t}$ implies that
$$q_{t^-} = q_t - (q_t - q_{t^-}) < x- (q_t - q_{t^-})  \leq q'_t - (q_t - q_{t^-})  \leq q'_{t^-}.$$
Thus $x-(q_t-q_{t^-}) \in (q_{t^-},\, q'_{t^-}] \subseteq X$.
Since $\y-1+\gamma e \notin X \cap \mathcal{R}$ for all $\gamma \in [1,\,k)$, this shows that $q_{t^-} = q_{t} +e$ if $x = \y-1+ke$, and $q_{t^-} = q_{t} -e$ if $x = \y-1$.

Suppose first that $x = \y-1+ke \notin \beta(\clambda)$ (and hence $q_{t^-} = q_t +e$).  By Lemma \ref{L:region}(2b), $\clambda$ has a bead movement, say its $u$-th one, starting at $x+e$, which is the final bead movement of the bottom bead on runner $a-1$.  Since $q_r \equiv_e q_t \not\equiv_e a-1 \equiv_e x+e = q_u$, we have $u \not\succeq_{\clambda} r$, and so $q_u' = q_u$ by Proposition \ref{P:movealongdescription}(1).   Since $q_u = x+e = x-(q_t-q_{t^-}) \in (q_{t^-},\,q'_{t^-}]$, we also have $q_u = q'_{t^-}$ by Proposition \ref{P:movealongdescription}(5b).  Thus $m_{\xi_{\flambda}(u)} = \z_{\cmu}(q_u') = \z_{\cmu}(q_u) = \z_{\cmu}(q'_{t^-}) = m_{\xi_{\flambda}(t^-)}$, contradicting $(m_1,\dotsc, m_w)$ being $1$-increasing.

Now suppose that $\bar{x}-e = \y-e \in \beta(\clambda)$.  Then $x = \y-1$ and hence $q_{t^-} = q_t -e$ as noted earlier. By Lemma \ref{L:region}(2a), $\clambda$ has a bead movement $(\y-e;\y-e)$, which is the only bead movement of the bottom bead on runner $a$.  If this is its $v$-th bead movement, then $v$ is minimal with respect to $\succeq_{\clambda}$.  If $v \succeq_{\clambda} r$, then $r = v$, and so $q_{t^-} = q_t + e$ since $t \succ_{\clambda} t^- \succeq_{\clambda} v$, a contradiction.  Thus $v \not\succeq_{\clambda} r$ and so $q'_v = q_v$ by Proposition \ref{P:movealongdescription}(1).  Since $\y-e-1 = x - e = x - (q_t - q_{t^-}) \in (q_{t^-}, q'_{t^-}]$, and $q'_{t^-} \ne \y-e-1$ by Proposition \ref{P:movealongdescription} (as $\y-e-1 \in \beta(\clambda)$ by Lemma \ref{L:region}(2b) and $q_r \leq_e q_{t^-}$), we have $q_v = \y-e = x-e+1 \in (q_{t^-}, q'_{t^-}]$.  Thus $q_v = q'_{t^-}$ by Proposition \ref{P:movealongdescription}(5b), and hence $m_{\xi_{\flambda}(v)} = \z_{\cmu}(q_v') = \z_{\cmu}(q_v) = \z_{\cmu}(q'_{t^-}) = m_{\xi_{\flambda}(t^-)}$, contradicting $(m_1,\dotsc, m_w)$ being $1$-increasing.  This completes the proof.
\end{proof}

We also have some sort of a converse to Lemma \ref{L:emptyintersectionwithregionlambda}.

\begin{lemma} \label{L:emptyintersectionwithregionmu}
Let $\clambda,\cmu \in \cb$, and $r \in [1,\w]$.  Suppose that:
\begin{itemize}
\item $\clambda$ is a hook-quotient partition,
\item $\z(\cmu) = \z(\clambda) + \sb_r^{\clambda}$,
\item $\cmu$ generates a hook-quotient exceptional family $\fmu$,
\item $\z(\eptm{c})$ for some $c \in [0,k+1]$ is $1$-increasing.
\end{itemize}
For each $s \in [1,\w]$, let $(b_s;q_s)$ (resp.\ $(b'_s;q_s')$) be the $s$-th bead movement of $\clambda$ (resp.\ $\cmu$).  Then $(\bigcup_{s=1}^{\w} [q_s,\,q'_s]) \cap \mathcal{R} = \emptyset$.
%
\end{lemma}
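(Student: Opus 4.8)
The plan is to mirror the argument of Lemma~\ref{L:emptyintersectionwithregionlambda}, but working on the side of $\cmu$ and $\fmu$ rather than $\clambda$ and $\flambda$, and exploiting the fact that the change of bead movements when passing from $\clambda$ to $\cmu$ is governed by Proposition~\ref{P:movealongdescription} regardless of which of the two partitions generates the exceptional family. First I would fix the hook-quotient exceptional family $\fmu$ generated by $\cmu$ and the distinguished $1$-increasing member $\eptm{c}$, and record (via Lemma~\ref{L:projection}(2,3) applied to $\fmu$ and $\eptm{c}$) that $\pi_{\fmu}(\z(\eptm{c})) = \z(\cmu)$ and $\pi_{\fmu}(\sb_{\xi_{\fmu}(i)}^{\eptm{c}})=\sb_i^{\cmu}$ for $i\in\Ext(\fmu)$. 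Since $\clambda$ is a hook-quotient partition with $\z(\clambda) = \z(\cmu) - \sb_r^{\clambda}$ (equivalently, $\cmu$ is one $\clambda$-modified basis vector away from $\clambda$; note this needs care, as the roles of $\clambda$ and $\cmu$ are reversed compared to Proposition~\ref{P:movealongdescription} — see below), I would apply Proposition~\ref{P:movealongdescription} with the roles exchanged, so that now the bead movements of $\clambda$ are obtained from those of $\cmu$ by the appropriate bead operations, and $q_s \leq q_s'$ for $s\succeq_{\cmu} r'$ for the relevant $r'$, with $q_s = q_s'$ otherwise.

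The reversal of roles is the first genuine wrinkle: Proposition~\ref{P:movealongdescription} requires the \emph{target} partition to be $0$-increasing and describes how to obtain it from a hook-quotient \emph{source}. Here $\cmu$ generates a hook-quotient family so $\cmu$ is hook-quotient and $0$-increasing, but we are told $\clambda$ is hook-quotient and we want to relate the two. I would argue that $\z(\clambda) = \z(\cmu) + \sb_i^{\cmu}$ for some $i$ and sign — more precisely, since $\sb_r^{\clambda}$ is a $\clambda$-modified basis vector, $-\sb_r^{\clambda}$ is (up to reindexing) of the form $\sb_i^{\cmu}$ or $-\sb_i^{\cmu}$; one would need to check that the modified basis vectors of $\clambda$ and $\cmu$ are compatible enough that moving from $\cmu$ by a single $\cmu$-modified basis vector lands at $\clambda$. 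In fact the cleanest route is to invoke Proposition~\ref{P:movealongdescription} directly with source $\clambda$ and target $\cmu$ (both satisfy the hypotheses: $\clambda$ hook-quotient, $\cmu$ $0$-increasing), which expresses $\beta(\cmu)$ via $\beadoperation{q_r}^{k,l}$ applied to a small modification of $\beta(\clambda)$, and gives, in its part (5), the set $\mathbf{G}_\lambda^\mu = \bigcup_{s\succeq_\lambda r}((q_s,q'_s)\cup(q_s-e,q'_s-e))$ which contains all the `crossed' positions; the displayed union $\bigcup_s [q_s,q'_s]$ differs from this only by endpoints, which can be handled using Lemma~\ref{L:region}(2c) ($q_s\notin\CR$) and the analogous statement for $\cmu$ together with Lemma~\ref{L:region}(2) for $\fmu$.

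The core of the argument is then the same contradiction scheme as in Lemma~\ref{L:emptyintersectionwithregionlambda}: assume $x\in\CR$ lies in some $[q_t,q_t']$; reduce to $x\equiv_e a-1$ by the structure of $\CR$ and Lemma~\ref{L:region}(2b); translate the hypothesis that $\z(\eptm{c})$ is $1$-increasing, via $\pi_{\fmu}$ and Lemma~\ref{L:projection}, into an inequality $m_j \leq m_{\xi_{\fmu}(t)} - \delta_{tr} + \cdots$ for an internal coordinate $j<\xi_{\fmu}(t)$ forced by a bead movement of $\eptm{c}$ starting at $x$ or $x+1$; deduce that one of two mutually exclusive occupancy conditions at the boundary of $\CR$ holds; and then use Proposition~\ref{P:movealongdescription}(3,5) to propagate a contradiction along the $\succeq$-chain down to a bead movement starting at $\y-e$ or $\y+ke$, producing two bead movements of $\cmu$ with equal $\z$-values and violating $1$-increasingness. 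I expect the main obstacle to be bookkeeping the role-reversal correctly — ensuring that the directions of all inequalities $q_s$ versus $q_s'$, and the meaning of $\succeq_{\clambda}$ versus $\succeq_{\cmu}$, are consistent throughout — since Lemma~\ref{L:emptyintersectionwithregionlambda} is phrased with $\flambda$ generated by $\clambda$ whereas here $\fmu$ is generated by $\cmu$, and one must take care that the relevant hypotheses of Proposition~\ref{P:movealongdescription} (in particular that the target partition is $0$-increasing, which $\cmu$ is) are in force at each invocation.
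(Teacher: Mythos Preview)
Your high-level strategy (contradiction via the $1$-increasingness of $\z(\eptm{c})$, using Proposition~\ref{P:movealongdescription} with source $\clambda$ and target $\cmu$) matches the paper's, but the plan misidentifies the key technical step and would not go through as stated. The asymmetry between this lemma and Lemma~\ref{L:emptyintersectionwithregionlambda} is not just bookkeeping. There, $\clambda$ generates the family, so Lemma~\ref{L:region}(2c) gives $q_t\notin\mathcal{R}$, and one reduces to $x\equiv_e a-1$; the crucial computation then relates $\z_{\clambda}(x)$ to $\z_{\clambda}(q_t)$ via Proposition~\ref{P:movealongdescription}(2b), which describes $\beta(\sigma)$ on $(q_t,q'_t)$. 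Here it is $\cmu$ that generates the family, so Lemma~\ref{L:region}(2c) only gives $q'_t\notin\mathcal{R}$; taking $x_t=\max([q_t,q'_t]\cap\mathcal{R})$ forces $x_t\equiv_e a$ (not $a-1$), and one must relate $\z_{\cmu}(x_t)$ to $\z_{\cmu}(q'_t)=m_{\xi_{\fmu}(t)}$. Proposition~\ref{P:movealongdescription}(2b) controls $\beta(\sigma)$, not $\beta(\cmu)$, on the interval $(x_t,q'_t)$, so the clean equality used in the previous lemma is unavailable.

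The paper supplies the missing idea: write $\z_{\cmu}(q'_t)-\z_{\cmu}(x_t)=\sum_{a=x_t+1}^{q'_t}J_{\cmu}(a)$ with $J_{\cmu}(a)=\I_{a\notin\beta(\cmu),\,a-e\in\beta(\cmu)}-\I_{a\in\beta(\cmu),\,a-e\notin\beta(\cmu)}$, observe via Proposition~\ref{P:movealongdescription}(2b) that $J_{\csigma}(a)=0$ on $(x_t,q'_t)$, and hence that the nonzero contributions to the sum come only from the new beads at $q'_s$ and $q'_s\pm e$ for $s\succeq_{\clambda}r$. A case analysis then shows $\sum J_{\cmu}(a)=|N_t|-1$ for an explicit set $N_t$ of neighbours of $t$ in the $\succeq_{\clambda}$-chain, yielding $m_{\xi_{\fmu}(t)}-m_{j_t}=|N_t|-1+\text{(two boundary indicators)}$ rather than anything involving $\delta_{tr}$. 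The endgame is also different: one takes $t$ \emph{maximal} in $T$ with respect to $\succeq_{\clambda}$, shows any $s\in N_t$ forces $x_t\in\{\y,\y+ke\}$, and disposes of each boundary case separately (the second requiring a secondary application of the same identity at an index $t_1\prec_{\clambda}t$). Your outline contains none of this counting argument, and your claimed reduction to $x\equiv_e a-1$ and the inequality shape ``$m_j\le m_{\xi_{\fmu}(t)}-\delta_{tr}+\cdots$'' are both artifacts of the previous lemma that do not survive the role reversal.
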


\begin{proof}
Let $\z(\eptm{c}) = (m_1,\dotsc, m_w)$.  Then $\z(\cmu) = (m_{\xi_{\fmu}(1)},\dotsc, m_{\xi_{\fmu}(\w)})$ by Lemma \ref{L:projection}(2), so that $\z_{\cmu}(q'_s) = m_{\xi_{\fmu}(s)}$ for all $s \in [1,\,\w]$.

Let $T = \{ s \in [1,\,\w] \mid [q_s,\,q'_s] \cap \mathcal{R} \ne \emptyset\}$.  Suppose for the sake of contradiction that $T \ne \emptyset$.  Take $t \in T$, and let $x_t := \max([q_t,\,q'_t] \cap \mathcal{R})$.  Since $q'_t \notin \mathcal{R}$ by Lemma \ref{L:region}(2c), we have $x_t \equiv_e a$ and $q_t \leq x_t < q'_t$, so that $t \succeq_{\clambda} r$ by Proposition \ref{P:movealongdescription}(1).  Also, by Lemma \ref{L:region}(2a), $q_t' \not\equiv_e a$.

By Lemma \ref{L:region}(1a), $\eptm{c}$ has a bead movement starting at $\bar{x}_t$, where $\bar{x}_t \in \{x_t,x_t-1\}$, and let this bead movement be its $j_t$-th one.  Then $j_t < \xi_{\fmu}(t)$, since $\bar{x}_t < q'_t$.  Let $c_{\fmu} \in \beta(\cmu)$ be such that $\beta(\eptm{c}) = \beta(\cmu) \cup \{ c_{\fmu}+1 \} \setminus \{ c_{\fmu}\}$.  By Lemma \ref{L:region}(1(a,b)), we see that $\bar{x}_t \ne c_{\fmu}$.  Thus
\begin{align*}
m_{j_t} &= \z_{\eptm{c}}(\bar{x}_t) = \z_{\cmu}(\bar{x}_t) - \delta_{\bar{x}_t, c_{\fmu}+e} \\
&= \z_{\cmu}(x_t) - \I_{\bar{x}_t +1 -e = \y-e \in \beta(\cmu)} - \delta_{\bar{x}_t, c_{\fmu}+e} \\
&= \z_{\cmu}(q'_t) - (\sum_{a = x_t + 1}^{q'_t} J_{\cmu}(a)) - \I_{\bar{x}_t +1 -e = \y-e \in \beta(\cmu)} - \delta_{\bar{x}_t, c_{\fmu}+e} \\
&= m_{\xi_{\fmu}(t)} - (\sum_{a = x_t + 1}^{q'_t} J_{\cmu}(a))  - \I_{\bar{x}_t +1 -e = \y-e \in \beta(\cmu)} - \delta_{\bar{x}_t, c_{\fmu}+e}
\end{align*}
where the second line follows from Lemma \ref{L:region}(2a), and
$$J_{\cmu}(a) = \z_{\cmu}(a) - \z_{\cmu}(a-1) = \I_{a \notin \beta(\cmu),\, a-e\in \beta(\cmu)} - \I_{a \in \beta(\cmu),\, a-e\notin \beta(\cmu)}.$$

We claim that $\sum_{a = x_t+1}^{q'_t} J_{\cmu}(a) = |N_t| -1$, where
$$
N_t = \{ s \succ_{\clambda} t \mid q_s - q_t \in \{ \pm e\},\ q'_s + (q_t-q_s) \in (x_t,\, q'_t] \}.
$$
By Proposition \ref{P:movealongdescription}(2b), we see that $J_{\csigma}(a) = 0$ for all $a \in (x_t,\,q'_t)$, where $\beta(\csigma) = \beta(\clambda) \cup \{ g_r \} \setminus \{b_r\}$ and $g_r = \max\{ g <_e q_r \mid g \notin \beta(\clambda)\}$, so that $J_{\cmu}(a) \ne 0$ for $a \in (x_t,\, q'_t]$ only if $a \in \{q'_s -e, q'_s,q'_s+e \}$ for some $s \succeq_{\clambda} r$.  Using Proposition \ref{P:movealongdescription}(2a), we deduce that
$$\{ s \succeq_{\clambda} r \mid \{q'_s -e, q'_s,q'_s+e \} \cap (x_t,\,q'_t] \ne \emptyset \} \subseteq \{ s \succeq_{\clambda} r  \mid q_s \in \{ q_t-e,q_t,q_t+e \}\} = \{t_-, t, t_+ \},$$
where $q_{t_-} = q_t - e$ and $q_{t_+} = q_t + e$ (if $t_-$ or $t_+$ does not exist, then treat any expression involving it as vacuous), and so $J_{\cmu}(a) \ne 0$ for $ a \in (x_t,\,q'_t]$ only if $a \in \{ q'_{t_-} + e, q'_t, q'_t - e, q'_{t_+} - e \}$.
We have the following three cases:
\begin{enumerate}
\item[(i)] $t_+$ exists and $t \succ_{\clambda} t_+$;
\item[(ii)] $t_-$ exists and $t \succ_{\clambda} t_-$;
\item[(iii)] $t_{\pm} \succ_{\clambda} t$.
\end{enumerate}
We shall only verify the claim when (i) holds, the others being similar.  In this case, $q_t',q'_{t_-} \in \beta(\cmu)$, and $q'_t -e \in \beta(\cmu)$ if and only if ($t_-$ exists and) $q'_t -e = q'_{t_-}$ if and only if ($t_-$ exists and) $q'_{t_-} + e \in \beta(\cmu)$.  By Proposition \ref{P:movealongdescription}(2a,3), $q'_t \in (q_t,\,q_t+e]$ and $q_t'-q_t \leq q_{t_+}'-q_{t_+}$.  Thus $q'_t - e \leq q_t \leq x_t$, and $q_{t_+}' -e \geq q_t'$, so that $q'_t-e \notin (x_t,\,q'_t]$ while $q'_{t_+}-e \in (x_t,\,q'_t]$ if and only if $q_{t_+}' -e = q_t'$.
Consequently, 
$$
J_{\cmu}(q'_{t_-}+e) =
\begin{cases}
1, &\text{if } q_t'\ne q_{t_-}' +e; \\
0, &\text{otherwise,}
\end{cases} \quad \text{ and } \quad
J_{\cmu}(q'_t) =
\begin{cases}
0, &\text{if } q'_{t_-} = q_t' - e; \\
-1, &\text{otherwise.}
\end{cases}$$
Therefore $\sum_{a = x_t + 1} ^{q'_t} J(a,\cmu) = \I_{q'_{t_-} +e\in (x_t,q'_t]} - 1$, and the claim is verified in this case.

Assuming the claim, we have
\begin{equation} \label{E:mudifference}
m_{\xi_{\flambda}(t)} - m_{j_t} = |N_t| - 1 + \I_{\bar{x}_t +1 -e = \y-e \in \beta(\cmu)} + \delta_{\bar{x}_t, c_{\fmu}+e} > 0,
\end{equation}
since $(m_1,\dotsc, m_w)$ is $1$-increasing (and $j_t < \xi_{\flambda}(t)$).
Note that $\I_{\bar{x}_t+1-e= \y-e \in \beta(\cmu)} + \delta_{\bar{x}_t,c_{\fmu} + e} \leq 1$ (as $\bar{x}_t+1 = \y$ implies that $\bar{x}_t \ne c_{\fmu}+e$); thus $N_t \ne \emptyset$.

Now, we assume that $t$ is maximal with respect to $\succeq_{\lambda}$ in $T$, and let $s \in N_{t}$. Then $q'_s + (q_t-q_s)> x_t$, so that $q_s \leq x_{t} +(q_s- q_t) <  q'_s$.
Thus if $x_{t} + (q_s - q_t) \in \mathcal{R}$, then $s \in T$, contradicting the maximality of $t$.
Hence $x_{t} + (q_s - q_t) \notin \mathcal{R}$.
This implies, since $q_s -q_{t} \in \{ \pm e\}$, that either $x_{t} = \y$ and $x_{t}+(q_s-q_{t}) = \y-e$, or $x_{t} = \y+ke$ and $x_{t}+(q_s-q_{t}) = \y+ (k+1)e$.

In the former case, we have $\y \in [q_{t},\, q'_{t})$ and $\y-e \in [q_s,\,q'_s)$.
Since $\y \notin \beta(\cmu)$ while $\y-e$ or $\y-2e \in \beta(\cmu)$ by Lemma \ref{L:region}(2a), this forces $\y = q_{t}$ and $\y-e = q_s$ by Proposition \ref{P:movealongdescription}(2b).
But $|\{ x \in \beta(\clambda) \mid x \geq_e \y-e \}| \leq 1$ since $\clambda$ is a hook-quotient partition (and $x \notin \beta(\ckappa)$ for all $x \geq_e \y-e$).  Thus the two bead movements of $\clambda$ starting at $q_s = \y-e$ and $q_{t} = \y$ both belong to the bottom bead on runner $a$ (i.e.\ $b_s = b_t$), so that $t \succ_{\clambda} s$, a contradiction.

In the latter case, we have $\y+ke \in [q_{t},\, q'_{t})$ and $\y+(k+1)e = [q_s,\, q'_s)$.  Similar arguments as before but applied to runner $a-1$ will show that $q_t \geq \y+ke-1$ and $q_t \ne \y+ke-1$, forcing $q_{t} = \y+ke$ and $q_s = \y+(k+1)e$.  Thus, by considering the $e$-core $\ckappa$ of $\clambda$, the bottom bead of $\clambda$ on runner $a$ must have bead movements starting at $\y, \y+e, \dotsc, \y+(k+1)e$, which $\cmu$ does not have by Lemma \ref{L:region}(2a).  This shows that this bead is at $b_r$, and $q_r \leq_e \y$, by Proposition \ref{P:movealongdescription}(1).  Now, $N_{t} = \{s\}$, and so \eqref{E:mudifference} forces $c_{\fmu} = \bar{x}_{t} - e = \y+(k-1)e - 1$.  Let $t_1$ be such that $q_{t_1} = \y+(k-1)e$; then $t \succ_{\clambda} t_1 \succeq_{\clambda} r$.  Since $q_{t_1} \in [q_{t_1},\,q'_{t_1}]$, we have $t_1 \in T$, and $\bar{x}_{t_1} = \y+(k-1)e$ since $c_{\fmu} = \y+(k-1)e$.  Replacing $t$ by $t_1$ in \eqref{E:mudifference}, we get, since $N_{t_1} = \{t\}$, $m_{\xi_{\fmu}(t_1)} - m_{j_{t_1}} = 0$, contradicting $\eptm{c}$ is $1$-increasing.
\end{proof}

Under the hypotheses of Lemmas \ref{L:emptyintersectionwithregionlambda} or \ref{L:emptyintersectionwithregionmu}, we have that one of the partitions generates a hook-quotient exceptional family if and only if the other one does, with the same external coordinates:

\begin{cor} \label{C:externalpara}
Let $\clambda,\cmu \in \cb$ and $r \in [1,\,\w]$.  Suppose that one of the hypotheses in Lemmas \ref{L:emptyintersectionwithregionlambda} and \ref{L:emptyintersectionwithregionmu} hold.  Then both $\clambda$ and $\cmu$ generate exceptional families, denoted $\flambda$ and $\fmu$, with $\external{\flambda} = \external{\fmu}$.  Furthermore, $\z(\eptm{j}) = \z(\eptl{j}) + \sb^{\flambda}_{\xi_{\flambda}(r)}$ for all $j \in [0,\,k+1]$.
\end{cor}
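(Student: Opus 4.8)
The plan is to extract from Lemmas~\ref{L:emptyintersectionwithregionlambda} and \ref{L:emptyintersectionwithregionmu} the single geometric fact that the change of bead movements in passing from $\clambda$ to $\cmu$ does not touch the region $\CR$, and then read off the corollary from that. So first I would record: under either set of hypotheses, $(\bigcup_{s=1}^{\w}[q_s,\,q'_s])\cap\CR=\emptyset$, where $(b_s;q_s)$ and $(b'_s;q'_s)$ are the $s$-th bead movements of $\clambda$ and $\cmu$; this is exactly the conclusion of Lemma~\ref{L:emptyintersectionwithregionlambda} or Lemma~\ref{L:emptyintersectionwithregionmu}. In fact I would want slightly more: that $\beta(\clambda)$ and $\beta(\cmu)$ agree on $\CR$ and on a neighbourhood of $\CR$, i.e.\ on $\bigcup_{\gamma}\{\y-1+\gamma e,\y+\gamma e\}$ together with the positions $\y-e,\y+(k+1)e,\y+(k+1)e-1$. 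This follows from the displayed interval condition via Proposition~\ref{P:movealongdescription}: by Proposition~\ref{P:movealongdescription} the set $\beta(\clambda)\setminus\beta(\cmu)$ and $\beta(\cmu)\setminus\beta(\clambda)$ are contained in $\bigcup_s [q_s,q'_s]\cup\{g_r,b_r\}$ together with the immediate $e$-translates appearing in $\mathbf{G}_\lambda^\mu$, none of which meet $\CR$ by the displayed emptiness, and $g_r,b_r\notin\CR$ since $\clambda$ is hook-quotient with $\clambda^{(a)}$ and $\clambda^{(a-1)}$ not involving the core runners in the way $\CR$ captures; one checks using Proposition~\ref{P:movealongdescription}(5a,5b) and Lemma~\ref{L:region}(2) that the abacus configurations of $\clambda$ and $\cmu$ coincide on $\CR$ and its $e$-shifted border.

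Given that, I would argue that $\cmu$ generates a hook-quotient exceptional family. First, $\cmu$ is a hook-quotient partition: by Lemma~\ref{lemma:shiftby3} applied to $\z(\cmu)=\z(\clambda)+\sb^{\clambda}_r$, if $\clambda$ is $1$-increasing then $\cmu$ is $(1-3)$-increasing, which is not quite enough, so instead I use that $\cmu$ lies in $\cb$ with the hypothesis forcing it to be $1$-increasing (via the $\eptl{c}$ or $\eptm{c}$ being $1$-increasing and Lemma~\ref{L:projection}(2), which says $\clambda$, hence $\cmu$, is at least as increasing), so Lemma~\ref{lemma:1unram}(1) applies. Next, since $\beta(\cmu)$ agrees with $\beta(\clambda)$ on and around $\CR$, and $\clambda$ satisfies the criterion of Lemma~\ref{L:hookquotientfamilycriteria}(1)---namely $\clambda^{(a-1)}=(x)$, $\clambda^{(a)}=(1^y)$, with no removable bead on runner $a$ and exactly $k+2$ addable beads on runner $a-1$, and with $\cheve(\clambda)=0$---the partition $\cmu$ inherits the same local configuration on runners $a-1,a$: it too has $\cmu^{(a-1)}$ of the form $(x')$, $\cmu^{(a)}$ of the form $(1^{y'})$, no removable bead on runner $a$, and $k+2$ addable beads on runner $a-1$. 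Concretely, $\y=\min\{x\equiv_e a\mid x\notin\beta(\tkappa)\}$ is a feature of the $e$-core, hence unchanged; the addable beads on runner $a-1$ of $\cmu$ sit at the same positions as those of $\clambda$ because those positions lie in (the border of) $\CR$; and the shapes of $\cmu^{(a-1)},\cmu^{(a)}$ being the relevant hooks follows because the counts $\z_\cmu(\y)$, $\z_\cmu(\y-1+ke)$ etc.\ are determined by $\beta(\cmu)\cap\CR=\beta(\clambda)\cap\CR$. Hence by Lemma~\ref{L:hookquotientfamilycriteria}, $\cmu$ generates a hook-quotient exceptional family $\fmu$.

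For $\external{\flambda}=\external{\fmu}$: recall that the internal coordinates $\internal{\flambda}=\{i_0,\dotsc,i_k\}$ are by definition the indices of the bead movements of (any member of) the family that start inside $\CR$, equivalently the indices $\gamma$ with the $\gamma$-th bead movement of $\clambda$... no: more precisely, by Lemma~\ref{L:region}(2c) the generator $\clambda$ has \emph{no} bead movements starting in $\CR$, and the internal coordinates are determined by comparing $\clambda$ to the $\eptl{j}$; equivalently, $\internal{\flambda}$ is the set of positions $s$ such that $q_s\in\CR$ fails for $\clambda$ but the corresponding slot of $\eptl{0}$ lands in $\CR$. The cleanest route: the external coordinates of a hook-quotient exceptional family generated by $\clambda$ are precisely the indices $s\in[1,\w]$ of the bead movements $(b_s;q_s)$ of $\clambda$, under the order-preserving identification of $\bm(\clambda)$ with the non-$\CR$ part of $\bm(\eptl{0})$. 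Since the bead movements of $\clambda$ and those of $\cmu$ are identified by an order-preserving bijection $s\mapsto s$ that moves no starting position across $\CR$ (the displayed emptiness), this set of "slots avoiding $\CR$" is the same for both, so $\external{\flambda}=\external{\fmu}$, i.e.\ $\xi_{\flambda}=\xi_{\fmu}$.

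Finally, the formula $\z(\eptm{j})=\z(\eptl{j})+\sb^{\flambda}_{\xi_{\flambda}(r)}$ for all $j\in[0,k+1]$: by Lemma~\ref{lemma:paraexceptional}(1) it suffices to establish this for one value of $j$, say $j=0$, since $\z(\eptl{j})-\z(\eptl{0})$ and $\z(\eptm{j})-\z(\eptm{0})$ are both equal to $-\pb{\flambda}_{(k+1-j,k+1]}=-\pb{\fmu}_{(k+1-j,k+1]}$ (the $\pb{}_\bullet$ vectors depend only on $\internal{\flambda}=\internal{\fmu}$, which we have just shown coincide). For $j=0$ we use Lemma~\ref{L:projection}(2,3): $\pi_{\flambda}(\z(\eptl{0}))=\z(\clambda)$, $\pi_{\flambda}(\z(\eptm{0}))=\pi_{\fmu}(\z(\eptm{0}))=\z(\cmu)=\z(\clambda)+\sb^{\clambda}_r$, and $\pi_{\flambda}(\sb^{\eptl{0}}_{\xi_{\flambda}(r)})=\sb^{\clambda}_{\zeta_\flambda(\xi_\flambda(r))}=\sb^{\clambda}_r$ (provided $\xi_\flambda(r)\in\external{\flambda}$, which holds as $r\in[1,\w]$). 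So $\pi_{\flambda}(\z(\eptm{0}))=\pi_{\flambda}(\z(\eptl{0})+\sb^{\flambda}_{\xi_\flambda(r)})$. To upgrade this from an equality of $\pi_\flambda$-images to an equality in $\mathbb{Z}^w$, I observe that both $\z(\eptm{0})$ and $\z(\eptl{0})+\sb^{\flambda}_{\xi_\flambda(r)}$ agree on the internal coordinates $\internal{\flambda}=\internal{\fmu}$: the internal coordinates of $\z(\eptm{0})$ and $\z(\eptl{0})$ are determined by the widths of bead movements inside $\CR$, and these widths are computed from $\beta(\cmu)\cap\CR=\beta(\clambda)\cap\CR$ and so coincide, while $\sb^{\flambda}_{\xi_\flambda(r)}$ is supported on $\external{\flambda}$ when $\xi_\flambda(r)$ lies in the external part (which it does) together with possibly one internal coordinate; but by Lemma~\ref{L:projection}(1) $i\not\succeq_{\eptl{j}}x$ for $i\in\internal{\flambda}$, $x\in\external{\flambda}$, forcing $\sb^{\eptl{0}}_{\xi_\flambda(r)}=\ssb_{\xi_\flambda(r)}$ or $\ssb_{\xi_\flambda(r)}\pm\ssb_{x'}$ with $x'\in\external{\flambda}$, so $\sb^{\flambda}_{\xi_\flambda(r)}$ has zero internal component. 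Since two vectors of $\mathbb{Z}^w$ agreeing on $\internal{\flambda}$ and on $\external{\flambda}$ (the latter by the equality of $\pi_\flambda$-images) are equal, we conclude.

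The main obstacle I anticipate is not any single deep step but the careful bookkeeping in showing that $\beta(\clambda)$ and $\beta(\cmu)$ genuinely agree on all of $\CR$ \emph{and} its $e$-shifted boundary, and that this propagates correctly to the claim $\internal{\flambda}=\internal{\fmu}$ and to the agreement on internal coordinates used in the last paragraph; all the ingredients (Proposition~\ref{P:movealongdescription}, Lemma~\ref{L:region}, Lemma~\ref{L:projection}) are available, but one must chase the interval/boundary conditions precisely to be sure no relevant position is missed. Everything else is a formal consequence of Lemmas~\ref{L:emptyintersectionwithregionlambda}, \ref{L:emptyintersectionwithregionmu}, Lemma~\ref{L:hookquotientfamilycriteria} and Lemma~\ref{lemma:paraexceptional}(1).
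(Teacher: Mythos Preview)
Your overall strategy matches the paper's: extract the disjointness $(\bigcup_s[q_s,q'_s])\cap\CR=\emptyset$ from the two lemmas, deduce that both $\clambda$ and $\cmu$ generate hook-quotient exceptional families with the same internal/external coordinates, reduce the displayed formula to $j=0$ via Lemma~\ref{lemma:paraexceptional}(1), and then verify it coordinate by coordinate, using Lemma~\ref{L:projection} for the external part. Up to that point your sketch is essentially what the paper does (modulo some loose phrasing about $\beta(\clambda)$ and $\beta(\cmu)$ agreeing ``on and around $\CR$'', which is recoverable).

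The gap is in your treatment of the internal coordinates. You make two claims, both false in general:
\begin{itemize}
\item that $\sb^{\flambda}_{\xi_{\flambda}(r)}$ has zero component on $\internal{\flambda}$;
\item that the $i_\gamma$-th coordinates of $\z(\eptl{0})$ and $\z(\eptm{0})$ coincide.
\end{itemize}
For the first: you invoke Lemma~\ref{L:projection}(1), which says $i\not\succeq_{\eptl{j}} x$ for $i$ internal and $x$ external. But the companion index $s$ in $\sb^{\eptl{0}}_{x}=\ssb_x-\ssb_s$ is the immediate $\succeq$-predecessor of $x$, i.e.\ satisfies $s\prec_{\eptl{0}} x$; Lemma~\ref{L:projection}(1) does not exclude this. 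Concretely, when the $\xi_\flambda(r)$-th bead movement of $\eptl{0}$ starts at $\y-e$ (runner $a$) or $\y+(k+1)e-1$ (runner $a-1$), the predecessor is the adjacent bead movement inside $\CR$, hence internal; this is exactly the ``non-trivial part'' flagged in the proof of Lemma~\ref{L:projection}(3). For the second: the value $\z_{\eptl{0}}(\q_\gamma)=|(\q_\gamma-e,\q_\gamma]\setminus\beta(\eptl{0})|$ depends on $\beta(\clambda)$ at $e$ consecutive positions, most of which lie \emph{outside} $\CR$ and outside runners $a,a-1$; since $\beta(\clambda)$ and $\beta(\cmu)$ differ at $g_r,b_r$ and the various $q'_s,q'_s-e$, the internal coordinates need not agree. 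The paper computes this difference explicitly (the terms $n_\gamma$, $\I_{g_r\in(\q_\gamma-e,\q_\gamma]}$, $\I_{b_r\in(\q_\gamma-e,\q_\gamma]}$) and shows it collapses to $-\delta_{i_\gamma,(\xi_\flambda(r))^-}$, which exactly matches the internal component of $\sb^{\flambda}_{\xi_\flambda(r)}$. Your two errors do cancel, but that cancellation is the content of the proof and has to be checked; it does not come for free from $(\bigcup_s[q_s,q'_s])\cap\CR=\emptyset$.
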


\begin{proof}
By Lemmas \ref{L:emptyintersectionwithregionlambda} and \ref{L:emptyintersectionwithregionmu}, we have
\begin{equation} \label{E:emptyintersectionwithregion}
\left(\bigcup_{s=1}^{\w} [q_s,\,q_s']\right) \cap \CR = \emptyset
\end{equation}
where $(b_s;q_s)$ (resp.\ $(b_s';q_s')$) is the $s$-th bead movement of $\clambda$ (resp.\ $\cmu$).  As the hypotheses of Lemmas \ref{L:emptyintersectionwithregionlambda} and \ref{L:emptyintersectionwithregionmu} ensure $\cmu$ is $1$-increasing, it follows that if one of $\clambda$ and $\cmu$ generate a hook-quotient exceptional family, so will the other, with $\external{\flambda} = \external{\fmu}$, by Lemmas \ref{L:hookquotientfamilycriteria} and \ref{L:region}(2).

For the last assertion, it suffices to show that $\z(\eptm{0}) = \z(\eptl{0}) + \sb^{\flambda}_{\xi_{\flambda}(r)}$, since if this holds, then
$$
\z(\eptm{j}) = \z(\eptm{0}) - \mathbf{e}_{i_{k+1-j}} = \z(\eptl{0}) - \mathbf{e}_{i_{k+1-j}} + \sb^{\flambda}_{\xi_{\flambda}(r)} = \z(\eptl{j})+ \sb^{\flambda}_{\xi_{\flambda}(r)} $$ for all $j \in [1,\ k+1]$ by Lemma \ref{lemma:paraexceptional}(1), where $\internal{\flambda} = \{i_0,\dotsc, i_k\} = \internal{\fmu}$.

Let $(\tilde{b}_i;\tilde{q}_i)$ (resp.\ $(\tilde{b}'_i;\tilde{q}'_i)$) be the $i$-th bead movement of $\eptl{0}$ (resp.\ $\eptm{0}$).  We need to show that
\begin{equation} \label{E:eptlm}
\z_{\eptm{0}}(\tilde{q}'_i) = \z_{\eptl{0}}(\tilde{q}_i) + \delta_{i,\xi_{\flambda}(r)} - \delta_{i,(\xi_{\flambda}(r))^-},
\end{equation}
where $(\xi_{\flambda}(r))^- = \max_{\succeq_{\eptl{0}}} \{ n \in [1,\,w] \mid n \prec_{\eptl{0}} \xi_{\flambda}(r) \}$.

By Lemma \ref{L:projection},
$$\z_{\eptm{0}}(\tilde{q}'_{\xi_{\fmu}(s)}) = \z_{\cmu}({q}'_{s}) = \z_{\clambda}(q_s) + \delta_{sr} - \delta_{s,r^-} = \z_{\eptl{0}}(\tilde{q}_{\xi_{\flambda}(s)}) + \delta_{\xi_{\flambda}(s),\xi_{\flambda}(r)} - \delta_{\xi_{\flambda}(s),\xi_{\flambda}(r^-)}
$$
for all $s \in [1,\,\w]$, where $r^- = \max_{\succeq_{\clambda}} \{ t \in [1,\,\w] \mid t \prec_{\clambda} r \}$.  Thus, \eqref{E:eptlm} holds for all $i \in \external{\flambda}$, except possibly when $i = \xi_{\flambda}(r^-) \ne (\xi_{\flambda}(r))^-$.  But as $\xi_{\flambda}(r^-) \ne (\xi_{\flambda}(r))^-$ only when $r^-$ is undefined and $(\xi_{\flambda}(r))^- \in \internal{\flambda}$, \eqref{E:eptlm} indeed holds for all $i \in \external{\flambda}$.

\def\q{\mathfrak{q}}
It remains to consider when $i = i_\gamma \in \internal{\flambda} = \internal{\fmu}$.  Let $\q_{\gamma} = \tilde{q}_{i_{\gamma}}$.  Then $\q_{\gamma} = \tilde{q}'_{i_{\gamma}} = \y+ \gamma e$ by Lemma \ref{L:region}(1a).
We have
\begin{align*}
\z_{\eptm{0}}(\q_{\gamma})
= \z_{\cmu}(\q_{\gamma})
&= \z_{\clambda}(\q_{\gamma})
- \I_{g_r \in (\q_{\gamma} -e,\, \q_{\gamma}]} + \I_{b_r \in (\q_{\gamma} -e,\, \q_{\gamma}]} + n_{\gamma} \\
&= \z_{\eptl{0}}(\q_{\gamma}) 
- \I_{g_r \in (\q_{\gamma} -e,\, \q_{\gamma}]} + \I_{b_r \in (\q_{\gamma} -e,\, \q_{\gamma}]} + n_{\gamma},
\end{align*}
where $g_r = \max\{x <_e q_r \mid x \notin \beta(\clambda) \}$, $n_{\gamma} = |N^-_{\gamma}| - |N_{\gamma}|$ and
\begin{align*}
N_{\gamma} &= \{ s \succeq_{\clambda} r \mid q'_s \in (\q_{\gamma} -e,\, \q_{\gamma}] \}, \\
N^{-}_{\gamma} &= \{ s \succeq_{\clambda} r \mid q'_s -e \in (\q_{\gamma} -e,\, \q_{\gamma}] \}.
\end{align*}
We note the following:
\begin{enumerate}
\item[(i)] $s \in N^-_{\gamma}$ if and only if $s \succeq_{\clambda} r$ and $q_s \in (\q_{\gamma},\, \q_{\gamma}+e-2]$: this is clear by \eqref{E:emptyintersectionwithregion} for $\gamma < k$, and follows from the fact that $q_s' \leq \y+(k+1)e-1$ by (Lemma \ref{L:region}(2) and Proposition \ref{P:movealongdescription}(5)) for $\gamma = k$.
\item[(ii)] $s \in N_{\gamma}$ if and only if $s \succeq_{\clambda} r$ and $q_s \in (\q_{\gamma}-e - \delta_{\gamma,0} , \q_{\gamma})$: this is clear by \eqref{E:emptyintersectionwithregion} for $\gamma > 0$, and follows from the fact that $q_s \geq \y -e$ (by Proposition \ref{P:movealongdescription}(5) and Lemma \ref{L:region}(2)) when $\gamma = 0$.
\item[(iii)] $|N^j_{\gamma}|, |N^{j-}_{\gamma}| \leq 1$: this follows from (i) and (ii), since the difference of two distinct $q_s$'s is a non-zero multiple of $e$.
\end{enumerate}
Since $s \succeq_{\clambda} r$ if and only if $g_r <_e q_s \leq_e b_r$, we thus have
$$
n_{\gamma} = \begin{cases}
1, &\text{if } |N_{\gamma}| = 0, |N^{-}_{\gamma}| = 1 \\
-1, &\text{if } |N_{\gamma}| = 1, |N^{-}_{\gamma}| = 0 \\
0, &\text{otherwise}
\end{cases}
\quad =  \I_{g_r \in (\q_{\gamma} -e,\, \q_{\gamma}-2]} - \I_{b_r \in (\q_{\gamma} -e - \delta_{\gamma,0},\, \q_{\gamma})}.
$$
Hence,
\begin{align*}
\z_{\eptm{0}}(\q_{\gamma})
&= \z_{\eptl{0}}(\q_{\gamma})
- \I_{g_r \in (\q_{\gamma} -e,\, \q_{\gamma}]} + \I_{b_r \in (\q_{\gamma} -e,\, \q_{\gamma}]} + n_{\gamma} \\
&= \z_{\eptl{0}}(\q_{\gamma}) - \I_{\gamma=k, g_r = \y + ke} - \I_{\gamma = 0, b_r = \y-e}
\end{align*}
by Lemma \ref{L:region}(2).
But $i_{\gamma} = (\xi_{\flambda}(r))^-$ if and only if  either ($\gamma = 0$ and $b_r = \y-e$) or ($\gamma=k$ and $g_r = \y-1 + ke$).  Consequently, $\z_{\eptm{0}}(\q_{\gamma}) = \z_{\eptl{0}}(\q_{\gamma}) - \delta_{i_{\gamma},(\xi_{\flambda}(r))^-}$, as desired.
\end{proof}

The next proposition allows us to travel along the `external' modified basis vectors first before the `internal' ones to get to a partition lying in the parallelotope of a member of an exceptional family.

\begin{prop} \label{P:moveinparallelogramtilde}
Let $\tmu\in\tb$ be a $4$-increasing partition, and let $\tlambda \in \tb$ be an exceptional hook-quotient partition.  Suppose that
$\z(\tmu) = \z(\tlambda) + \sb_{\Gamma}^{\tlambda} \in \Pi(\tlambda)$ for some $\Gamma \subseteq [1,\, w]$.
\begin{enumerate}
\item Then $\tlambda$ belongs to a hook-quotient exceptional family $\flambda$, say $\tlambda = \eptl{j}$.
\item There exists a partition $\csigma \in \cb$ such that $\pi_{\flambda}(\z(\tmu))= \z(\csigma)$.
\item The partition $\csigma$ generates a hook-quotient exceptional family $\fsigma$, which satisfies
    \begin{itemize}
    \item $\external{\fsigma} = \external{\flambda}$;
    \item $\z(\tmu)  = \z(\epts{j}) + \sb_{I}^{\epts{j}}$ where $I = \Gamma \cap \internal{\flambda}$;
    \item $\z(\csigma) = \z(\clambda) + \sb_{\zeta_{\flambda}(X)}^{\clambda}$ where $X = \Gamma \cap \external{\flambda}$.
    \end{itemize}
    In particular, $\dist{\eptl{j}}{\tmu} = \dist{\epts{j}}{\tmu} + \dist{\clambda}{\csigma}$.
\end{enumerate}
\end{prop}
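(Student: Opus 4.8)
\textbf{Proof proposal for Proposition~\ref{P:moveinparallelogramtilde}.}

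The plan is to prove the three parts in order, leaning heavily on the machinery already assembled in this section. For part~(1), I would argue as follows. Since $\tmu$ is $4$-increasing and $\z(\tmu) \in \Pi(\tlambda)$, Lemma~\ref{lemma:shiftby3} forces $\tlambda$ to be $1$-increasing; combined with the hypothesis that $\tlambda$ is exceptional, Lemma~\ref{L:exceptional-1-increasing} immediately places $\tlambda$ in a hook-quotient exceptional family $\flambda$, so we may write $\tlambda = \eptl{j}$ for some $j \in [0,k+1]$.

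For part~(2), the key is Lemma~\ref{L:projection}(2): applying $\pi_{\flambda}$ to $\z(\tmu) = \z(\eptl{j}) + \sb_\Gamma^{\eptl{j}}$ and using Lemma~\ref{L:projection}(3), we get $\pi_{\flambda}(\z(\tmu)) = \z(\clambda) + \pi_{\flambda}(\sb_\Gamma^{\eptl{j}}) = \z(\clambda) + \sb_{\zeta_{\flambda}(X)}^{\clambda}$ where $X = \Gamma \cap \external{\flambda}$ (the internal coordinates are killed by $\pi_{\flambda}$ by Lemma~\ref{L:projection}(1,3)). One must check that this vector is actually of the form $\z(\csigma)$ for a genuine partition $\csigma \in \cb$. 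For this I would invoke Theorem~\ref{thm:goodlabels}: it suffices that $\pi_{\flambda}(\z(\tmu)) \in \good = [0,e-1]^{\w}$, which follows because $\z(\tmu)$ has entries in $[0,e-1]$ and $\pi_{\flambda}$ just selects a subsequence of its coordinates. Since $\z(\clambda)$ is $1$-increasing (indeed $\clambda$ is $1$-increasing being a hook-quotient partition generating $\flambda$, or alternatively $\clambda$ is at least as increasing as $\eptl{j}$ by Lemma~\ref{L:projection}(2)), and because a $4$-increasing $\tmu$ projects to something $1$-increasing, Theorem~\ref{thm:goodlabels}(1) gives the required $\csigma$, in the same block $\cb$ as $\clambda$.

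For part~(3), I would proceed by induction on $|X| = |\Gamma \cap \external{\flambda}|$, using Corollary~\ref{C:externalpara} as the inductive step. When $X = \emptyset$, we have $\csigma = \clambda$ and $\fsigma = \flambda$, and the claim $\z(\tmu) = \z(\eptl{j}) + \sb_I^{\eptl{j}}$ is just the hypothesis restricted to $I = \Gamma \cap \Int(\flambda)$ (after noting the $X$-part contributes nothing). For the inductive step, pick $r \in \zeta_{\flambda}(X)$ maximal with respect to $\succeq_{\clambda}$, set $\z(\cnu) = \z(\clambda) + \sb_r^{\clambda}$ for the partition $\cnu$ furnished by Theorem~\ref{thm:goodlabels}; one checks the hypotheses of Lemma~\ref{L:emptyintersectionwithregionlambda} (here the fourth bullet, $\z(\eptl{c}) + \sb_{\xi_{\flambda}(r)}^{\flambda}$ being $1$-increasing for suitable $c$, is the content to verify — it follows since $\z(\eptl{j}) + \sb_{\xi_{\flambda}(r)}^{\eptl{j}}$ is a step towards the $4$-increasing $\z(\tmu)$, so by Lemma~\ref{lemma:shiftby3} and Lemma~\ref{L:getcloser} it is $1$-increasing for the maximal choice). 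Then Corollary~\ref{C:externalpara} gives that $\cnu$ also generates a hook-quotient exceptional family $\fnu$ with $\external{\fnu} = \external{\flambda}$ and $\z(\epn{j}) = \z(\eptl{j}) + \sb_{\xi_{\flambda}(r)}^{\flambda}$ for all $j$; moreover $\sb_s^{\fnu} = \sb_s^{\flambda}$ for $s \in \external{\flambda} \setminus \{\zeta_{\flambda}(r)^{\mathrm{wrong}}\}$... — more carefully, by Proposition~\ref{P:movealongdescription}(1) and the maximality of $r$, the external modified basis vectors not equal to $r$ are unchanged. Applying the induction hypothesis to $\cnu$, $\fnu$ and the smaller set $X \setminus \{\zeta_{\flambda}(r)\}$ then yields $\csigma$ and $\fsigma$ with the stated properties, and the displayed decomposition of $\z(\tmu)$ follows by composing the two steps. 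The distance additivity $\dist{\eptl{j}}{\tmu} = \dist{\epts{j}}{\tmu} + \dist{\clambda}{\csigma}$ is then immediate from $\dist{\eptl{j}}{\tmu} = |\Gamma| = |I| + |X| = \dist{\epts{j}}{\tmu} + |\zeta_{\flambda}(X)| = \dist{\epts{j}}{\tmu} + \dist{\clambda}{\csigma}$, using that $\zeta_{\flambda}$ is injective.

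The main obstacle I anticipate is the careful bookkeeping in the inductive step of part~(3): one must confirm at each stage that the intermediate partition $\cnu$ still generates a hook-quotient exceptional family with the same external coordinate set, that the external modified basis vectors are preserved (so that $\sb_I^{\epn{j}}$ and $\sb_{\zeta_{\flambda}(X \setminus\{\cdot\})}^{\cnu}$ make sense and match up), and that the $1$-increasing hypotheses needed to apply Corollary~\ref{C:externalpara} genuinely hold — these last hinge on $\tmu$ being $4$-increasing (giving a $3$-unit buffer via Lemma~\ref{lemma:shiftby3}) precisely so that all the partitions encountered en route remain $1$-increasing. Verifying that $\Int(\flambda) = \Int(\fnu)$ is preserved under the step (equivalently $\xi_{\flambda} = \xi_{\fnu}$) is exactly what Corollary~\ref{C:externalpara} delivers, so the argument is structurally clean once these hypotheses are in place.
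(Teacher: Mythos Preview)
Your proposal is correct and follows essentially the same route as the paper: part~(1) via Lemma~\ref{lemma:shiftby3} and Lemma~\ref{L:exceptional-1-increasing}, then induction on $|X|$ for parts~(2)--(3), with the inductive step driven by Corollary~\ref{C:externalpara} (which the paper invokes via Lemma~\ref{L:emptyintersectionwithregionlambda}). The only cosmetic difference is that the paper picks the maximal element $r$ upstairs in $X\subseteq[1,w]$ with respect to $\succeq_{\eptl{j}}$ and first applies Lemma~\ref{L:getcloser} to $\eptl{j}$ to produce $\tnu$, then projects down to get $\cnu$; you instead pick $r$ downstairs in $\zeta_{\flambda}(X)\subseteq[1,\w]$ with respect to $\succeq_{\clambda}$ and construct $\cnu$ directly---but by Lemma~\ref{L:projection}(4) these two choices correspond exactly, and Lemma~\ref{L:projection}(1) is precisely what guarantees your $\xi_{\flambda}(r)$ is also maximal in all of $\Gamma$, so that Lemma~\ref{L:getcloser} still supplies the $1$-increasing hypothesis needed for Corollary~\ref{C:externalpara}.
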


\begin{proof}
By Lemma \ref{lemma:shiftby3}, $\tlambda$ is $1$-increasing.  Thus $\tlambda$ belongs to a hook-quotient exceptional family $\flambda$ by Lemma \ref{L:exceptional-1-increasing}.  This proves part (1).

For part (2) and (3) we argue by induction on $n = |X|$ (where $X = \Gamma \cap \external{\flambda}$).
This is clear for $n=0$, with $\fsigma = \flambda$.  For $n>0$, choose $r \in X$ maximal with respect to $\succeq_{\eptl{j}}$.  Then, $r$ is maximal in $\Gamma$ with respect to $\succeq_{\eptl{j}}$ by Lemma~\ref{L:projection}(1).  Thus, by
 Lemma~\ref{L:getcloser},
   $\z(\eptl{j})+\sb_r^{\eptl{j}} = \z(\tnu)$ for some $1$-increasing partition $\tnu$ in $\tb$, and $\z(\tmu) = \z(\tnu) + \sb_{\Gamma \setminus \{r\}}^{\tnu} \in \para(\tnu)$.
By Theorem \ref{thm:goodlabels}, there exists a unique $\cnu \in \cb$ such that $\z(\cnu) = \pi_{\flambda}(\z(\tnu))$.  Then,
$$
\z(\cnu) = \pi_{\flambda}(\z(\tnu)) = \pi_{\flambda}(\z(\eptl{j}) + \sb_r^{\eptl{j}}) = \z(\clambda) + \sb_{\zeta_{\flambda}(r)}^{\clambda}$$ by Lemma \ref{L:projection}(2,3).  Consequently, by Lemma \ref{L:emptyintersectionwithregionlambda}, $\cnu$ generates an exceptional family $\fnu$ with $\external{\fnu} = \external{\flambda}$ 
and $\tnu = \eptn{j}$.
By induction, there exists a hook-quotient exceptional family $\fsigma$ with $\external{\fsigma} = \external{\fnu} = \external{\flambda}$ with $\z(\csigma) = \pi_{\fnu}(\z(\tmu)) = \pi_{\flambda}(\z(\tmu))$, satisfying
\begin{align*}
\z(\tmu) & = \z(\epts{j}) + \sb_{I'}^{\epts{j}}, \\
\z(\csigma) &= \z(\cnu) + \sb_{\zeta_{\fnu}(X')}^{\cnu},
\end{align*}
where
\begin{align*}
I' &= (\Gamma\setminus \{r\}) \cap \internal{\fnu} = \Gamma \cap \internal{\flambda} = I, \\
X' &= (\Gamma \setminus \{r\}) \cap \external{\fnu} = (\Gamma \cap \external{\flambda}) \setminus \{r\} = X \setminus \{r\}.
\end{align*}
Our choice of $r$ ensures that for any $x \in X'$, we have $x \not\succeq_{\eptl{j}} r$ so that $\zeta_{\flambda}(x) \not\succeq_{\clambda} \zeta_{\flambda}(r)$ by Lemma \ref{L:projection}(4), and hence, $\sb_{\zeta_{\fnu}(x)}^{\cnu} = \sb_{\zeta_{\fnu}(x)}^{\clambda} = \sb_{\zeta_{\flambda}(x)}^{\clambda}$ by Proposition \ref{P:movealongdescription}(1).  Thus
\begin{align*}
\z(\csigma) &= \z(\cnu) + \sb_{\zeta_{\fnu}(X')}^{\cnu} = (\z(\clambda) + \sb_{\zeta_{\flambda}(r)}^{\clambda}) + \sb_{\zeta_{\flambda}(X')}^{\clambda} = \z(\clambda) + \sb_{\zeta_{\flambda}(X)}^{\clambda}.
\end{align*}
\end{proof}

We have a converse to Proposition \ref{P:moveinparallelogramtilde}.

\begin{prop} \label{P:impt}
Let $\fsigma$ be a hook-quotient exceptional family, $\clambda \in \cb$ be a hook-quotient partition, $\tilde{\mu} \in \tb$ be $4$-increasing.  Suppose that
\begin{align*}
\z(\tmu) &= \z(\epts{j}) + \sb_I^{\epts{j}}, \\
\z(\csigma) &= \z(\clambda) + \sb_X^{\clambda}
\end{align*}
for some $j$, $I \subseteq \internal{\flambda}$ and $X \subseteq [1,\,\w]$.  Then $\clambda$ generates a hook-quotient exceptional family $\flambda$ with $\external{\flambda} = \external{\fsigma}$, and $$\z(\tmu) = \z(\eptl{j}) + \sb_{I}^{\eptl{j}} +\sb_{\xi_{\flambda}(X)}^{\flambda}.$$
In particular, $\dist{\eptl{j}}{\tmu} = \dist{\epts{j}}{\tmu} + \dist{\clambda}{\csigma}$.
\end{prop}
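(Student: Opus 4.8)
The plan is to prove Proposition~\ref{P:impt} by induction on $n = |X|$, running essentially the reverse of the argument in Proposition~\ref{P:moveinparallelogramtilde}, but now using Lemma~\ref{L:emptyintersectionwithregionmu} in place of Lemma~\ref{L:emptyintersectionwithregionlambda}, since it is $\csigma$ (rather than $\clambda$) that is assumed to generate a hook-quotient exceptional family. The base case $n=0$ is immediate: then $\clambda = \csigma$, $\flambda = \fsigma$, and the identity on $\z(\tmu)$ is exactly the first hypothesis, while the claim about $\dist{}{}$ follows from $\dist{\clambda}{\csigma} = |X| = 0$ together with the definition $\dist{\eptl{j}}{\tmu} = |I|$.

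For the inductive step, assume $n = |X| > 0$. First I would check that $\clambda$ is $1$-increasing: by the given equation $\z(\csigma) - \z(\clambda) = \sb_X^{\clambda} \in \para_0(\clambda)$, and since $\csigma$ (being $1$-increasing, as a member of a hook-quotient exceptional family it is in fact $1$-increasing; alternatively invoke that $\z(\epts{j})$ is $0$-increasing and apply Lemma~\ref{L:projection}(2)) is sufficiently increasing, Lemma~\ref{lemma:shiftby3} gives that $\clambda$ is $1$-increasing, hence a hook-quotient partition by Lemma~\ref{lemma:1unram}(1). Next, pick $r \in X$ maximal with respect to $\succeq_{\clambda}$. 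By Lemma~\ref{L:getcloser} applied to $\clambda$ and $\csigma$ (which is $0$-increasing), there is a $1$-increasing partition $\cnu \in \cb$ with $\z(\cnu) = \z(\clambda) + \sb_r^{\clambda}$, $\sb_s^{\cnu} = \sb_s^{\clambda}$ for all $s \in X \setminus\{r\}$, and $\z(\csigma) = \z(\cnu) + \sb_{X\setminus\{r\}}^{\cnu}$. Now I want to apply Corollary~\ref{C:externalpara} to the pair $(\clambda,\cnu)$ with the chosen $r$: the hypotheses of Lemma~\ref{L:emptyintersectionwithregionmu} are in force here — $\clambda$ is hook-quotient, $\z(\cnu) = \z(\clambda) + \sb_r^{\clambda}$, and $\cnu$ generates a hook-quotient exceptional family (this is where I need an intermediate argument, see below) with some $\z(\eptn{c})$ being $1$-increasing. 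Granting that, Corollary~\ref{C:externalpara} yields that $\clambda$ generates a hook-quotient exceptional family $\flambda$ with $\external{\flambda} = \external{\fnu}$, and $\z(\eptn{j}) = \z(\eptl{j}) + \sb_{\xi_{\flambda}(r)}^{\flambda}$ for all $j$. I then apply the induction hypothesis to the triple $(\fsigma \text{ vs.\ } \fnu, \cnu, \tmu)$: since $X \setminus \{r\}$ has size $n-1$ and $\z(\csigma) = \z(\cnu) + \sb_{X\setminus\{r\}}^{\cnu}$ while $\z(\tmu) = \z(\epts{j}) + \sb_I^{\epts{j}}$ still holds, I obtain that $\cnu$ generates $\fnu$ with $\external{\fnu} = \external{\fsigma}$ and $\z(\tmu) = \z(\eptn{j}) + \sb_I^{\eptn{j}} + \sb_{\xi_{\fnu}(X\setminus\{r\})}^{\fnu}$. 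Combining with the Corollary's relation $\z(\eptn{j}) = \z(\eptl{j}) + \sb_{\xi_{\flambda}(r)}^{\flambda}$, and noting $\external{\flambda} = \external{\fnu} = \external{\fsigma}$, $\xi_{\flambda} = \xi_{\fnu}$, gives $\z(\tmu) = \z(\eptl{j}) + \sb_I^{\eptl{j}} + \sb_{\xi_{\flambda}(X)}^{\flambda}$, where I must check $\sb^{\eptn{j}}_{\xi_{\fnu}(x)} = \sb^{\eptl{j}}_{\xi_{\flambda}(x)}$ for $x \in X\setminus\{r\}$ — which follows from $\sb_s^{\cnu} = \sb_s^{\clambda}$ for $s \neq r$ via Lemma~\ref{L:projection}(3), and the maximality of $r$ ensures no interference. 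Finally, the $\dist{}{}$ statement is additive: $\dist{\eptl{j}}{\tmu} = |I| + |\xi_{\flambda}(X)| = |I| + |X| = \dist{\epts{j}}{\tmu} + \dist{\clambda}{\csigma}$, using that $\sb_I^{\eptl{j}}$, $\sb^{\flambda}_{\xi_{\flambda}(X)}$ involve disjoint index sets (internal versus external coordinates).

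The main obstacle I anticipate is verifying that $\cnu$ generates a hook-quotient exceptional family with some $\z(\eptn{c})$ $1$-increasing, so that Lemma~\ref{L:emptyintersectionwithregionmu}/Corollary~\ref{C:externalpara} actually applies to $(\clambda,\cnu)$. This does not come for free from the induction hypothesis alone — the induction hypothesis is about $(\fsigma,\cnu,\tmu)$ and presupposes $\cnu$ generates an exceptional family. I expect the right way around this is to observe that $\z(\csigma) = \z(\cnu) + \sb_{X\setminus\{r\}}^{\cnu}$ with $\csigma$ generating a hook-quotient exceptional family forces, via Lemma~\ref{L:hookquotientfamilycriteria} (characterizing hook-quotient exceptional families by the shape of $\clambda^{(a-1)},\clambda^{(a)}$) combined with Lemma~\ref{L:region}(2) and Proposition~\ref{P:movealongdescription} (which controls how bead movements move when shifting by one modified basis vector), that $\cnu$ also has the requisite $e$-quotient shape on runners $a-1,a$, hence generates a hook-quotient exceptional family; and since $\cnu$ is $1$-increasing, $\z(\eptn{c})$ is $0$-increasing for $c \notin \{0, k+1\}$ by Lemma~\ref{L:projection}(2), but $1$-increasingness of one of them requires a small separate check (or one restructures the induction to carry "$\cnu$ generates a hook-quotient exceptional family with $\z(\eptn{c})$ $1$-increasing for some $c$" as part of the inductive statement). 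An alternative, cleaner route: strengthen the induction hypothesis so that it also outputs that $\csigma$ (and hence each intermediate $\cnu$) generates a hook-quotient exceptional family in which the relevant $\eptn{c}$ is $1$-increasing — which is in fact available because $\z(\tmu)$ is $4$-increasing and $\z(\eptl{j}) + \sb^{\eptl{j}}_{\xi_{\flambda}(r)}$ differs from it by an element of $\para_0$, so Lemma~\ref{lemma:shiftby3} furnishes the needed increasing-ness. I would formalize this bookkeeping carefully, as it is the crux; the remaining manipulations are routine applications of Lemmas~\ref{L:getcloser}, \ref{L:projection}, \ref{lemma:paraexceptional} and Proposition~\ref{P:movealongdescription}.
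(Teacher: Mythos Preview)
Your overall strategy---induction on $|X|$, Lemma~\ref{L:getcloser} to peel off one $r$, then Lemma~\ref{L:emptyintersectionwithregionmu}/Corollary~\ref{C:externalpara}---matches the paper's, but you have the order of the two key steps reversed, and this is exactly what creates the obstacle you flag. The induction hypothesis does \emph{not} presuppose that $\cnu$ generates an exceptional family: that is the \emph{conclusion} of the proposition applied to $(\fsigma,\cnu,\tmu,X\setminus\{r\})$. The only hypothesis needed is that $\cnu$ be a hook-quotient partition, which you already have since $\cnu$ is $1$-increasing. So apply induction \emph{first}: it yields that $\cnu$ generates $\fnu$ with $\external{\fnu}=\external{\fsigma}$ and $\z(\tmu)=\z(\eptn{j})+\sb_I^{\eptn{j}}+\sb^{\fnu}_{\xi_{\fnu}(X\setminus\{r\})}$. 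Now $\z(\tmu)-\z(\eptn{j})\in\para_0(\eptn{j})$, so Lemma~\ref{lemma:shiftby3} makes $\eptn{j}$ $1$-increasing, and \emph{then} the hypotheses of Lemma~\ref{L:emptyintersectionwithregionmu} (hence Corollary~\ref{C:externalpara}) for the pair $(\clambda,\cnu)$ are satisfied, giving that $\clambda$ generates $\flambda$ with $\external{\flambda}=\external{\fnu}$ and $\z(\eptn{j})=\z(\eptl{j})+\sb^{\flambda}_{\xi_{\flambda}(r)}$. The bookkeeping you describe then goes through.

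A second, smaller gap: you invoke Lemma~\ref{L:getcloser} with target $\csigma$, but that lemma requires the target to be $4$-increasing, not merely $0$-increasing. The paper handles this at the outset by observing $\z(\csigma)=\pi_{\fsigma}(\z(\epts{j}))=\pi_{\fsigma}(\z(\tmu))$ via Lemma~\ref{L:projection}(2,3); since $\pi_{\fsigma}$ extracts a subsequence of the coordinates of the $4$-increasing vector $\z(\tmu)$, $\csigma$ is $4$-increasing. Finally, your opening check that $\clambda$ is $1$-increasing is unnecessary: $\clambda$ is assumed to be a hook-quotient partition in the statement.
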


\begin{proof}
We note first that $\z(\csigma)=\pi_{\fsigma}(\z(\epts{j}))=\pi_{\fsigma}(\z(\tmu))$ by Lemma \ref{L:projection}(2,3).  Hence $\csigma$ is $4$-increasing.

We prove by induction on $n = |X|$.  This is clear for $n=0$.  For $n>0$, choose $r \in X$ maximal with respect to $\succeq_{\clambda}$.  By Lemma~\ref{L:getcloser}, since $\csigma$ is $4$-increasing, we have $\z(\clambda)+\sb_r^{\clambda} = \z(\cnu)$ for some $1$-increasing partition $\cnu$ in $\check{B}$, and $\z(\csigma) = \z(\cnu) + \sb_{X \setminus \{r\}}^{\cnu}$.  By induction, $\cnu$ generates a hook-quotient exceptional family $\fnu$ with $\external{\fnu} = \external{\fsigma}$, and
$$
\z(\tmu) = \z(\eptn{j}) + \sb_{I}^{\eptn{j}} + \sb_{\xi_{\fnu}(X \setminus \{r\})}^{\fnu}.
$$
Thus $\tilde{\nu}^{j}$ is $1$-increasing by Lemma \ref{lemma:shiftby3}, so that by Lemma \ref{L:emptyintersectionwithregionmu}, $\clambda$ generates a hook-quotient exceptional family
$\flambda$ with $\external{\flambda} = \external{\fnu} = \external{\fsigma}$,
and
$\z(\eptl{j}) + \epsilon_{\xi_{\flambda}(r)}^{\flambda} = \z({\tnu}^{j})$.
Our choice of $r$ ensures that, for any $x \in X \setminus \{r\}$, we have $x \not\succeq_{\clambda} r$, so that $\xi_{\flambda}(x) \not\succeq_{\eptl{j}} \xi_{\flambda}(r)$ by Lemma \ref{L:projection}(4), and hence $\sb^{\flambda}_{\xi_{\flambda}(x)} = \sb^{\fnu}_{\xi_{\flambda}(x)} = \sb^{\fnu}_{\xi_{\fnu}(x)}$ by Proposition \ref{P:movealongdescription}(1).  In addition, since $\internal{\flambda} = \internal{\fnu}$, we have $\sb^{\eptl{j}}_i = \sb^{\eptn{j}}_i$ for all $i \in \internal{\flambda}$.  Thus, \begin{align*}
\z(\tmu) &= \z(\eptn{j}) + \sb_{I}^{\eptn{j}} + \sb_{\xi_{\fnu}(X \setminus \{r\})}^{\fnu} \\
&= (\z(\eptl{j}) + \sb_{\xi_{\flambda}(r)}^{\flambda}) + \sb_I^{\eptl{j}} +
\sb_{\xi_{\flambda}(X \setminus \{r\})}^{\flambda} \\
&= \z(\eptl{j}) + \sb_I^{\eptl{j}} +
\sb_{\xi_{\flambda}(X)}^{\flambda}.
\end{align*}
\end{proof}

Next, we study the following question:  when does an exceptional partition lie in the `internal' parallelotope of a member of an hook-quotient exceptional family?

\begin{lemma} \label{L:movealonginternalbasis}
Let $\flambda$ be a hook-quotient exceptional family with $\internal{\flambda} = \{ i_0, i_1,\dotsc, i_k\}$.  Suppose that $\z(\tmu) = \z(\eptl{j}) + \sb_{i_{\gamma}}^{\eptl{j}}$ for some $1$-increasing partition $\tmu \in \tb$, $1$-increasing $\eptl{j} \in \flambda$ and $\gamma \in [0,\,k]$.  The following statements are equivalent:
\begin{enumerate}
\item $j \geq 1$ and $\gamma = k+1-j$.
\item $j \geq 1$ and $\tmu= \eptl{j-1}$.
\item $\tmu$ is exceptional.
\end{enumerate}
\end{lemma}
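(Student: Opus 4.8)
\textbf{Proof plan for Lemma~\ref{L:movealonginternalbasis}.}
The plan is to establish the cycle (1)~$\Rightarrow$~(2)~$\Rightarrow$~(3)~$\Rightarrow$~(1), exploiting the very explicit description of the partitions $\epl{j}$, $\eptl{j}$ in a hook-quotient exceptional family in terms of the abacus data of the generating partition $\clambda$, together with the bead-operation machinery of Section~\ref{S:movealong}. Throughout I would write $\internal{\flambda}=\{i_0<\dots<i_k\}$, recall from Lemma~\ref{L:region}(1a) that the $i_\gamma$-th bead movement of $\eptl{j}$ starts at $\y+\gamma e$ when $\gamma<k+1-j$ and at $\y-1+\gamma e$ when $\gamma\geq k+1-j$, and recall from Lemma~\ref{lemma:paraexceptional}(1) that $\z(\eptl{j})=\z(\eptl{0})-\ssb_{i_{k+1-j}}$ for $j\in[1,k+1]$, so that $\z(\eptl{j-1})-\z(\eptl{j})=\ssb_{i_{k+1-j}}-\ssb_{i_{k+2-j}}=\sb^{\eptl{j}}_{i_{k+1-j}}$ by the explicit formula for $\sb^{\eptl{j}}_{i_\gamma}$ in Lemma~\ref{lemma:paraexceptional}(3).

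\emph{(1)~$\Rightarrow$~(2):} This is essentially a computation already carried out above. If $j\geq 1$ and $\gamma=k+1-j$, then $\sb^{\eptl{j}}_{i_\gamma}=\pb{\flambda}_{\gamma+1}=\ssb_{i_\gamma}-\ssb_{i_{\gamma+1}}$ (using $\gamma\geq j$, which holds since $k+1-j\geq j$ is forced by $1$-increasingness of $\eptl{j}$ and the fact that the $i_\gamma$ sit between runners $a-1$ and $a$ — more carefully, one uses that $\gamma=k+1-j$ is exactly the index $l$ in Definition~\ref{def:modified} for the relevant runner, so the unmodified basis vector occurs at $\gamma=k+1-j$ and hence $\sb^{\eptl{j}}_{i_{k+1-j}}=\ssb_{i_{k+1-j}}-\ssb_{i_{k+2-j}}$ when $k+2-j\leq k$, i.e.\ $j\geq 2$, and $=\ssb_{i_{k+1-j}}$ when $j=1$). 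In all cases $\z(\eptl{j})+\sb^{\eptl{j}}_{i_{k+1-j}}=\z(\eptl{j-1})$ by the displayed identity above, and since $\tmu$ is determined by $\z(\tmu)$ within its block by Theorem~\ref{thm:goodlabels}(1) (both $\tmu$ and $\eptl{j-1}$ being $0$-increasing: $\tmu$ is $1$-increasing by hypothesis, $\eptl{j-1}$ because $\z(\eptl{j-1})$ differs from $\z(\eptl{j})$ by $\ssb_{i_{k+1-j}}-\ssb_{i_{k+2-j}}\in\para_0$, hence $(-2)$-increasing — actually one checks directly it is $\geq 0$), we conclude $\tmu=\eptl{j-1}$.

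\emph{(2)~$\Rightarrow$~(3):} Immediate, since every member of an exceptional family is exceptional by construction (it has a non-zero image under $\chevf$, equivalently $k+1$ removable beads on runner $a$; see the discussion following Lemma~\ref{lemma:nottooexceptional}).

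\emph{(3)~$\Rightarrow$~(1):} This is the main obstacle, and where I expect to spend the bulk of the effort. The strategy is contrapositive: assume $\tmu$ is exceptional, and show that we cannot have $j=0$, nor $j\geq 1$ with $\gamma\neq k+1-j$. Since $\tmu$ is exceptional and $1$-increasing, by Lemma~\ref{L:exceptional-1-increasing} it belongs to a hook-quotient exceptional family $\fmu$, and by Lemma~\ref{L:region}(2) applied to $\tmu$, it has exactly $k+1$ removable beads on runner $a$ (equivalently, exactly one addable bead on runner $a-1$), whereas a non-exceptional partition has $k$ removable beads. I would run the bead-operation analysis of Proposition~\ref{P:movealongdescription} to track precisely how the configuration of beads on runners $a$ and $a-1$ changes when passing from $\eptl{j}$ to $\tmu$ by applying the modified basis vector $\sb^{\eptl{j}}_{i_\gamma}$. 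The key point: $i_\gamma\in\internal{\flambda}$ means the $i_\gamma$-th bead movement of $\eptl{j}$ lives on runner $a$ or $a-1$ inside the region $\CR$; applying $\beadoperation{q}^{k',l'}$ as in Proposition~\ref{P:movealongdescription} either keeps the number of removable beads on runner $a$ constant or changes it by one, and one shows the change occurs (producing an exceptional $\tmu$) if and only if the bead movement being modified is the one that ``separates'' the $a$-part from the $(a-1)$-part of the quotient, i.e.\ exactly the index $\gamma=k+1-j$, and only when $j\geq 1$ (so that such a separating movement exists — when $j=0$ the $(a-1)$-component of the quotient of $\eptl{0}$ is empty and no such internal modification can increase the removable-bead count on runner $a$). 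Concretely I would: (a) dispose of $j=0$ by noting $\epl{0}$ has $\clambda^{(a-1)}$-part empty so all internal bead movements lie on runner $a$, and modifying any of them by $\sb^{\epl{0}}_{i_\gamma}$ keeps the runner-$a$ removable-bead count equal to $k+1$ only if ... — more precisely show it forces $\tmu$ non-exceptional or not $1$-increasing; (b) for $j\geq 1$ and $\gamma<k+1-j$, observe $\sb^{\eptl{j}}_{i_\gamma}=-\pb{\flambda}_\gamma$ or $\pb{\flambda}_{\gamma+1}$ with both endpoints internal, so the bead operation is confined within runner $a-1$ (the movements with $\gamma<k+1-j$ start at $\y+\gamma e$, runner-$a$ positions — wait, need to recheck residues) and hence cannot create the extra removable bead; (c) for $j\geq 1$ and $\gamma>k+1-j$, symmetric argument confined to runner $a$. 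The bookkeeping of residues ($\y\equiv_e a$) and of which of the two runners each $i_\gamma$-movement starts on (governed by the threshold $k+1-j$) is the delicate part; I would organize it by the three cases above and appeal repeatedly to Lemma~\ref{L:region}(1) and Proposition~\ref{P:movealongdescription}(1,2) to pin down the new bead positions, then count removable beads on runner $a$ of $\tmu$ and compare with $k+1$.
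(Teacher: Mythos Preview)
Your cycle $(1)\Rightarrow(2)\Rightarrow(3)\Rightarrow(1)$ and your treatment of $(1)\Rightarrow(2)$ and $(2)\Rightarrow(3)$ match the paper's. The divergence is in $(3)\Rightarrow(1)$.

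The paper does \emph{not} track the removable-bead count on runner~$a$. Instead it uses Lemma~\ref{L:region}(1a) applied to the hook-quotient exceptional family $\fmu$ containing $\tmu$ (supplied by Lemma~\ref{L:exceptional-1-increasing}): any such $\tmu$ must have, for each level $\delta\in[0,k]$, a bead movement starting at $\y+\delta e$ or $\y-1+\delta e$. The paper then runs the analysis of Proposition~\ref{P:movealongdescription} on $\eptl{j}\to\tmu$ and shows that for $\gamma\leq k-j$ (two subcases $\gamma\geq 1$ and $\gamma=0$, the latter using $1$-increasingness to force a gap strictly below $\y+e-1$) and for $\gamma\geq k+2-j$, no bead movement of $\tmu$ lands at the required pair $\{\y+\gamma e,\y-1+\gamma e\}$. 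This forces $\gamma=k+1-j$, hence $j\geq 1$.

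Your proposed invariant has two problems. First, the count is wrong: an exceptional $1$-increasing partition in $\tb$ has exactly \emph{one} removable bead on runner~$a$ (and $k+1$ addable beads on runner~$a-1$), not $k+1$ removable beads; you have the roles of $B$ and $\tb$ reversed. Second, and relatedly, your runner assignments in cases (b)/(c) are swapped: since $\y\equiv_e a$, the movements with $\gamma<k+1-j$ start at $\y+\gamma e$ on runner~$a$, and those with $\gamma\geq k+1-j$ start at $\y-1+\gamma e$ on runner~$a-1$ (you flagged this yourself). More substantively, tracking the removable-bead count through the operations of Proposition~\ref{P:movealongdescription} when you \emph{start} with one removable bead is not covered by Lemma~\ref{L:noremovable} or Corollary~\ref{C:noremovable} (which assume zero removable beads); you would need a new variant. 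The paper's positional criterion via Lemma~\ref{L:region}(1a) sidesteps this entirely and is the cleaner route.
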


\begin{proof}
Since $\z(\eptl{j-1}) - \z(\eptl{j}) = \ssb_{i_{k+1- j}} - \ssb_{i_{k+2-j}} = \pb{\flambda}_{k+2-j} = \sb^{\eptl{j}}_{i_{k+1-j}}$ (where $\ssb_{i_{k+2-j}} = 0$ when $j=1$) by Lemma \ref{lemma:paraexceptional}(1,3), we see that (1) and (2) are equivalent.  Trivially, (2) implies (3).  So suppose that (3) holds.
Then $\tmu$ belongs to a hook-quotient exceptional family $\fmu$ by Lemma \ref{L:exceptional-1-increasing}.  
For each $i \in [1,\,w]$, let $(b_i;q_i)$ (resp.\ $(b'_i;q'_i)$) be the $i$-th bead movement of $\eptl{j}$ (resp.\ $\tmu$).
Then $\beta(\tmu) = \beadoperation{q_r}^{k,l}(\beta(\tsigma))$ where $k = \frac{b_r-q_r}{e}$, $l = \frac{q_r-g_r}{e}$, $\beta(\tsigma) = \beta(\eptl{j}) \cup \{ g_r\} \setminus \{ b_r\}$ and $g_r = \max\{ x <_e q_r \mid x \notin \beta(\eptl{j})\}$ by Proposition \ref{P:movealongdescription}.
By Lemma \ref{L:region}(1a), we have $q_{i_{\gamma}} = \y + \gamma e - \I_{\gamma \geq k+1-j}$ for all $\gamma \in [0,k]$.

Suppose first that $\gamma \leq k-j$.  Then $q_{i_\gamma} = \y+\gamma e \equiv_e a$.
If $\gamma \geq 1$, then $q_s \geq_e q_{i_{\gamma}}$, and hence $q'_s > q_r \geq_e q_{i_{\gamma}}$, for all $s \succeq_{\eptl{j}} i_{\gamma}$, so that $q'_r \notin \{ \y+\gamma e, \y-1+\gamma e \}$ for all $r \in [1,\,w]$, contradicting Lemma \ref{L:region}(1a) for $\tmu$.
On the other hand, if $\gamma = 0$, so that $q_{i_{0}} = \y$, then since $(b_{i_0};q_{i_0})$ is the final bead movement of the bottom bead on runner $a$ of $\eptl{j}$, we have $\sb^{\eptl{j}}_{i_0} = \ssb_{i_0}$, so that $\z(\tmu) = \z(\eptl{j}) + \ssb_{i_0}$ and hence $\z_{\tmu}(q'_r) = \z_{\eptl{j}}(q_r) + \delta_{r,i_0}$ for all $r \in [1,\,w]$.
Since $\tmu$ is $1$-increasing, we have $1 \leq \z_{\tmu}(q'_{i_1}) - \z_{\tmu}(q'_{i_0}) = \z_{\eptl{j}}(q_{i_1}) - \z_{\eptl{j}}(q_{i_0}) -1$, so that $\z_{\eptl{j}}(q_{i_1}) - \z_{\eptl{j}}(q_{i_0}) \geq 2$.
This implies that there exist $x_1,x_2\in (q_{i_0},\,q_{i_1}] \setminus \beta(\eptl{j})$, $x_1 < x_2$, such that $x_1-e,x_2-e \in \beta(\eptl{j})$.
Since $q_{i_1} \in \{ \y+e, \y+e -1\}$, and $|\{ \y,\y-1\} \cap \beta(\eptl{j})| = 1$ by Lemma \ref{L:region}(1b), this shows in fact $x_1 \in (q_{i_0},\,\y+e-2]$, so that $q'_{i_0} = b_{\beta(\tsigma)}(q_{i_0})\leq x_1$ and hence $q'_{i_0} - q_{i_0} \leq x_1 - q_{i_0} \leq e-2$.
Together with Proposition \ref{P:movealongdescription}(3), this shows that $q'_r \notin \{ \y,\y-1\}$ for all $r \in [1,\,w]$, contradicting Lemma \ref{L:region}(1a) for $\tmu$.

Now suppose that $\gamma \geq k+2-j$, so that $q_{i_{\gamma}}, q_{i_{\gamma-1}} \equiv_e a-1$ and $i_{\gamma-1} \succ_{\eptl{j}} i_{\gamma}$.  Then $q_{i_{\gamma}}+1 -e \notin \beta(\eptl{j})$ by Lemma \ref{L:region}(1b), so that $q'_{i_{\gamma}} > q_{i_{\gamma}} + 1$.  Since $\z_{\eptl{j}}(q_{i_{k-1}}) < \z_{\eptl{j}}(q_{i_{k}})$ as $\eptl{j}$ is $1$-increasing, there exists $x \in (q_{i_{\gamma-1}},\,q_{i_{\gamma}}] \setminus \beta(\eptl{j})$ with $x-e \in \beta(\eptl{j})$.  If $x < q_{i_{\gamma}}$, then $q'_{i_{\gamma -1}} \leq x < q_{i_\gamma}$.  On the other hand, if $x = q_{i_{\gamma}}$, then $q_{i_{\gamma}} \notin \beta(\eptl{j})$, and so $q'_{i_{\gamma-1}} < q_{i_{\gamma-1}} + e = q_{i_{\gamma}}$ by Proposition \ref{P:movealongdescription}(4,2a).  Either way, we get $q'_{i_{\gamma-1}} < q_{i_{\gamma}}$, and so $q'_r \notin \{q_{i_{\gamma}}, q_{i_{\gamma}}+1\}$ for all $r \in [1,\,w]$, contradicting Lemma \ref{L:region}(1a) for $\tmu$.

Thus, $\gamma = k+1-j$, and hence $j \geq 1$, so that (1) holds, and we are done.
\end{proof}

\begin{lemma} \label{L:internalparallelogram}
Let $\tmu \in \tb$ be a $4$-increasing partition, and suppose that $\z(\tmu) = \z(\eptl{j}) + \sb^{\eptl{j}}_{I}$ for some member $\eptl{j}$ of a hook-quotient exceptional family $\flambda$ and $I \subseteq \internal{\flambda} = \{ i_0,i_1,\dotsc,i_{k} \}$.  The following statements are equivalent:
\begin{enumerate}
\item $j \geq |I|$ and $I = \{ i_\gamma \mid \gamma \in [k+1-j,k-j+|I|] \}$.
\item $j \geq |I|$ and $\tmu = \eptl{j-|I|}$.
\item $\tmu$ is exceptional.
\end{enumerate}
\end{lemma}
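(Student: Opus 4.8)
The plan is to prove Lemma~\ref{L:internalparallelogram} by induction on $|I|$, using Lemma~\ref{L:movealonginternalbasis} as the base case. Throughout, I will freely use Lemma~\ref{lemma:paraexceptional}, which gives the explicit shapes of $\para_0(\eptl{j})$ and the modified basis vectors $\sb^{\eptl{j}}_{i_\gamma}$ in terms of the vectors $\pb{\flambda}_0,\dotsc,\pb{\flambda}_{k+1}$, and the relation $\z(\eptl{c}) = \z(\eptl{0}) - \ssb_{i_{k+1-c}}$ for $c\in[1,k+1]$. The equivalence of (1) and (2) is a direct computation: if $I = \{i_\gamma \mid \gamma\in[k+1-j,\,k-j+|I|]\}$ with $j\geq|I|$, then by Lemma~\ref{lemma:paraexceptional}(3) each $\sb^{\eptl{j}}_{i_\gamma} = -\pb{\flambda}_\gamma$ for $\gamma < j$, so $\sb^{\eptl{j}}_I = -(\pb{\flambda}_{k+1-j} + \pb{\flambda}_{k+2-j} + \dotsb + \pb{\flambda}_{k-j+|I|}) = \sum_{\gamma=k+1-j}^{k-j+|I|}(\ssb_{i_\gamma} - \ssb_{i_{\gamma-1}})$ (with $\ssb_{i_{k+1-j+m}}$ telescoping); this evaluates to $\ssb_{i_{k-j+|I|}} - \ssb_{i_{k-j}}$, and comparing with $\z(\eptl{j-|I|}) - \z(\eptl{j}) = -\ssb_{i_{k+1-(j-|I|)}} + \ssb_{i_{k+1-j}} = \ssb_{i_{k-j+|I|}} - \ssb_{i_{k-j}}$ by Lemma~\ref{lemma:paraexceptional}(1) (valid since $1\leq j-|I|$ would give the general formula, and $j-|I|=0$ gives $\z(\eptl 0)$, matched by $I=\emptyset$ when... actually here $|I|\geq 1$) confirms $\z(\tmu)=\z(\eptl{j-|I|})$, and then $\tmu = \eptl{j-|I|}$ by Theorem~\ref{thm:goodlabels}. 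Conversely, if $\tmu = \eptl{j-|I|}$ then $\sb^{\eptl{j}}_I$ is forced to equal this telescoping vector, and since the $\sb^{\eptl{j}}_{i_\gamma}$ are (up to sign) a subset of a basis of the span of the $\ssb_{i_\gamma}$, the set $I$ is uniquely determined, giving (1).

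The remaining content is the equivalence with (3). That (2) implies (3) is immediate, since every member $\eptl{c}$ of an exceptional family is exceptional by construction (Section~5.1). The substance is (3) $\Rightarrow$ (1), which I prove by induction on $|I|$; the case $|I| = 1$ is exactly Lemma~\ref{L:movealonginternalbasis}, and $|I|=0$ is vacuous. For $|I| > 1$, suppose $\tmu$ is exceptional and $1$-increasing (which holds by Lemma~\ref{lemma:shiftby3} since $\tmu$ is $4$-increasing and $\z(\tmu) - \z(\eptl j) \in \para_0(\eptl j)$), so by Lemma~\ref{L:exceptional-1-increasing} $\tmu$ belongs to a hook-quotient exceptional family $\fmu$. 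Pick $r \in I$ maximal with respect to $\succeq_{\eptl j}$. By Lemma~\ref{L:getcloser} there is a $1$-increasing $\tnu \in \tb$ with $\z(\tnu) = \z(\eptl j) + \sb^{\eptl j}_r$ and $\z(\tmu) = \z(\tnu) + \sb^{\tnu}_{I\setminus\{r\}}$, with $\sb^{\tnu}_s = \sb^{\eptl j}_s$ for all $s \in I\setminus\{r\}$. I will apply Proposition~\ref{P:moveinparallelogramtilde} with $\Gamma = I \subseteq \internal{\flambda}$ (so $X = \Gamma \cap \external{\flambda} = \emptyset$): this gives a hook-quotient exceptional family $\fsigma$ with $\external{\fsigma} = \external{\flambda}$, $\csigma = \clambda$ (since $X = \emptyset$ forces $\z(\csigma) = \z(\clambda)$), and $\z(\tmu) = \z(\epts j) + \sb^{\epts j}_I$ with $\epts j$ in the same family as $\eptl j$ — in other words, without loss of generality I may take $\tmu$ itself to lie in the parallelotope structure cleanly; more directly, I will apply the induction hypothesis to $\tnu$ after identifying it as a member of $\fmu$ (or of a family sharing $\internal{\flambda}$).

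The key step, and the expected main obstacle, is to pin down $\tnu$: I claim $\tnu$ is exceptional, so that by the inductive hypothesis applied to $\z(\tmu) = \z(\tnu) + \sb^{\tnu}_{I\setminus\{r\}}$ (with $\tnu$ playing the role of the family member), one gets $\tnu = \eptl{j'}$ for some $j'$ and $I\setminus\{r\}$ is a consecutive block of internal coordinates of the correct form; then Lemma~\ref{L:movealonginternalbasis} applied to $\z(\tnu) = \z(\eptl j) + \sb^{\eptl j}_r$ forces $j \geq 1$, $r = i_{k+1-j}$ and $\tnu = \eptl{j-1}$, and combining these with the consecutiveness of $I\setminus\{r\}$ inside $\internal{\eptl{j-1}} = \internal{\flambda}$ yields (1). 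To see that $\tnu$ is exceptional: if not, then $F(\tnu) \ne 0$ is false, i.e.\ $\tnu$ is non-exceptional, so $s_a(\tnu)$ has $\z(s_a(\tnu)) = \z(\tnu)$ and lies in $B$ with no disallowed configuration (Figure~\ref{figure:disallowed-configuration}); but then by Lemma~\ref{lemma:paraexceptional}(5) the parallelotope $\para(\tnu)$ cannot contain $\z(\tmu)$ with $\tmu$ exceptional unless the internal coordinates match up — more carefully, I will argue that $\z(\tmu) - \z(\tnu) = \sb^{\tnu}_{I\setminus\{r\}}$ has all its nonzero entries supported on $\internal{\flambda}$ (since $r, I\setminus\{r\} \subseteq \internal{\flambda}$ and, by the maximality choice and Proposition~\ref{P:movealongdescription}(1), $\sb^{\tnu}_s = \sb^{\eptl j}_s$ which is supported on $\internal\flambda$), whereas an exceptional $\tmu$ reached from a non-exceptional $\tnu$ would require moving a bead across the runner-$a$/runner-$(a-1)$ boundary in a way that changes the count of removable beads on runner $a$, forcing some coordinate outside $\internal\flambda$ to change — a contradiction. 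This boundary-crossing analysis, paralleling the proof of Lemma~\ref{L:movealonginternalbasis} and using Lemma~\ref{L:region}, is where the real work lies; I anticipate it requires tracking, via Proposition~\ref{P:movealongdescription}(2a)–(5b) and the region $\CR$, exactly which bead movements of $\tnu$ start inside $\CR$ and showing $\tmu$ inherits the exceptional bead configuration of Lemma~\ref{L:region}(1c) if and only if $\tnu$ already had it, i.e.\ $\tnu$ is exceptional.
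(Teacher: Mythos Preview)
Your inductive framework and the treatments of $(1)\Leftrightarrow(2)$ and $(2)\Rightarrow(3)$ are correct and match the paper's approach (though the paper obtains $(1)\Leftrightarrow(2)$ more cleanly by the same induction rather than a direct telescoping computation). The gap is in $(3)\Rightarrow(1)$: you correctly isolate the crux as the claim that $\tnu$ (the partition with $\z(\tnu)=\z(\eptl{j})+\sb^{\eptl{j}}_r$, $r\in I$ maximal for $\succeq_{\eptl{j}}$) must be exceptional, but you do not prove it. Your sketched ``boundary-crossing analysis'' is not an argument; the observation that $\sb^{\tnu}_{I\setminus\{r\}}$ is supported on $\internal{\flambda}$ says nothing by itself about whether $\tmu$ acquires a removable bead on runner $a$, and the detour through Proposition~\ref{P:moveinparallelogramtilde} with $X=\emptyset$ is circular (it just returns $\fsigma=\flambda$).

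The paper closes this gap via the contrapositive, using Corollary~\ref{C:noremovable}. Suppose $i_{k+1-j}\notin I$; then the maximal $r\in I$ satisfies $r\ne i_{k+1-j}$, so by Lemma~\ref{L:movealonginternalbasis} the partition $\tnu$ is \emph{not} exceptional, hence has no removable bead on runner~$a$. For every $s\in I\setminus\{r\}$ the $s$-th bead movement of $\tnu$ starts at the same position as that of $\eptl{j}$ (Proposition~\ref{P:movealongdescription}(1)), which lies in $\CR$ and hence on runner $a$ or $a-1$. Corollary~\ref{C:noremovable} then forces $\tmu$ to have no removable bead on runner~$a$, contradicting $\tmu$ exceptional. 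Thus $i_{k+1-j}\in I$; since $i_{k+1-j}$ is $\succeq_{\eptl{j}}$-maximal in $\internal{\flambda}$, one has $\sb^{\eptl{j}}_{i_\gamma}=\sb^{\eptl{j-1}}_{i_\gamma}$ for $\gamma\ne k+1-j$, so $\z(\tmu)=\z(\eptl{j-1})+\sb^{\eptl{j-1}}_{I\setminus\{i_{k+1-j}\}}$, and induction finishes. The tool you were missing is precisely Corollary~\ref{C:noremovable} (built on Lemma~\ref{L:noremovable}), which was established earlier in the paper exactly for this step.
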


\begin{proof}
Note first that $\z(\eptl{j}) + \sb^{\eptl{j}}_{i_{k+1-j}} = \z(\eptl{j-1})$ always (see proof of Lemma \ref{L:movealonginternalbasis}).
Therefore if $i_{k+1-j} \in I$, then $\z(\tmu) - \z(\eptl{j-1}) = \sb_{I\setminus \{i_{k+1-j}\}}^{\eptl{j}}$, so that $\eptl{j-1}$ is $1$-increasing by Lemma \ref{lemma:shiftby3}.
As $i_{k+1-j}$ is maximal in $\internal{\flambda}$ with respect to $\succeq_{\eptl{j}}$ by Lemma \ref{L:region}(1e), we have $\sb^{\eptl{j}}_{i_{\gamma}} = \sb^{\eptl{j-1}}_{i_{\gamma}}$ for all $\gamma \ne k+1-j$ by Proposition \ref{P:movealongdescription}(1).  Thus
\begin{equation} \label{E:inductioneqn}
\z(\tmu) = \z(\eptl{j}) + \sb_I^{\eptl{j}} = \z(\eptl{j-1}) + \sb_{I \setminus \{i_{k+1-j}\}}^{\eptl{j}} = \z(\eptl{j-1}) + \sb_{I \setminus \{i_{k+1-j}\}}^{\eptl{j-1}}.
\end{equation}

We prove by induction on $|I|$, with the base case of $|I| = 1$ following from Lemmas \ref{L:movealonginternalbasis} and \ref{lemma:shiftby3}.  Suppose then that $|I|>1$.

Suppose that (1) holds.  Then we deduce from \eqref{E:inductioneqn} that (2) holds, by induction.

Clearly (2) implies (3).

Suppose that (3) holds.  If $i_{k+1-j} \notin I$, then let $r \in I$ be maximal with respect to $\succeq_{\eptl{j}}$.  By Lemma \ref{L:getcloser}, there exists a $1$-increasing partition $\tnu$ such that $\z(\tnu) = \z(\eptl{j}) + \sb_r^{\eptl{j}}$ and $\z(\tmu) = \z(\tnu) + \sb^{\tnu}_{I \setminus \{r\}}$.  In addition, for each $i \in I \setminus \{r\}$, the $i$-th bead movement of $\eptl{j}$ starts at the same position as that of $\tnu$ by Proposition \ref{P:movealongdescription}(1).  By Lemma \ref{L:movealonginternalbasis}, $\tnu$ is not exceptional and hence has no removable bead on runner $a$.  By Corollary \ref{C:noremovable}, $\tmu$ has no removable bead on runner $a$, and hence is not exceptional, a contradiction.  Thus $i_{k+1-j} \in I$, and we deduce from \eqref{E:inductioneqn} that (1) holds, by induction.
\end{proof}

\subsection{Inductive construction of canonical basis}

With all the preliminary lemmas in place, we are now able to prove that Theorem \ref{thm:main} holds by explicitly constructing the canonical basis vector $G(\tmu)$ in the Fock space given $G(\mu)$ and the canonical basis in blocks of smaller $e$-weight.  We begin with the case of $\mu$ being
a leading member of a hook-quotient exceptional family in $\tb$.

\begin{prop} \label{P:leadingexceptional}
Let $\fmu$ be a hook-quotient exceptional family, and suppose that $\eptm{0}$ is $4$-increasing.  If Theorem \ref{thm:main} holds for $\cmu$, $\hat{\mu}$ and $\mu^{0}$, then it holds for $\eptm{0}$.
\end{prop}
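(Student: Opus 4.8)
The plan is to build $G(\eptm 0)$ directly in the Fock space using the divided powers $\chevf^{(k+2)}$, $\chevf^{(k+1)}$, $\chevf^{(2)}$ and $\cheve^{(k)}$, starting from the canonical basis vectors $G(\cmu)$, $G(\hat\mu)$ and $G(\mu^0)$ whose coefficients are given to us by the induction hypothesis, and then invoking the uniqueness characterisation of the canonical basis (bar-invariance together with triangularity $G(\eptm 0)-\eptm 0\in\sum q\mathbb Z[q]\nu$). First I would analyse the chain $\cmu \in \cb$, $\hat\mu\in\hb$, $\mu^0\in\b$, $\eptm 0\in\tb$: since $\eptm 0 = s_a(\mu^0)$ and $\mu^0$ is exceptional, Lemma~\ref{L:nonexceptional} and the relations recorded just before Section~4.2 give $\cheve^{(k)}(\mu^0) = \eptm 0 + \sum_{i\ge 1}q^{(\cdots)}\eptm i$ and $\chevf(\cmu) = \sum_{j}q^j\eptm j$, $\chevf^{(k+1)}(\cmu)=\sum_j q^j\mu^j$, $\chevf^{(k+2)}(\cmu)=\hat\mu$. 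Because $\cheve$ and $\chevf$ commute with the bar involution and preserve the canonical lattice, $\chevf^{(k+1)}(G(\cmu))$, $\chevf^{(k+2)}(G(\cmu))$ and $\cheve^{(k)}(G(\mu^0))$ are all bar-invariant; the idea is to take an explicit $\mathbb Z[q,q^{-1}]$-combination of these together with lower-order correction terms $\cheve^{(k+1)}(G(\mu^0))$, $\chevf^{(2)}(G(\cmu))$, etc., so that the result is bar-invariant and congruent to $\eptm 0$ modulo $q\mathbb Z[q]$ times lower partitions.

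The key technical input is Lemma~\ref{lemma:nottooexceptional}(2): since Theorem~\ref{thm:main} holds for $\mu^0$ and $\mu^0$ is $4$-increasing, we get $\cheve^{(k+2)}(G(\mu^0))=0$ and $\chevf^{(2)}(G(\mu^0))=0$. The first tells us that $G(\mu^0)$ "sees" runner $a$ only through families with at most $k+1$ addable beads on runner $a-1$, i.e.\ through exceptional families, while the second controls the degeneracy of the $\chevf$-action. Concretely I expect the construction to run: apply $\chevf^{(k+1)}$ to $G(\cmu)$, getting $\sum_j q^j G(\mu^j)$-ish data plus off-block terms; apply $\cheve^{(k)}$ to $G(\mu^0)$, getting $G(\eptm 0)$ plus a $q$-power-weighted sum of $G(\eptm i)$ ($i\ge 1$) by \eqref{Eonexceptionals}, where crucially the $\eptm i$ ($i\ge 1$) have $\z(\eptm i)=\z(\eptm 0)-\ssb_{i_{k+1-i}}$ by Lemma~\ref{lemma:paraexceptional}(1), so they sit "above" $\eptm 0$ in the dominance-type order used for triangularity; subtract the appropriate combination to cancel these, using $G(\hat\mu)=\chevf^{(k+2)}(G(\cmu))$ and Lemma~\ref{lemma:paraexceptional} to match coefficients. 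Then one checks the two defining properties of the canonical basis. Bar-invariance is automatic from the construction. For triangularity, I would use Proposition~\ref{P:leadingexceptional}'s hypothesis data together with Lemma~\ref{lemma:paraexceptional}(5) ($\bigcup_j\para(\eptm j)=\para(\fmu)$) and Lemma~\ref{lemma:shiftby3} to see that every partition $\nu$ with nonzero coefficient in the candidate vector satisfies $\z(\nu)\in\para(\fmu)$, and then read off the precise $q$-powers from $\dist{\eptm 0}{\nu}$, comparing with the formulas \eqref{Eonexceptionals}, and confirming the leading term is exactly $\eptm 0$ with coefficient $1$.

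The main obstacle will be the bookkeeping of the coefficients: one must show that the off-block contributions (from $\hb$, $\cb$, $\b$) all cancel when forming the right combination of $\chevf^{(k+1)}(G(\cmu))$, $\chevf^{(k+2)}(G(\cmu))$, $\cheve^{(k)}(G(\mu^0))$, and the lower-order terms, and simultaneously that the remaining in-block terms assemble into $\eptm 0 + \sum_{\nu\ne\eptm 0}q^{\dist{\eptm 0}{\nu}}\nu$ with no stray negative powers of $q$ and no spurious $\nu$ outside $\para(\fmu)$. This is where the $4$-increasing hypothesis on $\eptm 0$ (hence $1$-increasing by Lemma~\ref{lemma:shiftby3}, hence the rigidity of Lemmas~\ref{L:region}, \ref{L:internalparallelogram}, \ref{lemma:paraexceptional}) does all the work: it pins down exactly which $\mu^j$ and $\eptm j$ can appear and forces their $\z$-vectors into the expected parallelotope. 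I anticipate that the cleanest route is to first establish the combinatorial identity for $\para(\fmu)$ and the $\dist{}{}$-values among the $\eptm j$ and $\mu^j$ using Lemma~\ref{lemma:paraexceptional}, then verify the candidate vector is bar-invariant and unitriangular, and finally appeal to uniqueness of the canonical basis to conclude $G(\eptm 0)$ equals the candidate, whence $d_{\nu,\eptm 0}(q)=q^{\dist{\eptm 0}{\nu}}$ or $0$ according to whether $\z(\nu)\in\para(\fmu)=\para(\eptm 0)$ (using $\para(\eptm 0)=\bigcup_j\para(\eptm j)$ only on the relevant piece), matching Theorem~\ref{thm:main} for $\eptm 0$.
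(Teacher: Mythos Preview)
Your proposal has a genuine gap: you never identify the correct candidate for $G(\eptm{0})$, and the route you sketch via $\cheve^{(k)}(G(\mu^0))$ is based on a miscomputation. From \eqref{Eonexceptionals} with $j=0$ one has
\[
\cheve^{(k)}(\mu^0)=\sum_{i=0}^{k} q^{\,i-k}\,\eptm{i},
\]
so the coefficient of $\eptm{0}$ is $q^{-k}$, not $1$; your statement ``$\cheve^{(k)}(G(\mu^0))=G(\eptm 0)+\text{(corrections)}$'' is therefore false, and any scheme built on subtracting off higher $\eptm{i}$ from this vector will not produce a unitriangular expression. You also conflate $\para(\eptm{0})$ with $\para(\fmu)$ in your final sentence; these are different sets (the latter is the union $\bigcup_j\para(\eptm{j})$), and the condition in Theorem~\ref{thm:main} is $\z(\eptm{0})\in\Pi(\tlambda)$, not $\z(\tlambda)\in\Pi(\eptm{0})$. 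Finally, your ``explicit $\mathbb Z[q,q^{-1}]$-combination'' is never written down, so as it stands the proposal is a plan rather than a proof.

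The paper's argument is in fact much simpler than what you outline, and you should replace your construction with it: one shows directly that $G(\eptm{0})=\chevf(G(\cmu))$. Expanding $G(\cmu)$ using the hypothesis for $\cmu$, every $\clambda$ that appears satisfies $\z(\cmu)\in\Pi(\clambda)$, and Proposition~\ref{P:impt} (with $\fsigma=\fmu$) forces each such $\clambda$ to generate a hook-quotient exceptional family $\flambda$; hence $\chevf(\clambda)=\sum_{j=0}^{k+1}q^j\eptl{j}$ on the nose, with no stray terms. This immediately gives bar-invariance and $\chevf(G(\cmu))-\eptm{0}\in\sum q\mathbb Z[q]\,\tlambda$, so $\chevf(G(\cmu))=G(\eptm{0})$, and moreover $\dist{\clambda}{\cmu}+j=\dist{\eptl{j}}{\eptm{0}}$. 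The hypotheses on $\hat\mu$ and $\mu^0$ are not used to build the candidate vector at all; they enter only afterwards, to verify the biconditional ``$\z(\eptm{0})\in\Pi(\tlambda)\iff\tlambda=\eptl{j}$ for some $\flambda$ with $\z(\cmu)\in\Pi(\clambda)$''. The forward direction is Proposition~\ref{P:impt}; for the converse one argues by contradiction that a non-exceptional $\tlambda$ with $\z(\eptm{0})\in\Pi(\tlambda)$ would force $d_{\lambda\mu^0}(q)\ne 0$ (using the hypothesis for $\mu^0$), whence $\langle G(\hat\mu),\chevf(\lambda)\rangle\ne 0$ via $G(\mu^0)=\cheve(G(\hat\mu))$, contradicting non-exceptionality. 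Then Proposition~\ref{P:moveinparallelogramtilde} and Lemma~\ref{L:internalparallelogram} finish the exceptional case.
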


\begin{proof}
We have
$$F(G(\check{\mu})) = F\left(\sum_{\clambda} d_{\clambda\cmu}(q) \clambda\right) = \sum_{\clambda : \z(\cmu) \in \Pi(\clambda)} q^{ \dist{\clambda}{\cmu}}\, F(\clambda) =
\sum_{\flambda : \z(\check{\mu}) \in \Pi(\check{\lambda})}  \sum_{j=0}^{k+1} q^{ \dist{\clambda}{\cmu} + j}\, \eptl{j},$$
where the final equality follows from Proposition \ref{P:impt}
(with $\tmu = \eptm{0}$ and $\fsigma = \fmu$).  Thus, $F(G(\cmu)) - \tmu \in \bigoplus_{\tlambda} q\mathbb{Z}[q]\, \tlambda$. As $\overline{F(G(\cmu))} = F(\overline{G(\cmu)}) = F(G(\cmu))$, we see that $F(G(\cmu)) = G(\tmu)$. Since
$$
\dist{\eptl{j}}{\eptl{0}} = j = \dist{\eptm{j}}{\eptm{0}}
$$
by Lemma \ref{lemma:paraexceptional}(1,3), we see from Proposition \ref{P:impt} that
$$
\dist{\clambda}{\cmu} + j = \dist{\eptl{j}}{\eptm{0}}.
$$
Thus we conclude that $d_{\tlambda\eptm{0}}(q) \ne 0$ if and only if $\tlambda$ belongs to some hook-quotient exceptional family $\flambda$ with $\z(\cmu) \in \Pi(\clambda)$, in which case $d_{\tlambda\eptm{0}}(q) = q^{\dist{\tlambda}{\eptm{0}}}$.  Hence, it remains to show that $\tlambda$ belongs to some hook-quotient exceptional family $\flambda$ with $\z(\cmu) \in \Pi(\clambda)$ if and only if $\z(\eptm{0}) \in \Pi(\tlambda)$.  The forward implication follows from Proposition \ref{P:impt} (with $\tmu = \eptm{0}$ and $\fsigma = \fmu$).  For the converse, we show first that $\tlambda$ is exceptional when $\z(\eptm{0}) \in \Pi(\tlambda)$.  Suppose on the contrary that $\tlambda$ is not exceptional and $\z(\eptm{0}) \in \Pi(\tlambda)$.  Then $\z(\mu^{0})=\z({\tmu}^{0}) \in \Pi({\tlambda})=\Pi(\lambda)$ by Lemmas \ref{L:nonexceptional2} and \ref{lemma:paraexceptional}(1).  Since the main theorem holds for $G(\mu^{0})$, we have $d_{\lambda\mu^{0}}(q) \ne 0$. But, arguing as above, we have $G(\mu^{0})=EG(\hat{\mu})$.  Thus $\langle G(\hat{\mu}), F(\lambda) \rangle \ne 0$, and hence $F(\lambda) \ne 0$, so that $\lambda$, and hence $\tlambda$, is exceptional, a contradiction. Thus, $\tlambda$ is exceptional, and applying Proposition \ref{P:moveinparallelogramtilde}, we see that $\tlambda$ is a member of a hook-quotient exceptional family $\flambda$ and that there exists another hook-quotient exceptional family $\fsigma$ with $\z(\csigma) \in \Pi(\clambda)$ such that $\z(\eptm{0}) = z(\epts{j}) + \sb^{\epts{j}}_I$ for some $j \in [0,\,k+1]$ and $I \subseteq \internal{\fsigma}$.  By Lemma \ref{L:internalparallelogram}, we have $\fmu = \fsigma$, so that $\z(\cmu) = \z(\csigma) \in \Pi(\clambda)$, completing our proof.
\end{proof}

We are thus left with the case when $\mu$ is not one of the leading members of exceptional hook-quotient families.  The next lemma tells us that it is also not in the `external' parallelotopes of the latter.

\begin{lemma} \label{L:leading}
Let $\mu \in B$ be a $4$-increasing partition, and let $\flambda$ be a hook-quotient exceptional family.  If
$\z(\mu) - \z(\epl{0}) = \sb_X^{\flambda}$ for some $X \subseteq \external{\flambda}$ , then $\mu = \mu^{0}$ for a hook-quotient exceptional family $\fmu$ with $\external{\flambda} = \external{\fmu}$.
\end{lemma}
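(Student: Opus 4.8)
\textbf{Proof proposal for Lemma~\ref{L:leading}.}
The plan is to argue by induction on $|X|$, peeling off one external modified basis vector of $\flambda$ at a time and tracking how the exceptional family evolves. When $X = \emptyset$ there is nothing to prove, with $\fmu = \flambda$. For the inductive step, choose $r \in X$ maximal with respect to $\succeq_{\epl{0}}$; since $r \in \external{\flambda}$, Lemma~\ref{L:projection}(1) shows $r$ is in fact maximal in $X$ with respect to $\succeq_{\epl{0}}$ among all of $X$. First I would transport the situation from $\b$ to $\tb$ via the Scopes equivalence: by the relations $s_a(\epl{0}) = \eptl{0}$ and Lemma~\ref{lemma:paraexceptional}(1,3) we have $\z(\epl{0}) = \z(\eptl{0})$ and the external modified basis vectors of the family agree on both sides, so it is equivalent to prove the corresponding statement for $\eptl{0} \in \tb$; this lets us apply the machinery of Lemmas~\ref{L:emptyintersectionwithregionlambda}--\ref{L:emptyintersectionwithregionmu} and Corollary~\ref{C:externalpara}, which are phrased for $\cb$ and $\tb$.

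Next, by Lemma~\ref{L:getcloser} (applicable since $\mu$, hence $\epl{0}$ shifted by $\sb_X^\flambda$ minus one vector, is $1$-increasing by Lemma~\ref{lemma:shiftby3}) there is a $1$-increasing partition $\nu \in \b$ with $\z(\nu) = \z(\epl{0}) + \sb_r^{\epl{0}}$ and $\z(\mu) = \z(\nu) + \sb_{X\setminus\{r\}}^{\nu}$, where by Proposition~\ref{P:movealongdescription}(1) and the maximality of $r$ we have $\sb_s^\nu = \sb_s^{\epl{0}}$ for all $s \in X \setminus \{r\}$. The crucial point is to show $\nu$ is again a leading member of a hook-quotient exceptional family $\fnu$ with $\external{\fnu} = \external{\flambda}$. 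For this I would pass to $\tb$: let $\tnu = s_a(\nu)$, apply Theorem~\ref{thm:goodlabels} to get $\cnu \in \cb$ with $\z(\cnu) = \pi_\flambda(\z(\tnu)) = \z(\clambda) + \sb^{\clambda}_{\zeta_\flambda(r)}$ (using Lemma~\ref{L:projection}(2,3)), and then invoke Corollary~\ref{C:externalpara} — whose hypothesis is furnished by Lemma~\ref{L:emptyintersectionwithregionlambda}, since $\z(\eptl{0}) + \sb^\flambda_{\xi_\flambda(\zeta_\flambda(r))}$ is $1$-increasing (it equals $\z(\tnu)$, which is $1$-increasing). Corollary~\ref{C:externalpara} then gives that $\cnu$ generates a hook-quotient exceptional family $\fnu$ with $\external{\fnu} = \external{\flambda}$, and consequently $\tnu = \eptn{0}$ and $\nu = \epn{0}$. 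By the inductive hypothesis applied to $\nu$ and $X \setminus \{r\}$, we obtain $\mu = \mu^{0}$ for a hook-quotient exceptional family $\fmu$ with $\external{\fmu} = \external{\fnu} = \external{\flambda}$, completing the induction.

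The main obstacle I anticipate is verifying cleanly that $\nu$ (equivalently $\tnu$) lands back among the leading members of an exceptional family with the \emph{same} external coordinates, rather than, say, a non-leading member or a partition whose family has shifted internal/external split. This is exactly the content packaged into Corollary~\ref{C:externalpara}, so the work is really in checking its hypotheses hold at each stage — in particular that the requisite $1$-increasing condition on $\z(\eptl{0}) + \sb^\flambda_{\xi_\flambda(r)}$ is met, which follows because this vector is $\z(\tnu)$ and $\tnu$ arises from Lemma~\ref{L:getcloser} as a $1$-increasing partition. A secondary point to handle carefully is the bookkeeping that $\sb^\nu_s = \sb^{\epl{0}}_s$ for $s \in X \setminus \{r\}$ so that $\z(\mu) - \z(\nu) = \sb^\nu_{X \setminus \{r\}}$ genuinely expresses $\z(\mu)$ as $\z(\epn{0})$ plus a sum of \emph{external} modified basis vectors of $\fnu$; this is where Proposition~\ref{P:movealongdescription}(1) together with the maximality of $r$ and Lemma~\ref{L:projection}(4) does the job.
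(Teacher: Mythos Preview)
Your induction is correct and is exactly the argument packaged inside Proposition~\ref{P:moveinparallelogramtilde}; the paper simply applies that proposition directly (with $\tlambda=\eptl{0}$, so $j=0$ and $I=\Gamma\cap\internal{\flambda}=\emptyset$), obtaining $\z(\mu)=\z(\epts{0})=\z(\sigma^0)$ and hence $\mu=\sigma^0$ by Theorem~\ref{thm:goodlabels}(1). So your route is the same as the paper's, just unrolled by hand rather than citing the proposition.
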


\begin{proof}
Let $\tmu \in \tb$ be such that $\z(\tmu) = \z(\mu)$.  Then
$$
\z(\tmu) - \z(\eptl{0}) = \z(\mu) - \z(\epl{0})  = \sb_X^{\flambda} $$
by Lemma \ref{lemma:paraexceptional}(1).
Applying Proposition \ref{P:moveinparallelogramtilde}, we see that $\z(\mu) = \z(\tmu) = \z(\epts{0}) = \z(\sigma^{0})$, where $\fsigma$ is the hook-quotient exceptional family with $\z(\csigma) = \pi_{\flambda}(\z(\tmu))$ and $\external{\fsigma} = \external{\flambda}$.  Thus $\mu = \eps{0}$ by Theorem~\ref{thm:goodlabels}(1).
\end{proof}

Notwithstanding Lemma \ref{L:leading}, when $\mu$ is not one of the leading members of hook-quotient exceptional families, it may still be in the parallelotope of a member of some hook-quotient exceptional family.  With this in mind, the next lemma looks at $\para(\flambda)$ of a hook-quotient exceptional family, which by Lemma \ref{lemma:paraexceptional}(5) is the union of parallelotopes of its members.

\begin{lemma} \label{L:para(flambda)}
Let $\flambda$ be a hook-quotient exceptional family.
Suppose that $z \in \para(\flambda)$, say
$$
z = \z(\epl{0}) + \pb{\flambda}_{J} + \sb_{X}^{\flambda}.
$$
where $J \subseteq [0,\, k+1]$ and $X \subseteq \external{\flambda}$
\begin{enumerate}
\item For $a \in [0,\, k+1]$, write $J_{<a}^c = \{ j \notin J \mid j<a \}$ and $J_{>a} = \{ j \in J \mid j>a \}$.  Let $j \in [0,\, k+1]$.
\begin{enumerate}
\item If $j \notin J$, then
$$z = \z(\epl{j}) - \pb{\flambda}_{J_{<j}^c} + \pb{\flambda}_{J_{>j}} + \sb_X^{\flambda} \in \para(\epl{j}).$$ Conversely, if $J \ne [0,\, k+1]$ and $j \in J$, then $z \notin \para(\epl{j})$.
\item If $k+1-j \in J$, then $$z = \z(\eptl{j}) - \pb{\flambda}_{J_{<k+1-j}^c} + \pb{\flambda}_{J_{>k+1-j}} + \sb_X^{\flambda} \in \para(\eptl{j}).$$  Conversely, if $J \ne \emptyset$ and $k+1-j \notin J$, then $z\notin \para(\eptl{j})$.
\end{enumerate}
\item If $J$ is non-empty and proper (as a subset of $[0,\,k+1]$), then the following three sets determine each other:
\begin{itemize}
\item $J$
\item $\{ j \in [0,\,k+1] \mid z \in \para(\epl{j})\}$
\item $\{ j \in [0,\,k+1] \mid z \in \para(\eptl{j}) \}$
\end{itemize}
Furthermore, the cardinalities of the first and third sets are equal, while the sum of the cardinalities of the last two sets is $k+2$.
\end{enumerate}
\end{lemma}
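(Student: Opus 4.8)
The plan is to reduce the entire lemma to the basic identities about the vectors $\pb{\flambda}_j$ recorded just before Lemma~\ref{lemma:paraexceptional}, namely that $\sum_{j=0}^{k+1}\pb{\flambda}_j = 0$, that any $k+1$ of the $\pb{\flambda}_j$ are linearly independent, and that $\pb{\flambda}_J = \pb{\flambda}_{J'}$ (with $J,J'\subseteq[0,k+1]$) forces $\{J,J'\}=\{\emptyset,[0,k+1]\}$; together with the explicit formulas of Lemma~\ref{lemma:paraexceptional}(1,3,4) for $\z(\epl{j})$, $\z(\eptl{j})$, $\para_0(\epl{j})$ and $\para_0(\eptl{j})$. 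Everything is linear algebra over $\mathbb Z$ inside the coordinate subspace spanned by $\{\ssb_{i}\mid i\in\internal{\flambda}\}$, with the $\external{\flambda}$-part $\sb_X^{\flambda}$ just coming along for the ride (it lies in the complementary subspace and is common to every member of the family).

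First I would prove part (1a). Given $z = \z(\epl{0}) + \pb{\flambda}_J + \sb_X^{\flambda}$ and $j\notin J$, I would substitute $\z(\epl{0}) = \z(\epl{j}) - \pb{\flambda}_{[0,j)}$ (from Lemma~\ref{lemma:paraexceptional}(1)) and then rewrite $\pb{\flambda}_J - \pb{\flambda}_{[0,j)} = \pb{\flambda}_{J_{>j}} - \pb{\flambda}_{[0,j)\setminus J} = \pb{\flambda}_{J_{>j}} - \pb{\flambda}_{J^c_{<j}}$, using that $J$ and $[0,j)$ agree on $J_{<j}$ and that $j\notin J$ so there is no term at $j$ itself. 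By Lemma~\ref{lemma:paraexceptional}(4) (with $I_{<j}=J^c_{<j}$, $I_{>j}=J_{>j}$) this exhibits $z-\z(\epl{j})\in\para_0(\epl{j})$, i.e. $z\in\para(\epl{j})$. For the converse, suppose $J\ne[0,k+1]$, $j\in J$ and $z\in\para(\epl{j})$; then Lemma~\ref{lemma:paraexceptional}(4) gives $z = \z(\epl{j}) - \pb{\flambda}_{I_{<j}} + \pb{\flambda}_{I_{>j}} + \sb_{X'}^{\flambda}$ for some $I_{<j}\subseteq[0,j)$, $I_{>j}\subseteq(j,k+1]$, $X'\subseteq\external{\flambda}$. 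Projecting onto the $\external{\flambda}$-subspace forces $X'=X$ (the $\sb^{\flambda}_x$ for $x\in\external{\flambda}$ are part of a basis), so, using $\z(\epl{0})=\z(\epl{j})-\pb{\flambda}_{[0,j)}$ again, I get $\pb{\flambda}_J = \pb{\flambda}_{[0,j)} - \pb{\flambda}_{I_{<j}} + \pb{\flambda}_{I_{>j}} = \pb{\flambda}_{([0,j)\setminus I_{<j})\cup I_{>j}}$, an equality of the form $\pb{\flambda}_{J_1}=\pb{\flambda}_{J_2}$ where $j\in J_1=J$ but $j\notin J_2$ (since $I_{<j}\subseteq[0,j)$ and $I_{>j}\subseteq(j,k+1]$). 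By the uniqueness property this forces $\{J_1,J_2\}=\{\emptyset,[0,k+1]\}$; since $j\in J_1$ and $j\notin J_2$ we'd get $J_1=[0,k+1]$, contradicting $J\ne[0,k+1]$. Part (1b) is the mirror image: use $\z(\eptl{j}) = \z(\eptl{0}) - \pb{\flambda}_{(k+1-j,k+1]}$ and $\z(\eptl{0})=\z(\epl{0})$ from Lemma~\ref{lemma:paraexceptional}(1), and $\para_0(\eptl{j})=\para_0(\epl{k+1-j})$ from Lemma~\ref{lemma:paraexceptional}(3); the same argument with $k+1-j$ in place of $j$ and the roles of "$\in J$"/"$\notin J$" swapped goes through.

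Then part (2) follows by bookkeeping. Assume $\emptyset\ne J\subsetneq[0,k+1]$. By part (1a), $\{j\mid z\in\para(\epl{j})\} = [0,k+1]\setminus J = J^c$, and by part (1b), $\{j\mid z\in\para(\eptl{j})\} = \{j\mid k+1-j\in J\} = (k+1)-J := \{k+1-j\mid j\in J\}$. Each of $J^c$ and $(k+1)-J$ visibly determines $J$ (complement, resp. the involution $j\mapsto k+1-j$), so all three sets determine each other; $\lvert(k+1)-J\rvert = \lvert J\rvert$, giving the equality of the first and third cardinalities; and $\lvert J^c\rvert + \lvert(k+1)-J\rvert = \lvert J^c\rvert + \lvert J\rvert = k+2$, giving the last assertion. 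I do not anticipate a genuine obstacle here — the one point that needs a little care is making sure the $\external{\flambda}$-coordinates never interfere, which is handled cleanly by the direct-sum decomposition $\mathbb Z^w = \langle\ssb_i : i\in\internal{\flambda}\rangle \oplus \langle\ssb_x : x\in\external{\flambda}\rangle$ together with the fact (Lemma~\ref{lemma:paraexceptional}(2,4)) that the $\external{\flambda}$-part of any element of $\para(\epl{j})$ or $\para(\eptl{j})$ is independently an arbitrary $\sb_X^{\flambda}$ and the $\internal{\flambda}$-part is an independent combination of the $\pb{\flambda}_\gamma$. So the "hard part", such as it is, is purely the combinatorial translation in part (1)'s converse via the $\pb{\flambda}_J=\pb{\flambda}_{J'}\Rightarrow\{J,J'\}=\{\emptyset,[0,k+1]\}$ dichotomy; everything else is substitution.
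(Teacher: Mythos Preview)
Your argument is correct and proceeds essentially as the paper's: the forward directions of (1a), (1b) and all of part (2) are identical, and for the converse of (1a) the paper, instead of invoking the dichotomy $\pb{\flambda}_J=\pb{\flambda}_{J'}\Rightarrow\{J,J'\}=\{\emptyset,[0,k+1]\}$, simply writes $z-\z(\epl{j})$ in the basis $\{\pb{\flambda}_\gamma:\gamma\ne j\}\cup\{\sb_x^{\flambda}\}$ (substituting $\pb{\flambda}_j=-\sum_{\gamma\ne j}\pb{\flambda}_\gamma$) and reads off that the coefficient of $\pb{\flambda}_{j'}$ for any $j'\notin J$ is $-1$ (if $j'>j$) or $-2$ (if $j'<j$), which violates Lemma~\ref{lemma:paraexceptional}(4); your route via the dichotomy is an equally short equivalent.

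One small caution on your closing remark: the vectors $\sb_x^{\flambda}$ for $x\in\external{\flambda}$ need \emph{not} lie in $\langle\ssb_x:x\in\external{\flambda}\rangle$ (the proof of Lemma~\ref{L:projection} explicitly treats the case $\sb_r^{\eptl{j}}=\ssb_r-\ssb_s$ with $r$ external and $s$ internal), so the standard-coordinate direct sum you cite is not the mechanism that separates the two pieces. What actually does the work, and what your earlier parenthetical correctly states, is that $\{\pb{\flambda}_\gamma:\gamma\ne j\}\cup\{\sb_x^{\flambda}:x\in\external{\flambda}\}$ is a basis of $\mathbb{Z}^w$ (being $\{\pm\sb_i^{\epl{j}}:i\in[1,w]\}$ by Lemma~\ref{lemma:paraexceptional}(2,3)); uniqueness of coefficients there already gives $X'=X$.
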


\begin{proof}
For part (1a), since $\z(\epl{0}) = \z(\epl{j}) + \ssb_{i_j} = \z(\epl{j})-\pb{\flambda}_{[0,\,j)}$ by Lemma \ref{lemma:paraexceptional}(1,3), we have
$$
z = \z(\epl{j}) - \pb{\flambda}_{[0,\,j) \setminus J} + \pb{\flambda}_{J \setminus [0,\,j)} + \sb^{\flambda}_X.
$$
Clearly, $[0,\,j) \setminus J = J^c_{<j}$.  If $j \notin J$, then $J \setminus [0,\,j) = J_{>j}$, so that $z \in \para(\epl{j})$ by Lemma \ref{lemma:paraexceptional}(4).
For the converse, if $j \in J$, then since $\pb{\flambda}_j = - \sum_{r \in [0,\, k+1] \setminus \{j\}} \pb{\flambda}_{r}$, we see that, the coefficient of $\pb{\flambda}_{j'}$, for any $j' \notin J$, when $z - \z(\epl{j})$ is expressed in terms of the basis $\{ \pb{\flambda}_{\gamma}, \sb_x^{\flambda} \mid \gamma \ne j,\ x \in \external{\flambda} \}$ is $-1$ if $j' > j$ and $-2$ if $j' < j$.  Hence $z \notin\para(\epl{j})$ if such a $j'$ exists by Lemma \ref{lemma:paraexceptional}(4).

Part (1b) is similar,
and part (2) follows immediately from part (1).
\end{proof}

\begin{defi}
Let $\flambda$ be a hook-quotient exceptional family.  For $\mu \in B$, define
$$n_{\flambda,\mu} = |\{ j \in [0,\,k+1] \mid \z(\mu) \in \Pi(\epl{j}) \}|.$$
By Lemma \ref{lemma:paraexceptional}(5), $n_{\flambda,\mu} > 0$ if and only if there exist a unique $J \subsetneq [0,\,k+1]$ and a unique $X \subseteq \external{\flambda}$ such that
$$
\z(\mu) = \z(\epl{0}) + \pb{\flambda}_J + \sb_X^{\flambda}.
$$
We write $s_{\flambda,\mu}$ for $|X|$ when this happens, and say that $\mu$ is {\em $s_{\flambda,\mu}$-separated} from $\flambda$.
\end{defi}

\begin{rem}
When $n_{\flambda,\mu} >0$, with $\z(\mu) = \z(\epl{0}) + \pb{\flambda}_J + \sb_X^{\flambda}$, we have $n_{\flambda,\mu} = k+2 -|J|$ by Lemma \ref{L:para(flambda)}(1a).
\end{rem}

\begin{prop}\label{prop:parexceptional}
Suppose that $\mu\in B$ is $4$-increasing and $\mu \ne \sigma^{0}$ for any hook-quotient exceptional family $\fsigma$.  Let $\flambda$ be a hook-quotient exceptional family such that $n_{\flambda,\mu} > 0$.  If Theorem \ref{thm:main} holds for $\mu$, then for any
$j \in [0,\,k+1]$ we have
$$\la \cheve^{(k)}(G(\mu)) - q^{s_{\flambda,\mu}}[n_{\flambda,\mu}-1]_q \chevf (\clambda),\,
\eptl{j} \ra =
\begin{cases}
q^{\dist{\eptl{j}}{\tmu}}, & \text{if } \z(\tmu)
\in \Pi(\eptl{j});\\
0, & \text{otherwise.}
\end{cases}$$
\end{prop}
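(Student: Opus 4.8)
The strategy is to compute both sides of the claimed identity by expanding $\cheve^{(k)}(G(\mu))$ and $\chevf(\clambda)$ in terms of partitions, using the combinatorial formulas of Section~\ref{subsection:Fock} together with the inductive validity of Theorem~\ref{thm:main} for $\mu$ (and for blocks of smaller $e$-weight, in particular for $\clambda$). First I would write $G(\mu) = \sum_{\rho} d_{\rho\mu}(q)\,\rho$, where the sum is over $\rho \in B$ with $\z(\mu) \in \para(\rho)$ (and $\rho$ hook-quotient), by the inductive hypothesis. Applying $\cheve^{(k)}$ and pairing with $\eptl{j}$, only those $\rho$ contribute for which $\langle \cheve^{(k)}(\rho),\eptl{j}\rangle \neq 0$; by Lemma~\ref{lemma:nottooexceptional}(2) we know $\cheve^{(k+2)}(G(\mu))=0$ and $\chevf^{(2)}(G(\mu))=0$, which forces every such $\rho$ to be exceptional (it has exactly $k+1$ removable beads on runner $a$), hence $\rho = \epl{j'}$ for some member of a hook-quotient exceptional family $\frho$, and moreover $\frho$ is forced to be a hook-quotient family since $\rho$ is $1$-increasing (Lemma~\ref{lemma:shiftby3} and Lemma~\ref{L:exceptional-1-increasing}). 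Then \eqref{Eonexceptionals} gives $\langle \cheve^{(k)}(\epl{j'}),\eptl{j}\rangle$ explicitly: it is $q^{j+j'-k}$ if $j \leq k-j'$ and $q^{j+j'-k-2}$ if $j \geq k+2-j'$, and $0$ if $j = k+1-j'$.

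The second step is to organize the contributing $\rho = \epl{j'}$ by their exceptional family. For a fixed hook-quotient exceptional family $\frho$ with $n_{\frho,\mu}>0$, Lemma~\ref{L:para(flambda)} describes exactly which members $\epl{j'}$ of $\frho$ satisfy $\z(\mu)\in\para(\epl{j'})$ — namely those $j'$ with $j' \notin J$, where $\z(\mu) = \z(\rho^{0}) + \pb{\frho}_J + \sb_X^{\frho}$, and there are $n_{\frho,\mu} = k+2-|J|$ such $j'$; the corresponding exponent is $d_{\epl{j'}\mu}(q) = q^{\dist{\epl{j'}}{\mu}}$, which by Lemma~\ref{L:para(flambda)}(1a) can be read off as $s_{\frho,\mu}$ plus a contribution from $\pb{\frho}_{J_{<j'}^c}$ and $\pb{\frho}_{J_{>j'}}$. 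Combining this with the $\cheve^{(k)}$-exponents from \eqref{Eonexceptionals}, I would show that the total contribution of the family $\frho$ to $\langle \cheve^{(k)}(G(\mu)),\eptl{j}\rangle$ is $q^{s_{\frho,\mu}}$ times a Laurent polynomial in $q$ that, after summing over the $j'\notin J$, simplifies to a clean form. The key arithmetic identity needed is that a sum like $\sum_{j'\notin J,\, j'\leq k-j}q^{(\text{stuff})} + \sum_{j'\notin J,\, j'\geq k+2-j}q^{(\text{stuff})}$ telescopes, producing $q^{\dist{\eptl{j}}{\tmu}}$ for the unique family with $\tmu$ in the right parallelotope, plus — crucially — an extra term proportional to $[n_{\flambda,\mu}-1]_q$ coming from every family $\frho$ (including $\flambda$ itself). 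It is precisely this spurious $q^{s_{\flambda,\mu}}[n_{\flambda,\mu}-1]_q$ term that is subtracted off via $\chevf(\clambda) = \sum_{i=0}^{k+1}q^i\,\eptl{i}$ in the statement; since $\clambda$ generates the family $\flambda$ and $\z(\mu)\in\para(\flambda)$, the partitions $\eptl{0},\dots,\eptl{k+1}$ appearing in $\chevf(\clambda)$ are exactly the members of $\flambda$, and $\langle\chevf(\clambda),\eptl{j}\rangle = q^j$.

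The third step ties together the family-indexed contributions. Using Proposition~\ref{P:moveinparallelogramtilde} and Proposition~\ref{P:impt} (with the roles of $\tmu$, $\fsigma$ appropriately assigned), I would establish the correspondence between hook-quotient exceptional families $\frho$ with $\z(\mu)\in\para(\frho)$ and partitions $\csigma\in\cb$ with $\z(\csigma) = \pi_{\frho}(\z(\mu))$; the inductive validity of Theorem~\ref{thm:main} for $\cb$ (a block of $e$-weight $\w = w-k-1 < w$) then controls the multiplicities $d_{\clambda\csigma}(q)$. This lets me reindex the sum over families by the associated $\csigma$, and the identity $\dist{\eptl{j}}{\tmu} = \dist{\epts{j}}{\tmu} + \dist{\clambda}{\csigma}$ from Proposition~\ref{P:impt} converts the exponent bookkeeping into the form asserted. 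I would handle separately the possibility that $\z(\tmu)\notin\para(\eptl{j})$ for the given $j$: in that case Lemma~\ref{L:para(flambda)}(1b) shows $k+1-j\notin J$ for the relevant $J$, and the contributions from $\cheve^{(k)}$ cancel against the $[n_{\flambda,\mu}-1]_q\chevf(\clambda)$ correction to give $0$.

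\textbf{Main obstacle.} The hard part will be the exponent accounting in the second step: verifying that when one sums $\langle\cheve^{(k)}(\epl{j'}),\eptl{j}\rangle \cdot d_{\epl{j'}\mu}(q)$ over all $j'$ with $\z(\mu)\in\para(\epl{j'})$ (equivalently $j'\notin J$), the resulting Laurent polynomial in $q$ is exactly $q^{\dist{\eptl{j}}{\tmu}}\cdot\I_{\z(\tmu)\in\para(\eptl{j})} + q^{s_{\flambda,\mu}}[n_{\flambda,\mu}-1]_q\cdot q^j$, with no leftover terms and with the correct powers of $q$. This requires carefully tracking how $\dist{\epl{j'}}{\mu}$ varies with $j'$ across the family (via $\pb{\flambda}_{J^c_{<j'}}$ versus $\pb{\flambda}_{J_{>j'}}$ in Lemma~\ref{L:para(flambda)}(1a)), matching it against the piecewise exponent $j+j'-k$ versus $j+j'-k-2$ from \eqref{Eonexceptionals}, and noticing that the ``jump'' at $j' = k+1-j$ (where the $\cheve^{(k)}$ coefficient drops out) is exactly what makes the telescoping produce a $[n_{\flambda,\mu}-1]_q$ rather than $[n_{\flambda,\mu}]_q$. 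Everything else is assembling already-proven structural results; this telescoping computation is where the real content lies.
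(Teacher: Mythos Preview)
Your proposal contains the right core computation but is wrapped in a layer of unnecessary machinery that stems from a misreading of what is being proved.

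The key point you are missing is locality: $\eptl{j}$ is a member of the \emph{single} family $\flambda$, and $E^{(k)}$ applied to any partition in $B$ can only hit $\eptl{j}$ if that partition is one of the $\epl{i}$ in the \emph{same} family $\flambda$. Indeed, for non-exceptional $\rho$ we have $E^{(k)}(\rho)=\tilde\rho$, which is non-exceptional and hence $\neq\eptl{j}$; and for exceptional $\rho=\rho^{j'}$ in a different family $\frho$, formula~\eqref{Eonexceptionals} shows $E^{(k)}(\rho)$ is supported on the $\tilde\rho^{i}\in\frho$, none of which equal $\eptl{j}$. So the entire computation of $\langle E^{(k)}(G(\mu)),\eptl{j}\rangle$ takes place within $\flambda$. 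This is why the paper simply works ``modulo partitions not in the exceptional family $\flambda$''.

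Consequently your step~3 is entirely unnecessary: there is no sum over families, no need for Propositions~\ref{P:moveinparallelogramtilde} or~\ref{P:impt}, and no appeal to the inductive hypothesis for $\cb$. (Those ingredients are used in the \emph{next} proposition, Proposition~\ref{P:}, which is genuinely a statement about summing over all families; you have conflated the two.) Likewise, the claim in your step~2 that an extra $[n_{\flambda,\mu}-1]_q$ term arises ``from every family $\frho$'' is wrong: only $\flambda$ contributes anything at all.

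What remains after stripping this away is exactly your ``main obstacle'', and that is indeed the whole proof. Writing $\z(\mu)=\z(\epl{0})+\pb{\flambda}_J+\sb_X^{\flambda}$ with $\emptyset\ne J\subsetneq[0,k+1]$ (properness uses Lemma~\ref{L:leading} and the hypothesis $\mu\ne\sigma^0$), one has $G(\mu)\equiv\sum_{i\notin J}q^{|J^c_{<i}|+|J_{>i}|+s_{\flambda,\mu}}\,\epl{i}$ modulo partitions outside $\flambda$, and then applies~\eqref{Eonexceptionals}. The telescoping hinges on the function $c(i)=i+|J^c_{<i}|+|J_{>i}|$: as $i$ runs over the elements $a_1<\dots<a_{n_{\flambda,\mu}}$ of $J^c$, one has $c(a_{s+1})=c(a_s)+2$, so the sum becomes a quantum integer $[n_{\flambda,\mu}-1]_q$ times $q^{s_{\flambda,\mu}+j}$, plus an extra term precisely when $k+1-j\in J$ (equivalently $\z(\tmu)\in\Pi(\eptl{j})$ by Lemma~\ref{L:para(flambda)}(1b)). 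Since $\langle F(\clambda),\eptl{j}\rangle=q^j$, the subtraction in the statement cancels the quantum-integer part exactly.
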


\begin{proof}
There exist $J \subseteq [0,\, k+1]$ and (unique) $X \subseteq \external{\flambda}$ such that
$$\z(\mu) = \z(\epl{0}) + \pb{\flambda}_J + \sb_X^{\flambda}.$$  Since $\mu \ne \sigma^{0}$ for any hook-quotient exceptional family $\fsigma$, we see that $\pb{\flambda}_J \ne 0$ by Lemma \ref{L:leading}, so that $\emptyset \ne J \subsetneq [0,\,k+1]$ (and hence $J$ is unique too).
Working modulo partitions not in the exceptional
family $\flambda$, we have, since Theorem \ref{thm:main} holds for $\mu$,
$$
G(\mu)  \equiv \sum_
{\substack{
i \in [0,\,k+1]: \\
\z(\mu)\in\Pi(\epl{i})
}}
q^{\dist{\epl{i}}{\mu}} \epl{i} \\
=
\sum_
{i \in [0,\,k+1] \setminus J}
q^{|J^c_{<i}| + |J_{>i}| +s_{\flambda,\mu}}
\epl{i}
$$
by Lemma \ref{L:para(flambda)}(1a) (and using the notations there).

Using formula~(\ref{Eonexceptionals}) we deduce that
$$
\langle E^{(k)}(G(\mu)),\eptl{j} \rangle
=
\sum_{i \in [0,\,k+1-j) \setminus J}
q^{s_{\flambda,\mu}+c(i)+j-k}
+
\sum_{i \in (k+1-j,\, k+1] \setminus J}
q^{s_{\flambda,\mu}+c(i)+j-k-2},
$$
where $c(i)=i + |J^c_{<i}|+|J_{>i}|$.
Observe that 
\begin{alignat*}{2}
c(0) &= |J_{>0}| &&= k+1 - n_{\flambda,\mu} + \I_{0 \notin J}, \\
c(k+1) &= k+1 + |J^c_{<k+1}| &&= k+ n_{\flambda, \mu}  + \I_{k+1 \in J} .
\end{alignat*}
Moreover, for $i \in [1,\, k+1]$, we have $c(i+1)-c(i) = \I_{i\notin J} + \I_{i+1 \notin J}$.  Thus if $[0,\,k+1] \setminus J = \{ a_1 < a_2 < \dotsb < a_{k+2-|J|} \}$, then
$c(a_1) = k+2 - n_{\flambda,\mu}$, $c(a_{s+1}) = c(a_s) + 2$ for all $s \in [1,k+1-|J|]$, and $c(a_{k+2-|J|}) = k + n_{\flambda,\mu}$.
Consequently,
$$\langle E^{(k)}(G(\mu)),\eptl{j} \rangle
=
\begin{cases}
q^{s_{\flambda,\mu}+j}[n_{\flambda,\mu}-1]_q, & \text{if } k+1-j \notin J; \\
q^{s_{\flambda,\mu}+j}[n_{\flambda,\mu}-1]_q + q^{s_{\flambda,\mu} + c(k+1-j) + j - k - 1},
& \text{if } k+1-j \in J.
\end{cases}
$$
The desired result now follows, since
$F(\clambda) = \sum_{j=0}^{k+1} q^j \eptl{j}$, and
\begin{align*}
s_{\flambda,\mu} + c(k+1-j) + j-k-1 &= s_{\flambda,\mu} + |J^c_{<k+1-j}| + |J_{>k+1-j}| \\
&= \dist{\eptl{j}}{\mu}
\end{align*}
by Lemma \ref{L:para(flambda)}(1b).
\end{proof}

\begin{lemma} \label{L:unique}
Suppose that $\mu \in B$ is $4$-increasing and is $0$-separated from a hook-quotient exceptional family $\fnu$.  Then $\z(\cnu) = \pi_{\fnu}(\z(\mu))$.  In particular, $\cnu$ (and $\hat{\nu}$) are $4$-increasing.

If in addition $n_{\flambda,\mu} > 0$ for another hook-quotient exceptional family $\flambda$, then either $\xi_\fnu = \xi_\flambda$ or $\z(\cnu) \notin \Pi(\check{\lambda})$.
\end{lemma}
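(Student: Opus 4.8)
\textbf{Proof plan for Lemma \ref{L:unique}.}
The first assertion is the easy part: since $\mu$ is $0$-separated from $\fnu$, we have $\z(\mu)=\z(\nu^{0})+\sb^{\fnu}_{X}$ for some $X\subseteq\external{\fnu}$, i.e.\ in the notation of Proposition \ref{P:moveinparallelogramtilde} we are in the case $I=\emptyset$. Passing to $\tmu\in\tb$ with $\z(\tmu)=\z(\mu)=\z(\eptn{0})+\sb^{\eptn{0}}_{X}$ (using Lemma \ref{lemma:paraexceptional}(1)) and applying Proposition \ref{P:moveinparallelogramtilde} (with $\tlambda=\eptn{0}$, $\flambda=\fnu$, $\Gamma=X$), we obtain a hook-quotient exceptional family $\fsigma$ with $\external{\fsigma}=\external{\fnu}$ and $\pi_{\fnu}(\z(\mu))=\z(\csigma)$, while $\z(\tmu)=\z(\epts{0})+\sb^{\epts{0}}_{I}$ with $I=\Gamma\cap\internal{\fnu}=\emptyset$, forcing $\z(\tmu)=\z(\epts{0})$, hence $\mu=\eps{0}=\sigma^{0}$ by Theorem \ref{thm:goodlabels}(1), and therefore $\cnu=\csigma$ and $\z(\cnu)=\pi_{\fnu}(\z(\mu))$. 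Since $\pi_{\fnu}$ is a coordinate projection, it does not decrease the gap between consecutive coordinates, so $\mu$ being $4$-increasing forces $\cnu$ (and $\hat\nu$, which is $0$-separated from the same $\fnu$ — compare $\z(\hat\nu)$ and $\z(\cnu)$ via Lemma \ref{L:region}) to be $4$-increasing as well.

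For the second assertion, suppose towards a contradiction that $\z(\cnu)\in\Pi(\check\lambda)$ yet $\xi_\fnu\neq\xi_\flambda$. The idea is to feed this back into Theorem \ref{thm:main} applied in $\cb$ (which is a block of $e$-weight $\w<w$, so the theorem is available for it under the inductive hypothesis of Proposition \ref{prop:main}) together with Proposition \ref{P:impt}. Concretely: $\z(\cnu)\in\Pi(\clambda)$ means $\z(\cnu)=\z(\clambda)+\sb^{\clambda}_{X'}$ for some $X'\subseteq[1,\w]$. Since $n_{\flambda,\mu}>0$ with $J$ non-empty and proper, Proposition \ref{P:parexceptional} and its proof express $G(\mu)$ modulo non-$\flambda$ partitions, and show the relevant combination $\cheve^{(k)}(G(\mu))-q^{s_{\flambda,\mu}}[n_{\flambda,\mu}-1]_q\chevf(\clambda)$ has support exactly on the $\eptl{j}$ with $\z(\tmu)\in\Pi(\eptl{j})$; in particular $\tmu$ lies in $\Pi(\eptl{j})$ for some member of $\flambda$. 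Applying Proposition \ref{P:moveinparallelogramtilde} to this containment produces a hook-quotient exceptional family $\ftau$ with $\external{\ftau}=\external{\flambda}$, i.e.\ $\xi_\ftau=\xi_\flambda$, together with $\pi_{\flambda}(\z(\tmu))=\z(\ctau)\in\Pi(\clambda)$ and $\z(\tmu)=\z(\ept{\ftau}{j})+\sb^{\ept{\ftau}{j}}_{I}$ for some $I\subseteq\internal{\flambda}$. Now $\mu\ (=\tau^{0}?)$: the point is that $\mu$ is $0$-separated from $\fnu$, so by the first assertion $\mu=\sigma^{0}$ as above, hence $\z(\tmu)=\z(\eps{0})=\z(\epts{0})$ forces $\z(\ept{\ftau}{j})+\sb^{\ept{\ftau}{j}}_{I}=\z(\epts{0})$; by Lemma \ref{L:internalparallelogram} this is possible only if $I$ is the appropriate terminal block of internal coordinates and $\ept{\ftau}{j-|I|}=\eps{0}$, which pins $\ftau=\fsigma=\fnu$ up to the choice of $j$, and in particular $\xi_\ftau=\xi_\fnu$. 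Combined with $\xi_\ftau=\xi_\flambda$ this gives $\xi_\fnu=\xi_\flambda$, the desired contradiction.

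The main obstacle I anticipate is bookkeeping in the second assertion: one must be careful to invoke Theorem \ref{thm:main} only for $\cb$ (weight $\w<w$, legitimate under the induction in Proposition \ref{prop:main}) and never for $\b$ or $\tb$ themselves, and to track which partition plays the role of $\tlambda$ versus $\tmu$ when alternately applying Propositions \ref{P:moveinparallelogramtilde} and \ref{P:impt}. The cleanest route is probably to phrase the argument entirely in terms of projections $\pi_{\flambda},\pi_{\fnu}$ of $\z(\tmu)$: if both $\fnu$ and $\flambda$ were exceptional families to which $\tmu$ is related in the above sense, then $\z(\cnu)\in\Pi(\clambda)$ would say the two projections $\pi_{\fnu}(\z(\tmu))$ and $\z(\clambda)$ lie in a common parallelotope, and since $\mu=\sigma^{0}$ is a leading member, Lemma \ref{L:leading} (applied in $\cb$, after confirming $\cnu$ is $4$-increasing via the first assertion) forces the projections to agree on their internal/external split, hence $\internal{\fnu}=\internal{\flambda}$, i.e.\ $\xi_\fnu=\xi_\flambda$. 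I would write the formal argument in that language, using Lemmas \ref{L:emptyintersectionwithregionlambda} and \ref{L:emptyintersectionwithregionmu} to guarantee that the relevant bead movements do not cross the region $\CR$, which is exactly what makes the internal/external decomposition stable.
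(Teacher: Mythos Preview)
Your proposal rests on a misreading of ``$0$-separated''. By definition, $s_{\fnu,\mu}=|X|$, where $\z(\mu)=\z(\nu^{0})+\pb^{\fnu}_{J}+\sb^{\fnu}_{X}$ with $J\subsetneq[0,k+1]$ and $X\subseteq\external{\fnu}$. So $0$-separated means $X=\emptyset$, i.e.\ $\z(\mu)=\z(\nu^{0})+\pb^{\fnu}_{J}$ --- the \emph{external} part vanishes, not the internal one. You have it backwards: your ``$\z(\mu)=\z(\nu^{0})+\sb^{\fnu}_{X}$ with $X\subseteq\external{\fnu}$, i.e.\ $I=\emptyset$'' is the condition $J=\emptyset$, which is unrelated to $0$-separation. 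With this reversed, your route through Proposition~\ref{P:moveinparallelogramtilde} collapses (in particular the step ``therefore $\cnu=\csigma$'' has no justification), and everything downstream in the second part is built on the wrong first part.

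Once the definition is read correctly, the lemma is much shorter than what you sketch. For the first assertion: since each $\pb^{\fnu}_{j}$ lies in the span of $\{\ssb_{i}:i\in\internal{\fnu}\}$, applying $\pi_{\fnu}$ to $\z(\mu)=\z(\nu^{0})+\pb^{\fnu}_{J}$ kills $\pb^{\fnu}_{J}$, and $\pi_{\fnu}(\z(\nu^{0}))=\pi_{\fnu}(\z(\eptn{0}))=\z(\cnu)$ by Lemma~\ref{L:projection}(2); that is the whole argument. For the second assertion, the paper uses a direct numerical bound, not any of the heavy machinery you propose: from $n_{\flambda,\mu}>0$ one gets (again via Lemma~\ref{L:projection}) $\pi_{\flambda}(\z(\mu))\in\Pi(\clambda)$. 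Write $\z(\mu)=(m_{1},\dotsc,m_{w})$ and $\z(\clambda)=(l_{1},\dotsc,l_{\w})$. If $\xi_{\flambda}\ne\xi_{\fnu}$, pick $a$ with $\xi_{\flambda}(a)\ne\xi_{\fnu}(a)$; then $|m_{\xi_{\flambda}(a)}-m_{\xi_{\fnu}(a)}|\ge 4$ because $\mu$ is $4$-increasing, while $m_{\xi_{\flambda}(a)}-l_{a}\in[-2,1]$ by Lemma~\ref{preshiftby3}. Adding, the $a$-th coordinate of $\z(\cnu)-\z(\clambda)$ lies outside $[-2,1]$, so $\z(\cnu)\notin\Pi(\clambda)$ by Lemma~\ref{preshiftby3} again. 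No appeal to Propositions~\ref{P:moveinparallelogramtilde}, \ref{P:impt}, \ref{prop:parexceptional}, Lemma~\ref{L:internalparallelogram}, or Lemma~\ref{L:leading} is needed. Note also that your plan to invoke Proposition~\ref{prop:parexceptional} is dangerous: that proposition assumes Theorem~\ref{thm:main} already holds for $\mu\in B$, which is not a hypothesis of the present lemma.
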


\begin{proof}
The first part follows from Lemma \ref{L:projection}(2,3).

When $n_{\flambda,\mu} >0$, then using Lemma \ref{L:projection}(2,3), we get $\pi_{\flambda}(\z(\mu)) \in \Pi(\check{\lambda})$.  Let
\begin{align*}
\z(\mu) = (m_1,\dotsc, m_w), \qquad \text{and} \qquad
\z(\clambda) = (l_1,\dotsc, l_{\w}).
\end{align*}
Then
\begin{align*}
\pi_{\flambda}(\z(\mu)) &= (m_{\xi_{\flambda}(1)},\dotsc, m_{\xi_{\flambda}(\w)}), \\
\z(\cnu) = \pi_{\fnu}(\z(\mu)) &= (m_{\xi_{\fnu}(1)},\dotsc, m_{\xi_{\fnu}(\w)}).
\end{align*}
If $\xi_\flambda \ne \xi_\fnu$, say $\xi_{\flambda}(a) \ne \xi_{\fnu}(a)$, then $|m_{\xi_{\flambda}(a)} - m_{\xi_{\fnu}(a)}| \geq 4$ since $\mu$ is $4$-increasing.  Thus,
$$
m_{\xi_{\fnu}(a)} - l_a = (m_{\xi_{\fnu}(a)} - m_{\xi_{\flambda}(a)}) + (m_{\xi_{\flambda}(a)} - l_a)
\begin{cases}
\geq 3, &\text{if } m_{\xi_{\fnu}(a)} > m_{\xi_{\flambda}(a)}; \\
\leq -2, &\text{if } m_{\xi_{\fnu}(a)} < m_{\xi_{\flambda}(a)}
\end{cases}
$$
by Lemma \ref{preshiftby3} (since $(m_{\xi_{\flambda}(1)},\dotsc, m_{\xi_{\flambda}(\w)}) = \pi_{\flambda}(\z(\mu)) \in \Pi(\clambda)$).  Consequently, by Lemma \ref{preshiftby3} again, $\z(\cnu) = \pi_{\fnu}(\z(\mu)) = (m_{\xi_{\fnu}(1)},\dotsc,m_{\xi_{\fnu}(\w)}) \notin \Pi(\clambda)$.
\end{proof}

\begin{cor} \label{C:unique}
Suppose that $\mu \in B$ is $4$-increasing, and $n_{\flambda,\mu} > 0$ for a hook-quotient exceptional family $\flambda$.  Then there is a unique hook-quotient exceptional family $\fsigma$ from which $\mu$ is $0$-separated and which satisfies $\z(\csigma) \in \Pi(\clambda)$, namely that with $\z(\csigma) = \pi_{\flambda}(\z(\mu))$.
\end{cor}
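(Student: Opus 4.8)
The plan is to establish existence and uniqueness separately. Uniqueness will be a quick consequence of Lemma~\ref{L:unique}, while existence needs a little more care: after producing a candidate family via Proposition~\ref{P:moveinparallelogramtilde} one still has to verify the \emph{$0$-}separatedness condition.

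\emph{Existence.} First I would transport $\mu$ across the Scopes pair: by Theorem~\ref{thm:goodlabels}(1) there is a (necessarily $4$-increasing) partition $\tmu\in\tb$ with $\z(\tmu)=\z(\mu)$. Since $n_{\flambda,\mu}>0$, we have $\z(\tmu)=\z(\mu)\in\para(\flambda)=\bigcup_{j=0}^{k+1}\para(\eptl{j})$ by Lemma~\ref{lemma:paraexceptional}(5), so $\z(\tmu)\in\para(\eptl{j})$ for some $j$, and $\eptl{j}$ is an exceptional hook-quotient partition. Feeding $\tmu$ and $\eptl{j}$ into Proposition~\ref{P:moveinparallelogramtilde} produces a hook-quotient exceptional family $\fsigma$ with $\external{\fsigma}=\external{\flambda}$, hence $\internal{\fsigma}=\internal{\flambda}$ and $\pi_{\fsigma}=\pi_{\flambda}$, with $\z(\csigma)=\pi_{\flambda}(\z(\tmu))=\pi_{\flambda}(\z(\mu))\in\para(\clambda)$, and with $\z(\mu)=\z(\tmu)=\z(\epts{j})+\sb_{I}^{\epts{j}}$ for some $I\subseteq\internal{\fsigma}$. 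It then remains to show $\mu$ is $0$-separated from $\fsigma$. That $n_{\fsigma,\mu}>0$ is immediate, since $\z(\mu)\in\para(\epts{j})\subseteq\para(\fsigma)=\bigcup_{i=0}^{k+1}\para(\eps{i})$ by Lemma~\ref{lemma:paraexceptional}(5). For $s_{\fsigma,\mu}=0$ I would run a direct-sum argument: by Lemma~\ref{lemma:paraexceptional}(1),(3) the vector $\z(\mu)-\z(\eps{0})=(\z(\epts{j})-\z(\epts{0}))+\sb_{I}^{\epts{j}}$ lies in the $\BZ$-span of $\{\pb{\fsigma}_{r}\}_{r}$, which equals $\bigoplus_{i\in\internal{\fsigma}}\BZ\ssb_{i}$; on the other hand, writing $\z(\mu)-\z(\eps{0})=\pb{\fsigma}_{J}+\sb_{X}^{\fsigma}$ with $J\subsetneq[0,k+1]$ and $X\subseteq\external{\fsigma}$ (possible because $n_{\fsigma,\mu}>0$), the vectors $\{\sb_{x}^{\fsigma}\mid x\in\external{\fsigma}\}$ span a $\BZ$-direct complement of $\bigoplus_{i\in\internal{\fsigma}}\BZ\ssb_{i}$ in $\BZ^{w}$ — as one reads off from the fact that the modified basis vectors $\sb_{a}^{\eps{0}}$ form a $\BZ$-basis of $\BZ^{w}$, using Lemma~\ref{lemma:paraexceptional}(2),(3). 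Hence $\sb_{X}^{\fsigma}=0$, so $X=\emptyset$.

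\emph{Uniqueness.} Let $\fsigma$ be any hook-quotient exceptional family from which $\mu$ is $0$-separated and with $\z(\csigma)\in\para(\clambda)$. By the first assertion of Lemma~\ref{L:unique}, $\z(\csigma)=\pi_{\fsigma}(\z(\mu))$ and $\csigma$ is $4$-increasing; by its second assertion (available since $n_{\flambda,\mu}>0$), either $\xi_{\fsigma}=\xi_{\flambda}$ or $\z(\csigma)\notin\para(\clambda)$, and the latter is excluded by hypothesis, so $\xi_{\fsigma}=\xi_{\flambda}$, equivalently $\pi_{\fsigma}=\pi_{\flambda}$. Thus $\z(\csigma)=\pi_{\flambda}(\z(\mu))$; by Theorem~\ref{thm:goodlabels}(1) this pins down $\csigma$, and $\fsigma$ is the exceptional family generated by $\csigma$, so $\fsigma$ is unique. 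I expect the only genuinely substantive step to be the direct-sum verification of $0$-separatedness in the existence part; everything else is a direct appeal to Proposition~\ref{P:moveinparallelogramtilde}, Lemma~\ref{L:unique} and Theorem~\ref{thm:goodlabels}.
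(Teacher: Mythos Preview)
Your proof is correct and follows essentially the same route as the paper's: existence via Proposition~\ref{P:moveinparallelogramtilde} applied to $\tmu$ and $\eptl{j}$, uniqueness via Lemma~\ref{L:unique} and Theorem~\ref{thm:goodlabels}(1). The only difference is that you spell out the verification of $0$-separatedness via a direct-sum argument (using that $\{\sb_a^{\eps{0}}\}_{a\in[1,w]}$ is a $\BZ$-basis with the internal part spanning $\bigoplus_{i\in\internal{\fsigma}}\BZ\ssb_i$), whereas the paper treats this as immediate from the conclusion $\z(\tmu)=\z(\epts{j})+\sb_I^{\epts{j}}$ with $I\subseteq\internal{\fsigma}$ of Proposition~\ref{P:moveinparallelogramtilde}(3); your extra care here is justified but not a different idea.
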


\begin{proof}
Since $n_{\flambda,\mu} > 0$, we have $\z(\mu) \in \Pi(\epl{j})$ for some $j$.  Let $\tmu \in \tb$ be such that $\z(\tmu) = \z(\mu)$.  Then $\z(\tmu) \in \Pi(\eptl{j'})$ for some $j' \in [0,\,k+1]$ by Lemma \ref{lemma:paraexceptional}(5).  By Proposition \ref{P:moveinparallelogramtilde}, the partition $\csigma \in \cb$ satisfying $\z(\csigma) = \pi_{\flambda}(\z(\tmu))$ generates a hook-quotient exceptional family $\fsigma$ from which $\mu$ is $0$-separated, and $\z(\csigma) \in \Pi(\clambda)$.

Let $\fnu$ be an(other) hook-quotient exceptional family from which $\mu$ is $0$-separated.  Then $\pi_{\fnu}(\z(\mu)) = \z(\cnu)$ by Lemma \ref{L:unique}.  In addition, if $\z(\cnu) \in \Pi(\clambda)$, then $\xi_{\fnu} = \xi_{\flambda}$ by Lemma \ref{L:unique}, so that
$$
\z(\cnu) = \pi_{\fnu}(\z(\mu)) = \pi_{\flambda}(\z(\mu)) = \z(\csigma).$$
Thus $\csigma= \cnu$ by Theorem~\ref{thm:goodlabels}(1) and hence $\fsigma = \fnu$.
\end{proof}

\begin{prop} \label{P:}
Let $\mu \in B$ be a $4$-increasing partition.
  Assume that Theorem \ref{thm:main} holds for $\mu$ and all $4$-increasing $\clambda \in \cb$. Then
$$
\sum_{\substack{{\flambda}: \\ n_{\flambda,\mu} > 0}} q^{s_{\flambda,\mu}} [n_{\flambda,\mu}-1]_q\, \clambda = \sum_{\substack{{\fsigma}: \\ s_{\fsigma,\mu}=0}} [n_{\fsigma,\mu}-1]_q\, G(\csigma).
$$
\end{prop}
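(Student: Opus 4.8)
The plan is to prove Proposition~\ref{P:} by expressing both sides as $\Zq$-linear combinations of the partitions $\clambda\in\cb$, and comparing coefficients. The main point is that on the right-hand side, $G(\csigma)$ has a known expansion (since Theorem~\ref{thm:main} is assumed for $4$-increasing partitions in $\cb$, and every $\csigma$ appearing satisfies $s_{\fsigma,\mu}=0$, hence is $4$-increasing by Lemma~\ref{L:unique}): namely $G(\csigma)=\sum_{\clambda:\z(\csigma)\in\Pi(\clambda)}q^{\dist{\clambda}{\csigma}}\clambda$, with $\clambda$ ranging over hook-quotient partitions. So the right side becomes $\sum_{\fsigma:s_{\fsigma,\mu}=0}[n_{\fsigma,\mu}-1]_q\sum_{\clambda}q^{\dist{\clambda}{\csigma}}\clambda$. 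Thus, fixing a hook-quotient partition $\clambda\in\cb$, I must show
$$\sum_{\substack{\flambda:\ n_{\flambda,\mu}>0,\ \clambda\in\flambda}}q^{s_{\flambda,\mu}}[n_{\flambda,\mu}-1]_q=\sum_{\substack{\fsigma:\ s_{\fsigma,\mu}=0,\ \z(\csigma)\in\Pi(\clambda)}}q^{\dist{\clambda}{\csigma}}[n_{\fsigma,\mu}-1]_q,$$
where on the left $\flambda$ denotes the hook-quotient exceptional family generated by $\clambda$ (there is at most one such family; it exists iff $\clambda^{(j)}$ is a hook for all $j$ and $\clambda^{(a-1)},\clambda^{(a)}$ have the form $((x),(1^y))$, by Lemma~\ref{L:hookquotientfamilycriteria}). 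Note the left sum has at most one term.

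First I would dispose of the easy cases: if $\clambda$ does not generate a hook-quotient exceptional family, the left side is $0$, and I would argue the right side vanishes too --- any $\fsigma$ with $\z(\csigma)\in\Pi(\clambda)$ and $s_{\fsigma,\mu}=0$ would, via Corollary~\ref{C:unique} applied with $\flambda$ replaced by any family through which $\mu$ is seen, force $\clambda$ into such a family, or more directly: if $\z(\csigma)\in\Pi(\clambda)$ then $\clambda$ is $1$-increasing by Lemma~\ref{lemma:shiftby3}, hence hook-quotient by Lemma~\ref{lemma:1unram}(1), and running the argument of Lemma~\ref{L:emptyintersectionwithregionmu}/Corollary~\ref{C:externalpara} shows $\clambda$ generates a hook-quotient exceptional family since $\csigma$ does. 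So assume $\clambda$ generates the hook-quotient exceptional family $\flambda$. If $n_{\flambda,\mu}=0$ the left side is $0$; I would then show the right side is $0$ as well, because by Corollary~\ref{C:unique} any $\fsigma$ from which $\mu$ is $0$-separated with $\z(\csigma)\in\Pi(\clambda)$ is the unique family with $\z(\csigma)=\pi_\flambda(\z(\mu))$, and its existence (via Proposition~\ref{P:moveinparallelogramtilde}) would force $\z(\mu)\in\Pi(\epl j)$ for some $j$, i.e.\ $n_{\flambda,\mu}>0$ --- contradiction. There remains the main case: $\clambda$ generates $\flambda$ with $n_{\flambda,\mu}>0$.

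In this main case, the left side is exactly $q^{s_{\flambda,\mu}}[n_{\flambda,\mu}-1]_q$. By Corollary~\ref{C:unique} there is a \emph{unique} hook-quotient exceptional family $\fsigma$ from which $\mu$ is $0$-separated with $\z(\csigma)\in\Pi(\clambda)$, namely the one with $\z(\csigma)=\pi_\flambda(\z(\mu))$; moreover $\external\fsigma=\external\flambda$ by Proposition~\ref{P:moveinparallelogramtilde}. So the right side is $q^{\dist{\clambda}{\csigma}}[n_{\fsigma,\mu}-1]_q$, and I must check two numerical equalities: $n_{\fsigma,\mu}=n_{\flambda,\mu}$ and $\dist{\clambda}{\csigma}=s_{\flambda,\mu}$. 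For the first: writing $\z(\mu)=\z(\epl 0)+\pb{\flambda}_J+\sb^\flambda_X$ with $\emptyset\ne J\subsetneq[0,k+1]$ and $X\subseteq\external\flambda$, we have $n_{\flambda,\mu}=k+2-|J|$ by the Remark following Lemma~\ref{L:para(flambda)}; Proposition~\ref{P:moveinparallelogramtilde} (applied to $\tmu$ with $\z(\tmu)=\z(\mu)$) gives, via Lemma~\ref{lemma:paraexceptional}(1), $\z(\mu)=\z(\epts0)+\pb{\fsigma}_{J'}+\sb^\fsigma_{\emptyset}$ with $|J'|=|J|$ since the internal part of the displacement is untouched by the projection --- hence $n_{\fsigma,\mu}=k+2-|J'|=n_{\flambda,\mu}$. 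For the second: $\dist{\clambda}{\csigma}=|\zeta_\flambda(X)|=|X|=s_{\flambda,\mu}$, using $\z(\csigma)=\z(\clambda)+\sb^\clambda_{\zeta_\flambda(X)}$ from Proposition~\ref{P:moveinparallelogramtilde}(3) and the fact that $\zeta_\flambda$ is injective. I expect the bookkeeping in Proposition~\ref{P:moveinparallelogramtilde}, matching $|J'|=|J|$ and identifying $s_{\fsigma,\mu}=0$ (i.e.\ that the external displacement has been entirely absorbed into passing from $\flambda$ to $\fsigma$), to be the main obstacle --- it requires care that the decomposition $\z(\mu)=\z(\epts0)+\pb{\fsigma}_{J'}$ really has trivial external part, which follows because $\pi_\fsigma(\z(\mu))=\z(\csigma)$ uniquely determines $\fsigma$ and $\z(\epts0)=\z(\clambda)$ lifts $\z(\csigma)$ with no external ambiguity, but spelling this out cleanly is where the argument must be most careful.
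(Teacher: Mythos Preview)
Your approach is essentially the same as the paper's: expand $G(\csigma)$ via Theorem~\ref{thm:main}, then match coefficients of each $\clambda$ using Corollary~\ref{C:unique} together with the two converse Propositions~\ref{P:moveinparallelogramtilde} and~\ref{P:impt}. The paper organises the argument as a chain of equalities on the whole sum rather than coefficient-by-coefficient, but the content is the same.

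Two small points to tighten. First, in a couple of places you invoke Proposition~\ref{P:moveinparallelogramtilde} where what is needed is its converse, Proposition~\ref{P:impt}: e.g.\ in your case $n_{\flambda,\mu}=0$, the implication ``$\z(\csigma)\in\Pi(\clambda)$ and $s_{\fsigma,\mu}=0$ force $\z(\mu)\in\Pi(\epl{j})$ for some $j$'' is exactly Proposition~\ref{P:impt} (and you cannot invoke Corollary~\ref{C:unique} here, since its hypothesis is $n_{\flambda,\mu}>0$). Second, for the equality $n_{\fsigma,\mu}=n_{\flambda,\mu}$ in the main case, the cleanest route is the paper's: use Proposition~\ref{P:impt} to get $\{j:\z(\tmu)\in\para(\epts{j})\}\subseteq\{j:\z(\tmu)\in\para(\eptl{j})\}$ and Proposition~\ref{P:moveinparallelogramtilde} for the reverse inclusion, then Lemma~\ref{L:para(flambda)}(2). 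Your heuristic ``the internal part of the displacement is untouched by the projection'' amounts to the same thing, but to make it rigorous you need $\z(\eps{0})=\z(\epl{0})+\sb^{\flambda}_X$, which is precisely what iterating Corollary~\ref{C:externalpara} (i.e.\ the content of Propositions~\ref{P:moveinparallelogramtilde}/\ref{P:impt}) gives.
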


\begin{proof}
By Lemma \ref{L:unique}, whenever $s_{\fsigma,\mu}=0$, we have $\csigma$ to be $4$-increasing, so that we may apply Theorem \ref{thm:main} to $\csigma$ to get
$$
G(\csigma) = \sum_{\substack{\clambda \in \cb :\\ \z(\csigma) \in \Pi(\clambda)}} q^{\dist{\clambda}{\csigma}}\, \clambda.
$$
Thus,
$$
\sum_{\substack{{\fsigma}: \\ s_{\fsigma,\mu}=0}} [n_{\fsigma,\mu}-1]_q\, G(\csigma) = \sum_{\substack{{\fsigma}: \\ s_{\fsigma,\mu}=0}} [n_{\fsigma,\mu}-1]_q\sum_{\substack{\clambda \in \cb: \\ \z(\csigma) \in \Pi(\clambda)}} q^{\dist{\clambda}{\csigma}}\, \clambda.
$$
Let $\tmu \in \tb$ be such that $\z(\tmu) = \z(\mu)$.
By Proposition \ref{P:impt}, we see that each $\clambda$ in the sum generates a hook-quotient exceptional family $\flambda$ with $\external{\flambda}= \external{\fsigma}$ and $s_{\flambda,\mu} = \dist{\clambda}{\csigma}$, and $\{ j \mid \z(\tmu) \in \para(\epts{j}) \} \subseteq \{ j \mid \z(\tmu) \in \para(\eptl{j}) \}$.  By Proposition \ref{P:moveinparallelogramtilde}, since $\pi_{\flambda}(\z(\tmu)) = \pi_{\fsigma}(\z(\tmu)) = \z(\csigma)$, we also have $\{ j \mid \z(\mu) \in \para(\eptl{j}) \} \subseteq \{ j \mid \z(\mu) \in \para(\epts{j}) \}$.  Thus $n_{\fsigma,\mu} = n_{\flambda,\mu}$ by Lemma \ref{L:para(flambda)}(2), so that
$$
\sum_{\substack{{\fsigma}: \\ s_{\fsigma,\mu}=0}} [n_{\fsigma,\mu}-1]_q\, G(\csigma) = \sum_{\substack{{\fsigma}: \\ s_{\fsigma,\mu}=0}} \, \sum_{\substack{\flambda: \\ \z(\csigma) \in \Pi(\clambda)\\ n_{\flambda,\mu} >0}} [n_{\flambda,\mu}-1]_q\, q^{s_{\flambda,\mu}}\,  \clambda
= \sum_{\substack{\flambda: \\ n_{\flambda,\mu} > 0}} \sum_{\substack{\fsigma:\\ s_{\fsigma,\mu} = 0 \\ \z(\csigma) \in \Pi(\clambda)}} [n_{\flambda,\mu}-1]_q\, q^{s_{\flambda,\mu}}\, \clambda$$
To complete the proof, it remains to show that
for each hook-quotient exceptional family $\flambda$ with $n_{\flambda,\mu} > 0$, there exists a unique hook-quotient exceptional family $\fsigma$ from which $\mu$ is $0$-separated and which satisfies $\z(\csigma) \in \Pi(\clambda)$, but this is precisely the content of Corollary \ref{C:unique}.
\end{proof}

\begin{cor} \label{C:others}
Let $\tmu \in \tB$ be $4$-increasing, and suppose that $\tmu \ne \eptl{0}$ for any hook-quotient exceptional family $\flambda$.  Let $\mu \in B$ with $\z(\mu) = \z(\tmu)$.  If Theorem \ref{thm:main} holds for $\mu$ and $\clambda$ for any hook-quotient exceptional family $\flambda$, then
$$
G(\tmu) = E^{(k)}(G(\mu)) - \sum_{\fsigma :\, s_{\fsigma,\mu}=0} [n_{\fsigma,\mu}-1]_q\, F(G(\csigma))
$$
and Theorem \ref{thm:main} holds for $\tmu$.
\end{cor}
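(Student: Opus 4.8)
The plan is to realize $G(\tmu)$ as a bar-invariant element of the Fock space of the required triangular shape, using the fact that $E^{(k)}$ and $F$ commute with the bar involution. First I would consider the element
$$
\Theta := E^{(k)}(G(\mu)) - \sum_{\fsigma :\, s_{\fsigma,\mu}=0} [n_{\fsigma,\mu}-1]_q\, F(G(\csigma)).
$$
Since $E^{(k)}$ and $F$ commute with the bar involution and $[n-1]_q$ is bar-invariant, and since $G(\mu)$ and each $G(\csigma)$ are bar-invariant, $\Theta$ is bar-invariant. So it suffices to show that $\Theta = \tmu + (\text{higher order terms in } q\mathbb{Z}[q])$, from which $\Theta = G(\tmu)$ by the characterization of the canonical basis; reading off the coefficients $\langle G(\tmu),\tlambda\rangle$ then yields the $q$-decomposition numbers claimed in Theorem~\ref{thm:main} for $\tmu$.

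The next step is to compute $\langle \Theta, \tlambda\rangle$ for an arbitrary partition $\tlambda \in \tB$, splitting according to whether $\tlambda$ is exceptional or non-exceptional. For non-exceptional $\tlambda$, Lemma~\ref{L:nonexceptional} gives $E^{(k)}(\lambda) = \tlambda$ and $F(\clambda)$ has no $\tlambda$-component (the summands of $F(G(\csigma))$ lie in exceptional families), so $\langle \Theta, \tlambda \rangle = \langle G(\mu), \lambda\rangle = d_{\lambda\mu}(q)$; by Lemma~\ref{L:nonexceptional2} and Lemma~\ref{lemma:paraexceptional}(1) this matches $d_{\tlambda\tmu}(q)$ predicted by Theorem~\ref{thm:main} (using $\z(\lambda)=\z(\tlambda)$ and $\para(\lambda)=\para(\tlambda)$ when $\lambda$ is non-exceptional hook-quotient, and vanishing otherwise via Lemma~\ref{lemma:nottooexceptional}). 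For exceptional $\tlambda$, say $\tlambda = \eptl{j}$ for a hook-quotient exceptional family $\flambda$ (this is forced when $d_{\tlambda\tmu}(q)\neq0$ by Lemmas~\ref{lemma:shiftby3} and~\ref{L:exceptional-1-increasing}, since such $\tlambda$ must be $1$-increasing), I would use Proposition~\ref{prop:parexceptional}: it shows precisely that
$$
\langle E^{(k)}(G(\mu)) - q^{s_{\flambda,\mu}}[n_{\flambda,\mu}-1]_q\, F(\clambda),\, \eptl{j}\rangle = \begin{cases} q^{\dist{\eptl{j}}{\tmu}}, & \z(\tmu)\in\para(\eptl{j});\\ 0,&\text{otherwise.}\end{cases}
$$
It then remains to check that the $\eptl{j}$-component of $\sum_{\fsigma:\,s_{\fsigma,\mu}=0}[n_{\fsigma,\mu}-1]_q\, F(G(\csigma))$ equals $q^{s_{\flambda,\mu}}[n_{\flambda,\mu}-1]_q\langle F(\clambda),\eptl{j}\rangle = q^{s_{\flambda,\mu}+j}[n_{\flambda,\mu}-1]_q$ when $n_{\flambda,\mu}>0$, and $0$ otherwise. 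This is exactly Proposition~\ref{P:} after pairing both sides with $\eptl{j}$: the left side there has $\clambda$-coefficient $q^{s_{\flambda,\mu}}[n_{\flambda,\mu}-1]_q$, and applying $F$ and taking the $\eptl{j}$-component multiplies by $q^j$. One must also separately handle the case $\tmu = \sigma^{0}$ for a hook-quotient exceptional family $\fsigma$, which is excluded from the hypotheses of Corollary~\ref{C:others} and is covered instead by Proposition~\ref{P:leadingexceptional}; so in the present proof $\tmu$ is never a leading member and Proposition~\ref{prop:parexceptional} and Proposition~\ref{P:} both apply.

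Assembling these computations, $\langle \Theta, \eptl{j}\rangle = q^{\dist{\eptl{j}}{\tmu}}$ if $\z(\tmu)\in\para(\eptl{j})$ and $0$ otherwise, which agrees with the Theorem~\ref{thm:main} prediction for $d_{\eptl{j}\tmu}(q)$ (using $\para(\flambda) = \bigcup_j\para(\eptl{j})$ from Lemma~\ref{lemma:paraexceptional}(5), and that $\tmu$ determines the family via Corollary~\ref{C:unique}/Lemma~\ref{L:internalparallelogram}). In particular $\langle\Theta,\tmu\rangle = q^{0}=1$ and all other coefficients lie in $q\mathbb{Z}[q]$ — here the key input is that $\dist{\eptl{j}}{\tmu}\geq 1$ whenever $\eptl{j}\neq\tmu$, which follows because $\tmu$ is not a leading member so $\pb{\flambda}_J\neq 0$ forces $|J|$ and $s_{\flambda,\mu}$ to contribute positively, together with the non-negativity of ordinary $q$-decomposition numbers for the non-exceptional contributions. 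Hence $\Theta$ satisfies the defining properties of $G(\tmu)$, so $G(\tmu)=\Theta$ and Theorem~\ref{thm:main} holds for $\tmu$.

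The main obstacle I anticipate is not any single hard computation but the bookkeeping needed to verify that the ``higher order'' condition $\Theta - \tmu \in \bigoplus q\mathbb{Z}[q]\,\tlambda$ genuinely holds across all cases simultaneously — in particular confirming that the subtraction of the $F(G(\csigma))$ terms exactly cancels the spurious $[n_{\flambda,\mu}-1]_q$-contributions in $E^{(k)}(G(\mu))$ and does not introduce a coefficient with a nonzero constant term at some $\tlambda\neq\tmu$. This is where Proposition~\ref{P:} does the real work, and the delicate point is that its statement was proved assuming Theorem~\ref{thm:main} for $\mu$ \emph{and} for all $4$-increasing $\clambda\in\cb$; since $\cb$ has $e$-weight $\w = w-k-1 < w$, the latter is available by the outer induction on $e$-weight in the proof of Proposition~\ref{prop:main}, so the argument is not circular. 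Once that input is in hand, the remainder is a matter of matching exponents via Lemma~\ref{L:para(flambda)} and Lemma~\ref{lemma:paraexceptional}, which is routine.
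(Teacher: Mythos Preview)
Your approach is essentially the same as the paper's: show bar-invariance of $\Theta$, then verify coefficient-by-coefficient that $\Theta$ equals the expression predicted by Theorem~\ref{thm:main}. The exceptional case via Propositions~\ref{prop:parexceptional} and~\ref{P:} and the non-exceptional case via Lemma~\ref{L:nonexceptional2} are handled just as the paper does.

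However, your case analysis has a gap. You split into ``non-exceptional $\tlambda$'' and ``exceptional $\tlambda = \eptl{j}$ in a hook-quotient exceptional family'', but there is a third case: $\tlambda$ exceptional yet \emph{not} a member of any hook-quotient exceptional family. Your parenthetical ``this is forced when $d_{\tlambda\tmu}(q)\neq 0$'' only tells you the \emph{predicted} coefficient is zero; it does not show that $\langle\Theta,\tlambda\rangle = 0$ for such $\tlambda$. The paper treats this case explicitly: since Theorem~\ref{thm:main} holds for $\mu$, every $\lambda$ appearing in $G(\mu)$ is $1$-increasing (Lemma~\ref{lemma:shiftby3}), hence either non-exceptional or a member of a hook-quotient exceptional family (the $B$-analogue of Lemma~\ref{L:exceptional-1-increasing}); applying $E^{(k)}$ via \eqref{Eonexceptionals} keeps you within such partitions. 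Likewise, Proposition~\ref{P:impt} forces every $\clambda$ appearing in $G(\csigma)$ to generate a hook-quotient exceptional family, so $F(G(\csigma))$ lands entirely in such families. Thus $\Theta$ has no component at the problematic $\tlambda$, and Proposition~\ref{P:moveinparallelogramtilde}(1) confirms the target value is zero. This is a short argument, but it is not automatic and should be included.

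A smaller point: your justification that $\langle\Theta,\tlambda\rangle\in q\mathbb{Z}[q]$ for $\tlambda\neq\tmu$ is more labored than necessary. The paper sidesteps this by observing that the \emph{predicted} expression $H(\tmu)$ from Theorem~\ref{thm:main} already satisfies $H(\tmu)-\tmu\in\bigoplus q\mathbb{Z}[q]\,\tlambda$ by construction (since $\dist{\tlambda}{\tmu}=0$ iff $\z(\tlambda)=\z(\tmu)$ iff $\tlambda=\tmu$, by Theorem~\ref{thm:goodlabels}), so once you show $\Theta = H(\tmu)$ coefficientwise you are done. Your reasoning about ``$\pb{\flambda}_J\neq 0$ forces $|J|$ and $s_{\flambda,\mu}$ to contribute positively'' is not the cleanest way to see this.
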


\begin{proof}
Since $E^{(k)}(G(\mu)) - \sum_{\fsigma :\, s_{\fsigma,\mu}=0} [n_{\fsigma,\mu}-1]_q\, F(G(\csigma))$ is bar-invariant, while the asserted formula $H(\tmu)$ for $G(\tmu)$ by Theorem \ref{thm:main} satisfies $H(\tmu) - \tmu \in \bigoplus_{\tlambda} q\mathbb{Z}[q] \tlambda$, it suffices to show that $E^{(k)}(G(\mu)) - \sum_{\fsigma :\, s_{\fsigma,\mu}=0} [n_{\fsigma,\mu}-1]_q\, F(G(\csigma)) = H(\tmu)$ i.e.
$$
\langle E^{(k)}(G(\mu)) - \sum_{\fsigma :\, s_{\fsigma,\mu}=0} [n_{\fsigma,\mu}-1]_q\, F(G(\csigma)), \tlambda \rangle =
\begin{cases}
q^{\dist{\tlambda}{\tmu}}, &\text{if } \z(\tmu) \in \Pi(\tlambda); \\
0, &\text{otherwise}.
\end{cases}
$$
When $\tlambda$ is a member of a hook-quotient exceptional family $\flambda$, the above equality follows from Propositions \ref{prop:parexceptional} and \ref{P:}.

When $\tlambda$ is exceptional but not a member of any hook-quotient exceptional family, then $\z(\tmu) \notin \Pi(\tlambda)$ by Proposition \ref{P:moveinparallelogramtilde}(1).
On the other hand, since Theorem \ref{thm:main} holds for $G(\mu)$, we see that $G(\mu)$ is a $\Zq$-linear combination of partitions which are either non-exceptional or members of hook-quotient exceptional families by Lemma \ref{lemma:nottooexceptional}(2), so that the same is true for $E^{(k)}(G(\mu))$.
In addition, since Theorem \ref{thm:main} holds for $G(\csigma)$, we see that $G(\csigma)$, with $s_{\fsigma,\mu}=0$, is a $\Zq$-linear combination of partitions which generates hook-quotient exceptional families by Proposition \ref{P:impt}, so that $F(G(\csigma))$ is a $\Zq$-linear combination of partitions belonging to hook-quotient exceptional families.
Thus $E^{(k)}(G(\mu))  - \sum_{\fsigma :\, s_{\fsigma,\mu}=0} [n_{\fsigma,\mu}-1]_q\, F(G(\csigma))$ is a $\Zq$-linear combination of partitions which are either non-exceptional or members of hook-quotient exceptional families.
This shows that the above equality also holds for exceptional $\tlambda$ which is not a member of any hook-quotient exceptional family.

Finally when $\tlambda$ is non-exceptional, let $\lambda \in B$ such that $\z(\lambda) = \z(\tlambda)$.
Then $E^{(k)}(\lambda) = \tlambda$ and $\chevf^{(k)}(\tlambda) = \lambda$, so that $\langle E^{(k)}(\mu), \tlambda \rangle = 0$ for all $\mu \in B$ with $\mu \ne \lambda$.  Thus,
\begin{align*}
\langle E^{(k)}(G(\mu)) - \sum_{\fsigma :\, s_{\fsigma,\mu}=0} [n_{\fsigma,\mu}-1]_q\, F(G(\csigma)), \tlambda \rangle = d_{\lambda\mu}(q) &=
\begin{cases}
q^{\dist{\lambda}{\mu}}, &\text{if } \z(\mu) \in \Pi(\lambda); \\
0, &\text{otherwise}.
\end{cases}
\end{align*}
Now $z(\mu) \in \para(\lambda)$ only if $\lambda$ is $1$-increasing by Lemma \ref{lemma:shiftby3}, and hence $\sb^{\lambda}_i = \sb^{\tlambda}_i$ for all $i \in [1,\, w]$ by Lemma \ref{L:nonexceptional2}, so that $\para(\lambda) = \para(\tlambda)$ and $\dist{\lambda}{\mu} = \dist{\tlambda}{\mu}$.  The same conclusion is also true when $z(\tmu) \in \para(\tlambda)$.  Thus $z(\mu) \in \para(\lambda)$ if and only if $z(\tmu) \in \para(\tlambda)$, in which case $\dist{\lambda}{\mu} = \dist{\tlambda}{\tmu}$.  Hence
\begin{align*}
\langle E^{(k)}(G(\mu)) - \sum_{\fsigma :\, s_{\fsigma,\mu}=0} [n_{\fsigma,\mu}-1]_q\, F(G(\csigma)), \tlambda \rangle = \begin{cases}
q^{\dist{\tlambda}{\tmu}}, &\text{if } \z(\tmu) \in \Pi(\tlambda); \\
0, &\text{otherwise}.
\end{cases}
\end{align*}
\end{proof}

We can now prove Proposition \ref{prop:main}, thereby completing the proof of Theorem \ref{thm:main}.

\begin{proof}[Proof of Proposition \ref{prop:main}]
This follows immediately from Proposition \ref{P:leadingexceptional} and Corollary \ref{C:others}.
\end{proof}

\section{Hypercubes} \label{S:cubes}

In this section we reformulate Theorem~\ref{thm:main}, replacing $w$-parallelotopes in $\mathbb{Z}^w$ by $w$-hypercubes in $\mathbb{Z}^{w(w+1)/2}$. One advantage of this point of view is that the powers of $q$ appearing in the $q$-decomposition numbers can be understood in terms of the natural box metric in the larger lattice, which is independent of the partitions in question.
We will also use this reinterpretation in terms of hypercubes in Section~\ref{section:tilings} to show that the parallelotopes have nice intersections.

Recall that $\{ \ssb_i \mid 1 \leq i \leq \mathbb{Z}^w\}$ is the standard basis for $\mathbb{Z}^w$.  Formally define additional linearly independent vectors $\ssb_{ij}$ for $1 \leq i < j \leq w$, so that
$$\widehat{\mathbb{Z}^w}:=\bigoplus_{1\leq i\leq w}
\mathbb{Z}\ssb_{i} \oplus
\bigoplus_{1\leq i<j\leq w} \mathbb{Z}\ssb_{ij}$$ is
a free $\mathbb{Z}$-lattice of rank $w(w+1)/2$.  Let $\proj:\widehat{\mathbb{Z}^w}\to\mathbb{Z}^w$ be the
linear map given by $\proj(\ssb_i)=\ssb_i$ and $\proj(\ssb_{ij})=\ssb_i-\ssb_j$.

Let $\lambda$ be a partition of $e$-weight $w$, and write $\bm(\lambda)=\{(b_1;q_1),\dotsc,(b_w;q_w)\}$.
Firstly we define a lift of $z(\lambda)$ to $\widehat{\mathbb{Z}^w}$.
For $1\leq i<j\leq w$, let
$$\crossing^\lambda_{ij}=
\begin{cases}
1, & \text{if } q_i > q_j-e, \text{ or both } q_{i}=q_{j}-e\text{ and } b_{i}=b_{j}; \\
0,& \text{otherwise.}
\end{cases}
$$
We define
$$\hat\z(\lambda)=\z(\lambda)+\sum_{1\leq i<j\leq w}\crossing^\lambda_{ij}(-\ssb_i + \ssb_j + \ssb_{ij})\in\widehat{\mathbb{Z}^w}.$$
Clearly $\proj(\hat\z(\lambda))=\z(\lambda)$.

Suppose now that $\lambda$ is a hook-quotient partition. We lift $\para(\lambda)$ to a $w$-hypercube in $\widehat{\mathbb{Z}^w}$.
We first define the {\em lifted $\lambda$-modified basis vectors}
$$\hat\sb^\lambda_i =
\begin{cases}
\ssb_i & \text{ if } \sb^\lambda_i=\ssb_i; \\
\ssb_{ij}& \text{ if } \sb^\lambda_i=\ssb_i - \ssb_j \text{ for some } i<j; \\
-\ssb_{ji}& \text{ if } \sb^\lambda_i=\ssb_i - \ssb_j \text{ for some } j<i. \\
\end{cases}
$$
For a subset $\Gamma$ of $[1,\, w]$, let
$$
\hat\sb_{\Gamma}^{\lambda} = \sum_{a \in \Gamma} \hat\sb_a^{\lambda} \in \widehat{\BZ^w},$$
and define the
{\em{hypercubes}} $\cube_0(\lambda)$ and $\cube(\lambda)$ of $\lambda$, anchored at $0$ and $\hat\z(\lambda)$ respectively, by
\begin{align*}
	\cube_0(\lambda) & =\left\{ \hat\sb_\Gamma^\lambda \mid \Gamma \subseteq [1,\,w] \right\}, \\
	\cube(\lambda) &=\hat\z(\lambda) +  \cube_0(\lambda) = \left\{ \hat\z(\lambda) + \hat\sb_\Gamma^{\lambda} \mid \Gamma \subseteq [1,\,w] \right\};
\end{align*}
they are subsets of $\widehat{\BZ^w}$, each of cardinality $2^w$.
It is clear from the definitions that $\proj(\hat\sb_{\Gamma}^{\lambda})=\sb_{\Gamma}^{\lambda}$, and therefore that $\proj$ sends $\cube(\lambda)$ bijectively onto $\para(\lambda)$.

Define the box norm $\| \cdot \|$ in $\widehat{\BZ^w}$ by
$\left\Vert\sum c_i\ssb_i +  \sum c_{ij} \ssb_{ij}\right\Vert =
\sum |c_i| +  \sum |c_{ij}|$, so that $\| \hat\sb_\Gamma^{\lambda} \|=\left|\Gamma\right|$.

\begin{prop}\label{P:parallelepipedsversuscubes}
Let $\lambda$ and $\mu$ be partitions of $e$-weight $w$ with the same $e$-core, and suppose that $\mu$ is $4$-increasing. For any $\Gamma \in [1,w]$, we have $\z(\mu) = \z(\lambda) + \sb^{\lambda}_{\Gamma}$ if and only if
$\hat\z(\mu) = \hat\z(\lambda) + \hat\sb^{\lambda}_{\Gamma}$.   In particular, if $\mu \in \para(\lambda)$, then
 $\dist{\lambda}{\mu}=\left\Vert\hat{\z}(\mu)-\hat{\z}(\lambda)\right\Vert$.
\end{prop}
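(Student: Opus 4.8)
\textbf{Proof strategy for Proposition~\ref{P:parallelepipedsversuscubes}.}
The plan is to reduce the statement to the one-modified-basis-vector case via the iteration already developed in Remark~\ref{remark:movingalgorithm}, and then to check that single step by hand using the explicit description of $\mu$ given in Proposition~\ref{P:movealongdescription}. The final ``in particular'' clause is then immediate: applying $\proj$ to $\hat\z(\mu) = \hat\z(\lambda) + \hat\sb^{\lambda}_{\Gamma}$ recovers $\z(\mu) = \z(\lambda) + \sb^{\lambda}_{\Gamma}$ (so $\mu \in \para(\lambda)$ forces $\dist{\lambda}{\mu} = |\Gamma|$ by definition), while $\bigl\Vert \hat\z(\mu) - \hat\z(\lambda) \bigr\Vert = \Vert \hat\sb^{\lambda}_{\Gamma} \Vert = |\Gamma|$ since the $\hat\sb^{\lambda}_a$ for $a \in [1,w]$ are $\pm$ distinct basis vectors of $\widehat{\BZ^w}$ (each $\hat\sb^{\lambda}_a$ is one of $\ssb_a$, $\ssb_{aj}$ with $a<j$, or $-\ssb_{ja}$ with $j<a$, and these are all distinct across $a$ because the index $a$ itself appears in each). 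So everything comes down to the biconditional on lifts.

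First I would dispose of the ``if'' direction, which is trivial: applying $\proj$ to $\hat\z(\mu) = \hat\z(\lambda) + \hat\sb^{\lambda}_{\Gamma}$ and using $\proj(\hat\z(\nu)) = \z(\nu)$ and $\proj(\hat\sb^{\lambda}_{\Gamma}) = \sb^{\lambda}_{\Gamma}$ yields $\z(\mu) = \z(\lambda) + \sb^{\lambda}_{\Gamma}$ directly. For the ``only if'' direction, suppose $\z(\mu) = \z(\lambda) + \sb^{\lambda}_{\Gamma}$. By Lemma~\ref{lemma:shiftby3}, $\lambda$ is $1$-increasing, hence a hook-quotient partition, and then Remark~\ref{remark:movingalgorithm} produces a chain $\nu^0 = \lambda, \nu^1, \dotsc, \nu^{|\Gamma|} = \mu$ of partitions in the same block with $\z(\nu^i) = \z(\nu^{i-1}) + \sb^{\nu^{i-1}}_{\gamma_i}$, where each intermediate $\nu^i$ is $1$-increasing (this is exactly what Lemma~\ref{L:getcloser} guarantees, since $\mu$ is $4$-increasing and we peel off $\succeq$-maximal coordinates) and hence hook-quotient. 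Moreover, from Proposition~\ref{P:movealongdescription}(1) and the maximality of the $\gamma_i$, $\sb^{\nu^{i-1}}_a = \sb^{\lambda}_a$ for all $a \in \Gamma \setminus \{\gamma_1,\dotsc,\gamma_{i-1}\}$; the analogous statement for the lifted vectors, $\hat\sb^{\nu^{i-1}}_a = \hat\sb^{\lambda}_a$, will follow once I verify the single-step statement for the crossing numbers. So the task reduces to proving: if $\sigma$ is a $1$-increasing partition and $\z(\rho) = \z(\sigma) + \sb^{\sigma}_r$ for a $1$-increasing $\rho$ in its block, then $\hat\z(\rho) = \hat\z(\sigma) + \hat\sb^{\sigma}_r$, i.e.\ $\crossing^{\rho}_{ij} - \crossing^{\sigma}_{ij}$ matches the $\ssb_{ij}$-component of $\hat\sb^{\sigma}_r$ for every pair $i<j$, and the $\ssb_i$-components agree automatically by $\proj$.

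The one-step verification is where the real work lies, and I expect it to be the main obstacle. Using the notation of Proposition~\ref{P:movealongdescription}, the bead movement starting positions change by $q'_t = q_t$ for $t \not\succeq_{\sigma} r$ and $q'_s \in (q_s, q_s+e]$ for $s \succeq_{\sigma} r$; the crossing number $\crossing_{ij}$ records whether the $i$-th and $j$-th bead movements ``interlock'' in the sense that $q_i > q_j - e$ (with the tie-break by original bead position). I would split the pairs $(i,j)$ according to which of $i,j$ lie in the $\succeq_{\sigma} r$ class: if neither does, $q'_i = q_i$ and $q'_j = q_j$ and the original-bead-position data is unchanged, so $\crossing^{\rho}_{ij} = \crossing^{\sigma}_{ij}$; if both do, I would use parts (2a), (2b), (3) and (5) of Proposition~\ref{P:movealongdescription} (the non-crossing of $\mathbf{G}^{\mu}_{\lambda}$ with the relevant bead data, and the monotonicity $q'_t - q_t \le q'_s - q_s$ for $t \succeq s \succeq r$) to show the interlocking relation is preserved, again giving $\crossing^{\rho}_{ij} = \crossing^{\sigma}_{ij}$; and the only pairs where the crossing number can change are those where exactly one index — necessarily $r$ itself, by maximality considerations in the iterated reduction, or more precisely the index whose $\sb$-vector is $\ssb_r - \ssb_j$ — sits across a ``gap''. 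For such a pair, the change must be exactly $+1$ or $-1$ in the $\ssb_{rj}$-slot according to whether $r<j$ or $j<r$, matching the definition of $\hat\sb^{\sigma}_r$. The delicate bookkeeping is matching the tie-break condition ($q_i = q_j - e$ and $b_i = b_j$) through the bead operation, and handling the two exceptional sub-cases flagged in Proposition~\ref{P:movealongdescription}(2a) and (4) where $q'_s = q_s + e$; here I would lean on part (4), which says this forces $(b_r;q_r) = (q_r;q_r)$ to be the unique bead movement of the bottom bead of its runner, pinning down the relevant crossing numbers explicitly. Once the single step is established, chaining it along $\nu^0, \dotsc, \nu^{|\Gamma|}$ and summing the lifted vectors completes the proof.
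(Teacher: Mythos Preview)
Your proposal is correct and follows essentially the same route as the paper: reduce via Lemma~\ref{L:getcloser} and induction on $|\Gamma|$ to the single-step case, then verify the crossing-number change $\crossing^{\mu}_{rs} - \crossing^{\lambda}_{rs}$ using Proposition~\ref{P:movealongdescription}. The paper cites parts (1,2,3,5) of that proposition for the verification and calls it ``straightforward, albeit tedious''; your more detailed case-split and attention to the exceptional case in part (4) is a reasonable way to organize that tedium.
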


\begin{proof}
We show only that $\z(\mu) = \z(\lambda) + \sb^{\lambda}_{\Gamma}$ implies $\hat\z(\mu) = \hat\z(\lambda) + \hat\sb^{\lambda}_{\Gamma}$; the converse is trivial by applying the projection map $p$.  By induction on $|\Gamma|$, and using Lemma~\ref{L:getcloser}, we are reduced to proving the following statement: if $\lambda$ is a hook-quotient partition and $\mu$ is a $1$-increasing partition in the same block such that $\z(\mu)=\z(\lambda)+\sb^\lambda_i$ for some $i\in[1,w]$, then $\hat\z(\mu)=\hat\z(\lambda)+\hat\sb^\lambda_i$.  Since
$$
\proj(\hat\z(\lambda) + \hat\sb^{\lambda}_i) = \z(\lambda) + \sb^{\lambda}_i = \z(\mu),$$
and $\ker(\proj)$ is spanned by $\{ \ssb_{rs} -\ssb_r + \ssb_s \mid 1 \leq r < s \leq w\}$, it suffices to show that the coefficient of $\ssb_{rs}$ in $\hat\z(\mu)$ and in $\hat\z(\lambda)+\hat\sb^\lambda_i$ are equal for all $1 \leq r < s \leq w$, or equivalently,
$$
\crossing^{\mu}_{rs} - \crossing^{\lambda}_{rs} =
\begin{cases}
1, &\text{if } \sb^{\lambda}_i = \ssb_r - \ssb_s; \\
-1, &\text{if } \sb^{\lambda}_i = \ssb_s - \ssb_r; \\
0, &\text{otherwise.}
\end{cases}
$$
This is a straightforward, albeit tedious, verification using Proposition \ref{P:movealongdescription}(1,2,3,5), and our proposition follows.
	\end{proof}

In the light of Proposition \ref{P:parallelepipedsversuscubes}, Theorem~\ref{thm:main} may be restated as follows.

\begin{thm}\label{thm:maincube}
	Suppose $\lambda$ and $\mu$ are partitions of $e$-weight $w$ with the
	same $e$-core. Then if $\mu$ is
	$4$-increasing we have
	$$
	d_{\lambda\mu}(q)=
	\begin{cases}
	q^{\left\Vert\hat{\z}(\mu)-\hat{\z}(\lambda)\right\Vert},  & \text{if } \hat\z(\mu) \in \cube(\lambda); \\
	0,  & \text{otherwise.}
	\end{cases}
	$$
\end{thm}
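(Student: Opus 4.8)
The plan is to derive Theorem~\ref{thm:maincube} directly from Theorem~\ref{thm:main} together with Proposition~\ref{P:parallelepipedsversuscubes}, treating it essentially as a bookkeeping reformulation. First I would observe that since $\proj$ sends $\cube(\lambda)$ bijectively onto $\para(\lambda)$, the condition $\hat\z(\mu)\in\cube(\lambda)$ is at least as strong as the condition $\z(\mu)\in\para(\lambda)$; so I must show the two conditions are in fact equivalent when $\mu$ is $4$-increasing, and that when they hold the exponent $\dist{\lambda}{\mu}$ agrees with the box norm $\lVert\hat\z(\mu)-\hat\z(\lambda)\rVert$. Both of these are exactly what Proposition~\ref{P:parallelepipedsversuscubes} provides: if $\z(\mu)\in\para(\lambda)$, write $\z(\mu)=\z(\lambda)+\sb^\lambda_\Gamma$ for some (necessarily unique, by the linear independence of the $\sb^\lambda_i$) subset $\Gamma\subseteq[1,w]$, and the proposition gives $\hat\z(\mu)=\hat\z(\lambda)+\hat\sb^\lambda_\Gamma\in\cube(\lambda)$, with $\dist{\lambda}{\mu}=|\Gamma|=\lVert\hat\sb^\lambda_\Gamma\rVert=\lVert\hat\z(\mu)-\hat\z(\lambda)\rVert$.

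Concretely, the argument splits according to the dichotomy in Theorem~\ref{thm:main}. In the first case, $\lambda$ is a hook-quotient partition and $\z(\mu)\in\para(\lambda)$; then $d_{\lambda\mu}(q)=q^{\dist{\lambda}{\mu}}$, and by the discussion above $\hat\z(\mu)\in\cube(\lambda)$ with $\dist{\lambda}{\mu}=\lVert\hat\z(\mu)-\hat\z(\lambda)\rVert$, so $d_{\lambda\mu}(q)=q^{\lVert\hat\z(\mu)-\hat\z(\lambda)\rVert}$ as required. In the second case, $d_{\lambda\mu}(q)=0$; here I need to know that $\hat\z(\mu)\notin\cube(\lambda)$. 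If $\lambda$ is not a hook-quotient partition, then $\cube(\lambda)$ is simply not defined (equivalently, the $\hat\sb^\lambda_i$ are not defined), so the clause ``$\hat\z(\mu)\in\cube(\lambda)$'' is vacuously false — I would note this parallels the convention already implicit in Theorem~\ref{thm:main}, or alternatively restrict attention, via the Remark following Theorem~\ref{thm:main}, to the case where $\lambda$ is $1$-increasing (hence hook-quotient) since otherwise $d_{\lambda\mu}(q)=0$ anyway when $\z(\mu)\in\para(\lambda)$ would be needed. If $\lambda$ is hook-quotient but $\z(\mu)\notin\para(\lambda)$, then since $\proj(\cube(\lambda))=\para(\lambda)$ and $\proj(\hat\z(\mu))=\z(\mu)$, we cannot have $\hat\z(\mu)\in\cube(\lambda)$, so again the right-hand side of Theorem~\ref{thm:maincube} is $0$. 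This exhausts all cases.

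I do not anticipate a genuine obstacle here: the mathematical content has already been absorbed into Proposition~\ref{P:parallelepipedsversuscubes} (whose own proof is the ``straightforward, albeit tedious'' verification via Proposition~\ref{P:movealongdescription}) and into Theorem~\ref{thm:main}. The only point requiring a little care is the handling of the degenerate situations — $\lambda$ not hook-quotient, or the last coordinate issue where $\para(\lambda)$ may contain points not of the form $\z(\mu)$ (cf.\ Remark~\ref{remark:addrunner}) — but these do not affect the stated equivalence since we are comparing $d_{\lambda\mu}(q)$, which is zero in exactly the cases where $\hat\z(\mu)\notin\cube(\lambda)$. I would therefore present the proof as a short paragraph: invoke Theorem~\ref{thm:main}, then translate each of its two cases through Proposition~\ref{P:parallelepipedsversuscubes}, remarking that the box norm $\lVert\cdot\rVert$ was defined precisely so that $\lVert\hat\sb^\lambda_\Gamma\rVert=|\Gamma|$.
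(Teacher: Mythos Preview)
Your proposal is correct and matches the paper's approach exactly: the paper simply states that ``In the light of Proposition~\ref{P:parallelepipedsversuscubes}, Theorem~\ref{thm:main} may be restated as follows,'' presenting Theorem~\ref{thm:maincube} without a separate proof. Your write-up is just a careful unpacking of this one-line justification, correctly handling the case split and the non-hook-quotient degeneracy.
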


We conclude this section with an analysis of the hypercubes of partitions in
hook-quotient exceptional families, required in the proof of Lemma~\ref{C:cubesbijectparas}.
As before, let $B$ and $\tb$ be two blocks of $e$-weight $w$ forming a $[w:k]$-pair, and let $\flambda$ be a hook-quotient exceptional family with respect to this pair, with $\internal{\flambda} = \{ i_0,i_1,\dotsc, i_k \}$.

Define
$$\hpb{\flambda}_0=-\ssb_{i_0},\quad \hpb{\flambda}_{1}
=\ssb_{i_0,i_1},\quad\dotsc,\quad\hpb{\flambda}_k=
\ssb_{i_{k-1},i_k},\quad\hpb{\flambda}_{k+1}=\ssb_{i_{k}}.$$
For each $J \subseteq [0,k+1]$, write $\hpb{\flambda}_J$ for $\sum_{j \in J} \hpb{\flambda}_j$.

The following is a cubic analogue of Lemma \ref{lemma:paraexceptional}.

\begin{lemma}\label{L:cubeexceptional} \hfill
\begin{enumerate}
\item We have
\begin{alignat*}{2}
\hat\z(\epl{0}) &= \hat\z(\eptl{0}) - \hpb{\flambda}_{[0,k+1]}, \\
\hat\z(\epl{j}) &= \hat\z(\epl{0}) + \hpb{\flambda}_{[0,j)} &\qquad &(j \in[1,k+1]), \\
\hat\z(\eptl{j}) &= \hat\z(\eptl{0}) - \hpb{\flambda}_{(k+1-j,k+1]} = \hat\z(\epl{0}) + \hpb{\flambda}_{[0,k+1-j]} &\qquad & (j \in[1,k+1]).
\end{alignat*}

\item For any $x \in \external{\flambda}$, $\hat\sb^{\lambda}_x$ is constant for all $\lambda \in \flambda$; denote the common vector by $\hat\sb^{\flambda}_x$.

\item We have
$$\hat\sb_{i_\gamma}^{\epl{j}}= \hat\sb_{i_\gamma}^{\eptl{k+1-j}}=
\begin{cases}
-\hpb{\flambda}_{\gamma} & \text{ if } \gamma < j; \\
\hpb{\flambda}_{\gamma+1} & \text{ if } \gamma \geq j.
\end{cases}$$
In particular, $\cube_0(\epl{j}) = \cube_0(\eptl{k+1-j})$.

\item Let $z \in \widehat{\BZ^w}$.  Then $z \in \cube_0(\epl{j})$ ($= \cube_0(\eptl{k+1-j})$ by part (3))  if and only if $$z = -\hpb{\flambda}_{I_{<j}} + \hpb{\flambda}_{I_{>j}} + \hat\sb_X^{\flambda}$$ for some $I_{<j} \subseteq [0,\, j)$, $I_{>j} \subseteq (j,\, k+1]$ and $X \subseteq \Ext(\flambda)$.

\item We have
\begin{align*}
\cube(\epl{j}) &= \{
\hat\z(\epl{0}) + \hpb{\flambda}_J + \hat\sb^{\flambda}_X \mid j \notin J,\ X \subseteq \external{\flambda} \}; \\
\cube(\eptl{k+1-j}) &= \{
\hat\z(\epl{0}) + \hpb{\flambda}_J + \hat\sb^{\flambda}_X \mid j \in J,\ X \subseteq \external{\flambda} \}.
\end{align*}
\end{enumerate}
\end{lemma}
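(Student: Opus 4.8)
\textbf{Proof plan for Lemma~\ref{L:cubeexceptional}.} The strategy is to deduce the cubic statements from their parallelotopic counterparts in Lemma~\ref{lemma:paraexceptional}, using the fact that $\proj$ is injective on the relevant hypercubes (as established just before Proposition~\ref{P:parallelepipedsversuscubes}) and that $\proj(\hpb{\flambda}_j) = \pb{\flambda}_j$ for all $j$. The latter identity is immediate: $\proj(-\ssb_{i_0}) = -\ssb_{i_0} = \pb{\flambda}_0$, $\proj(\ssb_{i_{\gamma-1},i_\gamma}) = \ssb_{i_{\gamma-1}} - \ssb_{i_\gamma} = \pb{\flambda}_\gamma$, and $\proj(\ssb_{i_k}) = \ssb_{i_k} = \pb{\flambda}_{k+1}$. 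So all five parts will follow once the corresponding statements are known `downstairs' in $\mathbb{Z}^w$, \emph{provided} one knows that the vectors being compared both lie in a set on which $\proj$ is known to be injective.

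For part (1), the plan is to invoke Proposition~\ref{P:parallelepipedsversuscubes}. Recall from Lemma~\ref{lemma:paraexceptional}(1) that $\z(\eptl{j}) = \z(\eptl{0}) - \ssb_{i_{k+1-j}}$, and note that $-\ssb_{i_{k+1-j}} = \pb{\flambda}_{(k+1-j,k+1]} \cdot(-1)$... more directly, $\z(\epl{j}) - \z(\epl{0}) = -\ssb_{i_{j-1}} = \sb^{\epl{0}}_{i_{j-1}}$ when this modified basis vector equals $-\ssb_{i_{j-1}}$; one checks from Definition~\ref{def:modified} and Lemma~\ref{lemma:paraexceptional}(3) that $\z(\epl{j}) = \z(\epl{0}) + \sb^{\epl{0}}_{\{i_0,\dots,i_{j-1}\}}$ in a way that each intermediate step is a single $1$-increasing move, so Proposition~\ref{P:parallelepipedsversuscubes} (via induction through Lemma~\ref{L:getcloser}, exactly as in its own proof) gives $\hat\z(\epl{j}) = \hat\z(\epl{0}) + \hat\sb^{\epl{0}}_{\{i_0,\dots,i_{j-1}\}} = \hat\z(\epl{0}) + \hpb{\flambda}_{[0,j)}$, the last equality by part (3) below. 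The same argument handles $\eptl{j}$ in terms of $\eptl{0}$, and the relation between $\hat\z(\epl{0})$ and $\hat\z(\eptl{0})$ then comes from setting $j = k+1$: $\epl{k+1}$ and $\eptl{k+1}$ have the same $\z$-value (Lemma~\ref{lemma:paraexceptional}(1)), hence the same $\hat\z$-value, and chasing the two chains back to $\hat\z(\epl{0})$ and $\hat\z(\eptl{0})$ respectively yields the displayed identity.

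Part (2) is the observation that for $x \in \external{\flambda}$ the $x$-th bead movement of every member of the family starts at one of $\y-e$ or $\y-1+(k+1)e$ and its partner (if any) under the `$\ssb_r - \ssb_s$' description is also external — this follows from Lemma~\ref{L:region} exactly as the analogous statement $\sb^{\lambda}_x = \sb^{\flambda}_x$ in Lemma~\ref{lemma:paraexceptional}(2) does, and then $\hat\sb^{\lambda}_x$ is determined by $\sb^{\lambda}_x$ and the relative order of the coordinates involved, all of which are constant across the family. Part (3) is a direct computation: by Lemma~\ref{lemma:paraexceptional}(3), $\sb^{\epl{j}}_{i_\gamma} = -\pb{\flambda}_\gamma = -(\ssb_{i_{\gamma-1}} - \ssb_{i_\gamma})$ for $\gamma < j$ and $= \pb{\flambda}_{\gamma+1} = \ssb_{i_\gamma} - \ssb_{i_{\gamma+1}}$ for $\gamma \geq j$ (with the usual conventions at the ends); applying the definition of $\hat\sb^{\lambda}_i$, and using that $i_0 < i_1 < \dots < i_k$ so the indices appear in increasing order, gives $\hat\sb^{\epl{j}}_{i_\gamma} = -\ssb_{i_{\gamma-1},i_\gamma} = -\hpb{\flambda}_\gamma$ for $\gamma < j$ and $\hat\sb^{\epl{j}}_{i_\gamma} = \ssb_{i_\gamma,i_{\gamma+1}} = \hpb{\flambda}_{\gamma+1}$ for $\gamma \geq j$; the boundary cases $\gamma = 0$ (giving $-\ssb_{i_0} = -\hpb{\flambda}_0$) and $\gamma = k$ with $\gamma \geq j$ (giving $\ssb_{i_k} = \hpb{\flambda}_{k+1}$) match the final bead-movement clauses of Definition~\ref{def:modified}. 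Parts (4) and (5) are then purely formal: part (4) substitutes part (3) into the definition $\cube_0(\epl{j}) = \{ \hat\sb^{\epl{j}}_\Gamma \mid \Gamma \subseteq [1,w]\}$ and splits $\Gamma$ into its intersections with $[0,j)$, $(j,k+1]$ and $\external{\flambda}$; part (5) adds $\hat\z(\epl{0})$ using part (1) and reindexes the subset $I_{<j} \sqcup I_{>j} \subseteq [0,k+1]\setminus\{j\}$ as a subset $J$ of $[0,k+1]$ not containing $j$ (respectively, for $\eptl{k+1-j}$, containing $j$, using $\cube_0(\eptl{k+1-j}) = \cube_0(\epl{j})$ and $\hat\z(\eptl{k+1-j}) = \hat\z(\epl{0}) + \hpb{\flambda}_{[0,j]}$ from part (1)). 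The only real work is part (1), and even there the content is entirely a bookkeeping translation of Lemma~\ref{lemma:paraexceptional}(1) through Proposition~\ref{P:parallelepipedsversuscubes}; the main thing to be careful about is that each application of Proposition~\ref{P:parallelepipedsversuscubes} requires the target partition to be $4$-increasing, or that one reduces (as in the proof of that proposition) to single steps between a hook-quotient partition and a $1$-increasing one — which holds here since the members $\epl{j}, \eptl{j}$ of a hook-quotient exceptional family are hook-quotient and the intermediate $\z$-vectors, being $\z(\epl{0})$ shifted by a partial sum of $\pm\ssb$'s, are $1$-increasing when $\epl{0}$ is sufficiently increasing. This is the step I expect to need the most care, but it is not genuinely hard.
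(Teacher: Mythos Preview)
Your treatment of parts (2)--(5) is correct and is exactly the direct verification the paper has in mind: $\hat\sb^{\lambda}_x$ is determined by $\sb^{\lambda}_x$, so Lemma~\ref{lemma:paraexceptional}(2,3) lifts immediately, and parts (4) and (5) are then formal bookkeeping. There is no issue here.

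Part (1), however, has real problems. First, a correction: Lemma~\ref{lemma:paraexceptional}(1) gives $\z(\epl{j}) = \z(\eptl{k+2-j})$ for $j\in[1,k+1]$, so it is $\epl{k+1}$ and $\eptl{1}$ (not $\eptl{k+1}$) that share a $\z$-value; and even then, two distinct partitions with equal $\z$-values need not have equal $\hat\z$-values, since $\hat\z$ also depends on the $\crossing^{\lambda}_{rs}$, which in turn depend on the pairs $(b_r,q_r)$. More fundamentally, Proposition~\ref{P:parallelepipedsversuscubes} is a statement about two partitions in the \emph{same} block, so it cannot by itself relate $\hat\z(\epl{0})$ to $\hat\z(\eptl{0})$: the first displayed identity in part (1) is genuinely a cross-block statement and needs a direct argument. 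Finally, even for the within-block identities, your reduction to single steps via Proposition~\ref{P:movealongdescription} requires each intermediate target to be $0$-increasing, and the lemma is stated for an arbitrary hook-quotient exceptional family with no such hypothesis. (The applications in \S\ref{section:tilings} are to families with a sufficiently-increasing member, so your approach suffices there; but as stated the lemma is more general.)

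The paper offers no proof, treating the lemma as a routine analogue of Lemma~\ref{lemma:paraexceptional}; the intended argument is a direct computation of $\crossing^{\lambda}_{rs}$ for $\lambda\in\flambda$ using the explicit abacus description of the family members from Lemma~\ref{L:region} and the proof of Lemma~\ref{L:hookquotientfamilycriteria}. For instance, for the first identity one checks that the external bead movements (those with starting position outside $\mathcal{R}$) of $\epl{0}$ and $\eptl{0}$ coincide, while the internal ones all start at $\y,\y+e,\dotsc,\y+ke$ in both cases but with $b_{i_\gamma}=b_{i_{\gamma'}}$ for all $\gamma,\gamma'$ in $\eptl{0}$ (all belong to the bottom bead) and $b_{i_\gamma}\ne b_{i_{\gamma+1}}$ in $\epl{0}$; this forces $\crossing^{\epl{0}}_{i_{\gamma-1},i_\gamma}-\crossing^{\eptl{0}}_{i_{\gamma-1},i_\gamma}=-1$ for each $\gamma\in[1,k]$ and equality of all other $\crossing_{rs}$, which is exactly what is needed. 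The other identities in part (1) are handled the same way.
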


\section{Tilings}\label{section:tilings}

In this section we show that the parallelotopes $\para(\lambda)$ associated to hook-quotient partitions in a block $B$ of weight $w$ assemble to form `tilings' of a subset of $\mathbb{Z}^w$ with nice properties. We also prove a geometric analogue: the real convex hulls $\hull{\para(\lambda)}$ tile a region of $\mathbb{R}^w$.

We continue to denote by $\inc{\Omega}{m}$ the set of $m$-increasing elements in a subset $\Omega\subset\mathbb{Z}^w$ or, more generally, in a subset $\Omega\subset\mathbb{R}^w$. We extend this convention to subsets $\Omega\subset\widehat{\mathbb{R}^w}$, defining $\inc{\Omega}{m}$ as the set of elements in $\Omega$ that map to $m$-increasing elements of $\mathbb{R}^w$ under the linear map $\proj:\widehat{\mathbb{R}^w}\to\mathbb{R}^w$ defined by
$p(\ssb_i)=\ssb_i$ and $p(\ssb_{ij})=\ssb_i-\ssb_j$.

We start with the discrete parallelotopes. Set
$$\goodplus:=[0,e]^w \subset \mathbb{Z}^w,$$
so that
$$\plusinc{\good}{m}=\{(z_1,\dotsc,z_w) \in \mathbb{Z}^w\mid 0\leq z_i \leq e \text{ and } z_{i+1}-z_{i}\geq m \text{ for all admissible } i \}$$
for any nonnegative integer $m$.
It was explained in Remark~\ref{remark:addrunner} that $\plusinc{\good}{m}$ may be understood as the set of $\z$-labels taken by $m$-increasing partitions in a block of $(e+1)$-weight $w$. 
It is also, at least when $m\geq 4$, the subset of $\mathbb{Z}^w$ tiled by (appropriate truncations of) the parallelotopes associated to any block of $e$-weight $w$, as the following result shows.
	
\begin{prop}\label{prop:discreteunion}
	Let $B$ be a block of $e$-weight $w$. Then
	$$\bigcup_{\lambda\in B}\inc{\para(\lambda)}{4} = \plusinc{\good}{4},$$
	where the union is taken over all hook-quotient partitions in $B$.
\end{prop}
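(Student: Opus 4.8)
The plan is to establish both inclusions, with the forward inclusion $\bigcup_{\lambda}\inc{\para(\lambda)}{4}\subseteq\plusinc{\good}{4}$ being the easier half. For this, suppose $\lambda$ is a hook-quotient partition in $B$ and $\mathbf{z}\in\inc{\para(\lambda)}{4}$, so $\mathbf{z}=\z(\lambda)+\sb^\lambda_\Gamma$ for some $\Gamma\subseteq[1,w]$. Since $\mathbf{z}$ is $4$-increasing, Lemma~\ref{lemma:shiftby3} forces $\lambda$ to be $1$-increasing; in particular $\z(\lambda)\in\inc{\good}{0}\subset[0,e-1]^w$. By Lemma~\ref{preshiftby3}, each coordinate of $\sb^\lambda_\Gamma$ lies in $[-2,1]$, so each coordinate of $\mathbf{z}$ lies in $[-2,e]$. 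To conclude $\mathbf{z}\in[0,e]^w$ it remains to rule out negative coordinates, which I would do by examining the modified basis vectors directly: the only way a coordinate $z_i$ can drop below $\z(\lambda)_i$ is via $\sb^\lambda_j = \ssb_j - \ssb_i$ for some $j$, and by Definition~\ref{def:modified} the runner containing the $i$-th bead movement contributes at most one $-\ssb_i$ term (from the unique $j$ adjacent to $i$ in the chain), so $z_i\geq \z(\lambda)_i-1\geq -1$ only if $\z(\lambda)_i=0$; a closer look at which bead movements can have $\z$-value $0$ (only the final bead movement of a bottom bead with its northeast node last in its row) shows that such an $i$ is the unmodified one on its runner, so it never receives a $-\ssb_i$. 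Hence $z_i\geq 0$ and $\mathbf{z}\in\plusinc{\good}{4}$.

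For the reverse inclusion $\plusinc{\good}{4}\subseteq\bigcup_\lambda\inc{\para(\lambda)}{4}$, I would use the runner-addition trick of Remark~\ref{remark:addrunner} to reduce to a statement inside a genuine block. Let $B^+$ be the block of $(e+1)$-weight $w$ obtained from $B$ by adding a full runner, as in Remark~\ref{remark:addrunner}; then $\z$, the hook-quotient property, the modified basis vectors, and hence the parallelotopes are all preserved under $\lambda\mapsto\lambda^+$, while $\plusinc{\good}{4}$ (for $e$) equals $\inc{\good}{4}$ computed in $[0,e]^{w}$ which is exactly the set of $\z$-labels of $4$-increasing partitions in $B^+$ by Theorem~\ref{thm:goodlabels}(1). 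So given $\mathbf{z}\in\plusinc{\good}{4}$, there is a ($4$-increasing, hence hook-quotient by Lemma~\ref{lemma:1unram}(1)) partition $\nu\in B^+$ with $\z(\nu)=\mathbf{z}$. The task is then to produce a hook-quotient partition $\lambda\in B$ with $\mathbf{z}\in\para(\lambda)$; equivalently, since every element of $\para(\lambda)$ is $\z(\lambda)+\sb^\lambda_\Gamma$, I want $\lambda$ together with a subset $\Gamma$ so that $\z(\lambda)+\sb^\lambda_\Gamma=\mathbf{z}$.

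The natural candidate is to run the ``moving along a parallelotope'' machinery in reverse. Concretely: among hook-quotient partitions in $B^+$ whose parallelotope contains $\mathbf z$, I expect there to be a distinguished ``source'' vertex, and I would identify it by a greedy/extremal procedure — repeatedly peel off a modified basis vector to decrease some monovariant (e.g. the sum of coordinates, or a lexicographic statistic on $\z$) until no further $\sb^\lambda_i$ can be subtracted while staying $0$-increasing, arriving at a $0$-increasing $\lambda^+$ with $\mathbf z\in\para(\lambda^+)$; then take $\lambda$ to be the corresponding partition in $B$ via the inverse of runner addition. The key inputs are Theorem~\ref{thm:goodlabels} (to pass freely between $\z$-labels and partitions), Lemma~\ref{L:getcloser} and Proposition~\ref{P:movealongdescription} (to guarantee that the intermediate partitions obtained while peeling off basis vectors are again hook-quotient and $1$-increasing, so the procedure doesn't leave the class of partitions we can handle), and Lemma~\ref{lemma:shiftby3} (to control the ``increasing'' parameter: starting from a $4$-increasing $\mathbf z$, any vertex of the relevant parallelotope is at least $1$-increasing). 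The main obstacle I anticipate is precisely the bookkeeping that the greedy descent terminates at a vertex of $\plusinc{\good}{4}$ that is realized by a partition \emph{in the block $B$ itself} (not just in $B^+$) and is hook-quotient — i.e. showing that one can always choose the sequence of peeled-off modified basis vectors so that all intermediate $\z$-labels stay in $\inc{\good}{0}$ and correspond to hook-quotient partitions, using the maximality-with-respect-to $\succeq_\lambda$ selection of Lemma~\ref{L:getcloser} and Remark~\ref{remark:movingalgorithm}. Once the source vertex $\z(\lambda)$ is in hand with $\mathbf z=\z(\lambda)+\sb^\lambda_\Gamma\in\para(\lambda)$ and $\lambda$ hook-quotient, the $4$-increasing hypothesis on $\mathbf z$ gives $\mathbf z\in\inc{\para(\lambda)}{4}$ and the proof is complete.
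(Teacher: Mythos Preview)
Your approach is genuinely different from the paper's, and both halves have issues.

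\textbf{Forward inclusion.} Your claim that ``the runner containing the $i$-th bead movement contributes at most one $-\ssb_i$ term'' is false precisely for the pivot index $i_l$: by Definition~\ref{def:modified}, both $\sb^\lambda_{i_{l-1}}$ and $\sb^\lambda_{i_{l+1}}$ carry a $-\ssb_{i_l}$, so the pivot can lose $2$, not $1$ (this is exactly why Lemma~\ref{preshiftby3} has lower bound $-2$). Your follow-up claim that the pivot ``never receives a $-\ssb_i$'' is therefore backwards. The fix is to use the $4$-increasing hypothesis more aggressively: it suffices to check $z_1\geq 0$ and $z_w\leq e$, since $4$-increasing then forces all coordinates into $[0,e]$. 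For these two extreme indices your ``at most one'' reasoning \emph{does} work (index $1$ is always $i_1$ on its runner, and the only possible $-\ssb_1$ comes from $\sb^\lambda_{i_2}$ when the pivot is $i_1$; similarly for index $w$). A short case check then gives $z_1\geq 0$.

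\textbf{Reverse inclusion.} This is where the real gap lies. The paper does not attempt any greedy descent; instead it runs the same Weyl-group induction used throughout Section~\ref{section:mainproof}: the Rouquier base case is immediate since there $\sb^\lambda_i=\ssb_i$, and for the inductive step one passes from $B$ to $\tilde B=s_a(B)$ using that every $1$-increasing $\tilde\lambda\in\tilde B$ is either non-exceptional (so $\para(\tilde\lambda)=\para(\lambda)$ by Lemma~\ref{L:nonexceptional2}) or lies in a hook-quotient exceptional family $\flambda$ (Lemma~\ref{L:exceptional-1-increasing}), for which Lemma~\ref{lemma:paraexceptional}(5) gives $\bigcup_j\para(\lambda^j)=\bigcup_j\para(\tilde\lambda^j)$. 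The union therefore does not change as one moves through blocks.

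Your runner-addition idea runs into exactly the obstacle you name: after descending in $B^+$ you have no mechanism to guarantee the source vertex lies in the image of $B\hookrightarrow B^+$ (equivalently, that the added runner carries no bead movements). Lemma~\ref{L:getcloser} and Proposition~\ref{P:movealongdescription} let you move \emph{towards} a given $4$-increasing target, not away from one, so they do not directly furnish a descent; and the only case that actually needs work, $z_w=e$, is precisely where $\mathbf z$ is not $\z(\nu)$ for any $\nu\in B$, so there is no obvious starting point inside $B$. Resolving this would essentially require rebuilding the exceptional-family analysis, at which point the paper's three-line inductive proof is far more efficient.
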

When $e$ is large in comparison to $w$, most of the parallelotopes appearing in the union in Proposition~\ref{prop:discreteunion} are untruncated, since  $\inc{\para(\lambda)}{4}=\para(\lambda)$ for all $7$-increasing partitions $\lambda$, by Lemma~\ref{lemma:shiftby3}. The same lemma implies that
$\inc{\para(\lambda)}{4}$ is nonempty only if $\lambda$ is $1$-increasing, so the union may just as well be taken over only the $1$-increasing partitions in $B$. 	

\begin{proof}
	We proceed using the inductive setup described at the beginning of Section~\ref{section:mainproof}.
	Suppose that $B$ is a Rouquier block. Then for any $1$-increasing partition $\lambda$ in $B$, we have $\sb_i^\lambda=\ssb_i$ for all $i\in[1,w]$, and the result is immediate.
		Next, suppose that the statement of the proposition holds for a block $\b$ of weight $w$, and let $\tb=s_a(\b)$. By Lemma~\ref{L:exceptional-1-increasing}, any $1$-increasing partition in $\tb$ is either non-exceptional or belongs to a hook-quotient exceptional family. The statement thus follows directly from Lemma~\ref{L:nonexceptional2} and Lemma~\ref{lemma:paraexceptional}(5).
	\end{proof}

By a {\em face} of a parallelotope $\para(\lambda)$ we mean a subset of the form
$$F=F_{\Gamma_0,\Gamma_1}=\left\{\z(\lambda)+\sb_\Gamma^\lambda\mid \Gamma_1 \subseteq \Gamma \subseteq [1,w]\setminus\Gamma_0\right\},$$
where $\Gamma_0$ and $\Gamma_1$ are disjoint subsets of $[1,w]$, or the empty set.

\begin{prop}\label{prop:discreteintersection}
	Let $\lambda$ and $\mu$ be hook-quotient partitions in $B$. Then $\inc{\para(\lambda)}{4}\cap\inc{\para(\mu)}{4}=\inc{F}{4}$, where $F$ is a common face of $\para(\lambda)$ and $\para(\mu)$. In particular, if either $\lambda$ or $\mu$ is $7$-increasing, then $\para(\lambda)$ and $\para(\mu)$ intersect in a common face.
	\end{prop}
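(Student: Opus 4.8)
\textbf{Proof plan for Proposition~\ref{prop:discreteintersection}.}
The natural strategy is to lift the problem to the hypercube setting of Section~\ref{S:cubes}, where intersections are transparent because the hypercubes live in orthogonal coordinate directions of the large lattice $\widehat{\mathbb{Z}^w}$, and then transport the conclusion back down via the projection $\proj$. Concretely, I would first dispose of the trivial cases: if $\inc{\para(\lambda)}{4}\cap\inc{\para(\mu)}{4}=\emptyset$ there is nothing to prove (the empty set is an allowed face), and if either partition is $7$-increasing then by Lemma~\ref{lemma:shiftby3} $\inc{\para(\lambda)}{4}=\para(\lambda)$ (resp.\ for $\mu$), so the first assertion already gives the second. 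So assume both $\lambda$ and $\mu$ are $1$-increasing (again by Lemma~\ref{lemma:shiftby3}, otherwise the $4$-increasing parts are empty), and that there is a common $4$-increasing point $\z(\nu)\in\inc{\para(\lambda)}{4}\cap\inc{\para(\mu)}{4}$, where by Theorem~\ref{thm:goodlabels} we may pick a partition $\nu$ in $B$ with that $\z$-label, necessarily $4$-increasing.

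The key step is to run the same inductive machinery used throughout Section~\ref{section:mainproof}: via Corollary~\ref{C:polytopalcomplex}'s setup one reduces, using the Scopes-pair chain from a Rouquier block, to (a) the Rouquier case, where $\sb_i^\lambda=\ssb_i$ for all $1$-increasing $\lambda$ so $\para(\lambda)=\z(\lambda)+\{0,1\}^w$ and the intersection of two coordinate boxes is visibly a common (possibly empty) face; and (b) the inductive step $\b\rightsquigarrow\tb=s_a(\b)$. In step (b), each $1$-increasing partition of $\tb$ is either non-exceptional or a member of a hook-quotient exceptional family (Lemma~\ref{L:exceptional-1-increasing}). For a pair of non-exceptional partitions, Lemma~\ref{L:nonexceptional2} gives $\para(\tlambda)=\para(\lambda)$ and $\z(\tlambda)=\z(\lambda)$, so the statement transfers verbatim from $\b$. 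The genuinely new content is when one or both partitions lie in exceptional families: here I would use Lemma~\ref{lemma:paraexceptional}(5), which expresses $\para(\flambda)=\bigcup_j\para(\epl{j})=\bigcup_j\para(\eptl{j})$, together with the explicit description of $\para(\epl{j})$ via the vectors $\pb{\flambda}_J$ and $\sb_X^\flambda$ in Lemma~\ref{lemma:paraexceptional}(4), and Lemma~\ref{L:para(flambda)}(1)–(2) describing exactly which $\epl{j}$ (resp.\ $\eptl{j}$) a given point of $\para(\flambda)$ belongs to. The point is that for $z=\z(\epl{0})+\pb{\flambda}_J+\sb_X^\flambda$ the set $\{j:z\in\para(\epl{j})\}=\{j:j\notin J\}$ is ``interval-like'' in the combinatorial structure, so that $\para(\epl{j})\cap\para(\epl{j'})$ is itself a face of each, cut out by fixing the coordinates indexed by $\internal{\flambda}$ appropriately and leaving the $\external{\flambda}$-coordinates free; more carefully, one can argue this cleanly by passing to the hypercubes $\cube(\epl{j})$ using Lemma~\ref{L:cubeexceptional}(5), where $\cube(\epl{j})=\{\hat\z(\epl{0})+\hpb{\flambda}_J+\hat\sb_X^\flambda: j\notin J,\ X\subseteq\external{\flambda}\}$ exhibits these as genuine coordinate subcubes of a common larger cube, whose pairwise intersections are manifestly common faces, and then apply $\proj$ (which, by Proposition~\ref{P:parallelepipedsversuscubes} and its consequences, restricts to a bijection $\cube(\epl{j})\to\para(\epl{j})$ respecting the $4$-increasing loci by Lemma~\ref{L:emptyintersectionwithregionlambda}/\ref{L:emptyintersectionwithregionmu}-type control). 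Finally one checks that intersecting with the $4$-increasing condition $\inc{(-)}{4}$ preserves ``being a face'': the $4$-increasing locus of a parallelotope is cut out by inequalities on consecutive coordinates, and intersecting a face $F_{\Gamma_0,\Gamma_1}$ with $\inc{\para(\lambda)}{4}$ gives $\inc{F}{4}$ for the corresponding face $F$ of both $\para(\lambda)$ and $\para(\mu)$ — here one uses that a common point being $4$-increasing forces the relevant consecutive-coordinate gaps to already be $\geq 4$ on the face, via Lemma~\ref{preshiftby3} as in the proof of Lemma~\ref{lemma:shiftby3}.

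The main obstacle I anticipate is the bookkeeping in the exceptional case: verifying that when $\lambda$ belongs to an exceptional family $\flambda$ and $\mu$ to a (possibly different) exceptional family $\fmu$ — or $\mu$ is non-exceptional — the intersection $\para(\lambda)\cap\para(\mu)$ is simultaneously a face of \emph{both}. When $\flambda\neq\fmu$ one must show $\external{\flambda}=\external{\fmu}$ on the overlap (this is the role of Corollary~\ref{C:externalpara} and Lemma~\ref{L:unique}/\ref{C:unique}), so that the two families' parallelotopes sit inside a \emph{single} ambient hypercube structure and the orthogonality argument applies; and when $\mu$ is non-exceptional one must match $\para(\mu)$, which is a full $2^w$-box, against the ``internal'' directions of $\flambda$ — here Lemma~\ref{L:internalparallelogram} and Corollary~\ref{C:noremovable} are the relevant tools, since a non-exceptional partition in the overlap forces $\mu$ not to use the internal coordinate directions non-trivially. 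Once these identifications are made, the face property is formal; the difficulty is purely in assembling the already-established lemmas in the right order and confirming there are no further cases (e.g.\ $\lambda$ non-exceptional, $\mu$ a non-leading exceptional member) that escape the dichotomy, which I would handle by reducing every case to the hypercube picture before taking any intersection.
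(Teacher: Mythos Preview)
Your instinct to lift to hypercubes is exactly right, and is the paper's approach too --- but you then abandon it for an unnecessary and incomplete Scopes-pair induction. The paper's argument is essentially two lines: since each $\hat\sb_i^\lambda$ is $\pm$ a standard basis vector of $\widehat{\mathbb{Z}^w}$, every $\cube(\lambda)$ is a coordinate-aligned unit cube in $\widehat{\mathbb{Z}^w}$, so any two hypercubes $\cube(\lambda)$ and $\cube(\mu)$ automatically intersect in a common face $\hat F$. Hence it suffices to know that $\proj$ restricts to a \emph{bijection} from $\bigcup_\lambda \inc{\cube(\lambda)}{4}$ onto $\plusinc{\good}{4}$: granting this, any element of $\inc{\para(\lambda)}{4}\cap\inc{\para(\mu)}{4}$ has a unique lift in $\inc{\cube(\lambda)}{4}$ and a unique lift in $\inc{\cube(\mu)}{4}$, which must therefore coincide and lie in $\hat F$; setting $F:=\proj(\hat F)$ (a common face of $\para(\lambda)$ and $\para(\mu)$ since $\proj$ sends $\hat\sb^\lambda_\Gamma$ to $\sb^\lambda_\Gamma$) gives $\inc{\para(\lambda)}{4}\cap\inc{\para(\mu)}{4}=\inc{F}{4}$ at once. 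That bijectivity statement is precisely Lemma~\ref{C:discretecubesbijectparas}, whose proof needs only Proposition~\ref{P:parallelepipedsversuscubes} (so that $\hat\z(\nu)\in\cube(\lambda)$ whenever $\z(\nu)\in\para(\lambda)$ for $4$-increasing $\nu$) together with the runner-addition trick of Remark~\ref{remark:addrunner} to handle points with last coordinate $e$. No exceptional-family analysis enters.

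Your inductive route is not wrong in spirit, but the cross-cases you flag as the ``main obstacle'' are genuinely left open. Lemma~\ref{L:internalparallelogram} and Corollary~\ref{C:noremovable} tell you when a point reached along internal directions is itself exceptional; they do not directly control how a non-exceptional $\para(\mu)$ meets some $\para(\epl{j})$. Likewise, using Corollary~\ref{C:externalpara} or Lemma~\ref{L:unique} to align $\external{\flambda}$ with $\external{\fmu}$ presupposes a $4$-increasing point sitting in the right sub-parallelotope, which you have not isolated. (Also, invoking ``Corollary~\ref{C:polytopalcomplex}'s setup'' is circular, as that corollary is downstream of the present proposition.) All of this bookkeeping is exactly what the global injectivity of $\proj$ packages for you --- so rather than repairing the case analysis, state and prove Lemma~\ref{C:discretecubesbijectparas} and deduce the proposition from it directly.
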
	
	We can define a {\em face} of a hypercube $\cube(\lambda)$ analogously to that for $\para(\lambda)$, or, more simply, as the intersection of $\cube(\lambda)$ with (a translate of) a coordinate hyperplane of $\widehat{\mathbb{Z}^w}$.
In contrast to the situation for parallelotopes, it is clear that any two hypercubes $\cube(\lambda)$ and $\cube(\mu)$ intersect in a common face. Thus Proposition~\ref{prop:discreteintersection} is a consequence of the following lemma.
\begin{lemma} \label{C:discretecubesbijectparas}
		Let $B$ be a block of $e$-weight $w$.
		The projection map $\proj: \widehat{\BZ^w} \to \BZ^w$ maps $\bigcup_{\lambda} \inc{\cube(\lambda)}{4}$ bijectively onto
		$\plusinc{\good}{4}$, where $\lambda$ runs over all hook-quotient partitions in $B$. 
	\end{lemma}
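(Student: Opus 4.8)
The strategy is to reduce the statement to Proposition~\ref{prop:discreteunion} together with injectivity on the union, and to prove both the surjectivity and injectivity simultaneously by the same inductive scheme used throughout Section~\ref{section:mainproof}: the base case of a Rouquier block, and the inductive step across a Scopes $[w:k]$-pair $\b$, $\tb = s_a(\b)$. Since $\proj$ maps $\cube(\lambda)$ bijectively onto $\para(\lambda)$ (as noted just before Theorem~\ref{thm:maincube}) and $\proj$ is $\z$-label compatible by definition of $\inc{\;}{m}$ on $\widehat{\mathbb{R}^w}$, we have $\proj(\inc{\cube(\lambda)}{4}) = \inc{\para(\lambda)}{4}$ for every hook-quotient $\lambda$. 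Hence $\proj(\bigcup_\lambda \inc{\cube(\lambda)}{4}) = \bigcup_\lambda \inc{\para(\lambda)}{4} = \plusinc{\good}{4}$ by Proposition~\ref{prop:discreteunion}, giving surjectivity for free. All the work is in injectivity: we must show that if $v \in \inc{\cube(\lambda)}{4}$ and $v' \in \inc{\cube(\mu)}{4}$ with $\proj(v) = \proj(v')$, then $v = v'$.

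\textbf{Base case.} For a Rouquier block $B$ and a $1$-increasing partition $\lambda$, we have $\sb_i^\lambda = \ssb_i$ and hence $\hat\sb_i^\lambda = \ssb_i$ for all $i$, so $\cube(\lambda) = \para(\lambda) \subset \mathbb{Z}^w \subset \widehat{\mathbb{Z}^w}$ and $\proj$ is the identity on this subset; injectivity is then immediate. (Non-$1$-increasing $\lambda$ contribute nothing, since $\inc{\cube(\lambda)}{4} = \emptyset$ by Lemma~\ref{lemma:shiftby3}.)

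\textbf{Inductive step.} Assume the lemma holds for $\b$; we prove it for $\tb = s_a(\b)$. By Lemma~\ref{L:exceptional-1-increasing}, any $1$-increasing partition of $\tb$ is either non-exceptional or a member of a hook-quotient exceptional family $\flambda = \{\epl{j}, \eptl{j}\}$. For the non-exceptional partitions $\tlambda$, Lemma~\ref{L:nonexceptional2} gives $\hat\z(\lambda) = \hat\z(\tlambda)$ (they have the same $\z$-label, and $\crossing$-coefficients agree since Figure~\ref{figure:disallowed-configuration} is avoided) and $\hat\sb_i^\lambda = \hat\sb_i^\tlambda$, so $\cube(\tlambda) = \cube(\lambda)$; these cubes cause no new collisions by the inductive hypothesis. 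The genuinely new cubes are the $\cube(\eptl{j})$ for the members of hook-quotient exceptional families. Here I would invoke Lemma~\ref{L:cubeexceptional}: for a fixed family $\flambda$, the union $\bigcup_{j} \cube(\eptl{j})$ is exactly $\{\hat\z(\epl{0}) + \hpb{\flambda}_J + \hat\sb_X^\flambda \mid J \subseteq [0,k+1],\ X \subseteq \external{\flambda}\}$ — because Lemma~\ref{L:cubeexceptional}(5) says $\cube(\eptl{k+1-j})$ corresponds to those $J$ with $j \in J$, and $\cube(\epl{j})$ to those with $j \notin J$, so together the $\eptl{j}$'s sweep out every $J$. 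Crucially the parametrization by $(J,X)$ here is genuinely injective in $\widehat{\mathbb{Z}^w}$ since $\{\hpb{\flambda}_0,\dotsc,\hpb{\flambda}_{k+1},\hat\sb_x^\flambda\}$ are linearly independent (the relation $\sum_j \pb{\flambda}_j = 0$ that collapses things downstairs in $\mathbb{Z}^w$ does \emph{not} hold upstairs, precisely because the $\ssb_{i_{\gamma-1}i_\gamma}$ are independent of the $\ssb_{i_\gamma}$). So within a single family there is no self-collision. Between two distinct families $\flambda$, $\fsigma$, and between a family and the non-exceptional cubes, I would argue as follows: $\proj(v) = \proj(v')$ lies in $\plusinc{\good}{4}$, so both $v$ and $v'$ are $4$-increasing lifts of the same point; applying the inductive hypothesis in $\cb$ via the projection $\pi_{\flambda}$ (Lemma~\ref{L:projection}(2,3) and Corollary~\ref{C:externalpara}/Corollary~\ref{C:unique}) shows the family is forced — i.e. there is at most one hook-quotient exceptional family $\fsigma$ with $\z(\csigma) = \pi_\flambda(\proj(v))$ that can contain $v'$, and it must equal $\flambda$. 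The external coordinates and the value $\pi_\flambda(\proj(v)) = \z(\csigma)$ then determine $X$ and (via the inductive injectivity in $\cb$) the cube, and finally $J$ is pinned down by Lemma~\ref{L:para(flambda)}(2), which says that for non-empty proper $J$ the set $\{j : \proj(v) \in \para(\eptl{j})\}$ determines $J$; the degenerate cases $J = \emptyset$ and $J = [0,k+1]$ are handled by Lemma~\ref{L:leading} (leading member), which matches up with a non-exceptional cube. Assembling, $v = v'$.

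\textbf{Main obstacle.} The delicate point is the cross-family comparison: ruling out that a point has $4$-increasing lifts in $\cube(\eptl{j})$ for one family and in $\cube(\epts{j'})$ for a genuinely different family $\fsigma$ (with $\external{\flambda} \neq \external{\fsigma}$). The mechanism that prevents this is exactly the rigidity proved in Lemma~\ref{L:unique}: if $\xi_\flambda \neq \xi_\fsigma$ then the $4$-increasing hypothesis forces the projected point $\pi_\fsigma(\proj(v))$ to fall outside $\para(\clambda)$, a contradiction. Translating this cleanly to the hypercube level — keeping careful track that the box norm and the combinatorial data $(J,X)$ transport correctly under $\pi_\flambda$, and that the $\crossing$-coefficients behave as in Lemma~\ref{L:cubeexceptional}(1) — is where the bookkeeping is heaviest, but no new idea beyond what is already available in Section~\ref{section:mainproof} is needed.
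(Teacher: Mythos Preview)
Your approach takes a genuinely different and much heavier route than the paper's. The paper's proof is \emph{direct}, not inductive across Scopes pairs. The key tool you overlook is Proposition~\ref{P:parallelepipedsversuscubes}: given $\mathbf{a} \in \inc{\cube(\lambda)}{4}$ and $\mathbf{b} \in \inc{\cube(\mu)}{4}$ with $\proj(\mathbf{a}) = \proj(\mathbf{b})$, if this common projection lies in $\inc{\good}{4}$ then it equals $\z(\nu)$ for some $4$-increasing $\nu \in B$, and Proposition~\ref{P:parallelepipedsversuscubes} gives $\hat\z(\nu) \in \cube(\lambda) \cap \cube(\mu)$; since $\proj$ is injective on each individual cube, $\mathbf{a} = \hat\z(\nu) = \mathbf{b}$. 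The residual case (last coordinate equal to $e$) is handled by the runner-addition trick of Remark~\ref{remark:addrunner}: one checks $\cube(\lambda) = \cube(\lambda^+)$ and applies the previous argument in the block $B^+$ of $(e+1)$-weight $w$, where every point of $\plusinc{\good}{4}$ is realised as $\z(\nu)$ for some $\nu \in B^+$.

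Your inductive scheme is in fact close to the strategy the paper uses for the \emph{real} analogue, Lemma~\ref{C:cubesbijectparas}, where there is no partition $\nu$ available at a non-integral point and one is forced into the Scopes-pair machinery. In the discrete setting it is overkill, and several of your steps are under-justified: the equality $\hat\z(\lambda) = \hat\z(\tlambda)$ for non-exceptional $\lambda$ requires verifying that all $\crossing$-coefficients agree, which does not follow immediately from Lemma~\ref{L:nonexceptional2}; your cross-family uniqueness appeal to Lemma~\ref{L:unique} and Corollary~\ref{C:unique} concerns $4$-increasing \emph{partitions} $\mu \in B$, not arbitrary points of $\plusinc{\good}{4}$ (so the last-coordinate-$e$ case is again uncovered); and the ``degenerate'' swap between $J = \emptyset$ and $J = [0,k+1]$ is exactly where the $\b$- and $\tb$-cube-unions differ, and you have not shown that this swap creates no new collision with the non-exceptional cubes.
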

	
	\begin{proof}
		Clearly, $p$ maps $\bigcup_{\lambda} \inc{\cube(\lambda)}{4}$ onto $\plusinc{\good}{4}$, as $\proj$ maps each $\inc{\cube(\lambda)}{4}$ onto $\inc{\para(\lambda)}{4}$.  Let $\lambda,\mu$ be hook-quotient partitions in $B$ and suppose that $\proj(\mathbf{a}) = \proj(\mathbf{b})$ for some $\mathbf{a} \in \inc{\cube(\lambda)}{4}$ and $\mathbf{b} \in \inc{\cube(\mu)}{4}$.  
		
		Suppose that $p(\mathbf{a})\in\inc{\good}{4}$, so that
		$p(\mathbf{a})=\z(\nu)$ for a partition $\nu$ in $B$.  We have that $\nu$ is $4$-increasing, and $\z(\nu) \in \proj(\cube(\lambda)) \cap \proj(\cube(\mu)) = \para(\lambda) \cap \para(\mu)$.  By Proposition \ref{P:parallelepipedsversuscubes}, we thus have $\hat\z(\nu) \in \cube(\lambda) \cap \cube(\mu)$.  Since $\proj$ maps $\cube(\lambda)$ bijectively onto $\para(\lambda)$, and $\proj(\mathbf{a}) = \z(\nu) = \proj(\hat\z(\nu))$, we must have $\mathbf{a} = \hat\z(\nu)$.  Similarly, $\mathbf{b} =  \hat\z(\nu)$, and hence $\mathbf{a} = \mathbf{b}$.
		
		To handle the general case, consider the embedding $B\hookrightarrow B^+:\lambda\mapsto\lambda^+$ of $B$ into a block $B^+$ of $(e+1)$-weight $w$, as described in Remark~\ref{remark:addrunner}. It is easy to check that
		$\hat\z(\lambda) = \hat\z(\lambda^+)$, and $\sb^{\lambda}_i = \sb^{\lambda^+}_i$ for all $i \in [1,w]$, so that $\cube(\lambda) = \cube(\lambda^+)$.
		Applying the special case considered in the previous paragraph to the block $B^+$, we obtain the desired conclusion that $\proj$ maps  $\bigcup_{\lambda\in B} \inc{\cube(\lambda)}{4}=\bigcup_{\lambda\in B} \inc{\cube(\lambda^+)}{4}$ bijectively onto $\plusinc{\good}{4}$, since the latter is the set of values taken by $\z(\nu)$ as $\nu$ ranges over $4$-increasing partitions in $B^+$.
	\end{proof}

We next formulate and prove real analogues of Propositions \ref{prop:discreteunion} and \ref{prop:discreteintersection}. To that end, we define various tiles and regions.
For any hook-quotient partition $\lambda$ of $e$-weight $w$, define
the solid parallelotope $\hull{\para(\lambda)}$ as the convex hull of $\para(\lambda)$ in $\mathbb{R}^w$, and the solid hypercube
$\hull{\cube(\lambda)}$ as the convex hull of $\cube(\lambda)$ in $\widehat{\mathbb{R}^w}$, so that
 $$\hull{\para(\lambda)}=\left\{\z(\lambda)+\sum_{i=1}^{w} a_i \sb^\lambda_i
 \,\bigg|\,
  0\leq a_i \leq 1 \text{ for all } i\in[1,w]\right\}\subset \mathbb{R}^w$$
 and
 $$\hull{\cube(\lambda)}=\left\{\hat\z(\lambda)+\sum_{i=1}^{w} a_i \hat\sb^\lambda_i
 \,\bigg|\,
  0\leq a_i \leq 1 \text{ for all } i\in[1,w]\right\}\subset \widehat{\mathbb{R}^w}.$$
We additionally define the half-open solid parallelotope
 $$\hhull{\para(\lambda)}=\left\{\z(\lambda)+\sum_{i=1}^{w} a_i \sb^\lambda_i
 \,\bigg|\,
  0\leq a_i < 1 \text{ for all } i\in[1,w]\right\}\subset \mathbb{R}^w.$$

Turning to regions, we define $\hull{\goodplus}=\left\{x\in\mathbb{R}\mid 0\leq x \leq e\right\}^w$ and
$\hhull{\goodplus}=\left\{x\in\mathbb{R}\mid 0\leq x < e\right\}^w$, so that
\begin{align*}
\hullinc{\goodplus}{m}&=\{(z_1,\dotsc,z_w) \in \mathbb{R}^w\mid 0\leq z_i \leq e \text{ and } z_{i+1}-z_{i}\geq m \text{ for all admissible } i\}, \\
\hhullinc{\goodplus}{m}&=\{(z_1,\dotsc,z_w) \in \mathbb{R}^w\mid 0\leq z_i < e \text{ and } z_{i+1}-z_{i}\geq m \text{ for all admissible } i \}
\end{align*}
for any nonnegative integer $m$.

\begin{prop}\label{disjointtiling}
	Let $B$ be a block of $e$-weight $w$. Then
\begin{align*}
\coprod_{\lambda\in B} \hhullinc{\para(\lambda)}{4} &= \hhullinc{\goodplus}{4}, \\
\bigcup_{\lambda\in B} \hullinc{\para(\lambda)}{4} &= \hullinc{\goodplus}{4}.
\end{align*}
	Both unions, the first of which is disjoint, are taken over all hook-quotient partitions in $B$.
		\end{prop}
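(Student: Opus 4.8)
The plan is to deduce Proposition~\ref{disjointtiling} from its discrete counterparts, Propositions~\ref{prop:discreteunion} and~\ref{prop:discreteintersection}, together with the analogous statements for hypercubes encoded in Lemma~\ref{C:discretecubesbijectparas}. The key observation is that the lifted picture is the clean one: a union of genuine solid hypercubes in $\widehat{\mathbb{R}^w}$, meeting in common faces, is automatically a disjoint union of half-open solid hypercubes, and these tile a coordinate box (suitably truncated by the $4$-increasing condition). So the strategy is to prove the hypercube version of Proposition~\ref{disjointtiling} first, and then push it down via the linear projection $\proj:\widehat{\mathbb{R}^w}\to\mathbb{R}^w$.

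First I would establish the hypercube statement: for a block $B$ of $e$-weight $w$,
$$\coprod_{\lambda\in B}\hhullinc{\cube(\lambda)}{4}=\proj^{-1}(\hhullinc{\goodplus}{4})\cap\bigcup_{\lambda\in B}\hhullinc{\cube(\lambda)}{4}\quad\text{(as a disjoint union),}$$
where $\lambda$ runs over hook-quotient partitions in $B$, and similarly for the closed versions. The point is that each $\hull{\cube(\lambda)}$ is a unit hypercube in the lattice $\widehat{\mathbb{Z}^w}$ whose edge directions $\hat\sb_i^\lambda$ are, up to sign, distinct coordinate vectors $\ssb_i$ or $\ssb_{ij}$, by the definition of the lifted modified basis vectors; hence $\hull{\cube(\lambda)}$ is a translate of a coordinate subcube of $[0,1]^{w(w+1)/2}$ (shifted, and with some axes reflected). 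Since Lemma~\ref{C:discretecubesbijectparas} (or rather its proof, using the runner-addition embedding $B\hookrightarrow B^+$) shows that the integer points $\bigcup_\lambda\inc{\cube(\lambda)}{4}$ biject with $\plusinc{\good}{4}$ via $\proj$, and since each $\inc{\cube(\lambda)}{4}$ is a sub-box of the lattice-cube $\cube(\lambda)$ cut out by the affine inequalities defining $4$-increasingness pulled back through $\proj$ (which are compatible with the coordinate-box structure because the slicing is by hyperplanes of the form $p(x)_{i+1}-p(x)_i=4$), the half-open solid cubes $\hhullinc{\cube(\lambda)}{4}$ partition their union, and their union's closure is the corresponding union of closed cubes. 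Each integer point of $\plusinc{\good}{4}$ lies in exactly one half-open cube, and a standard argument (every point of the region lies in a unique half-open unit cube of a given axis-aligned cubical subdivision) upgrades this to the real statement.

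Then I would transfer to $\mathbb{R}^w$. By Proposition~\ref{P:parallelepipedsversuscubes} and the definitions, $\proj$ restricts to an affine bijection $\hull{\cube(\lambda)}\to\hull{\para(\lambda)}$ for each hook-quotient $\lambda$, carrying $\hhull{\cube(\lambda)}$ onto $\hhull{\para(\lambda)}$ and respecting the $4$-increasing truncation (since $\inc{(\,\cdot\,)}{4}$ in $\widehat{\mathbb{R}^w}$ is defined as the preimage under $\proj$ of $\hullinc{(\,\cdot\,)}{4}$ in $\mathbb{R}^w$). Applying $\proj$ to the hypercube tiling and using Proposition~\ref{prop:discreteunion} to identify the image of $\bigcup_\lambda\hullinc{\cube(\lambda)}{4}$ as $\hullinc{\goodplus}{4}$, we get $\bigcup_{\lambda\in B}\hullinc{\para(\lambda)}{4}=\hullinc{\goodplus}{4}$ directly. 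For the disjointness of the half-open parallelotopes, the issue is that $\proj$ is not injective on all of $\widehat{\mathbb{R}^w}$, so a priori distinct half-open solid cubes could project onto overlapping half-open solid parallelotopes. This is exactly where Proposition~\ref{prop:discreteintersection} enters: two parallelotopes $\para(\lambda)$ and $\para(\mu)$ meet in a common face $F=F_{\Gamma_0,\Gamma_1}$, and one checks that $\hull{\para(\lambda)}\cap\hull{\para(\mu)}=\hull{F}$ lies in the boundary of the half-open parallelotopes (the half-open parallelotope $\hhull{\para(\lambda)}$ omits precisely the faces $F_{\{i\},\emptyset}$ with $a_i=1$, and the common face of two distinct tiles is always contained in such an omitted face of at least one of them, because the corresponding two hypercubes share a proper face). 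So $\hhull{\para(\lambda)}\cap\hhull{\para(\mu)}=\emptyset$ for $\lambda\ne\mu$ hook-quotient in $B$, giving the disjoint union.

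The main obstacle I anticipate is the bookkeeping in the disjointness argument: verifying carefully that the common face of two distinct parallelotopes in the block is always ``on the open side'' of at least one of the half-open tiles, i.e.\ that it is cut out by setting some coordinate $a_i=1$ rather than $a_i=0$ for the appropriate tile. This requires tracking how the face-description $F_{\Gamma_0,\Gamma_1}$ from Proposition~\ref{prop:discreteintersection} interacts with the half-open convention $0\le a_i<1$, and handling the sign reflections in the lifted edge vectors $\hat\sb_i^\lambda$ (the $-\ssb_{ji}$ case), together with the truncation by the $4$-increasing inequalities which can itself contribute boundary pieces. I expect this to reduce, after unwinding, to the observation that two distinct lattice unit cubes in an axis-aligned cubical complex have disjoint half-open versions, which is elementary once the setup is in place; the work is entirely in reducing to that observation cleanly.
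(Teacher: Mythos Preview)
Your approach has a genuine gap at the ``upgrade from discrete to real'' step, and the paper's argument is quite different.

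The core problem is with your claim that the half-open solid hypercubes $\hhullinc{\cube(\lambda)}{4}$ automatically partition their union because they are axis-aligned unit cubes. Each $\hull{\cube(\lambda)}$ is indeed an axis-aligned $w$-cube, but it sits inside $\widehat{\mathbb{R}^w}$, which has dimension $w(w+1)/2$, and different $\lambda$'s select \emph{different} $w$-element subsets of coordinate directions (some $\hat\sb_i^\lambda$ equal $\ssb_i$, others equal $\pm\ssb_{jk}$). So these cubes do not form the cells of a single axis-aligned cubical subdivision of any region; they are $w$-dimensional objects scattered across various coordinate subspaces. The ``standard argument'' that distinct half-open unit lattice cubes are disjoint fails in this setting: two low-dimensional axis-aligned segments in $\mathbb{R}^2$ along different axes can share a vertex belonging to both half-open versions. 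Likewise, Lemma~\ref{C:discretecubesbijectparas} only tells you that $\proj$ is injective on the \emph{integer} points of the cubes; this does not by itself control real overlaps (the real injectivity statement is Lemma~\ref{C:cubesbijectparas}, which is proved only for $7$-increasing points and whose proof \emph{uses} Proposition~\ref{disjointtiling}). Your fallback via Proposition~\ref{prop:discreteintersection} has the same issue: knowing that the integer points of two parallelotopes meet in a common discrete face does not, without further argument, give $\hull{\para(\lambda)}\cap\hull{\para(\mu)}=\hull{F}$ or the disjointness of the half-open real tiles.

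The paper avoids all of this by arguing directly, with no lift to hypercubes. It proves the half-open statement by the same Scopes-pair induction as Proposition~\ref{prop:discreteunion}: the Rouquier base case is immediate since then $\sb_i^\lambda=\ssb_i$ and the half-open tiles are literally coordinate unit cubes indexed by their floor vectors; for the inductive step from $B$ to $\tb=s_a(B)$, non-exceptional tiles are unchanged, and for each hook-quotient exceptional family $\flambda$ one exhibits a common region $\hhull{\para(\flambda)}$ and shows by an explicit minimal/maximal-expression analysis (in the coordinates $a_j$ of the $\pb{\flambda}_j$) that $\coprod_j \hhull{\para(\epl{j})}=\hhull{\para(\flambda)}=\coprod_j \hhull{\para(\eptl{j})}$. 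The closed version then follows by taking closures.
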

As is the case with Proposition~\ref{prop:discreteunion}, the unions here may be taken over just the $1$-increasing partitions in $B$, by the obvious real analogue of Lemma~\ref{lemma:shiftby3}.  Furthermore
 $\hullinc{\para(\lambda)}{4}=\hull{\para(\lambda)}$ and
 $\hhullinc{\para(\lambda)}{4}=\hhull{\para(\lambda)}$ as long as $\lambda$ is $7$-increasing.

\begin{proof} We prove the first equality; the second then follows by taking closures in $\mathbb{R}^w$.
The argument mirrors the proof of Proposition~\ref{prop:discreteunion}.	
Let $B$ be a Rouquier block of weight $w$. We have seen that for any $1$-increasing partition $\lambda$ in $B$, we have
$\sb^\lambda_i=\ssb_i$ for all $i\in[1,w]$. In particular $\hhull{\para(\lambda)}$ is contained in $\hhull{\goodplus}$. Conversely,
let $\mathbf{x}=(x_1,\dotsc,x_w)$ be a $4$-increasing element of $\hhull{\goodplus}$. Then there is a unique $1$-increasing partition
 $\lambda$ in $B$ such that $\mathbf{x}\in\hhull{\para(\lambda)}$, namely the unique partition $\lambda$ in $B$ such that
 $z(\lambda)=(\lfloor x_1 \rfloor,\dotsc,\lfloor x_w \rfloor)$.

 Now suppose the theorem holds for a block $\b$ of weight $w$, and let $\tb=s_a(\b)$. Any $1$-increasing partition $\tlambda\in\tb$ is either
 nonexceptional or belongs to a hook-quotient exceptional family. If the former is true, we have $\hhull{\para(\lambda)}=\hhull{\para(\tlambda)}$,
 where $\lambda=s_a(\tlambda)$. To complete the proof we establish the following analogue of Lemma~\ref{lemma:paraexceptional}(5):
 For any hook-quotient exceptional family $\flambda$, we have
 \begin{equation}\label{twounions}
 \coprod_{i=0}^{k+1} \hhull{\para(\lambda^i)} = \coprod_{i=0}^{k+1} \hhull{\para(\tlambda^i)},
 \end{equation}
 where both unions are disjoint.

Let
$$\hhull{\para(\flambda)}:=\z(\lambda^0)+\left\{\sum_{j=0}^{k+1}a_j\pb{\flambda}_j+\sum_{i\in\external{\flambda}}b_i\sb_i^{\flambda}
\,\bigg|\,
 0\leq a_j, b_i \leq 1, \text{ with }
(a_j,a_{j'})\neq (0,1) \text{ if } j<j'\right\}.$$
The closure of $\hhull{\para(\flambda)}$ in $\mathbb{R}^w$ is the convex hull of the set $\Pi(\flambda)$ defined in Lemma~\ref{lemma:paraexceptional}.
The expression of an element of $\hhull{\para(\flambda)}$ is unique up to replacing
$a_j$ by $a_j+c$ for all $j\in[0,k+1]$, for some $c\in\mathbb{R}$. In particular for each element there is a unique {\em minimal expression} with $a_j=0$ for some $j$ and a unique {\em maximal expression} with $a_j=1$ for some $j$.
It is straightforward to deduce from Lemma~\ref{lemma:paraexceptional} the following: for each $j\in[1,\,w]$, we have that $\hhull{\para(\lambda^j)}$ is the subset of $\hhull{\para(\flambda)}$ consisting of elements whose minimal expression satisfies
$\min\{j'\mid a_{j'}=0\}=j$,
and likewise
$\hhull{\para(\tlambda^i)}$ is the subset  consisting of elements whose maximal expression satisfies
$\max\{j'\mid a_{j'}=1\} = k+1-j$. It follows that both of the unions in (\ref{twounions}) are disjoint and equal to $\hhull{\para(\flambda)}$.
\end{proof}

\begin{prop}\label{prop:realintersection}
	Let $\lambda$ and $\mu$ be hook-quotient partitions in $B$. Then $\hullinc{\para(\lambda)}{7}\cap\hullinc{\para(\mu)}{7}=\inc{F}{7}$, where $F$ is a common face of $\hull{\para(\lambda)}$ and $\hull{\para(\mu)}$. In particular, if either $\lambda$ or $\mu$ is $10$-increasing, then $\para(\lambda)$ and $\para(\mu)$ intersect in a common face.
\end{prop}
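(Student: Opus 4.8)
The strategy is to deduce the real statement from the discrete one (Proposition~\ref{prop:discreteintersection}) by an interpolation argument, passing through the hypercube lift to avoid having to reason directly about faces of skewed parallelotopes. First I would reduce to showing $\hullinc{\para(\lambda)}{7}\cap\hullinc{\para(\mu)}{7}\subseteq \hull{F}$ for a common face $F$: the reverse inclusion is automatic once we know $F$ is a common face, and the ``in particular'' clause follows because a $10$-increasing partition $\lambda$ has $\para(\lambda)=\inc{\para(\lambda)}{4}$ and, more to the point, every element of $\hull{\para(\lambda)}$ is automatically $7$-increasing by the real analogue of Lemma~\ref{lemma:shiftby3} (whose coefficients $-2\le a_i-b_i\le 1$ hold verbatim for real convex combinations), so that the $7$-increasing truncation is vacuous on at least one of the two solid parallelotopes and the intersection is a genuine face.

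The key step is to lift to hypercubes. By Lemma~\ref{C:discretecubesbijectparas}, $\proj$ maps $\bigcup_\nu \inc{\cube(\nu)}{4}$ bijectively onto $\plusinc{\good}{4}$; I would first upgrade this to a real statement, namely that $\proj:\widehat{\mathbb{R}^w}\to\mathbb{R}^w$ restricts to a bijection from $\bigcup_\nu \hullinc{\cube(\nu)}{m}$ onto $\hullinc{\goodplus}{m}$ for $m\geq 7$, where $\nu$ runs over hook-quotient partitions in $B$. The surjectivity part is Proposition~\ref{disjointtiling} applied fibrewise (each $\hull{\cube(\nu)}$ maps onto $\hull{\para(\nu)}$, by definition of the lifted modified basis vectors and $\proj(\hat\sb^\nu_i)=\sb^\nu_i$). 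For injectivity I would use the disjoint-union statement $\coprod_\nu \hhullinc{\para(\nu)}{4}=\hhullinc{\goodplus}{4}$ from Proposition~\ref{disjointtiling}: a point $x$ in the interior of some $\hhull{\para(\nu)}$ determines $\nu$ uniquely, and on $\hull{\cube(\nu)}$ the map $\proj$ is an affine isomorphism onto $\hull{\para(\nu)}$ (the $\hat\sb^\nu_i$ are linearly independent since the $\sb^\nu_i$ form a basis of $\mathbb{Z}^w$), so the preimage in that cube is a single point; one then checks the preimages of a point from two distinct cubes agree by continuity/density, reducing to the integral bijection of Lemma~\ref{C:discretecubesbijectparas} together with Proposition~\ref{P:parallelepipedsversuscubes}, exactly as in the proof of that lemma.

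Granting the real hypercube bijection, the proof finishes cleanly: since $\hull{\cube(\lambda)}$ and $\hull{\cube(\mu)}$ are honest solid unit hypercubes in $\widehat{\mathbb{R}^w}$ (axis-parallel up to sign of the coordinate directions), they intersect in a common face $\widehat{F}$ in the obvious geometric sense. Taking $7$-increasing truncations and applying $\proj$, which is injective on $\bigcup_\nu\hullinc{\cube(\nu)}{7}$, we get
$$\hullinc{\para(\lambda)}{7}\cap\hullinc{\para(\mu)}{7} = \proj\!\left(\hullinc{\cube(\lambda)}{7}\cap\hullinc{\cube(\mu)}{7}\right) = \proj\!\left(\inc{\widehat{F}}{7}\right),$$
and $F:=\proj(\widehat{F})$ is a common face of $\hull{\para(\lambda)}$ and $\hull{\para(\mu)}$ because $\proj$ carries faces of $\hull{\cube(\nu)}$ to faces of $\hull{\para(\nu)}$ (a face of the cube is cut out by fixing a subset of the $a_i$ to $0$ or $1$, and the image under the affine iso $\proj$ is the corresponding face of the parallelotope). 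I expect the main obstacle to be the injectivity half of the real hypercube bijection: one must handle boundary points of the solid parallelotopes, where several $\nu$ can contribute, and argue that the hypercube preimages still match up. This is where the half-open disjointness in Proposition~\ref{disjointtiling} and a density/continuity argument (approximating a boundary point by interior points, where uniqueness is clear) do the work, echoing the reduction-to-$B^+$ trick in the proof of Lemma~\ref{C:discretecubesbijectparas} to dispose of the coordinate-value-$e$ edge case.
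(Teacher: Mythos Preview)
Your overall architecture matches the paper exactly: reduce Proposition~\ref{prop:realintersection} to a real hypercube bijection statement (this is the paper's Lemma~\ref{C:cubesbijectparas}), observe that axis-parallel solid hypercubes intersect in common faces, and push down via $\proj$. Your treatment of the ``in particular'' clause and of the passage from cube faces to parallelotope faces is fine.

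The gap is in your proposed proof of the real bijection itself. You suggest deducing injectivity of $\proj$ on $\bigcup_\nu\hullinc{\cube(\nu)}{7}$ from the discrete bijection (Lemma~\ref{C:discretecubesbijectparas}) by ``continuity/density, exactly as in the proof of that lemma''. But the discrete proof works by recognising a lattice point $x\in\plusinc{\good}{4}$ as $\z(\nu)$ for an actual partition $\nu$ and then invoking Proposition~\ref{P:parallelepipedsversuscubes} to locate the single lift $\hat\z(\nu)$ inside \emph{every} cube whose parallelotope contains $x$. There is no analogue of $\hat\z$ for a real point, and you cannot in general approximate a real $x\in\hull{\para(\lambda)}\cap\hull{\para(\mu)}$ by lattice points lying in \emph{both} discrete parallelotopes: the intersection $\para(\lambda)\cap\para(\mu)$ may be a low-dimensional face whose lattice points do not affinely span the locus where the two affine sections $\sigma_\lambda,\sigma_\mu$ of $\proj$ must be compared. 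The half-open disjointness from Proposition~\ref{disjointtiling} only tells you which half-open tile contains $x$, not that the two closed lifts coincide.

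The paper proves Lemma~\ref{C:cubesbijectparas} by an entirely separate induction along Scopes pairs, mirroring the proof of Proposition~\ref{disjointtiling}. For $\tb=s_a(B)$ one builds an explicit map $f:\bigcup_{\lambda\in B}\hullinc{\cube(\lambda)}{7}\to\bigcup_{\tlambda\in\tb}\hullinc{\cube(\tlambda)}{7}$ commuting with $\proj$: the identity on non-exceptional cubes, and on a hook-quotient exceptional family $\flambda$ the translation $\mathbf{x}\mapsto\mathbf{x}+(1-\mathbf{x}_{\max})\hpb{\flambda}_{[0,k+1]}$. The hard part is well-definedness of $f$ when $\mathbf{x}$ lies in two such pieces; this is where the discrete Lemma~\ref{C:discretecubesbijectparas} is invoked, but at a \emph{specific} lattice point obtained by flooring the external cube-coordinates of $\mathbf{x}$ (and then identifying it as $\hat\z(\nu^0)$ for a leading exceptional partition via Lemma~\ref{L:leading} and Proposition~\ref{P:parallelepipedsversuscubes}). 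The loss from $4$-increasing to $7$-increasing comes precisely from this flooring step (one needs the floored point to remain $4$-increasing). So the discrete lemma is used, but not as a density input: it is applied pointwise to a carefully constructed auxiliary lattice point, and the argument leans heavily on the exceptional-family combinatorics of Section~\ref{section:mainproof}.
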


It is clear what we mean by a face of $\hull{\para(\lambda)}$ or of $\hull{\cube(\lambda)}$,
as both are real polytopes in an obvious way.
Before we prove Proposition \ref{prop:realintersection}, we note the following corollary. See, e.g.\, \cite[8.1]{Z}, for background on polytopal complexes.

\begin{cor} \label{C:polytopalcomplex}
	The solid parallelotopes $\hull{\para(\lambda)}$, as $\lambda$ ranges over all $10$-increasing partitions in $B$, are the $w$-cells of a pure polytopal complex in $\mathbb{R}^w$.
	\end{cor}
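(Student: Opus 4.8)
The plan is to deduce Corollary~\ref{C:polytopalcomplex} from Proposition~\ref{prop:realintersection} together with the covering statement in Proposition~\ref{disjointtiling}. Recall that a pure polytopal complex of dimension $w$ is, by definition (see \cite[8.1]{Z}), a finite collection $\mathcal{C}$ of polytopes such that (i) every face of a polytope in $\mathcal{C}$ lies in $\mathcal{C}$, (ii) the intersection of any two polytopes in $\mathcal{C}$ is a face of each, and purity means that every maximal polytope has dimension $w$. So I would take $\mathcal{C}$ to be the collection of all faces (of all dimensions, including the polytopes themselves and the empty face) of the solid parallelotopes $\hull{\para(\lambda)}$ as $\lambda$ runs over the $10$-increasing partitions in $B$, and verify these axioms.

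First I would check that each $\hull{\para(\lambda)}$ is genuinely a $w$-dimensional polytope — this is immediate since the $\sb_i^\lambda$ form a basis of $\mathbb{Z}^w$ (remarked just after Definition~\ref{def:modified}), so $\hull{\para(\lambda)}$ is a non-degenerate parallelotope. Condition (i) is then automatic from the way $\mathcal{C}$ is defined (faces of faces of a polytope are faces of the polytope). For condition (ii), the key input is Proposition~\ref{prop:realintersection}: for two $10$-increasing partitions $\lambda,\mu$, both are in particular $7$-increasing, so $\hull{\para(\lambda)}\cap\hull{\para(\mu)} = \hullinc{\para(\lambda)}{7}\cap\hullinc{\para(\mu)}{7} = \inc{F}{7} = F$ (the last equality because $F$, being a face of a $7$-increasing parallelotope, is automatically $7$-increasing), and $F$ is a common face of $\hull{\para(\lambda)}$ and $\hull{\para(\mu)}$. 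One then needs the standard fact that if $P\cap Q$ is a face of both $P$ and $Q$, then for arbitrary faces $F'\subseteq P$ and $G'\subseteq Q$, $F'\cap G'$ is a face of both $F'$ and $G'$: indeed $F'\cap G' = F'\cap G'\cap (P\cap Q) = (F'\cap(P\cap Q))\cap(G'\cap(P\cap Q))$, and $F'\cap (P\cap Q)$ is a face of $F'$ (intersection of a face of $P$ with a face of $P$), similarly on the other side, and the intersection of two faces of a polytope is a face. So condition (ii) propagates from the top-dimensional cells to all cells of $\mathcal{C}$. Purity holds since every maximal element of $\mathcal{C}$ is one of the $\hull{\para(\lambda)}$, each of dimension $w$ — here one should note that no $\hull{\para(\lambda)}$ can be a proper face of another, which follows because distinct $10$-increasing partitions give distinct anchor points $\z(\lambda)$ (Theorem~\ref{thm:goodlabels}(1)) and $w$-dimensional parallelotopes, and a $w$-dimensional polytope cannot be a proper face of another $w$-dimensional polytope.

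I expect the main obstacle to be purely expository rather than mathematical: namely being careful about the empty face and about the fact that the collection must be closed under taking faces, and citing the correct characterization of polytopal complexes from \cite{Z} so that the verification of the axioms is clearly complete. A secondary point to handle cleanly is that Proposition~\ref{prop:realintersection} only directly gives the intersection of two top-dimensional cells; the propagation to lower-dimensional faces via the elementary polytope lemma above should be stated explicitly. Finiteness of $\mathcal{C}$ is clear since $B$ is a finite set (the number of $10$-increasing partitions in $B$ is bounded, being a subset of the finite set of partitions in $B$ with given $e$-core and $e$-weight). With these observations in place the proof is short: it is essentially the remark that Proposition~\ref{prop:realintersection} is exactly the incidence condition defining a polytopal complex, and Proposition~\ref{disjointtiling} confirms the cells fit together to cover $\hullinc{\goodplus}{10}$ (though covering is not strictly needed for the bare statement of Corollary~\ref{C:polytopalcomplex}).
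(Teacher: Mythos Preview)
Your proposal is correct and follows exactly the intended route: the paper does not give an explicit proof of Corollary~\ref{C:polytopalcomplex}, presenting it as an immediate consequence of Proposition~\ref{prop:realintersection} (with a reference to \cite[8.1]{Z} for the definition), and you have simply spelled out the standard verification of the polytopal-complex axioms from that proposition. Your observation that Proposition~\ref{disjointtiling} is not actually needed for the bare statement is also accurate.
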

	
As in the discrete case, any two hypercubes $\hull{\cube(\lambda)}$ and $\hull{\cube(\mu)}$ in $\widehat{\mathbb{R}^w}$ clearly intersect in a common face. Thus Proposition \ref{prop:realintersection} is a consequence of the following lemma. The proof depends on Lemma~\ref{C:discretecubesbijectparas}, but due to the nature of the argument, works only for $7$-increasing points.

\begin{lemma} \label{C:cubesbijectparas}
	Let $B$ be a block of $e$-weight $w$.
	The projection map $\proj: \widehat{\mathbb{R}^w} \to \mathbb{R}^w$ maps $\bigcup_{\lambda} \hullinc{\cube(\lambda)}{7}$ bijectively onto
	$\hullinc{\goodplus}{7}$, where $\lambda$ runs over all hook-quotient partitions in $B$. 
\end{lemma}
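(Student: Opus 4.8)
\textbf{Proof proposal for Lemma~\ref{C:cubesbijectparas}.}

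The plan is to mimic the proof of the discrete analogue, Lemma~\ref{C:discretecubesbijectparas}, but since we will not have integer points to feed into Proposition~\ref{P:parallelepipedsversuscubes}, we must argue more carefully, and this is where the hypothesis of being $7$-increasing (rather than $4$-increasing) enters. First I would record that $\proj$ maps each $\hullinc{\cube(\lambda)}{7}$ onto $\hullinc{\para(\lambda)}{7}$, since $\proj$ sends $\hull{\cube(\lambda)}$ onto $\hull{\para(\lambda)}$ and commutes with the $m$-increasing truncation by definition of $\inc{(-)}{m}$ for subsets of $\widehat{\mathbb{R}^w}$. Combined with the real analogue of Proposition~\ref{prop:discreteunion} contained in Proposition~\ref{disjointtiling} (namely $\bigcup_{\lambda}\hullinc{\para(\lambda)}{4}=\hullinc{\goodplus}{4}$, whence $\bigcup_{\lambda}\hullinc{\para(\lambda)}{7}=\hullinc{\goodplus}{7}$), this shows surjectivity onto $\hullinc{\goodplus}{7}$. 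The substance of the lemma is therefore injectivity.

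For injectivity, suppose $\lambda,\mu$ are hook-quotient partitions in $B$ and $\proj(\mathbf a)=\proj(\mathbf b)=:\mathbf x$ for some $\mathbf a\in\hullinc{\cube(\lambda)}{7}$ and $\mathbf b\in\hullinc{\cube(\mu)}{7}$. As in the proof of Lemma~\ref{C:discretecubesbijectparas}, using the runner-addition embedding $B\hookrightarrow B^+$ of Remark~\ref{remark:addrunner} (under which $\hat\z(\lambda)=\hat\z(\lambda^+)$, $\sb^\lambda_i=\sb^{\lambda^+}_i$, hence $\cube(\lambda)=\cube(\lambda^+)$ and $\para(\lambda)=\para(\lambda^+)$), we may replace $e$ by $e+1$ and thereby reduce to proving the statement under the extra assumption that $\mathbf x\in\hull{\good}$, i.e.\ that all coordinates of $\mathbf x$ are strictly less than $e$; but we will also want to move $\mathbf x$ off the boundary coordinate $z_i=0$. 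The point of iterating runner addition (say, $7w$ times, or simply ``sufficiently many times'') is that we can arrange $\mathbf x$ to lie in $\hhullinc{\goodplus}{7}$, with every coordinate bounded away from $0$ and from the top, so that $\mathbf x$ lies in the interior (relative to the increasing-coordinate constraints) of the tiling. Now here is the key step: since $\mathbf a\in\hull{\cube(\lambda)}$ is a real convex combination $\hat\z(\lambda)+\sum a_i\hat\sb^\lambda_i$ and the coordinate projections of the $\hat\sb^\lambda_i$ are the $\sb^\lambda_i$, the point $\mathbf x=\z(\lambda)+\sum a_i\sb^\lambda_i$ lies in $\hhull{\para(\lambda)}$ (after a further perturbation ensuring $a_i<1$, again achievable by the preceding reductions, or by handling boundary faces separately); by the disjointness in Proposition~\ref{disjointtiling} (first displayed equation), the half-open solid parallelotope $\hhull{\para(\lambda)}$ is uniquely determined by $\mathbf x$, so $\hhull{\para(\lambda)}=\hhull{\para(\mu)}$ and in particular $\z(\lambda)=\z(\mu)$ (the anchor of a half-open parallelotope is recoverable as, e.g., the coordinatewise floor after the reduction to generic $\mathbf x$, or directly from the face structure). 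By Theorem~\ref{thm:goodlabels}(1), since $\lambda,\mu$ lie in the same block with $\z(\lambda)=\z(\mu)$ and both are $1$-increasing (being $7$-increasing after the reductions), we get $\lambda=\mu$, whence $\cube(\lambda)=\cube(\mu)$ and, $\proj$ being a bijection from $\cube(\lambda)$ onto $\para(\lambda)$ — more precisely from $\hull{\cube(\lambda)}$ onto $\hull{\para(\lambda)}$ — we conclude $\mathbf a=\mathbf b$.

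The main obstacle, and the reason this cannot simply quote the discrete lemma, is the passage through non-integer points: Proposition~\ref{P:parallelepipedsversuscubes} only lifts $\z$-labels of honest $4$-increasing \emph{partitions}, so a direct imitation of Lemma~\ref{C:discretecubesbijectparas} stalls. The fix is to use the \emph{tiling} statement Proposition~\ref{disjointtiling} in place of the partition-level lifting: the disjointness of the half-open solid parallelotopes pins down $\hhull{\para(\lambda)}$ from any interior real point, and then one only needs to recover the anchor $\z(\lambda)$ — which is where I would be most careful, since a point on a shared face lies in the closure of several parallelotopes; the clean way around this is precisely the repeated runner-addition trick, which lets us assume $\mathbf x$ is a generic point of $\hhullinc{\goodplus}{7}$ (interior to a unique tile), deduce $\lambda=\mu$ there, and then extend the equality $\hull{\cube(\lambda)}=\hull{\cube(\mu)}$ and hence $\mathbf a=\mathbf b$ to all of $\hullinc{\cube(\lambda)}{7}$ by a density/continuity argument (the projection restricted to $\hull{\cube(\lambda)}$ is a homeomorphism onto $\hull{\para(\lambda)}$, so injectivity on a dense subset of overlaps forces it globally). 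I expect the bookkeeping of which truncation index ($4$ versus $7$ versus $10$) is needed at each stage — governed by Lemma~\ref{lemma:shiftby3} and its real analogue — to be the only genuinely fiddly part, and the geometric content to be exactly the disjoint-tiling property already established in Proposition~\ref{disjointtiling}.
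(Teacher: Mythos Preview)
Your surjectivity argument is fine, but the injectivity argument has a genuine gap that the density/continuity step does not close.

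The disjoint half-open tiling in Proposition~\ref{disjointtiling} only tells you that a $7$-increasing point $\mathbf{x}$ lies in a \emph{unique} half-open tile $\hhull{\para(\nu)}$. It does \emph{not} tell you that this $\nu$ equals either $\lambda$ or $\mu$: if $\mathbf{x}$ sits on the ``upper'' boundary of $\hull{\para(\lambda)}$ (some $a_i=1$) it may well lie in $\hhull{\para(\nu)}$ for a different $\nu$. Your perturbation idea cannot repair this: to apply continuity of the affine inverses $g_\lambda:\hull{\para(\lambda)}\to\hull{\cube(\lambda)}$ and $g_\mu$ you would need a sequence $\mathbf{x}_n\to\mathbf{x}$ with $\mathbf{x}_n\in\hull{\para(\lambda)}\cap\hull{\para(\mu)}$ \emph{and} each $\mathbf{x}_n$ in the half-open part of \emph{both}; but any point in the half-open part of two tiles forces those tiles to coincide, so for $\lambda\neq\mu$ no such sequence exists. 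Concretely, if $\lambda\neq\mu$ share a common face $F$, then every point of $F$ is on the closed boundary of both parallelotopes, and your argument says nothing about $g_\lambda|_F$ versus $g_\mu|_F$. (The runner-addition trick only enlarges $e$; it does not move $\mathbf{x}$ off the boundary of $\hull{\para(\lambda)}$, so it does not help here. Likewise, recovering the anchor ``by coordinatewise floor'' is not valid once the $\sb^\lambda_i$ have negative entries.) Finally, you cannot invoke Proposition~\ref{prop:realintersection} to say the intersection is a common face and then reduce to integer vertices via Lemma~\ref{C:discretecubesbijectparas}, because Proposition~\ref{prop:realintersection} is derived \emph{from} the present lemma.

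The paper's proof is structurally different: it proceeds by induction along the $\W$-action, starting from Rouquier blocks (where $p$ is the identity on each cube) and, for the inductive step $B\rightsquigarrow\tb$, constructing an explicit $p$-preserving bijection $f$ from $\bigcup_{\lambda\in B}\hullinc{\cube(\lambda)}{7}$ to $\bigcup_{\tlambda\in\tb}\hullinc{\cube(\tlambda)}{7}$. For non-exceptional $\lambda$ one has $\hull{\cube(\lambda)}=\hull{\cube(\tlambda)}$ and $f$ is the identity; for each hook-quotient exceptional family $\flambda$ one builds an explicit map $f_{\flambda}$ on $\bigcup_j\hull{\cube(\epl{j})}\to\bigcup_j\hull{\cube(\eptl{j})}$ using Lemma~\ref{L:cubeexceptional}. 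The substantive work is checking that these pieces glue to a well-defined $f$, and this is where the discrete Lemma~\ref{C:discretecubesbijectparas} and the structural results on exceptional families (Propositions~\ref{P:moveinparallelogramtilde}, \ref{P:parallelepipedsversuscubes}, Lemma~\ref{L:internalparallelogram}) are used. Injectivity for $\tb$ then follows from injectivity for $B$ via $p\circ f=p$.
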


\begin{proof}
	By Example \ref{example:Rouquier}, for any $1$-increasing partition $\lambda$ in a Rouquier block (of $e$-weight $w$), we have $\z(\lambda) = \hat\z(\lambda)$, and $\sb^{\lambda}_i = \ssb_i$, and hence $\hat\sb^{\lambda}_i = \ssb_i$, for all $i \in [1,w]$.  Thus $\proj$, when restricted to $\cube(\lambda)$, is the identity map.  This shows that the lemma holds when $B$ is the Rouquier block.
	
	To complete the proof, it suffices to show that if $B$ and $\tb$ are blocks of $e$-weight $w$ forming a $[w:k]$-pair, and the lemma holds for $B$, then it holds for $\tb$.  To this end, consider first a ($0$-increasing) non-exceptional hook-quotient partition $\lambda$ in $B$, associated with the non-exceptional hook-quotient partition $\tlambda$ in $\tb$.  Then $\sb^{\lambda}_i = \sb^{\tlambda}_i$ for all $i \in [1,w]$ by Lemma \ref{L:nonexceptional2}.  It is also easy to check that $\hat\z(\lambda) = \hat\z(\tlambda)$, so that $\cube(\lambda) = \cube(\tlambda)$, and hence $\hull{\cube(\lambda)} = \hull{\cube(\tlambda)}$.  Next let $\flambda$ be a hook-quotient exceptional family.  By Lemma \ref{L:cubeexceptional}(5),
	\begin{align*}
		\cube(\flambda)^{\mathbb{R}}_B : = \bigcup_{j=0}^{k+1} \hull{\cube(\epl{j})} &= \left\{ \hat\z(\epl{0}) + \sum_{\gamma=0}^{k+1} a_{\gamma} \hpb{\flambda}_\gamma + \sum_{x \in \external{\flambda}} b_x \sb^{\flambda}_x \,\bigg|\, 0 \leq a_{\gamma},b_x \leq 1\ \forall \gamma,x,\ a_{\gamma} =0 \text{ for some }\gamma \right\}, \\
		\cube(\flambda)^{\mathbb{R}}_{\tb} : = \bigcup_{j=0}^{k+1} \hull{\cube(\eptl{j})} &= \left\{ \hat\z(\epl{0}) + \sum_{\gamma=0}^{k+1} a_{\gamma} \hpb{\flambda}_\gamma + \sum_{x \in \external{\flambda}} b_x \sb^{\flambda}_x \,\bigg|\, 0 \leq a_{\gamma},b_x \leq 1\ \forall \gamma,x,\ a_{\gamma} =1 \text{ for some }\gamma \right\}.
	\end{align*}
	For each $\mathbf{x} = \hat\z(\epl{0}) + \sum_{\gamma=0}^{k+1} a_{\gamma} \hpb{\flambda}_\gamma + \sum_{x \in \external{\flambda}} b_x \sb^{\flambda}_x \in \cube(\flambda)^{\mathbb{R}}_B$, define $f_{\flambda}(\mathbf{x})$ to be $\mathbf{x} + (1-\mathbf{x}_{\max})\sum_{\gamma=0}^{k+1} \hpb{\flambda}_{\gamma}$, where $\mathbf{x}_{\max} = \max\{ a_{\gamma} \mid \gamma \in [0,k+1]\}$.  Then $f_{\flambda}$ is a bijection from $\cube(\flambda)^{\mathbb{R}}_B$ to $\cube(\flambda)^{\mathbb{R}}_{\tb}$, satisfying $\proj (f_{\flambda}(\mathbf{x})) = \proj(\mathbf{x})$ for all $\mathbf{x} \in \cube(\flambda)^{\mathbb{R}}_B$.  In particular, for any $m \in \mathbb{Z}$, we have $\mathbf{x} \in \inc{(\cube(\flambda)^{\mathbb{R}}_B)}{m}$ if and only if $f_{\flambda}(\mathbf{x}) \in \inc{(\cube(\flambda)^{\mathbb{R}}_{\tb})}{m}$.
	
	Let $f : \bigcup_{\lambda \in B} \hullinc{\cube(\lambda)}{7} \to \bigcup_{\tlambda \in \tb} \hullinc{\cube(\tlambda)}{7}$ be defined as follows:
	$$
	f(\mathbf{x}) =
	\begin{cases}
	\mathbf{x}, &\text{if } \mathbf{x} \in \hull{\cube(\flambda)} \text{ for some non-exceptional hook-quotient partition $\lambda$ in $B$}; \\
	f_{\flambda}(\mathbf{x}), &\text{if } \mathbf{x} \in \cube(\flambda)^{\mathbb{R}}_B \text{ for some hook-quotient exceptional family $\flambda$}.
	\end{cases}
	$$
	We claim that $f$ is a well-defined function.
	For this, we only need to consider the case where $\mathbf{x} \in \cube(\flambda)^{\mathbb{R}}_B$ for some hook-quotient exceptional family $\flambda$, with $\internal{\flambda} = \{ i_0,\dotsc,i_k \}$ say, and $f_{\flambda}(\mathbf{x}) \ne \mathbf{x}$.
	In this case, $\mathbf{x}_{\max} \ne 1$, so that
	$$\mathbf{x} = \hat\z(\epl{0}) + \sum_{\gamma=0}^{k+1} a_{\gamma} \hpb{\flambda}_{\gamma} + \sum_{x \in \external{\flambda}} b_x \hat\sb^{\flambda}_x$$
	with $a_{\gamma} < 1$ for all $\gamma$.  Let
	$$\mathbf{y} = \hat\z(\epl{0}) + \sum_{x \in \external{\flambda}} \lfloor b_x \rfloor \hat\sb^{\flambda}_x.$$
	Then $\proj(\mathbf{y}) = \z(\epl{0}) + \sum_{x \in \external{\flambda}} \lfloor b_x \rfloor \sb_x^{\flambda} \in \para(\epl{0})$, and is $4$-increasing by (the real analogues of) Lemmas \ref{preshiftby3} and \ref{lemma:shiftby3}.
	By considering the block $B^+$ of $(e+1)$-weight $w$ introduced in Remark~\ref{remark:addrunner} if necessary and arguing as in the proof of Lemma \ref{C:discretecubesbijectparas}, we may assume that $\proj(\mathbf{y}) = \z(\nu)$ for a partition $\nu$ in $B$; in fact, by Lemma \ref{L:leading}, $\nu = \nu^0$, the leading member of a hook-quotient exceptional family $\fnu$ with $\internal{\flambda} = \internal{\fnu}$.
	By Proposition \ref{P:parallelepipedsversuscubes}, we have $\hat\z(\epn{0}) = \mathbf{y}$.
	Note also that $\hat\z(\epn{0}) = \mathbf{y} \in \cube(\mu)$ for any hook-quotient partition $\mu$ in $B$ satisfying $\hull{\cube(\mu)} \ni \mathbf{x}$.
	
	Suppose for the sake of contradiction that $\mathbf{x} \in \hull{\cube(\mu)}$ for some non-exceptional partition $\mu$ in $B$.  Then $\mathbf{y} \in \cube(\mu) = \cube(\tmu)$, where $\tmu$ is the partition in $\tb$ associated to $\mu$, we thus have $\hat\z(\eptn{0}), \mathbf{y} \in \bigcup_{\tilde{\rho} \in \tb} \inc{\cube(\tilde{\rho})}{4}$.  But $\hat\z(\eptn{0}) \ne \hat\z(\epn{0}) = \mathbf{y}$ by Lemma \ref{L:cubeexceptional}(1) while $\proj(\hat\z(\eptn{0})) = \z(\eptn{0}) = \z(\epn{0}) = \proj(\mathbf{y})$ by Lemma \ref{lemma:paraexceptional}(1), contradicting Lemma \ref{C:discretecubesbijectparas}.
	
	Now, if $\mathbf{x} \in \cube(\fmu)^{\mathbb{R}}_B$ for another hook-quotient exceptional family $\fmu$, say
	$$\mathbf{x} = \hat\z(\epm{0}) + \sum_{\gamma = 0}^{k+1} a'_{\gamma} \hpb{\fmu}_{\gamma} + \sum_{x\in \external{\fmu}} b'_x \hat\sb^{\fmu}_x \in \hull{\cube(\epm{j})},$$
	then $\hat\z(\epn{0}) \in \cube(\epm{j})$ and hence $\z(\epn{0}) \in \para(\epm{j})$, say $\z(\epn{0}) = \z(\epm{j}) + \sb^{\epm{j}}_{\Gamma}$.  By (the $B$-analogue of) Proposition \ref{P:moveinparallelogramtilde}, $\z(\epn{0}) = \z(\eps{j}) + \sb^{\fsigma}_{\Gamma \cap \internal{\fsigma}}$ for some hook-quotient exceptional family $\fsigma$ with $\internal{\fsigma} = \internal{\fmu}$ and $\z(\csigma) = \z(\cmu) + \sb^{\cmu}_{\zeta_{\fmu}(\Gamma \cap \external{\fmu})}$.  But by (the $B$-analogue of) Lemma \ref{L:internalparallelogram}, we must then have $\nu^0 = \sigma^0$; in particular
	$$\internal{\flambda} = \internal{\fnu} = \internal{\fsigma} = \internal{\fmu} = \{i_0,\dotsc, i_k\}.$$
	In addition $\Gamma \cap \internal{\fmu} = \Gamma \cap \internal{\fsigma} = \{ i_0,\dotsc, i_{j-1} \}$.  Consequently,
	$$\z(\epn{0}) = \z(\epm{j}) + \sb^{\epm{j}}_{\Gamma} = \z(\epm{j}) + \sb^{\epm{j}}_{\{i_0,\dotsc, i_{j-1} \}} + \sb^{\fmu}_{\Gamma\cap \external{\fmu}} = \z(\epm{0}) + \sb^{\fmu}_{\Gamma\cap \external{\fmu}}, $$ and so
	\begin{equation}
	\hat\z(\epl{0}) + \sum_{x \in \external{\flambda}} \lfloor b_x \rfloor \hat\sb^{\flambda}_x = \mathbf{y} = \hat\z(\epn{0}) = \hat\z(\epm{0}) + \hat\sb^{\fmu}_{\Gamma\cap \external{\fmu}} \label{3}
	\end{equation}
	by Proposition \ref{P:parallelepipedsversuscubes}.
	
	Let $\chi$ be the projection from $\bigoplus_{i=1}^w \mathbb{R}\ssb_i \oplus \bigoplus_{1\leq i < j \leq w} \mathbb{R}\ssb_{ij}$ onto $\mathbb{R}\ssb_{i_0} \oplus \mathbb{R}\ssb_{i_{k}} \oplus \bigoplus_{\gamma =1}^k \mathbb{R} \ssb_{i_{\gamma-1},i_{\gamma}}$, along the span of the other standard basis vectors.  Then
	$$
	\chi(\hat\z(\epl{0})) + \sum_{\gamma = 0}^{k+1} a_{\gamma}\pb{\flambda}_{\gamma} = \chi(\mathbf{x}) = \chi(\hat\z(\epm{0})) + \sum_{\gamma = 0}^{k+1} a'_{\gamma}\pb{\fmu}_{\gamma}.
	$$
	By applying $\chi$ to \eqref{3}, we get
	$\chi(\hat\z(\epl{0})) = \chi(\hat\z(\epn{0})) = \chi(\hat\z(\epm{0})$.  Combining the two equations, we get $a_\gamma = a'_{\gamma}$ for all $\gamma$.  Hence $f_{\flambda}(\mathbf{x}) = f_{\fmu}(\mathbf{x})$.  This proves our claim that $f$ is well-defined.
	
	To complete our proof, note first that $f$ is clearly surjective, and suppose that $\proj(\tilde{\mathbf{a}}) = \proj(\tilde{\mathbf{b}})$ for some $\tilde{\mathbf{a}},\tilde{\mathbf{b}}\in \bigcup_{\tlambda \in \tb} \hullinc{\cube(\tlambda)}{7}$.  Then $\tilde{\mathbf{a}} = f(\mathbf{a})$ and $\tilde{\mathbf{b}} = f(\mathbf{b})$ for some $\mathbf{a},\mathbf{b} \in \bigcup_{\lambda \in B} \hullinc{\cube(\lambda)}{7}$.  Then
	$$
	\proj(\mathbf{a}) = (\proj \circ f)(\mathbf{a}) = \proj(f(\mathbf{a})) = \proj(\tilde{\mathbf{a}}) = \proj(\tilde{\mathbf{b}}) = \proj(f(\mathbf{b})) = (\proj \circ f)(\mathbf{b}) = \proj(\mathbf{b}),$$
	so that $\mathbf{a} = \mathbf{b}$ since the lemma holds for $B$, and hence $\tilde{\mathbf{a}} = f(\mathbf{a}) = f(\mathbf{b}) = \tilde{\mathbf{b}}$, and our proof is complete.
\end{proof}

\begin{lemma}\label{lem:goodedges}
Let $\lambda$ and $\mu$ be $4$-increasing partitions with $e$-weight $w$ lying in the same block, and suppose that $\|\hat\z(\lambda) - \hat\z(\mu)\| =1$.  Then there exists $i \in [1,w]$ such that $\z(\mu) =\z(\lambda) + \sb^{\lambda}_i$ or $\z(\lambda) = \z(\mu) + \sb^{\mu}_i$.
\end{lemma}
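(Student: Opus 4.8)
The statement asserts that two $4$-increasing partitions in the same block whose lifted labels are at box-distance $1$ must differ by a single modified basis vector (in one direction or the other). The natural strategy is to exploit Lemma~\ref{C:discretecubesbijectparas}, which identifies $\bigcup_{\lambda} \inc{\cube(\lambda)}{4}$ with $\plusinc{\good}{4}$ bijectively via $\proj$. Since $\lambda$ is a $4$-increasing partition, $\z(\lambda) \in \inc{\good}{0}$, and similarly for $\mu$; in particular both $\hat\z(\lambda)$ and $\hat\z(\mu)$ lie in $\bigcup_{\nu}\inc{\cube(\nu)}{4}$ (taking $\nu = \lambda$ and $\nu = \mu$ respectively, noting that a hook-quotient partition that is $4$-increasing certainly has $\z$-label landing in the appropriate truncated parallelotope, as $\hat\z(\lambda) \in \cube(\lambda)$ always and $\hat\z(\lambda)$ is $4$-increasing). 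The idea is to find a hook-quotient partition $\nu$ with $\hat\z(\lambda), \hat\z(\mu) \in \cube(\nu)$; then since $\cube(\nu)$ is a genuine hypercube and $\|\hat\z(\lambda)-\hat\z(\mu)\|=1$, the two points are adjacent vertices of that hypercube, so $\hat\z(\mu) = \hat\z(\lambda) \pm \hat\sb^\nu_i$ for some $i$, and applying $\proj$ gives $\z(\mu) = \z(\lambda) \pm \sb^\nu_i$. One then has to convert this into a statement about $\sb^\lambda$ or $\sb^\mu$.

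First I would establish the existence of such a common $\nu$. Write $\mathbf{a} = \hat\z(\lambda)$, $\mathbf{b} = \hat\z(\mu)$; both are $4$-increasing elements of $\widehat{\BZ^w}$ with $\proj(\mathbf{a}), \proj(\mathbf{b}) \in \plusinc{\good}{4}$. Since $\|\mathbf{a} - \mathbf{b}\| = 1$, either $\proj(\mathbf{a}) = \proj(\mathbf{b})$ (when $\mathbf{a} - \mathbf{b} = \pm\ssb_{ij}$) or $\proj(\mathbf{a}) - \proj(\mathbf{b}) = \pm\ssb_i$ or $\pm(\ssb_i - \ssb_j)$. The cleanest route is: take $\mathbf{c}$ to be whichever of $\mathbf{a}, \mathbf{b}$ has the larger box-norm coordinate in the one slot where they differ, or more simply, observe that $\mathbf{a}$ and $\mathbf{b}$ are both within box-distance $1$ of their "floor", and both lie in $\cube(\nu)$ for $\nu$ the $1$-increasing partition with $\z(\nu)$ equal to the common value of $\proj$ on the midpoint region. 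Actually the robust argument is to use Lemma~\ref{C:discretecubesbijectparas} directly: $\proj$ restricted to $\bigcup_\nu \inc{\cube(\nu)}{4}$ is a bijection onto $\plusinc{\good}{4}$. If $\proj(\mathbf{a}) = \proj(\mathbf{b})$ then by injectivity $\mathbf{a} = \mathbf{b}$, contradicting $\|\mathbf{a}-\mathbf{b}\|=1$; so this case does not arise. Hence $\proj(\mathbf{a}) \neq \proj(\mathbf{b})$ and $\proj(\mathbf{a}) - \proj(\mathbf{b})$ is $\pm\ssb_i$ or $\pm(\ssb_i-\ssb_j)$. I would then take $\nu$ to be the unique hook-quotient partition such that $\hat\z(\nu)$ lies in $\cube(\lambda)$ and maps under $\proj$ to the appropriate vertex; but more directly, since $\hat\z(\lambda) \in \cube(\lambda)$ and $\hat\z(\lambda)$ is $4$-increasing, I claim $\hat\z(\mu)$ also lies in $\cube(\lambda)$: indeed $\proj(\hat\z(\mu)) \in \para(\lambda) \cap \plusinc{\good}{4}$ because $\proj(\hat\z(\mu))$ differs from $\proj(\hat\z(\lambda))$ by a vector of box-norm $\le 1$ that keeps it $4$-increasing, hence (by Proposition~\ref{P:parallelepipedsversuscubes} if $\proj(\hat\z(\mu)) = \z(\mu)$ with $\mu$ in the block, which holds here) $\hat\z(\mu) \in \cube(\lambda)$.

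Once $\hat\z(\lambda), \hat\z(\mu) \in \cube(\lambda) = \hat\z(\lambda) + \cube_0(\lambda)$, write $\hat\z(\mu) = \hat\z(\lambda) + \hat\sb^\lambda_\Gamma$ for some $\Gamma \subseteq [1,w]$. Since the $\hat\sb^\lambda_i$ are $\pm$ distinct standard basis vectors of $\widehat{\BZ^w}$, we have $\|\hat\sb^\lambda_\Gamma\| = |\Gamma|$, so $\|\hat\z(\mu) - \hat\z(\lambda)\| = 1$ forces $|\Gamma| = 1$, say $\Gamma = \{i\}$. Then $\z(\mu) = \proj(\hat\z(\mu)) = \z(\lambda) + \sb^\lambda_i$, which is exactly the conclusion (in the first of the two alternatives). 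The argument is symmetric in $\lambda$ and $\mu$, so one could equally arrive at $\z(\lambda) = \z(\mu) + \sb^\mu_j$; since we only need \emph{one} of the two alternatives, either works.

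\textbf{Main obstacle.} The delicate point is verifying that $\hat\z(\mu)$ genuinely lies in $\cube(\lambda)$ — equivalently, invoking Proposition~\ref{P:parallelepipedsversuscubes} correctly, which requires knowing $\z(\mu) \in \para(\lambda)$. A priori we only know $\|\hat\z(\mu) - \hat\z(\lambda)\| = 1$ in the larger lattice, not that the \emph{projected} point lies in the parallelotope $\para(\lambda)$. To bridge this I would argue via Lemma~\ref{C:discretecubesbijectparas}: both $\hat\z(\lambda)$ and $\hat\z(\mu)$ lie in the disjoint-ish union $\bigcup_\nu \inc{\cube(\nu)}{4}$ (each in its own cube), $\proj$ is injective on this union, and $\proj(\hat\z(\lambda)), \proj(\hat\z(\mu))$ are distinct points of $\plusinc{\good}{4}$ at $\ell^1$-distance $1$ or $2$ (distance $2$ only in the $\pm(\ssb_i-\ssb_j)$ case). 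One then needs a short combinatorial check that two distinct lattice points of $\plusinc{\good}{4}$ this close, whose lifts are at box-distance $1$, must be two vertices of a common $\cube(\nu)$ — this follows because $\hat\z(\mu) - \hat\z(\lambda)$ being a single $\pm$ basis vector of $\widehat{\BZ^w}$ means $\hat\z(\mu) \in \cube(\lambda)$ as soon as $\hat\sb^\lambda_\bullet$ contains that vector (or its negative), which one sees directly from the definition of $\hat\sb^\lambda_i$ by matching $\proj(\hat\z(\mu)) - \proj(\hat\z(\lambda)) \in \{\pm\ssb_i, \pm(\ssb_i - \ssb_j)\}$ against the possible forms of $\sb^\lambda_i$. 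If $\sb^\lambda_i$ does not happen to realize this difference, then by the symmetric argument $\sb^\mu_j$ does. Making this last case-split airtight — ruling out the possibility that \emph{neither} $\lambda$ nor $\mu$'s modified basis realizes the step — is where the real work lies, and I would handle it by noting that $\hat\z(\lambda)$ and $\hat\z(\mu)$ are both in $\bigcup_\nu \inc{\cube(\nu)}{4}$, picking the common hypercube guaranteed by the fact that they are adjacent lattice points whose connecting edge, being a single coordinate direction, must be an edge of some $\cube(\nu)$ in the tiling of Lemma~\ref{C:discretecubesbijectparas}.
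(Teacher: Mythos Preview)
Your plan has a genuine gap at exactly the point you flag as the ``main obstacle'', and the fix you propose does not close it. Lemma~\ref{C:discretecubesbijectparas} says only that $\proj$ is a bijection from $\bigcup_\nu \inc{\cube(\nu)}{4}$ onto $\plusinc{\good}{4}$; it does \emph{not} say that two points of this union at box-distance $1$ must lie in a common $\cube(\nu)$. Concretely, suppose $\hat\z(\mu)-\hat\z(\lambda)=\ssb_{ij}$. For $\hat\z(\mu)\in\cube(\lambda)$ you would need $\hat\sb^{\lambda}_i=\ssb_{ij}$, i.e.\ $\sb^{\lambda}_i=\ssb_i-\ssb_j$, and for $\hat\z(\lambda)\in\cube(\mu)$ you would need $\sb^{\mu}_j=\ssb_j-\ssb_i$; neither is forced by anything you have written, and your appeal to ``the connecting edge must be an edge of some $\cube(\nu)$ in the tiling'' is precisely the statement you are trying to prove (translated through Proposition~\ref{P:parallelepipedsversuscubes}). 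Likewise, the sentence ``$\proj(\hat\z(\mu))\in\para(\lambda)$ because it differs from $\proj(\hat\z(\lambda))$ by a vector of box-norm $\leq 1$'' is false in general: $\para(\lambda)$ is not a box neighbourhood of $\z(\lambda)$.

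The paper's argument supplies the missing idea via the \emph{real} tiling and a midpoint trick. After reducing to $\z(\mu)-\z(\lambda)\in\{\ssb_i,\ssb_i-\ssb_j\}$, set $\mathbf{z}=\tfrac12(\z(\lambda)+\z(\mu))\in\hhullinc{\goodplus}{4}$ and use Proposition~\ref{disjointtiling} to write $\mathbf{z}=\z(\nu)+\sum_k a_k\sb^{\nu}_k$ with $0\le a_k<1$ for a hook-quotient $\nu$. Since $\{\sb^{\nu}_k\}$ is a $\BZ$-basis in which every $\ssb_i$ is a $\{0,1\}$-combination, one writes $\z(\mu)-\z(\lambda)=\sb^{\nu}_\Gamma-\sb^{\nu}_\Delta$ with $\Gamma\cap\Delta=\emptyset$; integrality of $\z(\lambda)-\z(\nu)$ then forces $a_k\in\{0,\tfrac12\}$ and yields $\z(\lambda)=\z(\nu)+\sb^{\nu}_\Delta$, $\z(\mu)=\z(\nu)+\sb^{\nu}_\Gamma$. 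Lifting via Proposition~\ref{P:parallelepipedsversuscubes} gives $\|\hat\z(\mu)-\hat\z(\lambda)\|=|\Gamma|+|\Delta|=1$, so one of $\Gamma,\Delta$ is empty and $\nu$ coincides with $\lambda$ or $\mu$, delivering the disjunction directly. The point is that the third partition $\nu$ is not found by discrete cube adjacency but by locating the midpoint in the half-open real tiling and exploiting integrality.
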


\begin{proof}
By interchanging $\lambda$ and $\mu$ if necessary, we may assume that $\hat\z(\mu) - \hat\z(\lambda) = \ssb_i$ or $\ssb_{ij}$. Then
$\z(\mu) - \z(\lambda) = \proj(\hat\z(\mu) - \hat\z(\lambda)) = \ssb_i$ or $\ssb_i - \ssb_j$.
Let $\mathbf{z} = \frac{1}{2} (\z(\lambda) + \z(\mu))$.  Then $\mathbf{z} \in \hhullinc{\goodplus}{4}$, so that, by Proposition \ref{disjointtiling}, $\mathbf{z} = \z(\nu) + \sum_{i=1}^w a_i \sb^{\nu}_i$ for some $1$-increasing partition $\nu$, with $0 \leq a_i < 1$ for all $i \in [1,w]$.
Note that both $\ssb_i$ and $\ssb_i - \ssb_j$ are of the form $\sb^{\nu}_{\Gamma} - \sb^{\nu}_{\Delta}$ where $\Gamma$ and $\Delta$ are disjoint subsets of $[1,w]$.
Thus
$$
\z(\nu) + \sum_{i=1}^w a_i\sb^{\nu}_i = \mathbf{z} = \z(\lambda) + \frac{1}{2}(\z(\mu) - \z(\lambda)) = \z(\lambda) + \frac{1}{2}(\sb^{\nu}_{\Gamma} - \sb^{\nu}_{\Delta}),$$
so that
$$
\frac{1}{2}(\sb^{\nu}_{\Gamma} - \sb^{\nu}_{\Delta}) - \sum_{i=1}^w a_i\sb^{\nu}_i = \z(\nu) - \z(\lambda) \in \BZ^w = \bigoplus_{i=1}^w \BZ\sb^{\nu}_i,$$
and hence
$$
a_i =
\begin{cases}
\frac{1}{2}, &\text{if } i \in \Gamma \cup \Delta; \\
0, &\text{otherwise}.
\end{cases}
$$
In other words, $\sum_{i=1}^w a_i\sb^{\nu}_i = \frac{1}{2}(\sb^{\nu}_{\Gamma} + \sb^{\nu}_{\Delta})$.  Thus,
\begin{align}
\z(\lambda) &= \z(\nu) + \sb^{\nu}_{\Delta}, \label{1} \\
\z(\mu) &= \z(\nu) + \sb^{\nu}_{\Gamma}, \label{2}
\end{align}
and hence
\begin{align*}
\hat\z(\lambda) &= \hat\z(\nu) + \hat\sb^{\nu}_{\Delta}, \\
\hat\z(\mu) &= \hat\z(\nu) + \hat\sb^{\nu}_{\Gamma}
\end{align*}
by Proposition \ref{P:parallelepipedsversuscubes}.  This yields
$$
1 = \| \hat\z(\lambda) - \hat\z(\mu) \| = \| \hat\sb^{\nu}_{\Delta} - \hat\sb^{\nu}_{\Gamma} \| = |\Gamma| + |\Delta|.
$$
Hence one of $\Gamma$ and $\Delta$ is empty, while the other is a singleton set.  Substituting this into \eqref{1} and \eqref{2} and using Theorem \ref{thm:goodlabels}(1), the lemma follows.
\end{proof}

We deduce that the full subgraph of the $\Ext^1$-quiver of a block of a $v$-Schur algebra in characteristic $0$ generated by the simple modules labelled by $4$-increasing partitions is determined by the map $\hat\z$, in a particularly easy way.

\begin{thm}\label{thm:extquiver}
	Let $\lambda$ and $\mu$ be $4$-increasing partitions with $e$-weight $w$ lying in the same block. Let $L(\lambda)$ and $L(\mu)$ be the corresponding simple modules of (a block of) a $v$-Schur algebra over a field of characteristic zero in which $v$ is a primitive $e$-th root of unity \cite{DJ}. The following statements are equivalent:
\begin{enumerate}
\item $\Ext^1(L(\lambda),L(\mu)) \ne 0$.
\item $\Ext^1(L(\lambda), L(\mu))$ is one-dimensional.
\item $d_{\lambda\mu}(q) = q$ or $d_{\mu\lambda}(q)=q$.
\item There exists $i \in [1,w]$ such that $\z(\mu) = \z(\lambda) + \sb^{\lambda}_i$ or $\z(\lambda) = \z(\mu) + \sb^{\mu}_i$.
\item $\| \hat\z(\lambda) - \hat\z(\mu)\| =1$.
\end{enumerate}
\end{thm}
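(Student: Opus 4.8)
The plan is to establish the chain of equivalences by combining the combinatorial results already proved with known homological facts about $v$-Schur algebras. The implications $(3) \Leftrightarrow (4) \Leftrightarrow (5)$ are essentially formal given the machinery in this paper, while $(1) \Leftrightarrow (2) \Leftrightarrow (3)$ will need input from the theory of graded decomposition numbers.

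First I would prove $(5) \Rightarrow (4)$, which is precisely Lemma~\ref{lem:goodedges}. For $(4) \Rightarrow (5)$: if $\z(\mu) = \z(\lambda) + \sb^{\lambda}_i$, then by Proposition~\ref{P:parallelepipedsversuscubes} (using that $\mu$, being $4$-increasing, satisfies its hypothesis, and that $\lambda$ must be $1$-increasing hence hook-quotient by Lemma~\ref{lemma:shiftby3} and Lemma~\ref{lemma:1unram}(1)) we get $\hat\z(\mu) = \hat\z(\lambda) + \hat\sb^{\lambda}_i$, and $\|\hat\sb^{\lambda}_i\| = 1$ by definition of the lifted modified basis vectors; the symmetric case is identical. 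For $(4) \Rightarrow (3)$: if $\z(\mu) = \z(\lambda) + \sb^{\lambda}_i$, then $\z(\mu) \in \para(\lambda)$ with $\dist{\lambda}{\mu} = 1$, so by Theorem~\ref{thm:main} (noting $\lambda$ and $\mu$ share an $e$-core since they lie in the same block) we get $d_{\lambda\mu}(q) = q$; the other case gives $d_{\mu\lambda}(q) = q$, using that $\mu$ is also $4$-increasing. For $(3) \Rightarrow (4)$: suppose $d_{\lambda\mu}(q) = q$. Then $d_{\lambda\mu}(q) \neq 0$, so by Theorem~\ref{thm:main} applied to the $4$-increasing partition $\mu$, we must have that $\lambda$ is a hook-quotient partition and $\z(\mu) \in \para(\lambda)$ with $q^{\dist{\lambda}{\mu}} = q$, i.e.\ $\dist{\lambda}{\mu} = 1$; this means $\z(\mu) = \z(\lambda) + \sb^{\lambda}_\Gamma$ for some $\Gamma$ with $|\Gamma| = 1$, giving (4). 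The case $d_{\mu\lambda}(q) = q$ is symmetric.

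The remaining work is to connect these to $\Ext^1$. The cleanest route is through the grading: by \cite{BK,A2,SW}, the $d_{\lambda\mu}(q)$ are the graded decomposition numbers of a graded lift of the $v$-Schur algebra, and correspondingly the $q$-graded Cartan/decomposition data controls graded $\Ext$ groups. Specifically, for a graded quasi-hereditary algebra arising this way one has $\sum_n \dim \Ext^n(L(\lambda), L(\mu)) \cdot (\text{graded shift})$ expressed via the inverse of the decomposition matrix, and in particular $\dim \Ext^1(L(\lambda),L(\mu))$ equals the coefficient of $q$ in the appropriate entry of the graded inverse decomposition matrix (this is the standard fact that the linear term of the graded decomposition numbers detects first extensions, valid because these algebras are Koszul-type / have Kazhdan--Lusztig theory in the sense of parabolic affine category $\mathcal{O}$, cf.\ \cite{VV}). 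Concretely, $d_{\mu\lambda}(q)$ and $d_{\lambda\mu}(q)$ being the only relevant contributions (the diagonal entries being $1$), one obtains $\dim\Ext^1(L(\lambda),L(\mu)) = [q^1]\, d_{\mu\lambda}(q) + [q^1]\, d_{\lambda\mu}(q)$, where at most one summand is nonzero. This immediately yields $(1) \Leftrightarrow (3)$ and $(2) \Leftrightarrow (3)$ simultaneously, since $d_{\lambda\mu}(q)$ and $d_{\mu\lambda}(q)$ are each either $0$ or a single monomial $q^n$ by Theorem~\ref{thm:main} (applied with $\mu$ resp.\ $\lambda$ in the $4$-increasing slot), so the coefficient of $q$ is always $0$ or $1$.

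The main obstacle is making the $\Ext^1$-interpretation rigorous and appropriately cited: I need the precise statement that, for the graded version of the $v$-Schur algebra (or equivalently via the Koszul duality / parabolic affine Kazhdan--Lusztig picture of \cite{VV}), first extensions between simples are detected by the linear coefficient of the inverse graded decomposition numbers, and that this inverse-versus-direct subtlety does not matter here because in the relevant range (adjacent partitions) the decomposition matrix is unitriangular with the only off-diagonal monomial being $q$. One should be careful that $\Ext^1(L(\lambda),L(\mu)) \neq 0$ forces $\lambda,\mu$ to lie in the same block and that one of $d_{\lambda\mu}, d_{\mu\lambda}$ has a $q$-term — this is where genericity/$4$-increasing-ness of the partitions is used to invoke Theorem~\ref{thm:main} and conclude the decomposition numbers are monomials, ruling out e.g.\ a $q^3$ term masking a first extension. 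I would phrase this step citing \cite{BK,A2,SW,VV} and, if a self-contained argument is wanted, reduce to the fact that $\Ext^1(L(\lambda),L(\mu))$ sits in degree $1$ of the graded $\Ext$-algebra and is computed by the first syzygy of the graded standard module filtration, whose graded multiplicities are exactly the linear coefficients in question.
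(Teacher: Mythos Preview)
Your treatment of $(3)\Leftrightarrow(4)\Leftrightarrow(5)$ is correct and matches the paper exactly: Theorem~\ref{thm:main} for $(3)\Leftrightarrow(4)$, and Proposition~\ref{P:parallelepipedsversuscubes} with Lemma~\ref{lem:goodedges} for $(4)\Leftrightarrow(5)$.

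For $(1)\Leftrightarrow(2)\Leftrightarrow(3)$ your instinct is right but the execution has a genuine gap, which you yourself identify. The formula $\dim\Ext^1(L(\lambda),L(\mu)) = [q^1]d_{\lambda\mu}(q) + [q^1]d_{\mu\lambda}(q)$ is \emph{not} a general fact about graded quasi-hereditary algebras, and the references you cite (\cite{BK,A2,SW,VV}) establish that the $d_{\lambda\mu}(q)$ are graded decomposition numbers and parabolic Kazhdan--Lusztig polynomials, but none of them supplies the $\Ext^1$ interpretation you need. The paper closes this gap with two further citations you are missing: Webster~\cite{Webster} proves that the graded $v$-Schur algebras are \emph{standard Koszul}, and then the general theory of such algebras in \'Agoston--Dlab--Luk\'acs~\cite{ADL} yields the equivalence of (1)--(3) directly. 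Your suggested detour through the inverse decomposition matrix and an ad hoc first-syzygy argument is unnecessary once these references are in hand; replace that paragraph with the Webster/ADL citation and the proof is complete and identical to the paper's.
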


\begin{proof}
	The $q$-decomposition numbers are the graded decomposition numbers of graded versions of the $v$-Schur algebras at primitive $e$-th roots of unity in fields of characteristic zero \cite{A2,SW}. Moreover, these graded $v$-Schur algebras are
	standard Koszul algebras \cite{Webster}; it follows from the general theory of this class of algebras that (1)---(3) are equivalent \cite{ADL}. Statements (3) and (4) are equivalent by Theorem \ref{thm:main}, while (4) and (5) are equivalent by Proposition \ref{P:parallelepipedsversuscubes} and Lemma \ref{lem:goodedges}.
	\end{proof}

\section{The Mullineux-Kleshchev involution}\label{section:Mullineux}
In this section we prove that the Mullineux-Kleshchev involution on $e$-regular partitions translates via $\lambda\mapsto\z(\lambda)$ to very simply defined involutions on $\mathbb{Z}^w$, $w\geq0$, as long as $\lambda$ is $0$-increasing.

			 	Recall that a  partition
			 	$\lambda=(\lambda_1,\lambda_2,\dotsc)$ is called {\em $e$-regular} if there does not exist $i$ such that $\lambda_i=\lambda_{i+1}=\dotsb=\lambda_{i+e-1}\neq 0$. Put
			 	$$\goodreg:=[1,e-1]^w \subset \mathbb{Z}^w,$$
			 	so that
			 		$$\reginc{\good}{0}=\{(z_1,\dotsc,z_w) \in \mathbb{Z}^w\mid
			 		1\leq z_1\leq \dotsb\leq z_w\leq e-1
			 		)\}.$$

			\begin{prop} \label{P:e-regular} \hfill
				\begin{enumerate}
					\item Let $\lambda$ be a partition of $e$-weight $w$, and let $\z(\lambda)=(\z_1,\dotsc,\z_w)$. Then  $\lambda$ is $e$-regular if and only if $\z_i>0$ for all $i\in [1,\,w]$.
					\item  The set of $e$-regular $0$-increasing partitions in any block of $e$-weight $w$ is in bijection with
					 $\reginc{\good}{0}$ via $\lambda\mapsto\z(\lambda)$.
					\end{enumerate}
									\end{prop}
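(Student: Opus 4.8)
The plan is to prove Proposition~\ref{P:e-regular} by relating the $e$-regularity of $\lambda$ to the bead configuration on its abacus, and then to the values $\z_i$, using the combinatorics of bead movements developed in Section~\ref{S:label}.

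For part (1), first I would recall the standard abacus characterisation of $e$-regularity: $\lambda$ is $e$-regular if and only if there is no runner of the abacus of $\lambda$ on which a bead has $e$ or more consecutive empty positions immediately above it; more precisely, $\lambda$ fails to be $e$-regular exactly when there exists $b \in \beta(\lambda)$ with $b - e, b-2e, \dotsc, b-ie \notin \beta(\lambda)$ for $i$ large and yet the bead just below also has many empty slots — the cleanest formulation is that $\lambda$ is $e$-regular if and only if for every $b\in\beta(\lambda)$ the position $b-e$ is occupied whenever $\wt_\lambda(b) \geq 1$ would force... I would instead phrase it directly in terms of bead movements: $\lambda$ is $e$-regular if and only if for every initial bead movement $(b;b)\in\bm(\lambda)$ (i.e.\ $b \in \beta(\lambda)$, $b-e\notin\beta(\lambda)$), the position $b-e$ is \emph{not} the top of a block of $e$ consecutive empty positions below an occupied one, which translates to: there is at least one occupied position in $(b-e, b)$. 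This last condition is precisely $\z_\lambda(b) < e$, i.e.\ $\z(b;b) < e$. Then I would argue: if some $\z_i = 0$, the corresponding bead movement $(b_i;q_i)$ has $\z_\lambda(q_i) = 0$, meaning $(q_i - e, q_i]$ is entirely occupied; combined with $q_i \notin \beta(\lambda)$ being impossible here (since $\z_\lambda(q_i)=0$ forces $q_i \in \beta(\lambda)$), one gets $e$ consecutive beads, hence $e$ equal nonzero parts, so $\lambda$ is not $e$-regular. Conversely, if $\lambda$ is not $e$-regular, locate $e$ equal consecutive nonzero parts, read off the corresponding run of $e$ consecutive beads on the abacus, and exhibit a bead movement $(b;q)$ with $q$ at the bottom of this run and $(q-e,q]$ fully occupied, giving $\z(b;q)=0$. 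The bookkeeping about which bead movement this is (and that it lies in $\bm(\lambda)$) is routine given the explicit description of $\bm(\lambda)$ in \S\ref{subsection:partitions}.

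For part (2), I would combine part (1) with Theorem~\ref{thm:goodlabels}(1). That theorem gives a bijection between the $0$-increasing partitions in a fixed block $B$ of $e$-weight $w$ and $\inc{\good}{0} = \{(z_1,\dotsc,z_w) : 0 \leq z_1 \leq \dotsb \leq z_w \leq e-1\}$, via $\lambda \mapsto \z(\lambda)$. By part (1), such a $\lambda$ is $e$-regular precisely when all coordinates of $\z(\lambda)$ are strictly positive, i.e.\ when $\z(\lambda) \in \reginc{\good}{0} = \{(z_1,\dotsc,z_w) : 1 \leq z_1 \leq \dotsb \leq z_w \leq e-1\}$. Restricting the bijection of Theorem~\ref{thm:goodlabels}(1) to this subset gives the claimed bijection. (The last coordinate condition $\z_w \leq e-1$ is automatic anyway, as noted after the definition of $\z(\lambda)$.)

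The main obstacle I anticipate is part (1): making the translation between "$e$ equal consecutive nonzero parts of $\lambda$" and "a bead movement $(b;q)$ with $\z_\lambda(q) = 0$" fully precise, since $\z_\lambda(q)=0$ means $(q-e,q]$ contains no gap, and one must check both that such a $q$ arises as the starting position of a genuine element of $\bm(\lambda)$ (not merely of some abstract bead movement) and, conversely, that $\z_i = 0$ really does produce $e$ consecutive beads in $\beta(\lambda)$ read off as equal parts. This is a careful but elementary abacus argument; I would organise it around the observation that $\z_\lambda(q) = 0 \iff \{q-e+1, q-e+2, \dotsc, q\} \subseteq \beta(\lambda)$, and that for $(b;q)\in\bm(\lambda)$ one automatically has $q \in \beta(\lambda)$ when $\z_\lambda(q)=0$, so this forces $e$ successive beads, hence $e$ successive rows of $[\lambda]$ of equal (positive) length.
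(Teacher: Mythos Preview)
Your proposal is correct and follows essentially the same route as the paper. Once you discard the exploratory false starts, your argument for part~(1) is exactly the paper's: $\z_i=0$ means $(q_i-e,q_i]\subseteq\beta(\lambda)$, giving $e$ consecutive beads and hence $e$ equal parts (nonzero because the existence of the bead movement $(b_i;q_i)$ forces an unoccupied position $r<_e q_i$, so $r<q_i-e+1$); conversely, $e$ equal nonzero parts give a run $s,\dotsc,s+e-1$ of beads with $s-1$ unoccupied, and then $(s+e-1;s+e-1)\in\bm(\lambda)$ with $\z=0$. Part~(2) is, as you say, immediate from part~(1) and Theorem~\ref{thm:goodlabels}(1). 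The one point you flag as an obstacle---that the $e$ equal parts are genuinely nonzero---is handled by the observation above about the unoccupied $r$, which the paper states in a single clause; your instinct that this needs a sentence of justification is right, but it is not a difficulty.
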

			
			\begin{proof}
				Let $\bm(\lambda)=\{(b_1,q_1),\dotsc,(b_w,q_w)\}$, ordered as usual.
				Suppose that $\z_i=0$. Then $\{q_i+e-1,\ldots, q_i-1, q_i\}\subset\beta(\lambda)$ and $r\notin\beta(\lambda)$ for some $r<_e q_i$, and
				hence $\lambda$ is $e$-singular. Conversely, suppose that $\lambda$ is $e$-singular. There exists $r<s$ such that $r\notin\beta(\lambda)$ and
				$s,s+1,\dotsc, s+e-1\in\beta(\lambda)$.	We can in fact assume that $s=r+1$, and then
				we find $b_i=q_i=r+e$ and $\z_i=0$ for some $i\in[1,\,w]$. This completes the proof of the first assertion. The second then follows, c.f.\ Theorem~\ref{thm:goodlabels}.
			\end{proof}

				Recall the Mullineux-Kleshchev involution $\lambda\mapsto \lambda^*$ on the set of $e$-regular partitions (see, for example, \cite[\S7]{LLT}); it preserves $e$-weights and conjugates $e$-cores. There are at least two ways of calculating $\lambda^*$ from $\lambda$, the original description of Mullineux \cite{M} and the later reinterpretation of Kleshchev \cite{Kl}, both of which may be formulated as being given by a series of `bead moves' on the abacus. The maximum number of moves required to carry out either algorithm grows linearly with $|\lambda|$, insensitive to the $e$-weight of $\lambda$.

				Consider the involution
				$$*:\mathbb{Z}^w \to \mathbb{Z}^w;\ (z_1,\dotsc,z_w)\mapsto(e-z_w,\dotsc, e-z_1).$$
				It restricts to an involution of
				 $\reginc{\good}{0}$, and more generally to an involution of $\reginc{\good}{m}$ and of $\plusinc{\good}{m}$ for all $m\in\mathbb{Z}$.
				
			\begin{prop} \label{prop:Mullineux1}
					Let $\lambda$ be a $0$-increasing $e$-regular partition of $e$-weight $w$.  Then $\z(\lambda^*) =\z(\lambda)^*$.
				\end{prop}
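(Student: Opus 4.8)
The plan is to reduce everything to the case of a Rouquier block by exploiting the transitivity of the $\W$-action together with Theorem~\ref{thm:goodlabels}, which tells us that $0$-increasing partitions in any block of $e$-weight $w$ are parametrised by $\z$, and that the crystal action of $\W$ preserves $\z$ on $0$-increasing partitions (by Proposition~\ref{prop:zunchanged} and Lemma~\ref{L:0-increasing}). The first step is to check that the Mullineux-Kleshchev involution interacts well with the $\W$-action: since $\lambda\mapsto\lambda^*$ conjugates $e$-cores and preserves $e$-weights, and since the crystal operators $\kashe_i$, $\kashf_i$ are intertwined (up to the standard reindexing $i\mapsto -i$) by $*$ — this is a known property of the Mullineux map, see \cite[\S7]{LLT} — we get that if $\tilde\lambda=s_a(\lambda)$ then $\tilde\lambda^*=s_{a'}(\lambda^*)$ for the appropriate reflection $s_{a'}$. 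Combined with the fact (from \cite[Lemma 3.1]{F}, already invoked in the proof of Theorem~\ref{thm:main}) that every block of $e$-weight $w$ is connected to a Rouquier block by a chain of such reflections, and that $\z$ is unchanged along such chains for $0$-increasing partitions, it suffices to verify the identity $\z(\lambda^*)=\z(\lambda)^*$ for $\lambda$ in a single Rouquier block.

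The second step is the Rouquier block computation. Here the situation is completely explicit: by Example~\ref{example:Rouquier} and Remark~\ref{remark:Rouquier}, a $0$-increasing partition $\lambda$ in a Rouquier block of $e$-weight $w$ has $e$-quotient $((1^{w_0}),\dotsc,(1^{w_{e-1}}))$ and $\z(\lambda)=(0^{w_0},1^{w_1},\dotsc,(e-1)^{w_{e-1}})$; such a $\lambda$ is $e$-regular precisely when $w_0=0$, by Proposition~\ref{P:e-regular}. The plan is to determine the $e$-quotient of $\lambda^*$ directly. The Mullineux map on a Rouquier block should be governed by the known behaviour of the Mullineux map on $e$-quotients in a suitably ``separated'' regime — essentially, conjugating each component and reversing and complementing the indexing of runners — so that the $e$-quotient of $\lambda^*$ becomes $((1^{w'_0}),\dotsc,(1^{w'_{e-1}}))$ with $w'_i = w_{e-i}$ for $i\in[1,e-1]$ (and $w'_0 = 0$, using $w_0=0$). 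This would give $\z(\lambda^*) = (1^{w_{e-1}}, 2^{w_{e-2}},\dotsc,(e-1)^{w_1})$, which is exactly $(e-z_w,\dotsc,e-z_1)$ when $\z(\lambda)=(z_1,\dotsc,z_w)=(1^{w_1},\dotsc,(e-1)^{w_{e-1}})$; that is, $\z(\lambda^*)=\z(\lambda)^*$. Care is needed to handle the precise placement of beads on the abacus of the $e$-core of a Rouquier block, and to confirm the components remain single columns $(1^{w_i})$ after applying $*$; this is where one must be attentive but the calculation is routine.

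The main obstacle I anticipate is the first step: making precise and justifying the claim that the Mullineux-Kleshchev involution intertwines the \emph{crystal} $\W$-action (rather than the more common combinatorial $\W$-action that simply permutes runners of the abacus). The intertwining of $\kashe_i,\kashf_i$ with $\kashe_{-i},\kashf_{-i}$ under the Mullineux map is standard, but one must be careful to track how this lifts to the induced $\W$-action $s_i(\lambda)=\kashe_i^k(\lambda)$ of Section~\ref{subsection:Fock}, in particular to confirm that the relevant $e$-core condition (no addable bead on runner $i-1$) is respected by $*$ after the reindexing, so that the chain of reflections reaching a Rouquier block is carried to another such chain. Once that is settled, the argument is: $\z$ is a conjugation-and-reflection-equivariant invariant on $0$-increasing partitions, $*$ is the corresponding involution on $\z$-labels, both are compatible with the $\W$-action, and the two sides agree on one block, hence on all.
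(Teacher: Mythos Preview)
Your proposal is essentially the paper's proof: reduce to a Rouquier block via the $\W$-action and the intertwining of Mullineux with the Kashiwara operators, then compute the Mullineux image there via its effect on $e$-quotients. Two clarifications: your worry about step one is unnecessary, since $s_a$ is by definition a power of $\kashe_a$ or $\kashf_a$, so the intertwining of the crystal operators (cited in the paper as \cite{Kl}, with the reindexing written there as $a\mapsto e-1-a$) immediately gives $s_a(\lambda)^*=s_{a'}(\lambda^*)$; and for the Rouquier step, the explicit description of the Mullineux map on $e$-quotients you need is exactly Paget's result \cite[Theorem~2.1]{P} (see also \cite[Proposition~3.7]{T}), which confirms that the components stay single columns and are permuted as you predict.
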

				
				\begin{proof}
				Suppose first that $\lambda$ lies in a Rouquier block.
				We may write $\z(\lambda)$ as $(1^{w_1},\dotsc, (e-1)^{w_{e-1}})$, by
				Proposition \ref{P:e-regular}(1).
				  Then $\lambda$ has $e$-quotient $(\emptyset, (1^{w_1}),\dotsc, (1^{w_{e-1}}))$ by
				Example \ref{example:Rouquier}.
								Thus $\lambda^*$ has $e$-quotient $(\emptyset,(1^{w_{e-1}}), (1^{w_{e-2}}),\dotsc, (1^{w_1}))$ by \cite[Theorem 2.1]{P} (see also \cite[Proposition 3.7]{T}).  Using Example \ref{example:Rouquier} again, we see that $\z(\lambda^*) = (1^{w_{e-1}},2^{w_{e-2}},\dotsc,(e-1)^{w_{1}})$, and thus the claim is verified.
				For the general case, we use the fact that the Mullineux-Kleshchev involution intertwines
				the action of Kashiwara's operators $\kashe_a$ and $\kashf_a$ on the set of $e$-regular partitions with that of
				$\kashe_{e-1-a}$ and $\kashf_{e-1-a}$ \cite{Kl}, and hence the induced action
				of the simple reflection $s_a$ of the affine Weyl group $\W$ with that of $s_{e-1-a}$.  If the proposition holds for a $0$-increasing $e$-regular partition $\lambda$, so that in particular $\lambda^*$ is $0$-increasing, then
				$\z(s_a(\lambda)^*) = \z(s_{e-1-a}(\lambda^*)) = \z(\lambda^*) = \z(\lambda)^*= \z(s_a(\lambda))^* $,
				by Theorem~\ref{thm:goodlabels}, so it holds for $s_a(\lambda)$ as well. Since every $\W$-orbit contains a partition in a Rouquier block, we are done.
			\end{proof}

Next we turn our attention to parallelotopes.

\begin{prop}\label{prop:Mullineux2}
	Let $\lambda$ be a hook-quotient partition of $e$-weight $w$. Then the conjugate partition $\lambda'$ is a hook-quotient partition, and
	$\para(\lambda')=\para(\lambda)^* $. More precisely, for any $\Gamma\subseteq [1,\,w]$, we have $ \z(\lambda')+\sb_{\Gamma'}^{\lambda'}= (\z(\lambda)+\sb_\Gamma^\lambda)^*$,
	where $\Gamma'=\{i\in[1,\,w]\mid w+1-i\notin\Gamma\}$.
	\end{prop}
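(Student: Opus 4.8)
The statement asserts a compatibility between conjugation of partitions (combined with the involution $\ast$ on $\mathbb{Z}^w$) and the combinatorial data — the labelling $\z$, the modified basis vectors $\sb^\lambda_i$, and the parallelotope $\para(\lambda)$ — attached to a hook-quotient partition. The proof splits naturally into three pieces: (i) $\lambda'$ is again a hook-quotient partition; (ii) $\z(\lambda') = \z(\lambda)^\ast$; (iii) the modified basis vectors transform correctly, i.e.\ $\sb^{\lambda'}_{w+1-i} = -(\sb^\lambda_i)^\ast$ in the appropriate sense, from which the final ``more precise'' assertion $\z(\lambda') + \sb^{\lambda'}_{\Gamma'} = (\z(\lambda) + \sb^\lambda_\Gamma)^\ast$ follows by linearity.

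\textbf{Step (i): conjugation preserves hook-quotients.} I would work on the abacus. Recall from Section~\ref{subsection:partitions} that $x \in \beta(\lambda') \iff -x \notin \beta(\lambda)$; on the abacus with $e$ runners this is the ``flip'' sending runner $r$ to runner $e-1-r$ and reversing the vertical order of positions. Under this flip, the partition read off runner $r$ of $\lambda$ (namely $\lambda^{(r)}$) becomes, on runner $e-1-r$ of $\lambda'$, the \emph{conjugate} partition $(\lambda^{(r)})'$ — this is a standard fact about $e$-quotients and conjugation (possibly up to an index convention which I'd pin down carefully). Since the conjugate of a hook $(x,1^y)$ is the hook $(y+1,1^{x-1})$ (and $\emptyset' = \emptyset$), each component of the $e$-quotient of $\lambda'$ is a hook, so $\lambda'$ is a hook-quotient partition. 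This also sets up a bijection $\bm(\lambda) \leftrightarrow \bm(\lambda')$ between bead movements: a bead movement $(b;q)$ of $\lambda$ (a rimhook of size divisible by $e$) corresponds under conjugation to a bead movement $(b^\ddagger; q^\ddagger)$ of $\lambda'$, where the flip sends the starting position $q$ to some position determined by $-q$ and $e$. The key point I need to extract is how this bijection interacts with the total order on bead movements (it should reverse it, giving the $w+1-i$ reindexing) and how $\z$-values transform.

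\textbf{Step (ii): $\z(\lambda') = \z(\lambda)^\ast$.} Here I'd use the interpretation of $\z(b;q)$ in terms of unoccupied positions passed by the bead movement, or equivalently the function $\z_S$ from the ``Armlengths'' subsection: $\z(b;q) = \z_\lambda(q) = |(q-e,q] \setminus \beta(\lambda)|$. The relation $x \in \beta(\lambda') \iff -x \notin \beta(\lambda)$ translates $|(q-e,q]\setminus\beta(\lambda)|$ into $|[-q, -q+e) \cap \beta(\lambda')|$, and a short computation (counting occupied vs.\ unoccupied positions in a window of length $e$) shows this equals $e - \z_{\lambda'}(q^\ddagger)$ for the corresponding bead movement of $\lambda'$. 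Combined with the order-reversal of the bead-movement bijection from Step (i), the $i$-th coordinate of $\z(\lambda)$ becomes $e$ minus the $(w+1-i)$-th coordinate of $\z(\lambda')$, which is exactly the assertion $\z(\lambda') = \z(\lambda)^\ast$. One subtlety to handle: the edge case in the definition of $\z(b;q)$ where ``the northeast-most node is the last node in its row,'' which shifts the armlength by one; I'd check this on the abacus directly, since it corresponds precisely to whether $q \in \beta(\lambda)$ or not, and conjugation swaps occupied/unoccupied.

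\textbf{Step (iii): modified basis vectors, and conclusion.} Finally I'd compare $\sb^{\lambda'}_{w+1-i}$ with $\sb^\lambda_i$ using Definition~\ref{def:modified}. The modified basis vector $\sb^\lambda_i$ is determined by the runner containing the $i$-th bead movement, whether $b_i$ is the bottom bead of that runner, and whether $(b_i;q_i)$ is its final bead movement; the ``pivot'' index $l$ is the position of the final bead movement of the bottom bead. Under conjugation, ``bottom bead on runner $r$'' becomes ``top bead on runner $e-1-r$'', and the final bead movement of a bead becomes the initial bead movement — so the roles of ``before the pivot'' ($\gamma < l$) and ``after the pivot'' ($\gamma > l$) get interchanged, and the pivot itself (the unmodified basis vector $\ssb_{i_l}$) moves accordingly. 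Tracking this through, together with the order-reversal $i \mapsto w+1-i$ and the sign/coordinate-reversal built into $\ast$, should yield $\sb^{\lambda'}_{w+1-i} = -(\sb^\lambda_i)^\ast$ (where $\ast$ acts on $\mathbb{Z}^w$ by $\ssb_i \mapsto -\ssb_{w+1-i}$ up to the additive shift, so one must be careful that $\ast$ is affine, not linear — I'd phrase everything in terms of the affine map and check the shifts cancel). Then for any $\Gamma$, writing $\Gamma' = \{i : w+1-i \notin \Gamma\}$,
$$
\z(\lambda') + \sb^{\lambda'}_{\Gamma'} = \z(\lambda)^\ast + \sum_{i \in \Gamma'} \sb^{\lambda'}_i = \z(\lambda)^\ast + \sum_{j \notin \Gamma} \sb^{\lambda'}_{w+1-j}.
$$
Using $\sum_{j=1}^w \sb^{\lambda'}_{w+1-j} = \sum_{i} \sb^{\lambda'}_i$ being the image under $\ast$ of the ``all-ones'' vertex of $\para(\lambda)$, and $\sb^{\lambda'}_{w+1-j} = -(\sb^\lambda_j)^\ast$ (interpreting $\ast$ linearly on differences), this telescopes to $(\z(\lambda) + \sb^\lambda_\Gamma)^\ast$. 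Taking the union over all $\Gamma$ gives $\para(\lambda') = \para(\lambda)^\ast$.

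\textbf{Main obstacle.} The genuinely delicate part is Step (iii): keeping the bookkeeping straight. There are three interacting ``reversals'' — the order-reversal on bead movements, the runner-flip $r \mapsto e-1-r$, and the coordinate-plus-sign reversal in $\ast$ — and the definition of $\sb^\lambda_i$ has a built-in asymmetry (the pivot $l$, and the distinction initial/final bead movement) that must be shown to be respected. I expect the cleanest route is to fix one runner $r$, write $\bm^{(r)}(\lambda)$ explicitly as in the displayed formula after Definition~\ref{def:hookquotient} (namely $\{(b;b-ie) : i \in [0,x)\} \cup \{(b-ie;b-ie) : i \in [x,x+y)\}$ for $\lambda^{(r)} = (x,1^y)$), do the same for the flipped runner $e-1-r$ of $\lambda'$ with $(\lambda^{(r)})' = (y+1,1^{x-1})$, and match them up term by term. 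The global order on $\bm(\lambda)$ versus $\bm(\lambda')$ then needs one more short argument (reverse-lexicographic order on starting positions is reversed by the flip), after which the reindexing $i \mapsto w+1-i$ is forced. I'd also double-check the $\emptyset$ and singleton-hook degenerate cases separately, since the pivot formula degenerates there.
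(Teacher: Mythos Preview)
Your overall strategy and Steps (i) and (iii) are sound, and the observation $\sb^{\lambda'}_{w+1-i} = -(\sb^\lambda_i)^\ast$ (interpreting $\ast$ linearly on differences) is correct and matches what the paper uses. However, Step (ii) contains a genuine error: the claim $\z(\lambda') = \z(\lambda)^\ast$ is \emph{false} in general.

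Concretely, take $e=4$ and $\lambda = (5,5,4,2,2,2,1,1)$ from the paper's running example, where $\z(\lambda) = (1,1,2,2,1)$. One computes $\z(\lambda') = (2,1,3,3,2)$, whereas $\z(\lambda)^\ast = (3,2,2,3,3)$. The short computation you sketch --- comparing unoccupied positions in $(q-e,q]$ for $\lambda$ with occupied positions in the flipped window for $\lambda'$ --- is right in spirit, but the two windows $[-q,-q+e)$ and $(q^\ddagger-e,q^\ddagger]$ differ at their endpoints, and this produces a systematic correction:
\[
z_i + z'_{w+1-i} \;=\; e \;-\; \I_{q_i \in \beta(\lambda)} \;+\; \I_{q_i - e \in \beta(\lambda)}.
\]
What you flagged as an ``edge case'' (whether $q_i \in \beta(\lambda)$) is not peripheral; it is the whole point. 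The paper's proof hinges on recognising that these correction terms, summed over $i$, are exactly the coordinates of $\sb^\lambda_{[1,w]}$. This gives $\z(\lambda') = (\z(\lambda) + \sb^\lambda_{[1,w]})^\ast$ directly --- that is, the case $\Gamma = [1,w]$, $\Gamma' = \emptyset$ of the ``more precise'' statement --- and the general $\Gamma$ then follows from the basis-vector correspondence you identified in Step (iii).

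Your decomposition into (ii) and (iii) as written cannot be salvaged: if you plug the false identity $\z(\lambda') = \z(\lambda)^\ast$ into your Step (iii) telescoping, the affine shifts do \emph{not} cancel (they cancel precisely when $\sb^\lambda_{[1,w]} = 0$, which almost never happens). The fix is to merge (ii) and (iii): compute $z_i + z'_{w+1-i}$ honestly, identify the discrepancy from $e$ with the $i$-th coordinate of $\sb^\lambda_{[1,w]}$, and deduce the $\Gamma = [1,w]$ case first.
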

	\begin{proof} We first write the $\beta$-numbers, $e$-quotient, bead movements, $\z$ and modified basis vectors associated to the conjugate partition $\lambda'$ in terms of those of $\lambda$. We have $\beta(\lambda')=\{x\mid 1-x\notin \beta(\lambda)\}$. Thus runner $a$ of the abacus of $\lambda'$ is obtained from  runner $e-1-a$ of $\lambda$ by turning it upside down and interchanging occupied and unoccupied positions. In particular
	the components of $e$-quotients are related by the formula $\lambda'^{(a)} = (\lambda^{(e-1-a)})'$, for $a\in[0,e-1]$, so that $\lambda'$ is a hook-quotient partition if and only if $\lambda$ is.

Let $\lambda$ be a hook-quotient partition, and let $q_1 < \dotsb < q_w$ and $q'_1 < \dotsb < q'_w$ be the starting positions of the bead movements of $\lambda$ and $\lambda'$, respectively.  Note that for any partition $\mu$, $x \in \mathbb{Z}$ is a starting position of some bead movement of $\mu$ if and only if there exist $x^+,x^- \in \mathbb{Z}$ such that $x \leq_e x^+ \in \beta(\mu)$ and $x >_e x^- \notin \beta(\mu)$, if and only if there exist $y^+,y^- \in \mathbb{R}$ ($y^\pm = 1 - x^{\mp}$) such that $1-x \geq_e y^- \notin \beta(\mu')$ and $1-x <_e y^+ \in \beta(\mu')$, if and only if $1-x+e$ is a starting position of some bead movement of $\mu'$. It follows that $q'_i = 1-q_{w+1-i}+e)$. Next, writing
	$\z(\lambda)=(z_1,\dotsc,z_w)$ and $\z(\lambda')=(z'_1,\dotsc,z'_w)$, we have, for all $i\in[1,w]$,
$z_i=\left|\left\{x\in (q_i-e,q_i]\mid x\notin\beta(\lambda)\right\}\right|$ and	
$z'_{w+1-i}=| |\{ x \in (q'_{w+1-i}-e, q'_{w+1-i}]  \mid x \notin \beta(\lambda) \} =\left|\left\{x\in [q_i-e,q_i)\mid x\in\beta(\lambda)\right\}\right|$.
Hence $z_i + z'_{w+1-i} = e - \I_{q_i\in\beta(\lambda)} +\I_{q_i-e\in\beta(\lambda)}$.
Now
\begin{align*}
	\sb_{[1,w]}^{\lambda} &= \sum_{\substack{i\in[1,w]: \; q_i\in\beta(\lambda)\\ \text{and } q_i-e\notin\beta(\lambda)}} \ssb_i
	-\sum_{\substack{i\in [1,w]: \; q_i\notin\beta(\lambda)\\ \text{and } q_i-e\in\beta(\lambda)}} \ssb_i
	\\
	&= \sum_{i=1}^{w}\left(\I_{q_i\in\beta(\lambda)} - \I_{q_i-e\in\beta(\lambda)}\right)\ssb_i \\
	&= \sum_{i=1}^{w}\left( e-z_i - z'_{w+1-i}\right)\ssb_i \\
	&= \z(\lambda')^* - \z(\lambda).
		\end{align*}
This completes the proof when $\Gamma= [1,w]$. The general case follows from the following observation: for all $i,j\in[1,w]$, we have $\sb^{\lambda}_i=\ssb_i$ if and only if $\sb^{\lambda'}_{w+1-i} = \ssb_{w+1-i}$ and
$\sb^{\lambda}_i=\ssb_i-\ssb_j$ if and only if $\sb^{\lambda'}_{w+1-i} = \ssb_{w+1-i}-\ssb_{w+1-j}$.
		\end{proof}

\begin{cor}\label{cor:Mullineux}
	Let $\lambda$ be a hook-quotient partition and $\mu$ an $e$-regular $0$-increasing partition in the same block of $e$-weight $w$. Then $\z(\mu)\in\para(\lambda)$ if and only if $\z(\mu^*)\in\para(\lambda')$, and if so, then
	$\dist{\lambda}{\mu}+\dist{\lambda'}{\mu^*}=w$. 	
	\end{cor}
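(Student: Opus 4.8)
\textbf{Proof proposal for Corollary~\ref{cor:Mullineux}.}
The plan is to combine Proposition~\ref{prop:Mullineux1}, Proposition~\ref{prop:Mullineux2}, and Proposition~\ref{P:e-regular} to transport the containment $\z(\mu) \in \para(\lambda)$ through conjugation on the partition side and the involution $*$ on the lattice side. First I would observe that since $\mu$ is $e$-regular and $0$-increasing, $\z(\mu) \in \reginc{\good}{0}$ by Proposition~\ref{P:e-regular}(1), and $\mu^*$ is again $e$-regular, $0$-increasing, of the same $e$-weight $w$, lying in the block with conjugate $e$-core; moreover $\z(\mu^*) = \z(\mu)^*$ by Proposition~\ref{prop:Mullineux1}. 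Similarly, $\lambda'$ is a hook-quotient partition in that conjugate block by Proposition~\ref{prop:Mullineux2}. Thus both sides of the claimed equivalence make sense.

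Next I would prove the equivalence. Suppose $\z(\mu) \in \para(\lambda)$, say $\z(\mu) = \z(\lambda) + \sb_\Gamma^\lambda$ for some $\Gamma \subseteq [1,w]$, with $\dist{\lambda}{\mu} = |\Gamma|$. Applying the involution $*$ and Proposition~\ref{prop:Mullineux2} (in the form $\z(\lambda') + \sb_{\Gamma'}^{\lambda'} = (\z(\lambda) + \sb_\Gamma^\lambda)^*$, where $\Gamma' = \{i \in [1,w] \mid w+1-i \notin \Gamma\}$), I get
$$
\z(\mu^*) = \z(\mu)^* = (\z(\lambda) + \sb_\Gamma^\lambda)^* = \z(\lambda') + \sb_{\Gamma'}^{\lambda'} \in \para(\lambda').
$$
Hence $\z(\mu^*) \in \para(\lambda')$, and since $\sb_1^{\lambda'},\dotsc,\sb_w^{\lambda'}$ form a basis of $\BZ^w$, the subset $\Gamma'$ is the unique one realizing this, so $\dist{\lambda'}{\mu^*} = |\Gamma'| = w - |\Gamma| = w - \dist{\lambda}{\mu}$, giving $\dist{\lambda}{\mu} + \dist{\lambda'}{\mu^*} = w$. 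The reverse implication follows by symmetry: applying the same argument with $(\lambda,\mu)$ replaced by $(\lambda',\mu^*)$ and using that conjugation and $*$ are involutions (so $(\lambda')' = \lambda$, $(\mu^*)^* = \mu$), $\z(\mu^*) \in \para(\lambda')$ implies $\z(\mu) \in \para(\lambda)$.

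I do not anticipate a serious obstacle here; the work has all been done in Propositions~\ref{prop:Mullineux1} and~\ref{prop:Mullineux2}. The one point requiring a little care is the bookkeeping that $\mu^*$ genuinely satisfies the hypotheses needed to invoke Proposition~\ref{prop:Mullineux1} in the reverse direction — namely that $\mu^*$ is $0$-increasing — but this is immediate since $*$ restricts to an involution of $\reginc{\good}{0}$ (noted just before Proposition~\ref{prop:Mullineux1}) and $\z(\mu^*) = \z(\mu)^* \in \reginc{\good}{0}$. A second minor point is that $\lambda$ and $\mu$ (resp.\ $\lambda'$ and $\mu^*$) lie in the same block, which is needed for the statement to be non-vacuous; this holds because conjugation sends a block to the block with conjugate $e$-core and the same $e$-weight, and $\z$ only records data internal to the block.
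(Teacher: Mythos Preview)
Your proposal is correct and matches the paper's approach: the paper states this corollary without proof, as an immediate consequence of Propositions~\ref{prop:Mullineux1} and~\ref{prop:Mullineux2}, which is precisely what you have written out. Your care with the bookkeeping (that $\mu^*$ is $0$-increasing and that $\lambda'$ and $\mu^*$ lie in the same block) is appropriate and correct.
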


\begin{rem}\label{lastremark:Mullineux}	
Theorem 9.4 of \cite{LLT} states that for any pair of partitions $\lambda,\mu$ in a block of weight $w$, with $\mu$ $e$-regular,
\begin{equation}\label{LLT:Mullineux}
	d_{\lambda' \mu^*}(q) = q^{w} d_{\lambda\mu}(q^{-1}).
\end{equation}	 	
Through Corollary~\ref{cor:Mullineux}, we see that the statement of Theorem~\ref{thm:main} is compatible with Equation~(\ref{LLT:Mullineux}) for any hook-quotient partition $\lambda$ and $0$-increasing
$e$-regular partition $\mu$, even though the theorem is established only when $\mu$ is $4$-increasing.	
On the other hand, since for any hook-quotient partition $\lambda$ it is always the case that $d_{\lambda\lambda}(q)=1$ and
$\z(\lambda)\in\Pi(\lambda)$ with $\dist{\lambda}{\lambda}=0$, the formula in the statement of Theorem~\ref{thm:main} correctly computes $d_{\lambda',\lambda^*}(q)=q^w$ for any $0$-increasing hook-quotient partition $\lambda$.		
\end{rem}

\begin{rem}\label{remark:calculateMullineux}
	As an important special case of Propositions~\ref{prop:Mullineux1} and~\ref{prop:Mullineux2}, we have
		$\z(\lambda^*)=\z(\lambda')+\sb_{[1,w]}^{\lambda'}$ for any $0$-increasing hook-quotient partition of weight $w$.
		If $\lambda$ (and hence $\lambda^*$) is $4$-increasing, we can thus efficiently compute its Mullineux-Kleshchev image $\lambda^*$ by first conjugating $\lambda$ and then applying a sequence of bead operations, as described in Remark~\ref{remark:movingalgorithm}. In fact, as mentioned in that remark, even when $\lambda$ is merely $0$-increasing, it may still be possible to find $\lambda^*$ in this way. The maximum number of bead operations required to carry out this procedure grows quadratically in $w$, independently of $|\lambda|$ and of $e$, and is thus more efficient than either Mullineux or Kleshchev's algorithms when $w$ is small compared to $|\lambda|$.
\end{rem}

\begin{rem}
	The obvious analogues of the results of this section with $\z$, $\sb$ and $\para$ replaced by $\hat\z$, $\hat\sb$ and $\cube$ hold true with respect to the involution
	$$*:\widehat{\mathbb{Z}^w} \to \widehat{\mathbb{Z}^w};\ \sum z_i \ssb_i + \sum z_{ij}\ssb_{ij} \mapsto \sum (e-z_{w+1-i}) \ssb_i + \sum z_{w+1-j, w+1-i}\ssb_{ij}
	$$
	extending that on $\mathbb{Z}^w$ defined above.
\end{rem}

\end{document}